\documentclass[reqno,10pt,letterpaper]{amsart}

\usepackage{lipsum}
\usepackage{amsmath}
\usepackage{amssymb}
\usepackage{amsthm}
\usepackage{mathrsfs}
\usepackage{accents}
\usepackage{calc}
\usepackage{arydshln}
\usepackage{upgreek}
\usepackage{slashed}
\usepackage{xifthen}
\usepackage{graphicx}
\usepackage{subcaption}
\usepackage{longtable}
\usepackage[inline]{enumitem}

\usepackage{tikz}

\newcommand{\smallrighttriangle}{{%
\tikz[scale=0.12, line join=round, baseline={(0,0)}]{
  \draw[line width=0.4pt] (0,0) -- (1,0) -- (0,1) -- cycle;
}}}

\usepackage{xcolor}
\definecolor{winered}{rgb}{0.6,0,0}
\definecolor{lessblue}{rgb}{0,0,0.7}

\usepackage[pdftex,colorlinks=true,linkcolor=winered,citecolor=lessblue,urlcolor=lessblue,breaklinks=true,bookmarksopen=true]{hyperref}

\makeatletter
\newcommand{\myitem}[2]{\item[\rm(#2)]\def\@currentlabel{#2}\label{#1}}
\makeatother

\usepackage{titletoc}

\makeatletter

\def\@tocline#1#2#3#4#5#6#7{
\begingroup
  \par
    \parindent\z@ \leftskip#3 \relax \advance\leftskip\@tempdima\relax
                  \rightskip\@pnumwidth plus 4em \parfillskip-\@pnumwidth
    \ifcase #1 
       \vskip 0.6em \hskip 0em 
       \or
       \or \hskip 0em 
       \or \hskip 1em 
    \fi%
    #6
    \nobreak\relax{\leavevmode\leaders\hbox{\,.}\hfill}
    \hbox to\@pnumwidth {\@tocpagenum{#7}}
  \par
\endgroup
}

 \def\l@section{\@tocline{0}{0pt}{0pc}{}{}}

\renewcommand{\tocsection}[3]{%
  \indentlabel{\@ifnotempty{#2}{ 
    \ignorespaces\bfseries{#2. #3}}}
  \indentlabel{\@ifempty{#2}{\ignorespaces\bfseries{#3}}{}} 
    \vspace{1.5pt}}

\renewcommand{\tocsubsection}[3]{%
  \indentlabel{\@ifnotempty{#2}{
    \ignorespaces#2. #3}}
  \indentlabel{\@ifempty{#2}{\ignorespaces #3}{}}
    \vspace{1.5pt}}

\renewcommand{\tocsubsubsection}[3]{%
  \indentlabel{\@ifnotempty{#2}{
    \ignorespaces#2. #3}}
  \indentlabel{\@ifempty{#2}{\ignorespaces #3}{}}
    \vspace{1.5pt}}

\makeatother

\makeatletter
\def\@nomenstarted{0}
\newlength{\@nomenoldtabcolsep}

\newcommand{\nomenstart}
  {%
    \def\@nomenstarted{1}%
    \setlength{\@nomenoldtabcolsep}{\tabcolsep}%
    \setlength{\tabcolsep}{3.5pt}%
    \begin{longtable}{p{0.11\textwidth} p{0.86\textwidth}}
  }

\newcommand{\nomenitem}[2]{%
    \ifcase\@nomenstarted%
      \or 
      \or \\ 
    \fi%
    #1\,{\leavevmode\leaders\hbox{\,.}\hfill} & #2%
    \def\@nomenstarted{2}%
  }%
\newcommand{\nomenend}
  {\\%
      \end{longtable}%
      \setlength{\tabcolsep}{\@nomenoldtabcolsep}%
      \def\@nomenstarted{0}%
  }
\makeatother

\makeatletter
\newcommand{\BIG}{\bBigg@{3.5}}
\newcommand{\vast}{\bBigg@{4}}
\newcommand{\Vast}{\bBigg@{5}}
\newcommand{\VAST}[1]{\bBigg@{#1}}
\makeatother

\allowdisplaybreaks

\numberwithin{equation}{section}
\numberwithin{figure}{section}
\newtheorem{thm}{Theorem}[section]

\newtheorem{prop}[thm]{Proposition}
\newtheorem{lemma}[thm]{Lemma}
\newtheorem{cor}[thm]{Corollary}

\newtheorem*{thm*}{Theorem}
\newtheorem*{prop*}{Proposition}
\newtheorem*{cor*}{Corollary}
\newtheorem*{conj*}{Conjecture}

\theoremstyle{definition}
\newtheorem{definition}[thm]{Definition}
\newtheorem{notation}[thm]{Notation}

\theoremstyle{remark}
\newtheorem{rmk}[thm]{Remark}

\makeatletter
\newcommand{\fakephantomsection}{%
  \Hy@MakeCurrentHref{\@currenvir.\the\Hy@linkcounter}
  \Hy@raisedlink{\hyper@anchorstart{\@currentHref}\hyper@anchorend}%
  \Hy@GlobalStepCount\Hy@linkcounter%
}
\makeatother

\newcommand{\mc}{\mathcal}
\newcommand{\cA}{\mc A}

\newcommand{\cC}{\mc C}

\newcommand{\cE}{\mc E}
\newcommand{\cF}{\mc F}

\newcommand{\cH}{\mc H}

\newcommand{\cK}{\mc K}
\newcommand{\cL}{\mc L}

\newcommand{\cO}{\mc O}

\newcommand{\cR}{\mc R}

\newcommand{\cT}{\mc T}
\newcommand{\cU}{\mc U}
\newcommand{\cV}{\mc V}

\newcommand{\ms}{\mathscr}
\newcommand{\sA}{\ms A}

\newcommand{\sC}{\ms C}

\newcommand{\C}{\mathbb{C}}
\newcommand{\N}{\mathbb{N}}
\newcommand{\R}{\mathbb{R}}
\newcommand{\Z}{\mathbb{Z}}

\newcommand{\Sph}{\mathbb{S}}

\newcommand{\sfG}{\mathsf{G}}
\newcommand{\sfH}{\mathsf{H}}

\newcommand{\sfM}{\mathsf{M}}

\newcommand{\fc}{\mathfrak{c}}

\newcommand{\fk}{\mathfrak{k}}
\newcommand{\fm}{\mathfrak{m}}

\newcommand{\ft}{\mathfrak{t}}

\newcommand{\sld}{\slashed{\dd}{}}
\newcommand{\slg}{\slashed{g}{}}

\newcommand{\slsfG}{\slashed{\sfG}{}}

\newcommand{\slU}{\slashed{U}{}}

\newcommand{\slGamma}{\slashed{\Gamma}{}}
\newcommand{\sldelta}{\slashed{\delta}{}}
\newcommand{\slDelta}{\slashed{\Delta}{}}
\newcommand{\slvarphi}{\slashed{\varphi}{}}
\newcommand{\slnabla}{\slashed{\nabla}{}}

\newcommand{\slstar}{\slashed{\star}}

\newcommand{\sltr}{\operatorname{\slashed\tr}}

\newcommand{\scal}{\mathsf{S}}
\newcommand{\scalspace}{\mathbf{S}}
\newcommand{\vect}{\mathsf{V}}

\newcommand{\End}{\operatorname{End}}

\newcommand{\Hom}{\operatorname{Hom}}
\renewcommand{\Re}{\operatorname{Re}}
\renewcommand{\Im}{\operatorname{Im}}
\newcommand{\Id}{\operatorname{Id}}

\newcommand{\mathspan}{\operatorname{span}}
\newcommand{\supp}{\operatorname{supp}}

\newcommand{\tr}{\operatorname{tr}}

\newcommand{\diag}{\operatorname{diag}}

\newcommand{\halfopen}{[0,1)} 

\newcommand{\cd}{\fc}

\newcommand{\Ups}{\Upsilon}

\newcommand{\eps}{\epsilon}

\newcommand{\hra}{\hookrightarrow}
\newcommand{\la}{\langle}

\newcommand{\ol}{\overline}
\newcommand{\pa}{\partial}
\newcommand{\dd}{{\mathrm d}}
\newcommand{\ra}{\rangle}
\newcommand{\spec}{\operatorname{spec}}

\newcommand{\wh}{\widehat}

\newcommand{\xra}{\xrightarrow}

\newcommand{\ubar}[1]{\underaccent{\bar}#1}
\newcommand{\pfstep}[1]{$\bullet$\ \underline{\textit{#1}}}

\newcommand{\bop}{{\mathrm{b}}}

\newcommand{\scop}{{\mathrm{sc}}}

\newcommand{\cl}{{\mathrm{cl}}}

\newcommand{\ebop}{{\mathrm{eb}}}

\newcommand{\tbop}{{3\mathrm{b}}}
\newcommand{\tscop}{{3\mathrm{sc}}}

\newcommand{\semi}{\hbar}

\newcommand{\cface}{{\mathrm{cf}}}

\newcommand{\sface}{{\mathrm{sf}}}

\newcommand{\rms}{{\mathrm{s}}}
\newcommand{\rmv}{{\mathrm{v}}}

\newcommand{\cp}{{\mathrm{c}}}

\newcommand{\Diff}{\mathrm{Diff}}

\DeclareMathOperator{\Op}{Op}

\newcommand{\Vb}{\cV_\bop}

\newcommand{\Diffb}{\Diff_\bop}

\newcommand{\Psisc}{\Psi_\scop}

\newcommand{\Vtb}{\cV_\tbop}

\newcommand{\Difftsc}{\Diff_\tscop}
\newcommand{\Difftb}{\Diff_\tbop}
\newcommand{\Difftbh}{\Diff_{\tbop,\hbar}}
\newcommand{\Psitb}{\Psi_\tbop}
\newcommand{\Psitbh}{\Psi_{\tbop,\semi}}
\newcommand{\Veb}{\cV_\ebop}

\newcommand{\Diffeb}{\Diff_\ebop}

\newcommand{\Vsc}{\cV_\scop}
\newcommand{\Diffsc}{\Diff_\scop}

\newcommand{\WF}{\mathrm{WF}}
\newcommand{\Ell}{\mathrm{Ell}}
\newcommand{\Char}{\mathrm{Char}}

\newcommand{\WFtbh}{\WF_{\tbop,\hbar}}
\newcommand{\Elltb}{{\rm Ell}_\tbop}
\newcommand{\Elltbh}{{\rm Ell}_{\tbop,\hbar}}

\newcommand{\Tb}{{}^{\bop}T}

\newcommand{\Tsc}{{}^{\scop}T}

\newcommand{\Teb}{{}^{\ebop}T}
\newcommand{\Ttb}{{}^{\tbop}T}
\newcommand{\Ttbh}{{}^{\tbop,\semi}T}

\newcommand{\Stb}{{}^{\tbop}S}

\newcommand{\WFebh}{\WF_{\ebop,\semi}}

\newcommand{\sigmab}{\upsigma_\bop}

\newcommand{\sigmah}{\upsigma_\hbar}

\newcommand{\sigmasc}{\upsigma_\scop}

\newcommand{\sigmatb}{\upsigma_\tbop}
\newcommand{\sigmatbh}{\upsigma_{\tbop,\semi}}
\newcommand{\sigmaebh}{\upsigma_{\ebop,\semi}}

\newcommand{\CI}{\cC^\infty}

\newcommand{\CIc}{\cC^\infty_\cp}

\newcommand{\Hb}{H_{\bop}}

\newcommand{\Hbext}{\bar H_{\bop}}

\newcommand{\Hbsupp}{\dot H_{\bop}}

\newcommand{\Heb}{H_{\ebop}}

\newcommand{\Htb}{H_\tbop}
\newcommand{\Htbh}{H_{\tbop,h}}

\newcommand{\Ric}{\mathrm{Ric}}

\newcommand{\bhm}{\fm}

\newcommand{\openbigpmatrix}[1]
  {%
    \def\@bigpmatrixsize{#1}%
    \addtolength{\arraycolsep}{-#1}%
    \begin{pmatrix}%
  }
\newcommand{\closebigpmatrix}
  {%
    \end{pmatrix}%
    \addtolength{\arraycolsep}{\@bigpmatrixsize}%
  }

\newlength{\enummargin}

\newcommand{\usref}[1]{{\upshape\ref{#1}}}

\DeclareGraphicsExtensions{.mps}

\makeatletter
\newcommand*{\fwbw}[1]{\expandafter\@fwbw\csname c@#1\endcsname}
\newcommand*{\@fwbw}[1]{\ifcase #1 \or {\rm fw}\or {\rm bw}\fi}
\AddEnumerateCounter{\fwbw}{\@fwbw}
\makeatother

\begin{document}

\title[Constraint damping on Kerr]{Constraint damping on subextremal Kerr spacetimes}

\date{\today}

\subjclass[2020]{Primary 83C57, Secondary 35B40, 35L05, 83C05}

\author{Peter Hintz}
\address{Department of Mathematics, Pennsylvania State University, 54 McAllister St, State
College,\newline PA 16801, United States}
\email{phintz@psu.edu}

\begin{abstract}
  In the context of hyperbolic formulations of Einstein's field equations obtained via gauge fixing, constraint damping is a desirable feature that ensures that violations of the gauge condition and thus of the constraint equations are suppressed in evolution. Besides its utility in numerical relativity, it has played a key role in several (linear and nonlinear) stability proofs of spacetimes as solutions of the Einstein equation. In this paper, we show that an enhanced form of constraint damping can be implemented for the linearization of the Einstein equation around any subextremal Kerr black hole metric: the constraint propagation wave operator satisfies mode stability and has an arbitrarily large indicial gap at zero energy. The results proved here are a key ingredient in the author's proof \cite{HintzKerrStab} of the nonlinear stability of the subextremal Kerr family.
\end{abstract}

\maketitle

\setlength{\parskip}{0.00pt}
\tableofcontents
\setlength{\parskip}{0.05in}

\section{Introduction}
\label{SI}

We study a problem concerning a certain wave operator on 1-forms which arises in the study of the nonlinear stability problem for subextremal Kerr spacetimes (settled in \cite{HintzKerrStab}). To introduce this operator, consider a Lorentzian metric $g$ (signature $(-,+,\ldots,+)$) on a smooth manifold $M$, and denote by $(\delta_g^*\omega)_{\mu\nu}=\frac12(\omega_{\mu;\nu}+\omega_{\nu;\mu})$, $\sfG_g=\Id-\frac12 g\tr_g$, and $(\delta_g h)_\mu=-g^{\kappa\lambda}h_{\mu\kappa;\lambda}$ the symmetric gradient on 1-forms, the ``trace reversal'' operator, and the (negative) divergence on symmetric 2-tensors, respectively. For a smooth vector bundle map $\cC\colon T^*M\to S^2 T^*M$, consider the modified symmetric gradient
\[
  \delta_{g,\cC}^*\omega := \delta_g^*\omega + \cC\omega.
\]
The \emph{constraint propagation wave operator} associated with $\cC$ is then
\[
  \Box_g^\cC := 2\delta_g\sfG_g\delta_{g,\cC}^*.
\]
This is the tensor wave operator on 1-forms on $(M,g)$ plus a first order operator depending on $\cC$.

In the context of the Einstein vacuum equation $\Ric(g)=0$, this operator arises as follows. Consider the gauge condition $\Ups(g)=0$, where $\Ups(g)^\sharp=\tr_g(\nabla^g-\nabla^{g^0})$; here $g^0$ is some fixed Lorentzian metric, e.g., the metric one is interested in perturbing in the context of a stability problem for a given spacetime. For any $\cC$ as above, one can then consider the \emph{gauge-fixed Einstein equation}\footnote{There are many possible modifications of this that preserve the quasilinear hyperbolic character of the equation~\eqref{EqIEinstein} and are thus equally suitable at least for short-time existence for $\Ric(g)=0$. For example, one can replace $\Ups(g)$ by $\Ups(g)+A g+\theta$ where $A$ is a bundle map and $\theta$ is a 1-form, both independent of $g$; or one can replace $\delta_{g,\cC}^*$ by $\delta_{g^0,\cC}^*$. Since in this paper the Einstein equation only serves as motivation but is not studied in itself, we do not strive for any generality here.}
\begin{equation}
\label{EqIEinstein}
  \Ric(g) - \delta_{g,\cC}^*\Ups(g) = 0
\end{equation}
which is a quasilinear wave equation for the Lorentzian metric $g$. If $g$ is a solution, let us apply $\delta_g\sfG_g$ to this equation; in light of the second Bianchi identity which states that $\delta_g\sfG_g\Ric(g)=0$ for any $g$, we obtain the wave equation $\Box_g^\cC(\Ups(g))=0$ for the gauge 1-form. When studying~\eqref{EqIEinstein} for general (small) Cauchy data (which may violate the gauge condition, as could happen, e.g., in numerical simulations), it is then convenient to arrange for homogeneous solutions of $\Box_g^\cC$ to decay as fast as possible as some time function $t$ tends to $+\infty$. (See \S\ref{SsIM} below for further motivation.) This also ensures that $g$ becomes an increasingly accurate solution of $\Ric(g)=0$ as $t\to\infty$, and thus the first and second fundamental forms of level sets of $t$ become increasingly accurate solutions of the constraint equations; hence the terminology ``stable constraint propagation'' \cite{BrodbeckFrittelliHubnerReulaSCP,HintzVasyKdSStability} and ``constraint damping'' \cite{GundlachCalabreseHinderMartinConstraintDamping,PretoriusBinaryBlackHole} for formulations of the gauge-fixed Einstein equation with this feature.

In this paper, we study the case that $g$ is a subextremal Kerr metric; see~\eqref{EqKBL} for its explicit expression. Then 1-form solutions $\omega$ of $\Box_g\omega=0$, with $\CIc$ initial data, say, generically do not decay over compact spatial sets in time at all. This is due to the existence of a 1-dimensional space of stationary (Coulomb-type) solutions \cite[Theorem~5.1(2)]{AnderssonHaefnerWhitingMode}. In order to enforce decay of solutions of $\Box_g^\cC$, it is thus necessary to use nontrivial modifications $\cC\neq 0$ of the symmetric gradient.

\begin{thm}[Mode stability of $\Box_g^\cC$: rough form]
\label{ThmIRough}
  Let $(M,g)$ denote a subextremal Kerr spacetime, with mass $\bhm>0$ and specific angular momentum $a\in(-\bhm,\bhm)$. Let $C_0>0$, $v\in(0,1)$. Then for sufficiently small $e>0$, there exist $h_0\in(0,1)$ and a smooth, timelike, stationary 1-form $\cd$, equal to $r^{-1}(\dd t-v\,\dd r)$ for all sufficiently large $r$, such that for all $h\in(0,h_0)$, the following statements hold true for the operator
  \begin{equation}
  \label{EqIRoughC}
    \Box_g^{\cC_h}=2\delta_g\sfG_g(\delta_g^*+\cC_h),\quad
    \cC_h\omega := h^{-1}\bigl(2\cd\otimes_s\omega - (1-e)g\,\iota_{\cd^\sharp}\omega\bigr),\ \cd^\sharp:=g^{-1}(\cd,\cdot).
  \end{equation}
  \begin{enumerate}
  \item\label{ItIRoughMS}{\rm (Mode stability.)} Let $\sigma\in\C$, $\Im\sigma\geq 0$, $\sigma\neq 0$. Then there do not exist nontrivial mode solutions (i.e., outgoing at infinity and at the future event horizon) of $\Box_g^{\cC_h}$ with frequency $\sigma$.
  \item\label{ItIRough0}{\rm (Enhanced mode stability at zero energy.)} Suppose $\omega$ is a smooth stationary 1-form on $M$ solving $\Box_g^{\cC_h}\omega=0$ with the property that its coefficients $\omega_\mu$ with respect to Cartesian coordinate differentials $\dd z^\mu$, $\mu=0,1,2,3$, obey the pointwise bound $|\omega_\mu|\leq C r^{C_0}$ for some $C<\infty$. Then $\omega=0$.
  \end{enumerate}
\end{thm}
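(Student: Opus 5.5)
The overall strategy is to use the large parameter $h^{-1}$ in front of $\cC_h$ and treat $\Box_g^{\cC_h}$ as a semiclassical operator. Multiplying by $h$ gives
\[
  h\Box_g^{\cC_h}\omega = h\Box_g\omega + 2\delta_g\sfG_g\bigl(2\cd\otimes_s\omega-(1-e)g\,\iota_{\cd^\sharp}\omega\bigr).
\]
The second term is a first order operator whose principal part is independent of $h$ and is built from the timelike 1-form $\cd$. The plan is to show that, for the first order ``transport'' part, the principal symbol has a definite sign when paired against $\omega$. I would establish this positivity through a direct computation in a frame adapted to $\cd$. The choice of the coefficient $(1-e)$, rather than $1$, is what gives the symbol a definite sign: with coefficient $1$ one would have a borderline degenerate case, which appears in Hintz--Vasy's Kerr--de Sitter work. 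Since the normalization is $-\tfrac12 g\,\tr$, I expect the resulting endomorphism-valued symbol $\sigma_1$, evaluated against $\cd^\sharp$, to be positive definite with respect to a positive definite fiber inner product constructed from $\cd$.

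For part~\eqref{ItIRoughMS} I would argue by an energy estimate. Take a mode solution $\omega=e^{-i\sigma t}\omega_0$ with $\Im\sigma\ge0$, $\sigma\ne0$, and pass to the spectral family $\wh{\Box}(\sigma)$. Pair it with $\omega_0$ using a positive definite inner product and apply the divergence theorem on a hyperboloidal slice $t_*=$ const extending across the horizon to null infinity. Since $\cd=r^{-1}(\dd t-v\,\dd r)$ near infinity, the slices are transversal to $\cd$, which justifies this choice. The $h^{-1}$ term then contributes a positive bulk term of size $h^{-1}\|\omega_0\|^2$ plus boundary terms. The outgoing condition, together with the weights $r^{-1}$ in $\cd$, should give these boundary terms a good sign or make them controllable. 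The $\Box_g$ contribution is $O(1)$ in the semiclassical sense, but it is not small at high frequencies. I would therefore split into two regimes:
\begin{itemize}
\item for bounded $|\sigma|$, a semiclassical propagation and ellipticity argument in the $3\mathrm{b}$/scattering calculus controls the solution;
\item for large $|\sigma|$, I would use known high-energy estimates for the principal wave part, i.e.\ normally hyperbolic trapping estimates on Kerr. These are insensitive to the subprincipal term up to the threshold condition, which the positive sign of $\cd$ ensures.
\end{itemize}
An alternative formulation works directly in $\sigma$-uniform semiclassical estimates with $h$ small: the operator $h^2\wh{\Box}(\sigma)$ at $h$-frequency is dominated by the transport term, so one can apply an estimate that degenerates only as $h\to0$ and derive triviality of the kernel for $h<h_0$ uniformly on compact subsets of $\Im\sigma\ge0$, $\sigma\ne0$.

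For part~\eqref{ItIRough0} I would analyze the indicial roots at $r=\infty$. Near infinity, the zero-energy operator $\wh{\Box}(0)$ for $h\Box^{\cC_h}$ is, to leading order, $r^{-2}$ times a b-operator whose normal operator involves
\[
  h^{-1}\bigl(r\pa_r\text{-transport}\bigr) + (\text{angular Laplacian}).
\]
Since $\cd\sim r^{-1}\dd t$, the transport part is $h^{-1}$ times a first-order operator in $r\pa_r$ with nonzero coefficients, so its indicial roots scale like $h^{-1}$. I would compute these roots explicitly on each spherical harmonic and component: scalar $\dd t$ part, radial part, and tangential part. The goal is to show that all roots lie outside the strip $[-C_0-1,\,C_0+1]$ once $h$ is small, except for roots corresponding to decay. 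The growth bound $|\omega_\mu|\le Cr^{C_0}$ then forces $\omega$ to decay to all orders at infinity, i.e.\ $\omega=O(r^{-N})$. At that point the energy identity of part~\eqref{ItIRoughMS}, run at $\sigma=0$ in the weighted space, applies with positive bulk term and yields $\omega=0$. Horizon regularity follows from smoothness of $\omega$.

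I expect the main obstacle to be the positivity computation for the $h^{-1}$ term. One needs an actual coercive pairing, with a fiber inner product, not merely a symbol sign, that simultaneously:
\begin{itemize}
\item has good boundary terms at the horizon and at null infinity;
\item is compatible with the precise asymptotics $\cd=r^{-1}(\dd t-v\,\dd r)$, where the $v<1$ and $e>0$ conditions must enter;
\item survives the b-degeneration at $\sigma=0$.
\end{itemize}
To handle this, I would first verify the algebra in Minkowski space with $\cd=r^{-1}(\dd t-v\,\dd r)$. I would then interpolate $\cd$ in a compact region to a timelike form adapted to the Kerr ergoregion and horizon, for example a multiple of $\dd t_*$ there, choosing $e$ small to keep the signs.
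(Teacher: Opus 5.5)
\textbf{Gap in part~(1).} Pairing $\wh{\Box_g^{\cC_h}}(\sigma)\omega_0$ with $\omega_0$ does not produce a positive bulk term of size $h^{-1}\|\omega_0\|^2$.

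\begin{itemize}
\item The $h^{-1}$-part $2\delta_g\sfG_g\cC_1$ is a \emph{first order} operator. Up to a factor $-i$, its principal symbol is the endomorphism $\ell_{g,\cd,e}(\zeta)$ of Lemma~\ref{LemmaCSymb}.
\item What $e>0$ buys is not a sign. It buys self-adjointness of $\ell_{g,\cd,e}(\zeta)$ for a positive definite fiber inner product (Lemma~\ref{LemmaCInner}; at $e=0$ there is a Jordan block).
\item $\ell_{g,\cd,e}(\zeta)$ is definite only for $\zeta$ near the causal directions of $\cd$ (Lemma~\ref{LemmaCTimelikeSp}). So at spatial frequencies its contribution to your pairing has no sign.
\item The only zeroth order bulk term is the skew-adjoint part $S_{g,\cd,e}$. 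It is positive only for $|r-r_0|\le\frac65 r_0\sqrt e/v$ (Lemma~\ref{LemmaMSSubpr}).
\end{itemize}

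Coercivity elsewhere must come from weights and escape functions, i.e.\ from propagation along $-\cd^\sharp$. This vector field necessarily has critical sets. It must point into the black hole at the horizon to be future timelike, yet it is $\approx r^{-1}(\pa_t+v\pa_r)$ for large $r$. So your interpolation of $\cd$ creates a saddle at some $r=r_0$ on $\cK^+$, in addition to the saddle at $\pa\cK^+$ and the sink at $\{\frac{r}{t}=v\}\cap\sface$.

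Propagating through these critical sets requires two things. First, the weights must satisfy $\alpha_\sface<-\frac12$ and $\alpha_\cK>\alpha_\sface+\frac12$. Second, at the interior saddle the region where $S>0$ must overlap the region $|r-r_0|\ge r_0\sqrt e/v$ where $\ell(\pm\dd r)$ is definite (Lemma~\ref{LemmaMSRad}, Proposition~\ref{PropEC}). No single energy identity delivers this.

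Your frequency splitting also does not give one $h_0$ for all $\sigma$:
\begin{itemize}
\item trapping estimates treat the $h^{-1}$-term perturbatively only when $|\sigma|\gg h^{-1}$;
\item your alternative gives an $h_0$ that depends on a compact set of $\sigma$;
\item for $|\sigma|\sim h^{-1}$, the semiclassical symbol of $h^2\wh{\Box_g^{\cC_h}}(h^{-1}\varsigma)$ is evaluated at complex covectors $-\varsigma\,\dd t+\zeta$, where none of the fiberwise positivity is available (Remark~\ref{RmkTInvCx}).
\end{itemize}

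The missing idea is to bypass the spectral family. Prove forward solvability of $h^2\Box_g^{\cC_h}$ on polynomially weighted semiclassical 3b-Sobolev spaces on spacetime (Theorem~\ref{ThmET}), where all covectors are real. Then take a mode $e^{-i\sigma t}\omega_0$ with $\Im\sigma\ge0$. Its data are polynomially bounded once you show $e^{-i\sigma r_*}\omega_0\in\cA^1$, via the regular singular ODE at infinity and the positivity of the eigenvalues of $\ubar S^{\cC_h}$. So it would have to lie in $\rho_\sface^{\alpha_\sface}\rho_\cK^{\alpha_\cK}\bar H_\tbop^1(\Omega)$ with $\alpha_\cK=1$, i.e.\ decay in time. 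That forces $\omega_0=0$, for all $\sigma$ simultaneously.

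\textbf{Gap in part~(2).} Your premise that the indicial roots scale like $h^{-1}$ holds for only half of them.
\begin{itemize}
\item The vector type polynomial $-h\lambda^2+(h-2v)\lambda+l(l+1)h+4v$ has a root tending to $2$ as $h\to0$.
\item $\lambda=1$ is a scalar type $0$ and scalar type $1$ root for every $e,h$.
\item For bounded $l$, three scalar type roots tend to $1,1,2$ as $e,h\to0$.
\end{itemize}

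So the best available statement is $\Re\lambda\ge1$ or $\Re\lambda<-C_0$ (Theorem~\ref{ThmM0}). This gives $\omega=\cO(r^{-1})$, not decay to all orders. Even that statement needs uniformity in $l$, which perturbing each spherical harmonic in $h$ cannot give. The bounded roots pass through transition regimes $l\sim e^{-1/2}$, $l\sim\sqrt e/h$, $l\sim h^{-1}$, and become complex in between. Excluding the remaining $\cO(r^{-1})$ zero modes would then again rely on the coercive identity at $\sigma=0$, which does not exist.

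In the paper, part~(2) instead follows directly from the spacetime estimate. A stationary $\omega=\cO(r^{C_0})$ with $\Box_g^{\cC_h}\omega=0$ solves the initial value problem with data $(\omega|_X,0)$ of growth $\rho_\sface^{\alpha_\sface+\frac12}$ for some $\alpha_\sface<-C_0-2$. Theorem~\ref{ThmET} with $\alpha_\cK=1$ then forces temporal decay, hence $\omega=0$. The detailed indicial root analysis is needed for the invertibility of $\wh{\Box_g^{\cC_h}}(0)$, not for this exclusion.
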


The ``outgoing'' property in part~\eqref{ItIRoughMS} is clarified in Definition~\ref{DefTOutgoing}. A detailed version of Theorem~\ref{ThmIRough} is stated as Theorem~\ref{ThmT}. In particular, Theorem~\ref{ThmT}\eqref{ItTZero} significantly strengthens Theorem~\ref{ThmIRough}\eqref{ItIRough0} to the invertibility of the \emph{zero energy operator} $\wh{\Box_g^{\cC_h}}(0)$ (obtained by dropping all $t$-derivatives of $\Box_g^{\cC_h}$) acting on weighted function spaces encoding decay/growth rates anywhere between $r^{-1}$ and $r^{C_0}$ (and mapping into similar spaces with additional $r^{-2}$ decay).

The concrete choice of the parameter $v$ is not important here or for our application to the Kerr stability problem in~\cite{HintzKerrStab}; the reader may take $v=\frac12$ throughout this paper. The stationarity of $\cd$ however, as well as the degree of homogeneity of $\cd$ for large $r$ with respect to scaling, are important: they ensure that $\Box_g^{\cC_h}$ is time-translation-invariant (necessary for the definition of mode solutions) and has the same approximate degree $-2$ homogeneity with respect to spacetime dilations outside of any cone $|\frac{r}{t}|<\delta$, $\delta>0$, as the standard (scalar or tensor) wave operator on Kerr. The particular choice of sign of the $\dd r$ term in $\cd$ is also crucial; it ensures that the dual vector field $-r\cd^\sharp\approx\pa_t+v\pa_r$ points away from the black hole for large $r$. (The relevance of this is of a somewhat technical nature, cf.\ Remarks~\ref{RmkM0WrongV} and~\ref{RmkEBOut}.)

If one is only interested in mode stability and the non-existence of non-trivial zero modes that decay (i.e., $|\omega_\mu|=o(1)$ as $r\to\infty$), then a small modification $\cC$ suffices. This was shown using perturbative techniques in \cite[Proposition~3.7]{HintzGlueLocIII}, where it was proved that one can even take $\cC$ to be compactly supported in space (and independent of time). However, for compactly supported $\cC$, the operator $\Box_g^\cC$ always admits non-trivial stationary solutions which are of size $\cO(1)$ as $r\to\infty$; in fact, there is an at least $4$-dimensional space of these (of the form $\dd z^\mu+o(1)$ where $z=(t,x^1,x^2,x^3)$), as follows from the normal operator arguments used in the proof of \cite[Lemma~3.11(1)]{HintzGlueLocIII}. This remains true, with $\cO(1)$ replaced by $\cO(r^\delta)$ for any fixed $\delta>0$, when $\cC$ is not compactly supported but scales like $r^{-1}$ as $r\to\infty$ (as does $\cC_h$ in~\eqref{EqIRoughC}) and is small; this follows from indicial root considerations (see~\S\ref{SssIA0} below). As a consequence, the enhanced mode stability in Theorem~\ref{ThmIRough}\eqref{ItIRough0} \emph{cannot} be arranged \emph{perturbatively} for any fixed positive value of $C_0$, say, $C_0=1$. (In our application in \cite{HintzKerrStab}, we use $C_0=100$.) Instead, we utilize a large parameter $h^{-1}$ in~\eqref{EqIRoughC} and prove (enhanced) mode stability when $h^{-1}$ is sufficiently large.

\subsection{Context and motivation}
\label{SsIM}

We first recall the importance of mode stability for the constraint damping wave operator in the setting of de~Sitter (dS) type spacetimes in~\S\ref{SssIMdS} (where it first entered the mathematical literature on nonlinear stability problems) and then in the asymptotically flat setting in~\S\ref{SssIMFlat}. Finally, we discuss the significance of enhanced mode stability---which has no analogue in the dS setting---in~\S\ref{SssIME}.

\subsubsection{Mode stability for \texorpdfstring{$\Box_g^\cC$}{the constraint propagation wave operator} on de~Sitter type spacetimes}
\label{SssIMdS}

Arranging mode stability (including at $\sigma=0$) for the constraint propagation wave operator was a crucial step in the works \cite{HintzVasyKdSStability} (nonlinear stability of KdS, small angular momenta), \cite{HintzPetersenVasyKdS} (KdS, full subextremal range), and \cite{HintzKNdSStability} (KNdS, subextremal charges and small angular momenta), and also in the static de~Sitter case as studied in \cite[Appendix~C]{HintzVasyKdSStability}. We recall the setup: we study perturbations of $(M,g_0)$ (say, slowly rotating KdS), with $\Ric(g_0)-\Lambda g_0=0$, by solving initial value problems for (modifications of)
\[
  \Ric(g)-\Lambda g - \delta_{g_0,\cC}^*\Ups(g) = 0.
\]
We attempt to solve this quasilinear equation globally in the future $\{t\geq 0\}$ of a Cauchy hypersurface $\{t=0\}$ by applying a Nash--Moser iteration scheme in which we solve a linearized version of this equation, with initial data and right-hand side given by the error left over after the previous iteration step. Consider the linearization around $g=g_0$, which is the linear wave-type operator
\begin{equation}
\label{EqILinEin}
  L_{g_0}h := D_{g_0}\Ric(h)-\Lambda h - \delta_{g_0,\cC}^*D_{g_0}\Ups(h)
\end{equation}
on $(M,g_0)$ acting on symmetric 2-tensors $h$.

Now, solutions of $L_{g_0}h=0$ (or more generally of $L_{g_0}h=f=\cO(e^{-\alpha t})$, $\alpha>0$) admit resonance expansions, i.e., they are sums of \emph{mode solutions} $e^{-i\sigma t}h_0(x)\in\ker L_{g_0}$ (ignoring, for brevity, the possibility of modes of higher multiplicity), $\Im\sigma>-\alpha$, and an exponentially decaying remainder. Resonance expansions were first established for scalar waves in the static region of Schwarzschild--de~Sitter spacetimes by Bony--H\"afner \cite{BonyHaefnerDecay} using work on the discreteness and approximate lattice structure of the set of quasinormal modes (QNMs) $\sigma$ by S\'a Barreto--Zworski \cite{SaBarretoZworskiResonances} (see also the very precise description of QNMs given by Hitrik--Zworski \cite{HitrikZworskiQNM}); Melrose--S\'a Barreto--Vasy \cite{MelroseSaBarretoVasySdS,MelroseSaBarretoVasyResolvent} proved exponential decay to constants up to the horizons, and Besset \cite{BessetRNdSDecay} studied the case of non-rotating charged black holes. Dyatlov \cite{DyatlovQNM,DyatlovQNMExtended,DyatlovAsymptoticDistribution} proved analogous results for the wave and Klein--Gordon equations on slowly rotating KdS (see also \cite{IantchenkoRNdSDirac,IantchenkoKNdSDirac}). Vasy \cite{VasyMicroKerrdS} introduced a robust microlocal framework, i.e., not relying on separation of variables techniques, for obtaining resonance expansions (see \cite[Chapter~11]{HintzMicro} for a detailed account). Petersen--Vasy \cite{PetersenVasySubextremal} used this framework to treat the full subextremal case, and the author provided a description of QNMs for black holes of small mass \cite{HintzXieSdS,HintzKdSMS}. The author proved resonance expansions for tensorial wave equations such as~\eqref{EqILinEin} in~\cite{HintzPsdoInner}. (Literature on QNMs for anti--de~Sitter spacetimes includes \cite{WarnickQNMs,GannotSAdS,GannotKerrAdS}.)

Given a non-decaying mode solution $h=e^{-i\sigma t}h_0(x)$, $\Im\sigma\geq 0$, of $L_{g_0}h=0$, one applies $\delta_{g_0}\sfG_{g_0}$ to~\eqref{EqILinEin} and obtains
\[
  \Box_{g_0}^\cC\eta = 0,\quad \eta := D_{g_0}\Ups(h).
\]
Here we use the linearization of the second Bianchi identity $\delta_g\sfG_g(\Ric(g)-\Lambda g)=0$ around $g=g_0$. But due to the stationarity of $g_0$, the 1-form $\eta$ is itself a mode, i.e., of the form $\eta=e^{-i\sigma t}\eta_0(x)$. \emph{If mode stability holds for $\Box_{g_0}^\cC$}, one thus concludes that $\eta=0$, and therefore $D_{g_0}\Ric(h)-\Lambda h=0$; thus, non-decaying mode solutions $h$ of the linearized gauge-fixed Einstein equation $L_{g_0}h=0$ automatically satisfy the linearized gauge condition $D_{g_0}\Ups(h)=0$ and the linearized \emph{ungauged} Einstein equation $D_{g_0}\Ric(h)-\Lambda h=0$. The study of mode solutions of the latter has a long history, with \cite{ReggeWheelerSchwarzschild,VishveshwaraSchwarzschild,ZerilliPotential,TeukolskySeparation,WhitingKerrModeStability,AnderssonMaPaganiniWhitingModeStab} treating Schwarzschild and Kerr spacetimes and \cite{KodamaIshibashiMaster,HintzVasyKdSStability} the dS case. A description of these non-decaying modes is then a key input in all stability proofs \cite{HintzVasyKdSStability,HintzKNdSStability,FangKdS,HintzPetersenVasyKdS}.

When $g_0$ is the metric on a neighborhood of the static patch of de~Sitter space, then the 1-form wave operator $\Box_{g_0}=\Box_{g_0}^0$ does \emph{not} satisfy mode stability \cite[\S{C.3}]{HintzVasyKdSStability}: it has, e.g., a QNM in the upper half plane at $\frac{i}{2}(-3+\sqrt{33})$. While the QNMs of $\Box_g$ on KdS have not been computed, there is no reason to expect that they should all satisfy $\Im\sigma<0$. It is for this reason that \cite{HintzVasyKdSStability} introduced large parameter constraint damping as in~\eqref{EqIRoughC} and proved mode stability for suitable choices of $\cd,e,h$. (Note that merely \emph{perturbative} constraint damping cannot push the aforementioned QNM all the way into the lower half plane.) The main novelty of \cite{HintzPetersenVasyKdS} is the implementation of constraint damping in the full subextremal range of KdS.

Further papers in which constraint damping has been used, albeit not in the context of QNMs, include the stability proofs for expanding spacetimes by Ringstr\"om \cite{RingstromEinsteinScalarStability} and Hintz--Vasy \cite{HintzVasyKdSCosm}.

\begin{rmk}[When constraint damping is not needed]
\label{RmkIMdSNoCD}
  In approaches to nonlinear stability problems based on bootstrap (continuous-in-time induction) arguments, such as \cite{FangKdS} in the KdS setting, \cite{FriedrichStability,ChristodoulouKlainermanStability,LindbladRodnianskiGlobalStability} on Minkowski space, or \cite{KlainermanSzeftelPolarized,DafermosHolzegelRodnianskiTaylorSchwarzschild,KlainermanSzeftelKerr,GiorgiKlainermanSzeftelStability} for Schwarzschild and slowly rotating Kerr, one can sidestep constraint damping since the validity of the constraint equations is assumed for the initial data (and this then propagates).
\end{rmk}

\subsubsection{Mode stability for \texorpdfstring{$\Box_g^\cC$}{the constraint propagation wave operator} on asymptotically flat spacetimes}
\label{SssIMFlat}

The proof of the linear stability of subextremal Kerr black holes $(M,g)$ by H\"afner--Hintz--Vasy \cite{HaefnerHintzVasyKerr,HaefnerHintzVasyKerrLarge} utilized constraint damping as well: mode stability of the constraint propagation wave operator $\Box_g^\cC$ at nonzero frequencies served the same purpose as in the dS setting, and on Kerr holds even for $\cC=0$, i.e., without any modifications to the 1-form wave operator (which in view of $\Ric(g)=0$ is equal to the Hodge d'Alembertian and thus closely related to Maxwell's equations). The situation at zero frequency is more subtle. The relevant zero modes $h$ of the linearized gauge-fixed Einstein operator
\begin{equation}
\label{EqIMFlatLinEin}
  L_{g_0}=D_{g_0}\Ric-\delta_{g_0,\cC}^*D_{g_0}\Ups,
\end{equation}
for spatially compactly supported $\cC$ for now, are those with $\cO(r^{-1})$ decay as $r\to\infty$ (cf.\ \cite[Proposition~9.1]{HaefnerHintzVasyKerr}); thus $\eta=D_{g_0}\Ups(h)=\cO(r^{-2})$ and $\Box_{g_0}^\cC\eta=0$. This still implies $\eta=0$ when $\cC=0$ (since the nullspace of $\Box_{g_0}$ is spanned by a Coulomb-type solution with sharp $r^{-1}$ decay).

The need for a non-trivial modification $\cC$ arises in \cite{HaefnerHintzVasyKerr,HaefnerHintzVasyKerrLarge} only in the analysis of \emph{generalized} zero modes (i.e., polynomial-in-time and $o(1)$-in-space elements of $\ker L_{g_0}$), as discussed in \cite[\S{9.3}]{HaefnerHintzVasyKerr}: eliminating the zero mode of $\Box_{g_0}^\cC$ for $\cC=0$ altogether by making a suitable small choice of $\cC$ (see \cite[Equation~(10.2) and Definition~4.6]{HaefnerHintzVasyKerr} and \cite[Proposition~3.7 and the definitions after equation~(3.1a)]{HintzGlueLocIII}) ensures that generalized zero modes of $L_{g_0}$ are necessarily generalized zero modes also of $D_{g_0}\Ric$. (This was then shown to imply that solutions of $L_{g_0}h=0$, with suitably decaying initial data, only feature non-decaying terms in their late-time expansions that also lie in the kernel of $D_{g_0}\Ric$, such as linearized Kerr metrics and deformation tensors of approximate Lorentz boosts.) As shown in \cite[\S{10}]{HaefnerHintzVasyKerr} (Kerr with small angular momentum) and \cite[\S{3.1}]{HintzGlueLocIII} (full subextremal range), constraint damping can be implemented perturbatively so as to eliminate all non-decaying mode solutions of $\Box_g^\cC$, including $o(1)$ mode solutions at zero frequency.

Constraint damping also played a role in the proofs of the stability of Minkowski space \cite{HintzVasyMink4,HintzMink4Gauge}, albeit for a different reason: there it ensures, roughly speaking, that certain components of metric perturbations, in generalized harmonic gauge, have improved decay (better than $\cO(r^{-1})$) at future null infinity. This in turn ensures that the geometry of null infinity remains unchanged throughout the Nash--Moser iteration scheme. (This partial decay improvement plays an important role also in the bootstrap-based stability proof of Lindblad--Rodnianski \cite{LindbladRodnianskiGlobalStability} in harmonic gauge and for initial data satisfying the constraints. Keir \cite{KeirWeak} showed, remarkably, that global existence holds even without assuming validity of the constraints.)

\subsubsection{Enhanced mode stability for \texorpdfstring{$\Box_g^\cC$}{the constraint propagation wave operator} at zero energy}
\label{SssIME}

As follows from the discussion in~\S\ref{SssIMFlat}, it is solely the requirement of \emph{enhanced} mode stability at zero energy in Theorem~\ref{ThmIRough}\eqref{ItIRough0} which forces us to do non-perturbative constraint damping in this paper. The full justification for this requirement can only be given in the course of the nonlinear stability proof; see \cite[\S{1.4.3}]{HintzKerrStab}. A partial justification is that it enables us to \textit{control lower-order terms in the late-time asymptotics of solutions of the linearized gauge-fixed Einstein equation}. To explain this, we first recall that \cite{HintzPrice} showed that the late-time behavior of solutions of linear wave-type equations $P u=0$ (e.g., $P$ is the wave operator on subextremal Kerr, or the wave operator on Minkowski coupled to an inverse cubic potential) on stationary asymptotically flat spacetimes in $3+1$ dimensions is, generically, dictated by a \emph{large zero energy state}, i.e., a solution $u_0=u_0(x)$ of $P u_0=0$ with $u_0\to 1$ as $|x|\to\infty$. Concretely, \cite{HintzPrice} establishes the asymptotics $u(t,x)=c t^{-3}u_0(x)+o(t^{-3})$ for solutions of $P u=0$ where $c$ is a constant computable from initial data. Furthermore, for the wave operator on the spherically symmetric Schwarzschild spacetime and upon restriction to waves supported in spherical harmonic degree $\geq l$, \cite[Theorem~5.1 and Remark~5.2]{HintzPrice} assert $c t^{-2 l-3}Y_0+o(t^{-2 l-3})$ asymptotics, where $Y_0(x)$ is a large zero energy state that now has $r^l Y_{l m}(\theta,\phi)$ asymptotics as $r=|x|\to\infty$. The following heuristic emerges: \textit{the coefficient of $t^{-k}$ in the late-time asymptotics of a linear wave is related to large zero energy states with $\cO(r^k)$ asymptotics (up to possible constant shifts in the exponent) as $|x|\to\infty$.} A general result corroborating this (only for the leading order term, and in a less degenerate setting than in \cite{HintzPrice} in that various cancellations are absent which led to an improved regularity of the resolvent at zero energy in \cite{HintzPrice} and, in a different language, in \cite{AngelopoulosAretakisGajicLate,LukOhTwoTails}) is stated as \cite[Theorem~4.17]{HintzNonstat}. See also \cite{AngelopoulosAretakisGajicKerr} for lower-order terms in the expansion of linear waves on Kerr.

Consider now the late-time asymptotics of a solution of $L_{g_0}h=0$ (recalling~\eqref{EqIMFlatLinEin}). A generalization of the algorithm introduced in \cite{HintzPrice}---and pushed to its limit by Sussman \cite{SussmanResolventPhg} in the absence of zero modes---yields an asymptotic expansion of $h$ as $t\to\infty$ into terms $t^{-k}h_k(x)$ where the $h_k$ are large zero energy states with, roughly, $\cO(r^k)$ asymptotics.\footnote{Here $k$ may be fractional; we are omitting the stationary and linearly growing zero mode contributions arising from linearized Kerr metrics and certain deformation tensors, see \cite[Theorem~8.1]{HaefnerHintzVasyKerrLarge}; and we ignore the fact that one term in the expansion leads to further terms later on, so that $h_k$ is not a pure large zero energy state but contains also contributions necessitated by the presence of the earlier terms $h_j$, $j<k$.} But applying the second Bianchi argument to $L_{g_0}h_k=0$ shows that $D_{g_0}\Ups(h_k)=0$ and $D_{g_0}\Ric(h_k)=0$ provided that $k-1<C_0$ (roughly) where $C_0$ is the growth rate which large zero modes \emph{must exceed} by Theorem~\ref{ThmIRough}\eqref{ItIRough0}.

Therefore, enhanced mode stability ensures that the late-time expansion of $h$ is physical/geometric in origin, i.e., it only contains terms which solve the ungauged linearized Einstein equation and are automatically in the desired gauge, up to roughly $\cO(t^{-C_0})$ remainders. Gauge modifications then allow one to remove most terms in the late-time expansion and thus obtain fast decay of metric perturbations, which is crucial for the nonlinear analysis. In our approach to nonlinear stability in \cite{HintzKerrStab}, we will need $\cO(t^{-4-\eps})$ remainders (in some sense) and thus need, roughly, $C_0>4$.

\subsection{Elements of the proof}
\label{SsIA}

The proof of the second part of Theorem~\ref{ThmIRough} is largely based on computations of \emph{indicial roots} and thus to some extent algebraic in nature; we discuss this in~\S\ref{SssIA0}. The proof that for suitable choices of $\cC_h$ in~\eqref{EqIRoughC}, mode stability holds at nonzero frequencies (and at zero frequency on decaying spaces), uses advanced microlocal machinery. We discuss this in~\S\ref{SssIAh}; and we will also explain the deeper reason behind choosing to work with a bundle map $\cC_h\colon T^*M^\circ\to S^2 T^*M^\circ$ of the particular form~\eqref{EqIRoughC} (beyond the fact that it combines two particularly natural operations---symmetric tensor product, and contraction with a given 1-form/vector field---with the parameter $e$ providing flexibility in the relative weighting of the two).

\subsubsection{Zero energy behavior and indicial roots}
\label{SssIA0}

The action of $\Box_g^{\cC_h}$ on stationary 1-forms is given by its \emph{zero energy operator} $\wh{\Box_g^{\cC_h}}(0)$, obtained by dropping all derivatives in $t$ (time). To orient ourselves, note first that the zero energy operator of the scalar wave operator on Kerr takes the form
\[
  \wh{\Box_g}(0) = r^{-2} \bigl( -(r\pa_r)^2 - r\pa_r + \slDelta \bigr) + \cO(r^{-3})
\]
in polar coordinates $x=r\omega$, with $\slDelta$ being the non-negative Laplacian on the standard 2-sphere $(\Sph^2,\slg)$. The first term (called the \emph{normal operator} of $\wh{\Box_g}(0)$ at $r^{-1}=0$) is the Euclidean Laplacian; the $\cO(r^{-3})$ term is a second order differential operator constructed from $r\pa_r$ and spherical derivatives (i.e., a \emph{b-differential operator} in the parlance of \cite{MelroseMendozaB,MelroseAPS}), with an overall factor of $r^{-3}$ in front. The asymptotic behavior of solutions of $\wh{\Box_g}(0)u_0=0$ is dictated by the \emph{indicial roots} of $\wh{\Box_g}(0)$: these are those complex numbers $\lambda\in\C$ for which there exists $0\neq u_\pa\in\CI(\Sph^2)$ with the property that
\[
  \wh{\Box_g}(0) ( r^{-\lambda}u_\pa ) = \cO(r^{-\lambda-3}),
\]
or equivalently,
\[
  (-\lambda^2+\lambda+\slDelta)u_\pa = 0.
\]
Separating into spherical harmonics and using that the eigenvalues of $\slDelta$ are $l(l+1)$, $l\in\N_0$, yields the indicial roots $\lambda=-l,l+1$. An analogous analysis can be performed for the tensor wave operator $\Box_g$ on 1-forms. In this case, the decomposition into spherical harmonic \emph{1-forms} on $\Sph^2$ yields two sub-cases, namely, scalar type and vector type 1-forms; see Definition~\ref{DefM0Types}.

Returning to $\wh{\Box_g^{\cC_h}}(0)$, observe that $\Box_g^{\cC_h}=2\delta_g\sfG_g\delta_g^*+2\delta_g\sfG_g\cC_h$ differs from $2\delta_g\sfG_g\delta_g^*=\Box_g$ by the extra term $2\delta_g\sfG_g\cC_h$; for the choice $\cd$ in Theorem~\ref{ThmIRough}, which equals $r^{-1}(\dd t-v\,\dd r)$ for large $r$, this extra term is a b-differential operator with weight $r^{-2}$, and thus does contribute to the normal operator of $\wh{\Box_g^{\cC_h}}(0)$, and hence affects the indicial roots. The full expression is given in Corollary~\ref{CorM00}. One step towards proving Theorem~\ref{ThmIRough}\eqref{ItIRough0} is thus to show that, for a suitable choice of $\cC_h$,
\begin{equation}
\label{EqIA0Roots}
  \mbox{\textit{all indicial roots $\lambda\in\C$ of $\wh{\Box_g^{\cC_h}}(0)$ satisfy $\Re\lambda<-C_0$ or $\Re\lambda\geq 1$.}}
\end{equation}
See Theorem~\ref{ThmM0} for a more comprehensive statement. This implies that every $\omega=\cO(r^{C_0})$ with $\wh{\Box_g^{\cC_h}}(0)\omega=0$ must, in fact, satisfy $\omega=\cO(r^{-1})$. To conclude $\omega=0$, it then remains to exclude the possibility of \emph{decaying} zero modes, which the reader may find less surprising to be doable.\footnote{We shall, in fact, proceed slightly differently, and exclude the possibility of $\cO(r^{C_0})$ large zero modes \emph{directly} (see~\S\ref{SssIAh} and the proof of Theorem~\ref{ThmT}\eqref{ItTZero}); nonetheless, the full description of the indicial roots in Theorem~\ref{ThmM0} is crucially used in the proof of the \emph{invertibility} of $\wh{\Box_g^{\cC_h}}(0)$ on certain weighted Sobolev spaces (see Theorem~\ref{ThmT}\eqref{ItTZero}), and for the existence of a nontrivial indicial gap for the linearized gauge-fixed Einstein operator in \cite[Lemma~8.4]{HintzKerrStab}.}

Arranging~\eqref{EqIA0Roots} requires a careful fine-tuning of the parameters $e$, $\fc$, and $h$: we show that for small enough $e>0$ (and corresponding choices of $\fc$, see Proposition~\ref{PropEC}), every small $0<h\leq h_0=h_0(e)$ works. (One \emph{cannot} take $e=0$, as one may check numerically. Moreover, it is crucial to take $v\in(0,1)$, as $v\leq 0$ turns out to be incompatible with~\eqref{EqIA0Roots}; see Remark~\ref{RmkM0WrongV}.) The indicial roots are the roots of polynomials of degrees up to $6$ (see, e.g., \eqref{EqMsPoly}), and the determination of their roots is only possible asymptotically in the double limit $e\to 0$, $h\to 0$. The detailed proofs are presented in Appendix~\ref{SM0}.

\subsubsection{Nonzero frequencies: semiclassical estimates on spacetime}
\label{SssIAh}

Typically, one studies mode solutions
\begin{equation}
\label{EqIAhMode}
  \omega(t,x)=e^{-i\sigma t}\omega_0(x)
\end{equation}
of $\Box_g^{\cC_h}$ by analyzing the spectral family $\wh{\Box_g^{\cC_h}}(\sigma)$ (an operator in the spatial variables only), obtained by replacing $\pa_t$ in the expression for $\Box_g^{\cC_h}$ by $-i\sigma$; and spectral information, such as mode stability, is then used to infer information about asymptotics and decay of solutions of $\Box_g^{\cC_h}$. In this paper, as already previously in \cite{HintzVasyKdSStability,HintzPetersenVasyKdS}, we argue the other way around. By taking advantage of the presence of a large parameter $h^{-1}$, we prove that the wave operator $\Box_g^{\cC_h}$ is invertible as a map between appropriate \emph{polynomially weighted} Sobolev spaces \emph{on spacetime}. Concretely, we will show that, for small $e>0$ and small $h>0$, one can solve initial value problems for $\Box_g^{\cC_h}$ in the domain $\{t\geq 0\}$ in the weighted spacetime $L^2$-space
\begin{equation}
\label{EqIAhL2}
  \rho_\sface^{\alpha_\sface}\rho_\cK^{\alpha_\cK}L^2,\quad
  \rho_\sface=\frac{1}{r},\quad\rho_\cK=\frac{r}{t+r},
\end{equation}
provided the initial data at $t=0$ have decay/growth rates compatible with the weight $\alpha_\sface$. (Here $t$ is a time function whose level sets are transversal to the future event horizon, and which is equal to the Boyer--Lindquist time coordinate for large $r$.) Roughly speaking, $\alpha_\sface$, resp.\ $\alpha_\cK$ measures the rate of decay as $r\to\infty$ with fixed ratio $\frac{r}{t}$, resp.\ as $t\to\infty$ with $r$ held fixed. The weights $\alpha_\sface,\alpha_\cK$ will be subject to
\begin{equation}
\label{EqIAhWeights}
  \alpha_\sface<-\tfrac12+\min(\alpha_\cK,0).
\end{equation}
Thus, if we fix $\alpha_\cK=1$ and $\alpha_\sface\ll -1$, then solutions $\omega$ of $\Box_g^{\cC_h}\omega=0$ with possibly large ($\sim r^{-\alpha_\sface}$) initial data still decay as $t\to\infty$. Applying this to the initial data $(\omega_0,-i\sigma\omega_0)$ of a mode solution~\eqref{EqIAhMode}, we conclude the non-existence of non-decaying mode solutions~\eqref{EqIAhMode} and thus the validity of Theorem~\ref{ThmIRough}.

{\bf Semiclassical setup.} The proof of the invertibility of $\Box_g^{\cC_h}$ as a map between Sobolev spaces related to~\eqref{EqIAhL2} is the heart of the paper (\S\ref{SE}). The key observation, first made in \cite[\S{8}]{HintzVasyKdSStability}, is that the rescaling
\[
  P_h := h^2\Box_g^{\cC_h} = h^2\Box_g - i L_h,\quad L_h := 2 i h\delta_g\sfG_g\cC_1
\]
is a \emph{semiclassical} differential operator (i.e., built out of semiclassical derivatives $h\pa$) that is amenable to a full phase space analysis; this amounts, roughly speaking, to controlling the amplitudes of oscillations $\sim e^{i(x-x_0)\cdot\xi/h}$ of solutions $\omega$ of $P_h\omega=f$ in terms of those of $f$, modulo lower order terms (as measured in powers of $h$). Concretely, we will prove the estimate
\begin{equation}
\label{EqIAhEst}
  \|\omega\|_{\rho_\sface^{\alpha_\sface}\rho_\cK^{\alpha_\cK}L^2} \leq C\Bigl(h^{-1} \| P_h\omega \|_{\rho_\sface^{\alpha_\sface+2}\rho_\cK^{\alpha_\cK}L^2} + h^N \| \omega \|_{\rho_\sface^{\alpha_\sface}\rho_\cK^{\alpha_\cK}L^2} \Bigr)
\end{equation}
(or refinements thereof involving other differential orders, see~\eqref{EqEPAprioriAlmost}) for 1-forms $\omega$ vanishing for $t<0$. For small $h>0$, the second term on the right can be absorbed into the left-hand side.\footnote{Without the presence of a semiclassical parameter, i.e., for $h=1$, a version of the estimate~\eqref{EqIAhEst} where the error term is estimated in a Sobolev space with differentiability order less than that of the space on the left (which is $0$) would only be the first step in the spacetime analysis of linear waves, cf.\ \cite[(1.20)]{HintzNonstat}. Obtaining Fredholm and invertibility properties of $P_h$ would require, in addition, the inversion of various model operators (cf.\ \cite[\S{1.4.2}]{HintzNonstat}) including a stationary model operator---which in the present case would be exactly $P_h$---via spectral theory; at this point the argument would become circular.} A variant of this estimate for the $L^2$-adjoint of $P_h$ yields the solvability of the forward problem for $P_h\omega=f$ (i.e., $f$ and $\omega$ vanish for $t<0$) with $\omega\in\rho_\sface^{\alpha_\sface}\rho_\cK^{\alpha_\cK}L^2$ by duality.

{\bf Local structure of $P_h$.} The semiclassical principal symbol of $P_h$ (obtained by replacing $h D$ by the momentum $\zeta\in T_z^*M^\circ$, $z\in M^\circ$, and dropping terms which are $o(1)$ as $h\to 0$) is given by
\begin{equation}
\label{EqIAhSymb}
  p_e(\zeta) = G(\zeta) - i\ell_e(\zeta),\quad \zeta\in T_z^*M^\circ,
\end{equation}
where $G(\zeta)=\la\zeta,\zeta\ra$, $\la\cdot,\cdot\ra=g^{-1}(\cdot,\cdot)$, is the dual metric function (quadratic in $\zeta$) and $\ell_e(\zeta)$ is an endomorphism of $T_z^*M^\circ$ (depending linearly on $\zeta$), corresponding to the fact that $P_h$ acts on sections of $T^*M^\circ$; see Lemma~\ref{LemmaCSymb}. We improve on \cite{HintzVasyKdSStability} by introducing a general-purpose theory of large parameter/semiclassical constraint damping in~\S\ref{SC} which, moreover, is slightly sharper than that developed in \cite{HintzPetersenVasyKdS}.

In the limit $e\to 0$, one ``almost'' has $\frac12\ell_0(\zeta)=\la\cd,\zeta\ra$ (see Remark~\ref{RmkCNoe0}), but there are several reasons why we must work with (small) $e>0$: recall that $e>0$ is necessary for arranging~\eqref{EqIA0Roots}; moreover, $\ell_0(\zeta)$ has a nontrivial Jordan block structure, whereas for $e\in(0,1)$ and timelike $\cd$ there exists a fiber inner product on $T^*M^\circ$ such that $\ell_e(\zeta)$ is self-adjoint for all $\zeta$, and in fact positive definite for future causal $\zeta$ (see Lemmas~\ref{LemmaCInner} and \ref{LemmaCTimelike}). For $e=0$ and thus in the absence of a good fiber inner product, it is not clear how to prove any estimates for $P_h$. Nonetheless, we continue our discussion for now with $\la\cd,\zeta\ra$ in place of $\ell_e(\zeta)$, and thus with
\[
  \tilde p=G-i\la\cd,\cdot\ra
\]
in place of $p_e$; this is the semiclassical principal symbol of $\tilde P_h:=h^2\Box_g-i h\frac{1}{i}\nabla_{\cd^\sharp}$, $\cd^\sharp:=g^{-1}(\cd,\cdot)$.\footnote{On Minkowski space $(M^\circ,g)=(\R^4,-\dd t^2+\dd x^2)$, with $\cd=\dd t$, this is the (strongly) damped wave operator $(h\pa_t)^2-(h\pa_x)^2+h\pa_t$.}

Note that for timelike $\cd$, the symbol $\tilde p$ is elliptic at nonzero frequencies $\zeta$; it is characteristic only at the zero section and at infinite frequencies (see \cite[Chapter~6.5, Definition~6.52]{HintzMicro}) on the dual light cone $G^{-1}(0)$.

{\bf Propagation at infinite lightlike momenta.} Consider first infinite frequencies in $G^{-1}(0)$: phase space control (i.e., $L^2$-bounds on microlocalizations) of solutions of $\tilde P_h\omega=f$ there is provided by real principal type propagation estimates in the spirit of \cite{HormanderEnseignement,DuistermaatHormanderFIO2}, but with the term $-i\la\cd,\cdot\ra$ of $\tilde p$ acting as a damping term (a weak form of complex absorption) when propagating in the future direction. Due to the strength of this damping, very little information is needed on the global null-geodesic dynamics of $(M^\circ,g)$ to obtain global (but microlocalized near $G^{-1}(0)$) such propagation estimates: trapping, the event horizon, and the ergoregion of Kerr do not play any role here. See Proposition~\ref{PropEEInfty}.

{\bf Propagation along the zero section.} Microlocal control near the zero section is considerably more delicate to obtain. Since $G(\zeta)$ vanishes quadratically there, the main term of $\tilde p$ there is $-i\la\cd,\cdot\ra$, and that of $\tilde P_h$ is $-h\nabla_{\cd^\sharp}$. That is, we must study a transport operator along the vector field $-\cd^\sharp$, which will be chosen to be future timelike. Over precompact subsets of $M^\circ$, this is akin to real principal type propagation.

Issues arise once one needs to prove \emph{uniform} (i.e., as $|(t,x)|\to\infty$) estimates. Following the tradition of geometric singular analysis, we shall phrase uniform estimates as \emph{local} estimates on a \emph{compactification} of the spacetime manifold. Note that if we take $\cd=r^{-1}(\dd t-v\,\dd r)$ with $v\in(0,1)$ (as needed for arranging~\eqref{EqIA0Roots}), then on Minkowski space we have $-r^2\cd^\sharp=r(\pa_t+v\pa_r)$. Radially compactifying the manifold $\R^4$ (see \cite[Chapter~6.5]{HintzMicro}) underlying Minkowski space, this has a sink at the endpoint $\{\frac{r}{t}=v,\ r^{-1}=0\}$ at infinity of the timelike curve $r=v t$.

However, on the radial compactification $\ol{\R^4}$, the coefficients of the Kerr metric, and thus of $P_h$, are singular near the ``north pole'' (where local coordinates are $\frac{1}{t}$, $\frac{x}{t}$); we must thus pass to a resolved space, on which the spatial coordinate $x$ itself is a local coordinate, by blowing up the north pole. The vector field $-r^2\cd^\sharp$ then has a saddle point at the corner between the regimes ``$r$ bounded, $t=\infty$'' and ``$\frac{r}{t}>0$, $r=\infty$,'' namely where $\frac{1}{r}=0$ and $\frac{r}{t}=0$. When propagating $L^2$-estimates through this saddle point, we must work with a weight $\alpha_\sface$ at $\sface$ (as indicated in Figure~\ref{FigIBlowup}) that is less than $\alpha_\cK$ (up to a constant shift), cf.\ \eqref{EqIAhWeights} and Proposition~\ref{PropEB}.\footnote{If we took $v<0$, so $-r^2\cd^\sharp$ would be inward pointing, one would need to assume $\alpha_\cK<\alpha_\sface+{\rm const.}$, which would be incompatible with our needs, as discussed after~\eqref{EqIAhWeights}.}

\begin{figure}[!ht]
\centering
\includegraphics{FigIBlowup}
\caption{The compactified Kerr spacetime manifold (here only the subset where $t\geq 0$, and projected to the $(t,r)$-plane) and some local coordinates. The boundary $r=\bhm$ is a spacelike hypersurface inside of the black hole. The flow of the vector field $-\cd^\sharp$ is indicated in green, and its critical sets are indicated as thick dots.}
\label{FigIBlowup}
\end{figure}

The requirement that $-\cd^\sharp$ be future timelike also for bounded $r$ means that it must be \emph{inward} pointing at the event horizon of the Kerr black hole. Since $\pa_t+v\pa_r$ is \emph{outward} pointing at large $r$, this forces $-\cd^\sharp$ to have yet another critical set; we design $\cd$ such that this critical set is given by $\{r=r_0,\ t=\infty\}$ for a large radius $r_0$, and $-\cd^\sharp$ is essentially given by $\pa_t+(\frac{r}{r_0}-1)\pa_r$ nearby. See Figure~\ref{FigIBlowup}.

An approximate normal form for $-r^2\cd^\sharp$ near the sink $\{\frac{r}{t}=v,\ r^{-1}=0\}$ is $-\rho\pa_\rho-y\pa_y$ where $\rho=r^{-1}$ and $y=\frac{r}{t}-v$. To determine the weight (power of $\rho$) of the weighted $L^2$-space in which one can uniformly bound solutions of $h\nabla_{\cd^\sharp}\omega=f$ (the proxy for $\tilde P_h\omega=f$ near the zero section), subprincipal terms matter: the behavior of solutions of $h(\nabla_{\cd^\sharp}+r^{-2}E)\omega=f$ depends on (the eigenvalues of) the subprincipal term $E$ of the operator $L_1$ (which thus far we had dropped). Taking into account this subprincipal term for the actual operator $P_h$ of interest yields the upper bound $\alpha_\sface<-\frac12$ in~\eqref{EqIAhWeights}, cf.\ Lemma~\ref{LemmaMISink} and Proposition~\ref{PropES}.

The most delicate place is the saddle point at $\{r=r_0,\ t=\infty\}$. A rough model for the operator $P_h$ nearby (and near the zero section) is the transport operator $\pa_t+(\frac{r}{r_0}-1)\pa_r+E$ for some $0$-th order term $E$. Since for present objectives we must work on spaces encoding decay in time, it is crucial that $E$ only have positive eigenvalues, for otherwise this transport operator would yield exponentially growing solutions. Performing the relevant subprincipal symbol calculation for the actual operator $P_h$, one finds that this positive definiteness holds only in a neighborhood $|r-r_0|\leq \frac{6}{5}r_0\sqrt{e}/v$ that \emph{shrinks} as $e\to 0$ (Lemma~\ref{LemmaMSSubpr}). But for any positive $e>0$, the principal symbol of $\ell_e$ in~\eqref{EqIAhSymb} is no longer cleanly related to $\cd^\sharp$, and thus the transport interpretation breaks down. We resolve this tension by showing that outside of a $1\cdot r_0\sqrt{e}/v$-neighborhood of $r=r_0$, the transport interpretation is valid (cf.\ Lemma~\ref{LemmaMSRad}); the fact that $\frac65>1$ is thus crucial in our proof.

\begin{rmk}[Comparison with constraint damping in the KdS setting]
\label{RmkICompKdS}
  The dynamics of the flow of $-\fc^\sharp$ here is more ornate than in~\cite{HintzVasyKdSStability,HintzPetersenVasyKdS}; this is also due to the presence of 2 asymptotic regimes\footnote{The number of asymptotic regimes one needs to distinguish depends on the problem at hand. Unlike in \cite{HintzKerrStab}, we do not need to separate spacelike or null infinity here.} in the Kerr case (indicated by the boundary hypersurfaces $\sface$ and $\cK^+$ in Figure~\ref{FigIBlowup}) compared to only 1 regime ($t\to\infty$) in the KdS case. Furthermore, only in the present paper is it necessary to keep track of the precise sizes of the regions of subprincipal symbol positivity and the region where the transport interpretation is valid.
\end{rmk}

\begin{rmk}[Function spaces]
\label{RmkIFn}
  Our analysis will take place on weighted semiclassical 3b-Sobolev spaces; these control distributions in the weighted $L^2$-space~\eqref{EqIAhL2} and also their derivatives along $h r D_t$, $h r D_r$, $h\slnabla$. These spaces correctly capture three features of the problem: the semiclassical parameter $h$; the approximate homogeneity with respect to spacetime scaling in $r\gtrsim t$; and the translation-invariance in $t$ when $r$ is bounded. Such Sobolev spaces, albeit without a semiclassical parameter, were introduced in \cite{Hintz3b} and subsequently used in the analysis of linear waves on (dynamical) asymptotically flat spacetimes in \cite{HintzNonstat}. Unlike in \cite{HintzNonstat,HintzVasyScrieb}, we do not need to pay any attention to null infinity here due to the strong damping at infinite momenta; see Remark~\ref{RmkERThres}.
\end{rmk}

\subsection{Outline}
\label{SsIO}

The plan of the paper is as follows.
\begin{itemize}
\item In~\S\ref{SK}, we define the compactified spacetime manifold $M$ on which our uniform analysis will take place, and explain how the Kerr metric behaves on it. We moreover show that geometric operators associated with the Kerr metric are weighted 3b-operators on $M$.
\item In~\S\ref{SC}, we prove general results on the principal and subprincipal symbol of the constraint propagation wave operator $h^2\Box_g^{\cC_h}$ which apply on any spacetime and are thus of independent interest.
\item Computations of subprincipal symbols at the critical sets of the constraint damping vector field $-\cd^\sharp$ are presented in~\S\ref{SM}. In the same section, we also compute the indicial roots of the zero energy operator of $\Box_g^{\cC_h}$ (Theorem~\ref{ThmM0}); this is a crucial ingredient in the proof of enhanced mode stability at zero energy.
\item The technical heart of the paper is~\S\ref{SE} where we give proofs for all microlocal propagation estimates (in particular, at infinite lightlike momenta and near zero momenta) mentioned in~\S\ref{SssIAh}, in addition to semiclassical energy estimates to deal with spacelike boundaries (namely, the initial hypersurface $\{t=0\}$ as well as $\{r=\bhm\}$ inside the black hole), and combine them to prove the solvability of $\Box_g^{\cC_h}$ on polynomially weighted spaces (Theorem~\ref{ThmET}).
\item The proof of Theorem~\ref{ThmIRough} is now a simple consequence, as shown in~\S\ref{ST}.
\item Appendix~\ref{SM0} contains the proof of Theorem~\ref{ThmM0}.
\item Appendix~\ref{SMi} proves a mode stability type result (Theorem~\ref{ThmMiInv}) for an operator arising in the low energy spectral analysis for the linearized gauge-fixed Einstein operator on Kerr. This is not needed for the proof of the main theorem of the present paper, but naturally fits here since the method of proof is closely related to that of Theorem~\ref{ThmIRough}. (It plays an important, albeit technical, role in \cite[\S\S{7--8}]{HintzKerrStab}.)
\end{itemize}

\subsection*{Acknowledgments}

I gratefully acknowledge support from the NSF grants DMS-1955614 and DMS-2554160. Part of this research was carried out during the periods I was a Clay Research Fellow and Sloan Research Fellow; I thank the Clay Mathematics Institute and the Sloan Foundation for their support. An AI tool (ChatGPT) was used in the proofreading of the current (second) version of the manuscript, but the work and writing are the author's alone.

\subsection*{Data availability statement}

Data sharing is not applicable to this article as no datasets were generated or analysed during the current study.

\section{Spacetime compactification and 3b-analysis}
\label{SK}

\subsection{Manifolds, scattering structures, and the Kerr metric}
\label{SsKMfd}

We write the standard coordinates on $\R^4=\R\times\R^3$ as
\[
  z=(t,x)=(t,x^1,x^2,x^3)=(z^0,z^1,z^2,z^3).
\]
We denote polar coordinates on the ``spatial manifold'' $\R^3$ by
\[
  r:=|x|,\ \omega:=\frac{x}{|x|};\quad \rho:=r^{-1}.
\]
We denote the radial compactification of $\R^4$ by
\[
  \ol{\R^4} := \Bigl( \R^4 \sqcup \bigl( [0,\infty)\times\Sph^3 \bigr) \Bigr) / \sim,\quad 0\neq z=R\zeta \sim (R^{-1},\zeta),
\]
where $R=\sqrt{t^2+|x|^2}\geq 0$ and $\zeta=\frac{(t,x)}{R}\in\Sph^3$ denote polar coordinates. Convenient local coordinates on $\ol{\R^4}$ near its ``north pole'' are the projective coordinates
\begin{equation}
\label{EqKCoord}
  T := \frac{1}{t}\geq 0,\quad
  \hat x := \frac{x}{t}\in\R^3.
\end{equation}
We write $\fk^+$ for the north pole, given in these coordinates by $(T,\hat x)=(0,0)$, and $\fk^-=-\fk^+$ for the south pole.

\begin{definition}[Spacetime manifold]
\label{DefKMfd}
  Fix $\bhm>0$. Define $M_0:=[\ol{\R^4};\fk^+,\fk^-]$ (real blow-up). Denote the blow-down map by $\upbeta\colon M_0\to\ol{\R^4}$. Then we define the manifold with corners
  \[
    M = \cl_{M_0}\biggl( M_0 \setminus \cl_{M_0}\Bigl( \{ r<\bhm\} \cup \Bigl\{ t < -\frac12 r \Bigr\} \Bigr)\biggr),
  \]
  where ``$\cl_{M_0}$'' denotes the closure of a subset of $M_0$ inside of $M_0$. We define the following submanifolds of $M_0$:
  \begin{enumerate}
  \item $\cK^+:=M\cap\upbeta^{-1}(\fk^+)$ is the \emph{Kerr face};
  \item $\sface:=\pa M\cap(\pa M_0 \setminus (\cK^+)^\circ)$ is the \emph{side face};
  \item $X:=\cl_M(\{t=0\})$ is the \emph{initial (Cauchy) hypersurface};
  \item $\Sigma^\sharp:=\cl_M(\{r=\bhm\})$ is the \emph{final interior hypersurface}.
  \end{enumerate}
  We denote by $\rho_\cK$ and $\rho_\sface\in\CI(M)$ defining functions of $\cK^+$ and $\sface$, respectively.\footnote{That is, $\rho_\cK\geq 0$, $\cK^+=\rho_\cK^{-1}(0)$, and $\dd\rho_\cK\neq 0$ on $\cK^+$; similarly for $\rho_\sface$ and $\sface$.}
\end{definition}

We can construct $M$ explicitly as the union of the following charts.
\begin{enumerate}
\item $[\bhm,\infty)_r\times[-\frac12,\infty)_\tau\times\Sph^2_\omega$, identified with a subset of $\R^4$ via $(t,r,\omega)=(r\tau,r,\omega)$. (Thus $\tau=\frac{t}{r}$.) This covers all of $M\setminus(\cK^+\cup\sface)$.
\item $[0,\frac{1}{\bhm}]_{\rho_\sface} \times [0,2]_{\rho_{\cK,1}} \times \Sph^2_\omega$, with $(t,r,\omega)=(\frac{1}{\rho_\sface\rho_{\cK,1}},\frac{1}{\rho_\sface},\omega)$ on $\{\rho_\sface,\rho_{\cK,1}\neq 0\}$. (Thus $\rho_\sface=\frac{1}{r}$ and $\rho_{\cK,1}=\frac{r}{t}$.) This covers a neighborhood of $\cK^+$.
\item $[0,\frac{1}{\bhm}]_{\rho_\sface} \times [-\frac12,\infty)_\tau\times\Sph^2_\omega$, with $(t,r,\omega)=(\frac{\tau}{\rho_\sface},\frac{1}{\rho_\sface},\omega)$. (Thus $\tau=\frac{1}{\rho_{\cK,1}}=\frac{t}{r}$ again.) This covers a neighborhood of $\sface^\circ\cap M$.
\end{enumerate}
(Globally, $M=[\bhm,\infty]_r\times[-\frac12,\infty]_\tau\times\Sph^2_\omega$.) See Figure~\ref{FigKMfd}. In particular, we have $\cK^+=[0,\frac{1}{\bhm}]_{\rho_\sface}\times\Sph^2_\omega=[\bhm,\infty]_r\times\Sph^2_\omega\cong X$; here $X$ is the closure of $\{x\in\R^3\colon|x|\geq \bhm\}$ inside of the radial compactification $\ol{\R^3}$. We only blow up $\fk^-$ in the definition of $M_0$ for reasons of aesthetics and later convenience.

\begin{figure}[!ht]
\centering
\includegraphics{FigKMfd}
\caption{A cross section of $M$ for a fixed value of the angular coordinate $\omega\in\Sph^2$. The starting points of the arrows labeled ``$\rho_\cK$'' lie in the hypersurface $\cK^+$ where $\rho_\cK$ vanishes; similarly for arrows labeled ``$\rho_\sface$.''}
\label{FigKMfd}
\end{figure}

The blow-down map $\upbeta\colon M\to\ol{\R^4}$ is the smooth extension of the map $M^\circ\to\R^4$ that assigns to each point its Cartesian coordinates $(t,x)=(t,r\omega)$. (For example, in the coordinates $\rho_\sface,\rho_{\cK,1},\omega$ and $T,\hat x$ from~\eqref{EqKCoord}, this map takes $(\rho_\sface,\rho_{\cK,1},\omega)\mapsto(T,\hat x)=(\rho_\sface\rho_{\cK,1},\rho_{\cK,1}\omega)$.) Concrete choices of boundary defining functions are $\rho_\cK=\frac{r}{t+r}$ and $\rho_\sface=\frac{1}{r}$. In the region $t>\frac12 r$, we shall work with the local defining functions
\begin{equation}
\label{EqKbdf}
  \rho_\cK = \frac{r}{t},\quad
  \rho_\sface = \frac{1}{r}.
\end{equation}
(Note that $\rho_\cK$ is a smooth positive multiple of $\frac{r}{t+r}$ in this region.)

We will effect the uniform description of tensors on $\R^4\cap\{r>\bhm,\ t>-\frac12 r\}$ using bundles defined over $M_0$ and $M$.

\begin{definition}[Scattering bundles over $\ol{\R^4}$]
\label{DefKTsc}
  We write $\Tsc\ol{\R^4}$ for the \emph{scattering tangent bundle} of $\ol{\R^4}$: this is the trivial bundle $\ol{\R^4}\times\R^4$ which over the interior $\R^4$ of $\ol{\R^4}$ we identify with $T\R^4$ by identifying $(p,v)\in\Tsc_{\R^4}\ol{\R^4}$, $v=(v^0,v^1,v^2,v^3)$, with the tangent vector $v^\mu\pa_{z^\mu}\in T_p\R^4$. The \emph{scattering cotangent bundle} $\Tsc^*\ol{\R^4}$ is its dual bundle. Sections of $\Tsc^*\ol{\R^4}$ are called \emph{scattering 1-forms}. The space of \emph{scattering vector fields} is defined as
  \[
    \Vsc(\ol{\R^4}) := \CI(\ol{\R^4};\Tsc\ol{\R^4}).
  \]
  We write
  \begin{equation}
  \label{EqKscT}
    \cT := \upbeta^*\Tsc\ol{\R^4},\quad
    \cT^* := \upbeta^*\Tsc^*\ol{\R^4}
  \end{equation}
  for the pullbacks of these scattering bundles to $M_0$ (and thus, by restriction, to $M$). Following \cite{VasyThreeBody}, they are called the \emph{three-body scattering} (or \emph{3sc-}) \emph{(co)tangent bundles} over $M_0$.
\end{definition}

For example, a smooth 3sc-vector field in the region $t>\frac12 r$ is a linear combination of the coordinate vector fields $\pa_t,\pa_{x^1},\pa_{x^2},\pa_{x^3}$ with coefficients which are smooth in $\rho_\cK=\frac{r}{t}$, $\rho_\sface=\frac{1}{r}$, and $\omega\in\Sph^2$, including down to $\rho_\cK=0$ and $\rho_\sface=0$.

The terminology in Definition~\usref{DefKMfd} arises from putting the Kerr metric $g_{\bhm,a}$ \cite{KerrKerr,BoyerLindquistKerr} with parameters $\bhm$ (black hole mass) and $a$ (specific angular momentum), in the subextremal range $a\in(-\bhm,\bhm)$, on $M^\circ$. Concretely, in Boyer--Lindquist coordinates $(\ft,r,\theta,\varphi)$ with $\ft\in\R$, $r>r_+:=\bhm+\sqrt{\bhm^2-a^2}$, and $(\theta,\varphi)\in(0,\pi)\times(0,2\pi)$, this is given by
\begin{equation}
\label{EqKBL}
\begin{split}
  g_{\bhm,a} &= -\frac{\Delta}{\varrho^2}(\dd\ft-a\,\sin^2\theta\,\dd\varphi)^2 + \varrho^2\Bigl(\frac{\dd r^2}{\Delta} + \dd\theta^2\Bigr) + \frac{\sin^2\theta}{\varrho^2}\bigl(a\,\dd\ft-(r^2+a^2)\,\dd\varphi\bigr)^2, \\
  g_{\bhm,a}^{-1} &= \varrho^{-2}\Bigl(-\frac{1}{\Delta}\bigl((r^2+a^2)\pa_\ft+a\pa_\varphi\bigr)^2 + \Delta\pa_r^2 + \pa_\theta^2 + (\sin\theta)^{-2}\bigl(\pa_\varphi+a\sin^2\theta\,\pa_\ft\bigr)^2\Bigr), \\
  &\qquad \varrho^2 := r^2+a^2\cos^2\theta,\quad
          \Delta := r^2-2\bhm r+a^2.
\end{split}
\end{equation}
The coordinate singularity at the root $r=r_+$ of $\Delta$ is resolved by passing to Kerr-star type coordinates. Thus, fixing a cutoff $\chi\in\CIc([0,4\bhm))$, identically $1$ on $[0,3\bhm]$, and fixing a radius $r_0>4\bhm$, we define $t_1$, $\phi_1$ by
\[
  \ft = t_1 - \int_{r_0}^r \frac{r'{}^2+a^2}{\Delta(r')}\chi(r')\,\dd r',\quad
  \varphi = \phi_1 - \int_{r_0}^r \frac{a}{\Delta(r')}\chi(r')\,\dd r'.
\]
In these coordinates, one computes
\begin{align}
\label{EqKMetStar}
\begin{split}
  g_{\bhm,a} &= -\frac{\Delta}{\varrho^2}\bigl(\dd t_1-a\sin^2\theta\,\dd\phi_1\bigr)^2 + 2\chi(\dd t_1-a\sin^2\theta\,\dd\phi_1)\otimes_s\dd r \\
    &\qquad + \frac{(1-\chi^2)\varrho^2}{\Delta}\dd r^2 + \varrho^2\,\dd\theta^2 + \frac{\sin^2\theta}{\varrho^2}\bigl(a\,\dd t_1-(r^2+a^2)\,\dd\phi_1\bigr)^2, \\
  g^{-1}_{\bhm,a} &= \varrho^{-2}\Bigl(-\frac{1-\chi^2}{\Delta}\bigl((r^2+a^2)\pa_{t_1}+a\pa_{\phi_1}\bigr)^2 + 2\chi\bigl((r^2+a^2)\pa_{t_1}+a\pa_{\phi_1}\bigr)\otimes_s\pa_r \\
    &\qquad\qquad + \Delta\pa_r^2 + \pa_\theta^2 + (\sin\theta)^{-2}\bigl(\pa_{\phi_1}+a\sin^2\theta\,\pa_{t_1}\bigr)^2\Bigr).
\end{split}
\end{align}
These tensors are smooth down to $r=r_+$ and can be analytically extended to the region $r>r_-=\bhm-\sqrt{\bhm^2-a^2}$, where $r_-<r_+$ is the other root of $\Delta$. For subextremal $a$, we have $\bhm\in(r_-,r_+)$, which is a partial motivation for the particular choices in Definition~\ref{DefKMfd}. Observe moreover that for large $\frac{r}{\bhm}$, the 1-form $\dd t_1=\dd t$ is (past) timelike for $g^{-1}_{\bhm,a}$; one can then easily construct a smooth function $F\in\CIc([\bhm,\infty))$ such that $t:=t_1-F(r)$ has timelike differential for all $r\geq\bhm$. (That is, the level sets of $t_1-F(r)$ are smooth spacelike hypersurfaces which are transversal to the event horizon.) Via pullback along the diffeomorphism
\begin{equation}
\label{EqKMetCoord}
  (t,r,\theta,\phi) \mapsto (t_1,r,\theta,\phi_1) = (t+F(r),r,\theta,\phi),
\end{equation}
we shall then regard $g_{\bhm,a}$ as a smooth metric on $\R_t\times[\bhm,\infty)_r\times\Sph^2_{\theta,\phi}$ and also, by restriction, on $M^\circ$. We call the null hypersurface $\cH^+=r^{-1}(r_+)$ the \emph{event horizon} of the black hole. We equip $(M,g_{\bhm,a})$ with the time orientation for which $\pa_t$ is future timelike near $\sface$, or equivalently $\dd t$ is past timelike everywhere.

\begin{lemma}[The Kerr metric as a 3sc-metric on $M$]
\label{LemmaK3sc}
  The Kerr metric $g_{\bhm,a}$ is a nondegenerate Lorentzian 3sc-metric on $M$, that is, $g_{\bhm,a}\in\CI(M;S^2\cT^*)$ has signature $(-,+,+,+)$, and $g_{\bhm,a}^{-1}\in\CI(M;S^2\cT)$.
\end{lemma}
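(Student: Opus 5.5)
We must show that the Cartesian components $g_{\mu\nu}=g(\pa_{z^\mu},\pa_{z^\nu})$ and $g^{\mu\nu}$, with $z=(t,x)$ the Cartesian coordinates attached to the final time function $t$ and $x=r\omega$, are smooth functions on $M$, i.e.\ smooth in $(\rho_\sface,\rho_\cK,\omega)$ down to $\sface$ and $\cK^+$. Since $\cT^*$ is trivialized by $\dd z^\mu$, this is exactly membership in $\CI(M;S^2\cT^*)$ and $\CI(M;S^2\cT)$. The signature is automatic: in $M^\circ$ it is the Lorentzian signature of $g_{\bhm,a}$. At the boundary it suffices that $g^{-1}$ is also smooth, because $g\cdot g^{-1}=\Id$ extends by continuity and so $g$ stays nondegenerate; the signature is then locally constant.

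Over compact subsets of $\{r<\infty\}$ the components are functions of $(r,\omega)$ only, by stationarity in $t$. The map $(t,r,\theta,\phi)\mapsto(t,x)$ is a diffeomorphism for $r\geq\bhm$ (away from the polar coordinate singularities, which are harmless since $g$ is smooth in Kerr-star coordinates, including on the axis). Hence these components are smooth functions of $x\in\{|x|\geq\bhm\}$ that do not depend on $t$. Such functions are smooth on $M$ near $\cK^+$ away from $\sface$, since there $(x, \rho_\cK)$ with $\rho_\cK=r/t$ are local coordinates. So the whole matter reduces to the behavior as $r\to\infty$: we must show the coefficients are smooth functions of $(\rho_\sface,\omega)$ down to $\rho_\sface=0$, i.e.\ smooth on the radial compactification $\ol{\R^3}$. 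Time-independent functions smooth on $\ol{\R^3}$ pull back to smooth functions on all of $M$ near $\sface\cup\cK^+$, because $\rho_\sface=1/r$ and $\omega$ are part of each boundary coordinate system listed after Definition~\ref{DefKMfd}.

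For large $r$ we have $\chi=0$ and $F=0$, so $t=\ft$ and $\phi=\varphi$, and the metric is in Boyer--Lindquist form~\eqref{EqKBL}. I will write $g$ as $\ul g+\tilde g$, where $\ul g=-\dd t^2+\dd r^2+r^2\slg$ is the Minkowski metric, and check that each coefficient of $\tilde g$ relative to the frame $\dd t,\dd r,r\,\dd\theta,r\sin\theta\,\dd\phi$ is $r^{-1}$ times a smooth function of $(\rho_\sface,\theta)$. Two facts make this work:
\begin{itemize}
\item $\varrho^2/r^2$ and $\Delta/r^2$ are polynomials in $\rho_\sface$, with $\cos^2\theta$ coefficients.
\item $a\sin^2\theta\,\dd\phi = a\sin\theta\cdot\rho_\sface\cdot(r\sin\theta\,\dd\phi)$.
\end{itemize}
The Minkowski part has constant Cartesian components. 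Converting the orthonormal polar frame to $\dd x^i$ introduces only smooth functions of $\omega$. Together with the factors of $r$ above, this shows $\tilde g_{\mu\nu}\in\rho_\sface\CI(\ol{\R^3})$. The expression~\eqref{EqKBL} for $g^{-1}$ is handled the same way, using the frame $\pa_t,\pa_r,r^{-1}\pa_\theta,(r\sin\theta)^{-1}\pa_\phi$.

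The main obstacle is smoothness across the axis $\theta\in\{0,\pi\}$ in combination with the $\rho_\sface$-expansion. Functions like $\cos^2\theta=(x^3/r)^2$ are smooth on $\Sph^2$, but terms such as $\sin^2\theta\,\dd\phi$ must be rewritten as $x^1\dd x^2-x^2\dd x^1$ divided by $r^2$ to see smoothness in Cartesian form. I will carry out this rewriting explicitly: $r^2\sin^2\theta\,\dd\phi=x^1\dd x^2-x^2\dd x^1$, and $r\,\dd r=x^i\dd x^i$, both with smooth Cartesian coefficients. Once this is done, every term is a polynomial in $x^i/r$, $\rho_\sface$, and $\varrho^{-2}r^2=(1+a^2\rho_\sface^2\cos^2\theta)^{-1}$, hence smooth. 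The inverse metric admits the same treatment. This completes the argument.
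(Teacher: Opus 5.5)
Your proposal is correct and takes essentially the same route as the paper. The paper likewise uses stationarity and reads off from the explicit Kerr-star expressions (which are Boyer--Lindquist for large $r$) that the coefficients of $g_{\bhm,a}^{\pm1}$ in a scattering frame are smooth functions of $(r^{-1},\omega)$, and it dismisses the axis by passing to smooth coordinates on $\Sph^2$.

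One correction to your axis step: the two rewritings you list, $r^2\sin^2\theta\,\dd\phi=x^1\dd x^2-x^2\dd x^1$ and $r\,\dd r=x^i\dd x^i$, do not suffice. The terms $\varrho^2\,\dd\theta^2$ and $\varrho^{-2}(r^2+a^2)^2\sin^2\theta\,\dd\phi^2$ are each singular on the axis. You must first regroup them as $\varrho^2\slg+\varrho^{-2}a^2(r^2+a^2+\varrho^2)(\sin^2\theta\,\dd\phi)^2$ and then use $r^2\slg=|\dd x|^2-\dd r^2$. The inverse metric needs the analogous regrouping $\pa_\theta^2+\sin^{-2}\theta\,\pa_\phi^2=\slg^{-1}$.
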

\begin{proof}
  In the region of validity of polar coordinates on $\R^3$, a basis of smooth scattering 1-forms on $\ol{\R^4}$ is given by $\dd t$, $\dd r$, $r\,\dd\theta$, $r\,\dd\phi$. It thus suffices to observe from~\eqref{EqKMetStar} that the coefficients of $g_{\bhm,a}^{-1}$ in the frame $\dd t_1$, $\dd r$, $r\,\dd\theta$, $r\,\dd\phi_1$ are smooth functions of $(r^{-1},\theta,\phi_1)$ which are independent of $t_1$, and hence they are smooth functions on $M$. Similarly, the coefficients of $g_{\bhm,a}$ in the frame $\pa_{t_1}$, $\pa_r$, $r^{-1}\pa_\theta$, $r^{-1}\pa_{\phi_1}$ are smooth. Smoothness at $\theta=0,\pi$ can be verified by working with smooth coordinates on $\Sph^2$ there.
\end{proof}

The Minkowski metric, defined by
\[
  \ubar g:=-\dd t^2+\dd x^2=-\dd t^2+\dd r^2+r^2(\dd\theta^2+\sin^2\theta\,\dd\phi^2),
\]
is an important point of comparison. We note that $\ubar g$ is a nondegenerate Lorentzian scattering metric on $\ol{\R^4}$, i.e., $\ubar g\in\CI(\ol{\R^4};S^2\,\Tsc^*\ol{\R^4})$ and $\ubar g^{-1}\in\CI(\ol{\R^4};S^2\,\Tsc\ol{\R^4})$; its pullback to $M_0$ is thus a nondegenerate 3sc-metric on $M_0$ (and by restriction also on $M$), i.e., $\ubar g\in\CI(M;S^2\cT^*)$ has signature $(-,+,+,+)$, and $\ubar g^{-1}\in\CI(M;S^2\cT)$.

\begin{lemma}[Comparison of Kerr and Minkowski metrics]
\label{LemmaKMink}
  On $M$, we have
  \[
    g_{\bhm,a}-\ubar g \in \rho_\sface\CI(M;S^2\cT^*),\quad
    g_{\bhm,a}^{-1}-\ubar g^{-1} \in \rho_\sface\CI(M;S^2\cT).
  \]
\end{lemma}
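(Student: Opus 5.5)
Since the Kerr metric is smooth and $t$-independent on compact subsets of $[\bhm,\infty)_r$, while $\rho_\sface=r^{-1}$ is bounded below by a positive constant there, any smooth stationary difference of 3sc-tensors restricted to $\{r\leq R\}$ is automatically in $\rho_\sface\CI$. The content of the lemma is therefore the behavior for large $r$. I would choose $R>4\bhm$ so large that $\chi(r)=0$ and $F(r)=0$ for $r\geq R$. In $\{r\geq R\}$ we then have $t=t_1=\ft$ and $\phi_1=\varphi$, so the Kerr metric is given by the Boyer--Lindquist expression~\eqref{EqKBL}. For the region $r\leq R$, I would simply appeal to Lemma~\ref{LemmaK3sc} together with the analogous statement for $\ubar g$: both metrics lie in $\CI(M;S^2\cT^*)$, so their difference does too, and multiplying by $\rho_\sface^{-1}\in\CI$ on the support of a cutoff to $\{r\leq 2R\}$ handles that part.

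For $r\geq R$, I would expand the coefficients of $g_{\bhm,a}$ in the frame $\dd t,\dd r,r\,\dd\theta,r\sin\theta\,\dd\varphi$, which is a frame of $\cT^*$ away from the poles. Using $\varrho^2=r^2(1+a^2r^{-2}\cos^2\theta)$ and $\Delta=r^2(1-2\bhm r^{-1}+a^2r^{-2})$, every coefficient becomes a smooth function of $(r^{-1},\theta)$. I would then compare each coefficient with the corresponding Minkowski coefficient $(-1,1,1,1)$, with vanishing off-diagonal entries:
\begin{itemize}
\item the $\dd t^2$ coefficient is $-\Delta/\varrho^2+a^2\sin^2\theta/\varrho^2=-1+\cO(r^{-1})$;
\item the $\dd r^2$ coefficient is $\varrho^2/\Delta=1+\cO(r^{-1})$, and the $(r\,\dd\theta)^2$ coefficient is $\varrho^2/r^2=1+\cO(r^{-2})$;
\item the cross term $\dd t\otimes_s(r\sin\theta\,\dd\varphi)$ has coefficient $\frac{2a\sin\theta}{r\varrho^2}\bigl(\Delta-(r^2+a^2)\bigr)=\cO(r^{-2})$;
\item the $(r\sin\theta\,\dd\varphi)^2$ coefficient is $\frac{(r^2+a^2)^2-a^2\Delta\sin^2\theta}{r^2\varrho^2}=1+\cO(r^{-2})$.
\end{itemize}
Each error is a smooth function of $r^{-1}=\rho_\sface$ and $\theta$, vanishing at $\rho_\sface=0$, hence lies in $\rho_\sface\CI$. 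Smoothness at the poles requires that the angular part assemble into smooth tensors on $\Sph^2$. I would check this by rewriting in Cartesian-type form, for instance writing $a^2\sin^2\theta\,\dd\varphi^2$-type terms via $x^1\dd x^2-x^2\dd x^1$, or by invoking the smoothness argument already used in Lemma~\ref{LemmaK3sc}.

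The inverse metric is handled the same way from~\eqref{EqKBL}, in the frame $\pa_t,\pa_r,r^{-1}\pa_\theta,(r\sin\theta)^{-1}\pa_\varphi$. Alternatively, I would use $g^{-1}-\ubar g^{-1}=-g^{-1}(g-\ubar g)\ubar g^{-1}$ together with Lemma~\ref{LemmaK3sc}, which gives the second claim directly from the first. This identity-based route is what I would actually use. The main obstacle is nothing deep: it is the bookkeeping at the poles $\theta=0,\pi$ and verifying that the $t$-independent coefficients, which are smooth functions of $(r^{-1},\omega)$, are smooth on all of $M$, including near $\cK^+$. The latter follows because $\rho_\sface$ and $\omega$ are smooth coordinates on the charts covering $\sface$ and $\cK^+$.
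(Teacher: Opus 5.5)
Your proposal is correct and follows the paper's proof: you reduce to large $r$, where $t=\ft$ and $\phi_1=\varphi$, and compare the Boyer--Lindquist coefficients with the Minkowski ones using $\varrho^2,\Delta\in r^2(1+\rho_\sface\CI)$. Two of your steps differ slightly from the paper, and both are fine shortcuts.
\begin{itemize}
\item For the inverse metric, you use $g^{-1}-\ubar g^{-1}=-g^{-1}(g-\ubar g)\ubar g^{-1}$ together with Lemma~\ref{LemmaK3sc}, rather than a second direct comparison with $g^{-1}$ in Boyer--Lindquist form.
\item The pole bookkeeping can be avoided entirely. Smoothness of $g-\ubar g$ is already given by Lemma~\ref{LemmaK3sc}, so you only need its vanishing at $\sface$. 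That is a closed condition and can be checked away from $\theta=0,\pi$.
\end{itemize}
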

\begin{proof}
  This is only nontrivial near $\sface$, where however it follows from the fact that $\ft=t$ there, and direct comparison of $\ubar g$, $\ubar g^{-1}$ with the expressions in~\eqref{EqKBL} using $\varrho^2,\Delta\in r^2(1+\rho_\sface\CI)$.
\end{proof}

Another choice of coordinates for large $\frac{r}{\bhm}$ is better adapted to the geometry of $g_{\bhm,a}$ near null infinity. Namely, in $r>2 r_+$ we introduce
\begin{equation}
\label{EqKtstar}
  t_* := \ft - \int_{2 r_+}^r \frac{r'{}^2+a^2}{\Delta(r')}\,\dd r' = t - r_* + c + \cO(r^{-1}),\qquad r_*:=r+2\bhm\log(r-2\bhm),
\end{equation}
where $c$ is an integration constant. Thus, when passing from $(\ft,r,\theta,\phi)$ coordinates to $(t_*,r,\theta,\phi)$, one needs to replace $\pa_r$ by $\pa_r-\frac{r^2+a^2}{\Delta}\pa_{t_*}$ in the expression~\eqref{EqKBL} for $g_{\bhm,a}^{-1}$, so
\begin{equation}
\label{EqKtstarMet}
\begin{split}
  g_{\bhm,a}^{-1} &= \varrho^{-2}\Bigl(-\frac{1}{\Delta}\bigl((r^2+a^2)\pa_{t_*}+a\,\pa_\varphi\bigr)^2 + \Delta\Bigl(\pa_r-\frac{r^2+a^2}{\Delta}\pa_{t_*}\Bigr)^2 \\
    &\quad\hspace{10em} + \pa_\theta^2 + (\sin\theta)^{-2}(\pa_\varphi+a\,\sin^2\theta\,\pa_{t_*})^2\Bigr).
\end{split}
\end{equation}
We compute the inner products of $\dd t_*$ with other scattering 1-forms:
\begin{equation}
\label{EqKtstarInner}
\begin{alignedat}{2}
  |\dd t_*|^2_{g_{\bhm,a}^{-1}}&=\frac{a^2\sin^2\theta}{\varrho^2}&&=\cO(r^{-2}), \\
  g_{\bhm,a}^{-1}(\dd t_*,\dd r)&=-\frac{r^2+a^2}{\varrho^2}&&=-1+\cO(r^{-2}), \\
  g_{\bhm,a}^{-1}(\dd t_*,r\,\dd\varphi)&=-\frac{2 a\bhm r^2}{\varrho^2\Delta}&&=\cO(r^{-2}).
\end{alignedat}
\end{equation}

Finally, for the purpose of solving initial value or forward problems for wave equations on $(M^\circ,g_{\bhm,a})$, we define the domain
\begin{equation}
\label{EqKDomain}
  \Omega := \cl_M\{t\geq 0\} = \{ \tau\geq 0 \},\quad \tau=\frac{t}{r},
\end{equation}
whose boundary hypersurfaces are thus $X$ and $\Sigma^\sharp\cap\pa\Omega$, in addition to the boundary hypersurfaces at infinity $\sface\cap\pa\Omega$ and $\cK^+$. The hypersurface $X$, resp.\ $\Sigma^\sharp$ is an initial, resp.\ final hypersurface in the sense that future timelike vector fields point into, resp.\ out of $\Omega$ at $X$, resp.\ $\Sigma^\sharp$. Initial value problems (possibly with nonzero forcing) in $\Omega$ with initial data at $X$ are well-posed. See Figure~\ref{FigKDomain}.

\begin{figure}[!ht]
\centering
\includegraphics{FigKDomain}
\caption{The domain $\Omega$ inside of the compactified spacetime manifold $M$.}
\label{FigKDomain}
\end{figure}

\subsection{3b-structures and geometric differential operators}
\label{SsK3b}

We record some general observations regarding geometric operators related to 3sc-metrics on $M$ or $M_0$, such as the Kerr metric $g_{\bhm,a}$ or the Minkowski metric $\ubar g$. The first step is to understand the behavior of the basic (3-body-)scattering vector fields on $M$. Near $\cK^+$ and using the coordinates $\rho_\cK=\frac{r}{t}$, $\rho_\sface=\frac{1}{r}$, $\omega\in\Sph^2$, we compute
\[
  \pa_t=-\rho_\cK^2\rho_\sface\pa_{\rho_\cK},\quad
  \pa_r=\rho_\sface(\rho_\cK\pa_{\rho_\cK}-\rho_\sface\pa_{\rho_\sface}),\quad
  r^{-1}\pa_\omega=\rho_\sface\pa_\omega,
\]
where $\pa_\omega$ is our schematic notation for a vector field on $\Sph^2$. We leave the analogous computation on $M\setminus\cK^+$ in the coordinates $\tau=\frac{t}{r}$, $\rho_\sface=\frac{1}{r}$, $\omega$ to the reader. Passing back to coordinate derivatives in Cartesian coordinates, we conclude that the vector fields $\rho_\sface^{-1}\pa_{z^\mu}=r\pa_{z^\mu}$ are smooth on $M$ and tangent to its boundary hypersurfaces at infinity (i.e., $\pa M\cap\pa M_0$). Following \cite{Hintz3b}, we introduce:

\begin{definition}[3b-structures]
\label{DefK3b}
  The space of \emph{3b-vector fields} $\Vtb(M)$ is the $\CI(M)$-span of the vector fields $r\pa_{z^\mu}$, $\mu=0,\ldots,3$ (or equivalently of $r\pa_t$, $r\pa_r$, $\pa_\omega$). The \emph{3b-tangent bundle} $\Ttb M\to M$ is defined to be the trivial bundle $M\times\R^4$ which over $M^\circ$ is identified with $T M^\circ$ as follows: for $p\in M^\circ$ and $v=(v^0,\ldots,v^3)\in\R^4$, we identify $v\in\Ttb_p M$ with $v^\mu\,r\pa_{z^\mu}\in T_p M$. The \emph{3b-cotangent bundle} $\Ttb^*M\to M$ is the dual bundle. Finally, we write $\Difftb^k(M)$ for the space of up to $k$-fold compositions of 3b-vector fields (with $0$-fold compositions being defined as multiplication operators by elements of $\CI(M)$), and
  \[
    \rho_\sface^{-l}\Difftb^k(M)=\{\rho_\sface^{-l}A\colon A\in\Difftb^k(M)\}.
  \]
  For smooth vector bundles $\cE,\cF\to M$, we write $\Difftb^k(M;\cE,\cF)$ for the space of operators which, in local trivializations of $\cE,\cF$, are given by matrices of elements of $\Difftb^k$; and we write $\Difftb^k(M;\cE)=\Difftb^k(M;\cE,\cE)$; similarly for weighted versions.
\end{definition}

\begin{rmk}[3b-covectors]
\label{RmkK3b3sc}
  A frame of $\Ttb^*M$ is given by the 3b-1-forms $\frac{\dd z^\mu}{r}\in\CI(M;\Ttb^*M)$, or equivalently by $\frac{\dd t}{r}$, $\frac{\dd r}{r}$, $\dd\omega$ (1-forms on $\Sph^2$). Multiplying these 1-forms by $r=\rho_\sface^{-1}$ gives a frame of $\cT^*$. We also remark that $\Vtb(M)$ is a Lie subalgebra of $\cV(M)=\CI(M;T M)$.
\end{rmk}

The above computations show that
\begin{equation}
\label{EqK3bVF}
  \pa_{z^\mu} \in \rho_\sface\Vtb(M).
\end{equation}
Consider now a non-degenerate 3sc-metric $g\in\CI(M;S^2\cT^*)$, so $g^{-1}\in\CI(M;S^2\cT)$; the components with respect to $\pa_{z^\mu}$, where $z=(t,x)$, then satisfy $g_{\mu\nu}=g(\pa_{z^\mu},\pa_{z^\nu})$, $g^{\mu\nu}=g^{-1}(\dd z^\mu,\dd z^\nu)\in\CI(M)$. The Christoffel symbols of $g$ thus satisfy
\begin{equation}
\label{EqK3bChristoffel}
  \Gamma(g)_{\mu\nu}^\kappa = \frac12 g^{\kappa\lambda}(\pa_\mu g_{\nu\lambda} + \pa_\nu g_{\mu\lambda} - \pa_\lambda g_{\mu\nu}) \in \rho_\sface\CI(M),
\end{equation}
and therefore, for a vector field $V=V^\mu\pa_\mu$,
\[
  V^\nu{}_{;\mu}=(\nabla_\mu V)^\nu = \pa_\mu V^\nu + \Gamma_{\mu\lambda}^\nu V^\lambda.
\]
It then follows from~\eqref{EqK3bVF} and~\eqref{EqK3bChristoffel} that the covariant derivative on vector fields defines an element
\[
  \nabla \in \rho_\sface\Difftb^1(M;\cT,\cT^*\otimes\cT);
\]
more generally, for any tensor bundle $\cE=\cT^{p,q}:=\cT^{\otimes p}\otimes(\cT^*)^{\otimes q}$, we have
\begin{equation}
\label{EqK3bNabla}
  \nabla \in \rho_\sface\Difftb^1(M;\cE,\cT^*\otimes\cE).
\end{equation}
Note that this is stronger than the more natural seeming membership in $\Difftsc^1(M;\cE,\cT^*\otimes\cE)$ (3-body-scattering operators). As a consequence, we obtain the following memberships for geometric operators associated with Lorentzian 3sc-metrics $g$ such as $g_{\bhm,a}$ and $\ubar g$:
\begin{equation}
\label{EqK3bMem}
\begin{split}
  \Box_g &\in \rho_\sface^2\Difftb^2(M;\cT^{p,q}), \\
  \delta_g^* &\in \rho_\sface\Difftb^1(M;\cT^*,S^2\cT^*), \\
  \delta_g &\in \rho_\sface\Difftb^1(M;S^2\cT^*,\cT^*);
\end{split}
\end{equation}
here $\Box_g=-\tr_g\nabla^2$ is the tensor wave operator on the tensor bundle $\cT^{p,q}$, further $(\delta_g^*\omega)_{\mu\nu}=\frac12(\omega_{\mu;\nu}+\omega_{\nu;\mu})$, and finally $(\delta_g h)_\mu=-h_{\mu\nu}{}^{;\nu}$.

The metrics $g_{\bhm,a},\ubar g$ of interest in this paper are stationary, i.e., time translations $(t,x)\mapsto(t+c,x)$, $c\in\R$, are isometries. Therefore, all associated geometric operators $P$ such as~\eqref{EqK3bMem} commute with $t$-translations; here we use that time translations induce vector bundle isomorphisms $\tau_c\colon\cT_{(t,x)}\to\cT_{(t+c,x)}$ which simply map $\pa_{z^\mu}|_{(t,x)}\mapsto\pa_{z^\mu}|_{(t+c,x)}$; similarly for tensor bundles. Note now that the projection map $(t,x)\mapsto x$ extends to a smooth surjective map
\[
  \pi\colon M\to X,
\]
which is a left inverse to the inclusion map $X\cong\{0\}\times X\hra M$. This follows easily from $X=[0,\frac{1}{\bhm}]_\rho\times\Sph^2$, so the map $M\to X$ takes the form $(r,\tau,\omega)\mapsto(r^{-1},\omega)$, resp.\ $(\rho_\sface,\rho_{\cK,1},\omega)\mapsto(\rho_\sface,\omega)$, resp. $(\rho_\sface,\tau,\omega)\mapsto(\rho_\sface,\omega)$ in the three coordinate systems on $M$ described after Definition~\ref{DefKMfd}. (See \cite[Lemma~3.4 and Definition~3.26]{HintzGlueLocI} for an earlier instance of this construction.) Defining the bundle
\[
  \cT_X := \cT|_X \to X,
\]
the map assigning to $v\in\cT_{(t,x)}$ the element $\tau_{-t}(v)\in\cT_{(0,x)}=(\pi^*\cT_X)_{(t,x)}$ therefore gives an isomorphism
\begin{equation}
\label{EqK3bBundleIso}
  \cT|_M \cong \pi^*\cT_X
\end{equation}
of vector bundles over $M$. In concrete terms, we simply have $\cT=M\times\R^4$ and $\cT_X=X\times\R^4$, and the isomorphism~\eqref{EqK3bBundleIso} is the identity map on the $\R^4$ factor.

\begin{definition}[Spectral family]
\label{DefK3bSpec}
  Let $\cE_X\to X$ be a smooth vector bundle, and denote by $\pi\colon M\to X$ the projection. Suppose $P\in\rho_\sface^{-l}\Difftb^m(M;\pi^*\cE_X)$ commutes with $t$-translations. Then the \emph{spectral family} of $P$ is given by $\hat P(\sigma)\in\Diff^m(X^\circ;\cE_X)$, $\sigma\in\C$, where
  \[
    \hat P(\sigma)u := \bigl(e^{i\sigma t} P ( e^{-i\sigma t}u )\bigr)\big|_{t=0}\,,\quad u\in\CIc(X^\circ;\cE_X).
  \]
\end{definition}

For example, the spectral family of $\Box_{\ubar g}=-D_t^2+\Delta$, $\Delta:=\sum_{j=1}^3 D_{x^j}^2$, is $\wh{\Box_{\ubar g}}(\sigma)=\Delta-\sigma^2$; this also holds for the tensor wave operator $\Box_{\ubar g}\in\rho_\sface^2\Difftb^2(M;\cT^{p,q})$, which acts component-wise as the scalar wave operator in the tensor bundle splitting induced by the coordinate vector fields $\pa_{z^\mu}$.

Following \cite[\S{3.2}]{Hintz3b}, we record a structural result about the zero energy operator. We first recall:

\begin{definition}[b-structures]
\label{DefKb}
  The space of \emph{b-vector fields} $\Vb(\ol{\R^3})$ on $\ol{\R^3}$ is the $\CI(\ol{\R^3})$-span of the vector fields $\pa_{x^\mu}$ and $x^\nu\pa_{x^\mu}$, $1\leq\mu,\nu\leq 3$, or more efficiently of $\la x\ra\pa_{x^\mu}$, $\mu=1,\ldots,3$. We define $\Diffb^m(\ol{\R^3})$ and weighted versions $\la r\ra^l\Diffb^m(\ol{\R^3})$ as usual. For operators defined on $X=\ol{\R^3}\setminus\{r<\bhm\}$, we write spaces of weighted operators as $\rho^{-l}\Diffb^m(X)$, $\rho=r^{-1}$.
\end{definition}

A change of coordinates calculation shows that $\Vb(\ol{\R^3})$ is precisely the space of smooth vector fields on $\ol{\R^3}$ that are tangent to $\pa\ol{\R^3}$.

\begin{lemma}[Zero energy operator]
\label{LemmaK3bSpec0}
  Using the notation of Definition~\usref{DefK3bSpec}, so in particular $P\in\rho_\sface^{-l}\Difftb^m(M;\pi^*\cE_X)$, we have $\hat P(0) \in \rho^{-l}\Diffb^m(X;\cE_X)$.
\end{lemma}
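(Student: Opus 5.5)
To prove Lemma~\ref{LemmaK3bSpec0}, we reduce to the case $l=0$ and then work with local generators. The weight is harmless: $\rho_\sface=r^{-1}$ near $\sface$ and $\rho_\sface$ is independent of $t$ (after adjusting by a smooth positive $t$-independent factor, which we may choose). Thus $\hat P(0)=\rho^{-l}\widehat{(\rho_\sface^{l}P)}(0)$, and it suffices to treat $P\in\Difftb^m(M;\pi^*\cE_X)$ commuting with $t$-translations.

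Locally trivialize $\cE_X$ over $X$; its pullback then gives a trivialization of $\pi^*\cE_X$ that is $t$-translation invariant. In such a trivialization, write $P$ using the generating 3b-vector fields. Near $\pa X$ we use $r\pa_t$, $r\pa_r$, $\pa_\omega$; over compact subsets of $X^\circ$ we use $\pa_t,\pa_x$. This gives
\[
  P=\sum_{j+|\beta|\leq m} a_{j\beta}(z)\,(r\pa_t)^j (r\pa_r,\pa_\omega)^\beta ,\qquad a_{j\beta}\in\CI(M),
\]
where the expansion is obtained by commuting factors to normal order, which is possible because $\Vtb(M)$ is a Lie algebra and $[r\pa_t,r\pa_r]=-r\pa_t$ stays in the span.

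The key step is to extract $t$-independent coefficients. Since $P$ commutes with $t$-translations, the translated operator $\tau_c^*P=\sum a_{j\beta}(t+c,x)(\ldots)$ equals $P$. The generators themselves are translation invariant, and the representation in this normal-ordered basis is unique (coefficients are determined by applying $P$ to polynomials in $t$ and $x$ locally). Hence each $a_{j\beta}(t,x)=a_{j\beta}(0,x)$ for all $t$.

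Now apply the definition of the spectral family at $\sigma=0$. Conjugation by $e^{i\sigma t}$ with $\sigma=0$ is trivial, and terms with $j\geq1$ annihilate $u(x)$, which is independent of $t$. Therefore
\[
  \hat P(0)=\sum_{|\beta|\leq m} a_{0\beta}(0,x)\,(r\pa_r,\pa_\omega)^\beta .
\]
The fields $r\pa_r$ and $\pa_\omega$ span $\Vb(X)$ near $\pa X$. It remains to check that $a_{0\beta}(0,\cdot)\in\CI(X)$, and this follows from the fact that it is the restriction of $a_{0\beta}\in\CI(M)$ to the hypersurface $X\hra M$, which is smooth.

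The main subtlety is the uniqueness-of-coefficients step together with smoothness. We need $t$-invariance of the coefficients, and restricting to $X$ (rather than taking limits at $\cK^+$) makes the smoothness transparent. Alternatively, one can sidestep the uniqueness issue by arguing directly: $t$-invariance gives $a(t,x)=a(0,x)$ immediately once the coefficient representation is fixed canonically. To do this, compute the coefficients by applying $P$ to monomials $t^j\,x^\alpha$ in the local trivialization. These are invariant up to lower-order terms, which yields the result inductively in $j+|\beta|$.
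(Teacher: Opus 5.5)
Your proposal is correct and follows essentially the same argument as the paper: expand $P$ in normal-ordered monomials of $rD_t$, $rD_r$, $D_\omega$, use $t$-translation invariance to see that the coefficients are pullbacks from $X$, and drop the terms containing $t$-derivatives when $\sigma=0$. Your additional steps, namely the reduction to $l=0$ via the $t$-independent weight $r^{-1}$ and the uniqueness of normal-ordered coefficients that justifies their $t$-independence, are sound. They make explicit what the paper states in one line.
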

\begin{proof}
  We can write
  \[
    P = r^l \sum_{j+k+|\alpha|\leq m} p_{j k\alpha} (r D_t)^j (r D_r)^k D_\omega^\alpha,\quad p_{j k\alpha}\in\CI(M;\End(\cE)),
  \]
  where $D_\omega^\alpha$ is schematic notation for an $|\alpha|$-th order differential operator on $\Sph^2$. The fact that $P$ commutes with $t$-translations is equivalent to $p_{j k\alpha}$ lying in (or more precisely, being the pullback along $\pi$ of an element of) $\CI(X;\End(\cE_X))$. Therefore,
  \[
    \hat P(0) = r^l\sum_{k+|\alpha|\leq m} p_{0 k\alpha} (r D_r)^k D_\omega^\alpha \in \rho^{-l}\Diffb^m(X;\cE_X).\qedhere
  \]
\end{proof}

The asymptotic behavior (as $\rho\to 0$) of solutions of b-differential equations such as $\hat P(0)u=0$ which are elliptic near $\rho=0$ is governed by the \emph{indicial roots} of $\hat P(0)$. For $\hat P(0)\in\rho^{-l}\Diffb^m(X;\cE_X)$, these are defined as follows. Fix a collar neighborhood $[0,\rho_0)_\rho\times\pa X$ of $\pa X$, and identify $\cE_X$ on this collar neighborhood with the pullback of $\cE_{\pa X}:=\cE_X|_{\pa X}$ along the projection $[0,\rho_0)\times\pa X\to\pa X$. Write then
\[
  \rho^l\hat P(0) = \sum_{j=0}^m P_j(\rho,\omega,\pa_\omega)(\rho\pa_\rho)^j,\quad P_j\in\CI\bigl([0,\rho_0);\Diff^{m-j}(\pa X;\cE_{\pa X})\bigr).
\]
Then the \emph{b-normal operator} of $\rho^l\hat P(0)$ is defined to be
\[
  N_{\pa X}(\rho^l\hat P(0)) := \sum_{j=0}^m P_j(0,\omega,\pa_\omega)(\rho\pa_\rho)^j \in \Diffb^m([0,\infty)_\rho\times\pa X;\cE_{\pa X}).
\]
An \emph{indicial root} $\lambda\in\C$ of $\hat P(0)$ is then a complex number for which there exists a nonzero $u_0\in\CI(\pa X;\cE_{\pa X})$ such that
\begin{equation}
\label{EqK3bIndRoots}
  N_{\pa X}(\rho^l\hat P(0)) ( \rho^\lambda u_0 ) = 0.
\end{equation}
Equivalently, define the elliptic operator $N_{\pa X}(\rho^l\hat P(0),\lambda):=\sum_{j=0}^m P_j(0,\omega,\pa_\omega)\lambda^j$; as a consequence of the ellipticity of $\hat P(0)$ near $\rho=0$, this is invertible for large $|\Im\lambda|$ when $\Re\lambda$ is fixed \cite[Proposition~5.3]{MelroseAPS}. Then $\lambda$ is an indicial root if and only if $N_{\pa X}(\rho^l\hat P(0),\zeta)^{-1}$ has a pole at $\zeta=\lambda$. We say that an indicial root has \emph{order $1$} if this pole has order $1$, or equivalently if there are no $u_0,u_1\in\CI(\pa X;\cE_{\pa X})$ with $u_1\neq 0$ such that $N_{\pa X}(\rho^l\hat P(0))(\rho^\lambda(u_1\log\rho+u_0))=0$. (Cf.\ the proof of \cite[Proposition~11.20]{HintzMicro} with the substitution $\rho=e^{-t}$.)

\begin{rmk}[Model at infinity]
\label{RmkK3bInfty}
  We stress that indicial roots (and their orders) depend only on the restrictions of the coefficients of $\rho^l\hat P(0)$ to $\rho=0$. In our application, this means that the indicial roots of the zero energy operator of a wave-type operator on Kerr can be computed by working with the corresponding Minkowskian operator.
\end{rmk}

\subsection{3b-analysis and its semiclassical version}
\label{SsK3bh}

The following is a natural scale of function spaces related to $\Vtb(M)$:

\begin{definition}[3b-Sobolev spaces]
\label{DefK3bSob}
  Let $\alpha_\sface,\alpha_\cK\in\R$ and $s\in\N_0$. We define $\Htb^0(M):=L^2(M;|\dd\ubar g|)$ (also written $\Htb^0(M;|\dd\ubar g|)$ if we need to make the volume density explicit); further
  \[
    \Htb^s(M) := \{ u\in\Htb^0(M) \colon (r\pa_z)^\alpha u\in\Htb^0(M)\ \forall\,\alpha\in\N_0^4,\ |\alpha|\leq s \},
  \]
  and finally
  \[
    \Htb^{s,(\alpha_\sface,\alpha_\cK)}(M) = \rho_\sface^{\alpha_\sface}\rho_\cK^{\alpha_\cK}\Htb^s(M) := \{ \rho_\sface^{\alpha_\sface}\rho_\cK^{\alpha_\cK}u \colon u\in\Htb^s(M) \}.
  \]
  If $\cE\to M$ is a vector bundle, then $\Htb^{s,(\alpha_\sface,\alpha_\cK)}(M;\cE)$ denotes functions which in local trivializations of $\cE$ are elements of $\Htb^{s,(\alpha_\sface,\alpha_\cK)}$.
\end{definition}

The natural squared norm on $\Htb^s(M)$ is the sum of squared $L^2$-norms of $(r\pa_z)^\alpha u$ for $|\alpha|\leq s$. One can then set $\|u\|_{\Htb^{s,(\alpha_\sface,\alpha_\cK)}}=\|\rho_\sface^{-\alpha_\sface}\rho_\cK^{-\alpha_\cK}u\|_{\Htb^s}$. Every $P\in\Difftb^m(M)$ maps $\Htb^s(M)\to\Htb^{s-m}(M)$ (for $s\geq m$, since so far we have only defined these spaces for nonnegative integer orders), similarly for weighted operators. We also note that every weighted 3b-Sobolev space is a subspace of the space of tempered distributions on $M$, defined as the dual of the space of smooth functions (or densities) on $M$ vanishing to infinite order at $\cK^+\cup\sface$.

The microlocal analysis of weighted 3b-operators acting between weighted 3b-Sobolev spaces is done on the 3b-cotangent bundle. The starting point is that the principal symbol of a 3b-differential operator $P\in\Difftb^m(M)\subset\Diff^m(M^\circ)$ extends to an element of the space $P^{[m]}(\Ttb^*M)$ of smooth functions on $\Ttb^*M$ which are homogeneous polynomials of degree $m$ on every fiber of $\Ttb^*M$; this follows from the fact that the principal symbol of $r D_\mu=-i r\pa_\mu$ is given by the function $\Ttb^*M\ni\frac{\dd z^\nu}{r}\mapsto \delta_\mu^\nu$ (Kronecker delta).

For the study of forward problems of wave equations on $(M^\circ,g_{\bhm,a})$, we need spaces of functions on the domain $\Omega$ from~\eqref{EqKDomain} which encode vanishing in the past of the initial hypersurface $X$, but which allow arbitrary behavior (consistent with membership in a 3b-Sobolev space) near the final interior hypersurface $\Sigma^\sharp=\cl_M\{r=\bhm\}$. We thus write
\[
  \Htb^s(\Omega)^{\bullet,-} := \{ u\in\Htb^s(M) \colon \supp u\subset\Omega \},
\]
equipped with the induced norm; similarly for weighted spaces. For a future generalization to arbitrary real orders $s$, it is useful to give a slightly different definition. First of all, we can define 3b-structures on $M_0$ (as done in \cite{Hintz3b}) using the 3b-vector fields $\la x\ra\pa_{z^\mu}$, the only difference to our previous discussion being that the weight $\la x\ra$ is smooth and positive also near $x=0$ (a region which was excised in $M$). (It is at this point that the blow-up of the south pole $\fk^-$ in the definition of $M_0$, which gives time-inversion symmetry, comes in handy.) One can then define 3b-Sobolev spaces and their weighted versions completely analogously to Definition~\ref{DefK3bSob}. The space $\Htb^s(M_0)$ for real $s$ can then be defined using duality and interpolation. For $s\in\N_0$, we may now set
\begin{equation}
\label{EqK3bSuppExt}
  \Htb^s(\Omega)^{\bullet,-} = \{ u|_{\{r>\bhm\}} \colon u\in\Htb^s(M_0),\ t\geq 0\ \text{on}\ \supp u \},
\end{equation}
equipped with the quotient norm. (Note that $\Htb^s(\Omega)^{\bullet,-}$ is the quotient of the space $\Htb^s(\cl_{M_0}\{t\geq 0\})^\bullet$ of elements of $\Htb^s(M_0)$ with support in $\cl_{M_0}\{t\geq 0\}$ by the subspace consisting of elements supported moreover in $\cl_{M_0}\{r\leq\bhm\}$.) This definition also makes immediate sense for $s\in\R$. The $L^2$-dual of $\Htb^s(\Omega)^{\bullet,-}$ is the space
\begin{equation}
\label{EqK3bExtSupp}
  \Htb^{-s}(\Omega)^{-,\bullet} := \{ u|_{\{t>0\}} \colon u\in\Htb^{-s}(M_0),\ r\geq\bhm\ \text{on}\ \supp u \},
\end{equation}
which we again equip with the quotient norm. For details regarding these spaces and equivalent definitions, we refer the reader to \cite[Chapter~9]{HintzMicro} (which can be straightforwardly adapted to the present 3b-setting). We omit spelling out the definitions of weighted versions of these spaces.

\begin{definition}[Conormality]
\label{DefK3bConormal}
  Let $\alpha,\alpha_\sface,\alpha_\cK\in\R$.
  \begin{enumerate}
  \item The space $\cA^\alpha(X)$ of \emph{weighted conormal} (or \emph{b-regular}) \emph{functions} consists of all functions of the form $\rho^\alpha u_0$ where $(\la x\ra\pa_x)^\gamma u_0\in\CI(X^\circ)\cap L^\infty(X^\circ)$ for all $\gamma\in\N_0^3$, i.e., $u_0$ and all its b-derivatives remain uniformly bounded on $X^\circ$.
  \item The space $\cA_\tbop^{\alpha_\sface,\alpha_\cK}(M)$ of \emph{weighted 3b-conormal} (or \emph{3b-regular}) \emph{functions} consists of all functions of the form $\rho_\sface^{\alpha_\sface}\rho_\cK^{\alpha_\cK}u_0$ where $u_0$ and all its 3b-derivatives remain uniformly bounded on $M^\circ$. We write $\cA_\tbop(M):=\cA_\tbop^{0,0}(M)$. We define $\cA_\tbop^{\alpha_\sface,\alpha_\cK}(M_0)$ analogously.
  \end{enumerate}
\end{definition}

\subsubsection{Bounded geometry perspective, 3b-ps.d.o.s}
\label{SssK3bBdd}

The spaces
\begin{equation}
\label{EqK3bDiffCoeff}
  \cA_\tbop\Difftb^m(M_0)
\end{equation}
(finite linear combinations of products $u P$ where\footnote{This is thus the space of 3b-differential operators with 3b-regular coefficients.} $u\in\cA_\tbop(M_0)$ and $P\in\Difftb^m(M_0)$) and $\Htb^s(M_0)$ (and their weighted versions) arise as special cases of \emph{bounded geometry operators} and Sobolev spaces (following \cite{ShubinBounded}), which are in turn special cases of a general construction introduced in \cite{HintzScaledBddGeo}; see \cite[Example~(11) in \S{1.4.4}]{HintzScaledBddGeo} for the case of 3b-operators with b-regular coefficients. We recall this using terminology from \cite{HintzScaledBddGeo} for 3b-operators with 3b-regular coefficients. We shall only state the results and refer the reader to \cite[\S{3}]{HintzScaledBddGeo} for proofs. Thus, we cover $M_0^\circ=\R^4$ with two families of sets (called \emph{unit cells}), each equipped with a map to the cube $(-2,2)^4$.
\begin{enumerate}
\item The first family covers the region $\R_t\times(-2,2)_x^3$:
  \begin{equation}
  \label{EqK3bCell1}
  \begin{split}
    U_{(1),j} &:= (j-2,j+2)_t \times (-2,2)_x^3, \\
    \phi_{(1),j}(t,x)&:=(t-j,x);
  \end{split}
  \end{equation}
  here $j\in\Z$.
\item The second family covers the complement of $\R_t\times[-\frac14,\frac14]_x^3$. In the region where $\pm x^1>0$, we use local coordinates $\omega_a=\frac{x^a}{x^1}$, $a=2,3$, on $\Sph^2$ and the sets
  \begin{equation}
  \label{EqK3bCell2}
  \begin{split}
    U_{(2),\pm 1,j,k,m_2,m_3} &:= \bigl(2^k(j-2),2^k(j+2)\bigr)_t \times \bigl(\pm(2^{k-2},2^{k+2})_{x^1}\bigr) \\
      &\quad\qquad \times (m_2-2,m_2+2)_{\omega_2} \times (m_3-2,m_3+2)_{\omega_3}, \\
    \phi_{(2),\pm 1,j,k,m_2,m_3}(t,x^1,\omega_2,\omega_3) &:= \bigl( 2^{-k}t-j,\,\log_2(\pm x^1)-k,\,\omega_2-m_2,\,\omega_3-m_3\bigr);
  \end{split}
  \end{equation}
  here $k\in\N_0$, $j\in\Z$, and $m_2,m_3=-2,-1,0,1,2$. We define the sets $U_{(2),\pm i,j,k,m_2,m_3}$ for $i=2,3$ analogously.
\end{enumerate}
See Figure~\ref{FigK3bCells}.

\begin{figure}[!ht]
\centering
\includegraphics{FigK3bCells}
\caption{Some unit cells $U_j^{(1)}$ (blue) and $U_{\pm 1,j k}^{(2)}$ (red) defined in~\eqref{EqK3bCell1}--\eqref{EqK3bCell2}, shown here in $2$ (instead of $4$) dimensions. \textit{On the left:} in $\R_{t,x^1}$ (dropping the $x^2,x^3$ coordinates). \textit{On the right:} in $M_0$, or more accurately the blow-up of $\ol{\R^2}$ at its north and south poles.}
\label{FigK3bCells}
\end{figure}

The index set for the sets and maps~\eqref{EqK3bCell1}--\eqref{EqK3bCell2} is thus
\[
  \sA := \bigl\{ ((1),j),\ ((2),\pm i,j,k,m_2,m_3) \colon j\in\Z,\ i=1,2,3,\ k\in\N_0,\ m_2,m_3=-2,-1,0,1,2 \bigr\},
\]
and the sets $\phi_\alpha^{-1}((-1,1)^4)$ cover $M_0^\circ$ with finite multiplicity. The key point is that the basic 3b-vector fields push forward under $\phi_\alpha$ to uniformly bounded (in $\CI((-2,2)^4)$) families of vector fields on $(-2,2)^4$; concretely, writing points in $(-2,2)^4$ as $(T,X)$ (for the sets~\eqref{EqK3bCell1}) and $(T,X^1,\Omega^2,\Omega^3)$ (for the sets~\eqref{EqK3bCell2}), we have
\[
  (\phi_{(1),j})_*\pa_t = \pa_T,\quad
  (\phi_{(1),j})_*\pa_{x^i} = \pa_{X^i},\ i=1,2,3,
\]
and, noting that $r=|x^1|\sqrt{1+\omega_2^2+\omega_3^2}$ is, on $U_{(2),\pm 1,j,k,m_2,m_3}$, uniformly equivalent to $|x^1|$ (in that the pushforwards under $\phi_{(2),\pm 1,j,k,m_2,m_3}$ of the quotients $r/|x^1|$, $|x^1|/r$ are uniformly bounded in $\CI((-2,2)^4)$),
\begin{align*}
  (\phi_{(2),\pm 1,j,k,m_2,m_3})_*(|x^1|\pa_t) &= 2^{X^1} \pa_T, \\
  (\phi_{(2),\pm 1,j,k,m_2,m_3})_*(|x^1|\pa_{x^1}) &= \pm\frac{1}{\log 2}\pa_{X^1} \mp (\omega_2\pa_{\omega_2}+\omega_3\pa_{\omega_3}), \\
  (\phi_{(2),\pm 1,j,k,m_2,m_3})_*(|x^1|\pa_{x^a}) &= \pm\pa_{\omega_a},\ a=2,3.
\end{align*}
These expressions show, conversely, that the coordinate vector fields on $(-2,2)^4$ are linear combinations of the pushforwards of the basic 3b-vector fields $\la x\ra\pa_{z^\mu}$, $\mu=0,1,2,3$, with uniformly bounded coefficients in $\CI((-2,2)^4)$. Therefore, $u\in\CI(\R^4)$ lies in $\cA_\tbop(M_0)$ if and only if $(\phi_\alpha)_*u$ is uniformly bounded in $\CI((-2,2)^4)$. Similarly, $V\in\cV(\R^4)=\CI(\R^4;T\R^4)$ lies in $\cA_\tbop\Vtb(M_0)$ if and only if $(\phi_\alpha)_*V$ is uniformly (in $\alpha\in\sA$) bounded in $\cV((-2,2)^4)$. We can then similarly characterize $\cA_\tbop\Difftb^m(M_0)$ as the space of differential operators on $\R^4$ that push forward under the maps $\phi_\alpha$ to a uniformly bounded family of differential operators on $(-2,2)^4$.

Fix now a cutoff function
\[
  \chi \in \CIc\bigl((-\tfrac32,\tfrac32)^4\bigr),\quad \chi=1\ \text{on}\ [-1,1]^4,
\]
and set
\[
  \chi_\alpha := \frac{\phi_\alpha^*\chi}{\sum_{\beta\in\sA} \phi_\beta^*\chi},
\]
then the functions $(\phi_\alpha)_*\chi_\alpha\in\CI((-2,2)^4)$ are uniformly bounded, and $\sum_\alpha \chi_\alpha=1$ on $\R^4$. We then have the equivalence of norms (i.e., the left-hand side is bounded by a multiple of the right-hand side, and vice versa)
\[
  \|u\|_{\Htb^s(M_0;|\dd\mu_\tbop|)}^2 \sim \sum_{\alpha\in\sA} \|(\phi_\alpha)_*(\chi_\alpha u)\|_{H^s(\R^4)}^2,
\]
where $|\dd\mu_\tbop|:=\la x\ra^{-4}|\dd t\,\dd x|$ is a smooth positive 3b-density. (This is easy to check for $s\in\N_0$, and can be used as a definition of $\Htb^s(M_0)$ for $s\in\R$ equivalent to the one using duality and interpolation.) Regarding weights, notice first that the pushforward of $\rho_\sface^{\alpha_\sface}\rho_\cK^{\alpha_\cK}$ under $\phi_{(1),j}$ and $\phi_{(2),\pm 1,j,k,m_2,m_3}$ is uniformly equivalent to $\la j\ra^{-\alpha_\cK}$ and $2^{-k\alpha_\sface}\la j\ra^{-\alpha_\cK}$. Denote these numbers by $w_\alpha$ on $U_\alpha$. Since the $w_\alpha$ change by uniformly bounded factors across adjacent unit cells, $\rho_\sface^{\alpha_\sface}\rho_\cK^{\alpha_\cK}$ is a weight (and $w_\alpha$ is a weight family) in the sense of \cite[Definition~3.4]{HintzScaledBddGeo}. We then have
\[
  \|u\|_{\rho_\sface^{\alpha_\sface}\rho_\cK^{\alpha_\cK}\Htb^s(M_0;|\dd\mu_\tbop|)}^2 \sim \sum_{\alpha\in\sA} \|w_\alpha^{-1}(\phi_\alpha)_*(\chi_\alpha u)\|_{H^s(\R^4)}^2.
\]
For $\alpha_\sface=2$ and $\alpha_\cK=0$, this is equivalent to the squared $H_\tbop^s(M_0;|\dd t\,\dd x|)$-norm.

We can moreover define spaces of \emph{3b-ps.d.o.s with 3b-regular coefficients} by patching together standard quantizations on $\R^4$ using the partition of unity $\chi_\alpha$. Concretely, write
\begin{equation}
\label{EqK3bSymb}
  S^m(\Ttb^*M_0)
\end{equation}
for the space of functions $a=a(z,\zeta)$ on $T^*\R^4$ such that $(\phi_\alpha)_*(\chi_\alpha a)$ is uniformly bounded in the space $S^m(\R^4;\R^4)$ of symbols on $\R^4$; the seminorms on the latter space are
\[
  |a|_{S^m;k} := \sup_{Z,\Xi\in\R^4}\max_{|\alpha|,|\beta|\leq k} |\la\Xi\ra^{-m+|\beta|}\pa_Z^\alpha\pa_\Xi^\beta a(Z,\Xi)|.
\]

\begin{rmk}[Regularity in the base]
\label{RmkK3bReg}
  The notation $S^m(\Ttb^*M_0)$ is an abuse of notation compared to \cite{Hintz3b} since we do not require the smoothness of elements of $S^m(\Ttb^*M_0)$ in the base variables down to $\pa M_0$. Rather, elements of $S^m(\Ttb^*M_0)$ as defined here are 3b-regular in the base variables; the reader may thus wish to write $\cA_\tbop S^m(\Ttb^*M_0)$ instead (which we will not do here). Nonetheless, if we define the symbol class $S^m(\cE)$ on the vector bundle $\cE\to M_0$ in the standard fashion (i.e., including smoothness down to $\pa M_0$) as, e.g., in \cite[Definition~5.35]{HintzMicro}, then this symbol class is contained in the space~\eqref{EqK3bSymb}.
\end{rmk}

Fixing $\psi\in\CIc((-\frac12,\frac12)^4)$ to be equal to $1$ near $0$, we define for $a_\alpha\in S^m(\R^4;\R^4)$ the Schwartz kernel of its quantization by
\[
  \Op(a_\alpha)(Z,Z') := (2\pi)^{-4}\int_{\R^4} e^{i(Z-Z')\cdot\Xi} \psi(Z-Z')a_\alpha(Z,\Xi)\,\dd\Xi \cdot|\dd Z'|
\]
(which is a right density, with $|\dd Z'|$ being the density factor) and then, for $a\in S^m(\Ttb^*M_0)$,
\[
  \Op(a) := \sum_{\alpha\in\sA} (\phi_\alpha)^* \Op\bigl((\phi_\alpha)_*(\chi_\alpha a)\bigr).
\]
Here, we identify a Schwartz kernel with the corresponding linear operator, and for the linear operator $A=\Op((\phi_\alpha)_*(\chi_\alpha a))\colon\CIc((-2,2)^4)\to\CIc((-2,2)^4)$, we write $(\phi_\alpha)^*A$ for the operator mapping $u\in\CI(M_0)$ to $\phi_\alpha^*(A((\phi_\alpha)_*u))$. Define, finally, the space of \emph{residual 3b-ps.d.o.s}
\[
  \Psitb^{-\infty}(M_0)
\]
to consist of all operators $R\colon\CIc(\R^4)\to\CI(\R^4)$ which are bounded as maps $w\Htb^{-N}(M_0)\to w\Htb^N(M_0)$ for all $N$ and all weights $w$ together with their $L^2$-adjoints $R^*$. Then
\begin{equation}
\label{EqK3bQuant}
  \Psitb^m(M_0) := \{ \Op(a) + R \colon a\in S^m(\Ttb^*M_0),\ R\in\Psitb^{-\infty}(M_0) \}.
\end{equation}

\begin{rmk}[Regularity of coefficients]
\label{RmkK3bPsdoReg}
  In analogy with the notation~\eqref{EqK3bDiffCoeff}, the reader may wish to record the mere 3b-regularity of the symbols $a$ we are quantizing in~\eqref{EqK3bQuant} by writing instead $\cA_\tbop\Psitb^m(M_0)$. We shall not do this in this paper for ease of notation. We caution the reader that what we call $\Psitb$ here is a considerably larger space than that defined in \cite{Hintz3b} (which features symbols which are smooth in the base); but for purposes of microlocalization in $\Ttb^*M_0$ and mapping properties on 3b-Sobolev spaces, the present space $\Psitb$ is perfectly adequate.
\end{rmk}

Spaces $\Psitb^{m,(\alpha_\sface,\alpha_\cK)}(M_0)=\rho_\sface^{-\alpha_\sface}\rho_\cK^{-\alpha_\cK}\Psitb^m(M_0)$ of weighted 3b-ps.d.o.s are defined similarly as sums of quantizations of weighted symbols $a\in\rho_\sface^{-\alpha_\sface}\rho_\cK^{-\alpha_\cK}S^m(\Ttb^*M_0)$ and weighted residual operators mapping $w\Htb^{-N}\to w\rho_\sface^{-\alpha_\sface}\rho_\cK^{-\alpha_\cK}\Htb^N(M_0)$ for all $N$ and all weights $w$ together with their adjoints. Every element of $\Psitb^m(M_0)$ defines a bounded linear map $\Htb^s(M_0)\to\Htb^{s-m}(M_0)$; similarly with weights.

It is shown in \cite[\S{3}]{HintzScaledBddGeo} that the equivalence class of the symbol $a$ in $S^m/S^{m-1}$ gives rise to a principal symbol map $\sigmatb^m$ which fits into the short exact sequence
\begin{subequations}
\begin{equation}
\label{EqK3bSES}
  0 \to \Psitb^{m-1}(M_0) \hra \Psitb^m(M_0) \xra{\sigmatb^m} S^m/S^{m-1}(\Ttb^*M_0) \to 0;
\end{equation}
moreover, the space of (weighted) 3b-ps.d.o.s is a *-algebra, and for $A\in\Psitb^m$, $B\in\Psitb^{m'}$, we have
\begin{equation}
\label{EqK3bAlg}
\begin{gathered}
  A\circ B\in\Psitb^{m+m'}(M_0),\quad \sigmatb^{m+m'}(A\circ B)=\sigmatb^m(A)\cdot\sigmatb^{m'}(B), \\
  \sigmatb^{m+m'-1}(i[A,B])=\{\sigmatb^m(A),\sigmatb^{m'}(B)\},
\end{gathered}
\end{equation}
\end{subequations}
where $\{\cdot,\cdot\}$ denotes the Poisson bracket on $T^*\R^4$.

Finally, for $A=\Op(a)+R\in\Psitb^m(M_0)$, we can define its elliptic, characteristic, and operator wave front sets
\[
  \Elltb(A),\ \Char_\tbop(A),\ \WF'_\tbop(A)
\]
as conic subsets of $\Ttb^*M_0\setminus o$, or equivalently as subsets of the 3b-cosphere bundle $\Stb^*M_0$ (the boundary at fiber infinity of $\ol{\Ttb^*}M_0$, identified with $(\Ttb^*M_0\setminus o)/\R^+$), in the usual fashion: a point $\alpha\in\Stb^*M_0$ lies in $\Elltb(A)$, resp.\ does not lie in $\WF'_\tbop(A)$, if and only if $\chi/a\in S^{-m}(\Ttb^*M_0)$, resp.\ $\chi a\in S^{-\infty}(\Ttb^*M_0)$ for some $\chi\in\CI(\ol{\Ttb^*}M_0)$ that does not vanish at $\alpha$. Note here that we use $\Ttb^*M_0$ as the phase space even though the symbols $a$ need not be smooth down to $\Ttb^*_{\pa M_0}M_0$. (In the parlance of \cite[\S{3.8}]{HintzScaledBddGeo}, $\ol{\Ttb^*}M_0$ is an \emph{admissible compactification}.)

\subsubsection{Semiclassical 3b-notions}

The constraint damping modification we introduce in this paper involves a large parameter $h^{-1}$ where $0<h\ll 1$; the resulting wave-type operators on Kerr that we need to study are then \emph{semiclassical} 3b-differential operators, which we will study on semiclassical 3b-Sobolev spaces using semiclassical 3b-pseudodifferential operators. We proceed to introduce these notions.

\begin{definition}[Semiclassical 3b-differential operators]
\label{DefK3bhDiff}
  By $\Difftbh^m(M)$, we denote the space of all families $P=(P_h)_{h\in(0,1)}$ of differential operators on $M^\circ$ such that
  \begin{equation}
  \label{EqK3bhDiffExpr}
    P_h = \sum_{j+k+|\alpha|\leq m} p_{j k\alpha}(h,t,r,\omega) (h r D_t)^j (h r D_r)^k (h D_\omega)^\alpha,
  \end{equation}
  where $p_{j k\alpha}\in\CI([0,1)_h\times M)$. Spaces of weighted such operators are denoted $\rho_\sface^{-l}\Difftbh^m(M)$, and spaces of such operators acting between sections of smooth vector bundles $\cE,\cF\to M$ are denoted $\rho_\sface^{-l}\Difftbh^m(M;\cE,\cF)$.
\end{definition}

We could equivalently have defined semiclassical 3b-vector fields to be families $V=(V_h)_{h\in(0,1)}$ of vector fields on $M^\circ$ such that $h^{-1}V_h\in\CI([0,1)_h;\Vtb(M))$, and then semiclassical 3b-differential operators are up to $m$-fold compositions of semiclassical 3b-vector fields. In the notation~\eqref{EqK3bhDiffExpr}, we define the principal symbol of an element $P=(P_h)_{h\in(0,1)}$ to be
\[
  \sigmatbh^m(P) \colon \Ttb^*M \ni \zeta\frac{\dd t}{r}+\xi\frac{\dd r}{r} + \eta\cdot\dd\omega \mapsto \sum_{j+k+|\alpha|\leq m} p_{j k\alpha}(0,t,r,\omega)\zeta^j\xi^k\eta^\alpha;
\]
thus $\sigmatbh^m(P)\in P^m(\Ttb^*M)$, where $P^m$ denotes the space of smooth functions which are fiber-wise polynomials of degree $m$.

Function spaces adapted to the study of such operators are the following 3b-Sobolev spaces with $h$-dependent norms:

\begin{definition}[Semiclassical 3b-Sobolev spaces]
\label{DefK3bhSob}
  Let $\alpha_\sface,\alpha_\cK\in\R$ and $s\in\N_0$. Fix a choice of defining functions $\rho_\cK$ and $\rho_\sface$ of $\cK^+$ and $\sface$, respectively. We define $H_{\tbop,h}^{s,(\alpha_\sface,\alpha_\cK)}(M)=\Htb^{s,(\alpha_\sface,\alpha_\cK)}(M)$ as a set, but with $h$-dependent norm
  \[
    \|u\|_{H_{\tbop,h}^{s,(\alpha_\sface,\alpha_\cK)}(M)}^2 = \sum_{j+k+|\alpha|\leq s} \| \rho_\sface^{-\alpha_\sface}\rho_\cK^{-\alpha_\cK}(h r D_t)^j (h r D_r)^k (h D_\omega)^\alpha u \|_{L^2(M;|\dd\ubar g|)}^2.
  \]
  If one wishes to emphasize the volume density, one writes $H_{\tbop,h}^s(M;|\dd\ubar g|)$, similarly for weighted versions.
\end{definition}

We can similarly define spaces $\Difftbh^m(M_0)$ and $\Htbh^s(M_0)$ on all of $M_0$ using $h\la x\ra D_t$, $h\la x\ra D_x$ instead of $h r D_t$, $h r D_r$, $h D_\omega$. We can then define
\begin{equation}
\label{EqK3bhSob}
  H_{\tbop,h}^s(\Omega)^{\bullet,-},\quad
  H_{\tbop,h}^s(\Omega)^{-,\bullet}
\end{equation}
as quotient spaces of (closed subspaces of) $H_{\tbop,h}^s(M_0)$ as in~\eqref{EqK3bSuppExt}--\eqref{EqK3bExtSupp} above. Defining the latter space for real $s\in\R$ using duality and interpolation, we thus also get the spaces~\eqref{EqK3bhSob} for real $s$; similarly for weighted versions.

The parameterized scaled bounded geometry perspective on semiclassical 3b-analysis, following the general recipe in~\cite{HintzScaledBddGeo}, is as follows. For every $h\in(0,1)$, we consider the same cover $(U_\alpha,\phi_\alpha)$ of $\R^4$ as in~\eqref{EqK3bCell1}--\eqref{EqK3bCell2}, but with $h$-dependent \emph{scalings} $\rho_{h,\alpha,\mu}=h$, $\mu=0,1,2,3$. The associated space of $m$-th order differential operators is then the space
\begin{equation}
\label{EqK3bhDiffCoeff}
  \cA_\tbop\Difftbh^m(M_0)
\end{equation}
of families $A=(A_h)_{h\in(0,1)}$ of operators $A_h$ with the property that, upon writing
\[
  (\phi_\alpha)_*A_h = \sum_{|\beta|\leq m} a_\alpha(h,Z)(h\pa_Z)^\beta,
\]
the functions $a(h,\cdot)\in\CI((-2,2)^4)$ are uniformly bounded in $\alpha\in\sA$ and $h\in(0,1)$. (Equivalently, we can write $A_h$ as a finite sum of operators of the form $u_h B$ where $B\in\Difftbh^m(M_0)$ has smooth coefficients and $(u_h)_{h\in(0,1)}\subset\cA_\tbop(M_0)$ is a bounded subset. This justifies the notation~\eqref{EqK3bhDiffCoeff}.) Similarly, we can define a norm equivalent to the $H_{\tbop,h}^s(M_0)$-norm defined above by
\[
  \|u\|_{H_{\tbop,h}^s(M_0;|\dd\mu_\tbop|)}^2 := \sum_{\alpha\in\sA} \| (\phi_\alpha)_*(\chi_\alpha u) \|_{H_h^s(\R^4)}^2,\quad
  \|v\|_{H_h^s(\R^4)} := \| \la h D\ra^s v \|_{L^2(\R^4)},
\]
where we recall $|\dd\mu_\tbop|=\la x\ra^{-4}|\dd t\,\dd x|$. Turning to pseudodifferential operators, we now quantize families $(a_h)_{h\in(0,1)}$ of elements $a_h\in S^m(\Ttb^*M_0)$ which are uniformly bounded in $h$ via
\begin{align*}
  &\Op_h(a_h) :=\sum_{\alpha\in\sA} (\phi_\alpha)^*\Op_h\bigl((\phi_\alpha)_*(\chi_\alpha a_h)\bigr), \\
  &\qquad \Op_h(a)(Z,Z') := (2\pi)^{-4}\int_{\R^4} e^{i(Z-Z')\cdot\Xi}\psi(Z-Z')a(Z,h\Xi)\,\dd\Xi\cdot|\dd Z'|.
\end{align*}
We denote the space of such families $(a_h)_{h\in(0,1)}$ by $S^m({}^{\tbop,\semi}T^*M_0)$. The residual space $h^\infty\Psitbh^{-\infty}(M_0)$ consists of all families of operators which for all $N$ are uniformly bounded as maps $h^{-N}H_{\tbop,h}^{-N}(M_0)\to h^N H_{\tbop,h}^N(M_0)$, similarly for all weighted versions, for their $L^2$-adjoints, and for any finite number of commutators with $h$-independent 3b-vector fields. (We use here the residual space from \cite[\S{4.4}]{HintzScaledBddGeo} since it is easier to describe, though the space in \cite[Definition~3.34]{HintzScaledBddGeo} would be just as good for present purposes.) Then
\[
  \Psitbh^m(M_0) := \{ (\Op_h(a_h))_{h\in(0,1)} + R \colon \{a_h\}\subset S^m(\Ttb^*M_0)\ \text{is bounded},\ R\in h^\infty\Psitbh^{-\infty}(M_0) \}.
\]
We refer readers objecting to the notation (which does not emphasize the notion of regularity in the base variables of the symbols underlying elements of $\Psitbh$) to Remark~\ref{RmkK3bPsdoReg}. The analogues of~\eqref{EqK3bSES}--\eqref{EqK3bAlg} in the semiclassical setting read
\begin{equation}
\label{EqK3bhSES}
  0 \to h\Psitbh^{m-1}(M_0) \hra \Psitbh^m(M_0) \xra{\sigmatbh^m} S^m/h S^{m-1}(\Ttbh^*M_0) \to 0
\end{equation}
and, for $A\in\Psitbh^m(M_0)$ and $B\in\Psitbh^{m'}(M_0)$,
\begin{gather*}
  A\circ B\in\Psitbh^{m+m'}(M_0),\quad \sigmatbh^{m+m'}(A\circ B)=\sigmatbh^m(A)\cdot\sigmatbh^{m'}(B), \\
  \sigmatbh^{m+m'-1}\Bigl(\frac{i}{h}[A,B]\Bigr)=\{\sigmatbh^m(A),\sigmatbh^{m'}(B)\}.
\end{gather*}
There are two differences between the semiclassical 3b-principal symbol thus defined for elements of $\Psitbh$ and the principal symbol of elements of $\Difftbh$ defined previously: first, since (following \cite{HintzScaledBddGeo}) we do not impose any smoothness in $h$ for the symbols underlying semiclassical 3b-ps.d.o.s, we cannot restrict them to $h=0$ but instead must work with a quotient by symbols featuring an extra power $h$. Second, for ps.d.o.s we elect to have $\sigmatbh$ capture operators modulo operators with an additional order of $h$ \emph{and} lower differential order (cf.\ the appearance of $m-1$ on the right in~\eqref{EqK3bhSES}). This will not lead to confusion in this paper since all semiclassical 3b-differential operators appearing here will have smooth coefficients on $[0,1)\times M$ and, in the notation~\eqref{EqK3bhDiffExpr}, the leading differential order coefficients $p_{j k\alpha}$, $j+k+|\alpha|=m$, will in fact be \emph{independent} of $h$, and so the symbol at $h=0$ does also capture $P$ modulo elements of $h\Difftbh^{m-1}(M_0)$.

Every $A\in\Psitbh^m(M_0)$ defines a \emph{uniformly} bounded family of maps $H_{\tbop,h}^s(M_0)\to H_{\tbop,h}^{s-m}(M_0)$, likewise for weighted versions (which we will leave to the reader to spell out). As a simple application of the principal symbol calculus, if $A\in\Difftbh^m(M_0)$ is elliptic, i.e., $|\sigmatbh(A)(z,\zeta)|\gtrsim\la\zeta\ra^m$ for all $z\in M_0$, $\zeta\in\Ttb^*_z M_0$, then we can construct an elliptic parametrix $B\in\Psitbh^{-m}(M_0)$ in the usual fashion (see, e.g., \cite[Theorems~4.29 and 4.53]{HintzMicro}) such that $B A=I+R$, with $R\in h^\infty\Psitbh^{-\infty}(M_0)$ residual. Therefore $\|R\|_{L^2\to L^2}\leq\frac12$ when $h$ is sufficiently small, and therefore $A$ is left-invertible then; arguing similarly for $A B=I+R'$ shows that $A$ is invertible. That is, for all sufficiently small $h>0$, $A_h\colon H_\tbop^s(M_0)\to H_\tbop^{s-m}(M_0)$ is invertible for all $s\in\R$, and the operator norm of its inverse, when acting between the semiclassical spaces $H_{\tbop,h}^{s-m}(M_0)$ and $H_{\tbop,h}^s(M_0)$, is uniformly bounded.

Given $A=(\Op_h(a_h))_{h\in(0,1)}+R\in\Psitbh^m(M_0)$, we define its semiclassical elliptic, characteristic, and operator wave front sets as subsets
\[
  \Elltbh(A),\ \Char_{\tbop,\semi}(A),\ \WF'_{\tbop,\semi}(A) \subset \ol{\Ttb^*}M_0,
\]
analogously to the non-semiclassical setting. These sets are not necessarily conic anymore.

\section{General theory of large parameter constraint damping}
\label{SC}

Let us work on an open Lorentzian manifold $(M,g)$, $\dim M=4$; we recall that our signature convention is $(-,+,+,+)$.

\begin{definition}[Modified symmetric gradient]
\label{DefCMod}
  For a timelike 1-form $\cd$ on $(M,g)$, a parameter $e\in\R$, and a semiclassical parameter $h>0$, we set $\cd^\sharp=g^{-1}(\cd,\cdot)$ and\footnote{This has the same structure as the modification used in \cite[\S{8}]{HintzVasyKdSStability}, except there we called $e$ what is $1-e$ in present notation. Our conventions here match those of \cite[\S{4}]{HintzPetersenVasyKdS} except for some factors of $2$; the considerations in \cite[\S{4}]{HintzPetersenVasyKdS} were, in turn, based on an earlier version of the present manuscript.}
  \begin{equation}
  \label{EqCMod}
    \delta_{g,\cd,e,h}^*\omega := \delta_g^*\omega + \cC_{g,\cd,e,h}\omega,\quad
    \cC_{g,\cd,e,h}\omega := h^{-1}\bigl(2\cd\otimes_s\omega - (1-e)g\,\iota_{\cd^\sharp}\omega\bigr).
  \end{equation}
\end{definition}

For a general zeroth order modification $\cC$ of $\delta_g^*$, we recall the notation
\[
  \Box_g^\cC=2\delta_g\sfG_g(\delta_g^*+\cC),\quad \sfG_g h=h-\frac12 g\tr_g h,
\]
for the resulting \emph{constraint propagation wave operator}. We then introduce
\begin{equation}
\label{EqCOp}
\begin{split}
  &\Box_{g,h}^{\cC_{g,\cd,e,h}} := h^2\Box_g^{\cC_{g,\cd,e,h}} = 2 h^2\delta_g\sfG_g\delta_g^* - i L_{g,\cd,e,h}, \\
  &\qquad L_{g,\cd,e,h} = 2 i h\delta_g\sfG_g\bigl(2\cd\otimes_s(\cdot)-(1-e)g\,\iota_{\cd^\sharp}\bigr) = 4 i h\delta_g(\cd\otimes_s(\cdot)) + 2 i h e\,\dd\,\iota_{\cd^\sharp}.
\end{split}
\end{equation}

\begin{lemma}[Principal symbol]
\label{LemmaCSymb}
  The semiclassical principal symbol $p_{g,\cd,e}$ of $\Box_{g,h}^{\cC_{g,\cd,e,h}}$ at a point $\zeta$ in phase space $T^*_z M$ over a point $z$ is the endomorphism of $T^*_z M$ given by
  \begin{equation}
  \label{EqCSymb}
    p_{g,\cd,e}(\zeta) = G(\zeta) - i\ell_{g,\cd,e}(\zeta),\quad
    G(\zeta)=\la\zeta,\zeta\ra,\ 
    \ell_{g,\cd,e}(\zeta)\omega=2\bigl(\la\cd,\zeta\ra\omega + \la\zeta,\omega\ra\cd - e\la\omega,\cd\ra\zeta\bigr),
  \end{equation}
  where $\la\cdot,\cdot\ra=g_z^{-1}(\cdot,\cdot)$.
\end{lemma}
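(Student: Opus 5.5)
The plan is to compute the two pieces of $\Box_{g,h}^{\cC_{g,\cd,e,h}}=2h^2\delta_g\sfG_g\delta_g^*-iL_{g,\cd,e,h}$ separately. We read off semiclassical symbols by replacing $h\nabla_\mu$ with $i\zeta_\mu$ (so $h D\mapsto\zeta$). Terms carrying an extra factor of $h$, such as Christoffel symbols or derivatives of $\cd$, drop out. We work at a fixed point $z$ in normal coordinates, so only the highest-order terms with the full power of $h$ matter.

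For the second-order part, recall that $2\delta_g\sfG_g\delta_g^*=\Box_g+(\text{zeroth order curvature term})$ on 1-forms. Alternatively, compute directly. We have $\sigma(h\delta_g^*)(\zeta)\omega=i\zeta\otimes_s\omega$ and $\sigma(h\delta_g)(\zeta)k=-i\,\iota_{\zeta^\sharp}k$, where $\iota_{\zeta^\sharp}k=k(\zeta^\sharp,\cdot)$. We also use $\sfG_g(\zeta\otimes_s\omega)=\zeta\otimes_s\omega-\frac12 g\la\zeta,\omega\ra$. Combining these gives
\[
2\cdot(-i)(i)\,\iota_{\zeta^\sharp}\bigl(\zeta\otimes_s\omega-\tfrac12 g\la\zeta,\omega\ra\bigr)=2\bigl(\tfrac12\la\zeta,\zeta\ra\omega+\tfrac12\la\zeta,\omega\ra\zeta-\tfrac12\la\zeta,\omega\ra\zeta\bigr)=G(\zeta)\omega,
\]
using $\iota_{\zeta^\sharp}(\zeta\otimes_s\omega)=\frac12(\la\zeta,\zeta\ra\omega+\la\zeta,\omega\ra\zeta)$. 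This gives the term $G(\zeta)$.

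For the first-order part, we use the second expression in~\eqref{EqCOp}, $L=4ih\delta_g(\cd\otimes_s\cdot)+2ihe\,\dd\,\iota_{\cd^\sharp}$. Here $\cd\otimes_s\omega$ has no derivatives, and $h\delta_g\mapsto-i\iota_{\zeta^\sharp}$. Hence the first term has symbol
\[
4i(-i)\,\iota_{\zeta^\sharp}(\cd\otimes_s\omega)=2\bigl(\la\zeta,\cd\ra\omega+\la\zeta,\omega\ra\cd\bigr).
\]
For the second term, $h\,\dd\mapsto i\zeta\wedge$, acting on functions as multiplication by $i\zeta$. This gives $2ie\cdot i\la\cd,\omega\ra\zeta=-2e\la\omega,\cd\ra\zeta$. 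The symbol of $L$ is thus $\ell_{g,\cd,e}(\zeta)$, and the full symbol is $G-i\ell$.

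As a sanity check, I would verify the identity $2\delta_g\sfG_g(2\cd\otimes_s\omega-(1-e)g\,\iota_{\cd^\sharp}\omega)=2\delta_g(2\cd\otimes_s\omega)+2e\,\dd\iota_{\cd^\sharp}\omega$ stated in~\eqref{EqCOp}. It follows from $\tr_g(2\cd\otimes_s\omega)=2\la\cd,\omega\ra$, $\sfG_g(fg)=-fg$, and $\delta_g(fg)=-\dd f$. I could also bypass it by computing the symbol of $2h\delta_g\sfG_g\cC$ directly: the trace terms cancel except for $e$. The main point requiring care is sign conventions ($\delta_g$ being the negative divergence, and the factor $i$ in $L$), since everything else is pointwise linear algebra. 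The remaining $O(h)$ contributions, coming from derivatives falling on $\cd$ and $g$, are subprincipal and do not affect the principal symbol.
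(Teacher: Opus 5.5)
Your proposal is correct and takes the same route as the paper. The paper's proof just invokes the symbol rules $\sigmah^1(h\,\dd)(\zeta)=i\zeta$, $\sigmah^1(h\delta_g^*)(\zeta)=i\zeta\otimes_s(\cdot)$, $\sigmah^1(h\delta_g)(\zeta)=-i\iota_{\zeta^\sharp}$ together with the second expression for $L_{g,\cd,e,h}$ in~\eqref{EqCOp}; you have written out the linear algebra, and the check of that expression (which uses $\tr_g g=4$ via $\sfG_g(fg)=-fg$), that the paper leaves implicit.
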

\begin{proof}
  This follows from a simple calculation using
  \[
    \sigmah^1(h\dd)(\zeta)=i\zeta\cdot,\quad
    \sigmah^1(h\delta_g^*)(\zeta)=i\zeta\otimes_s(\cdot),\quad
    \sigmah^1(h\delta_g)(\zeta)=-i\iota_{\zeta^\sharp}.\qedhere
  \]
\end{proof}

Later on, we will work on a compactified spacetime and a suitable uniform (degenerate) version of the cotangent bundle (concretely, the 3b-cotangent bundle introduced in Definition~\ref{DefK3b}); the symbolic properties studied in this section will however be completely analogous.

Let us first work over a single fiber of $T^*M$, denoted $T^*:=T^*_z M$, and its dual $T:=(T^*)^*=T_z M$. We introduce the $g^{-1}$-orthogonal splitting
\begin{equation}
\label{EqCSplit}
  T^* = \R\cd \oplus \cd^\perp \cong \R\oplus\cd^\perp,
\end{equation}
where we identify $\lambda\cd$ with $\lambda\in\R$. Thus, $\cd=(1,0)$, and the inner product $g^{-1}$ is given by $g^{-1}=\diag(G(\cd),g^{-1}|_{\cd^\perp\times\cd^\perp})$. We moreover use the dual splitting
\[
  T \cong \R \oplus (\cd^\perp)^*,
\]
where we identify $(1,0)$ with the map $T^*\to\R$, $\cd=(1,0)\mapsto 1$, $\cd^\perp\ni\zeta'\mapsto 0$. Thus, $\cd^\sharp=(G(\cd),0)$. Writing $\zeta=(\zeta_0,\zeta')$ in the splitting~\eqref{EqCSplit}, thus $\zeta^\sharp=(G(\cd)\zeta_0,\zeta'{}^\sharp)$, we have
\begin{equation}
\label{EqCSplitMtx}
  G(\zeta) = G(\cd)\zeta_0^2 + G(\zeta'),\qquad
  \ell_{g,\cd,e}(\zeta) = 2\begin{pmatrix} (2-e)G(\cd)\zeta_0 & \zeta'{}^\sharp \\ -e G(\cd)\zeta' & G(\cd)\zeta_0 \end{pmatrix}.
\end{equation}

We first prove that there exists a fiber inner product on $T^*$ with respect to which $\ell_{g,\cd,e}(\zeta)$ is symmetric for all $\zeta$. We shall denote the adjoint of an operator $A$ with respect to an inner product $B$ by $A^{*B}$. Let us fix, as a reference, the positive definite fiber inner product
\[
  R := -2 G(\cd)^{-1}\cd^\sharp\otimes\cd^\sharp + g^{-1},
\]
which in the splitting~\eqref{EqCSplit} takes the form
\begin{equation}
\label{EqCInnerRef}
  R = \begin{pmatrix} -G(\cd) & 0 \\ 0 & g^{-1}|_{\cd^\perp\times\cd^\perp} \end{pmatrix}.
\end{equation}

\begin{lemma}[Fiber inner product]
\label{LemmaCInner}
  (Cf.\ \cite[Lemma~8.13]{HintzVasyKdSStability} and \cite[Proposition~4.5]{HintzPetersenVasyKdS}.) Let $e\neq 0$. Then the inner product
  \[
    B_{g,\cd,e}(\omega,\eta) := R(\omega,b_{g,\cd,e}\eta),\quad b_{g,\cd,e}=\Id-\frac{1-e}{G(\cd)}\cd\otimes\cd^\sharp,
  \]
  is the unique (up to scaling) inner product on $T^*$ relative to which $\ell_{g,\cd,e}(\zeta)$ is self-adjoint for all $\zeta\in T^*$. For $e>0$, it is positive definite.
\end{lemma}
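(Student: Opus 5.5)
Everything reduces to linear algebra in the splitting~\eqref{EqCSplit}, with $\ell_{g,\cd,e}(\zeta)$ in the block form~\eqref{EqCSplitMtx}. Write $\gamma:=G(\cd)<0$ and $\zeta=(\zeta_0,\zeta')$. The map $b_{g,\cd,e}$ acts on $\omega=(\omega_0,\omega')$ by $\omega\mapsto\omega-\frac{1-e}{\gamma}\la\cd^\sharp,\omega\ra\cd$. Since $\cd^\sharp=(\gamma,0)$, this gives $b_{g,\cd,e}=\diag(e,\Id)$. Combining with~\eqref{EqCInnerRef},
\[
  B_{g,\cd,e}=\begin{pmatrix} -e\gamma & 0 \\ 0 & g^{-1}|_{\cd^\perp\times\cd^\perp}\end{pmatrix}.
\]
Because $\cd$ is timelike, $g^{-1}$ restricted to $\cd^\perp$ is positive definite and $-\gamma>0$. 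Hence $B_{g,\cd,e}$ is a nondegenerate inner product for $e\neq0$, and it is positive definite exactly when $e>0$. This settles the last claim.

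For self-adjointness, I would use that a symmetric form $B$ makes a linear map $A$ self-adjoint if and only if $BA$ is a symmetric bilinear form; here $B$ is viewed as a map $T^*\to T$ and $BA$ as a form on $T^*$. It is enough to check this for two classes of $\zeta$. For $\zeta=(\zeta_0,0)$, the matrix $\ell$ is diagonal and $B$ is block diagonal, so there is nothing to check. For $\zeta=(0,\zeta')$, the off-diagonal blocks of $\ell$ are $\zeta'^\sharp\colon\cd^\perp\to\R$ (top right) and $-e\gamma\zeta'$ (bottom left). The form $B\ell$ then pairs $\omega_0$ and $\omega'$ in two ways:
\begin{itemize}
\item through the top-right block, giving $-e\gamma\,\la\zeta',\eta'\ra\,\omega_0$;
\item through the bottom-left block, giving $\la\omega',-e\gamma\zeta'\ra\,\eta_0$.
\end{itemize}
These two cross terms agree, so $B\ell(\zeta)$ is symmetric. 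Since $\ell$ is linear in $\zeta$, symmetry holds for all $\zeta$.

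For uniqueness, let $B'$ be any inner product for which every $\ell(\zeta)$ is self-adjoint.
\begin{itemize}
\item Take $\zeta=\cd$, so $\ell(\cd)=2\gamma\,\diag(2-e,1)$. For $e\neq1$ its two eigenspaces $\R\cd$ and $\cd^\perp$ have distinct eigenvalues and must be $B'$-orthogonal. So $B'=\diag(\beta_0,B'')$.
\item Take $\zeta=(0,\zeta')$. The symmetry of $B'\ell$ on cross terms forces $\beta_0\la\zeta',\eta'\ra=-e\gamma\,B''(\zeta',\eta')$ for all $\zeta',\eta'\in\cd^\perp$. Therefore $B''$ is a multiple of $g^{-1}|_{\cd^\perp}$, and $\beta_0$ is determined by that multiple. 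This yields $B'=cB_{g,\cd,e}$.
\end{itemize}
The only delicate point is the case $e=1$, where $\ell(\cd)$ is scalar and gives no orthogonality. There I would instead take $\zeta$ to be a general combination $\cd+\zeta'$ and use the off-diagonal structure to force block-diagonality: writing out the symmetry condition for the mixed block, with a rank-one perturbation, one checks that $B'(\cd,\cdot)$ must vanish on $\cd^\perp$. I expect this to be a routine but slightly fiddly verification, and the main obstacle in an otherwise mechanical computation.
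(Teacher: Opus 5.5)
Your computations of $b_{g,\cd,e}=\diag(e,\Id)$ and $B_{g,\cd,e}=\diag(-eG(\cd),g^{-1}|_{\cd^\perp\times\cd^\perp})$ in the splitting \eqref{EqCSplit} are correct. So are the positivity conclusion and the symmetry check of $B\ell(\zeta)$ on $\zeta\in\R\cd$ and $\zeta\in\cd^\perp$. For uniqueness with $e\neq1$ you take a different route from the paper. The paper writes the general $R$-self-adjoint $b$ as $\begin{pmatrix}x&-y^\sharp\\ G(\cd)y&Z\end{pmatrix}$ and kills $y$ using the $\cd^\perp$-block of $b\ell(\zeta)$. You instead get block-diagonality of $B'$ from the $B'$-orthogonality of the eigenspaces $\R\cd$ and $\cd^\perp$ of $\ell(\cd)$. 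Your cross-term identity $\beta_0\la\zeta',\eta'\ra=-eG(\cd)B''(\zeta',\eta')$ then matches the paper's $eZ=x\,\Id$. Your argument has the merit of not depending on the dimension, but it is unavailable exactly at $e=1$.

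That case is a genuine gap. Your suggestion to use $\zeta=\cd+\zeta'$ cannot help. The condition is linear in $\zeta$, and at $e=1$ one has $\ell(\cd)=2G(\cd)\Id$, which is self-adjoint for every form. So all the information sits in $\zeta=(0,\zeta')$.

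The step you called routine is the real content, and it is where $\dim M=4$ enters. Put $\kappa:=B'(\cd,\cdot)|_{\cd^\perp}$. For $\omega=(0,\omega')$ and $\eta=(0,\eta')$ one has $\ell(\zeta)\eta=2\la\zeta',\eta'\ra\cd$. Symmetry then reads $\kappa(\omega')\la\zeta',\eta'\ra=\kappa(\eta')\la\zeta',\omega'\ra$ for all $\zeta',\omega',\eta'\in\cd^\perp$. Choosing $0\neq\zeta'=\eta'\perp\omega'$, which is possible since $\dim\cd^\perp=3\geq2$, forces $\kappa=0$.

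This is not a formality. In $1+1$ dimensions with $e=1$, use a basis $\cd,\epsilon$ with $\epsilon\perp\cd$ and $\la\epsilon,\epsilon\ra=1$. Then every Gram matrix $\begin{pmatrix}a&k\\k&-a/G(\cd)\end{pmatrix}$ makes all $\ell(\zeta)$ self-adjoint, so uniqueness up to scaling fails.

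The rank-one argument above is exactly the paper's step that ``$y\otimes\zeta'{}^\sharp=\zeta'\otimes y^\sharp$ for all $\zeta'$ forces $y=0$''. Because it does not involve $e$, it gives block-diagonality for every $e\neq0$ at once. Your cross-term computation then finishes the proof, and the eigenspace step becomes unnecessary.
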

\begin{proof}
  We want to find $b_{g,\cd,e}=b_{g,\cd,e}^{*R}\in\End(T^*)$ such that
  \[
    R(\omega,b_{g,\cd,e}\ell_{g,\cd,e}(\zeta)\eta)=R(\ell_{g,\cd,e}(\zeta)\omega,b_{g,\cd,e}\eta)
  \]
  for all $\zeta,\omega,\eta\in T^*$, or equivalently
  \[
    b_{g,\cd,e}\ell_{g,\cd,e}(\zeta) = (b_{g,\cd,e}\ell_{g,\cd,e}(\zeta))^{*R}.
  \]
  The general form of a self-adjoint (with respect to $R$) endomorphism $b_{g,\cd,e}$ is
  \[
    b_{g,\cd,e} = \begin{pmatrix} x & -y^\sharp \\ G(\cd)y & Z \end{pmatrix}
  \]
  where $y\in\cd^\perp$, and $Z\in\End(\cd^\perp)$ is self-adjoint with respect to $g^{-1}|_{\cd^\perp\times\cd^\perp}$. We then compute
  \[
    b_{g,\cd,e}\ell_{g,\cd,e}(\zeta)=
      2\begin{pmatrix}
        (2-e)G(\cd)\zeta_0 x+e G(\cd)\la\zeta',y\ra & x\zeta'{}^\sharp-G(\cd)\zeta_0 y^\sharp \\
        (2-e)G(\cd)^2\zeta_0 y-e G(\cd)Z(\zeta') & G(\cd)(y\otimes\zeta'{}^\sharp+\zeta_0 Z)
      \end{pmatrix}.
  \]
  The symmetry of the $(2,2)$ component with respect to $R$ requires $y\otimes\zeta'{}^\sharp=\zeta'\otimes y^\sharp$ for all $\zeta'$, which forces $y=0$. It then remains to analyze the conditions on $x,Z$ such that $-e G(\cd)Z(\zeta')=-x\zeta'G(\cd)$, i.e.\ $e Z(\zeta')=x\zeta'$. Up to scaling, we get $Z=\Id_{\cd^\perp}$ and $x=e$.
\end{proof}

The inner product $B_{g,\cd,e}$ depends smoothly on $G$ and $\cd$. In terms of $g^{-1}$, it is given by
\begin{equation}
\label{EqCInnerProd}
\begin{split}
  &B_{g,\cd,e}(\omega,\eta) = g^{-1}(\omega,\tilde b_{g,\cd,e}\eta),\quad
    g^{-1}(\omega,\eta) = B_{g,\cd,e}(\omega,\tilde b_{g,\cd,e}^{-1}\eta), \\
  &\qquad \tilde b_{g,\cd,e}=\Id-\frac{1+e}{G(\cd)}\cd\otimes\cd^\sharp,\quad
    \tilde b_{g,\cd,e}^{-1}=\Id-\frac{1+e}{e G(\cd)}\cd\otimes\cd^\sharp.
\end{split}
\end{equation}

\begin{rmk}[Why $e=0$ is excluded]
\label{RmkCNoe0}
  The operator $\ell_{g,\cd,0}(\zeta)$ in~\eqref{EqCSplitMtx} has eigenvalues $4 G(\cd)\zeta_0=4\la\zeta,\cd\ra$ and $2 G(\cd)\zeta_0=2\la\zeta,\cd\ra$, suggesting that $\ell_0$ is essentially the symbol of a first order transport operator along $2\cd^\sharp$. But for $\zeta=(0,\zeta')\neq 0$, $\ell_0(\zeta)$ has a $2\times 2$ Jordan block structure, and hence cannot be symmetric with respect to any positive definite inner product. Without a convenient fiber inner product as provided by Lemma~\ref{LemmaCInner}, it is not clear how to prove coercive estimates for the zero section propagation in~\S\ref{SE}. Nonetheless, we shall eventually choose $e>0$ to be very small, and thus one can still roughly think of $\ell_0$ as the symbol of a transport operator along $\cd^\sharp$.
\end{rmk}

\begin{lemma}[Timelike character of $\ell_{g,\cd,e}$]
\label{LemmaCTimelike}
  (Cf.\ \cite[Lemma~8.15]{HintzVasyKdSStability}.) Let $e\in(0,1)$, and let $\zeta\neq 0$. If $\zeta$ is in the same, resp.\ opposite causal cone as $\cd$, then $\ell_{g,\cd,e}(\zeta)$ is negative, resp.\ positive definite with respect to $B_{g,\cd,e}$.
\end{lemma}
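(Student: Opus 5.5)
The plan is to compute the quadratic form $\omega\mapsto B_{g,\cd,e}(\omega,\ell_{g,\cd,e}(\zeta)\omega)$ explicitly in the splitting~\eqref{EqCSplit}, and then read off its sign from the causal character of $\zeta$. By Lemma~\ref{LemmaCInner}, $\ell_{g,\cd,e}(\zeta)$ is $B_{g,\cd,e}$-self-adjoint. Definiteness of this quadratic form is therefore exactly the claim.

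\textbf{Step 1: the inner product in the splitting.} We have $\cd=(1,0)$ and $\cd^\sharp=(G(\cd),0)$. Hence $b_{g,\cd,e}(\omega_0,\omega')=(e\omega_0,\omega')$. Combined with~\eqref{EqCInnerRef}, this gives
\[
  B_{g,\cd,e}=\diag\bigl(-eG(\cd),\ g^{-1}|_{\cd^\perp\times\cd^\perp}\bigr).
\]
Write $\gamma:=-G(\cd)>0$ and $|\omega'|^2:=G(\omega')\geq 0$. The form $g^{-1}|_{\cd^\perp}$ is positive definite since $\cd$ is timelike, so $B_{g,\cd,e}$ is positive definite for $e>0$.

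\textbf{Step 2: the quadratic form.} Using the matrix~\eqref{EqCSplitMtx} for $\ell_{g,\cd,e}(\zeta)$, with $\zeta=(\zeta_0,\zeta')$ and $\omega=(\omega_0,\omega')$, a short computation gives
\[
  B_{g,\cd,e}(\omega,\ell_{g,\cd,e}(\zeta)\omega)
  = -2\Bigl(e(2-e)\gamma^2\zeta_0\,\omega_0^2 - 2e\gamma\,\omega_0\la\zeta',\omega'\ra + \gamma\zeta_0|\omega'|^2\Bigr).
\]

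\textbf{Step 3: the causal conditions.} Since $\la\zeta,\cd\ra=G(\cd)\zeta_0=-\gamma\zeta_0$, the covector $\zeta$ lies in the causal cone of $\cd$ if and only if $\zeta_0>0$, and in the opposite cone if and only if $\zeta_0<0$. Causality means $G(\zeta)=-\gamma\zeta_0^2+|\zeta'|^2\leq 0$, i.e.\ $|\zeta'|^2\leq\gamma\zeta_0^2$. In particular, $\zeta\neq 0$ causal forces $\zeta_0\neq 0$.

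\textbf{Step 4: definiteness.} Consider the bracket in Step 2 as a quadratic form in $(\omega_0,\omega')$. Bound the cross term by Cauchy--Schwarz in the positive definite space $(\cd^\perp,g^{-1})$:
\[
  \bigl|2e\gamma\,\omega_0\la\zeta',\omega'\ra\bigr|\leq 2e\gamma|\zeta'|\,|\omega_0|\,|\omega'|.
\]
For $\zeta_0>0$, the bracket is therefore positive definite provided the $2\times 2$ discriminant condition
\[
  e^2\gamma^2|\zeta'|^2 < e(2-e)\gamma^2\zeta_0\cdot\gamma\zeta_0
\]
holds, i.e.\ $e|\zeta'|^2<(2-e)\gamma\zeta_0^2$. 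This follows from $|\zeta'|^2\leq\gamma\zeta_0^2$ together with $e<2-e$, which holds because $e\in(0,1)$. The positive definite bracket together with the prefactor $-2$ gives negative definiteness in the same-cone case. Replacing $\zeta$ by $-\zeta$ gives positive definiteness in the opposite-cone case.

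The only delicate point is Step 4. One must check that the cross term is dominated uniformly up to the light cone, where $|\zeta'|^2=\gamma\zeta_0^2$. It is exactly the strict inequality $e<2-e$ (i.e.\ $e<1$) that makes this work.
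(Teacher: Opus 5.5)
Your proof is correct and is essentially the paper's argument. Both work in the splitting $T^*=\R\cd\oplus\cd^\perp$ and reduce definiteness to the same inequality $(2-e)\gamma\zeta_0^2>e\,G(\zeta')$, where $\gamma=-G(\cd)$, which follows from causality of $\zeta$ together with $e<1$; the paper further splits off $\zeta'{}^\perp$ and checks the trace and determinant of the $2\times 2$ block on $\R\cd\oplus\R\zeta'$, using Lemma~\ref{LemmaCInner} to know the eigenvalues are real, whereas you compute the $B_{g,\cd,e}$-quadratic form explicitly and conclude via Cauchy--Schwarz, which avoids that appeal.
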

\begin{proof}
  We use the expression~\eqref{EqCSplitMtx} of $\ell_{g,\cd,e}(\zeta)$, $\zeta=\zeta_0\cd+\zeta'$, $\zeta'\perp\cd$. It suffices to consider causal $\zeta\neq 0$ with $\zeta,\cd$ in different causal cones, so $\la\zeta,\cd\ra=\zeta_0 G(\cd)>0$, i.e., $\zeta_0<0$. For $\zeta'=0$, the matrix of $\ell_{g,\cd,e}(\zeta)$ in~\eqref{EqCSplitMtx} is block diagonal with positive (scalar) diagonal entries. For $\zeta'\neq 0$, we split $\cd^\perp=\R\zeta'\oplus\zeta'{}^\perp$, so
  \begin{equation}
  \label{EqCTimelikeEll}
    \frac12\ell_{g,\cd,e}(\zeta)=
      \begin{pmatrix}
        (2-e)G(\cd)\zeta_0 & G(\zeta') & 0 \\
        -e G(\cd) & G(\cd)\zeta_0 & 0 \\
        0 & 0 & G(\cd)\zeta_0
      \end{pmatrix}.
  \end{equation}
  Since by Lemma~\ref{LemmaCInner} all eigenvalues of $\frac12\ell_{g,\cd,e}(\zeta)$ are real, we only need to check the positivity of the trace and the determinant of the $2\times 2$ minor of the right-hand side of~\eqref{EqCTimelikeEll}. The trace is $(3-e)G(\cd)\zeta_0>0$. Using $G(\zeta)=\zeta_0^2 G(\cd)+G(\zeta')\leq 0$, so $-G(\zeta')\geq\zeta_0^2 G(\cd)$, its determinant is
  \[
    \bigl(-(2-e)G(\cd)\zeta_0^2-e G(\zeta')\bigr)\cdot(-G(\cd))\geq 2(1-e)G(\cd)^2\zeta_0^2 > 0.
  \]
  The proof is complete.
\end{proof}

\begin{cor}[A positive definite operator for nonzero $\zeta$]
\label{CorCPos}
  (Cf.\ \cite[Corollary~8.16]{HintzVasyKdSStability}.) Let $e\in(0,1)$. Then for all sufficiently large $C>0$, the operator $C\ell_{g,\cd,e}(\zeta)^2+G(\zeta)\Id$ is positive definite (with respect to $B_{g,\cd,e}$) for all $0\neq\zeta\in T^*$.
\end{cor}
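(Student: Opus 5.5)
The plan is a compactness argument on the unit sphere of $T^*$ (with respect to any fixed Euclidean norm, e.g.\ the one induced by $R$), using homogeneity. Both $\ell(\zeta)^2$ and $G(\zeta)$ are homogeneous of degree $2$ in $\zeta$, where we abbreviate $\ell=\ell_{g,\cd,e}$ and $B=B_{g,\cd,e}$. It therefore suffices to prove positive definiteness for $\zeta$ in the unit sphere $S\subset T^*$, which is compact.

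\textbf{Step 1: where $\ell(\zeta)^2$ is positive.} By Lemma~\ref{LemmaCInner}, $\ell(\zeta)$ is $B$-self-adjoint, so $\ell(\zeta)^2$ is $B$-self-adjoint and $B$-positive semidefinite. It is positive definite exactly when $\ell(\zeta)$ is invertible. We check that $\ell(\zeta)$ is invertible for all $\zeta\neq 0$ using the block form~\eqref{EqCSplitMtx}, split further as in~\eqref{EqCTimelikeEll} when $\zeta'\neq 0$. The $\zeta'{}^\perp$ block is $2G(\cd)\zeta_0$, and the $2\times 2$ block has determinant
\[
  4G(\cd)\bigl((2-e)G(\cd)\zeta_0^2+eG(\zeta')\bigr).
\]
Since $G(\zeta')>0$ for spacelike $\zeta'\perp\cd$ and $G(\cd)<0$, this determinant is nonzero when $\zeta_0\neq0$ (as $e\in(0,1)$). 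It is also nonzero when $\zeta_0=0$, $\zeta'\neq0$, because $eG(\zeta')\neq0$. The difficulty is the $\zeta'{}^\perp$ block, which vanishes when $\zeta_0=0$. So $\ell(\zeta)$ is \emph{not} invertible for $\zeta\in\cd^\perp$: it has a kernel along $\zeta'{}^\perp$. Hence Step 1 alone does not suffice, and we need $G$ on this set.

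\textbf{Step 2: combine with $G$.} Set
\[
  S_0=\{\zeta\in S\colon \ell(\zeta)\ \text{not invertible}\}.
\]
By Step 1, $S_0\subset\{\zeta_0=0\}$, and on $\{\zeta_0=0\}$ we have $G(\zeta)=G(\zeta')>0$. Hence $G(\zeta)\Id$ is positive definite near $S_0$; more precisely, $G\geq c>0$ on a neighborhood $U$ of $S_0$ in $S$. On $S\setminus U$, compactness and continuity of eigenvalues give $\ell(\zeta)^2\geq c'\Id$ (in the $B$ sense) for some $c'>0$, while $|G|\leq C_1$. Therefore any $C>C_1/c'$ works on $S\setminus U$.

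\textbf{Step 3: the neighborhood $U$, the main obstacle.} On $U$ we have $C\ell^2+G\geq G\geq c$, because $C\ell^2\geq0$. So positivity holds there for every $C\geq0$. Choosing $U$ first and then $C$ large completes the proof.

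The only genuine subtlety is the bookkeeping in Step 1: identifying exactly where $\ell$ degenerates, namely on $\zeta_0=0$ with the $\zeta'{}^\perp$ direction, and checking that $G$ is positive there. For timelike $\cd$, $\cd^\perp$ is spacelike, so this holds.
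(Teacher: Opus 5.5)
Your overall structure is sound: reduce to the compact unit sphere, isolate where $\ell(\zeta)$ fails to be invertible, use $G>0$ near there and compactness elsewhere. But Step~1 contains a false claim on which Step~2 relies.

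The determinant $4G(\cd)\bigl((2-e)G(\cd)\zeta_0^2+eG(\zeta')\bigr)$ is computed correctly. However, the bracket is the sum of the negative term $(2-e)G(\cd)\zeta_0^2$ and the positive term $eG(\zeta')$. So it does \emph{not} stay nonzero for $\zeta_0\neq 0$: it vanishes on the entire cone $eG(\zeta')=-(2-e)G(\cd)\zeta_0^2$, $\zeta_0\neq 0$. For instance, on Minkowski space with $\cd=\dd t$ and $\zeta=\zeta_0\,\dd t+\xi\,\dd x^1$, the operator $\ell(\zeta)$ is singular whenever $e\xi^2=(2-e)\zeta_0^2$. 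This cone is precisely the boundary of the region of definiteness in Lemma~\ref{LemmaCTimelikeSp}. Consequently $S_0\not\subset\{\zeta_0=0\}$, and your justification of ``$G>0$ on $S_0$'' fails as written. If $U$ were taken to be a neighborhood of $\{\zeta_0=0\}\cap S$ only, the lower bound $\ell(\zeta)^2\geq c'\Id$ on $S\setminus U$ would fail at the points of this cone.

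The statement you actually need is nonetheless true, and the repair is short. On that cone, $G(\zeta)=G(\cd)\zeta_0^2+G(\zeta')=-\frac{2(1-e)}{e}G(\cd)\zeta_0^2>0$. More conceptually, Lemma~\ref{LemmaCTimelike} shows that $\ell(\zeta)$ is definite for every causal $\zeta\neq 0$, so $S_0$ consists of spacelike covectors, where $G>0$. With Step~1 replaced by this observation, your Steps~2--3 go through.

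Once you use Lemma~\ref{LemmaCTimelike}, the neighborhood $U$ becomes unnecessary, and you arrive at the paper's argument, which splits the sphere by causal character rather than by invertibility of $\ell$. For spacelike $\zeta$ one has $C\ell(\zeta)^2+G(\zeta)\Id\geq G(\zeta)\Id>0$ for every $C\geq 0$; positivity is pointwise, so no uniform bound is needed there. On the compact set $K$ of causal unit covectors, definiteness of $\ell$ gives $\ell(\zeta)^2\geq c\Id$ with $c>0$ and $-G\leq c'$, so any $C>c'/c$ works.
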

\begin{proof}
  By homogeneity, it suffices to consider the compact set $S^*$ of $\zeta\in T^*$ with $R(\zeta)=1$. For spacelike such $\zeta$, we have $G(\zeta)>0$, so $C\ell_{g,\cd,e}(\zeta)^2+G(\zeta)\Id$ is positive definite for all $C\geq 0$. For the closed subset $K\subset S^*$ consisting of causal vectors on the other hand, Lemma~\ref{LemmaCTimelike} implies that $\inf_{\zeta\in K}\ell_{g,\cd,e}(\zeta)^2=:c>0$, while $\sup_{\zeta\in K}(-G(\zeta))=:c'<\infty$. Hence, any $C>c'/c$ works.
\end{proof}

\begin{lemma}[Ellipticity of $p_{g,\cd,e}(\zeta)$ at nonzero $\zeta$]
\label{LemmaCInv}
  (Cf.\ \cite[Lemma~8.5]{HintzVasyKdSStability}.) Let $e\in(0,1)$. Then the linear map $p_{g,\cd,e}(\zeta)\in\End(T^*)$ is invertible for all $\zeta\neq 0$.
\end{lemma}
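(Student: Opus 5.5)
We must show $p(\zeta)=G(\zeta)-i\ell(\zeta)$ is invertible for $\zeta\neq0$, where we abbreviate $\ell=\ell_{g,\cd,e}$ and $B=B_{g,\cd,e}$. The plan is to split into the causal and the spacelike case, using in both the fact (Lemma~\ref{LemmaCInner}) that $\ell(\zeta)$ is self-adjoint with respect to the positive definite inner product $B$ when $e\in(0,1)$. Self-adjointness makes $\ell(\zeta)$ diagonalizable in a $B$-orthonormal basis with real eigenvalues $\mu_1,\ldots,\mu_4$. In that basis $p(\zeta)$ is diagonal with entries $G(\zeta)-i\mu_j$, since $G(\zeta)$ is a scalar. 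So $p(\zeta)$ fails to be invertible exactly when some $\mu_j=0$ and at the same time $G(\zeta)=0$. Since $\zeta\neq0$, it therefore suffices to show the following: if $G(\zeta)=0$, then $\ell(\zeta)$ is invertible.

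When $\zeta\neq0$ is null, it is in particular causal, so it lies in one of the two causal cones. Because $\cd$ is timelike, $\zeta$ lies either in the same causal cone as $\cd$ or in the opposite one. Lemma~\ref{LemmaCTimelike} then says that $\ell(\zeta)$ is negative, resp.\ positive, definite with respect to $B$. Either way all $\mu_j\neq0$, so $\ell(\zeta)$ is invertible.

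This gives a genuinely short argument. Alternatively, one can argue through the corollary: $p(\zeta)^{*B}p(\zeta)=G(\zeta)^2+\ell(\zeta)^2$, since $G$ is real and $\ell$ is $B$-symmetric, so the cross terms cancel. For spacelike $\zeta$ the $G^2>0$ term makes this positive definite. For causal $\zeta$ the term $\ell(\zeta)^2$ is positive definite by Lemma~\ref{LemmaCTimelike}. Hence $p(\zeta)$ is injective, and therefore invertible.

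The only point needing care is that $\ell(\zeta)^2$ is positive definite, and not merely semidefinite, for every causal $\zeta\neq0$. This is precisely the definiteness in Lemma~\ref{LemmaCTimelike}. There the restriction $e<1$ enters through the determinant bound $2(1-e)G(\cd)^2\zeta_0^2>0$, and $\zeta_0\neq0$ holds because a causal vector cannot be orthogonal to the timelike $\cd$ unless it is zero.
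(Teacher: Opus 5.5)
Your proposal is correct and is essentially the paper's proof. The paper also treats $G(\zeta)\neq 0$, where $G(\zeta)^{-1}p(\zeta)$ is the identity plus a $B$-skew-adjoint endomorphism, separately from $G(\zeta)=0$, where $i p(\zeta)=\ell(\zeta)$ is definite by Lemma~\ref{LemmaCTimelike}; your diagonalization of $\ell(\zeta)$ in a $B$-orthonormal eigenbasis handles both cases at once.
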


This sharpens \cite[Proposition~4.8]{HintzPetersenVasyKdS}.

\begin{proof}[Proof of Lemma~\usref{LemmaCInv}]
  If $G(\zeta)\neq 0$, then $G(\zeta)^{-1}p_{g,\cd,e}(\zeta)=I-i G(\zeta)^{-1}\ell_{g,\cd,e}(\zeta)$ is the sum of the identity map $I$ and a skew-adjoint endomorphism, and hence invertible. If $G(\zeta)=0$, i.e., $\zeta$ is causal, then $i p_{g,\cd,e}(\zeta)=\ell_{g,\cd,e}(\zeta)$ is positive or negative definite by Lemma~\ref{LemmaCTimelike}, and therefore invertible.
\end{proof}

We end our fiberwise considerations with the following generalization of Lemma~\ref{LemmaCTimelike}:

\begin{lemma}[Timelike character of $\ell_{g,\cd,e}(\zeta)$ for certain spacelike $\zeta$]
\label{LemmaCTimelikeSp}
  Let $e\in(0,1)$, and let $\zeta\neq 0$. Then $\ell_{g,\cd,e}(\zeta)$ is positive, resp.\ negative definite if and only if
  \begin{equation}
  \label{EqCTimelikeSp}
    {\pm}\la\cd,\zeta\ra>0,\qquad
    G(\zeta) < -\frac{2(1-e)}{e G(\cd)}\la\cd,\zeta\ra^2.
  \end{equation}
\end{lemma}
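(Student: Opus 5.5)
We work fiberwise in the $g^{-1}$-orthogonal splitting~\eqref{EqCSplit}, writing $\zeta=\zeta_0\cd+\zeta'$ with $\zeta'\perp\cd$. We will use that $G(\cd)<0$, so $g^{-1}$ is positive definite on $\cd^\perp$, and that $\la\cd,\zeta\ra=G(\cd)\zeta_0$. By Lemma~\ref{LemmaCInner}, $\ell_{g,\cd,e}(\zeta)$ is self-adjoint with respect to the positive definite inner product $B_{g,\cd,e}$ (here $e>0$). Hence it is diagonalizable with real eigenvalues, and it is positive (resp.\ negative) definite if and only if all its eigenvalues are positive (resp.\ negative). So the whole proof reduces to locating the eigenvalues.

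\emph{Case $\zeta'\neq 0$.} Split further $\cd^\perp=\R\zeta'\oplus\zeta'{}^\perp$. Then $\frac12\ell_{g,\cd,e}(\zeta)$ has exactly the block form~\eqref{EqCTimelikeEll}; its derivation in the proof of Lemma~\ref{LemmaCTimelike} used no causality assumption on $\zeta$. Since $\dim\cd^\perp=3$, the space $\zeta'{}^\perp$ is nontrivial, so $G(\cd)\zeta_0=\la\cd,\zeta\ra$ is always an eigenvalue. The remaining two eigenvalues are those of the $2\times2$ block
\[
  \begin{pmatrix} (2-e)G(\cd)\zeta_0 & G(\zeta') \\ -eG(\cd) & G(\cd)\zeta_0 \end{pmatrix}.
\]
These are real, so both have a given sign if and only if the trace has that sign and the determinant is positive. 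The trace is $(3-e)\la\cd,\zeta\ra$, which has the same sign as the third eigenvalue $\la\cd,\zeta\ra$; this yields the first condition ${\pm}\la\cd,\zeta\ra>0$ in~\eqref{EqCTimelikeSp}. The determinant is
\[
  G(\cd)\bigl((2-e)G(\cd)\zeta_0^2+eG(\zeta')\bigr).
\]
Since $G(\cd)<0$, it is positive if and only if $(2-e)G(\cd)\zeta_0^2+eG(\zeta')<0$. We now substitute $G(\zeta')=G(\zeta)-G(\cd)\zeta_0^2$, which turns this into
\[
  2(1-e)G(\cd)\zeta_0^2+eG(\zeta)<0.
\]
Dividing by $e>0$ and using $G(\cd)\zeta_0^2=\la\cd,\zeta\ra^2/G(\cd)$ gives exactly the second condition in~\eqref{EqCTimelikeSp}.

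\emph{Case $\zeta'=0$.} Then $\zeta_0\neq0$, and by~\eqref{EqCSplitMtx} the matrix $\frac12\ell_{g,\cd,e}(\zeta)$ is diagonal with entries $(2-e)\la\cd,\zeta\ra$ and $\la\cd,\zeta\ra$. These have the common sign of $\la\cd,\zeta\ra\neq0$, so definiteness (with the corresponding sign) holds automatically. It therefore remains to check that~\eqref{EqCTimelikeSp} also holds automatically. Here $\zeta$ is timelike, so $G(\zeta)<0$, while the right-hand side of the second inequality is positive because $-2(1-e)/(eG(\cd))>0$. Hence the second inequality holds, and the sign condition matches the sign of $\la\cd,\zeta\ra$. (Equivalently, the determinant computation above with $G(\zeta')=0$ gives $(2-e)G(\cd)^2\zeta_0^2>0$.)

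The only delicate point is bookkeeping of signs from $G(\cd)<0$ when dividing, and remembering that the third eigenvalue $\la\cd,\zeta\ra$ is what fixes the sign while the determinant condition is sign-independent. I expect no genuine obstacle; the argument is essentially the proof of Lemma~\ref{LemmaCTimelike} with the inequality $G(\zeta)\leq0$ replaced by an equivalence.
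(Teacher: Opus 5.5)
Your proof is correct and is essentially the paper's argument: you use the block form~\eqref{EqCTimelikeEll}, the sign of the $\zeta'{}^\perp$-eigenvalue $\la\cd,\zeta\ra$ and of the trace, and the determinant condition rewritten via $G(\zeta')=G(\zeta)-G(\cd)^{-1}\la\cd,\zeta\ra^2$. The only difference is organizational: you split into $\zeta'=0$ versus $\zeta'\neq 0$ rather than causal versus spacelike, so the determinant equivalence handles causal $\zeta$ with $\zeta'\neq 0$ directly and you do not need to cite Lemma~\ref{LemmaCTimelike}.
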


For fixed $\zeta$ with $\la\cd,\zeta\ra\neq 0$, the second condition in~\eqref{EqCTimelikeSp} is automatically satisfied when $e>0$ is sufficiently small. This re-affirms the interpretation of $\ell_{g,\cd,e}$ in the limit $e\searrow 0$ as the symbol of transport along $\cd^\sharp$.

\begin{proof}[Proof of Lemma~\usref{LemmaCTimelikeSp}]
  For causal $\zeta$, we have $G(\zeta)\leq 0$, while $-2/G(\cd)>0$, so the conditions~\eqref{EqCTimelikeSp} are satisfied, and the positive/negative definiteness of $\ell_{g,\cd,e}(\zeta)$ was shown in Lemma~\ref{LemmaCTimelike}. For spacelike $\zeta=(\zeta_0,\zeta')$ on the other hand, split $\cd^\perp=\R\zeta'\oplus\zeta'{}^\perp$ and consider again~\eqref{EqCTimelikeEll}. The positivity of the $(3,3)$ entry is equivalent to $\la\cd,\zeta\ra=G(\cd)\zeta_0>0$. Under this condition, the trace of the $2\times 2$ minor is positive, too, and its determinant is a positive multiple of $-(2-e)G(\cd)\zeta_0^2-e G(\zeta')$ which we wish to be positive. Upon inserting $G(\cd)\zeta_0^2=G(\cd)^{-1}\la\cd,\zeta\ra^2$ and $G(\zeta')=G(\zeta)-G(\cd)^{-1}\la\cd,\zeta\ra^2$, one obtains the condition~\eqref{EqCTimelikeSp}.
\end{proof}

Near critical points of $\cd^\sharp$, where the transport along $\cd^\sharp$ degenerates, we need information about the skew-adjoint part of the operator $L_{g,\cd,e,h}$ in~\eqref{EqCOp}. This involves first derivatives of $\cd$:

\begin{lemma}[Subprincipal symbol]
\label{LemmaCSubpr}
  Let $e\neq 0$. Define adjoints using the metric volume density $|\dd g|$ and the fiber inner product $B_{g,\cd,e}$. Then the skew-adjoint part $S_{g,\cd,e}=\frac{1}{2 i h}(L_{g,\cd,e,h}-L_{g,\cd,e,h}^*)$ is the bundle map with matrix elements
  \begin{align*}
    (S_{g,\cd,e})_i{}^k &= -\bigl(\cd^k{}_{;i}+\cd_i{}^{;k}+\cd_j{}^{;j}\delta_i^k\bigr) \\
      &\quad + \frac{1+e}{G(\cd)}\Bigl[-\Bigl(\cd_j{}^{;j}+\frac{2(1-e)}{e G(\cd)}\cd_j\cd_m\cd^{j;m}\Bigr)\cd_i\cd^k + 2 e^{-1}\cd_i\cd_j\cd^{k;j} + \cd^j(\cd_{j;i}-\cd_{i;j})\cd^k \Bigr].
  \end{align*}
\end{lemma}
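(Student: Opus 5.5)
We compute the adjoint $L^*$ explicitly and subtract. Since $L_{g,\cd,e,h}=4ih\delta_g(\cd\otimes_s\cdot)+2ihe\,\dd\iota_{\cd^\sharp}$ is first order, $S_{g,\cd,e}$ is a priori first order. Its principal part vanishes by Lemma~\ref{LemmaCInner}: $\ell_{g,\cd,e}(\zeta)$ is $B_{g,\cd,e}$-self-adjoint, and $\sigmah(L)=\ell$ is the principal symbol of $L$ (the factors of $i$ are arranged exactly so). Hence $S_{g,\cd,e}$ is a zeroth order bundle map, and it suffices to compute it in abstract index notation.

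\emph{Step 1: $g^{-1}$-adjoints.} With respect to $|\dd g|$ and the inner product $g^{-1}$ we have $\delta_g^*=(\delta_g)^{*}$ on symmetric 2-tensors (with the trace-weighted inner product) and $\dd^*=\delta_g$. Write
\[
  (L\omega)_i = 2ih\bigl(-2(\cd_{(i}\omega_{j)})^{;j}+e\,(\cd^k\omega_k)_{;i}\bigr).
\]
Expanding by the Leibniz rule gives
\[
  -(\cd_i\omega_j+\cd_j\omega_i)^{;j}=-\cd_i\omega_j{}^{;j}-\cd_{i}{}^{;j}\omega_j-\cd_j\omega_i{}^{;j}-\cd_j{}^{;j}\omega_i,
\]
and similarly for the second term. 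The $g^{-1}$-adjoint $L^{*g}$ is obtained by integration by parts. The first order terms then reproduce the same principal symbol up to transposition, and the zeroth order terms pick up derivatives of $\cd$. From this we obtain an explicit expression $\frac{1}{2ih}(L-L^{*g})=A_1+A_0$, where $A_1$ is first order with symbol $\tfrac12(\ell-\ell^{T_g})$ and $A_0$ is a bundle map linear in $\nabla\cd$.

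\emph{Step 2: change of inner product.} By~\eqref{EqCInnerProd}, $B=g^{-1}(\cdot,\tilde b\,\cdot)$, so $L^{*B}=\tilde b^{-1}L^{*g}\tilde b$. Since $\tilde b=\Id-\frac{1+e}{G(\cd)}\cd\otimes\cd^\sharp$ depends on $z$, conjugating the first order operator $L^{*g}$ produces commutator terms:
\[
  \tilde b^{-1}L^{*g}\tilde b=\tilde b^{-1}\sigma\text{-part}\,\tilde b+\tilde b^{-1}[L^{*g},\tilde b].
\]
The commutator $[L^{*g},\tilde b]$ is zeroth order and involves $\nabla\tilde b$. That in turn involves $\cd_{i;j}$ and $\nabla G(\cd)=2\cd^m\cd_{m;j}$, and the latter is the source of the $\frac{2(1-e)}{eG(\cd)}\cd_j\cd_m\cd^{j;m}$ term. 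The first order parts cancel against those of $L$ by Lemma~\ref{LemmaCInner}. As a consistency check, one can verify this cancellation directly using $\tilde b^{-1}$ from~\eqref{EqCInnerProd}.

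\emph{Step 3: collect terms.} Next we collect all zeroth order contributions: $A_0$, the commutator terms, and the terms from $\tilde b^{-1}(\cdots)\tilde b$ acting on the lower order part of $L^{*g}$. We organize them by their tensor structure ($\delta_i^k$, $\cd_i\cd^k$, $\cd_i\cd_j\cd^{k;j}$, antisymmetric part $\cd_{[j;i]}$) and simplify using $\tilde b^{-1}\cd=-e^{-1}\cd$ and $\cd^\sharp\tilde b$-relations. The factor $e^{-1}$ in front of $\cd_i\cd_j\cd^{k;j}$ arises exactly from $\tilde b^{-1}$.

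The main obstacle is bookkeeping in Step~2. I would mitigate it by working in the splitting~\eqref{EqCSplit} at a point, using a frame in which $\cd$ is the first basis covector and $\cd^\perp$ is orthonormal. One then checks each block: the $\cd\cd$ block, the mixed blocks, and the $\cd^\perp\cd^\perp$ block. Finally, one verifies the $e$-dependent coefficients against the stated formula. A sanity check is the Minkowski case with $\cd$ constant, where $S=0$. Another is $\cd=\dd f$ closed, where the antisymmetric term drops.
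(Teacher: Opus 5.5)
Your plan is correct and is essentially the paper's proof: use the $B_{g,\cd,e}$-self-adjointness of $\ell_{g,\cd,e}$ to reduce to a zeroth-order computation, write $L^{*B}=\tilde b^{-1}L^{*g}\tilde b$ with $(2ih)^{-1}L^{*g}=-e\cd\delta_g-2\iota_{\cd^\sharp}\delta_g^*$, collect the terms where derivatives fall on $\cd$ and on $\tilde b$ (the latter giving the $\nabla G(\cd)$ contributions), and finally apply $\tilde b^{-1}$ using $\tilde b^{-1}\cd=-e^{-1}\cd$. One correction to your Step~2: the operator commutator $[L^{*g},\tilde b]$ is \emph{first} order, not zeroth order, because the principal symbol $\ell^{*g}$ of $L^{*g}$ does not commute with $\tilde b$ (indeed $\tilde b^{-1}[\ell^{*g},\tilde b]=\ell-\ell^{*g}\neq 0$); only the part in which the derivative lands on $\tilde b$ is zeroth order, and the paper avoids this bookkeeping altogether by evaluating $(L-L^{*B})\omega$ at a point $p$ for a 1-form $\omega$ with $\nabla\omega(p)=0$.
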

\begin{proof}
  Since $\ell_{g,\cd,e}=\sigmah^1(L_{g,\cd,e,h})$ is self-adjoint, the operator $L_{g,\cd,e,h}-L_{g,\cd,e,h}^*$, which a priori is of first order, is in fact of order zero. In its calculation, we can thus take $\omega$ to be a smooth 1-form which is covariantly constant at a point $p\in M$ and evaluate $(L_{g,\cd,e,h}-L_{g,\cd,e,h}^*)\omega$ at $p$, which means dropping all terms involving differentiation of $\omega$. Making the fiber inner products explicit, we have, in the notation~\eqref{EqCInnerProd},
  \[
    L_{g,\cd,e,h}^{*B_{g,\cd,e}} = \tilde b_{g,\cd,e}^{-1}L_{g,\cd,e,h}^{*g^{-1}}\tilde b_{g,\cd,e},\quad
    (2 i h)^{-1}L_{g,\cd,e,h}^{*g^{-1}} = -e\cd\delta_g-2\iota_{\cd^\sharp}\delta_g^*,
  \]
  as follows easily from~\eqref{EqCOp}. At $p$, we have
  \begin{equation}
  \label{EqCSubpr1}
    (2 i h)^{-1}(L_{g,\cd,e,h}\omega)_i = (e\cd^k{}_{;i}-\cd_i{}^{;k})\omega_k - \cd_j{}^{;j}\omega_i.
  \end{equation}
  The action of $(2 i h)^{-1}L_{g,\cd,e,h}^{*g^{-1}}$ on $\tilde b_{g,\cd,e}\omega$ with $(\tilde b_{g,\cd,e}\omega)_j=\omega_j-\frac{1+e}{G(\cd)}\cd_j\cd^k\omega_k$ has $i$-th component given at $p$ by
  \[
    -\frac{1+e}{G(\cd)}\omega_k\Bigl[\Bigl(e\cd_j{}^{;j}+\frac{2(1-e)}{G(\cd)}\cd_j\cd_m\cd^{j;m}\Bigr)\cd_i\cd^k - (1-e)\cd_i\cd_j\cd^{k;j} + \cd^j(\cd_{j;i}-\cd_{i;j})\cd^k - G(\cd)\cd^k{}_{;i}\Bigr].
  \]
  Applying $\tilde b_{g,\cd,e}^{-1}$ to this gives
  \[
    -\frac{1+e}{G(\cd)}\omega_k\Bigl[ -\Bigl(\cd_j{}^{;j}+\frac{2(1-e)}{e G(\cd)}\cd_j\cd_m\cd^{j;m}\Bigr)\cd_i\cd^k + 2 e^{-1}\cd_i\cd_j\cd^{k;j} + \cd^j(\cd_{j;i}-\cd_{i;j})\cd^k-G(\cd)\cd^k{}_{;i}\Bigr].
  \]
  Subtracting this from~\eqref{EqCSubpr1} yields the claim.
\end{proof}

\section{Symbol calculations on Minkowski spacetime}
\label{SM}

As already briefly discussed in~\S\ref{SI}, the choice of constraint damping 1-form $\cd$ on the Kerr spacetime $(M,g_{\bhm,a})$ is rather delicate. First of all, in order for the operator $\Box_{g_{\bhm,a}}^{\cC_{g_{\bhm,a},\cd,e,h}}$ defined in~\eqref{EqCOp} to have similar asymptotic symmetries as $\Box_{g_{\bhm,a}}$, we need $\cd$ to be stationary and moreover of size $\rho_\sface\sim r^{-1}$ as a scattering 1-form. (We will later on choose $\cd\in\rho_\sface\CI(M;\cT^*)$, where we recall the notation~\eqref{EqKscT}.) Indeed, this ensures that $\Box_g^\cC$ is approximately homogeneous of degree $-2$ with respect to scaling in any exterior cone $r>\delta t$, $\delta>0$. (If the coefficients of $\cd$ with respect to $\dd z^\mu$ are $t$-independent and exactly homogeneous of degree $-1$ with respect to scaling in the spatial variables, then the Minkowskian operator $\Box_{\ubar g}^{\cC_{\ubar g,\cd,e,h}}$ is homogeneous of degree $-2$ with respect to spacetime scaling.)

Next, working with the rescaled dual vector field $V=-r^2\cd^\sharp$ for clarity (which is a 3b-vector field on $M$), we need $V$ to be everywhere future\footnote{The \emph{future} timelike nature ensures that $-i\ell_{g,\cd,e}$ in~\eqref{EqCSymb} acts as complex absorption when propagating microlocal estimates in the causal \emph{future} direction, see Proposition~\ref{PropEEInfty}.}  timelike so that the results of~\S\ref{SC} are applicable; this in particular requires $V$ to be pointing into the black hole at the event horizon, so $V r<0$ for $r\leq r_+$. So far, one might be tempted to take $V\approx r(\pa_t-v\pa_r)$ with $v\in(0,1)$. However, it turns out that for $V$ which are inward pointing also near $\pa\cK^+=\cK^+\cap\sface$ (i.e., for large $r$ but with $\frac{r}{t}$ small), like $r(\pa_t-v\pa_r)$, the behavior of $\Box_g^\cC$ at zero energy (i.e., acting on stationary 1-forms) is inconsistent with the desideratum expressed in Theorem~\ref{ThmIRough}\eqref{ItIRough0}; we give a numerical reason in Remark~\ref{RmkM0WrongV}, and a conceptual one in Remark~\ref{RmkEBOut}.

Instead, $V$ needs to be \emph{outward} pointing near $\pa\cK^+$, and we shall take $V=r(\pa_t+v\pa_r)$ with $v\in(0,1)$ there. Viewed as a 3b-vector field on $M$, this has a saddle point at $\pa\cK^+$; we discuss this, and compute the relevant principal and subprincipal symbols of the zero section transport operator $L_{g,\cd,e,h}$ in~\S\ref{SsMI}. The requirement that $\cd$, thus $V$, be stationary, forces $V=r(\pa_t+v\pa_r)$ globally along $\sface$, which in particular means that $V$ has a critical point at $\sface$ where $\frac{r}{t}=v$; the relevant (sub)principal symbol calculation at this critical set (which is a sink) is also given in~\S\ref{SsMI}.

Returning to the behavior of $V$ near $\cK^+$, the incoming nature near the event horizon (i.e., $r\lesssim\bhm$) and the outgoing nature near $\pa\cK^+$ (i.e., $r\gg\bhm$) force $V$ to have saddle points in the interior of $\cK^+$. We compute the relevant (sub)principal symbols governing propagation at the zero section near such a saddle point in~\S\ref{SsMS}. All calculations here will be for the Minkowski metric, which near $\sface$ (i.e., for large $r$) is a perturbation of the Kerr metric by Lemma~\ref{LemmaKMink}. (We glue the various pieces together on a subextremal Kerr spacetime in~\S\ref{SsEC} by perturbing off the Minkowski case.)

Finally, in~\S\ref{SsM0}, we perform the key analysis for the proof of Theorem~\ref{ThmIRough}\eqref{ItIRough0}, namely the calculations/estimates for the indicial roots at infinity of the zero energy operator of $\Box_{g_{\bhm,a}}^{\cC_{g_{\bhm,a}},\cd,e,h}$ (cf.\ Definition~\ref{DefK3bSpec} and Lemma~\ref{LemmaK3bSpec0}).

Away from $r=0$ inside of $M_0$, we use the bundle splitting of $\cT^*=\upbeta^*\Tsc^*\ol{\R^4}$ given by smooth extension of the splitting
\begin{equation}
\label{EqMSplit}
  T^*\R^4 = \la\dd t\ra \oplus \la\dd r\ra \oplus r T^*\Sph^2
\end{equation}
from $\R^4\setminus\{r=0\}$ to $M_0\setminus\cl_{M_0}\{r=0\}$. This means that we identify $(a,b,\eta)$, where $a,b\in\R$ and $\eta\in T^*\Sph^2$, with the 1-form $a\,\dd t+b\,\dd r+r\eta$. \emph{Throughout this section, we work relative to the Minkowski metric $\ubar g$}
\[
  \ubar g = -\dd t^2 + \dd r^2 + r^2\slg,\quad\slg:=\dd\theta^2+\sin^2\theta\,\dd\phi^2,
\]
which we henceforth drop from the notation.

\subsection{Subprincipal symbol at critical sets at \texorpdfstring{$\sface$}{the side face}}
\label{SsMI}

We consider the past timelike 1-form
\begin{equation}
\label{EqMI1form}
  \cd_v = r^{-1}(\dd t-v\,\dd r),\quad v\in(0,1).
\end{equation}
Its rescaled dual vector field $V_v:=-r^2\cd_v^\sharp=r(\pa_t+v\pa_r)$ is a 3b-vector field on $M$ which on $\sface\subset\pa M$ has two critical sets,
\begin{equation}
\label{EqMICrit}
  \pa\cK^+ = \cK^+\cap\sface,\quad
  \cR_v := \Bigl\{ \frac{r}{t}=v \Bigr\} \cap \sface.
\end{equation}
Indeed, in the coordinates $\rho_\cK:=\frac{r}{t}$, $\rho_\sface:=\frac{1}{r}$ in $t,r>0$ from~\eqref{EqKbdf}, we have $V_v=-v\rho_\sface\pa_{\rho_\sface}+(v-\rho_\cK)\rho_\cK\pa_{\rho_\cK}$, and hence $\pa\cK^+=\{\rho_\cK=\rho_\sface=0\}$ is a saddle point (with stable manifold $\cK^+$ and unstable manifold $\sface$), while $\cR_v=\{\rho_\cK-v=0,\ \rho_\sface=0\}$ is a sink. See Figure~\ref{FigMIVv}.

\begin{figure}[!ht]
\centering
\includegraphics{FigMIVv}
\caption{The vector field $V_v$, shown here in a coordinate patch $[0,1)_{\rho_\sface}\times[0,1)_{\rho_\cK}\times\Sph^2$ without the $\Sph^2$-factor. The two critical sets $\pa\cK^+$ and $\cR_v$ are marked as thick dots.}
\label{FigMIVv}
\end{figure}

As a rough heuristic, the operator $L_{\cd_v,e,h}:=L_{\ubar g,\cd_v,e,h}$, for $0<e\ll 1$, can be regarded as the operator $\ell_{\cd_v,e}(h D)+i h S_{\cd_v,e}=i h(-\ell_{\cd_v,e}(\nabla)+S_{\cd_v,e})$ where
\[
  \ell_{\cd_v,e}:=\ell_{\ubar g,\cd_v,e},\quad
  S_{\cd_v,e}:=S_{\ubar g,\cd_v,e}
\]
are defined by Lemmas~\ref{LemmaCSymb} and \ref{LemmaCSubpr}, and $-\ell_{\cd_v,e}(\nabla)\approx F\nabla_{-\cd_v^\sharp}$, with $F=2,4$ depending on the part of the bundle it acts on, cf.\ Remark~\ref{RmkCNoe0}. Thus, when attempting to control solutions of $L_{\cd_v,e,h}\omega=0$ by following the flow of $-\cd_v^\sharp$ (thus, propagating in the future direction) near $\pa\cK^+$ on a 3b-Sobolev space with weights $w:=\rho_\sface^{\beta_\sface}\rho_\cK^{\beta_\cK}$, the crucial quantity to compute is
\[
  -\ell_{\cd_v,e}(\dd w)+S_{\cd_v,e}w = w\Bigl[-\ell_{\cd_v,e}\Bigl(\frac{\dd w}{w}\Bigr) + S_{\cd_v,e}\Bigr],
\]
or rather a suitable rescaling thereof in order to balance the weight $r^{-1}$ in the definition of $\cd_v$ in~\eqref{EqMI1form}. Indeed, if this is positive definite, one expects to be able to propagate in the future direction. (Cf.\ the simple-minded example $i h(t\pa_t+S)$ which is of this form if we used $\ell(\nabla)=-t\pa_t$: solutions can be controlled as $t\to\infty$ by a power $w=t^\beta$ if and only if $-\ell(\frac{\dd w}{w})+S=\beta+S>0$.) (Similar considerations apply near $\cR_v$ where, however, the only available weight is $\rho_\sface^{\beta_\sface}$; this will be discussed in Lemma~\ref{LemmaMISink} below.)

\begin{lemma}[Total symbol at $\pa\cK^+$]
\label{LemmaMIbdy}
  Let $\beta_\cK,\beta_\sface\in\R$, and define $w=\rho_\sface^{\beta_\sface}\rho_\cK^{\beta_\cK}$ where $\rho_\cK=\frac{r}{t}$, $\rho_\sface=\frac{1}{r}$. Then
  \begin{equation}
  \label{EqMIbdyTot}
    T_{\cd_v,e} := r^2\Bigl(-\ell_{\cd_v,e}\Bigl(\frac{\dd w}{w}\Bigr) + S_{\cd_v,e}\Bigr)
  \end{equation}
  depends smoothly on $e\in\R$. The limiting bundle endomorphism $T_{\cd_v,0}$ has positive eigenvalues at $\pa\cK^+$ if and only if $\beta_\cK>\beta_\sface-\frac12$.
\end{lemma}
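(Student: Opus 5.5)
The plan is to compute $T_{\cd_v,e}$ explicitly in the splitting~\eqref{EqMSplit}, $\cT^*=\la\dd t\ra\oplus\la\dd r\ra\oplus rT^*\Sph^2$, using the formulas of Lemma~\ref{LemmaCSymb} for $\ell_{\cd_v,e}$ and Lemma~\ref{LemmaCSubpr} for $S_{\cd_v,e}$. Since $\ubar g$ is flat, the only input needed is the covariant derivative $\nabla\cd_v$ for $\cd_v=r^{-1}(\dd t-v\,\dd r)$. In polar coordinates, with $A,B$ angular indices, I expect
\[
  \cd_{t;r}=-r^{-2},\quad \cd_{r;r}=vr^{-2},\quad \cd_{A;B}=-\Gamma^r_{AB}\cd_r=-v\,\slg_{AB},
\]
and all other components zero. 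The remaining ingredients are
\[
  G(\cd_v)=-(1-v^2)r^{-2},\qquad \cd_v^\sharp=-r^{-1}(\pa_t+v\pa_r).
\]

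\emph{Smoothness in $e$.} The only possibly singular terms in Lemma~\ref{LemmaCSubpr} are the two $e^{-1}$ terms:
\[
  \frac{1+e}{G(\cd)}\cdot e^{-1}\Bigl[-\frac{2(1-e)}{G(\cd)}\,\cd_j\cd_m\cd^{j;m}\,\cd_i\cd^k+2\,\cd_i(\nabla_{\cd^\sharp}\cd)^k\Bigr].
\]
For $\cd_v$ I expect $\nabla_{\cd_v^\sharp}\cd_v=-vr^{-1}\pa_r\cd_v=vr^{-1}\cd_v$. This gives $\cd_j\cd_m\cd^{j;m}=vr^{-1}G(\cd_v)$ and $\cd_i(\nabla_{\cd^\sharp}\cd)^k=vr^{-1}\cd_i\cd^k$. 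The bracket then equals $2e\,vr^{-1}\cd_i\cd^k$, so the $e^{-1}$ cancels. All remaining terms, as well as $\ell_{\cd_v,e}$, are polynomial in $e$, so $T_{\cd_v,e}$ is smooth in $e\in\R$. The overall $r^2$ rescaling makes every entry bounded, because each entry of $S$ and of $\ell(\dd w/w)$ is $\cO(r^{-2})$ in the scattering frame.

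\emph{Restriction to $\pa\cK^+$.} We have
\[
  \frac{\dd w}{w}=(\beta_\cK-\beta_\sface)\frac{\dd r}{r}-\beta_\cK\frac{\dd t}{t}.
\]
The $\dd t/t$ contribution to $r^2\ell(\dd w/w)$ is of size $r^2\cdot r^{-1}\cdot t^{-1}=\rho_\cK$, so it vanishes at $\pa\cK^+$. Hence there $r^2\ell_{\cd_v,0}(\dd w/w)$ reduces to $(\beta_\cK-\beta_\sface)\,r\,\ell_{\cd_v,0}(\dd r)$.

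\emph{Eigenvalue analysis at $e=0$.} Set $e=0$ and $\gamma:=\beta_\cK-\beta_\sface$. By spherical symmetry, $T_{\cd_v,0}$ at $\pa\cK^+$ is block diagonal: a $2\times2$ block on $\la\dd t,\dd r\ra$, and a multiple of the identity on $rT^*\Sph^2$. The angular entry is read off from $\ell_0(\dd r)$, which acts on the angular part as $2\la\cd_v,\dd r\ra=-2vr^{-1}$, together with the $\slg$-trace terms of $S$. The $2\times2$ block follows from~\eqref{EqCSplitMtx} and the $\cd_i\cd^k$, $\cd^k{}_{;i}$ terms. The eigenvalues of this block are obtained from its trace and determinant, which are quadratic polynomials in $\gamma$ with coefficients depending on $v$.

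The claim is that all eigenvalues are positive if and only if $\gamma>-\tfrac12$, that is, $\beta_\cK>\beta_\sface-\frac12$. I expect the $2\times 2$ block to be the main obstacle. Remark~\ref{RmkCNoe0} says $\ell_0$ has a Jordan structure, so the block need not be diagonalizable. At $e=0$ its eigenvalues should nonetheless be affine in $\gamma$, roughly $2v(\gamma+c_1)$ and $4v(\gamma+c_2)$, since $\ell_0(\dd r)$ has eigenvalues $2\la\cd,\dd r\ra$ and $4\la\cd,\dd r\ra$, both positive multiples of $v$. The task is then to check that the binding threshold among $c_1$, $c_2$ and the angular threshold is exactly $\tfrac12$. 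I would carry out this step by writing the $2\times2$ block in the basis $\cd_v$ and $\cd_v^\perp\cap\la\dd t,\dd r\ra$ and then performing a careful sign check of the trace and determinant.
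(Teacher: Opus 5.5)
Your route is the paper's: compute $-\ell_{\cd_v,e}(\frac{\dd w}{w})$ and $S_{\cd_v,e}$ explicitly in the splitting~\eqref{EqMSplit}, note polynomial dependence on $e$, and drop the $\rho_\cK\beta_\cK\frac{\dd t}{r}$-term at $\pa\cK^+$. Then read off the eigenvalues in the frame $\dd t-v\,\dd r$, $-v\,\dd t+\dd r$, $rT^*\Sph^2$ adapted to $\cd_v$, which is the frame in the paper's footnote. Your $\nabla\cd_v$, $G(\cd_v)$, $\cd_v^\sharp$ and $\frac{\dd w}{w}$ are correct, but there is a slip. Since $\nabla_{\pa_t}\cd_v=0$ and $\nabla_{\pa_r}\cd_v=-r^{-1}\cd_v$, one has $\nabla_{\cd_v^\sharp}\cd_v=vr^{-2}\cd_v$, not $vr^{-1}\cd_v$. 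Hence $\cd_j\cd_m\cd^{j;m}=vr^{-2}G(\cd_v)$, and your bracket equals $2evr^{-2}\cd_i\cd^k$. The cancellation of $e^{-1}$ survives. With your power of $r$, however, the remaining term $2(1+e)G(\cd_v)^{-1}vr^{-1}\cd_i\cd^k$ would be $\cO(r^{-1})$, contradicting your claim that $r^2S_{\cd_v,e}$ is bounded. The Jordan-block worry is also unfounded. Remark~\ref{RmkCNoe0} concerns $\zeta\perp\cd$, whereas $\la\dd r,\cd_v\ra=-v/r\neq0$. So on the $(\dd t,\dd r)$-block, $\ell_{\cd_v,0}(\dd r)$ has the distinct eigenvalues $-4v/r$ (on $\cd_v$) and $-2v/r$, and only eigenvalues matter anyway.

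What is genuinely missing is the content of the lemma, namely the exact threshold $\frac12$. You do not compute the $2\times2$ block of $r^2S_{\cd_v,0}$, and you do not justify that the eigenvalues are affine in $\gamma=\beta_\cK-\beta_\sface$. The key fact is that $\R\cd_v$ is invariant under $r^2S_{\cd_v,0}$ as well as under $\ell_{\cd_v,0}(\dd r)$. To see it, use $\nabla_{\cd_v^\sharp}\cd_v\parallel\cd_v$ and $\cd^j(\cd_{j;i}-\cd_{i;j})=\frac12\pa_iG(\cd)-(\nabla_{\cd^\sharp}\cd)_i$ in Lemma~\ref{LemmaCSubpr}. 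Everything then cancels except $S_{\cd_v,0}\cd_v=-2(\cd_j{}^{;j})\cd_v=2vr^{-2}\cd_v$. Explicitly, on $(\dd t,\dd r)$-coefficient vectors, $\frac12r^2S_{\cd_v,0}=\frac{1}{2(1-v^2)}\begin{pmatrix}v(3-v^2)&1+v^2\\-2v^2&-2v^3\end{pmatrix}$, with trace $\frac{3v}{2}$; on $rT^*\Sph^2$ it equals $\frac{3v}{2}$. Thus in the adapted frame $\frac12T_{\cd_v,0}$ at $\pa\cK^+$ is upper triangular, with diagonal entries $2v(\gamma+\frac12)$, $v(\gamma+\frac12)$, $v(\gamma+\frac32)$, $v(\gamma+\frac32)$. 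So your constants are $c_1=c_2=\frac12$, the angular threshold is $\frac32$, and positivity holds if and only if $\gamma>-\frac12$. With this computation filled in, your argument coincides with the paper's.
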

\begin{proof}
  Write $\cd:=\cd_v$. Since
  \[
    \frac{\dd\rho_\sface}{\rho_\sface} = -\frac{\dd r}{r},\quad
    \frac{\dd\rho_\cK}{\rho_\cK} = \frac{\dd r}{r} - \frac{\dd t}{t}.
  \]
  we have
  \[
    \frac{\dd w}{w} = -(\beta_\sface-\beta_\cK)\frac{\dd r}{r} - \rho_\cK\beta_\cK\frac{\dd t}{r}.
  \]
  Direct calculation using the expression~\eqref{EqCSymb} in the bundle splitting~\eqref{EqMSplit} thus gives
  \begin{equation}
  \label{EqMIbdyEll}
    \frac12 r^2\ell_{\cd,e}\Bigl(\frac{\dd w}{w}\Bigr) = (\beta_\sface-\beta_\cK)\begin{pmatrix} v & -1 & 0 \\ -e & (2-e)v & 0 \\ 0 & 0 & v \end{pmatrix} + \beta_\cK\rho_\cK\begin{pmatrix} 2-e & -e v & 0 \\ -v & 1 & 0 \\ 0 & 0 & 1 \end{pmatrix}.
  \end{equation}
  We base our calculation of $r^2 S_{\cd,e}$ on the calculation of $\nabla\cd$ in the coordinates $(t,r,\theta,\phi)$: we have $\cd_{\mu;\nu}=0$ for all $\mu,\nu$ except for $\cd_{0;1}=-r^{-2}$, $\cd_{1;1}=v r^{-2}$, and $\cd_{2;2}=-v$, $\cd_{3;3}=-v\sin^2\theta$. One finds
  \begin{equation}
  \label{EqMIbdyS}
    \frac12 r^2 S_{\cd,e}=
      \begin{pmatrix}
        \frac{v(3-v^2+2 e)}{2(1-v^2)} & \frac{1+v^2+2 e v^2}{2(1-v^2)} & 0 \\
        \frac{-2 v^2+e(1-3 v^2)}{2(1-v^2)} & \frac{-2 v^3+e v(1-3 v^2)}{2(1-v^2)} & 0 \\
        0 & 0 & \frac{3 v}{2}
      \end{pmatrix}.
  \end{equation}
  This is smooth in $e$. By direct calculation,\footnote{This calculation is straightforward upon switching to the frame of $T^*\R^4$ adapted to the choice of $\cd_v$, namely $\dd t-v\,\dd r$, $-v\,\dd t+\dd r$, and $r T^*\Sph^2$. In this basis, $\frac12 r^2(-\ell_{\cd,0}(\frac{\dd w}{w})+S_{\cd,0})$ is upper triangular.} the eigenvalues of $\frac12 r^2(-\ell_{\cd,0}(\frac{\dd w}{w})+S_{\cd,0})$ at $\rho_\cK=0$ are
  \begin{equation}
  \label{EqMIbdyEval}
    v\Bigl(\frac12+\beta_\cK-\beta_\sface\Bigr),\quad
    2 v\Bigl(\frac12+\beta_\cK-\beta_\sface\Bigr),\quad
    v\Bigl(\frac32+\beta_\cK-\beta_\sface\Bigr)\ \ (2\times).
  \end{equation}
  These are all positive provided $\beta_\cK>\beta_\sface-\frac12$.
\end{proof}

\begin{lemma}[Total symbol at $\cR_v$]
\label{LemmaMISink}
  Let $\beta_\sface\in\R$, and define $w=\rho_\sface^{\beta_\sface}$. Define $T_{\cd_v,e}$ as in~\eqref{EqMIbdyTot}, which thus smoothly depends on $e$ down to $e=0$. The eigenvalues of $T_{\cd_v,0}|_{\cR_v}$ are positive if and only if $\beta_\sface<\frac12$.
\end{lemma}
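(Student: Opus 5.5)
The plan is to reduce the computation at $\cR_v$ to the one already done at $\pa\cK^+$ in Lemma~\ref{LemmaMIbdy}, by specializing that lemma's formulas to $\beta_\cK=0$. With $w=\rho_\sface^{\beta_\sface}$ we have $\frac{\dd w}{w}=-\beta_\sface\frac{\dd r}{r}$. This is exactly the formula for $\frac{\dd w}{w}$ in the proof of Lemma~\ref{LemmaMIbdy} with $\beta_\cK=0$. The term there that carries the factor $\rho_\cK$ (which equals $v\neq 0$ on $\cR_v$) is multiplied by $\beta_\cK$, so it drops out. Hence~\eqref{EqMIbdyEll} gives
\[
  \tfrac12 r^2\ell_{\cd_v,e}\Bigl(\frac{\dd w}{w}\Bigr)=\beta_\sface\begin{pmatrix} v & -1 & 0 \\ -e & (2-e)v & 0 \\ 0 & 0 & v\end{pmatrix}
\]
everywhere, and in particular on $\cR_v$. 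This expression does not depend on the location along $\sface$.

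Next I will handle the subprincipal term. The computation of $\nabla\cd_v$ in the proof of Lemma~\ref{LemmaMIbdy} holds at every point with $r>0$. The only nonzero components are $\cd_{0;1}=-r^{-2}$, $\cd_{1;1}=vr^{-2}$, $\cd_{2;2}=-v$ and $\cd_{3;3}=-v\sin^2\theta$. These do not involve $t$, so they are the same on the whole of $\sface$ in the splitting~\eqref{EqMSplit}. Consequently $\frac12 r^2S_{\cd_v,e}$ is the constant matrix~\eqref{EqMIbdyS} on all of $\sface$, including $\cR_v$. It is smooth in $e$, and so $T_{\cd_v,e}$ depends smoothly on $e$ down to $e=0$.

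Combining the two steps, $T_{\cd_v,0}|_{\cR_v}$ coincides with the limiting endomorphism $T_{\cd_v,0}$ of Lemma~\ref{LemmaMIbdy} evaluated at $\rho_\cK=0$ with $\beta_\cK=0$. By~\eqref{EqMIbdyEval}, its eigenvalues are therefore
\[
  2v\bigl(\tfrac12-\beta_\sface\bigr),\quad 4v\bigl(\tfrac12-\beta_\sface\bigr),\quad 2v\bigl(\tfrac32-\beta_\sface\bigr)\ (2\times).
\]
Since $v>0$, all four are positive if and only if $\beta_\sface<\frac12$. Indeed, $\beta_\sface<\frac12$ implies $\beta_\sface<\frac32$, and conversely $\beta_\sface\geq\frac12$ makes the first eigenvalue nonpositive.

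The only step that needs real care is the eigenvalue computation itself. Lemma~\ref{LemmaMIbdy} asserts it, but if I rechecked it I would do so in the frame $\dd t-v\,\dd r$, $-v\,\dd t+\dd r$, $rT^*\Sph^2$. In that frame the $e=0$ matrix is upper triangular, so the eigenvalues can be read off its diagonal. Everything else is bookkeeping, namely confirming that no $\rho_\cK$-dependence survives when $\beta_\cK=0$.
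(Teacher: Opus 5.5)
Your proof is correct and is essentially the paper's argument. The paper also obtains $T_{\cd_v,0}|_{\cR_v}$ by direct calculation from \eqref{EqMIbdyEll}--\eqref{EqMIbdyS}, displaying it as an upper triangular matrix in the frame $\dd t-v\,\dd r$, $-v\,\dd t+\dd r$, $rT^*\Sph^2$ with diagonal entries $2v(\frac12-\beta_\sface)$, $v(\frac12-\beta_\sface)$, $v(\frac32-\beta_\sface)$ (these are entries of $\frac12 T$). Your observation that for $\beta_\cK=0$ the $\rho_\cK$-dependent term drops out, so that the eigenvalues are just \eqref{EqMIbdyEval} with $\beta_\cK=0$, is a valid shortcut and gives the same values.
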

\begin{proof}
  At $\cR_v$ we have $\rho_\cK=v$. The claim now follows by direct calculation using the expressions~\eqref{EqMIbdyEll}--\eqref{EqMIbdyS}. Indeed, one finds in the bundle splitting $\la \dd t-v\,\dd r\ra\oplus\la -v\,\dd t+\dd r\ra\oplus r T^*\Sph^2$,
  \[
    \frac12 T_{\cd_v,0}|_{\cR_v} =
    \begin{pmatrix}
      2 v(\frac12-\beta_\sface) & \frac12+\beta_\sface & 0 \\
      0 & v(\frac12-\beta_\sface) & 0 \\
      0 & 0 & v(\frac32-\beta_\sface)
    \end{pmatrix}.
  \]
  All eigenvalues are positive if and only if $\beta_\sface<\frac12$.
\end{proof}

\subsection{Subprincipal symbol near saddle points at coordinate spheres}
\label{SsMS}

We shall now consider the 1-form
\begin{equation}
\label{EqMS1form}
  \cd_{r_0,v} = r_0^{-2}\bigl(r_0\,\dd t - v(r-r_0)\,\dd r\bigr),\quad r_0>0,\ v\in(0,1),
\end{equation}
which will be the model for our final choice of $\cd$ in Theorem~\ref{ThmIRough} in an $\cO(r_0)$-neighborhood of $r=r_0$. Note that $\cd$ is past timelike for $r\in(0,r_0(1+\frac{1}{v}))$. We consider $\cd_{r_0,v}$ near the interior of $\cK^+$ (or more generally near the interior of the front face of $M_0$ arising from the blow-up of the north pole $\fk^+$ in Definition~\ref{DefKMfd}). In this region, we can use the coordinates $T=t^{-1}\geq 0$, $r>0$, $\omega\in\Sph^2$. The vector field
\[
  -\cd_{r_0,v}^\sharp=r_0^{-2}\bigl(r_0\pa_t+v(r-r_0)\pa_r\bigr)=r_0^{-2}\bigl(-r_0 T^2\pa_T+v(r-r_0)\pa_r\bigr)
\]
vanishes at $T=0$, $r=r_0$, and indeed has a (degenerate) saddle point there. Due to the degeneracy, polynomial weights $T^{\alpha_\cK}$ cannot produce positivity of expressions of the type~\eqref{EqMIbdyTot}. (Note that $\frac{\dd T}{T}=-\frac{\dd t}{t}$ vanishes, as a scattering 1-form, at $T=t^{-1}=0$, due to the factor of $t^{-1}$. By contrast, the subprincipal symbol does not come with such a vanishing factor.) Thus, it is only the subprincipal symbol $S_{\cd_{r_0,v},e}=S_{\ubar g,\cd_{r_0,v},e}$ that can give positivity near $T=0$, $r=r_0$; we compute this in Lemma~\ref{LemmaMSSubpr} below, and we also quantify the size of the neighborhood in which this positivity holds: this will be of size $\frac{6}{5}r_0\sqrt{e}/v$. As soon as one is at a definite distance away from $r=r_0$ (the required minimal distance being $r_0\sqrt{e}/v$, as shown below), we will be able to exploit again the approximate transport character of $\ell_{\cd_{r_0,v},e}$; see Lemma~\ref{LemmaMSRad} below for the quantitative details. Thus, there is a very delicate balance between the region of positivity of $S_{\cd_{r_0,v},e}$ ($\frac{6}{5}r_0\sqrt{e}/v$-close to $r=r_0$) and the region where $\ell_{\cd_{r_0,v},e}$ behaves like non-degenerate transport ($r_0\sqrt{e}/v$-far from $r=r_0$), and \emph{it is crucial for our purposes that the two regions overlap, i.e., that $\frac{6}{5}>1$}.

We provide further motivation using the following heuristic. In $1+1$ dimensions with coordinates $(t,r)$ and metric $-\dd t^2+\dd r^2$, and for $\cd=\dd t$, $\zeta=\zeta_0\,\dd t+\zeta_1\,\dd r$, one computes
\[
  \ell_{\cd,e}(\zeta) = 2\begin{pmatrix} -(2-e)\zeta_0 & \zeta_1 \\ e\zeta_1 & -\zeta_0 \end{pmatrix}
\]
using~\eqref{EqCSplitMtx}. The eigenvalues of $\ell_{\cd,e}(\zeta)$ are $\lambda_\pm(\zeta)=-(3-e)\zeta_0\pm\sqrt{(1-e)^2\zeta_0^2+4 e\zeta_1^2}$. We interpret this, for $0<e\ll 1$, as the symbol of an ``approximate (collection of) vector field(s)'' (note that for $e=0$ the eigenvalues are $-4\zeta_0$ and $-2\zeta_0$, cf.\ Remark~\ref{RmkCNoe0}) and compute
\[
  -H_{\lambda_\pm} = \Bigl(3-e\mp\frac{(1-e)^2\zeta_0}{\sqrt{(1-e)^2\zeta_0^2+4 e\zeta_1^2}}\Bigr)\pa_t \mp \frac{4 e\zeta_1}{\sqrt{(1-e)^2\zeta_0^2+4 e\zeta_1^2}}\pa_r.
\]
The coefficient of $\pa_t$ is bounded from below by $(3-e)-(1-e)=2$, and the coefficient of $\pa_r$ is bounded from above in absolute value by $\sqrt{4 e}=2\sqrt{e}$. Thus, $-\frac12 H_{\lambda_\pm}$ is ``between'' $\pa_t-\sqrt{e}\pa_r$ and $\pa_t+\sqrt{e}\pa_r$, thus spreading from $-\cd^\sharp=\pa_t$ by at most $\sqrt{e}$ in the direction orthogonal to $-\cd^\sharp$.

Applying this heuristic to $r_0^2\cd_{r_0,v}$---with $v(r-r_0)\pa_t+r_0\pa_r$ being orthogonal to $r_0^2\cd_{r_0,v}^\sharp$ and of the same squared length up to a sign---suggests that the operator $\ell_{\cd_{r_0,v},e}$ will be degenerate over $(\cK^+)^\circ$ at most at the zeros of $v(r-r_0)+s\sqrt{e}r_0$ for $s\in[-1,1]$, i.e., in $r\in[r_0(1-\sqrt{e}/v),r_0(1+\sqrt{e}/v)]$. The following computation can be viewed as a confirmation of this heuristic, or indeed a slight strengthening (since already at $s=\pm 1$ we show the nondegeneracy of $\ell_{\cd_{r_0},v,e}$):

\begin{lemma}[$\ell_{\cd_{r_0,v},e}(\dd r)$ is definite away from $r=r_0$]
\label{LemmaMSRad}
  Let $e\in(0,1)$. Then $\ell_{\cd_{r_0,v},e}(\pm\dd r)$ is negative definite when $\pm(r-r_0)\geq r_0\sqrt{e}/v$ and $r\in[\frac12 r_0,2 r_0]$. Moreover, for such $r$, the eigenvalues $\lambda$ of $\ell_{\cd_{r_0,v},e}(\pm\dd r)$ satisfy
  \begin{equation}
  \label{EqMSRadQuant}
    \lambda\leq-\lambda_0 r_0^{-1}
  \end{equation}
  for some constant $\lambda_0>0$ which only depends on $e,v$, but not on $r_0,r$.
\end{lemma}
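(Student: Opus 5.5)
The plan is to apply Lemma~\ref{LemmaCTimelikeSp} pointwise with $\zeta=\pm\dd r$ and $\cd=\cd_{r_0,v}$, computing in the Minkowski metric $\ubar g$. First I record the relevant inner products. Since $\ubar g^{-1}(\dd t,\dd r)=0$, $\ubar g^{-1}(\dd t,\dd t)=-1$ and $\ubar g^{-1}(\dd r,\dd r)=1$, we get
\[
  \la\cd_{r_0,v},\pm\dd r\ra=\mp r_0^{-2}v(r-r_0),\qquad
  G(\cd_{r_0,v})=r_0^{-2}\bigl(-1+x\bigr),\qquad
  G(\pm\dd r)=1,
\]
where $x:=v^2(r-r_0)^2/r_0^2$. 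For $r\in[\frac12 r_0,2r_0]$ we have $|r-r_0|\leq r_0$, hence $x\leq v^2<1$. So $\cd_{r_0,v}$ is timelike there and Lemma~\ref{LemmaCTimelikeSp} applies.

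Next I check the two conditions in~\eqref{EqCTimelikeSp} for negative definiteness. The sign condition $\la\cd,\zeta\ra<0$ reads $\pm v(r-r_0)>0$, which holds when $\pm(r-r_0)\geq r_0\sqrt e/v>0$. The second condition is
\[
  1<-\frac{2(1-e)}{eG(\cd)}\la\cd,\zeta\ra^2=\frac{2(1-e)x}{e(1-x)},
\]
which is equivalent to $e<(2-e)x$. The hypothesis $\pm(r-r_0)\geq r_0\sqrt e/v$ is exactly $x\geq e$. Since $e\in(0,1)$, this gives $(2-e)x\geq(2-e)e>e$. Hence $\ell_{\cd_{r_0,v},e}(\pm\dd r)$ is negative definite.

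For the quantitative bound~\eqref{EqMSRadQuant} I will use scaling and compactness. Write $r=r_0 s$. Then $\cd_{r_0,v}=r_0^{-1}\tilde\cd_s$ with $\tilde\cd_s=\dd t-v(s-1)\,\dd r$, which is independent of $r_0$. By~\eqref{EqCSymb}, $\ell$ is linear in $\cd$, so $\ell_{\cd_{r_0,v},e}(\pm\dd r)=r_0^{-1}\ell_{\tilde\cd_s,e}(\pm\dd r)$. In the fixed splitting~\eqref{EqMSplit}, the matrix of $\ell_{\tilde\cd_s,e}(\pm\dd r)$ depends continuously on $s$ and involves only $e$ and $v$; in particular it does not depend on $r_0$ or the angular point.

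Its eigenvalues are real, being eigenvalues of an operator self-adjoint with respect to $B_{\tilde\cd_s,e}$ (Lemma~\ref{LemmaCInner}). The largest eigenvalue therefore depends continuously on $s$. It is negative on the compact set $\{s\in[\frac12,2]\colon \pm(s-1)\geq\sqrt e/v\}$ by the first part of the argument. If this set is empty the claim is vacuous; otherwise the maximum over it is some $-\lambda_0<0$, and $\lambda_0$ depends only on $e$ and $v$.

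I expect the only delicate step to be the algebra in the second condition of~\eqref{EqCTimelikeSp}, namely correctly tracking the signs of $G(\cd)<0$ and of $\la\cd,\zeta\ra$. That step is where the threshold $\sqrt e/v$ enters, and it enters with room to spare because $(2-e)e>e$.
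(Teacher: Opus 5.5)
Your proposal is correct and follows the paper's route. Like the paper, you apply Lemma~\ref{LemmaCTimelikeSp} with $\zeta=\pm\dd r$; your reduction of the second condition to $e<(2-e)x$ is equivalent to the paper's bound $\frac{2(1-e)x}{e(1-x)}\geq 2>1$ for $x\geq e$. You then get \eqref{EqMSRadQuant} by scaling to $r_0=1$ plus compactness, and your justification via linearity of $\ell$ in $\cd$ and the $r_0$- and angle-independence of the matrix in the splitting~\eqref{EqMSplit} is a more explicit version of the paper's homogeneity argument.
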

\begin{proof}
  For the stated range of $r$, the 1-form $\cd_{r_0,v}$ is timelike. We use Lemma~\ref{LemmaCTimelikeSp} with $\zeta=\pm\dd r$. We compute $\la\cd_{r_0,v},\zeta\ra=\mp r_0^{-2}v(r-r_0)<0$ for $\pm(r-r_0)>0$; moreover, we have $G(\cd_{r_0,v})=r_0^{-4}(-r_0^2+v^2(r-r_0)^2)$. If $|r-r_0|\geq r_0\sqrt{e}/v$, then $w:=v^2(r-r_0)^2/r_0^2\geq e$, and thus
  \[
    -\frac{2(1-e)}{e G(\cd_{r_0,v})}\la\cd_{r_0,v},\zeta\ra^2=\frac{2(1-e)v^2(r-r_0)^2}{e(r_0^2-v^2(r-r_0)^2)} \geq \frac{2(1-e)w}{e(1-w)} \geq 2 > 1 = G(\zeta).
  \]
  
  The quantitative bound~\eqref{EqMSRadQuant} for $r\in[\frac12 r_0,2 r_0]\setminus[r_0(1-\sqrt{e}/v),r_0(1+\sqrt{e}/v)]$ follows by scaling from the case $r_0=1$, in which case the bound is automatic by compactness. Indeed, under pullback by the map $(t,r,r_0)\mapsto(q t,q r,q r_0)$ for $q\geq 1$, the Minkowski dual metric $\ubar g^{-1}$ (which is independent of $r_0$ of course) is homogeneous of degree $-2$, whereas $\cd_{r_0,v}$ is homogeneous of degree $0$, and $\pm\dd r$ is homogeneous of degree $1$. Therefore, overall, the eigenvalues of $\ell_{\cd_{r_0,v},e}$ are homogeneous of degree $-1$, which is the content of~\eqref{EqMSRadQuant}.
\end{proof}

For $r$ with $|r-r_0|\leq r_0\sqrt{e}/v$, one should regard $\ell_{\cd_{r_0,v}}$ as degenerate as far as positive commutators are concerned (see~\S\ref{SE}); thus, in this region we compute the subprincipal symbol $S_{\cd_{r_0,v},e}$ using Lemma~\ref{LemmaCSubpr}:

\begin{lemma}[Subprincipal symbol]
\label{LemmaMSSubpr}
  In the bundle splitting~\eqref{EqMSplit}, we have, at $r=r_0(1+x\sqrt{e}/v)$ with $x\in\R$,
  \[
    S_{\cd_{r_0,v},e} = r_0^{-2}\begin{pmatrix} a_{0 0} & a_{0 1} & 0 \\ a_{1 0} & a_{1 1} & 0 \\ 0 & 0 & a_{2 2} \end{pmatrix},
  \]
  where, for $\Delta:=(1+x\sqrt{e}/v)(1-e x^2)^2$, we have
  \begin{align*}
    a_{0 0} &= \Delta^{-1}\Bigl( v\bigl[ 2(1-x^2)+e(1-3 x^2)+e^2 x^2(x^2+1) \bigr] \\
      &\quad\hspace{2em} + x\sqrt{e}\bigl[2(3-x^2)+e(1-3 x^2)(3-e x^2)\bigr] \Bigr), \\
    a_{0 1} &= \Delta^{-1}\frac{(1+e)x}{\sqrt e}\Bigl( v\bigl[ -2+e+e^2 x^2 \bigr] + x\sqrt{e}\bigl[-2+3 e-e^2 x^2\bigr] \Bigr), \\
    a_{1 0} &= \Delta^{-1}(1+e)x\sqrt{e}\Bigl( v\bigl[-1+2 x^2-e x^2\bigr] + x\sqrt{e}\bigl[-3+2 x^2+e x^2\bigr] \Bigr), \\
    a_{1 1} &= \Delta^{-1}\Bigl( v \bigl[ 3+2 x^2 - 5 e x^2 - e^2 x^2(1-(2-e)x^2)\bigr] \\
      &\quad\hspace{2em} + x\sqrt{e}\bigl[5+2 x^2 - 11 e x^2 + e^2(-3 x^2+(6+e)x^4)\bigr] \Bigr), \\
    a_{2 2} &= v\Bigl(5-\frac{4 v}{v+x\sqrt{e}}\Bigr).
  \end{align*}
  There exists $e_0>0$ depending only on $v$ such that for all $e\in(0,e_0)$, the operator $S_{\cd_{r_0,v},e}$ is positive definite (with respect to the inner product $B_{\cd_{r_0,v},e}$ from Lemma~\usref{LemmaCInner}) for all $x\in[-\frac65,\frac65]$, i.e., for all $r$ with $|r-r_0|\leq\frac{6}{5}r_0\sqrt{e}/v$.
\end{lemma}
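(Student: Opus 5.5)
The proof has two parts: compute the matrix of $S_{\cd_{r_0,v},e}$ by substituting into Lemma~\ref{LemmaCSubpr}, then establish positive definiteness with respect to $B_{\cd_{r_0,v},e}$ by a perturbation argument in $e$.

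\textbf{Computing the matrix.} We work in Minkowski coordinates $(t,r,\theta,\phi)$ and use the formula of Lemma~\ref{LemmaCSubpr}. The only nonvanishing covariant derivatives of $\cd=\cd_{r_0,v}$ are
\[
  \cd_{1;1}=-v/r_0^2,\qquad \cd_{2;2}=-\Gamma^1_{22}\cd_1=r\,\cd_1=-vr(r-r_0)/r_0^2,\qquad \cd_{3;3}=\sin^2\theta\,\cd_{2;2},
\]
since $\cd_0=1/r_0$ is constant and $\Gamma^\cdot_{0\cdot}=0$. Hence
\[
  \cd_j{}^{;j}=-r_0^{-2}\bigl(v+2v(r-r_0)/r\bigr),\qquad G(\cd)=r_0^{-4}\bigl(-r_0^2+v^2(r-r_0)^2\bigr).
\]
Substituting $r-r_0=r_0x\sqrt e/v$ gives $G(\cd)=-r_0^{-2}(1-ex^2)$. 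This is the source of the factor $(1-ex^2)^2$ in $\Delta$, coming from $G(\cd)^{-1}$ and from $e^{-1}G(\cd)^{-2}$. The factor $1+x\sqrt e/v=r/r_0$ comes from the angular terms. The angular block is obtained directly: only the trace term and the $\cd^{k}{}_{;i}$ terms contribute there, which gives $a_{22}$. The $(t,r)$ block is a routine $2\times2$ computation, which I would delegate to symbolic algebra.

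A structural point matters for the second part. The terms carrying $e^{-1}$ are $2e^{-1}\cd_i\cd_j\cd^{k;j}$ and the $\cd_j\cd_m\cd^{j;m}$ term. Since $\cd^j\cd^{k}{}_{;j}$ involves $\cd_1\cd_{1;1}\propto v(r-r_0)=r_0x\sqrt e$, these terms are only $\cO(e^{-1/2})$. They appear only in off-diagonal entries, giving $a_{01}\sim x/\sqrt e$, while $a_{10}\sim x\sqrt e$.

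\textbf{Positive definiteness.} Since $S$ is skew-adjoint part data but is being tested for positivity with respect to $B$, we need the symmetric form $B(\omega,S\omega)$, i.e.\ the matrix $bS$ in the frame of \eqref{EqCInnerRef}. In the splitting \eqref{EqCSplit}, $B=\diag(-eG(\cd),\,g^{-1}|_{\cd^\perp})$ up to scaling. The $\cd$-direction thus carries weight $\sim e$, which exactly compensates the anisotropy $a_{01}\sim e^{-1/2}$, $a_{10}\sim e^{1/2}$. I would therefore rescale the $\cd$-component by $\sqrt e$, i.e.\ conjugate by $\diag(\sqrt e,1,1)$ after changing to the frame adapted to $\cd$. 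The angular block is immediate: $a_{22}=v(5-4v/(v+x\sqrt e))\to v>0$ uniformly for $|x|\le\frac65$ as $e\to0$.

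For the $2\times2$ block, after rescaling I take the limit $e\to0$ with $x$ fixed. The entries tend to an explicit matrix depending only on $v$ and $x$; the leading terms are $a_{00}\to 2v(1-x^2)$, $a_{11}\to v(3+2x^2)$, and the rescaled off-diagonal terms become $\cO(1)$. I must check that the symmetrized limit is positive definite uniformly for $x\in[-\frac65,\frac65]$. By compactness and continuity in $(e,x)$, this then persists for $e<e_0(v)$.

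\textbf{Main obstacle.} The delicate step is the limit matrix. The diagonal entry $2v(1-x^2)$ becomes negative for $|x|>1$, so positivity for $1<|x|\le\frac65$ cannot come from the diagonal alone. It must come either from the change to the $\cd$-adapted frame mixing in $a_{11}$, or from the $\mathcal O(\sqrt e)$ terms $x\sqrt e[\cdots]/v$ being promoted by the rescaling. I would first compute the symmetrized limit explicitly and check its trace and determinant as polynomials in $x$; presumably the determinant has its first root somewhat beyond $\frac65$. If the naive limit fails there, I would keep the terms of order $\sqrt e$ consistently, since the rescaling makes them order one, and redo the trace and determinant analysis.
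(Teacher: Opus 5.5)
Your proposal is correct and the limit you were unsure about does work out, but you reach positivity by a different route than the paper.

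\textbf{The paper's route.} The paper never changes frame. Since $S$ is self-adjoint with respect to $B$, its eigenvalues are real. So positive definiteness of the $(t,r)$ block is equivalent to positivity of the similarity invariants $a_{00}+a_{11}$ and $a_{00}a_{11}-a_{01}a_{10}$ of the non-symmetric matrix in the $\dd t,\dd r$ frame. Although $a_{01}=\cO(e^{-1/2})$, these invariants are smooth in $\sqrt e$, because the factors $e^{\mp1/2}$ in $a_{01}$ and $a_{10}$ cancel in the product. At $e=0$ they equal $5v$ and $2v^2(3-2x^2)$. Together with $a_{22}\to v$, this gives positivity for $|x|<\sqrt{3/2}$, hence on $[-\frac65,\frac65]$.

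\textbf{Your route.} You pass to the $B$-orthonormal frame $f_0=(\dd t-x\sqrt e\,\dd r)/\sqrt{e(1-ex^2)}$, $f_1=(-x\sqrt e\,\dd t+\dd r)/\sqrt{1-ex^2}$. There the matrix of $r_0^2S$ is symmetric, with entries continuous in $(\sqrt e,x)$. As $e\to0$ it converges to
\[
  \begin{pmatrix} 2v & -2vx \\ -2vx & 3v \end{pmatrix},
\]
whose determinant $2v^2(3-2x^2)$ first vanishes at $|x|=\sqrt{3/2}>\frac65$, as you presumed.

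This also resolves your ``main obstacle.'' The negative diagonal value $2v(1-x^2)$ for $|x|>1$ is corrected by the $\cO(\sqrt e)$ tilt of $f_0$ relative to $\dd t$. That tilt shifts $2vx^2=-x\lim_{e\to0}\sqrt e\,a_{01}$ from the $(1,1)$ entry to the $(0,0)$ entry. So no extra bookkeeping of $\cO(\sqrt e)$ terms is needed.

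A minor correction to your structural remark: the $e^{-1}$-terms do not appear only off the diagonal. At $e=0$ they contribute $-2vx^2$ to $a_{00}$ and $+2vx^2$ to $a_{11}$; only their $\cO(e^{-1/2})$ part is confined to $a_{01}$.

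\textbf{Comparison.} Your approach yields an explicit symmetric limit matrix, and hence a quantitative lower bound for $S$ uniform in $x$. The paper's approach is shorter: it needs only two scalar invariants and no frame change.
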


Numerical experimentation shows that the maximal possible value of $e_0$ is very small (namely, $e_0<c v^2$ for $c\approx 0.000152$).

\begin{proof}[Proof of Lemma~\usref{LemmaMSSubpr}]
  Write $\cd:=r_0^2\cd_{r_0,v}$; we shall compute $S_{\cd,e}=r_0^2 S_{\cd_{r_0,v},e}$. In the coordinates $(t,r,\theta,\phi)$, the Christoffel symbols $\Gamma_{\mu\nu}^\lambda$ of Minkowski space with $\mu$ equal to $0$ or $1$ vanish except for $\Gamma^2_{1 2}=r^{-1}$, $\Gamma^3_{1 3}=r^{-1}$. Therefore, the only nonzero components of $\cd_{\mu;\nu}$ are
  \begin{align*}
    \cd_{1;1}&=-v, \\
    \cd_{2;2}&=-v r(r-r_0)=-r_0^2 x\sqrt{e}\Bigl(1+\frac{x\sqrt{e}}{v}\Bigr), \\
    \cd_{3;3}&=-v r(r-r_0)\sin^2\theta = -r_0^2 x\sqrt{e}\Bigl(1+\frac{x\sqrt{e}}{v}\Bigr)\sin^2\theta.
  \end{align*}
  This implies that $\cd_{j;i}-\cd_{i;j}=0$ for all $i,j$. Moreover, using $\cd^0=-\cd_0=-r_0$ and $\cd^1=\cd_1=-v(r-r_0)$, we compute
  \begin{align*}
    \cd_j{}^{;j} &= \cd_{1;1}+r^{-2}\cd_{2;2}+r^{-2}\sin^{-2}\theta\,\cd_{3;3}=-v\Bigl(3-\frac{2 r_0}{r}\Bigr) = -v\Bigl(3-\frac{2 v}{v+x\sqrt{e}}\Bigr), \\
    \cd^j\cd^m\cd_{j;m} &= (\cd^1)^2\cd_{1;1} = -v^3(r-r_0)^2 = -v r_0^2 x^2 e,
  \end{align*}
  and finally $\cd_j\cd^{k;j}$ vanishes for $k=0,2,3$, while $\cd_j\cd^{1;j}=v^2(r-r_0)=v r_0 x\sqrt{e}$.
  
  Furthermore, we record $G(\cd)=-r_0^2+v^2(r-r_0)^2=-r_0^2(1-e x^2)$ and $g_{0 0}=-1$, $g_{1 1}=1$, $g_{2 2}=r_0^2(1+x\sqrt{e}/v)^2$, and $g_{3 3}=r_0^2(1+x\sqrt{e}/v)^2\sin^2\theta$. The explicit expression for $S_{\cd_{r_0,v},e}$ now follows by direct calculation from Lemma~\ref{LemmaCSubpr}.

  In order to prove the positive definiteness, we note that for $e=0$, we have $a_{2 2}=v>0$, further $a_{0 0}+a_{1 1}=5 v>0$ as well as
  \[
    a_{0 0}a_{1 1}-a_{0 1}a_{1 0} = 2 v^2(3-2 x^2),
  \]
  which is positive for $|x|<\sqrt{\frac32}$, which is implied by $|x|\leq\frac65$. Moreover, the functions $a_{0 0}+a_{1 1}$, $a_{0 0}a_{1 1}-a_{0 1}a_{1 0}$, and $a_{2 2}$ on $[-\frac65,\frac65]$ depend smoothly on $\sqrt{e}$, and hence these positivity statements hold true for all sufficiently small $e>0$. The proof is complete.
\end{proof}

\subsection{Indicial roots of the zero energy operator}
\label{SsM0}

We now use a different bundle splitting, defined in terms of the functions $x^0=t+r$, $x^1=t-r$, namely,
\begin{equation}
\label{EqM0Split}
\begin{split}
  T^*\R^4 &= \la\dd x^0\ra \oplus \la\dd x^1\ra \oplus r T^*\Sph^2, \\
  S^2 T^*\R^4 &= \la(\dd x^0)^2\ra \oplus \la 2\dd x^0\,\dd x^1\ra \oplus (2\dd x^0\otimes_s r T^*\Sph^2) \oplus \la(\dd x^1)^2\ra \\
    &\qquad \oplus (2\dd x^1\otimes_s r T^*\Sph^2) \oplus r^2 S^2 T^*\Sph^2.
\end{split}
\end{equation}
Thus, $\dd t=(\frac12,\frac12,0)$ and $\dd r=(\frac12,-\frac12,0)$; in the corresponding dual splitting, $\pa_t=\pa_0+\pa_1=(1,1,0)$ (where we write $\pa_0=\pa_{x^0}$, $\pa_1=\pa_{x^1}$) and $\pa_r=(1,-1,0)$. Moreover, $\ubar g=-\dd x^0\,\dd x^1+r^2\slg$, so
\begin{equation}
\label{EqM0Metric}
  \ubar g = (0, -\tfrac12, 0, 0, 0, \slg),\quad
  \tr_{\ubar g} = (0, -4, 0, 0, 0, \sltr),\quad
  \sfG_{\ubar g} =
    \begin{pmatrix}
      1 & 0 & 0 & 0 & 0 & 0 \\
      0 & 0 & 0 & 0 & 0 & \frac14\sltr \\
      0 & 0 & 1 & 0 & 0 & 0 \\
      0 & 0 & 0 & 1 & 0 & 0 \\
      0 & 0 & 0 & 0 & 1 & 0 \\
      0 & 2\slg & 0 & 0 & 0 & \slsfG
    \end{pmatrix},
\end{equation}
where $\slsfG:=I-\frac12\slg\sltr$ with $\sltr:=\tr_\slg$.

Our present aim is to compute the indicial roots of the zero energy operator of the constraint propagation wave operator on Kerr, defined using~\eqref{EqCOp} where the constraint damping 1-form $\cd$ is given by~\eqref{EqMI1form} for large $r$. Recalling Remark~\ref{RmkK3bInfty}, it thus suffices to work on Minkowski space with the homogeneous (with respect to dilations) 1-form
\begin{equation}
\label{EqM01form}
\begin{alignedat}{2}
  \cd_v &:= r^{-1}(\dd t-v\,\dd r) &&= r^{-1} \bigl( \tfrac12(1-v),\ \tfrac12(1+v),\ 0\bigr), \\
  \cd_v^\sharp &= -r^{-1}(\pa_t+v\pa_r) &&= r^{-1}\bigl( -(1+v),\ -(1-v), 0 \bigr),
\end{alignedat}
\end{equation}
and to consider as the modification of $\delta_{\ubar g}^*$ the bundle map
\[
  \cC_{v,e,h}=h^{-1}\bigl(2\cd_v\otimes_s(\cdot)-(1-e)\ubar g\,\iota_{\cd_v^\sharp}\bigr).
\]

\begin{notation}[Operators on the sphere]
  We use slashes to denote operators on $\Sph^2$ with the standard metric $\slg$; thus, we write $\sld$, $\sldelta$, $\sldelta^*$, $\sltr$, $\slsfG$, and $\slstar$ for the exterior derivative, (negative) divergence, symmetric gradient, trace, the map $I-\frac12\slg\sltr$, and the Hodge star operator on $(\Sph^2,\slg)$.
\end{notation}

\begin{lemma}[Differential operators]
\label{LemmaM0Ops}
  We use the coordinates $t_*=x^1$, $\rho=r^{-1}$, and spherical coordinates.
  \begin{enumerate}
  \item On 1-forms, we have\footnote{In this paper, we only use the explicit forms of the zero-energy operators. We record full operator expressions here for consistency with \cite[\S{2.4.1}]{HintzKerrStab}.}
    \begin{align*}
      \Box_{\ubar g} &= -2\pa_{t_*}\rho(\rho\pa_\rho-1) + \rho^2\left( -(\rho\pa_\rho)^2+\rho\pa_\rho+\slDelta+\begin{pmatrix} 1 & -1 & -\sldelta \\ -1 & 1 & \sldelta \\ -2\sld & 2\sld & 1 \end{pmatrix} \right), \\
      \cC_{v,e,h} &= h^{-1}\rho
        \begin{pmatrix}
          1-v & 0 & 0 \\
          \frac12 e(1+v) & \frac12 e(1-v) & 0 \\
          0 & 0 & \frac12(1-v) \\
          0 & 1+v & 0 \\
          0 & 0 & \frac12(1+v) \\
          (1-e)(1+v)\slg & (1-e)(1-v)\slg & 0
        \end{pmatrix},
    \end{align*}
    where $\slDelta$ is the block diagonal operator with diagonal entries given by the tensor Laplacian $-\sltr\slnabla^2$ on functions, functions, and 1-forms, in this order.
  \item On symmetric 2-tensors, we have
    \begin{equation}
    \label{EqM0OpsDel}
    \begin{split}
      \delta_{\ubar g} &= \begin{pmatrix} 2 & 0 & 0 & 0 & 0 & 0 \\ 0 & 2 & 0 & 0 & 0 & 0 \\ 0 & 0 & 2 & 0 & 0 & 0 \end{pmatrix}\pa_{t_*} \\
        &\qquad + \rho\begin{pmatrix} \rho\pa_\rho-2 & -\rho\pa_\rho+2 & \sldelta & 0 & 0 & \frac12\sltr \\ 0 & \rho\pa_\rho-2 & 0 & -\rho\pa_\rho+2 & \sldelta & -\frac12\sltr \\ 0 & 0 & \rho\pa_\rho-3 & 0 & -\rho\pa_\rho+3 & \sldelta \end{pmatrix}.
    \end{split}
    \end{equation}
  \end{enumerate}
\end{lemma}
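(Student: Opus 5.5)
This is a direct coordinate computation in the null frame~\eqref{EqM0Split}, using the coordinates $t_*=x^1=t-r$, $\rho=r^{-1}$, $\omega\in\Sph^2$. The basic conversions are as follows. Since $x^0=t+r=t_*+2r$, we have $\pa_t=\pa_{t_*}$ and $\pa_r|_{t}=-\pa_{t_*}-\rho^2\pa_\rho$ in these coordinates. The dual frame vectors are $\pa_0=\frac12(\pa_t+\pa_r)=-\frac12\rho^2\pa_\rho$ and $\pa_1=\frac12(\pa_t-\pa_r)=\pa_{t_*}+\frac12\rho^2\pa_\rho$.

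I would first record the connection coefficients of $\ubar g=-\dd x^0\,\dd x^1+r^2\slg$ in this frame. The only nontrivial ones come from the spherical part: $\nabla_{\pa_0}(r\eta)=\nabla_{\pa_1}(r\eta)=0$ for frame 1-forms $r\eta$ pulled back from $\Sph^2$, up to the standard terms. For a spherical vector $X$,
\[
  \nabla_X\dd x^0=-\tfrac{1}{2r}\,r\slg(X,\cdot),\qquad \nabla_X\dd x^1=\tfrac1{2r}\,r\slg(X,\cdot),
\]
and $\nabla_X(r\eta)$ equals $\slnabla_X\eta$ (times $r$) plus the corresponding $\dd x^0,\dd x^1$ components. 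These follow from $\nabla\dd r=r\slg$ and $\nabla\dd t=0$.

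\emph{Step 1: $\delta_{\ubar g}$.} Write $(\delta h)_\mu=-g^{\kappa\lambda}h_{\mu\kappa;\lambda}$ with $g^{-1}=-4\pa_0\otimes_s\pa_1+r^{-2}\slg^{-1}$. Contracting $h$ in each of the six components of~\eqref{EqM0Split} yields~\eqref{EqM0OpsDel}. The $\pa_{t_*}$ terms come only from $-g^{01}$ contracted with $\pa_1$, which gives the factor $2$. The $\rho\pa_\rho$ terms come from $\pa_0,\pa_1$. The constants $-2,+2,-3,+3$ come from the spherical divergence of the radial scaling, i.e.\ the $2/r$ mean curvature of spheres, with the extra $-1$ in the third row from the $r$ in the frame $rT^*\Sph^2$. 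The $\frac12\sltr$ entries come from $\nabla_X\dd x^{0,1}$ acting on $r^2S^2T^*\Sph^2$.

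\emph{Step 2: $\cC_{v,e,h}$.} This is pure algebra from~\eqref{EqM01form}. For $\omega=(\omega_0,\omega_1,\eta)$, compute $2\cd_v\otimes_s\omega$ and $\iota_{\cd_v^\sharp}\omega=-\rho\bigl((1+v)\omega_0+(1-v)\omega_1\bigr)$. The latter is multiplied by $-(1-e)\ubar g=(1-e)(0,\frac12,0,0,0,-\slg)$. Reading off the six components gives the stated matrix; for instance, the $2\dd x^0\dd x^1$ entry is $\frac12(1-v)\omega_1+\frac12(1+v)\omega_0-\frac12(1-e)\bigl((1+v)\omega_0+(1-v)\omega_1\bigr)$, which matches.

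\emph{Step 3: $\Box_{\ubar g}$ on 1-forms.} Since $\ubar g$ is flat, the tensor wave operator acts componentwise on Cartesian components. I would nonetheless compute it in the null frame as $-\tr\nabla^2$. The scalar part is the standard $-4\pa_0\pa_1-\frac{2}{r}(\pa_0-\pa_1)+r^{-2}\slDelta$, rewritten in $(t_*,\rho)$ as $-2\pa_{t_*}\rho(\rho\pa_\rho-1)+\rho^2(-(\rho\pa_\rho)^2+\rho\pa_\rho+\slDelta)$. The zeroth- and first-order coupling matrix then arises from the connection terms above, namely from two spherical covariant derivatives hitting frame elements. This gives $\pm1$ entries from $\nabla_X\dd x^{0,1}$ paired twice, $\mp\sldelta$ and $\pm2\sld$ cross terms, and the $+1$ on spherical 1-forms, which is the Ricci curvature of $\Sph^2$ together with the $r$ frame scaling. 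As a check, one can verify the $\rho^2$ matrix on $\dd t=(\frac12,\frac12,0)$, which must be annihilated, and on $\dd r=(\frac12,-\frac12,0)$, using $\Box\,\dd r=\dd\Box r$ with $\Box r=-2/r$. The main obstacle is bookkeeping of signs in Step 3, and these consistency checks pin them down.
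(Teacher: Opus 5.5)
Your computations of $\delta_{\ubar g}$ and $\cC_{v,e,h}$ follow the paper's approach (direct computation in the splitting~\eqref{EqM0Split}). Your conversions $\pa_0=-\frac12\rho^2\pa_\rho$ and $\pa_1=\pa_{t_*}+\frac12\rho^2\pa_\rho$ are correct, and so is the algebra in Step~2.

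For $\Box_{\ubar g}$ your route is different. The paper computes $\delta_{\ubar g}^*$ from the Christoffel symbols in the coordinates $x^\mu$ and obtains $\delta_{\ubar g}$ as its formal adjoint. It then uses $\Box_{\ubar g}=2\delta_{\ubar g}\sfG_{\ubar g}\delta_{\ubar g}^*$ (valid because $\Ric(\ubar g)=0$) together with $2\sldelta\slsfG\sldelta^*=\slDelta-1$. This reuses exactly the pieces needed for $2\delta\sfG(\delta^*+\cC)$ in Corollary~\ref{CorM00}. Computing $-\tr\nabla^2$ directly is fine in principle: $\nabla_{\pa_0}$ and $\nabla_{\pa_1}$ annihilate the frame $\dd x^0,\dd x^1,r\eta$, so only spherical derivatives hitting the frame produce coupling terms. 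However, the inputs you write down for Step~3 are wrong, so carried out as stated it would not produce the matrix in the lemma.

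\emph{Connection data.} From $\nabla\dd t=0$ and $\nabla\dd r=r\slg$, which you cite yourself, one gets $\nabla_X\dd x^0=r\,\slg(X,\cdot)=-\nabla_X\dd x^1$ for spherical $X$.
\begin{itemize}
\item Your formulas carry an extra factor $-\frac1{2r}$: the sign is wrong, and these contributions would scale like $\rho^3$ rather than $\rho^2$.
\item The coefficients $\mp\frac1{2r}$ actually come from $\Gamma^c_{0b}=-\Gamma^c_{1b}=\frac1{2r}\delta^c_b$. They belong to $\nabla_{\pa_0}\dd x^c=-\frac1{2r}\dd x^c$ and $\nabla_{\pa_1}\dd x^c=\frac1{2r}\dd x^c$, and to the $\dd x^0,\dd x^1$ components of $\nabla_X\dd x^c$.
\item Equivalently, $\nabla_X(r\eta)=r\slnabla_X\eta-\eta(X)\,\dd r$; this is the identity you leave as ``the corresponding components.''
\item That identity produces the $\mp\sldelta$ entries. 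Followed by $\nabla_X\dd r=r\slg(X,\cdot)$, it also produces the $+1$ in the spherical slot. Meanwhile $\nabla_X\dd x^{0,1}=\pm r\slg(X,\cdot)$ produces the $\pm2\sld$ entries and the $\pm1$ block.
\item That $+1$ is extrinsic (the second fundamental form of the spheres). It is not the Ricci curvature of $\Sph^2$, which enters only in the paper's route, via $2\sldelta\slsfG\sldelta^*=\slDelta-1$.
\end{itemize}

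\emph{Sign of the scalar part.} With $\ubar g^{-1}=-4\pa_0\otimes_s\pa_1+r^{-2}\slg^{-1}$ and $\Box=-\tr\nabla^2$, the scalar part is $+4\pa_0\pa_1-\frac2r(\pa_0-\pa_1)+r^{-2}\slDelta$; indeed $\pa_t^2-\pa_r^2=4\pa_0\pa_1$. With $-4\pa_0\pa_1$ you would get $2\pa_{t_*}\rho(\rho\pa_\rho+1)$ instead of $-2\pa_{t_*}\rho(\rho\pa_\rho-1)$.

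\emph{Consistency checks.} Your checks on $\dd t$ and $\dd r$ only involve constant coefficients in the first two slots. They fix the $\pm1$ block but say nothing about the $\mp\sldelta$, $\pm2\sld$ and $(3,3)$ entries, so they cannot ``pin down'' all signs. You can determine those entries by also testing on the following 1-forms, with $Y=\cos\theta$:
\begin{itemize}
\item $\dd x^3=(\frac12Y,-\frac12Y,\sld Y)$;
\item $x^3\,\dd t=(\frac12 rY,\frac12 rY,0)$;
\item $x^1\dd x^2-x^2\dd x^1$.
\end{itemize}
All three have linear Cartesian coefficients and hence lie in $\ker\Box_{\ubar g}$.
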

\begin{proof}
  Introduce local coordinates $x^2,x^3$ on $\Sph^2$, and let us write $a,b,c$ for spherical indices $2,3$. In the coordinates $x^\mu$, $\mu=0,\ldots,3$, all Christoffel symbols vanish except for
  \[
    \Gamma_{0 b}^c=\Gamma_{b 0}^c=\frac12 r^{-1}\delta_b^c,\quad
    \Gamma_{1 b}^c=\Gamma_{b 1}^c=-\frac12 r^{-1}\delta_b^c,\quad
    \Gamma_{a b}^0=-r\slg_{a b},\quad
    \Gamma_{a b}^1=r\slg_{a b},\quad
    \Gamma_{a b}^c=\slGamma_{a b}^c.
  \]
  For this calculation, one uses that $\pa_0 r=-\pa_1 r=\frac12$, as follows from $t=\frac12(x^0+x^1)$, $r=\frac12(x^0-x^1)$. For future reference, we note that in the coordinates $t_*,r$ and $t_*,\rho$, we have
  \begin{equation}
  \label{EqM0OpsPa01}
    \pa_0 = \frac12\pa_r = -\frac12\rho^2\pa_\rho,\quad
    \pa_1 = \pa_{t_*}-\frac12\pa_r = \pa_{t_*}+\frac12\rho^2\pa_\rho.
  \end{equation}
  We compute $(\delta_{\ubar g}^*\omega)_{\mu\nu}=\frac12(\pa_\mu\omega_\nu+\pa_\nu\omega_\mu)-\Gamma_{\mu\nu}^\kappa\omega_\kappa$ in the coordinates $x^\mu$ and in the splittings~\eqref{EqM0Split} (in particular, taking care of the factors of $r$) to be given by
  \begin{align}
    \delta_{\ubar g}^*&=
      \begin{pmatrix}
        \pa_0 & 0 & 0 \\
        \frac12\pa_1 & \frac12\pa_0 & 0 \\
        \frac12 r^{-1}\sld & 0 & \frac12(r^{-1}\pa_0 r-r^{-1}) \\
        0 & \pa_1 & 0 \\
        0 & \frac12 r^{-1}\sld & \frac12(r^{-1}\pa_1 r+r^{-1}) \\
        r^{-1}\slg & -r^{-1}\slg & r^{-1}\sldelta^*
      \end{pmatrix} \nonumber\\
  \label{EqM0OpsDelStar}
    &= \begin{pmatrix} 0 & 0 & 0 \\ \frac12 & 0 & 0 \\ 0 & 0 & 0 \\ 0 & 1 & 0 \\ 0 & 0 & \frac12 \\ 0 & 0 & 0 \end{pmatrix}\pa_{t_*} + \rho\begin{pmatrix} -\frac12\rho\pa_\rho & 0 & 0 \\ \frac14 \rho\pa_\rho & -\frac14\rho\pa_\rho & 0 \\ \frac12\sld & 0 & -\frac14(\rho\pa_\rho+1) \\ 0 & \frac12\rho\pa_\rho & 0 \\ 0 & \frac12\sld & \frac14(\rho\pa_\rho+1) \\ \slg & -\slg & \sldelta^* \end{pmatrix}.
  \end{align}
  The divergence on symmetric 2-tensors can either be computed directly, or by computing the adjoint of $\delta_{\ubar g}^*$, noting that the metric volume density is $\frac12 r^2|\dd x^0\dd x^1\dd\slg|$, and the fiber inner products on $T^*\R^4$ resp.\ $S^2 T^*\R^4$ induced by $\ubar g$ are given by
  \[
    \begin{pmatrix}
      0 & -2 & 0 \\
      -2 & 0 & 0 \\
      0 & 0 & \slg^{-1}
    \end{pmatrix},\quad\text{resp.}\quad
    \begin{pmatrix}
      0 & 0 & 0 & 4 & 0 & 0 \\
      0 & 8 & 0 & 0 & 0 & 0 \\
      0 & 0 & 0 & 0 & -4\slg^{-1} & 0 \\
      4 & 0 & 0 & 0 & 0 & 0 \\
      0 & 0 & -4\slg^{-1} & 0 & 0 & 0 \\
      0 & 0 & 0 & 0 & 0 & \slg^{-1}_2
    \end{pmatrix},
  \]
  where $\slg^{-1}_2$ is the fiber inner product on $S^2 T^*\Sph^2$ induced by $\slg$. One finds
  \[
    \delta_{\ubar g}=
      \begin{pmatrix}
        2 r^{-2}\pa_1 r^2 & 2 r^{-2}\pa_0 r^2 & r^{-1}\sldelta & 0 & 0 & \frac12 r^{-1}\sltr \\
        0 & 2 r^{-2}\pa_1 r^2 & 0 & 2 r^{-2}\pa_0 r^2 & r^{-1}\sldelta & -\frac12 r^{-1}\sltr \\
        0 & 0 & 2(r^{-1}\pa_1 r-r^{-1}) & 0 & 2(r^{-1}\pa_0 r+r^{-1}) & r^{-1}\sldelta
      \end{pmatrix}.
  \]
  Upon inserting~\eqref{EqM0OpsPa01}, this gives~\eqref{EqM0OpsDel}. Since $2\delta_{\ubar g}\sfG_{\ubar g}\delta_{\ubar g}^*\omega=\Box_{\ubar g}\omega+\iota_{\omega^\sharp}\Ric(\ubar g)$ and $\Ric(\ubar g)=0$, we can compute $\Box_{\ubar g}=2\delta_{\ubar g}\sfG_{\ubar g}\delta_{\ubar g}^*$ using~\eqref{EqM0OpsDel}, \eqref{EqM0Metric}, and \eqref{EqM0OpsDelStar}, together with $2\sldelta\slsfG\sldelta^*=\slDelta-1$.

  Lastly, the expression for $\cC_{v,e,h}$ follows from the expression
  \[
    2\cd_v\otimes_s(\cdot)
     =\rho\begin{pmatrix}
        1-v & 0 & 0 \\
        \frac12(1+v) & \frac12(1-v) & 0 \\
        0 & 0 & \frac12(1-v) \\
        0 & 1+v & 0 \\
        0 & 0 & \frac12(1+v) \\
        0 & 0 & 0
      \end{pmatrix}
  \]
  and~\eqref{EqM0Metric}--\eqref{EqM01form}.
\end{proof}

\begin{cor}[Constraint propagation wave operator]
\label{CorM00}
  In the bundle splitting~\eqref{EqM0Split}, we have
  \begin{equation}
  \label{EqM00Box}
    \Box_{\ubar g}^{\cC_{v,e,h}} = 2\delta_{\ubar g}\sfG_{\ubar g}(\delta_{\ubar g}^*+\cC_{v,e,h}) = -2\pa_{t_*}\rho(\rho\pa_\rho-1-\ubar S^{\cC_{v,e,h}}) + \wh{\Box_{\ubar g}^{\cC_{v,e,h}}}(0),
  \end{equation}
  where
  \begin{equation}
  \label{EqM00S}
    \ubar S^{\cC_{v,e,h}} = h^{-1}
      \begin{pmatrix}
        2(1-v) & 0 & 0 \\
        (1-e)(1+v) & (1-e)(1-v) & 0 \\
        0 & 0 & 1-v
      \end{pmatrix}
  \end{equation}
  and
  \begin{equation}
  \label{EqM00}
    \rho^{-2}\wh{\Box_{\ubar g}^{\cC_{v,e,h}}}(0) = -(\rho\pa_\rho)^2 + \rho\pa_\rho + \slDelta + \begin{pmatrix} 1 & -1 & -\sldelta \\ -1 & 1 & \sldelta \\ -2\sld & 2\sld & 1 \end{pmatrix} + h^{-1}\begin{pmatrix} q_{0 0} & q_{0 1} & q_{0\slash} \\ q_{1 0} & q_{1 1} & q_{1\slash} \\ q_{\slash 0} & q_{\slash 1} & q_{\slash\slash} \end{pmatrix},
  \end{equation}
  and where the $q_{\bullet\bullet}$ are the first order operators
  \begin{align*}
    q_{0 0}&=(1-3 v+e+e v)\rho\pa_\rho + (-1+3 v+e+e v), \\
    q_{1 0}&=(1-e)(1+v)\rho\pa_\rho-(1+e)(1+v), \\
    q_{\slash 0}&=-2 e(1+v)\sld, \\
    q_{0 1}&=-(1-e)(1-v)\rho\pa_\rho+(1+e)(1-v), \\
    q_{1 1}&=(-1-3 v-e+e v)\rho\pa_\rho+(1+3 v-e+e v), \\
    q_{\slash 1}&=-2 e(1-v)\sld, \\
    q_{0\slash}&=(1-v)\sldelta, \\
    q_{1\slash}&=(1+v)\sldelta, \\
    q_{\slash\slash}&=-2 v\rho\pa_\rho+4 v.
  \end{align*}
\end{cor}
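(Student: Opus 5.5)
The plan is direct composition using Lemma~\ref{LemmaM0Ops}. Write $\Box_{\ubar g}^{\cC_{v,e,h}}=\Box_{\ubar g}+2\delta_{\ubar g}\sfG_{\ubar g}\cC_{v,e,h}$, where $\Box_{\ubar g}=2\delta_{\ubar g}\sfG_{\ubar g}\delta_{\ubar g}^*$ because $\Ric(\ubar g)=0$. The expression for $\Box_{\ubar g}$ in Lemma~\ref{LemmaM0Ops} already supplies the $-2\pa_{t_*}\rho(\rho\pa_\rho-1)$ term and the $\rho^2$-terms of \eqref{EqM00} that do not involve $h^{-1}$. It therefore remains to compute the $6\times 3$ matrix product $\sfG_{\ubar g}\cC_{v,e,h}$ using \eqref{EqM0Metric}, and then to apply $2\delta_{\ubar g}$ from \eqref{EqM0OpsDel}.

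First, I compute $\sfG_{\ubar g}\cC_{v,e,h}$. By \eqref{EqM0Metric}, $\sfG_{\ubar g}$ fixes rows $1,3,4,5$. Row $2$ becomes $\frac14\sltr$ of row $6$, which gives $h^{-1}\rho\cdot\frac12(1-e)\bigl((1+v),(1-v),0\bigr)$. Row $6$ becomes $2\slg\cdot(\text{row }2)+\slsfG(\text{row }6)$. Since $\slsfG\slg=\slg-\slg=0$, this equals $h^{-1}\rho\, e\bigl((1+v),(1-v),0\bigr)\slg$. All entries have the form $h^{-1}\rho\times$ constant, or constant times $\slg$.

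Second, I apply $2\delta_{\ubar g}$ from \eqref{EqM0OpsDel}, commuting $\rho\pa_\rho$ past the factor $\rho$ via $\rho\pa_\rho\rho=\rho(\rho\pa_\rho+1)$. The $\pa_{t_*}$ part of $2\delta_{\ubar g}$ picks out the first three rows of $\sfG\cC$, times $4$. Its coefficient is $h^{-1}\rho\cdot 2\bigl((2(1-v),0,0),((1-e)(1+v),(1-e)(1-v),0),(0,0,1-v)\bigr)$. Comparing with $-2\pa_{t_*}\rho\cdot(-\ubar S)$ yields \eqref{EqM00S}. The $\rho$-part of $\delta_{\ubar g}$ produces the $h^{-1}\rho^2$ block of \eqref{EqM00}.

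Each $q$ entry is then a short combination. For example, $q_{00}=2[(\rho\pa_\rho-1)(1-v)-(\rho\pa_\rho-1)\tfrac12(1-e)(1+v)+\tfrac12\sltr\, e(1+v)\slg]$ with $\sltr\slg=2$. This expands to $(1-3v+e+ev)\rho\pa_\rho+(-1+3v+e+ev)$, as claimed. The entries $q_{0\slash}$ and $q_{1\slash}$ come from $\sldelta$ acting on rows $3$ and $5$. The entries $q_{\slash0}$ and $q_{\slash1}$ come from $\sldelta(e(1\pm v)u\slg)=-e(1\pm v)\sld u$, using $\sldelta(u\slg)=-\sld u$. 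The entry $q_{\slash\slash}$ comes from row $3$ of $\delta$ acting on rows $3$ and $5$ of $\sfG\cC$.

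The step I expect to need the most care is not conceptual but bookkeeping. One must track the shifts $\rho\pa_\rho\mapsto\rho\pa_\rho+1$ from commuting past $\rho$, the factor $\frac14\sltr$ versus $\frac12\sltr$, and the sign convention $\sldelta(u\slg)=-\sld u$. I would cross-check the result against a special case: at $e=1$ the $\slg$-term of $\cC$ vanishes. Finally, the zero energy operator is read off by dropping all $\pa_{t_*}$-terms. By Definition~\ref{DefK3bSpec} this is legitimate since $t_*-t$ is $t$-independent and only the normal operator at $\rho=0$ is asserted.
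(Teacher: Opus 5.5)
Your proposal is correct and is essentially the paper's proof: the paper likewise identifies the $q_{\bullet\bullet}$-block as $2\rho^{-2}\wh{\delta_{\ubar g}}(0)\sfG_{\ubar g}\cC_{v,e,h}$ and $\ubar S^{\cC_{v,e,h}}$ as $\rho^{-1}[\delta_{\ubar g},t_*]\sfG_{\ubar g}\cC_{v,e,h}$ (which is exactly your ``first three rows times $4$'' computation), with the $h$-independent terms taken from Lemma~\ref{LemmaM0Ops}. Your intermediate matrix $\sfG_{\ubar g}\cC_{v,e,h}$ and the entries you derive from it check out. One small correction to your closing remark: on Minkowski space with the exactly homogeneous $\cd_v$, the identity \eqref{EqM00} holds exactly, not merely at the level of the normal operator at $\rho=0$.
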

\begin{proof}
  The term involving the $q_{\bullet\bullet}$ is the operator $2\rho^{-2}\wh{\delta_{\ubar g}}(0)\sfG_{\ubar g}\cC_{v,e,h}$ where $\wh{\delta_{\ubar g}}(0)$ is the zero energy operator of $\delta_{\ubar g}$, obtained from~\eqref{EqM0OpsDel} by dropping the first term on the right. The endomorphism $\ubar S^{\cC_{v,e,h}}$ on the other hand is given by $\rho^{-1}[\delta_{\ubar g},t_*]\sfG_{\ubar g}\cC_{v,e,h}$.
\end{proof}

Recall now that an indicial root of $\wh{\Box_{{\ubar g}}^{\cC_{v,e,h}}}(0)$ (or equivalently of $\wh{\Box_{{\ubar g},h}^{\cC_{v,e,h}}}(0)=h^2\wh{\Box_{\ubar g}^{\cC_{v,e,h}}}(0)$) is a complex number $\lambda\in\C$ such that $\wh{\Box_{\ubar g}^{\cC_{v,e,h}}}(0)$ has a nontrivial kernel on 1-forms whose coefficients in the splitting~\eqref{EqM0Split} are homogeneous of degree $\lambda$ with respect to dilations in $\rho$. Equivalently, the operator
\begin{equation}
\label{EqM0NormOp}
  N_{v,e,h}(\lambda) := -\lambda^2+\lambda+\slDelta+\begin{pmatrix} 1 & -1 & -\sldelta \\ -1 & 1 & \sldelta \\ -2\sld & 2\sld & 1 \end{pmatrix} + Q_{v,e,h}(\lambda,\slnabla) \in \Diff^2(\Sph^2;\ubar\C\oplus\ubar\C\oplus T^*\Sph^2),
\end{equation}
has nontrivial kernel, where $Q_{v,e,h}(\lambda,\slnabla)$ is obtained from the last term in~\eqref{EqM00} (including the prefactor $h^{-1}$) upon replacing $\rho\pa_\rho$ by $\lambda$. (We write $\ubar\C$ for the trivial bundle $\Sph^2\times\C\to\Sph^2$.) Since $N_{v,e,h}(\lambda)$ commutes with the rotation action on $\Sph^2$ (acting via pullback on $T^*\Sph^2$), one can classify indicial roots into types as follows:
\begin{definition}[Scalar and vector type indicial roots]
\label{DefM0Types}
  Denote the space of degree $l$ spherical harmonics, $l\in\N_0$, by $\scalspace_l=\mathspan\{Y_{l m}\colon m=-l,\ldots,l\}$. Let $\lambda\in\C$ be an indicial root of $\wh{\Box_{{\ubar g}}^{\cC_{v,e,h}}}(0)$.
  \begin{enumerate}
  \item Let $l\geq 0$. We say that $\lambda$ is a \emph{scalar type $l$ indicial root} (for short: an \emph{$\rms l$ indicial root}) if, in the case $l\geq 1$, there exist $0\neq\scal\in\scalspace_l$ and constants $(a,b,c)\neq(0,0,0)$ such that $(a\scal,b\scal,c\,\sld\scal)=a\scal\,\dd x^0+b\scal\,\dd x^1+c r\,\sld\scal\in\ker N_{v,e,h}(\lambda)$. In the case $l=0$, we require the existence of constants $(a,b)\neq(0,0)$ such that $(a,b,0)=a\,\dd x^0+b\,\dd x^1\in\ker N_{v,e,h}(\lambda)$.
  \item Let $l\geq 1$. Then $\lambda$ is a \emph{vector type $l$ indicial root} (for short: a \emph{$\rmv l$ indicial root}) if there exists $0\neq\scal\in\scalspace_l$ such that $(0,0,\slstar\sld\scal)=r\slstar\sld\scal\in\ker N_{v,e,h}(\lambda)$.
  \end{enumerate}
  We call these special types of solutions scalar type $l$ and vector type $l$ solutions.
\end{definition}

The point is that $L^2(\Sph^2;T^*\Sph^2)$ has an orthogonal basis given by the 1-forms $\sld\scal$ and $\slstar\sld\scal$, where $\scal\in\scalspace_l$ runs over an orthogonal basis of $\scalspace_l$ for each $l\geq 1$. (This is a classical result and can be proved by noting that every $\omega\in\CI(\Sph^2;T^*\Sph^2)$ can be written as $\omega=\sld u+\eta$ where $u\in\CI(\Sph^2)$ and $\eta\in\CI(\Sph^2;T^*\Sph^2)$, $\sldelta\eta=0$, so $\sld\slstar\eta=0$; but since the first cohomology group of $\Sph^2$ is trivial, we can then further write $\eta=\slstar\sld v$ for some $v\in\CI(\Sph^2)$. Expanding $u$ and $v$ into spherical harmonics yields the claim.) Thus, any $\omega\in\CI(\Sph^2)\oplus\CI(\Sph^2)\oplus\CI(\Sph^2;T^*\Sph^2)$ with $N_{v,e,h}(\lambda)\omega=0$ can be decomposed into a sum of scalar type $l\in\N_0$ and vector type $l\in\N$ 1-forms, and all of the individual terms lie in $\ker N_{v,e,h}(\lambda)$ since $N_{v,e,h}(\lambda)$ preserves the spaces of scalar type $l$ and vector type $l$ sections of $\ubar\C\oplus\ubar\C\oplus T^*\Sph^2$ for all $l$. (Since $N_{v,e,h}(\lambda)$ is elliptic, only finitely many pure type parts of such $\omega$ can be non-zero, as each of these parts is an indicial solution by itself.)

The main result of this section is:

\begin{thm}[Indicial roots at zero energy]
\label{ThmM0}
  Let $v\in(0,1)$ and $C_0>0$. Then there exists $e_0=e_0(v)\in(0,1)$ such that for all $e\in(0,e_0)$, there exists $h_0>0$ such that for all $h\in(0,h_0)$, all indicial roots $\lambda\in\C$ of $\wh{\Box_{{\ubar g}}^{\cC_{v,e,h}}}(0)$ satisfy:
  \begin{enumerate}
  \item $\Re\lambda\geq 1$ or $\Re\lambda<-C_0$;
  \item the only root with $\Re\lambda=1$ is $\lambda=1$, which occurs only in scalar types $0$ and $1$, and has order $1$ in each;
  \item\label{ItM0Number} there are precisely $1$, resp.\ $2$, resp.\ $3$ vector type $l$ ($l\geq 1$), resp.\ scalar type $0$, resp.\ scalar type $l$ ($l\geq 1$) roots with real parts $\geq 1$, and an equal number of roots with real parts $<-C_0$;
  \item the real parts of the indicial roots $\lambda$ accumulate only at $\pm\infty$. In particular, there exists $\eps_{\rm ind}>0$ such that no root has real part in $(1,1+\eps_{\rm ind})$.
  \end{enumerate}
\end{thm}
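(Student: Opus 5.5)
Using Definition~\ref{DefM0Types}, I reduce the problem to finite-dimensional determinant equations, one for each type and each $l$, and then analyze their roots asymptotically in the double limit $h\to0$, then $e\to0$.

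\emph{Reduction.} For a vector type $l$ solution I insert $(0,0,\slstar\sld\scal)$ into $N_{v,e,h}(\lambda)$ from~\eqref{EqM0NormOp}. Since $\sldelta\slstar\sld=0$, this gives the scalar equation
\[
-\lambda^2+\lambda+l(l+1)+1+h^{-1}v(4-2\lambda)=0 .
\]
Here I use that the Hodge Laplacian on co-closed 1-forms has eigenvalue $l(l+1)$, and I relate it to $\slDelta$ via the Weitzenb\"ock identity with curvature $1$ (I will match constants with Lemma~\ref{LemmaM0Ops}). Multiplying by $h$, the equation becomes $-h\lambda^2+(h-2v)\lambda+\dots$. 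Its two roots are $\lambda\approx2+O(h)$ and $\lambda\approx-2v/h\to-\infty$. That yields exactly one root with real part $\ge1$ and one with real part $<-C_0$ once $h$ is small. Note that this needs $l$-uniformity: for $l^2\gtrsim h^{-1}$ the roots behave like $\pm l$ instead, so I will treat large $l$ separately (see below). For scalar type $l\ge1$ I use $\sldelta\sld\scal=l(l+1)\scal$ and $\sld\sldelta\sld\scal=l(l+1)\sld\scal$. This reduces $N$ to a $3\times3$ matrix $A_l(\lambda)$ whose entries are quadratic in $\lambda$, giving a polynomial $\det(hA_l(\lambda))$ of degree $6$. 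For $l=0$ the third component drops out and I get a $2\times2$ matrix, so degree $4$. These are the polynomials alluded to in~\eqref{EqMsPoly}.

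\emph{Asymptotics for fixed $l$.} I write $hA_l(\lambda)=h(-\lambda^2+\lambda+\dots)+Q^{(1)}(\lambda)$, where $Q^{(1)}$ is the $h$-independent matrix of the $q_{\bullet\bullet}$, which is affine in $\lambda$. There are two root regimes. The bounded roots, as $h\to0$, converge to the roots of $\det Q^{(1)}(\lambda)$, with the $l$-dependent entries entering only through the $\sld,\sldelta$ entries. The large roots are $\lambda=\mu/h$ with $\mu$ solving $\det(-\mu^2+\mu\,\partial_\lambda Q^{(1)})=0$. The latter says that $\mu$ is $0$ or an eigenvalue of the $\rho\pa_\rho$-coefficient matrix of $Q$. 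That matrix is lower block triangular and, from Corollary~\ref{CorM00}, has entries $1-3v+e(1+v)$, $-1-3v-e(1-v)$ and $-2v$. For the scheme to work I need these nonzero eigenvalues to be negative, i.e.\ the corresponding roots should escape to $-\infty$. This is where $e$ small and $v\in(0,1)$ enter; the remark about $v\le0$ being incompatible is consistent with this. I expect to need $e>0$ small so that the relevant $2\times2$ block has negative eigenvalues (e.g.\ determinant positive, trace negative).

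\emph{Main obstacle.} The bounded roots must be computed exactly enough to show they satisfy $\Re\lambda\ge1$, with $\lambda=1$ occurring only for $l=0,1$ and simply. I will first verify directly that $\lambda=1$ is a root in scalar types $0$ and $1$. These correspond to the Minkowski Coulomb- and translation-type stationary solutions $\rho^{1}$ and $\dd(\rho)$-like forms, which solve $\delta_{\ubar g}^*\omega$-related equations. Then I will factor $\det Q^{(1)}_l(\lambda)$ after setting $e=0$: at $e=0$ I expect a triangular structure giving roots $2,\ l+1,\dots$, and then I perturb in $e$. Because the limit $e\to0$ is degenerate (Remark~\ref{RmkCNoe0}), the $e=0$ problem probably has a root collision, so roots may split like $\sqrt e$. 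In that case I would verify the sign of the splitting by expanding to next order in $\sqrt e$, uniformly in $l$.

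\emph{Large $l$ and the remaining claims.} For $l\gtrsim h^{-1/2}$ I will treat $N(\lambda)$ as a perturbation of $-\lambda^2+\lambda+l(l+1)$, whose roots are $l+1$ and $-l$. There I rescale $\lambda=l\nu$ and show that the $h^{-1}Q$ terms, which are $O(l/h)$, keep the roots in the right half-plane or in $\Re\lambda<-C_0$. This works because $|\Re\lambda|\gtrsim\min(l,h^{-1})$ in that regime. Root counting then follows by continuity from these limits, using that the degrees are $2$, $4$ and $6$. Finally, accumulation only at $\pm\infty$ follows from the large-$l$ asymptotics $\Re\lambda\sim\pm l$, and this also gives $\eps_{\rm ind}$.
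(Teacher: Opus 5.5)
There is a genuine gap: uniformity in $l$ for the scalar type $l\geq1$ roots.

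Your reduction to type-wise determinants is correct. The two limits you actually analyze are the paper's regimes (N1)/(P1) and (N3)/(P6): fixed $l$ with $h\to0$ (large roots $\lambda\approx\mu/h$ with $\mu\in\{-4v,-2v,-2v\}$, bounded roots tending to those of the $h=0$ cubic), and $l\gg h^{-1}$ (roots $\approx\pm l$). The vector type is harmless. $p_\rmv(h,l,\cdot)$ has negative leading coefficient and is positive at $\lambda=1$ and, for $h<\frac{2v(C_0+2)}{C_0(C_0+1)}$, at $\lambda=-C_0$, uniformly in $l$.

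The theorem needs a single $h_0=h_0(e)$ valid for \emph{all} $l$. Your fixed-$l$ step produces an $h_0$ depending on $l$. Your large-$l$ step is valid only for $l\gg h^{-1}$, not for $l\gtrsim h^{-1/2}$ as you claim. After rescaling $\lambda=l\nu$, the $O(l/h)$ contributions of $h^{-1}Q$ are small relative to $l^2$ only if $hl\gg1$. Your bound $|\Re\lambda|\gtrsim\min(l,h^{-1})$ is also false in the intermediate range for the three roots with positive real part: their real parts are near $1$ or $\frac32$, or of size $hl^2\ll l$. So the range $1\ll l\ll h^{-1}$, which is where all the difficulty lies, is not controlled.

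Your picture of the $e=0$ degeneracy is also wrong in a way that matters. At $e=h=0$ the bounded scalar type $l$ roots are $1,1,2$ for every $l$, not $2,l+1,\ldots$. At $h=0$ one root stays exactly at $1$ for all $e$. The other moves \emph{linearly} in $e$, to $1+\frac{e}{2v^2}\bigl((1-v^2)l(l+1)+2(1+v^2)\bigr)+O_l(e^2)$. Since that coefficient grows like $l^2$, the relevant parameter is $el^2$. For $el^2>\frac{v^2}{2(1-v^2)}$ two roots become complex, with real parts near $\frac32$ and imaginary parts of size $\sim l\sqrt e$. Hence no expansion in $e$ or $\sqrt e$ is uniform in $l$.

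The root pinned at $1$ moves for $h>0$ at the rate $\frac{(l-1)l(l+1)(l+2)(1-v^2)}{2v((1-v^2)l(l+1)+2(1+v^2))}$. You never compute this $h$-derivative, yet it is what shows that $\lambda=1$ occurs only in scalar types $0$ and $1$. It too grows like $l^2$. For $\sqrt e/h\lesssim l\ll h^{-1}$ the three positive-real-part roots are of size $hl^2$, and for $l\sim h^{-1}$ they are of size $l$.

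The missing idea is an analysis that is uniform on the whole $(e,h,l^{-1})$ parameter space. The paper covers $1\leq l<\infty$ by six overlapping scaling regimes, with coordinates such as $(el^2,hl^2,l^{-1})$, $(e,\frac{hl}{\sqrt e},\frac{1}{l\sqrt e})$ and $(\frac{e}{h^2l^2},hl,\frac{1}{l\sqrt e})$, and a suitably rescaled $\lambda$ in each. It obtains root bounds by continuity within each regime and chains them together by fixing $e$ small first and then $h$ small. Continuity from the fixed-$l$ and $l\gg h^{-1}$ limits alone cannot give the $l$-uniform bound in (i) or the root count in (iii).

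A smaller point: $e>0$ is not what makes $-4v,-2v,-2v$ negative. It is needed to split the double roots at $1$; for example, the extra scalar type $0$ root $\frac{(2+e)v^2+e}{(2-e)v^2-e}$ at $h=0$ equals $1$ when $e=0$.
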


The proof, which is given in Appendix~\ref{SM0}, follows from computations and estimates for roots of characteristic polynomials. We study the various types separately: vector type roots in~\S\ref{SsM0v}, scalar type $0$ roots in~\S\ref{SsM0s}, and scalar type $l\geq 1$ roots in~\S\ref{SsMls}. For scalar type $l\geq 1$ roots, the characteristic polynomial has degree $6$ (see~\eqref{EqMsPoly}) and has no explicit full set of zeros when $e,h>0$, and hence we must resort to careful asymptotic analysis to prove Theorem~\ref{ThmM0} in this case. While the proof is thus quite delicate (albeit an instructive example of the clarifying power of geometric singular analysis), we only use the \emph{statement} of Theorem~\ref{ThmM0} in later parts of the paper, and hence the reader may safely skip its proof at first reading.

\begin{rmk}[Conceptual proof of a weaker statement]
\label{RmkM0Weaker}
  A weaker statement, with the lower bound $\Re\lambda\geq 1$ relaxed to $\Re\lambda\geq 1-\eps$ for any fixed $\eps>0$, can be proved in a very conceptual manner using (semiclassical) microlocal techniques as in~\S\ref{SE} (see Remark~\ref{RmkTInv0}), with $h_0>0$ depending on the choice of $\eps>0$. In order to get the sharp result of Theorem~\ref{ThmM0}---which is crucial for our application to the Kerr black hole stability problem---it appears unavoidable to proceed directly, as we do in Appendix~\ref{SM0}.
\end{rmk}

\begin{rmk}[Inward pointing constraint damping vector field]
\label{RmkM0WrongV}
  If we were to choose the constraint damping 1-form $\cd_v$ such that the vector field $-r^2\cd_v^\sharp=r(\pa_t+v\pa_r)$ is \emph{inward} pointing for large $r$, i.e., $v<0$ (and $v>-1$ to ensure its timelike nature), then the indicial roots would behave in a manner incompatible with our needs for the stability problem: for example, a simple perturbative analysis of $\lambda_+(h,l)$ in~\eqref{EqM0vTaylor} shows that the number of vector type indicial roots in any fixed interval $[2-\delta,2]$, $\delta>0$, grows without bound as $h\searrow 0$.
\end{rmk}

\begin{rmk}[Complex roots]
\label{RmkM0Complex}
  For fixed small $e$ and all sufficiently small $h$, there is a finite number of indicial roots that are non-real (but then appear in complex conjugate pairs); see Figure~\ref{FigMsNumeric}.
\end{rmk}

\section{Semiclassical constraint damping on subextremal Kerr spacetimes}
\label{SE}

We now fix a subextremal Kerr metric
\[
  g = g_{\bhm,a},
\]
see~\eqref{EqKBL}--\eqref{EqKMetStar}. We recall from Definition~\ref{DefKMfd} the notation $X=\cl_M\{t=0\}$ (with $t$ as in~\eqref{EqKMetCoord}) for (the closure of) its Cauchy hypersurface. Recall the bundles $\cT$, $\cT^*$ from~\eqref{EqKscT}, and put $\cT_X:=\cT|_X$, $\cT^*_X:=\cT^*|_X$.

\subsection{Choice of the constraint damping 1-form and the parameter \texorpdfstring{$e$}{e}}
\label{SsEC}

Our first task is to construct a stationary 1-form $\cd$ out of the pieces constructed in~\S\ref{SM}. More precisely:

\begin{prop}[Existence of suitable $\cd$ and $e$]
\label{PropEC}
  Fix any $v\in(0,1)$ and $C_0>0$. Fix moreover $\alpha_\sface,\alpha_\cK\in\R$ subject to the conditions $\alpha_\sface<-\frac12$ and $\alpha_\cK > \alpha_\sface+\frac12$. Set $\rho_\sface=r^{-1}$ and $\rho_\cK=\frac{r}{t}$; put $w:=\rho_\sface^{\alpha_\sface+1}\rho_\cK^{\alpha_\cK}$ and $w':=\rho_\sface^{\alpha_\sface+1}$. Then for all sufficiently small $e>0$, there exists a 1-form $\cd\in\rho_\sface\CI(M;\cT^*)$ satisfying the following properties for some $r_0>100\bhm$, $0<\delta_r<(4 r_0)^{-1}$, and $\delta_\sface>0$:
  \begin{enumerate}
  \item\label{ItECCausal}{\rm (Basic properties.)} $\cd$ is stationary, i.e., $\cL_{\pa_t}\cd=0$ (equivalently, $\cd\in r^{-1}\CI(X;\cT^*_X)$). Moreover, $r\cd\in\CI(M;\cT^*)$ is past timelike and nowhere vanishing.
  \item\label{ItECExpl}{\rm (Explicit regions.)} We have
    \begin{alignat}{3}
    \label{EqECExpl}
      \cd&=\frac12 r_0^{-2}(r_0\,\dd t-v(r-r_0)\,\dd r),&\quad \frac12 r_0\leq r&\leq 2 r_0, \\
    \label{EqECExpl2}
      \cd&=r^{-1}(\dd t-v\,\dd r),&\quad r&\geq 3 r_0.
    \end{alignat}
  \item\label{ItECPosInt}{\rm (Interior saddle point, cf.\ Lemma~\ref{LemmaMSSubpr}.)} The endomorphism $S_{g,\cd,e}$ from Lemma~\usref{LemmaCSubpr} is positive definite (with respect to the inner product $B_{g,\cd,e}$ on $\cT^*$ from Lemma~\usref{LemmaCInner}) at all points with $|r-r_0|\leq\frac65 r_0\sqrt{e}/v$.
  \item\label{ItECPosCorner}{\rm (Saddle point at $\pa\cK^+$, cf.\ Lemma~\ref{LemmaMIbdy}.)} For $\rho_\sface\leq 2\delta_r$ and $\rho_\cK\leq 2\delta_r$, the endomorphism $r^2(-\ell_{g,\cd,e}(\frac{\dd w}{w})+S_{g,\cd,e})$ is positive definite.
  \item\label{ItECPosSink}{\rm (Sink in $\sface^\circ$, cf.\ Lemma~\ref{LemmaMISink}.)} At points where $\frac{r}{t}/v\in[1-2\delta_\sface,1+2\delta_\sface]$ and $r^{-1}\leq 2\delta_r$, the endomorphism $r^2(-\ell_{g,\cd,e}(\frac{\dd w'}{w'})+S_{g,\cd,e})$ is positive definite.
  \item\label{ItECPosEsc}{\rm (Positivity along escape functions, cf.\ Lemma~\ref{LemmaMSRad}.)} For $\bhm\leq r\leq r_0-r_0\sqrt{e}/v$, $-\ell_{g,\cd,e}(-\dd r)$ is positive definite. For $r_0+r_0\sqrt{e}/v\leq r<\infty$, $-\ell_{g,\cd,e}(\dd r)$ is positive definite. On $\sface\cap\{\frac{r}{t}\in[\delta_r,v(1-\delta_\sface)]\}$, $-r^2\ell_{g,\cd,e}(\dd(\frac{r}{t}))$ is positive definite, and on $\sface\cap\{\frac{r}{t}\in[v(1+\delta_\sface),10]\}$, $-r^2\ell_{g,\cd,e}(-\dd(\frac{r}{t}))$ is positive definite.
  \end{enumerate}
\end{prop}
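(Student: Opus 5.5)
The plan is to build $\cd$ by gluing the model 1-forms of \S\ref{SM}, check each positivity property for the Minkowski metric, and then pass to $g=g_{\bhm,a}$ by perturbation, using Lemma~\ref{LemmaKMink} and taking $r_0$ large. The order of choices is as follows. First fix $e\in(0,e_0)$, with $e_0$ small enough for Lemmas~\ref{LemmaMSSubpr}, \ref{LemmaMIbdy} and \ref{LemmaMISink} and for the transport conditions below. Then choose $r_0$ large depending on $e$, and finally choose $\delta_r,\delta_\sface$.

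Two elementary observations come first.
\begin{enumerate}
\item If $\cd$ is multiplied by a positive constant, then $b_{g,\cd,e}$, and hence $B_{g,\cd,e}$, is unchanged. Meanwhile $\ell_{g,\cd,e}$ and $S_{g,\cd,e}$ scale linearly. So the factor $\frac12$ in~\eqref{EqECExpl} relative to $\cd_{r_0,v}$ from~\eqref{EqMS1form} is harmless.
\item All the endomorphisms in question ($S$, $\ell(\zeta)$, and $T$ from~\eqref{EqMIbdyTot}) are $B_{g,\cd,e}$-self-adjoint, since $\ell$ and the skew part of $L$ are. Hence positive definiteness is equivalent to positivity of the (real) eigenvalues. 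Eigenvalues depend continuously on $e$ and on the metric, so the $e=0$ statements of Lemmas~\ref{LemmaMIbdy} and \ref{LemmaMISink} give positive definiteness for small $e>0$. This holds even though $B_{g,\cd,e}$ itself degenerates as $e\to 0$.
\end{enumerate}

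\emph{Construction of $\cd$.} On $[\frac12 r_0,2r_0]$ take~\eqref{EqECExpl}. At $r=2r_0$ this equals $(2r_0)^{-1}(\dd t-v\,\dd r)$, so on $[2r_0,3r_0]$ I set $\cd=\phi(r)(\dd t-v\,\dd r)$, with $\phi$ smooth and decreasing from $(2r_0)^{-1}$ to $r^{-1}$, matching~\eqref{EqECExpl2}. On $[\bhm,\frac12 r_0]$ I take $\cd=\psi(r)\,\dd t+k(r)\,\dd r$, a smooth past timelike interpolation. It is chosen so that $\la\cd,-\dd r\ra$ has a fixed sign bounded away from $0$, meaning $-\cd^\sharp$ points strictly inward; this is compatible with inward pointing at $r\le r_+$ because $\dd t$ is timelike there. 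I choose it in a scale-invariant way, i.e.\ as a function of $r/r_0$ on the range $r\gtrsim$ const.
\begin{itemize}
\item Stationarity is clear, and $\cd\in\rho_\sface\CI(M;\cT^*)$ holds because $\cd=r^{-1}(\dd t-v\,\dd r)$ for $r\ge 3r_0$.
\item Timelikeness on $[\frac12 r_0,3r_0]$ holds for Minkowski since $v<1$; for Kerr it follows from Lemma~\ref{LemmaKMink} once $r_0\gg\bhm$.
\end{itemize}

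\emph{Positivity properties.}
\begin{itemize}
\item Item~\eqref{ItECPosInt} follows from Lemma~\ref{LemmaMSSubpr}. The positivity margins there are uniform over the compact set $x\in[-\frac65,\frac65]$. After rescaling by $r_0$ (the homogeneity argument in the proof of Lemma~\ref{LemmaMSRad}), the Kerr corrections are $\cO(\bhm/r_0)$, so they are absorbed for $r_0$ large.
\item Item~\eqref{ItECPosCorner} is Lemma~\ref{LemmaMIbdy} with $\beta_\cK=\alpha_\cK$ and $\beta_\sface=\alpha_\sface+1$; the condition $\alpha_\cK>\alpha_\sface+\frac12$ is exactly what that lemma requires. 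Continuity in $(\rho_\sface,\rho_\cK)$, together with Kerr being Minkowski at $\sface$, gives a neighborhood of size $2\delta_r$.
\item Item~\eqref{ItECPosSink} is Lemma~\ref{LemmaMISink} with $\beta_\sface=\alpha_\sface+1<\frac12$, again followed by continuity.
\item For item~\eqref{ItECPosEsc} I use Lemma~\ref{LemmaCTimelikeSp}. The sign condition $\pm\la\cd,\zeta\ra>0$ holds in each listed region for $\zeta=\pm\dd r$ or $\pm\dd(r/t)$. The second condition in~\eqref{EqCTimelikeSp} then holds for $e$ small, because the right-hand side is of size $e^{-1}\la\cd,\zeta\ra^2/|G(\cd)|$ with everything else fixed. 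The exception is the region near $r_0$ where $\la\cd,\dd r\ra\to 0$; there Lemma~\ref{LemmaMSRad} supplies exactly the threshold $|r-r_0|\ge r_0\sqrt e/v$ with the uniform bound~\eqref{EqMSRadQuant}.
\item On the $\sface$-intervals in item~\eqref{ItECPosEsc}, $\la\cd_v,\dd(r/t)\ra$ vanishes only at $r/t=v$ and at $r/t=0$, so it is bounded away from $0$ on $[\delta_r,v(1-\delta_\sface)]$ and $[v(1+\delta_\sface),10]$. I would therefore fix $\delta_r,\delta_\sface$ first and then shrink $e$. Since $\delta_r,\delta_\sface$ are also constrained by items~\eqref{ItECPosCorner}--\eqref{ItECPosSink}, whose neighborhoods depend only on $e$ continuously, the order needs some care. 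I would avoid circularity by noting that the $e=0$ eigenvalue computations are uniform on small fixed neighborhoods, which can be chosen first.
\end{itemize}

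\emph{Main obstacle.} I expect the main difficulty to be uniformity in the interpolation regions $[\bhm,\frac12 r_0]$ and $[2r_0,3r_0]$ and beyond, with $r_0\to\infty$ and $e$ fixed. The first thing I would try is to use the scale invariance of all conditions under $(t,r,r_0)\mapsto(qt,qr,qr_0)$, as in Lemma~\ref{LemmaMSRad}, which reduces everything to $r_0=1$ and compact parameter sets. The genuinely non-scale-invariant region $r\sim\bhm$ is fixed independently of $r_0$ and handled directly via Lemma~\ref{LemmaCTimelikeSp} and smallness of $e$.
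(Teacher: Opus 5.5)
Your overall plan matches the paper's: model forms in the explicit regions; Lemmas~\ref{LemmaMSSubpr}, \ref{LemmaMIbdy}, \ref{LemmaMISink}, \ref{LemmaMSRad} for $\ubar g$; perturbation to Kerr with $r_0$ large depending on $e$; and Lemma~\ref{LemmaCTimelikeSp} for item~\eqref{ItECPosEsc}. However, the order in which you choose the parameters does not close, and it fails exactly at the $\sface$-intervals of item~\eqref{ItECPosEsc}.

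You argue that $\la\cd_v,\dd(r/t)\ra$ is bounded away from $0$ on $[\delta_r,v(1-\delta_\sface)]$, and therefore fix $\delta_r$ before shrinking $e$. That is impossible. The statement requires $\delta_r<(4r_0)^{-1}$, and $r_0$ must be taken large \emph{depending on} $e$, because the Kerr corrections to $r_0^2 S_{g,\cd,e}$ near $r=r_0$, and to the endomorphisms of items~\eqref{ItECPosCorner}--\eqref{ItECPosSink} off $\sface$, carry $e$-dependent constants. (The $e^{-1}$-terms in Lemma~\ref{LemmaCSubpr} combine to $e^{-1}$ times the part of $\nabla_{\cd^\sharp}\cd$ orthogonal to $\cd$. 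This vanishes for $\cd_v$ on Minkowski space, which is why $T_{\cd_v,e}$ is smooth in $e$, but not in general. So your continuity-in-$e$ remark is uniform only for the Minkowskian, equivalently $\sface$-restricted, endomorphisms.)

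Hence $\delta_r\to0$ as $e\to0$, and a lower bound on $\la\cd_v,\dd(r/t)\ra$ over $[\delta_r,\cdot]$ gives no $e$-uniform control. Your fallback of first fixing neighborhoods from the $e=0$ computations rescues $\delta_\sface$, but not this.

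The missing observation is that the lower endpoint is irrelevant. Take $\tilde\rho=r/t$, $\zeta=\tilde\rho^2\,\dd t-\tilde\rho\,\dd r$, and $r\cd=\dd t-v\,\dd r$ at $\sface$. Then $G(\zeta)=\tilde\rho^2(1-\tilde\rho^2)$ and $\la r\cd,\zeta\ra=\tilde\rho(v-\tilde\rho)$. So $\tilde\rho^2$ cancels in the second condition of~\eqref{EqCTimelikeSp}, which becomes $1-\tilde\rho^2<\frac{2(1-e)}{e(1-v^2)}(v-\tilde\rho)^2$. On all of $(0,v(1-\delta_\sface)]$ this follows from $v^{-2}\delta_\sface^{-2}<\frac{2(1-e)}{e(1-v^2)}$, and likewise on $[v(1+\delta_\sface),10]$.

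Thus only $\delta_\sface$ has to precede $e$. It can, because on $\sface$ the Kerr quantities equal the Minkowski ones, which are smooth in $e$ down to $e=0$; this is the paper's $e$-independent $\delta'_r$. The consistent order is: $\delta_\sface$, then $e$, then $r_0$, then $\delta_r$.

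A smaller point: your ansatz $\phi(r)(\dd t-v\,\dd r)$ on $[2r_0,3r_0]$ cannot be smooth at $r=2r_0$. In~\eqref{EqECExpl} the $\dd t$-coefficient is constant but the $\dd r$-coefficient is linear in $r$, so no choice of $\phi$ matches the first derivatives. Do as the paper does: define $\cd$ piecewise (for instance $r^{-1}(\dd t-v\,\dd r)$ already for $r\geq 2r_0$), then smooth slightly in $r\geq 2r_0$, and similarly in $r\leq\frac12 r_0$. This is harmless because there only timelikeness and item~\eqref{ItECPosEsc} are required. Both are open conditions on $\cd|_p$ that involve no derivatives of $\cd$, provided you take $\delta_r\leq(6r_0)^{-1}$ so that items~\eqref{ItECPosCorner}--\eqref{ItECPosSink} only concern $r\geq 3r_0$.
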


The factor $\frac12$ in~\eqref{EqECExpl} (as compared to~\eqref{EqMS1form}) is inconsequential since it can be absorbed in the semiclassical parameter $h$ when studying $\Box_{g,h}^{\cC_{g,\cd,e,h}}$. It is inserted here so that~\eqref{EqECExpl} equals $r^{-1}(\dd t-v\,\dd r)$ at $r=2 r_0$.

\begin{proof}[Proof of Proposition~\usref{PropEC}]
  Let $e_1$ be smaller than the number $e_0(v)$ provided by Theorem~\ref{ThmM0} and smaller than the number $e_0$ from Lemma~\ref{LemmaMSSubpr}.
  
  \pfstep{Positivity properties for $\frac12 r_0\leq r\leq 2 r_0$.} If we use the Minkowski metric $\ubar g$ instead of the Kerr metric $g$, and if we define $\cd$ as in condition~\eqref{ItECExpl}, then Lemma~\ref{LemmaMSRad} ensures the positivity condition~\eqref{ItECPosEsc} for $r\in[\frac12 r_0,2 r_0]\setminus[r_0-r_0\sqrt{e}/v,r_0+r_0\sqrt{e}/v]$. More precisely, Lemma~\ref{LemmaMSRad} gives an $r_0$-independent lower bound
  \begin{equation}
  \label{EqECMinkRad}
    -r_0\ell_{\ubar g,\cd,e}(\pm \dd r) \geq \lambda_0 > 0,\quad \pm(r-r_0)\geq r_0\sqrt{e}/v,\ r\in[\tfrac12 r_0,2 r_0].
  \end{equation}
  We aim to show that if $r_0$ is sufficiently large, this positivity holds also for the Kerr metric. Note first that
  \begin{equation}
  \label{EqECUnifBdd}
    r\cd|_{\{\frac12 r_0\leq r\leq 2 r_0\}} = \frac{r}{2 r_0}\Bigl(\dd t-v\Bigl(\frac{r}{r_0}-1\Bigr)\dd r\Bigr)
  \end{equation}
  is uniformly bounded in $L^\infty$ when $r_0$ ranges from $100\bhm$ to $+\infty$ (meaning: its components with respect to the standard vector fields $\pa_t,\pa_{x^1},\pa_{x^2},\pa_{x^3}$ on $\R^4$ are uniformly bounded). Moreover, in view of Lemma~\ref{LemmaKMink} and the explicit formulas~\eqref{EqCInnerProd}, the fiber inner products from Lemma~\ref{LemmaCInner} satisfy $B_{g,\cd,e}-B_{\ubar g,\cd,e}\in\rho_\sface\CI(M;S^2\cT)$. In particular, denoting by $|\cdot|$ the $L^\infty$-norm of the coefficients with respect to the standard coordinate differentials $\dd t,\dd x^1,\dd x^2,\dd x^3$, we have
  \begin{equation}
  \label{EqECDiffB}
    |B_{g,\cd,e}-B_{\ubar g,\cd,e}|\leq C r_0^{-1},\quad \frac12 r_0\leq r\leq 2 r_0,
  \end{equation}
  where the constant $C$ is independent of the choice of $r_0$ (but $C$ does depend on $e$). Using that for fixed $e$, the norm of endomorphisms of $\cT^*$ with respect to the fiber inner product $B_{\ubar g,\cd,e}$ is uniformly equivalent to the $L^\infty$-norm of the matrix coefficients in the standard coordinate basis, we similarly have
  \begin{equation}
  \label{EqECDiffEll}
    |r\ell_{g,\cd,e}(\dd r) - r\ell_{\ubar g,\cd,e}(\dd r)|_{B_{\ubar g,\cd,e}} \leq C r_0^{-1},\quad \frac12 r_0\leq r\leq 2 r_0.
  \end{equation}
  Altogether then, choosing $r_1$ sufficiently large (depending on $e$) so that $C(\frac 12 r_1)^{-1}\leq\frac12\lambda_0$, we conclude from~\eqref{EqECMinkRad}--\eqref{EqECDiffEll} that for all $r_0\geq r_1$, and for $\cd$ as in~\eqref{EqECExpl}, we have
  \begin{equation}
  \label{EqECfrac122Pos}
    {-}\ell_{g,\cd,e}(\pm\dd r) \geq \frac12\lambda_0 r_0^{-1} > 0,\quad \pm(r-r_0)\geq r_0\sqrt{e}/v,\ r\in[\tfrac12 r_0,2 r_0].
  \end{equation}

  Consider next the positivity condition~\eqref{ItECPosInt}, which for the Minkowski metric holds for all $e\in(0,e_1)$ by Lemma~\ref{LemmaMSSubpr}; in fact, $r_0^2 S_{\ubar g,\cd,e}$ has an $r_0$-independent lower bound for $|r-r_0|\leq\frac65 r_0\sqrt{e}/v$. In order to perturb to the Kerr metric, we note that
  \[
    |r_0^2 S_{g,\cd,e}-r_0^2 S_{\ubar g,\cd,e}| \leq C r_0^{-1},\qquad \frac12 r_0\leq r\leq 2 r_0,
  \]
  as follows directly from the expression for $S_{g,\cd,e}$ given in Lemma~\ref{LemmaCSubpr}; here $C$ is independent of $r_0$ (but does depend on $e$). Thus, for sufficiently large $r_2\geq r_1$ (depending on $e$), we conclude that condition~\eqref{ItECPosInt} and the positivity~\eqref{EqECfrac122Pos} hold for all $r_0\geq r_2=r_2(e)$.\footnote{Numerically, on the Schwarzschild spacetime, one finds that for fixed $v$, the lower bound $r_2$ scales with $\bhm$ and $e^{-\frac12}$. Concretely, for $v=\frac12$, the requirement is roughly $r_2\geq 81 \bhm/\sqrt{e}$. For $e=0.0001 v^2$ (cf.\ the comment after Lemma~\ref{LemmaMSSubpr}), this reads $r_2\geq 16200\bhm$.}

  \pfstep{Control near the critical sets over $\sface$; positivity in $\sface^\circ$ away from the critical sets.} Using the explicit form~\eqref{EqECExpl2} of $\cd$ for $r\geq 3 r_0$, consider condition~\eqref{ItECPosCorner}. We again argue perturbatively. For the Minkowski metric, all eigenvalues of $T_{\ubar g,\cd,e}:=r^2(-\ell_{\ubar g,\cd,e}(\frac{\dd w}{w})+S_{\ubar g,\cd,e})$ are positive for $e=0$ in a compact neighborhood of $\pa\cK^+$ by Lemma~\ref{LemmaMIbdy} (with $\beta_\cK=\alpha_\cK$ and $\beta_\sface=\alpha_\sface+1$); in this neighborhood, this positivity persists for all $e\in(0,e_\pa)$ when $e_\pa>0$ is sufficiently small. For any fixed such $e$ then, note that
  \[
    T_{g,\cd,e}-T_{\ubar g,\cd,e} \in \rho_\sface\CI(M;\End(\cT^*)).
  \]
  Hence, for some $\delta_r>0$ (depending on $e$), $T_{g,\cd,e}$ is positive definite when $\rho_\cK\leq 2\delta_r$ and $\rho_\sface\leq 2\delta_r$. In fact, since $g$ and $\ubar g$ are \emph{equal at $\sface$} (as 3sc-metrics), there exists $\delta'_r>0$ which is \emph{independent of the choice of $e\in(0,e_\pa)$} such that $T_{g,\cd,e}|_\sface=T_{\ubar g,\cd,e}|_\sface$ is positive definite for $\rho_\cK\leq 2\delta'_r$.
  
  Analogous considerations apply near the sink $\cR_v$ defined in~\eqref{EqMICrit}, now using Lemma~\ref{LemmaMISink} (with $\beta_\sface=\alpha_\sface+1$). To summarize, we have now shown that there exist $e_2>0$, which we take to be equal to $\min(e_1,e_\pa)$, and $\delta'_r>0$ such that for all $e\in(0,e_2)$, there exist $r_2\geq 100\bhm$ and $\delta_r>0$ such that for all $r_0\geq r_2$, conditions~\eqref{ItECExpl}--\eqref{ItECPosSink} are satisfied, as is condition~\eqref{ItECPosEsc} for $r\in[\frac12 r_0,2 r_0]$; moreover, $T_{g,\cd,e}|_\sface$ is positive definite for $\rho_\cK\leq 2\delta'_r$, and $T'_{g,\cd,e}:=r^2(-\ell_{g,\cd,e}(\frac{\dd w'}{w'})+S_{g,\cd,e})$ is positive definite on $\sface\cap\{\frac{r}{t}/v\in[1-2\delta'_r,1+2\delta'_r]\}$.

  Consider next condition~\eqref{ItECPosEsc} on $\sface^\circ$, where we can replace $g$ by the Minkowski metric $\ubar g$. Writing $\tilde\rho_\cK=\frac{r}{t}$ for a defining function of $\cK^+$ in $\frac{r}{t}\leq 10$, we compute
  \[
    -r^2\ell_{g,\cd,e}\Bigl(\dd\Bigl(\frac{r}{t}\Bigr)\Bigr)=\ell_{g,r\cd,e}(\tilde\rho_\cK^2\,\dd t-\tilde\rho_\cK\,\dd r).
  \]
  We use Lemma~\ref{LemmaCTimelikeSp} with $\zeta=\tilde\rho_\cK^2\,\dd t-\tilde\rho_\cK\,\dd r$ and $r\cd=\dd t-v\,\dd r$. The first condition in~\eqref{EqCTimelikeSp} (with the ``$+$'' sign) reads $\la r\cd,\zeta\ra=\tilde\rho_\cK(v-\tilde\rho_\cK)>0$ and is thus satisfied for $0<\tilde\rho_\cK<v$. The second condition in~\eqref{EqCTimelikeSp} is equivalent to
  \begin{equation}
  \label{EqECPossf}
    -\tilde\rho_\cK^4 + \tilde\rho_\cK^2 < \frac{2(1-e)}{e(1-v^2)}\tilde\rho_\cK^2(v-\tilde\rho_\cK)^2.
  \end{equation}
  Upon division by $\tilde\rho_\cK^2(v-\tilde\rho_\cK)^2$ for $\delta'_r\leq\tilde\rho_\cK\leq v(1-\delta'_r)$, this would follow from
  \begin{equation}
  \label{EqECPossf2}
    v^{-2}(\delta'_r)^{-2} < \frac{2(1-e)}{e(1-v^2)}.
  \end{equation}
  But this indeed holds for all sufficiently small $e$. (This is where the $e$-independence of $\delta'_r$ is crucial.)
  
  Similarly, for $v(1+\delta'_r)\leq\tilde\rho_\cK\leq 10$, we use Lemma~\ref{LemmaCTimelikeSp} with $\zeta=-(\tilde\rho_\cK^2\,\dd t-\tilde\rho_\cK\,\dd r)$, so now $\la r\cd,\zeta\ra=\tilde\rho_\cK(\tilde\rho_\cK-v)>0$ verifies the first condition in~\eqref{EqCTimelikeSp}, while the second condition in~\eqref{EqCTimelikeSp} is equivalent to~\eqref{EqECPossf}; this is automatic for $\tilde\rho_\cK\geq 1$ (where $\zeta$ is future causal), and for $\tilde\rho_\cK\in[v(1+\delta'_r),1]$ is implied by~\eqref{EqECPossf2}. We shrink $e_2>0$ so that these conclusions are valid for all $e\in(0,e_2)$.

  \pfstep{Positivity for $3 r_0\leq r<\infty$.} In this region, we have the explicit form~\eqref{EqECExpl2} of $\cd$. We use Lemma~\ref{LemmaCTimelikeSp} with $\zeta=-r\,\dd r$. The first condition in~\eqref{EqCTimelikeSp} (with the ``$+$'' sign) is equivalent to $\la\dd t-v\,\dd r,\dd r\ra=-v+\cO(r^{-1})<0$ where the $\cO(r^{-1})$ term depends only on $\bhm$ and $v$; this holds for all sufficiently large $r$. The second condition in~\eqref{EqCTimelikeSp} is equivalent to
  \[
    r^2+\cO(r) < \frac{2(1-e)}{e(r^{-2}(1-v^2)+\cO(r^{-3}))} (v^2+\cO(r^{-1})),
  \]
  that is, to
  \begin{equation}
  \label{EqECPos3r0}
    \frac{2(1-e)v^2}{e(1-v^2)}-C e^{-1}r^{-1}>1,
  \end{equation}
  where $C$ only depends on $\bhm,v$. This holds for all small $e$ and large enough $r$, so for $e<e_2$ and $r\geq 3 r_0$ upon decreasing $e_2$ and increasing $r_0$, if necessary.

  \pfstep{Constructing $\cd$ in the remaining regions.} We first define a continuous 1-form $\cd'$ satisfying all conditions except for smoothness at $r=\frac12 r_0$ and $r=2 r_0$: namely, we let $\cd'$ be given by~\eqref{EqECExpl}--\eqref{EqECExpl2}, and further set
  \begin{equation}
  \label{EqECExpl3}
    \cd'=r^{-1}(\dd t-v\,\dd r),\quad r\geq 2 r_0
  \end{equation}
  (which at $r=2 r_0$ matches~\eqref{EqECExpl}). Then the arguments following~\eqref{EqECPos3r0} apply in the larger region $r\geq 2 r_0$ still when $r_0\geq r_2$ with $r_2$ sufficiently large (depending on $e$ and $\bhm$).

  At $r=\frac12 r_0$, we have $\cd'=(2 r_0)^{-1}(\dd t+\frac12 v\,\dd r)$, which we need to extend to $\bhm\leq r\leq\frac12 r_0$ as a past timelike 1-form and such that $-\ell_{g,\cd',e}(-\dd r)$ is positive definite; this requires using the form of the Kerr metric. Firstly, for $r\in[\bhm,r_+]$, we use the form~\eqref{EqKMetStar} of $g=g_{\bhm,a}$ with $\chi\equiv 1$ and set
  \[
    \cd'_1 := \dd t_1-q\,\dd r.
  \]
  Then, writing $\la\cdot,\cdot\ra=g^{-1}(\cdot,\cdot)$, the inner product $\varrho^2\la\cd'_1,\dd t_1\ra=-q(r^2+a^2)+a^2\sin^2\theta$ is negative provided we fix $q>a^2/(\bhm^2+a^2)$, and then $\varrho^2|\cd'_1|_{g^{-1}}^2=-2 q(r^2+a^2)+\Delta q^2+a^2\sin^2\theta\leq\varrho^2 g^{-1}(\cd'_1,\dd t_1)<0$ for $r\in[\bhm,r_+]$ since $\Delta\leq 0$ there. Thus, $\cd'_1$ is past timelike. Moreover, $\varrho^2\la\cd'_1,\dd r\ra=(r^2+a^2)-q \Delta>0$. Fixing $q$, there exists $\delta>0$ such that the past timelike character and the sign condition $\la\cd'_1,\dd r\ra>0$ continue to hold in the slightly larger interval $r\in[\bhm,r_++2\delta]$.

  Next, for sufficiently large $r'_1>r_+$ (depending on $\bhm$), which, by requiring $r_0>2 r'_1$, we may assume to be less than $\frac12 r_0$, the 1-form
  \begin{equation}
  \label{EqECcd3}
    \cd'_3=\dd t+\frac12 v\,\dd r
  \end{equation}
  (where we recall that $\frac12 v\in(0,\frac12)$) is past timelike for the Kerr metric for $r\geq r'_1$ since this is true for the Minkowski metric; and $\varrho^2\la\cd'_3,\dd r\ra=\frac12 v\Delta>0$.

  Finally, for $r>r_+$, the 1-form $\dd t$ is past timelike since, using~\eqref{EqKBL},
  \[
    \varrho^2\Delta|\dd t|_{g^{-1}}^2=-(r^2+a^2)^2+a^2\Delta\sin^2\theta<-(r^2+a^2)^2+a^2(r^2+a^2)<0.
  \]
  Thus, by continuity,
  \[
    \cd'_2 := \dd t+\eta\,\dd r
  \]
  is past timelike for $r_++\delta\leq r\leq 2 r'_1$ for sufficiently small $\eta>0$; and then also $\varrho^2\la\cd'_2,\dd r\ra=\eta\Delta>0$.

  Using a partition of unity $\chi_1+\chi_2+\chi_3=1$ on $[\bhm,\infty)$ subordinate to the cover $[\bhm,r_++2\delta)\cup(r_++\delta,2 r'_1)\cup(r'_1,\infty)$ (and with $0\leq\chi_j\leq 1$, $j=1,2,3$), the 1-form
  \begin{equation}
  \label{EqECGlue}
    \cd'_0 := \sum_{j=1}^3 \chi_j\cd'_j
  \end{equation}
  on $[\bhm,\frac12 r_0]$ is past timelike and satisfies $\la\cd'_0,\dd r\ra>0$.

  We want to apply Lemma~\ref{LemmaCTimelikeSp} to conclude that $-\ell_{g,\cd'_0,e}(-\dd r)$ is positive definite for $r\in[\bhm,\frac12 r_0]$ when $e>0$ is small; since however the lower bound on $r_0$ required in the previous steps of the proof depends on $e$, this requires a bit of care. Consider again the 1-form $\cd'_3$ in~\eqref{EqECcd3}; the second condition in~\eqref{EqCTimelikeSp} for $\zeta=\dd r$ and $\cd=\cd'_3$ reads
  \[
    1+\cO(r^{-1}) < \frac{2(1-e)}{e(1-v^2/4+\cO(r^{-1}))} (v^2/4+\cO(r^{-1})).
  \]
  Thus, there exist $e'>0$ and $r'_0>r_+$ (only depending on $\bhm$ and $v$) such that this inequality is verified for all $e\in(0,e')$ and $r\geq r'_0$. We then first fix this $r'_0$, and also fix $e<\min(e',e_2)$ so small that Lemma~\ref{LemmaCTimelikeSp} applies to $\cd'_0$ on $r^{-1}([\bhm,r'_0])$ (the second condition in~\eqref{EqCTimelikeSp} being automatic for sufficiently small $e>0$); thus $-\ell_{g,\cd'_0,e}(-\dd r)$ is positive definite in $r\leq r'_0$, and also for \emph{all} $r\geq r'_0$.

  With this fixed choice of $e$, we then select $r_0\geq r'_0$ sufficiently large such that all desired positivity conditions hold for $r\geq\frac12 r_0$; and then the piecewise smooth 1-form $\cd'$, given in $r\geq\frac12 r_0$ by~\eqref{EqECExpl}--\eqref{EqECExpl2} and~\eqref{EqECExpl3}, and for $\bhm\leq r\leq\frac12 r_0$ by $(2 r_0)^{-1}\cd'_0$ (see~\eqref{EqECGlue}), satisfies the positivity condition~\eqref{ItECPosEsc} everywhere. The only remaining issue is that $\cd'$ is not smooth, but merely continuous, at $r=\frac12 r_0$ and $r=2 r_0$. This is easily rectified by smoothing in $r\leq\frac12 r_0$ near $\frac12 r_0$ and in $r\geq 2 r_0$ near $2 r_0$. Note that for these $r$, the only condition on $\cd$ is condition~\eqref{ItECPosEsc}; but $\ell_{g,\cd,e}(\dd r)$, at any point $p\in M$, depends continuously on $\cd|_p$ (and, importantly, not on any derivatives of $\cd$), and hence a sufficiently small amount of smoothing does not destroy the positivity of $-\ell_{g,\cd,e}(\mp\dd r)$ already arranged there. This finishes the choice of $e$, $\delta_r$, $r_0$, and the construction of $\cd$.
\end{proof}

\subsection{Solvability of the constraint propagation wave operator on weighted 3b-spaces}
\label{SsET}

We can now state the global solvability statement for the constraint propagation wave operator on the Kerr spacetime $(M,g)=(M,g_{\bhm,a})$. We recall the scale of 3b-Sobolev spaces from~\S\ref{SsK3bh}, and the domain $\Omega$ from~\eqref{EqKDomain}.

\begin{thm}[Solvability]
\label{ThmET}
  Fix $v\in(0,1)$, $C_0>0$, and $\alpha_\sface,\alpha_\cK\in\R$ such that
  \begin{equation}
  \label{EqETThr}
    \alpha_\sface<-\frac12,\quad
    \alpha_\cK>\alpha_\sface+\frac12.
  \end{equation}
  Let $e\in(0,1)$ and $\cd\in\rho_\sface\CI(M;\cT^*)$ be given by Proposition~\usref{PropEC}. Define $\cC_h:=\cC_{g_{\bhm,a},\cd,e,h}$ and $\Box_g^{\cC_h}$ by~\eqref{EqCMod}--\eqref{EqCOp}. Then there exists $h_0>0$ such that for all $h\in(0,h_0)$ and $s\in[-C_0,C_0]$, the unique forward solution of the forcing problem
  \begin{equation}
  \label{EqET}
    \Box_g^{\cC_h}\omega = f \in \rho_\sface^{\alpha_\sface+2}\rho_\cK^{\alpha_\cK}\Htb^{s-1}(\Omega;\cT^*)^{\bullet,-}
  \end{equation}
  on $\Omega$ satisfies $\omega\in\rho_\sface^{\alpha_\sface}\rho_\cK^{\alpha_\cK}\Htb^s(\Omega;\cT^*)^{\bullet,-}$. Similarly, making the volume densities explicit in the notation, the solution of the initial value problem\footnote{The volume densities for the initial data in~\eqref{EqETIVP} are the Euclidean densities. (One could equally well have used the Minkowskian density instead of $|\dd g|$ in~\eqref{EqETIVP} and, implicitly, in~\eqref{EqET}.)}
  \begin{equation}
  \label{EqETIVP}
    \left\{
    \begin{alignedat}{2}
      \Box_g^{\cC_h}\omega&=f\in\rho_\sface^{\alpha_\sface+2}\rho_\cK^{\alpha_\cK}\bar H_\tbop^0(\Omega,|\dd g|;\cT^*), \\
      (\omega,r\cL_{\pa_t}\omega)|_X &\in \rho_\sface^{\alpha_\sface+\frac12}\Hbext^1(X,r^2\,|\dd r\,\dd\slg|;\cT^*_X) \oplus \rho_\sface^{\alpha_\sface+\frac12}\Hbext^0(X,r^2\,|\dd r\,\dd\slg|;\cT^*_X)
    \end{alignedat}
    \right.
  \end{equation}
  in $\Omega$ satisfies $\omega\in\rho_\sface^{\alpha_\sface}\rho_\cK^{\alpha_\cK}\bar H_\tbop^1(\Omega,|\dd g|;\cT^*)$.
\end{thm}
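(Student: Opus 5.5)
The plan is to work with the semiclassical rescaling $P_h:=h^2\Box_g^{\cC_h}=\Box_{g,h}^{\cC_{g,\cd,e,h}}\in\rho_\sface^2\Difftbh^2(M;\cT^*)$. I would prove the uniform a priori estimate
\[
  \|\omega\|_{\rho_\sface^{\alpha_\sface}\rho_\cK^{\alpha_\cK}H_{\tbop,h}^s(\Omega)^{\bullet,-}} \leq C\bigl(h^{-1}\|P_h\omega\|_{\rho_\sface^{\alpha_\sface+2}\rho_\cK^{\alpha_\cK}H_{\tbop,h}^{s-1}(\Omega)^{\bullet,-}} + h\|\omega\|_{\rho_\sface^{\alpha_\sface}\rho_\cK^{\alpha_\cK}H_{\tbop,h}^{s-1}(\Omega)^{\bullet,-}}\bigr),
\]
together with the analogous estimate for the adjoint $P_h^*$ on the dual spaces $H_{\tbop,h}^{-s+1}(\Omega)^{-,\bullet}$ with dual weights. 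For $h$ small, the error term can be absorbed. Injectivity of $P_h$ together with solvability of the forward problem then follows by duality. Uniqueness of the forward solution is standard hyperbolic theory, so the content of the statement is the weighted membership of $\omega$. The initial value problem~\eqref{EqETIVP} reduces to~\eqref{EqET} in the usual way: solve locally near $X$, cut off, and absorb the resulting commutator into the forcing. The weights $\rho_\sface^{\alpha_\sface+\frac12}$ on the data come from the $r^2\,|\dd r\,\dd\slg|$ versus 3b density bookkeeping.

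The a priori estimate is assembled from microlocal pieces on $\ol{\Ttb^*}M$, using the fiber inner product $B_{g,\cd,e}$ from Lemma~\ref{LemmaCInner} throughout. In it $\ell_{g,\cd,e}$ is self-adjoint, so the skew-adjoint part of $P_h$ has principal part $-\ell_{g,\cd,e}$ and subprincipal part $hS_{g,\cd,e}$.

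\begin{enumerate}
\item \emph{Elliptic region.} On nonzero finite momenta, $P_h$ is elliptic by Lemma~\ref{LemmaCInv}, which gives an elliptic estimate there.
\item \emph{Fiber infinity.} On the characteristic set at fiber infinity, I would use positive commutator estimates in which $-i\ell_{g,\cd,e}$ acts as complex absorption in the future direction, via Lemma~\ref{LemmaCTimelike} and Corollary~\ref{CorCPos}. This propagates control forward from $X$ without requiring any global null geodesic dynamics.
\item \emph{Energy estimates.} Near the spacelike hypersurfaces $X$ and $\Sigma^\sharp$, I would use semiclassical energy estimates to control $\omega$ from the initial data, respectively without needing data at $\Sigma^\sharp$.
\item \emph{Zero section.} This is the main part. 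The operator is essentially $ih(-\ell_{g,\cd,e}(\nabla)+S_{g,\cd,e})$, and I would prove transport-type estimates for it. Away from the critical sets of $-\cd^\sharp$, I would use commutants $\check\chi(r)$ (or $\check\chi(r/t)$ on $\sface$) as escape functions, with positivity supplied by Proposition~\ref{PropEC}\eqref{ItECPosEsc}. At the sink $\cR_v$, the interior saddle $\{r=r_0\}\cap\cK^+$, and the corner $\pa\cK^+$, the commutator weight $w$ itself provides positivity, through Proposition~\ref{PropEC}\eqref{ItECPosSink}, \eqref{ItECPosInt} and \eqref{ItECPosCorner}. These conditions are exactly where the constraints~\eqref{EqETThr} enter.
\end{enumerate}

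The order would be: start from $X$ (energy estimate), propagate along $-\cd^\sharp$ into the region $r<r_0$ toward $\Sigma^\sharp$, through the saddle at $r=r_0$ into $r>r_0$, and then out to $\sface$ via $\pa\cK^+$, ending at the sink $\cR_v$.

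I expect the main obstacle to be the zero-section estimate near the degenerate interior saddle $\{r=r_0,\ t=\infty\}$. There the weight $\rho_\cK^{\alpha_\cK}$ gives no positivity, since $\dd T/T$ vanishes as a 3b-covector. Positivity must therefore come solely from $S_{g,\cd,e}$ in the region $|r-r_0|\leq\frac65 r_0\sqrt e/v$. It has to be patched to the transport region $|r-r_0|\geq r_0\sqrt e/v$ using a cutoff commutant whose derivative terms have a good sign there. My first attempt would be a commutant $\chi(r)$ equal to $1$ on $|r-r_0|\leq r_0\sqrt e/v$ and monotone outside, supported in the $\frac65$-neighborhood. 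Any bad sign from $\ell(\dd\chi)$ would then be confined to the region where Lemma~\ref{LemmaMSRad} and Proposition~\ref{PropEC}\eqref{ItECPosEsc} give definiteness.

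Lower-order terms $O(h)$, including the $h\Psitbh$ errors and the imaginary parts from symmetrizing with weights, would be absorbed by taking $h<h_0$. The dependence on $s\in[-C_0,C_0]$ enters only through bounded weight and order shifts, so it costs $O(h)$ uniformly and $h_0$ can be chosen once for the whole range.
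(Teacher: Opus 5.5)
Your overall architecture matches the paper's: the semiclassical rescaling, elliptic estimates off $o\cup\pa\Sigma$, damping estimates at fiber infinity, energy estimates at $X$ and $\Sigma^\sharp$, zero-section estimates at the critical sets of $-\cd^\sharp$ driven by Proposition~\ref{PropEC}, duality, and the reduction of~\eqref{EqETIVP}. However, step~4 has a genuine gap. Near the zero section you cannot run commutator estimates for the transport operator $ih(-\ell(\nabla)+S)$ while ignoring $\Box_h=h^2\Box_g$.

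To let $-L_h$ generate the transport, you must pair $h^{-1}\Im\la -iP_h\omega,A^2\omega\ra$. The transport part then has symbol $aH_{-\ell}a-Sa^2=O(1)$. But $-i\Box_h$ now enters as a skew term and contributes $-h^{-1}G\,a^2$. On $\supp a\subset\{|\zeta|\lesssim\digamma^{-1}\}$ this is indefinite and of size up to $h^{-1}\digamma^{-2}$, so it swamps the positivity as $h\to 0$. (Pairing $P_h$ itself only makes $-h^{-1}\ell(\zeta)a^2$ the offending indefinite term instead.)

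The missing idea is the elliptic multiplier in~\eqref{EqEKcL}. One works with $\cL_h=-i(1+iCL_h^*\rho_\sface^{-2})P_h=-L_h+J-iQ$. Here $J$ has symbol vanishing cubically at $o$, and $Q$ has symbol $C\rho_\sface^{-2}\ell^2+G$, which is positive definite off $o$ by Corollary~\ref{CorCPos}. Then $-iQ$ is a weak complex absorption with the favorable sign in both the forward and the adjoint estimates. Its contribution is handled by writing $Q=h^2Q_0+h^3Q_1$ with $Q_0=\Theta^*\Theta+R$ as a non-semiclassical 3b-operator. This is where Corollary~\ref{CorCPos} is actually needed; at fiber infinity, Lemma~\ref{LemmaCTimelike} alone suffices.

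Two further corrections. First, your propagation order through the interior saddle contradicts the dynamics. On $\cK^+$ the flow of $-\cd^\sharp$ moves away from $r=r_0$ on both sides, so control cannot pass from $r<r_0$ through $r_0$ into $r>r_0$. In the forward estimate (Proposition~\ref{PropEK}), control enters the saddle from finite times; the a priori term comes from the cutoff in $\rho_\cK$. The terms from $\psi'(r)$ have the same sign as the main term and are discarded. Control then leaves along $\cK^+$ in both directions. Towards $r=\bhm$ this uses Proposition~\ref{PropER} and then~\eqref{EqEISigmaSharpFw}, which itself needs a priori control on $[r_2,r_3]$. The other direction is towards $\pa\cK^+$. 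Only in the adjoint estimate do the $\psi'$-terms have the bad sign. They then need a priori control on $|r-r_0|\in[\frac{55}{50},\frac{60}{50}]r_0\sqrt e/v$, supplied by backward propagation from the sink $\cR_v$ and from $\Sigma^\sharp$.

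Second, your ``semiclassical energy estimates'' must be uniform in $h$ despite a first-order term of size $h^{-1}$; a naive energy estimate loses $\exp(C/h)$. The paper obtains uniformity from the non-scalar multiplier $a^2L_h$, the coercivity in Lemma~\ref{LemmaEIT} and Corollary~\ref{CorEICoercive}, and the damping sign of $L_h$. These are $H^1$-level estimates, so general $s$ requires the extension/restriction argument from the proof of Proposition~\ref{PropEPWarmup}, not just bounded weight and order shifts.
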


The solvability of $\Box_g^{\cC_h}\omega=f$ is automatic since $\Box_g^{\cC_h}$ is a differential operator and principally a wave operator. The point is that the solution lies in a space with the desired weights.

The mode stability statement, Theorem~\ref{ThmIRough}, is deduced from Theorem~\ref{ThmET} in~\S\ref{ST}. The rest of the present section is concerned with the proof of Theorem~\ref{ThmET}. We shall work with the operator $\Box_{g,h}^{\cC_h}=h^2\Box_g^{\cC_h}$. Recall also the operator $L_{g,\cd,e,h}$ from~\eqref{EqCOp}, so $\Box_{g,h}^{\cC_h}=h^2\Box_g-i L_{g,\cd,e,h}$. First, we justify working in 3b-phase space. 

\begin{lemma}[$\Box_{g,h}^{\cC_h}$ as a weighted semiclassical 3b-operator]
\label{LemmaET3b}
  We have
  \begin{equation}
  \label{EqET3bSeparate}
    h^2\Box_g\in h^2\rho_\sface^2\Difftb^2(M;\cT^*),\quad
    L_{g,\cd,e,h}\in h\rho_\sface^2\Difftb^1(M;\cT^*).
  \end{equation}
  In particular, in the notation of Lemma~\usref{LemmaCSymb}, the semiclassical 3b-principal symbol of $\Box_{g,h}^{\cC_h}$ is given by $\sigma_{\tbop,\semi}^2(\Box_{g,h}^{\cC_h}) = p_{g,\cd,e} = G - i\ell_{g,\cd,e}$.
\end{lemma}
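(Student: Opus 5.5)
The membership $h^2\Box_g\in h^2\rho_\sface^2\Difftb^2(M;\cT^*)$ is immediate from~\eqref{EqK3bMem}, since $g=g_{\bhm,a}$ is a nondegenerate Lorentzian 3sc-metric on $M$ by Lemma~\ref{LemmaK3sc}. For $L_{g,\cd,e,h}$, I use the second expression in~\eqref{EqCOp},
\[
  L_{g,\cd,e,h} = 4 i h\,\delta_g(\cd\otimes_s(\cdot)) + 2 i h e\,\dd\,\iota_{\cd^\sharp}.
\]
By Proposition~\ref{PropEC}, $\cd\in\rho_\sface\CI(M;\cT^*)$. Since $g^{-1}\in\CI(M;S^2\cT)$, it follows that $\cd^\sharp\in\rho_\sface\CI(M;\cT)$. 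Hence the bundle maps $\omega\mapsto\cd\otimes_s\omega$ (from $\cT^*$ to $S^2\cT^*$) and $\omega\mapsto\iota_{\cd^\sharp}\omega$ (from $\cT^*$ to the trivial bundle) are elements of $\rho_\sface\CI(M;\Hom(\cdot,\cdot))$.

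Composing these with $\delta_g\in\rho_\sface\Difftb^1(M;S^2\cT^*,\cT^*)$ from~\eqref{EqK3bMem} gives the first term. For the second, I use $\dd=\nabla$ followed by antisymmetrization on functions. By~\eqref{EqK3bNabla}, or directly from $\pa_{z^\mu}\in\rho_\sface\Vtb(M)$ in~\eqref{EqK3bVF}, this lies in $\rho_\sface\Difftb^1(M;\ubar\C,\cT^*)$. One point needs care here: multiplication by $\rho_\sface$ and composition with 3b-operators commute modulo terms of the same weight, because $\Vtb(M)$ preserves $\rho_\sface^k\CI(M)$; indeed $r\pa_z(r^{-1})\in\rho_\sface\CI$. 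Therefore the product of the two factors of $\rho_\sface$ gives $L_{g,\cd,e,h}\in h\rho_\sface^2\Difftb^1(M;\cT^*)$, which is~\eqref{EqET3bSeparate}.

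For the principal symbol, I rewrite the operators in semiclassical 3b-form. Pulling out the weight, $\rho_\sface^{-2}h^2\Box_g$ is a combination of terms $p\,(hr\pa_z)^\alpha$ with $|\alpha|\leq2$, where the coefficients of the top-order terms are the components $g^{\mu\nu}$. This places it in $\Difftbh^2(M)$. Likewise, $\rho_\sface^{-2}L_{g,\cd,e,h}$ is a sum of a term of the form $h\cdot(\text{first order})$, which is semiclassical first order, and a term $h\cdot(\text{zeroth order})\in h\Difftbh^0$. Both are $h$-independent at leading order, as required in the remark after~\eqref{EqK3bhSES}.

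The semiclassical principal symbol is then computed pointwise over $M^\circ$ exactly as in Lemma~\ref{LemmaCSymb}, using the identification of $\Ttb^*M$ with $T^*M^\circ$ over the interior. The lower-order Christoffel terms and the derivatives of $\cd$ carry an extra factor of $h$, so they drop out. This gives $G-i\ell_{g,\cd,e}$ over $M^\circ$.

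Since both sides extend smoothly to $\Ttb^*M$ once rescaled by the appropriate weight, the identity $\sigma^2_{\tbop,\semi}=p_{g,\cd,e}$ extends by continuity to all of $M$. I read this as an identity of weighted symbols, or equivalently as one after conjugating by $\rho_\sface^{2}$.

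The only step needing care is the weight bookkeeping. One has to check that the product of two weighted operators has the summed weight, and that the symbol is homogeneous consistently with the frame $\frac{\dd z^\mu}{r}$. Both follow from the Lie-algebra property in Remark~\ref{RmkK3b3sc}, so I do not expect a genuine obstacle.
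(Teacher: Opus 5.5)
Your proof is correct and follows the same route as the paper, which just notes that the claim follows from \eqref{EqK3bMem} together with $\cd\in\rho_\sface\CI(M;\cT^*)$. You have made explicit the two steps the paper leaves implicit: the weight bookkeeping, namely composing the $\rho_\sface$-weighted bundle maps $\cd\otimes_s(\cdot)$ and $\iota_{\cd^\sharp}$ with $\delta_g,\dd\in\rho_\sface\Difftb^1$, and the reduction of the symbol computation to Lemma~\ref{LemmaCSymb} over $M^\circ$. Your only slips are cosmetic: on functions $\dd=\nabla$ with no antisymmetrization, and the symbol identity is read after multiplying by $\rho_\sface^{-2}$, not after conjugating.
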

\begin{proof}
  This follows from~\eqref{EqK3bMem} and $\cd\in\rho_\sface\CI(M;\cT^*)$.
\end{proof}

This implies $h^2\Box_g\in\rho_\sface^2\Diff_{\tbop,\semi}^2$ and $L_{g,\cd,e,h}\in\rho_\sface^2\Diff_{\tbop,\semi}^1$; but the more precise information~\eqref{EqET3bSeparate} will be important for the analysis near the zero section of $\Ttb^*M$ later on.

The proof of Theorem~\ref{ThmET} proceeds by proving (mostly microlocal) semiclassical estimates for $\Box_{g,h}^{\cC_h}$. In particular, for the forward solution of~\eqref{EqET}---thus for $\Box_{g,h}^{\cC_h}\omega=h^2 f$---we will in fact obtain uniform estimates (as $h\to 0$) in semiclassical 3b-Sobolev spaces,
\begin{equation}
\label{EqETQuant}
  \|\omega\|_{\rho_\sface^{\alpha_\sface}\rho_\cK^{\alpha_\cK}H_{\tbop,h}^s(\Omega)^{\bullet,-}} \leq C h^{-1}\|h^2 f\|_{\rho_\sface^{\alpha_\sface+2}\rho_\cK^{\alpha_\cK}H_{\tbop,h}^{s-1}(\Omega)^{\bullet,-}},
\end{equation}
similarly for the initial value problem. The steps in the proof are the following.

\begin{itemize}
\item In~\S\ref{SssEE}, we prove elliptic estimates at finite nonzero points of the semiclassical 3b-phase space, as well as complex absorption type estimates at infinite frequencies; note here that $\Box_{g,h}^{\cC_h}$ is a wave operator with a \emph{damping} term ($-i L_{g,\cd,e,h}$), cf.\ Lemma~\ref{LemmaCTimelike}, which is principal in the semiclassical sense. It remains to prove estimates near zero frequency, where $i\Box_{g,h}^{\cC_h}\approx L_{g,\cd,e,h}$ is to leading order a non-scalar transport operator.
\item In~\S\S\ref{SssEK}--\ref{SssES}, we prove propagation estimates near the three critical sets of the vector field $-r\cd^\sharp$ (i.e., near the saddle point in $(\cK^+)^\circ$, near the saddle point at $\pa\cK^+$, and near the sink over $\sface^\circ$); these have the character of radial point estimates, though some care is required since the leading order part of $\Box_{g,h}^{\cC_h}$ near the zero section, $L_{g,\cd,e,h}$, is not scalar.
\item Zero section propagation away from the critical sets, which has the character of real principal type propagation, is analyzed in~\S\ref{SssER}.
\item In order to obtain closed estimates near the finite boundary hypersurfaces $X$ and $\Sigma^\sharp$ of $\Omega$ (see Definition~\ref{DefKMfd}), we prove energy estimates for $\Box_{g,h}^{\cC_h}$ in~\S\ref{SssEI}. We again exploit the character of $\Box_{g,h}^{\cC_h}$ as a wave operator with semiclassical principal (but non-scalar) damping.
\item In~\S\ref{SssEP}, we combine all ingredients and prove Theorem~\ref{ThmET}.
\end{itemize}

The analysis is related to that of \cite[\S8]{HintzVasyKdSStability} and the more recent \cite{HintzPetersenVasyKdS}. The main differences are that we work in 3b-phase space over a manifold with corners, rather than in b-phase space over a manifold with boundary; and the dynamical properties of the flow of the constraint damping 1-form $\cd$ are more involved than in \cite{HintzVasyKdSStability,HintzPetersenVasyKdS} (e.g., $\cd$ has more critical sets). Furthermore, the positivity properties captured in Proposition~\ref{PropEC} are considerably more delicate than in \cite{HintzVasyKdSStability,HintzPetersenVasyKdS}.

We point out here already that we will ultimately prove solvability for $\Box_{g,h}^{\cC_h}\omega=h^2 f$ using a duality argument that is based on a priori estimates for the adjoint operator $(\Box_{g,h}^{\cC_h})^*$; these estimates are obtained by microlocally propagating in the opposite (i.e., past) direction. However, the forward propagation estimates are somewhat more intuitive and easier to parse, and hence we shall state these in~\S\S\ref{SssEE}--\ref{SssEI} in addition to the (ultimately more important) adjoint estimates.

For the remainder of this section, we shall fix $\alpha_\cK,\alpha_\sface,\cd,e$ according to the statement of Theorem~\ref{ThmET}. We abbreviate
\begin{equation}
\label{EqENotation}
\begin{gathered}
  P_h := \Box_{g,h}^{\cC_h} = \Box_h - i L_h,\quad
  \Box_h := h^2\Box_g,\quad
  L_h := L_{g,\cd,e,h}, \\
  p := p_{g,\cd,e},\quad
  \ell := \ell_{g,\cd,e},\quad
  B := B_{g,\cd,e}, \quad
  S := S_{g,\cd,e} = h^{-1}\Im L_h, \\
  \Htbh^{m,(\beta_\sface,\beta_\cK)} := \rho_\sface^{\beta_\sface}\rho_\cK^{\beta_\cK}\Htbh^m,
\end{gathered}
\end{equation}
where we recall that $p_{g,\cd,e}$ and $\ell_{g,\cd,e}$ are the semiclassical 3b-principal symbols of $P_h$ and $L_h$, respectively, and $B_{g,\cd,e}$ and $S_{g,\cd,e}$ are defined in~\eqref{EqCInnerProd} and Lemma~\ref{LemmaCSubpr}. We require the Schwartz kernels of all pseudodifferential operators to have supports whose closures in $M\times M$ are contained in $\Omega'\times\Omega'$ where $\Omega'=\Omega\setminus(X\cup\Sigma^\sharp)$. We shall work with the $L^2$-inner product on $M$, written $\la\cdot,\cdot\ra$, with respect to the metric volume density $|\dd g|$ and the fiber inner product $B$ on $\cT^*$. Finally, we shall drop the bundle $\cT^*$ from the notation.

\subsubsection{Elliptic estimates at nonzero and infinite semiclassical frequencies}
\label{SssEE}

Denote by
\[
  \Sigma := \{ \zeta\in\Ttb^*M \colon (\rho_\sface^{-2}G)(\zeta) = 0 \}
\]
the semiclassical 3b-characteristic set of the unweighted semiclassical 3b-operator $h^2\rho_\sface^{-2}\Box_g$; this is the closure in $\Ttb^*M$ of the double (dual) light cone $G^{-1}(0)\subset T^*M^\circ$. We denote the boundary of $\Sigma$ at fiber infinity by
\[
  \pa\Sigma \subset \Stb^*M \subset \pa(\ol{\Ttb^*}M).
\]
Since $(M,g)$ is time-orientable, $\pa\Sigma$ has two connected components $\pa\Sigma^\pm$ which are equal to the boundary at fiber infinity of the future (``$+$''), resp.\ past (``$-$'') light cones. The other submanifold of $\ol{\Ttb^*}M$ of main interest is the zero section,
\[
  o \subset \ol{\Ttb^*}M.
\]

\begin{lemma}[Characteristic set]
\label{LemmaEE}
  The semiclassical 3b-characteristic set of $\rho_\sface^{-2}P_h$ is equal to $\pa\Sigma\cup o$.
\end{lemma}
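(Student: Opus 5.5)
By Lemma~\ref{LemmaET3b}, the semiclassical 3b-principal symbol of $\rho_\sface^{-2}P_h$ is $\rho_\sface^{-2}p=\rho_\sface^{-2}G-i\rho_\sface^{-2}\ell$. Since $\cd\in\rho_\sface\CI$, this is $\tilde G-i\tilde\ell$, where $\tilde G=\rho_\sface^{-2}G$ is a fiberwise quadratic form on $\Ttb^*M$ and $\tilde\ell=\rho_\sface^{-2}\ell_{g,\cd,e}=\ell_{g,r\cd,e}$, suitably rescaled, is fiberwise linear. Both are smooth down to $\pa M$. I would split $\ol{\Ttb^*}M$ into three regions: the zero section, finite nonzero covectors, and fiber infinity.

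\textbf{Zero section.} There $p$ vanishes, so $o$ is contained in the characteristic set.

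\textbf{Finite $\zeta\neq0$.} Here I would use Lemma~\ref{LemmaCInv}, applied pointwise to the nondegenerate Lorentzian 3sc/3b-metric, in its rescaled form. The rescaled form comes from replacing $(g^{-1},\cd)$ by $(\rho_\sface^{-2}g^{-1},\rho_\sface^{-1}\cd)$, i.e.\ working in the frame $\dd z^\mu/r$. It gives that $\tilde G(\zeta)-i\tilde\ell(\zeta)$ is invertible, since $r\cd$ is timelike and nowhere vanishing on all of $M$, including $\pa M$, by Proposition~\ref{PropEC}\eqref{ItECCausal}.

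This pointwise invertibility has to be made uniform, so that the inverse is bounded by a constant on compact subsets of $\Ttb^*M\setminus o$. Uniformity follows from continuity together with the compactness of $M$, which is a compact manifold with corners, and of each closed fiber annulus.

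\textbf{Fiber infinity.} Ellipticity at fiber infinity (order $2$) means that $\la\zeta\ra^{-2}(\tilde G-i\tilde\ell)$ is invertible near the boundary point. Since $\tilde\ell$ is of lower order, the leading behavior there is governed by $\tilde G$ alone. If $\tilde G(\zeta)\neq0$ for $\zeta\in\pa\Ttb^*M$ in a conic sense, the symbol is elliptic. Conversely, at $\pa\Sigma$ the quadratic part vanishes, so the symbol is not elliptic in the order-$2$ sense: $|\la\zeta\ra^{-2}p|\to0$ along $\Sigma$.

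Hence the characteristic set is exactly $\pa\Sigma\cup o$.

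The main subtlety is the precise notion of the semiclassical characteristic set at infinity, and the uniform lower bound $|p^{-1}|\lesssim\la\zeta\ra^{-2}$ on finite nonzero $\zeta$ away from $\pa\Sigma$ as $|\zeta|\to\infty$. To get this bound I would argue by homogeneity. Write $\zeta=R\hat\zeta$ with $\hat\zeta$ in a compact sphere bundle, so that $R^{-2}p=\tilde G(\hat\zeta)-iR^{-1}\tilde\ell(\hat\zeta)$. Away from the cone $\{\tilde G(\hat\zeta)=0\}$, this is bounded below for large $R$. For bounded $R$, use the compactness argument above.
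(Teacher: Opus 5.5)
Your proposal is correct and follows the paper's argument. The zero section is characteristic by fiber homogeneity of $\rho_\sface^{-2}G$ and $\rho_\sface^{-2}\ell$; finite nonzero frequencies are elliptic by Lemma~\ref{LemmaCInv}, applied to the rescaled timelike 1-form $r\cd$; and at fiber infinity $G$ is principal while $\ell$ is subprincipal. Your rescaling and uniformity bookkeeping, via compactness of $M$ and homogeneity in the fibers, just spells out details the paper leaves implicit.
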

\begin{proof}
  The fact that $\rho_\sface^{-2}P_h$ is characteristic at the zero section is clear, since $\rho_\sface^{-2}G$ and $\rho_\sface^{-2}\ell$ are homogeneous (of degree $2$ and $1$, respectively) in the fibers of $\Ttb^*M$. For finite nonzero frequencies, the conclusion follows from Lemma~\ref{LemmaCInv}. At fiber infinity, $G$ is the principal term and $\ell$ is subprincipal, and hence the conclusion is immediate from the definition of $\Sigma$.
\end{proof}

Recall $\tau=\frac{t}{r}$; note that at $t=0$, the differential $\dd\tau=r^{-1}(\dd t-\tau\,\dd r)$ is past timelike, and this persists for $0\leq\tau\leq T$ when $T>0$ is sufficiently small. Denote by $\chi_{\tau,j}\in\CI(M)$, $\chi_{r,j}\in\CI(M)$, $j=0,1,2$, cutoff functions such that
\begin{equation}
\label{EqEECutoffs}
\begin{alignedat}{4}
  \chi_{\tau,j} &= 0\ \text{for}&\ \tau&\leq\frac{j+\frac14}{10}T, &\quad
  \chi_{\tau,j} &= 1\ \text{for}&\ \tau&\geq\frac{j+\frac34}{10}T, \\
  \chi_{r,j} &= 0\ \text{for}&\ r&\leq\bhm+\frac{j+\frac14}{10}(r_+-\bhm), &\quad
  \chi_{r,j} &= 1\ \text{for}&\ r&\geq\bhm+\frac{j+\frac34}{10}(r_+-\bhm).
\end{alignedat}
\end{equation}
Thus, $\chi_{\bullet,j}\equiv 1$ near $\supp\chi_{\bullet,j+1}$ for $\bullet=\tau,r$. We choose $\chi_{\tau,j}$ to only depend on $\tau$ and $\chi_{r,j}$ to only depend on $r$, and may moreover arrange that $\chi'_{\bullet,j}\geq 0$ and $(\chi_{\bullet,j}\chi'_{\bullet,j})^{\frac12}\in\CI$.

\begin{prop}[Elliptic estimates]
\label{PropEE}
  Let $B_0,B_1\in\Psitbh^0(M)$ be such that $\WFtbh'(B_j)\cap(o\cup\pa\Sigma)=\emptyset$, $j=0,1$, and $\WFtbh'(B_0)\subset\Elltbh(B_1)$. Then for all $s,N\in\R$ there exists a constant $C$ such that for all $\omega\in\rho_\sface^{\alpha_\sface}\rho_\cK^{\alpha_\cK}\Htbh^{-N}(M)$, we have
  \[
    \| B_0 \chi_{\tau,2}\chi_{r,2} \omega \|_{\Htbh^{s,(\alpha_\sface,\alpha_\cK)}} \leq C\Bigl( \|B_1\chi_{\tau,1}\chi_{r,1}P_h \omega \|_{\Htbh^{s-2,(\alpha_\sface+2,\alpha_\cK)}} + h^N\| \chi_{\tau,0}\chi_{r,0} \omega \|_{\Htbh^{-N,(\alpha_\sface,\alpha_\cK)}}\Bigr).
  \]
  The same holds true for $P_h^*$ in place of $P_h$.
\end{prop}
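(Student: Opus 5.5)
This is a standard semiclassical elliptic parametrix argument in the calculus $\Psitbh$. The plan is to reduce to an unweighted operator, build a microlocal inverse on $\WFtbh'(B_0)$, and handle the cutoffs by support considerations.

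\emph{Reduction.} Conjugating by the weight, set $\tilde P:=\rho_\sface^{-\alpha_\sface}\rho_\cK^{-\alpha_\cK}\rho_\sface^{-2}P_h\rho_\sface^{\alpha_\sface}\rho_\cK^{\alpha_\cK}$. Since $\rho_\sface^{\alpha_\sface}\rho_\cK^{\alpha_\cK}$ is 3b-regular up to a power (logarithmic derivatives along 3b-vector fields are bounded), conjugation changes $\rho_\sface^{-2}P_h$ only by $h\Difftbh^1$ terms. Hence $\tilde P\in\Difftbh^2(M;\cT^*)$ has the same semiclassical principal symbol $\rho_\sface^{-2}p=\rho_\sface^{-2}(G-i\ell)$. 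It then suffices to prove the unweighted estimate for $\tilde P$ acting on $\tilde\omega=\rho_\sface^{-\alpha_\sface}\rho_\cK^{-\alpha_\cK}\omega$.

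\emph{Ellipticity on $\WFtbh'(B_1)$.} By Lemma~\ref{LemmaEE}, the characteristic set of $\rho_\sface^{-2}P_h$ is $\pa\Sigma\cup o$. Since $\WFtbh'(B_1)$ is a compact subset of $\ol{\Ttb^*}M$ disjoint from this set, the symbol $\rho_\sface^{-2}p$ is invertible there with uniform bounds, as an endomorphism-valued symbol. At finite nonzero frequencies I will use Lemma~\ref{LemmaCInv}, together with compactness of the relevant region in $\ol{\Ttb^*}M$ (uniformity down to $\pa M$ comes from $\rho_\sface^{-2}G$, $\rho_\sface^{-2}\ell$ being smooth homogeneous functions on $\Ttb^*M$). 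At fiber infinity away from $\pa\Sigma$, $\rho_\sface^{-2}G$ is elliptic of order $2$, so $|(\rho_\sface^{-2}p)^{-1}|\lesssim\la\zeta\ra^{-2}$.

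\emph{Parametrix.} Next I construct $Q\in\Psitbh^{-2}$ with $Q\tilde P=B_0+R$, where $R\in h^\infty\Psitbh^{-\infty}$. This uses the usual iterative symbolic construction with matrix-valued symbols, first quantizing $b_0(\rho_\sface^{-2}p)^{-1}$ (cut off near $\WFtbh'(B_0)$) and then correcting by $h$-lower order terms via the composition formula \eqref{EqK3bhSES}. Because $\WFtbh'(B_0)\subset\Elltbh(B_1)$, one may insert $B_1$: write $Q=Q'B_1+R'$ with $\WFtbh'(Q)\subset\Elltbh(B_1)$, using a microlocal inverse of $B_1$. The cutoffs are handled by support: $\chi_{\tau,2}\chi_{r,2}$ is supported where $\chi_{\tau,1}\chi_{r,1}=1$, and we may arrange the Schwartz kernels of $Q,R$ to be supported near the diagonal. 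Then $B_0\chi_{\tau,2}\chi_{r,2}\tilde\omega=QB_1\chi_{\tau,1}\chi_{r,1}\tilde P\tilde\omega+Q[\tilde P,\chi_{\tau,2}\chi_{r,2}]\tilde\omega+\ldots$. The commutator term is supported away from the region where $\chi_{\tau,2}\chi_{r,2}$ is non-constant; more simply, I avoid it by composing with $B_0\chi_{\tau,2}\chi_{r,2}$ after writing $\chi_{\tau,2}\chi_{r,2}\chi_{\tau,1}\chi_{r,1}=\chi_{\tau,2}\chi_{r,2}$ and using locality of the calculus.

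\emph{Conclusion and main obstacle.} The uniform boundedness $\Psitbh^{-2}\colon H_{\tbop,h}^{s-2}\to H_{\tbop,h}^s$ and the residual bound $h^N$ then give the estimate. For $P_h^*$ the principal symbol is $G+i\ell$ (self-adjoint parts relative to $B$), which is invertible by the same lemma, so the same argument applies. The only slightly delicate point is the uniformity of the symbolic inverse bounds up to the corners $\sface,\cK^+$ and as $|\zeta|\to\infty$ for vector-bundle-valued symbols. The key observation is that ellipticity is a condition on the compact set $\WFtbh'(B_1)$ in $\ol{\Ttb^*}M$, using that the symbols are smooth on $\ol{\Ttb^*}M$ (Lemma~\ref{LemmaET3b}).
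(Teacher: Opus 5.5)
Your proposal is correct and is essentially the paper's argument: the paper replaces $\omega$ by $\omega'=\chi_{\tau,0}\chi_{r,0}\omega$ and then invokes the standard semiclassical elliptic parametrix, exactly as you do with the microlocalizers $B_0\chi_{\tau,2}\chi_{r,2}$ and $B_1\chi_{\tau,1}\chi_{r,1}$. That substitution leaves $\chi_{\tau,2}\chi_{r,2}\omega$ and $\chi_{\tau,1}\chi_{r,1}P_h\omega$ unchanged, so the residual error is automatically localized to $\chi_{\tau,0}\chi_{r,0}\omega$; your ``locality'' remark should be read this way. Your intermediate claim about $[\tilde P,\chi_{\tau,2}\chi_{r,2}]$ is wrong, since that commutator is supported precisely where $\chi_{\tau,2}\chi_{r,2}$ is non-constant, but the route you finally settle on never produces this term, so nothing is lost.
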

\begin{proof}
  Let $\omega'=\chi_{\tau,0}\chi_{r,0}\omega$, then $\chi_{\tau,1}\chi_{r,1}P_h \omega'=\chi_{\tau,1}\chi_{r,1}P_h \omega$ and $\chi_{\tau,2}\chi_{r,2}\omega=\chi_{\tau,2}\chi_{r,2}\omega'$. Thus, the estimate follows from the usual (symbolic) elliptic parametrix construction (cf.\ \cite[Proposition~6.61]{HintzMicro}).
\end{proof}

At $\pa\Sigma$, we do not have elliptic estimates. However, the skew-adjoint part of $P_h$, coming from $L_h$, has a sign by Lemma~\ref{LemmaCTimelike}, and even though it is subprincipal in the differential order sense, it is principal in the semiclassical sense. Therefore, we have semiclassical estimates with a loss of one (semiclassical 3b-)derivative but without a loss of a power of $h$:

\begin{prop}[Estimates at the characteristic set at fiber infinity]
\label{PropEEInfty}
  Let $B_0,B_1\in\Psitbh^0(M)$ be elliptic at $\pa\Sigma\cap\ol{\Ttb^*_{\supp(\chi_{\tau,0}\chi_{r,0})}}M$, and suppose that $\WFtbh'(B_0)\cap o=\emptyset$ and $\WFtbh'(B_0)\subset\Elltbh(B_1)$. Then for all $s,N\in\R$ there exist $h_0>0$ and $C<\infty$ such that
  \begin{equation}
  \label{EqEEInfty}
  \begin{split}
    &\|B_0\chi_{\tau,2}\chi_{r,2}\omega\|_{\Htbh^{s,(\alpha_\sface,\alpha_\cK)}} \\
    &\qquad \leq C\Bigl( \|B_1\chi_{\tau,1}\chi_{r,1} P_h \omega\|_{\Htbh^{s-1,(\alpha_\sface+2,\alpha_\cK)}} + h^{\frac12}\|B_1\chi_{\tau,0}(1-\chi_{\tau,2})\chi_{r,0}\omega\|_{\Htbh^{s,(\alpha_\sface,\alpha_\cK)}} \\
    &\qquad \hspace{4em} + h^N\|\chi_{\tau,0}\chi_{r,0}\omega\|_{\Htbh^{-N,(\alpha_\sface,\alpha_\cK)}}\Bigr)
  \end{split}
  \end{equation}
  for all $h\in(0,h_0)$. This estimate holds in the strong sense that if the norms on the right-hand side are finite, then so is the norm on the left-hand side, and the estimate holds. We have a similar estimate for the adjoint, which is valid in this strong sense as well:
  \begin{equation}
  \label{EqEEInftyAdj}
  \begin{split}
    &\|B_0\chi_{\tau,2}\chi_{r,2}\omega\|_{\Htbh^{-s+1,(-\alpha_\sface-2,-\alpha_\cK)}} \\
    &\qquad \leq C\Bigl( \|B_1\chi_{\tau,1}\chi_{r,1} P_h^*\omega\|_{\Htbh^{-s,(-\alpha_\sface,-\alpha_\cK)}} + h^{\frac12}\|B_1\chi_{\tau,0}\chi_{r,0}(1-\chi_{r,2})\omega\|_{\Htbh^{-s+1,(-\alpha_\sface-2,-\alpha_\cK)}} \\
    &\qquad \hspace{4em} + h^N\|\chi_{\tau,0}\chi_{r,0}\omega\|_{\Htbh^{-N,(-\alpha_\sface-2,-\alpha_\cK)}}\Bigr).
  \end{split}
  \end{equation}
\end{prop}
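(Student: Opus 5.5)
We prove this by a positive commutator argument near $\pa\Sigma$ in which the damping term $-iL_h$ supplies the positivity, and no propagation along the null bicharacteristic flow is needed. Microlocally near $\pa\Sigma^\pm$ we write $P_h=\Box_h-iL_h$, where $\Box_h$ is formally self-adjoint with respect to $\la\cdot,\cdot\ra$ modulo $h\rho_\sface^2\Difftbh^1$. The operator $L_h$ has semiclassical principal symbol $\ell$, which is self-adjoint with respect to $B$ (Lemma~\ref{LemmaCInner}). By Lemma~\ref{LemmaCTimelike}, for $\zeta\in\Sigma$ near fiber infinity, $\ell(\zeta)$ is definite: negative on the component where $\zeta$ lies in the same causal cone as $\cd$, and positive on the other. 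This is the sense in which the statement speaks of a damping term when propagating in the future direction. Concretely, on the appropriate component of $\pa\Sigma$ we have $\rho_\sface^{-2}\ell\ge c\,|\zeta|$ uniformly on the 3b-cotangent bundle. Uniformity as $r\to\infty$ holds because $\rho_\sface^{-1}\cd$ is smooth and timelike down to $\cK^+\cup\sface$. The weights cause no difficulty here: conjugating by $\rho_\sface^{\alpha_\sface}\rho_\cK^{\alpha_\cK}$ changes $P_h$ only by $h\rho_\sface^2\Difftbh^1$, which is lower order in the semiclassical sense.

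The main step is to pair with a microlocalizer. Set $u=\chi_{\tau,1}\chi_{r,1}\omega$ (after the weight conjugation) and take $A=A^*\in\Psitbh^{2s-1}$ with principal symbol $a^2\rho_\sface^{-2}$, where $a$ is supported in $\Elltbh(B_1)$, elliptic on $\WFtbh'(B_0)$, and supported away from $o$. We then compute
\[
  \Im\la A u,\rho_\sface^{-2}P_h u\ra = -\Re\la A u,\rho_\sface^{-2}L_h u\ra + \tfrac{1}{2i}\la[\rho_\sface^{-2}\Box_h\text{-type commutator}]u,u\ra.
\]
The $L_h$ term has principal symbol $h\,a^2\rho_\sface^{-2}\ell$. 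By definiteness, its sign gives control of $h\|B_0 u\|^2_{\Htbh^{s}}$, since one order of $\ell$ is gained against $|\zeta|$ at fiber infinity. The commutator $\frac ih[\Box_h,A]$ is of order $2s$ and carries an extra factor $h$, so after division it is of size $h\cdot h$ relative to the main term. It is therefore lower order in $h$, and we can bound it by $h^{1/2}$ times norms on a slightly larger microlocal set. Remaining terms, namely the subprincipal part of $L_h$ and the commutators of $\chi_{\tau,1}\chi_{r,1}$ with $P_h$, are handled as follows. The cutoff commutators are supported where $\chi_{\tau,2}\ne1$ or $\chi_{r,2}\ne1$; this produces the $h^{1/2}\|B_1\chi_{\tau,0}(1-\chi_{\tau,2})\chi_{r,0}\omega\|$ term. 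Near $\Sigma^\sharp$ the sign is favorable, since there we are propagating toward the final hypersurface, so no $r$-cutoff error arises in the forward estimate. All other terms lie in $h^N$ residual errors. Applying Cauchy--Schwarz to the left side gives $\|B_1\rho_\sface^{-2}P_h u\|_{s-1}\|B_1u\|_s$, and dividing by $h$ matches the scaling in \eqref{EqEEInfty}, with $P_h$ paired in $H^{s-1}$, losing one derivative but no power of $h$.

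For the adjoint estimate we repeat the argument with $P_h^*=\Box_h^*+iL_h^*$. The sign of the damping flips, which corresponds to propagating backward. The cutoff errors now arise at $\chi_{r,\cdot}$, near $\Sigma^\sharp$, while at $X$ the sign is favorable; this explains the placement of $(1-\chi_{r,2})$ in \eqref{EqEEInftyAdj}. The weights and orders dualize accordingly.

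The strong form of the estimates (finiteness of the left side whenever the right side is finite) follows by the standard regularization argument: insert $A_\epsilon=A(1+\epsilon\Lambda)^{-2}$ with $\Lambda$ of order $1$. The extra commutator terms are uniformly bounded in $\epsilon$ and carry the same factor $h$, so they are again absorbed, and we let $\epsilon\to0$.

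The main obstacle is ensuring that the definiteness of $\ell$ is uniform in the 3b sense near the corner $\cK^+\cap\sface$ and at $\Sigma^\sharp$. This also requires checking that the non-scalar subprincipal contributions are genuinely $O(h)$ relative to $h\ell$ at fiber infinity; they are, because they are of differential order zero while $\ell$ is of order one.
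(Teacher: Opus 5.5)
Your core mechanism is the paper's. The damping $\pm\Re L_h$ is definite on $\pa\Sigma^\pm$ by Lemma~\ref{LemmaCTimelike} and dominates, no null-geodesic dynamics is needed, and $[\Box_h,A]$ is $O(h)$. The uniformity you flag as the main obstacle follows at once from compactness and Proposition~\ref{PropEC}\eqref{ItECCausal}. The real content of the proposition is which cutoff produces an a priori term, and there you assert the answer without deriving it.

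\emph{The cutoff signs.} As written, you set $u=\chi_{\tau,1}\chi_{r,1}\omega$ and apply Cauchy--Schwarz to $\la Au,\rho_\sface^{-2}P_hu\ra$. Then $[P_h,\chi_{r,1}]\omega$ enters as an unsigned forcing term of size $h\|\tilde B\chi_{r,0}(1-\chi_{r,2})\omega\|_{\Htbh^{s,(\alpha_\sface,\alpha_\cK)}}$. Estimate \eqref{EqEEInfty} does not contain such a term, and symmetrically \eqref{EqEEInftyAdj} has no $\chi_{\tau,1}$-term. To get this asymmetry, put $\chi_{\tau,1}\chi_{r,1}$ into the commutant. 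You also need a cutoff $\chi^\pm$ to one component of $\pa\Sigma$ and the pairing $\mp\Im\la P_h\omega,A^2\omega\ra$, which are required anyway because $\ell$ has opposite signs on $\pa\Sigma^+$ and $\pa\Sigma^-$. Then the symbol $\mp aH_Ga$ of $\mp\frac{i}{2h}[\Box_h,A^2]$ contains $\mp\chi_{\tau,1}\chi_{\tau,1}'H_G\tau$ and $\mp\chi_{r,1}\chi_{r,1}'H_Gr$, times squares of the other factors of $a$. Now $\dd\tau$ is past timelike on $\tau^{-1}([0,T])$, and $\dd r$ is future timelike in $r<r_+$ (where $\Delta<0$). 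Hence $\pm H_G\tau>0$ and $\pm H_G r<0$ on $\pa\Sigma^\pm$. The $\tau$-term is therefore a negative square: this is the a priori term, carrying $h^{\frac12}$ because it enters quadratically. The $r$-term has the sign of the main term and can be dropped. For $P_h^*$ the main term changes sign and the two roles swap. ``Propagating toward the final hypersurface'' is the right intuition, but it is this computation that makes it true.

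\emph{The differential order of the error.} The rest of the commutator is $h$ times an operator of the \emph{same} differential order $2s$ as the main term. Bounding it by $h^{\frac12}$ times a norm on a slightly larger microlocal set and iterating improves only the power of $h$. You would arrive at the weak estimate \eqref{EqEEInftyWeak}, with error $h^N\|\omega\|_{\Htbh^{s,(\alpha_\sface,\alpha_\cK)}}$ rather than $h^N\|\chi_{\tau,0}\chi_{r,0}\omega\|_{\Htbh^{-N,(\alpha_\sface,\alpha_\cK)}}$, and the strong form does not follow. The order-$2s$ part of $H_G(a^2)$ must be sorted, as in \eqref{EqEEInftyComm2}:
\begin{itemize}
\item Derivatives of the weights and of $\hat\rho^{-s+\frac12}$ ($\hat\rho$ a defining function of fiber infinity) give $a^2$ times a first-order symbol. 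This is absorbed into the elliptic first-order main term for small $h$.
\item Derivatives of $\chi^\pm$ and of the cutoff to a neighborhood of $\Sigma$ are supported off the characteristic set, so they are controlled by $P_h\omega$.
\item The derivative of the cutoff to fiber infinity has order $-\infty$.
\end{itemize}
Together with the signed cutoff terms above, this leaves a remainder of order $2s-1$, so each iteration gains both $h^{\frac12}$ and half a derivative.

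\emph{The powers of $h$.} $\ell$ is the semiclassical principal symbol of $L_h$ itself, so the main term controls $\|\cdot\|_{\Htbh^s}^2$ with no factor of $h$. That is why there is no loss of $h$; ``dividing by $h$'' as you describe would in fact produce an $h^{-1}$ loss.
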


The weights $(-\alpha_\sface-2,-\alpha_\cK)$ on $\omega$ in~\eqref{EqEEInftyAdj} are dual to those on $P_h\omega$ in~\eqref{EqEEInfty}. Note also that the estimates~\eqref{EqEEInfty} and \eqref{EqEEInftyAdj} entail the elliptic estimates from Proposition~\ref{PropEE} for $P_h$ and $P_h^*$. The a priori control term in~\eqref{EqEEInfty}, resp.\ \eqref{EqEEInftyAdj}, is such that past-directed, resp.\ future-directed null-geodesics from $\supp(\chi_{\tau,2}\chi_{r,2})$ reach its support. Thus, \eqref{EqEEInfty}, resp.\ \eqref{EqEEInftyAdj} is a forward, resp.\ backward propagation estimate.

\begin{rmk}[Threshold regularity at the conormal bundle of the event horizon]
\label{RmkEEThr}
  The estimate~\eqref{EqEEInfty} controls $\omega$ microlocally also near the conormal bundle of the event horizon, where by comparison with \cite[\S{2}]{VasyMicroKerrdS} one might expect a radial point estimate to play a role and lower bounds on the regularity orders $s,-N$ to be required for the validity of~\eqref{EqEEInfty}. From this perspective, it is the smallness of $h$ that pushes the threshold regularity below any fixed number (such as $\min(s,-N)$).
\end{rmk}

\begin{proof}[Proof of Proposition~\usref{PropEEInfty}]
  The proof does not rely on any fine properties of the null-geodesic flow on $(M,g)$. Instead, it is based on a simple pairing argument which has the character of a positive commutator argument in which the ellipticity of the skew-adjoint part dominates, and only simple causal information is used at those places (near $\tau=0$ and $r=\bhm$) where the Hamiltonian vector field differentiates the localizers $\chi_{\tau,1}$, $\chi_{r,1}$.

  The details are as follows. We may replace $\omega$ by $\chi_{\tau,0}\chi_{r,0}\omega$. We only prove the result \emph{assuming} $\omega\in\Htbh^{s,(\alpha_\sface,\alpha_\cK)}$; the stronger statement follows from a standard regularization argument, see, e.g., \cite[\S4.4]{VasyMinicourse} or \cite[\S{8.5}]{HintzMicro}. Moreover, in view of Proposition~\ref{PropEE}, it suffices to prove an estimate in an arbitrarily small (but $h$-independent) neighborhood of $\pa\Sigma$.
  
  Instead of~\eqref{EqEEInfty}, we first prove the estimate
  \begin{equation}
  \label{EqEEInftyWeak}
    \|B_0\chi_{\tau,2}\chi_{r,2}\omega\|_{\Htbh^{s,(\alpha_\sface,\alpha_\cK)}} \leq C\Bigl( \|B_1\chi_{\tau,1}\chi_{r,1}P_h \omega\|_{\Htbh^{s-1,(\alpha_\sface+2,\alpha_\cK)}} + h^N\|\omega\|_{\Htbh^{s,(\alpha_\sface,\alpha_\cK)}}\Bigr)
  \end{equation}
  which does not require the use of \emph{any} properties of the null-geodesic flow. Fix a defining function $\hat\rho\in\CI(\ol{\Ttb^*}M)$ of fiber infinity $\Stb^*M$, and consider the rescaled dual metric function $\hat G=\hat\rho^2\rho_\sface^{-2}G\in\CI(\ol{\Ttb^*}M)$. Let $\chi\in\CIc(\R)$ denote a cutoff function which is identically $1$ near $0$. For simpler bookkeeping of signs, we shall also localize to the component $\pa\Sigma^\pm$ using a cutoff $\chi^\pm\in\CI(\ol{\Ttb^*}M)$ which is identically $1$, resp. $0$ in a fixed neighborhood of $\pa\Sigma^\pm$, resp.\ $\pa\Sigma^\mp$. With $\digamma>1$ to be specified, we define
  \begin{equation}
  \label{EqEEInftyA}
    a := \rho_\sface^{-\alpha_\sface-1}\rho_\cK^{-\alpha_\cK} \hat\rho^{-s+\frac12} \chi^\pm \chi(\digamma\hat\rho) \chi(\digamma\hat G) \chi_{\tau,1}\chi_{r,1}.
  \end{equation}
  We let $A=A^*\in\rho_\sface^{-\alpha_\sface-1}\rho_\cK^{-\alpha_\cK}\Psitbh^{s-\frac12}(M)$ denote a semiclassical quantization of $a$, with operator wave front set equal to $\supp a$ and with Schwartz kernel supported in a fixed small neighborhood of $(\pi(\supp a))^2$ where $\pi\colon\Ttb^*M\to M$ is the projection. Then
  \begin{equation}
  \label{EqEEInftyPair}
    {\mp}\Im\la P_h \omega,A^2 \omega\ra = \Big\la \Bigl(\pm(\Re L_h-\Im\Box_h)A^2 \mp \frac{i}{2}[P_h,A^2]\Bigr)\omega, \omega \Big\ra.
  \end{equation}
  For sufficiently large $\digamma$, Lemma~\ref{LemmaCTimelike} implies that the principal symbol $\pm\rho_\sface^{-2}\ell$ of $\pm\rho_\sface^{-2}\Re L_h\in\Difftbh^1$ is positive definite on $\supp a$. On the other hand, since $\Box_h$ is principally scalar with real principal symbol, the skew-adjoint part $\Im\Box_h\in h\Difftbh^1$ is semiclassically subprincipal. Therefore, fixing $\Lambda\in\rho_\sface\Psitbh^{\frac12}$ to be elliptic on $\supp a$, there exists $c_0>0$ such that we can write
  \begin{equation}
  \label{EqEEReL}
    {\pm}(\Re L_h-\Im\Box_h)A^2 = A(\tilde B_0^*\tilde B_0 + c_0\Lambda^*\Lambda) A + h E,\quad E\in\rho_\sface^{-2\alpha_\sface}\rho_\cK^{-2\alpha_\cK}\Psitbh^{2 s-1},
  \end{equation}
  where also $\tilde B_0\in\rho_\sface\Psitbh^{\frac12}$ is elliptic on $\supp a$. The commutator term in~\eqref{EqEEInftyPair} is
  \begin{equation}
  \label{EqEEInftyComm}
    {\mp}\frac{i}{2}[P_h,A^2] =: h F,\quad F\in\rho_\sface^{-2\alpha_\sface}\rho_\cK^{-2\alpha_\cK}\Psitbh^{2 s},
  \end{equation}
  i.e., of lower order in the semiclassical sense (powers of $h$).
  
  Applying Cauchy--Schwarz to the left-hand side of~\eqref{EqEEInftyPair}, we obtain the $L^2$-estimate
  \[
    \|\tilde B_0 A \omega\|^2 + c_0\|\Lambda A \omega\|^2 \leq \frac{1}{4 c_0}\|\Lambda^-A P_h \omega\|^2 + c_0\|\Lambda A \omega\|^2 + h|\la(E+F)\omega,\omega\ra| + C h^{2 N}\|\omega\|_{\Htbh^{-N,(\alpha_\sface,\alpha_\cK)}}^2
  \]
  for any fixed $N$, where $\Lambda^-\in\rho_\sface^{-1}\Psitbh^{-\frac12}$ is a microlocal parametrix of $\Lambda$ near $\supp a$. Canceling the second terms on both sides and taking square roots, we obtain
  \[
    \|\tilde B_0 A \omega\| \leq C\Bigl( \|\Lambda^-A P_h \omega\| + h^{\frac12}\|\tilde B_0'\omega\| + h^N\|\omega\|_{\Htbh^{-N,(\alpha_\sface,\alpha_\cK)}}\Bigr),
  \]
  where the term involving $\tilde B'_0\in\rho_\sface^{-\alpha_\sface}\rho_\cK^{-\alpha_\cK}\Psitbh^s$ (bounding the pairing $h|\la(E+F)\omega,\omega\ra|$, with $\tilde B'_0$ elliptic on $\supp a$) can be made to have Schwartz kernel and operator wave front set arbitrarily close to those of $A$ (with the constant $C$ depending on the particular choice of $\tilde B'_0$). An application of microlocal elliptic regularity yields the estimate~\eqref{EqEEInftyWeak} except for the presence of an additional error term $C h^{\frac12}\|\tilde B'_0 \omega\|$ on the right-hand side. Note then that $\tilde B'_0$ is microlocalized near $\supp a$, and hence one can proceed inductively, estimating $\|\tilde B'_0 \omega\|$ using the same argument (with suitably adapted cutoffs), and improving the power of $h$ of the error term by $\frac12$ at each step. This establishes~\eqref{EqEEInftyWeak} in the stated form after finitely many iterations.

  One can similarly establish~\eqref{EqEEInftyWeak} with $P_h^*$ in place of $P_h$ and $-\alpha_\sface-2,-\alpha_\cK$ in place of $\alpha_\sface,\alpha_\cK$, and with the same replacement in the definition of the commutant $a$ in~\eqref{EqEEInftyA}. Using that $P_h^*=\Box_h^*+i L_h^*$ (with $\Box_h^*-\Box_h\in h\Difftbh^1$ and $L_h^*-L_h\in h\Difftbh^0$), we have, in place of~\eqref{EqEEInftyPair},
  \begin{equation}
  \label{EqEEInftyAdjPair}
    {\mp}\Im\la P_h^*\omega,A^2\omega\ra = \Big\la \Bigl({\mp}(\Re L_h-\Im\Box_h)A^2 \mp \frac{i}{2}[P_h^*,A^2]\Bigr)\omega, \omega\Big\ra.
  \end{equation}
  The only (at this point inconsequential) change is that the principal symbol $\mp\rho_\sface^{-2}\ell$ of the main term $\mp\rho_\sface^{-2}\Re L_h$ is now \emph{negative} definite on $\supp a$.

  In order to prove the original estimate~\eqref{EqEEInfty}, we improve~\eqref{EqEEInftyComm} to the statement
  \begin{equation}
  \label{EqEEInftyComm2}
    {\mp}\frac{i}{2}[P_h,A^2] = -h E_\tau^*E_\tau + h E_r^*E_r + h A\tilde B_2 A + h A Q P_h + h F',
  \end{equation}
  where the operators
  \begin{alignat*}{2}
    \qquad E_\tau, E_r &\in \rho_\sface^{-\alpha_\sface}\rho_\cK^{-\alpha_\cK}\Psitbh^s, &\quad
    \tilde B_2 &\in \rho_\sface^2\Psitbh^1, \\
    Q &\in \rho_\sface^{-\alpha_\sface-1}\rho_\cK^{-\alpha_\cK}\Psitbh^{s-\frac32}, &\quad
    F' &\in \rho_\sface^{-2\alpha_\sface}\rho_\cK^{-2\alpha_\cK}\Psitbh^{2 s-1}.
  \end{alignat*}
  have operator wave front sets contained in $\supp a$, and with $\WFtbh'(E_\tau)$ in addition lying over $\supp(\chi_{r,1}\dd\chi_{\tau,1})$. The point is that the error term $h F'$ has lower differential order than $h F$ in~\eqref{EqEEInftyComm}, and thus is of the same class as $h E$ in~\eqref{EqEEReL}; all other terms on the other hand (apart from the term $E_\tau$ which will necessitate the second, a priori control, term on the right in~\eqref{EqEEInfty}) either have a sign matching that of the main term~\eqref{EqEEReL} or can be controlled by $\tilde B_0 A \omega$ or $P_h \omega$. Indeed, assuming~\eqref{EqEEInftyComm2}, we obtain from~\eqref{EqEEInftyPair} and~\eqref{EqEEReL} the estimate
  \begin{equation}
  \label{EqEEInftyEst}
    \|\tilde B_0 A \omega\|^2 \leq C\Bigl( \|\Lambda^- A P_h \omega\|^2 + h\|\tilde B_0' \omega\|^2 + h\|E_\tau \omega\|^2 + h^{2 N}\|\omega\|_{\Htbh^{-N,(\alpha_\sface,\alpha_\cK)}}^2 \Bigr)
  \end{equation}
  (where now $\tilde B_0'\in\rho_\sface^{-\alpha_\sface}\rho_\cK^{-\alpha_\cK}\Psitbh^{s-\frac12}$ is elliptic on $\supp a$, similarly to before, but now with reduced differential order) upon dropping the term involving $E_r$ (which has the same sign as the main term $\|\tilde B_0 A \omega\|^2$), and upon absorbing the first term on the right-hand side of $h|\la\tilde B_2 A \omega,A \omega\ra|\leq C h\|\tilde B_0 A \omega\|^2+h^N\|\omega\|_{\Htbh^{-N,(\alpha_\sface,\alpha_\cK)}}^2$ into the left-hand side of~\eqref{EqEEInftyEst} (for sufficiently small $h$). Taking the square root of~\eqref{EqEEInftyEst} and bounding $h^{\frac12}\|\tilde B_0'\omega\|$ in an iterative fashion yields~\eqref{EqEEInfty}.

  Now,~\eqref{EqEEInftyComm2} follows from a simple symbolic calculation. The term $-i L_h$ of $P_h$ contributes only to the error term $h F'$. It remains to analyze the principal symbol of $\mp\frac{i}{2 h}[\Box_h,A^2]$, which is given by $\mp a H_G a$. Evaluating this using the form~\eqref{EqEEInftyA} of $a$, the terms coming from differentiation of $\chi^\pm$ and $\chi(\digamma\hat G)$ are supported away from the characteristic set of $P_h$ and can thus be written as a product $a q p$ (and then $Q$ is a quantization of $q$). Differentiation of $\chi(\digamma\hat\rho)$ gives a symbol of differential order $-\infty$ which we put into $F'$. Differentiation of the weights $\rho_\sface^{-\alpha_\sface-1}$, $\rho_\cK^{-\alpha_\cK}$, and $\hat\rho^{-s+\frac12}$ of $a$ yields smooth bounded multiples of $a$, giving rise to the term $A\tilde B_2 A$ in~\eqref{EqEEInftyComm2}. Finally then, the term coming from $\mp\chi_{\tau,1}H_G\chi_{\tau,1}=\mp\chi_{\tau,1}\chi'_{\tau,1} H_G \tau=-e_T^2$ is a negative square since $\pm H_G \tau>0$ at future (top sign), resp.\ past (bottom sign) lightlike covectors. Similarly, $\mp\chi_{r,1}H_G\chi_{r,1}=\mp\chi_{r,1}\chi'_{r,1} H_G r=e_R^2$ since $\pm H_G r<0$ at future, resp.\ past lightlike covectors over $\supp\dd\chi_{r,1}\subset r^{-1}([\bhm,r_+))$; this is due to the future timelike nature of $\dd r$ in $r<r_+$ (where $\Delta<0$ in the expression~\eqref{EqKMetStar} for the dual metric). This finishes the proof of~\eqref{EqEEInftyComm2}.

  The proof of~\eqref{EqEEInftyAdj} is based on the fact that~\eqref{EqEEInftyComm2} holds for $P_h^*$ in place of $P_h$ as well, with the operators $E_\tau,E_r,\tilde B_2,Q,F'$ having the same properties; indeed, only the second order term $\Box_h^*$ of $P_h^*$ mattered in arranging~\eqref{EqEEInftyComm2}, and this has the same semiclassical principal symbol $G$ as $\Box_h$. Recall however that the main, conclusion, term (involving $\Re L_h$) in~\eqref{EqEEInftyAdjPair} now has the opposite sign (as compared to the main term in~\eqref{EqEEInftyPair}). Hence, now it is the term $E_\tau$ that has the same sign as this main term, whereas the term $E_r$ requires an a priori control term---the second term on the right in~\eqref{EqEEInftyAdj}. The proof is complete.
\end{proof}

\subsubsection{Zero section propagation near the saddle point at \texorpdfstring{$r=r_0$}{r=r0}}
\label{SssEK}

Near the zero section $o\subset\Ttb^*M$, the term $-i L_h$ of $P_h$ dominates, as its semiclassical principal symbol vanishes simply at $o$, whereas the semiclassical principal symbol of $\Box_h$ vanishes quadratically at $o$. By Corollary~\ref{CorCPos}, applied at each point of the (compact) domain $\Omega$, there exists $C>0$ such that $C(\rho_\sface^{-2}\ell(\zeta))^2+\rho_\sface^{-2}G(\zeta)$ is positive definite (with respect to $B$) for all $0\neq\zeta\in\Ttb^*_\Omega M$. Therefore, the elliptic multiple
\[
  -(1+i C\rho_\sface^{-2}\ell) i(G-i\ell) = -\ell(1-C\rho_\sface^{-2}G) - i(C\rho_\sface^{-2}\ell^2+G)
\]
is, near $o$, given roughly by the non-scalar transport (cf.\ Remark~\ref{RmkCNoe0}) along $-\ell$ (roughly a collection of \emph{future} timelike vector fields in view of the minus sign) with complex absorption $C\rho_\sface^{-2}\ell^2+G$ (which is weak in the sense that its principal symbol vanishes quadratically at $o$). On the level of operators, we consider
\begin{equation}
\label{EqEKcL}
  \cL_h := -i(1+i C L_h^*\rho_\sface^{-2})P_h = -L_h + J - i Q \in \rho_\sface^2\Difftbh^3,
\end{equation}
where $J=J^*$ and $Q=Q^*$ are given by
\begin{equation}
\label{EqEKDecomp}
\begin{alignedat}{2}
  J &= h^2 J_0 + h^3 J_1, \\
  &\qquad J_0=J_0^*=h^{-2}\Im\Box_h=\Im\Box_g&&\in \rho_\sface^2\Difftb^1, \\
  &\qquad J_1 =J_1^*=C h^{-3} \Re\bigl( L_h^*\rho_\sface^{-2}\Box_h \bigr)&&\in\rho_\sface^2\Difftb^3, \\
  Q &= h^2 Q_0 + h^3 Q_1, \\
  &\qquad Q_0=Q_0^*=h^{-2}\bigl(\Re\Box_h + C L_h^*\rho_\sface^{-2}L_h\bigr) &&\in \rho_\sface^2\Difftb^2, \\
  &\qquad Q_1=Q_1^*=-C h^{-3}\Im\bigl(L_h^*\rho_\sface^{-2}\Box_h\bigr) &&\in \rho_\sface^2\Difftb^2.
\end{alignedat}
\end{equation}
We used Lemma~\ref{LemmaET3b} and the self-adjointness of the principal symbol of $L_h$ for the stated memberships.

Since the weight at $\sface$ and the 3b-differential order are irrelevant (i.e., arbitrary) in the analysis near $(\cK^+)^\circ$ and the zero section $o\subset\Ttb^*M$, we omit them from the notation. Furthermore, we shall, only in this section, use $\rho_\cK=t^{-1}$ as the local defining function of $\cK^+$. Denote by $|\cdot|$ a positive definite norm on $\Ttb^*M$.

\begin{prop}[Propagation near the saddle point in $(\cK^+)^\circ$]
\label{PropEK}
  There exists $\delta>0$ such that the following holds for $K:=\{|r-r_0|\leq\frac32 r_0\sqrt{e}/v,\ \rho_\cK\leq 4\delta\}$, for any cutoff function $\chi\in\CI(M)$ which is identically $1$ on $K$ and supported in any small neighborhood of $K$, for any operators $B_0,B_1,E\in\Psitbh(M)$ with operator wave front sets which are compact subsets of $\Ttb^*M$ and with Schwartz kernels which are supported in both factors in $K$, and for all sufficiently small $h>0$.
  \begin{enumerate}
  \item\label{ItEKFw}{\rm (Estimate for $P_h$.)} Suppose that
  \begin{alignat*}{3}
    \WFtbh'(B_0) &\subset \bigl\{ |r-r_0|\leq\tfrac{55}{50}r_0\sqrt{e}/v,\ &&\rho_\cK\leq 2\delta,\ &&|\zeta|<1 \bigr\}, \\
    \Elltbh(E) &\supset \bigl\{ |r-r_0|\leq\tfrac{60}{50}r_0\sqrt{e}/v,\ &&\delta\leq\rho_\cK\leq 3\delta,\ &&|\zeta|<1 \bigr\}, \\
    \Elltbh(B_1) &\supset \bigl\{ |r-r_0|\leq\tfrac{60}{50}r_0\sqrt{e}/v,\ &&\rho_\cK\leq 3\delta,\ &&|\zeta|<2 \bigr\}.
  \end{alignat*}
  Then for any $N\in\R$ there exists a constant $C$ such that
  \begin{equation}
  \label{EqEK}
    \|B_0 \omega\|_{\rho_\cK^{\alpha_\cK}L^2} \leq C\Bigl( h^{-1}\|B_1 P_h \omega\|_{\rho_\cK^{\alpha_\cK}L^2} + \|E \omega\|_{\rho_\cK^{\alpha_\cK}L^2} + h^N\|\chi \omega\|_{\rho_\cK^{\alpha_\cK}\Htbh^{-N}} \Bigr).
  \end{equation}
  \item\label{ItEKBw}{\rm (Estimate for $P_h^*$.)} Suppose that $B_0,B_1$ are as in part~\eqref{ItEKFw}, while now
    \[
      \Elltbh(E) \supset \{ |r-r_0| \in [\tfrac{55}{50}r_0\sqrt{e}/v,\tfrac{60}{50}r_0\sqrt{e}/v],\ \rho_\cK\leq 2\delta,\ |\zeta|<1 \}.
    \]
    Then for any $N\in\R$ there exists a constant $C$ such that
    \begin{equation}
    \label{EqEKAdj}
      \|B_0\omega\|_{\rho_\cK^{-\alpha_\cK}L^2} \leq C\Bigl( h^{-1}\|B_1 P_h^* \omega\|_{\rho_\cK^{-\alpha_\cK}L^2} + \|E \omega\|_{\rho_\cK^{-\alpha_\cK}L^2} + h^N\|\chi \omega\|_{\rho_\cK^{-\alpha_\cK}\Htbh^{-N}} \Bigr).
    \end{equation}
  \end{enumerate}
\end{prop}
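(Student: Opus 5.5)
We propose a positive commutator (really a positive pairing) argument for the operator $\cL_h$ from~\eqref{EqEKcL}, microlocalized near the zero section over $K$. Near $o$, $\cL_h=-L_h+J-iQ$ is, to leading order, the non-scalar transport operator $-L_h$. Its skew-adjoint part is $-ihS$ with $S$ from Lemma~\ref{LemmaCSubpr}. By Proposition~\ref{PropEC}\eqref{ItECPosInt}, $S$ is positive definite with respect to $B$ on $|r-r_0|\leq\frac65 r_0\sqrt e/v$. The key observation is that near $T=t^{-1}=0$ the Hamiltonian derivatives of the localizers in $\rho_\cK$ are weak: $\dd\rho_\cK/\rho_\cK$ vanishes as a scattering 1-form at $\cK^+$. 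So the $\rho_\cK^{\alpha_\cK}$ weight contributes only $\cO(\delta)$ terms, which are dominated by $S$ once $\delta$ is small. The $r$-localization, in turn, is controlled by Lemma~\ref{LemmaMSRad} and Proposition~\ref{PropEC}\eqref{ItECPosEsc}.

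For part~\eqref{ItEKFw}, we take the commutant
\[
  a=\rho_\cK^{-\alpha_\cK}\chi_0(|\zeta|^2)\,\psi(r)\,\phi(\rho_\cK).
\]
Here $\chi_0$ cuts off to $|\zeta|<2$. The function $\phi$ equals $1$ on $[0,2\delta]$, is supported in $[0,3\delta]$, and is nonincreasing. The function $\psi$ equals $1$ on $|r-r_0|\leq\frac{55}{50}r_0\sqrt e/v$ and is supported in $|r-r_0|\leq\frac{60}{50}r_0\sqrt e/v$. Let $A$ be a quantization of $a$, acting on $\cT^*$ with the fiber inner product $B$. We compute $\Im\la\cL_h\omega,A^2\omega\ra$. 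The main term is $h\la S A\omega,A\omega\ra$, which is positive and of size $h$. The remaining terms are the following.
\begin{enumerate}
  \item The commutator $\frac{i}{2}[L_h,A^2]$ has principal part $h\,a\,(H_{\ell}a)$. Since $\ell$ is a self-adjoint endomorphism-valued symbol, $H_\ell a$ is computed by differentiating the scalar $a$ along the matrix-valued vector field $-\ell(\dd\,\cdot)$.
  \item Differentiation of $\rho_\cK^{-\alpha_\cK}$ gives $\cO(\rho_\cK)$ times $a^2$. This is absorbed by $S$ for $\delta$ small.
  \item Differentiation of $\phi$ gives a term supported in $\delta\leq\rho_\cK\leq3\delta$, which is controlled by $E$.
  \item Differentiation of $\chi_0$ gives a term supported where $1\leq|\zeta|<2$. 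This is controlled by $B_1P_h$ by ellipticity, via Proposition~\ref{PropEE}, or is absorbed.
  \item Differentiation of $\psi$ produces $-\psi'(r)\,\ell(\dd r)$. On the transition region $|r-r_0|\in[\frac{55}{50},\frac{60}{50}]r_0\sqrt e/v$, Lemma~\ref{LemmaMSRad} (via Proposition~\ref{PropEC}\eqref{ItECPosEsc}) gives that $-\ell(\pm\dd r)$ is positive definite for $\pm(r-r_0)\ge r_0\sqrt e/v$. Choosing $\psi$ as a function of $|r-r_0|$ decreasing outward therefore makes this term have the same sign as the main term, so it can be dropped.
  \item The terms $J$ and $Q$ are $\cO(h^2)$ relative to the main term or have quadratic vanishing at $o$. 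The part $h^2Q_0\geq0$ modulo lower order, by the construction leading to Corollary~\ref{CorCPos}, so it has a favourable sign. The remaining pieces are $\cO(h^2)$ and absorbed for small $h$.
\end{enumerate}
Dividing by $h$ and applying Cauchy--Schwarz to $\la\cL_h\omega,A^2\omega\ra$, where $\cL_h=-i(1+iCL_h^*\rho_\sface^{-2})P_h$ with a bounded prefactor near $o$, yields~\eqref{EqEK}. The loss $h^{-1}$ comes from this division. The error terms are bootstrapped iteratively in powers of $h^{1/2}$, as in Proposition~\ref{PropEEInfty}.

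For part~\eqref{ItEKBw} we repeat the argument with $P_h^*$. The skew-adjoint part of the transport term changes sign, giving $-\cL_h^*$-type pairings. To keep $S$ as the good term we flip the overall sign. As a consequence, the $\psi'$ term now has the wrong sign and must be controlled by $E$, which is elliptic on the $r$-transition annulus. Meanwhile the $\phi'$ term, at larger $\rho_\cK$, acquires the good sign and can be dropped; the weight becomes $\rho_\cK^{-\alpha_\cK}$. This matches the stated hypothesis on $\Elltbh(E)$.

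The main obstacle we expect is making the sign bookkeeping rigorous for matrix-valued symbols. Specifically, one must show that $\Re\bigl(a\,\{\ell,a\}\bigr)+a^2S$ is positive as an endomorphism with respect to $B$. This needs $\ell(\dd\psi)$ and $S$ to be simultaneously controlled in $B$-norm on the overlap $[\frac{55}{50},\frac{60}{50}]r_0\sqrt e/v$. Exactly here the inequality $\frac65>1$ is used: $S>0$ and $-\ell(\pm\dd r)>0$ both hold there. We also need that $\ell$'s degeneracy inside $|r-r_0|<r_0\sqrt e/v$ is harmless, since there $\psi'=0$.
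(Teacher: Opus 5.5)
This is essentially the paper's argument: the same elliptic multiple $\cL_h$, a commutant built from the weight, $\psi(r)$, a cutoff in $\rho_\cK$ and a cutoff near $o$. It uses positivity of $S$ on $\supp\psi$ (Proposition~\ref{PropEC}\eqref{ItECPosInt}) and of $-\ell(\pm\dd r)$ on $\supp\psi'$ (Proposition~\ref{PropEC}\eqref{ItECPosEsc}), handles the $Q_0$-term by its sign, and swaps the roles of the $\psi'$- and $\rho_\cK$-cutoff terms for $P_h^*$, exactly as you describe.

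One step needs correcting. The contributions of $h^3Q_1$ and $h^3J_1$ are of the same semiclassical order as the main term $hS$, not $\cO(h^2)$ relative to it. They are harmless only because the symbol of $h^{-1}h^3Q_1$ vanishes quadratically at $o$, and so do the $H_j$-terms coming from $h^3J_1$. Hence your frequency cutoff must be supported in a small neighborhood $|\zeta|\lesssim\digamma^{-1}$ of the zero section, as with the paper's $\chi_0(\digamma|\zeta|)$ and $\delta=\digamma^{-1}$, rather than merely in $|\zeta|<2$. The remainder of $\{|\zeta|<1\}$ is then covered by the elliptic estimate of Proposition~\ref{PropEE}.
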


In practice, we take $B_0$ to satisfy
\[
  \Elltbh(B_0) \supset \{ |r-r_0|\leq\tfrac{54}{50}r_0\sqrt{e}/v,\ \rho_\cK\leq\delta,\ |\zeta|<\tfrac12\}.
\]
In the estimate~\eqref{EqEK}, we may take $E$ to have operator wave front set localized near $o$, and with the projection of the support of its Schwartz kernel to either factor contained in $\rho_\cK^{-1}([\frac12\delta,4\delta])$, and hence disjoint from $\cK^+$. Thus~\eqref{EqEK} gives control in a $\frac{54}{50}r_0\sqrt{e}/v$-neighborhood of $r=r_0$ at $\cK^+$, provided we have a priori control on the set $\WFtbh'(E)$; this is a forward propagation estimate.

In the estimate~\eqref{EqEKAdj}, we will instead take $E$ to satisfy
\[
  \WFtbh'(E) \subset \{ |r-r_0|\in[\tfrac{54}{50} r_0\sqrt{e}/v,\tfrac32 r_0\sqrt{e}/v],\ \rho_\cK\leq 3\delta,\ |\zeta|\leq 2 \}.
\]
Thus,~\eqref{EqEKAdj} propagates control from outside a $\frac{54}{50}r_0\sqrt{e}/v$-neighborhood of $r=r_0$ at $\cK^+$ into this neighborhood; this is a backward propagation estimate. See Figure~\ref{FigEK}.

\begin{figure}[!ht]
\centering
\includegraphics{FigEK-r}
\caption{\textit{On the left:} illustration of the estimate~\eqref{EqEK}. The arrows indicate the vector field $-\cd^\sharp$ along which, roughly speaking (cf.\ Remark~\ref{RmkCNoe0}), we propagate control near the zero section $o\subset\Ttb^*M$. \textit{On the right:} illustration of the estimate~\eqref{EqEKAdj}. The arrows now indicate the past timelike vector field $\cd^\sharp$ along which, roughly speaking, we now propagate.}
\label{FigEK}
\end{figure}

\begin{proof}[Proof of Proposition~\usref{PropEK}]
  \pfstep{Part~\eqref{ItEKFw}.} Since at finite points of $\Ttb^*M$ but away from the zero section $o$ we have elliptic estimates by Proposition~\ref{PropEE}, it suffices to work in an arbitrarily small neighborhood of $o$. Moreover, the operator $1+i C L_h^*\rho_\sface^{-2}$ is semiclassically elliptic near $o$ (and in fact on all of $\Ttb^*M$ since the principal symbol of $L_h^*$ is self-adjoint), and hence it suffices to prove the estimate~\eqref{EqEK} with $\cL_h$ (from~\eqref{EqEKcL}) in place of $P_h$.
  
  Let $\chi_0,\chi_\cK\in\CIc([0,3))$ be identically $1$ near $[0,1]$, and with $\chi_\cK'\leq 0$ and $(-\chi_\cK\chi_\cK')^{\frac12}\in\CI([0,3))$. Fix furthermore a cutoff
  \[
    \psi=\psi(r)\in\CIc\bigl((r_0-\tfrac{60}{50} r_0\sqrt{e}/v,r_0+\tfrac{60}{50} r_0\sqrt{e}/v)\bigr)
  \]
  with the following properties:
  \begin{enumerate}
  \item $\psi(r)=1$ for $|r-r_0|\leq \tfrac{55}{50}r_0\sqrt{e}/v$;
  \item $\pm\psi'(r)\leq -c<0$ for
    \begin{equation}
    \label{EqEKEll}
      {\pm}(r-r_0)\in\bigl[\tfrac{56}{50}r_0\sqrt{e}/v,\tfrac{59}{50}r_0\sqrt{e}/v\bigr];
    \end{equation}
  \item $0\leq\psi(r)\leq 1$ and $\pm\psi'(r)\leq 0$ for $\pm(r-r_0)\geq 0$;
  \item $(\mp\psi\psi')^{\frac12}$ is smooth in $\pm(r-r_0)\geq 0$.
  \end{enumerate}
  We then consider the commutant
  \begin{equation}
  \label{EqEKComm}
    a = \rho_\cK^{-\alpha_\cK} \psi(r)\chi_\cK(\digamma\rho_\cK)\chi_0(\digamma|\zeta|)
  \end{equation}
  and its quantization $A=A^*\in\rho_\cK^{-\alpha_\cK}\Psitbh$ with $\WFtbh'(A)=\supp a$. (The differential order is arbitrary since we work at finite semiclassical 3b-frequencies. For concreteness, one may take it to be $-\infty$, say.) We then compute the $L^2$-pairing
  \begin{equation}
  \label{EqEKPair}
    h^{-1}\Im \la \cL_h \omega,A^2 \omega\ra =\la\sC \omega,\omega\ra,
  \end{equation}
  where
  \begin{align}
    \sC &= \frac{i}{2 h}[\cL_h,A^2] + h^{-1}(\Im\cL_h)A^2 \nonumber\\
  \label{EqEKC}
      &= \Bigl(\frac{i}{2 h}[-L_h+J,A^2] - h^{-1}(\Im L_h)A^2\Bigr) - h^{-1}A Q A - \frac{1}{2 h}[A,[A,Q]] \in \rho_\cK^{-2\alpha_\cK}\Psitbh.
  \end{align}
  Write $j=\upsigma_{\tbop,\hbar}(J)$; in view of~\eqref{EqEKDecomp}, this is homogeneous of degree $3$ and thus vanishes cubically at the zero section. (Note that $h^2 J_0\in h\Difftbh^1$ is semiclassical subprincipal, unlike $h^3 J_1\in\Difftbh^3$.) The principal symbol of the term in parentheses in~\eqref{EqEKC} is then (using the notation~\eqref{EqENotation}) given by
  \[
    a H_{-\ell+j}a - S a^2;
  \]
  and since $\ell$ is linear in the momentum variables, we have $H_{-\ell}f=-\ell(\dd f)$ when $f$ is the lift of a function on $M$. Therefore, we can write
  \begin{equation}
  \label{EqEKSymbol}
    a H_{-\ell+j}a - S a^2 = -c_0 a^2 - b_0^2 - b_R^2 + e^2 + f,
  \end{equation}
  where, for $c_0>0$ to be chosen momentarily, we define the $\End(\cT^*)$-valued symbols
  \begin{align}
  \label{EqEKb0}
    b_0 &:= \rho_\cK^{-\alpha_\cK}\psi\chi_\cK\chi_0\Bigl(S-c_0-\alpha_\cK\rho_\cK\ell\Bigl(\frac{\dd\rho_\cK}{\rho_\cK^2}\Bigr) - \alpha_\cK\rho_\cK H_j(\rho_\cK^{-1})\Bigr)^{\frac12}, \\
  \label{EqEKbR}
    b_R &:= \rho_\cK^{-\alpha_\cK}\chi_\cK\chi_0\bigl( \psi \psi'(\ell(\dd r)-H_j r)\bigr)^{\frac12}, \\
  \label{EqEKe}
    e &:= \rho_\cK^{-\alpha_\cK+1}\psi\chi_0\Bigl(-\digamma\chi_\cK\chi_\cK'\Bigl[\ell\Bigl(\frac{\dd\rho_\cK}{\rho_\cK^2}\Bigr)+H_j(\rho_\cK^{-1})\Bigr]\Bigr)^{\frac12}, \\
  \label{EqKf}
    f &:= \rho_\cK^{-2\alpha_\cK}\psi^2\chi_\cK^2\chi_0 H_{-\ell+j}\chi_0.
  \end{align}
  We consider each of these four terms separately:
  \begin{itemize}
  \item On $\supp(\psi\chi_\cK)$, the endomorphism $S$ is positive definite by Proposition~\ref{PropEC}\eqref{ItECPosInt}, and hence bounded from below by $2 c_0$ for some $c_0>0$. Since $\rho_\cK^{-1}=t$ and $\frac{\dd\rho_\cK}{\rho_\cK^2}=-\dd t$, the terms $\ell(\frac{\dd\rho_\cK}{\rho_\cK^2})$ and $H_j\rho_\cK^{-1}$ are smooth (in particular bounded) down to $(\cK^+)^\circ$ on $\supp\chi_0(\digamma|\zeta|)$. Therefore, the argument of the square root in~\eqref{EqEKb0} is bounded from below (as a self-adjoint endomorphism of $\cT^*$) by $c_0>0$ provided $\digamma$ is chosen sufficiently large (and hence $\rho_\cK\lesssim\digamma^{-1}$ is small).
  \item Write the term involving $\ell$ in the argument of the square root in~\eqref{EqEKbR} as $\mp\psi\psi'\cdot(-\ell(\pm\dd r))$. Observe that for $r\in\supp(\psi')$ with $\pm(r-r_0)>0$, the endomorphism $-\ell(\pm\dd r)$ is positive definite by Proposition~\ref{PropEC}\eqref{ItECPosEsc}, and hence has a positive lower bound $2 c_1>0$. The contribution of $H_j$ vanishes quadratically at $\zeta=0$ since $|j|=\cO(|\zeta|^3)$, and hence for sufficiently large $\digamma$, we have $|H_j r|<c_1$ on $\supp\chi_0(\digamma|\zeta|)$. This proves that $b_R$ is well-defined.
  \item The term $e$ is supported away from $\rho_\cK=0$ and gives rise to the a priori control term in~\eqref{EqEK}. It is well-defined since $\frac{\dd\rho_\cK}{\rho_\cK^2}=-\dd t$ is future timelike; we must also use that $H_j\rho_\cK^{-1}$ is smooth and vanishes quadratically at $\zeta=0$, and take $\digamma$ sufficiently large.
  \item Finally, $f$ is supported at finite nonzero semiclassical 3b-frequencies where we already have elliptic estimates by Proposition~\ref{PropEE}.
  \end{itemize}
  This finishes the symbolic analysis of the first term in~\eqref{EqEKC}.

  The second term in~\eqref{EqEKC} features the mild complex absorption $Q=h^2 Q_0+h^3 Q_1$, see~\eqref{EqEKDecomp}. Since $Q_0\in\Difftb^2$ (dropping $\sface$-weights), as a \emph{non-semiclassical} 3b-operator, has a positive definite principal symbol, we can express it as
  \[
    Q_0=\Theta^*\Theta+R,\quad \Theta\in\Psitb^1,\ R\in\Psitb^{-\infty}.
  \]
  This allows us to estimate
  \begin{equation}
  \label{EqEKErr0}
    -h^{-1}\la A h^2 Q_0 A \omega,\omega\ra = -h\|\Theta A \omega\|^2 - h\la R A \omega,A \omega\ra \leq C h\|A \omega\|^2
  \end{equation}
  in view of the $L^2$-boundedness of $R$. For the term involving $Q_1\in\Difftb^2$, we note that $h^3 Q_1$ (which is schematically of the form $h(h D_\tbop)^2+h^2(h D_\tbop)+h^3$ where $D_\tbop$ is a 3b-derivative) has the property that the semiclassical 3b-principal symbol $q_{1 0}$ of $h^{-1}h^3 Q_1$ vanishes quadratically at the zero section. Therefore, $|q_{1 0}|\leq C\digamma^{-2}$ on $\supp a$ for some constant $C>0$ which is independent of $\digamma$, and hence G\aa{}rding's inequality gives
  \begin{equation}
  \label{EqEKErr2}
    \bigl|\Re\bigl( h^{-1}\la A h^3 Q_1 A \omega,\omega\ra\bigr)\bigr| \leq \bigl( 2 C\digamma^{-2} + C_\digamma h\bigr)\|A \omega\|^2.
  \end{equation}

  The last term in~\eqref{EqEKC} finally lies in $h\rho_\cK^{-2\alpha_\cK}\Psitbh$ (since $Q\in\Difftbh^2$, and each commutator gains a power of $h$) and has operator wave front set contained in $\WFtbh'(A)$. Its contribution to the right-hand side of~\eqref{EqEKPair} is therefore bounded from above by
  \begin{equation}
  \label{EqEKErr3}
    \Bigl|\Re\Big\la\frac{1}{2 h}[A,[A,Q]]\omega,\omega\Big\ra\Bigr| \leq C\Bigl(h\|\tilde B_0 \omega\|_{\rho_\cK^{\alpha_\cK}L^2}^2 + h^N\|\chi \omega\|^2_{\rho_\cK^{\alpha_\cK}\Htbh^{-N}}\Bigr),
  \end{equation}
  where
  \[
    \tilde B_0\in\Psitbh(M),\quad \Elltbh(\tilde B_0)\supset\WFtbh'(A),
  \]
  and $\WFtbh'(\tilde B_0)$ is contained in an arbitrary but fixed neighborhood of $\supp a$.

  In summary, denoting by\footnote{For notational brevity, we incorporate the $\cK^+$-weight into the operators here, unlike in the estimate~\eqref{EqEK}.} $B_0,B_R,E\in\rho_\cK^{-\alpha_\cK}\Psitbh$ and $F\in\rho_\cK^{-2\alpha_\cK}\Psitbh$ quantizations of the corresponding lower case symbols~\eqref{EqEKb0}--\eqref{EqKf}, we obtain from~\eqref{EqEKPair}, \eqref{EqEKSymbol} and \eqref{EqEKErr0}--\eqref{EqEKErr3} the $L^2$-estimate
  \begin{align*}
    c_0\|A \omega\|^2 + \|B_0 \omega\|^2 + \|B_R \omega\|^2 &\leq c_0^{-1}h^{-2}\|A\cL_h \omega\|^2 + \bigl(\tfrac14 c_0+(C+C_\digamma)h + 2 C\digamma^{-2}\bigr)\|A \omega\|^2 \\
      &\qquad + \|E \omega\|^2 + |\la F \omega,\omega\ra| + C h\|\tilde B_0 \omega\|_{\rho_\cK^{\alpha_\cK}L^2}^2 + C h^N\|\chi\omega\|_{\rho_\cK^{\alpha_\cK}H_{\tbop,h}^{-N}}^2,
  \end{align*}
  where the last two terms also take care of the subprincipal terms not captured by the principal symbol calculation~\eqref{EqEKSymbol}. First fixing $\digamma$ to be sufficiently large and then $h$ to be sufficiently small, we can arrange for the prefactor of $\|A \omega\|^2$ on the right to be less than $c_0$, and hence this term can be absorbed into the first term on the left. Since $\WFtbh'(F)\subset\Elltbh(\cL_h)$, we can further estimate
  \begin{equation}
  \label{EqEKFEst}
    |\la F \omega,\omega\ra| \leq C\Bigl( \|\tilde B_0\cL_h \omega\|_{\rho_\cK^{\alpha_\cK}L^2}^2 + h^N\|\chi \omega\|^2_{\rho_\cK^{\alpha_\cK}\Htbh^{-N}} \Bigr).
  \end{equation}
  Upon simplifying and dropping the term involving $B_R$, we have now proved
  \begin{equation}
  \label{EqEKHalf}
    \|B_0 \omega\| \leq C\Bigl( h^{-1}\|\tilde B_0\cL_h \omega\|_{\rho_\cK^{\alpha_\cK}L^2} + \|E \omega\| + h^{\frac12}\|\tilde B_0 \omega\|_{\rho_\cK^{\alpha_\cK}L^2} + h^N\|\chi \omega\|_{\rho_\cK^{\alpha_\cK}\Htbh^{-N}}\Bigr),
  \end{equation}
  thus improving the control of $\omega$ by one power of $h^{\frac12}$. Arranging that $\tilde B_0$ has operator wave front set contained in a very small neighborhood of $\supp a$, we can now apply the same argument to estimate $\tilde B_0 \omega$ upon enlarging the supports of the cutoffs by an arbitrarily small but fixed amount. After a finite number of steps, each of which improves the error term by one power of $h^{\frac12}$, we obtain the estimate~\eqref{EqEK}; the quantity $\delta$ in the statement can be taken to be $\digamma^{-1}$.

  \pfstep{Part~\eqref{ItEKBw}.} We may replace $P_h^*$ by $\cL_h^*=i P_h^*(1-i C\rho_\sface^{-2}L_h)=-L_h^*+J+i Q$ in the notation of~\eqref{EqEKcL}, since $1-i C\rho_\sface^{-2}L_h$ is elliptic, and hence microlocally invertible, near the zero section. We consider the commutant
  \[
    a = \rho_\cK^{\alpha_\cK}\psi(r)\chi_\cK(\digamma\rho_\cK)\chi_0(\digamma|\zeta|).
  \]
  (We switch the sign in the weight compared to~\eqref{EqEKComm} in order to obtain an estimate on spaces dual to those in~\eqref{EqEK}.) We then consider the $L^2$-pairing
  \begin{equation}
  \label{EqEKAdjPair}
    h^{-1}\Im\la\cL_h^* \omega,A^2 \omega\ra=\la\sC \omega,\omega\ra
  \end{equation}
  analogously to~\eqref{EqEKPair}, where $\sC$ is given by the expression~\eqref{EqEKC} except with $L_h$, $Q$ replaced by $L_h^*$, $-Q$, so
  \begin{equation}
  \label{EqEKAdjC}
    \sC = \Bigl(\frac{i}{2 h}[-L_h^*+J,A^2]+h^{-1}(\Im L_h)A^2\Bigr) + h^{-1}A Q A + \frac{1}{2 h}[A,[A,Q]] \in \rho_\cK^{2\alpha_\cK}\Psitbh.
  \end{equation}
  We can estimate $h^{-1}\la A Q A \omega,\omega\ra\geq -(2 C\digamma^{-2}+(C+C_\digamma)h)\|A \omega\|^2$ using~\eqref{EqEKErr0}--\eqref{EqEKErr2} (but now getting a lower bound), and we still have the estimate~\eqref{EqEKErr3} for the double commutator term. The first term of $\sC$ has semiclassical principal symbol
  \[
    a H_{-\ell+j}a + S a^2 = c_0 a^2 + b_0^2 + b_\cK^2 - e^2 + f,
  \]
  where for small $c_0>0$ we set
  \begin{align*}
    b_0 &:= \rho_\cK^{\alpha_\cK}\psi\chi_\cK\chi_0\Bigl(S-c_0-\alpha_\cK\rho_\cK\ell\Bigl(\frac{\dd\rho_\cK}{\rho_\cK^2}\Bigr) - \alpha_\cK\rho_\cK H_j\rho_\cK^{-1}\Bigr)^{\frac12}, \\
    b_\cK &:= \rho_\cK^{\alpha_\cK+1}\psi\chi_0\Bigl(-\digamma\chi_\cK\chi_\cK'\Bigl[\ell\Bigl(\frac{\dd\rho_\cK}{\rho_\cK^2}\Bigr)+H_j\rho_\cK^{-1}\Bigr]\Bigr)^{\frac12}, \\
    e &:= \rho_\cK^{\alpha_\cK}\chi_\cK\chi_0\bigl( \psi \psi' (\ell(\dd r)-H_j r)\bigr)^{\frac12}, \\
    f &:= \rho_\cK^{2\alpha_\cK}\psi^2\chi_\cK^2\chi_0 H_{-\ell+j}\chi_0.
  \end{align*}
  The terms $b_\cK$ and $e$ take the place of $e$ and $b_R$ in~\eqref{EqEKe} and \eqref{EqEKbR}. Denoting by upper case letters the semiclassical 3b-quantizations of the corresponding lower case symbols, we now obtain from~\eqref{EqEKAdjPair}
  \begin{align*}
    &c_0^{-1}h^{-2}\|A\cL_h^* \omega\|^2 + \tfrac14 c_0\|A \omega\|^2 \\
    &\qquad \geq \bigl(c_0-2 C\digamma^{-2}-(C+C_\digamma)h\bigr)\|A \omega\|^2 + \|B_0 \omega\|^2 + \|B_\cK \omega\|^2 \\
    &\qquad\qquad - \|E \omega\|^2 - |\la F \omega,\omega\ra| - C h \|\tilde B_0 \omega\|_{\rho_\cK^{-\alpha_\cK}L^2}^2 - h^N\|\chi \omega\|_{\Htbh^{-N,(-\alpha_\sface-2,-\alpha_\cK)}}^2,
  \end{align*}
  where $\tilde B_0\in\Psitbh$ is elliptic on $\supp a$ (arising from~\eqref{EqEKErr3} and also controlling subprincipal terms not seen by the above principal symbol computation). Fixing $\digamma>1$ large and then $h$ small such that the second term on the left can be absorbed into the first term on the right, and estimating the term involving $F$ as in~\eqref{EqEKFEst} (with $\cL_h^*$ in place of $\cL_h$), the desired estimate~\eqref{EqEKAdj} follows by using the iterative procedure explained after~\eqref{EqEKHalf}.
\end{proof}

\subsubsection{Zero section propagation near the saddle point at the corner}
\label{SssEB}

The estimates near the other critical points of $-r\cd^\sharp$ are obtained in a similar fashion to Proposition~\ref{PropEK}, and hence we shall be rather brief. We use the local defining functions $\rho_\cK=\frac{r}{t}$ and $\rho_\sface=\frac{1}{r}$, and recall the quantity $\delta_r>0$ from Proposition~\ref{PropEC}.

\begin{prop}[Propagation near $\pa\cK^+$]
\label{PropEB}
  Recall the weights $\alpha_\sface,\alpha_\cK\in\R$ from Theorem~\usref{ThmET}. There exists $\delta\in(0,\delta_r]$ such that the following holds for $K:=\{\rho_\cK\leq 3\delta_r,\ \rho_\sface\leq 3\delta\}$, for any cutoff function $\chi\in\CI(M)$ which is identically $1$ on $K$ and supported in a small neighborhood of $K$, for any operators $B_0,B_1,E\in\Psitbh(M)$ with operator wave front sets which are compact subsets of $\Ttb^*M$ and with Schwartz kernels supported in both factors in $K$, and for all sufficiently small $h>0$.
  \begin{enumerate}
  \item\label{ItEBFw}{\rm (Estimate for $P_h$.)} Suppose that
  \begin{alignat*}{3}
    \WFtbh'(B_0) &\subset \bigl\{ \rho_\cK\leq\tfrac32\delta_r,\ &&\rho_\sface\leq\tfrac32\delta,\ &&|\zeta|<1 \bigr\}, \\
    \Elltbh(E) &\supset \bigl\{ \rho_\cK\leq 2\delta_r,\ &&\tfrac54\delta\leq\rho_\sface\leq 2\delta,\ &&|\zeta|<1 \bigr\}, \\
    \Elltbh(B_1) &\supset \bigl\{ \rho_\cK\leq 2\delta_r,\ &&\rho_\sface\leq 2\delta,\ &&|\zeta|<2 \bigr\}.
  \end{alignat*}
  Then for any $N\in\R$ there exists a constant $C$ such that
  \begin{equation}
  \label{EqEB}
    \|B_0 \omega\|_{\rho_\sface^{\alpha_\sface}\rho_\cK^{\alpha_\cK}L^2} \leq C\Bigl( h^{-1}\|B_1 P_h \omega\|_{\rho_\sface^{\alpha_\sface+2}\rho_\cK^{\alpha_\cK}L^2} + \|E \omega\|_{\rho_\sface^{\alpha_\sface}\rho_\cK^{\alpha_\cK}L^2} + h^N\|\chi \omega\|_{\Htbh^{-N,(\alpha_\sface,\alpha_\cK)}}\Bigr).
  \end{equation}
  \item\label{ItEBBw}{\rm (Estimate for $P_h^*$.)} Suppose that $B_0,B_1$ are as in part~\eqref{ItEBFw}, while now
  \[
    \Elltbh(E) \supset \bigl\{ \tfrac54\delta_r\leq\rho_\cK\leq 2\delta_r,\ \rho_\sface\leq 2\delta,\ |\zeta|<1 \bigr\}, \\
  \]
  Then for any $N\in\R$ there exists a constant $C$ such that
  \begin{equation}
  \label{EqEBAdj}
  \begin{split}
    \|B_0 \omega\|_{\rho_\sface^{-\alpha_\sface-2}\rho_\cK^{-\alpha_\cK}L^2} &\leq C\Bigl( h^{-1}\|B_1 P_h^* \omega\|_{\rho_\sface^{-\alpha_\sface}\rho_\cK^{-\alpha_\cK}L^2} + \|E \omega\|_{\rho_\sface^{-\alpha_\sface-2}\rho_\cK^{-\alpha_\cK}L^2} \\
      &\quad\hspace{12em} + h^N\|\chi \omega\|_{\Htbh^{-N,(-\alpha_\sface-2,-\alpha_\cK)}}\Bigr).
  \end{split}
  \end{equation}
  \end{enumerate}
\end{prop}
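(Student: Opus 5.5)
We argue exactly as in the proof of Proposition~\ref{PropEK}, replacing the interior saddle point by the corner saddle point $\pa\cK^+$ and using the positivity of Proposition~\ref{PropEC}\eqref{ItECPosCorner} in place of Proposition~\ref{PropEC}\eqref{ItECPosInt}. Away from $o$ we have elliptic estimates (Proposition~\ref{PropEE}). Near $o$ the operator $1+iC L_h^*\rho_\sface^{-2}$ is elliptic, so it suffices to prove~\eqref{EqEB} with $\cL_h$ from~\eqref{EqEKcL} in place of $P_h$. We then use the commutant
\[
  a=\rho_\sface^{-\alpha_\sface-1}\rho_\cK^{-\alpha_\cK}\,\chi_\sface(\rho_\sface/\delta)\,\psi_\cK(\rho_\cK/\delta_r)\,\chi_0(\digamma|\zeta|),
\]
where $\chi_\sface\in\CIc([0,2))$ equals $1$ on $[0,\frac54]$ and is nonincreasing, and $\psi_\cK$ equals $1$ on $[0,\frac32]$ and is supported in $[0,2)$. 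The weight $\rho_\sface^{-\alpha_\sface-1}$, rather than $\rho_\sface^{-\alpha_\sface}$, balances the factor $\rho_\sface^2$ in $L_h\in h\rho_\sface^2\Difftb^1$. We take the pairing $h^{-1}\Im\la\rho_\sface^{-2}\cL_h\omega,A^2\omega\ra$, or equivalently we weight $\cL_h$ so that it becomes unweighted.

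Using $H_{-\ell}f=-\ell(\dd f)$ for base functions $f$, the principal symbol of the commutator plus skew-adjoint part is
\[
  a^2\rho_\sface^{-2}\Bigl(-\ell\Bigl(\frac{\dd w}{w}\Bigr)+S\Bigr)
\]
with $w=\rho_\sface^{\alpha_\sface+1}\rho_\cK^{\alpha_\cK}$, plus terms in which the cutoffs are differentiated. By Proposition~\ref{PropEC}\eqref{ItECPosCorner}, the main term is positive definite on $\supp a$ provided $\rho_\sface,\rho_\cK\le 2\delta_r$; this gives the main term $c_0a^2+b_0^2$ after choosing $\delta\le\delta_r$. The cutoff terms are handled as follows.

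First, differentiating $\psi_\cK$ yields $\rho_\sface^{-2}\ell(\dd\rho_\cK)$ on $\supp\psi_\cK'$. Since $V_v=r(\pa_t+v\pa_r)$ has $\rho_\cK$-component $(v-\rho_\cK)\rho_\cK\pa_{\rho_\cK}$, which is outgoing from $\cK^+$ in the direction of the flow of $-\cd^\sharp$, Proposition~\ref{PropEC}\eqref{ItECPosEsc} (positivity of $-r^2\ell(\dd(r/t))$ for $r/t\in[\delta_r,v(1-\delta_\sface)]$, extended off $\sface$ by continuity and smallness of $\delta$) shows that this term comes with the favorable sign. It therefore gives a term $-b_R^2$, which is dropped, just as in Proposition~\ref{PropEK}.

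Second, differentiating $\chi_\sface$ yields $-\ell(\dd\rho_\sface)\rho_\sface^{-2}\chi_\sface\chi_\sface'$. Here $-\cd^\sharp$ is outward ($\rho_\sface$ decreasing), so this term has the unfavorable sign, supported where $\frac54\delta\le\rho_\sface\le2\delta$. It becomes the a priori control term $E$. To see that it has a definite sign, we use that $-r^2\ell(-\dd r)$ is close to a multiple of transport along $r(\pa_t+v\pa_r)$; more precisely, Lemma~\ref{LemmaCTimelikeSp} with $\zeta=-r\,\dd r$ (as in the step ``Positivity for $3r_0\le r$'' of Proposition~\ref{PropEC}) gives definiteness for $r\ge3r_0$, and this region contains $\supp\chi_\sface'$ for $\delta$ small.

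Third, differentiating $\chi_0$ produces terms supported at finite nonzero frequencies, which are handled by elliptic estimates. The remaining terms are treated as in Proposition~\ref{PropEK}: the contribution of $J$ vanishes cubically at $o$, so its Hamilton derivatives are $\mathcal O(\digamma^{-2})$; the absorption term $Q$ is estimated via~\eqref{EqEKErr0}--\eqref{EqEKErr2}; and the double commutator is of order $h$. Absorbing these and iterating in powers of $h^{1/2}$ gives~\eqref{EqEB}.

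For part~\eqref{ItEBBw}, we use $\cL_h^*$ and the commutant with the dual weights $\rho_\sface^{\alpha_\sface+1}\rho_\cK^{\alpha_\cK}$. The $\ell(\dd w/w)$ term flips sign together with $S$, so the main term has the sign $+S-\ell(\dd w^{-1}/w^{-1})$, which is the same endomorphism as before. Now the $\rho_\sface$-cutoff term is favorable, and the $\psi_\cK'$ term, supported in $\frac54\delta_r\le\rho_\cK\le2\delta_r$, becomes the control term $E$. This gives~\eqref{EqEBAdj}.

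The main obstacle is making sure that the signs of the two cutoff derivative terms hold uniformly near the corner, off $\sface$ where $g\ne\ubar g$ and $\ell$ is only approximately transport. I would settle this by choosing $\delta$ small after $\delta_r$, so that these regions lie in $\sface$-neighborhoods where Proposition~\ref{PropEC}\eqref{ItECPosEsc} holds by continuity.
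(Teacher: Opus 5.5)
Your scheme is the paper's: a commutant $w^{-1}\chi_\sface\chi_\cK\chi_0$ with $w=\rho_\sface^{\alpha_\sface+1}\rho_\cK^{\alpha_\cK}$, positivity of the main term from Proposition~\ref{PropEC}\eqref{ItECPosCorner}, a favorable $\rho_\cK$-cutoff term, and a $\rho_\sface$-cutoff term that produces $E$ (signs from Proposition~\ref{PropEC}\eqref{ItECPosEsc}, extended off $\sface$ by continuity), with the roles reversed for $P_h^*$. However, your weight bookkeeping at $\sface$ is wrong as written, and this is exactly the point the paper singles out as new relative to Proposition~\ref{PropEK}.

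Pairing with $\rho_\sface^{-2}\cL_h$ is not equivalent to pairing with $\cL_h$. Since $\rho_\sface^{-2}$ does not commute with $L_h^*$, the principal symbol of $h^{-1}\Im(\rho_\sface^{-2}\cL_h)$ contains, besides $-\rho_\sface^{-2}S$, the term $-\frac12\ell(\dd\rho_\sface^{-2})=\rho_\sface^{-2}\ell(\frac{\dd\rho_\sface}{\rho_\sface})$. This term is of the same order as $S$. With your $a$, the main symbol is therefore
\[
  -\rho_\sface^{-2}a^2\Bigl(-\ell\Bigl(\frac{\dd\tilde w}{\tilde w}\Bigr)+S\Bigr),\qquad \tilde w:=\rho_\sface w=\rho_\sface^{\alpha_\sface+2}\rho_\cK^{\alpha_\cK},
\]
and not $\rho_\sface^{-2}a^2(-\ell(\frac{\dd w}{w})+S)$. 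Two things then go wrong:
\begin{itemize}
\item By Lemma~\ref{LemmaMIbdy} with $\beta_\sface=\alpha_\sface+2$, this symbol fails to be positive at $\pa\cK^+$ for small $e$ whenever $\alpha_\cK<\alpha_\sface+\frac32$. That range is allowed by~\eqref{EqETThr} and is not covered by Proposition~\ref{PropEC}\eqref{ItECPosCorner}.
\item Even when the symbol is positive, the main term only bounds $\|A\omega\|$. So you control $\omega$ in $\rho_\sface^{\alpha_\sface+1}\rho_\cK^{\alpha_\cK}L^2$ in terms of $P_h\omega$ in $\rho_\sface^{\alpha_\sface+3}\rho_\cK^{\alpha_\cK}L^2$, which is \eqref{EqEB} with $\alpha_\sface$ shifted by $1$.
\end{itemize}
The extra $\rho_\sface^{-1}$ in the weight of $a$ and the division by $\rho_\sface^2$ each compensate for the same factor $\rho_\sface^2$ in $L_h$, so you have counted it twice.

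The fix is to do what the paper does: pair with $\cL_h$ itself, as in~\eqref{EqEKPair}, and keep $a$ weighted by $w^{-1}$. The main symbol is then $-a^2(-\ell(\frac{\dd w}{w})+S)\leq -c_0\rho_\sface^2a^2$ near $\pa\cK^+$ by Proposition~\ref{PropEC}\eqref{ItECPosCorner}. Bounding the left-hand side of the pairing by $c_0^{-1}h^{-2}\|\rho_\sface^{-1}A\cL_h\omega\|^2+\frac14c_0\|\rho_\sface A\omega\|^2$ gives exactly the weights in~\eqref{EqEB}. The same works for~\eqref{EqEBAdj} with $a$ weighted by $w$. Alternatively, keep your normalization by $\rho_\sface^{-2}$ but put the weight $\rho_\sface^{-\alpha_\sface}\rho_\cK^{-\alpha_\cK}$ in $a$. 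With this correction, the rest of your argument, including the sign analysis of the two cutoff terms in both parts, goes through as you describe.
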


Choosing $B_0$ to be elliptic on $\{\rho_\cK\leq\delta_r,\,\rho_\sface\leq\delta,\,|\zeta|<\frac12\}$, Proposition~\ref{PropEB} permits us to propagate a priori control (for solutions of $P_h$) at the zero section from $\cK^+\setminus\pa\cK^+$ into $\pa\cK^+$. The adjoint estimate~\eqref{EqEBAdj} propagates control in the reverse direction, i.e., from $\sface\setminus\pa\cK^+$ into $\pa\cK^+$. See Figure~\ref{FigEB}.

\begin{figure}[!ht]
\centering
\includegraphics{FigEB-r}
\caption{\textit{On the left:} setup of the forward propagation estimate~\eqref{EqEB}. Control from the set labeled $E$ propagates into the region labeled $B_0$. The flow of the vector field $-r\cd^\sharp$ is indicated by green arrows. \textit{On the right:} the backward propagation estimate~\eqref{EqEBAdj}.}
\label{FigEB}
\end{figure}

\begin{proof}[Proof of Proposition~\usref{PropEB}]
  \pfstep{Part~\eqref{ItEBFw}.} As in the proof of Proposition~\ref{PropEK}, we may replace $P_h$ by the microlocally elliptic (near $o$) multiple $\cL_h$ defined in~\eqref{EqEKcL}. We fix cutoffs
  \begin{alignat*}{2}
    \chi_\sface&\in\CIc([0,\tfrac32)),&\quad \chi_\sface&=1\ \text{on}\ [0,\tfrac54], \\
    \chi_\cK&\in\CIc([0,\tfrac32\delta_r)),&\quad \chi_\cK&=1\ \text{on}\ [0,\tfrac54\delta_r], \\
    \chi_0&\in\CIc([0,\infty)),&\quad \chi_0&=1\ \text{near}\ 0,
  \end{alignat*}
  and arrange that $(-\chi_\cK\chi_\cK')^{\frac12},(-\chi_\sface\chi_\sface')^{\frac12}\in\CI([0,\infty))$. For a parameter $\digamma>\delta_r^{-1}$ that will be chosen later on, we then consider the commutant
  \begin{equation}
  \label{EqEBa}
    a = w^{-1}\chi_\sface(\digamma\rho_\sface)\chi_\cK(\rho_\cK)\chi_0(\digamma|\zeta|),\quad w:=\rho_\sface^{\alpha_\sface+1}\rho_\cK^{\alpha_\cK}.
  \end{equation}
  Let $A=A^*\in\rho_\sface^{-\alpha_\sface-1}\rho_\cK^{-\alpha_\cK}\Psitbh$ with $\WFtbh'(A)=\supp a$ denote its quantization. We then consider the $L^2$-pairing~\eqref{EqEKPair}, where $\sC$ is given by the expression~\eqref{EqEKC} and now lies in $\rho_\sface^{-2\alpha_\sface}\rho_\cK^{-2\alpha_\cK}\Psitb$. We can write the principal symbol of the first, main, term in~\eqref{EqEKC} as
  \begin{equation}
  \label{EqEBSymbol}
    a H_{-\ell+j}a - S a^2 = -c_0\rho_\sface^2 a^2 - b_0^2 - b_\sface^2 + e^2 + f,
  \end{equation}
  where for small $c_0>0$ we set
  \begin{align*}
    b_0 &:= \rho_\sface^{-\alpha_\sface}\rho_\cK^{-\alpha_\cK}\chi_\sface\chi_\cK\chi_0\Bigl(\rho_\sface^{-2}\Bigl[-\ell\Bigl(\frac{\dd w}{w}\Bigr)+S+w^{-1} H_j w\Bigr] - c_0\Bigr)^{\frac12}  , \\
    b_\sface &:= \rho_\sface^{-\alpha_\sface}\rho_\cK^{-\alpha_\cK}\chi_\sface\chi_0\bigl( -\chi_\cK\chi_\cK'\rho_\sface^{-2}\bigl[-\ell(\dd\rho_\cK)+H_j\rho_\cK\bigr]\bigr)^{\frac12}, \\
    e &:= \rho_\sface^{-\alpha_\sface}\rho_\cK^{-\alpha_\cK} \chi_\cK\chi_0\Bigl(-\digamma\chi_\sface\chi_\sface'\Bigl[\ell\Bigl(\frac{\dd\rho_\sface}{\rho_\sface^2}\Bigr)+H_j(\rho_\sface^{-1})\Bigr]\Bigr)^{\frac12}, \\
    f &:= \rho_\sface^{-2\alpha_\sface}\rho_\cK^{-2\alpha_\cK} \chi_\sface^2\chi_\cK^2 \chi_0\rho_\sface^{-2}H_{-\ell+j}\chi_0.
  \end{align*}
  By Proposition~\ref{PropEC}\eqref{ItECPosCorner} and the observation that $\rho_\sface^{-2}j$ vanishes cubically at $o\subset\Ttb^*M$, the square root in the definition of $b_0$ is indeed well-defined for sufficiently small $c_0>0$ and large $\digamma>1$ (used to localize near the zero section). For $b_\sface$, we note that on $\supp\chi_\cK'$ we have $\rho_\cK\in[\frac54\delta_r,\frac32\delta_r)$, and hence Proposition~\ref{PropEC}\eqref{ItECPosEsc} ensures the existence of the square root for sufficiently large $\digamma>1$ (so that $\rho_\sface$ and $|\zeta|$ are small on $\supp a$). For $e$ finally, note that $\frac{\dd\rho_\sface}{\rho_\sface^2}=-\dd r$, and hence the square root is well-defined by Proposition~\ref{PropEC}\eqref{ItECPosEsc}; here we also use that $H_j(\rho_\sface^{-1})=H_j r$ is smooth on $\supp a$ and vanishes at the zero section.

  The estimate~\eqref{EqEB} is now proved by following the arguments of the proof of Proposition~\ref{PropEK}; the correct weights at $\sface$ are obtained by estimating the left-hand side of~\eqref{EqEKPair} by $c_0^{-1}h^{-2}\|\rho_\sface^{-1}A\cL_h \omega\|^2+\frac14 c_0\|\rho_\sface A \omega\|^2$. The second, a priori control, term on the right-hand side in~\eqref{EqEB} arises due to the presence of $e$, which is supported away from $\sface$. The quantity $\delta$ is taken to be equal to $\digamma^{-1}$ for the (large) choice of $\digamma$ made analogously to the proof of Proposition~\ref{PropEK}.

  \pfstep{Part~\eqref{ItEBBw}.} We argue similarly to the proof of Proposition~\ref{PropEK}\eqref{ItEKBw}. We replace $P_h^*$ by $\cL_h^*$. We now consider the commutant
  \[
    a = w\chi_\sface(\digamma\rho_\sface)\chi_\cK(\rho_\cK)\chi_0(\digamma|\zeta|),\quad
    w = \rho_\sface^{\alpha_\sface+1}\rho_\cK^{\alpha_\cK},
  \]
  where the cutoffs $\chi_\sface,\chi_\cK,\chi_0$ are as before~\eqref{EqEBa}. We use the weight $w$ here rather than $w^{-1}$ in~\eqref{EqEBa} as we wish to prove an estimate on dual spaces. Letting $A=A^*$ denote the quantization of $a$, we consider the $L^2$-pairing~\eqref{EqEKAdjPair}. The main term of~\eqref{EqEKAdjC} has principal symbol
  \[
    a H_{-\ell+j}a + S a^2 = c_0 \rho_\sface^2 a^2 + b_0^2 + b_\cK^2 - e^2 + f,
  \]
  where for small $c_0>0$ we set
  \begin{align*}
    b_0 &:= \rho_\sface^{\alpha_\sface+2}\rho_\cK^{\alpha_\cK}\chi_\sface\chi_\cK\chi_0\Bigl(\rho_\sface^{-2}\Bigl[-\ell\Bigl(\frac{\dd w}{w}\Bigr) + S + w^{-1}H_j w\Bigr] - c_0\Bigr)^{\frac12}  , \\
    b_\cK &:= \rho_\sface^{\alpha_\sface+2}\rho_\cK^{\alpha_\cK} \chi_\cK\chi_0\Bigl(-\digamma\chi_\sface\chi_\sface'\Bigl[\ell\Bigl(\frac{\dd\rho_\sface}{\rho_\sface^2}\Bigr)+H_j(\rho_\sface^{-1})\Bigr]\Bigr)^{\frac12}, \\
    e &:= \rho_\sface^{\alpha_\sface+2}\rho_\cK^{\alpha_\cK}\chi_\sface\chi_0\bigl( -\chi_\cK\chi_\cK'\rho_\sface^{-2}\bigl[-\ell(\dd\rho_\cK)+H_j\rho_\cK\bigr]\bigr)^{\frac12}, \\
    f &:= \rho_\sface^{2(\alpha_\sface+2)}\rho_\cK^{2\alpha_\cK} \chi_\sface^2\chi_\cK^2 \chi_0\rho_\sface^{-2}H_{-\ell+j}\chi_0.
  \end{align*}
  The terms $b_\cK$ and $e$ take the place of (and are given by the same expressions, with $\alpha_\sface$ replaced by $-\alpha_\sface-2$, as) $e$ and $b_\sface$ in~\eqref{EqEBSymbol}. The square root defining $b_0$ is positive definite on $\supp\chi_\sface\chi_\cK\chi_0$ for sufficiently large $\digamma>1$ in view of Proposition~\ref{PropEC}\eqref{ItECPosCorner}. Upon quantization, we obtain control on the elliptic set of $b_0$ (and also on that of $b_\cK$), provided we have a priori control on $\supp e$. This proves~\eqref{EqEBAdj}.
\end{proof}

\begin{rmk}[Necessity of $-r\cd^\sharp$ being outgoing at $\pa\cK^+$]
\label{RmkEBOut}
  We return once more to the question (cf.\ Remark~\ref{RmkM0WrongV}) why we cannot work with a vector field $\cd^\sharp$ that has fewer critical sets by requiring $V=-r\cd^\sharp=\pa_t-v'\pa_r$ with $v'\in(0,1)$ for large $r$, i.e., $V$ is pointing towards the black hole (and $\cd=r^{-1}(\dd t+v'\,\dd r)$---notice the sign difference to~\eqref{EqECExpl2}). This $\cd$ (with $r\cd$ a past timelike scattering 1-form) could be extended to all of $M$ in such a way that the only critical set of $V$ would be $\pa\cK^+$: the flow of $V$ on $\pa M$ would pass from $\sface^\circ$ through the saddle $\pa\cK^+$ into $(\cK^+)^\circ$ and onwards through the event horizon of the black hole. The only condition on the weights at $\sface$ and $\cK^+$ would arise from the zero section propagation through $\pa\cK^+$,
  \[
    \alpha_\cK < \alpha_\sface - \frac12,
  \]
  by mirroring the calculation in~Lemma~\ref{LemmaMIbdy}. \emph{However}, this places a strict upper bound on the decay rate $\alpha_\cK$ when $\alpha_\sface$ is fixed. This implies that one \emph{cannot} exclude the existence of nontrivial stationary elements of $\ker\Box_g^{\cC_h}$ unless they have pointwise decay $r^{-2-\eps}$, $\eps>0$. (And in fact there would exist stationary elements in the kernel with $r^{-2}$ decay.) This is far from our requirement stated in Theorem~\ref{ThmIRough}\eqref{ItIRough0}. Directly related to this is that for such $\cd$, the indicial roots at zero energy, for suitably chosen small $e$ and $h$, lie (according to numerical experiments) outside the strip $\{2<\Re\lambda<C_0\}$ for any desired fixed $C_0>2$, with the same number of roots of each individual type on either side of this strip, and with $\lambda=2$ always a vector type $1$ indicial root.
\end{rmk}

\subsubsection{Zero section propagation near the sink in \texorpdfstring{$\sface^\circ$}{the interior of the side face}}
\label{SssES}

We now use the positivity property stated in Proposition~\ref{PropEC}\eqref{ItECPosSink} to propagate semiclassical estimates into the sink of $-r\cd^\sharp$ inside of $\sface^\circ$. We continue to use $\rho_\cK=\frac{r}{t}$ and $\rho_\sface=r^{-1}$. We shall drop the decay order at $\cK^+$ from the notation; moreover, the 3b-differential order continues to be irrelevant (i.e., arbitrary) since we are working near the zero section of $\Ttb^*M$.

\begin{prop}[Propagation near the sink of $-r\cd^\sharp$ in $\sface^\circ$]
\label{PropES}
  Recall the weight $\alpha_\sface\in\R$ from Theorem~\usref{ThmET}, so $\alpha_\sface<-\frac12$. There exists $\delta\in(0,\delta_r]$ such that the following holds for $K:=\{|\frac{r}{t}/v-1|\leq 3\delta_\sface,\ \rho_\sface\leq 3\delta\}$, any cutoff function $\chi\in\CI(M)$ which is identically $1$ on $K$ and supported in a small neighborhood of $K$, and for all sufficiently small $h>0$.
  \begin{enumerate}
  \item\label{ItESFw}{\rm (Estimate for $P_h$.)} Let $B_0,B_1,E\in\Psitbh(M)$ have operator wave front sets which are compact subsets of $\Ttb^*M$, and suppose that
  \begin{alignat*}{3}
    \WFtbh'(B_0) &\subset \{ |\tfrac{r}{t}/v-1| \leq \tfrac32\delta_\sface,\ &&\rho_\sface\leq\tfrac32\delta,\ &&|\zeta|<1 \}, \\
    \Elltbh(E) &\supset \{ |\tfrac{r}{t}/v-1| \leq 2\delta_\sface,\ &&\rho_\sface\leq 2\delta,\ &&|\zeta|<1 \} \setminus \{ |\tfrac{r}{t}/v-1| \leq \tfrac54\delta_\sface,\ \rho_\sface\leq \tfrac54\delta \}, \\
    \Elltbh(B_1) &\supset \{ |\tfrac{r}{t}/v-1| \leq 2\delta_\sface,\ &&\rho_\sface\leq 2\delta,\ &&|\zeta|<2 \}.
  \end{alignat*}
  Suppose moreover that the Schwartz kernels of $B_0,B_1,E$ are supported in both factors in $K$. Then for any $N\in\R$ there exists a constant $C$ such that for all sufficiently small $h>0$,
  \begin{equation}
  \label{EqES}
    \|B_0 \omega\|_{\rho_\sface^{\alpha_\sface}L^2} \leq C\Bigl( h^{-1}\|B_1 P_h \omega\|_{\rho_\sface^{\alpha_\sface+2}L^2} + \|E \omega\|_{\rho_\sface^{\alpha_\sface}L^2} + h^N\|\chi \omega\|_{\rho_\sface^{\alpha_\sface}\Htbh^{-N}}\Bigr).
  \end{equation}
  \item\label{ItESBw}{\rm (Estimate for $P_h^*$.)} Let $B_0,B_1\in\Psitbh(M)$ be as in part~\eqref{ItESFw}. Then for any $N\in\R$ there exists a constant $C$ such that for all sufficiently small $h>0$,
  \begin{equation}
  \label{EqESAdj}
    \|B_0\omega\|_{\rho_\sface^{-\alpha_\sface-2}L^2} \leq C\Bigl( h^{-1}\|B_1 P_h^*\omega\|_{\rho_\sface^{-\alpha_\sface}L^2} + h^N\|\chi\omega\|_{\rho_\sface^{-\alpha_\sface-2}\Htbh^{-N}}\Bigr).
  \end{equation}
  \end{enumerate}
\end{prop}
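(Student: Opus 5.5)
We argue exactly as in the proofs of Propositions~\ref{PropEK} and~\ref{PropEB}. We first replace $P_h$ by the multiple $\cL_h$ from~\eqref{EqEKcL}, which is microlocally elliptic near $o$; Proposition~\ref{PropEE} handles nonzero finite frequencies. The new feature is that $\cR_v$ is a sink of $-r\cd^\sharp$ within $\sface$ and transversally to it. Forward propagation therefore needs a priori control only on an annular region around $\cR_v$, which is the region $\Elltbh(E)$ in part~\eqref{ItESFw}. Backward propagation needs \emph{no} a priori control, since nothing flows into $\cR_v$ under $\cd^\sharp$; this is why~\eqref{EqESAdj} has no $E$ term.

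\textbf{Part~\eqref{ItESFw}.} Set $y:=\frac{r}{t}/v-1$ and take cutoffs $\chi_\rms,\chi_y\in\CIc([0,\frac32))$ equal to $1$ on $[0,\frac54]$, together with $\chi_0(\digamma|\zeta|)$ localizing near $o$. The commutant is
\[
  a=w'^{-1}\chi_\rms(\digamma\rho_\sface)\chi_y(|y|/\delta_\sface)\chi_0(\digamma|\zeta|),\qquad w'=\rho_\sface^{\alpha_\sface+1}.
\]
We compute the pairing~\eqref{EqEKPair}. The principal symbol of the main term is $aH_{-\ell+j}a-Sa^2$, and we write it as
\[
  -c_0\rho_\sface^2a^2-b_0^2+e^2+f .
\]
The pieces are as follows.
\begin{itemize}
\item $b_0$ has square root argument $\rho_\sface^{-2}[-\ell(\frac{\dd w'}{w'})+S+w'^{-1}H_jw']-c_0$. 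This is positive definite by Proposition~\ref{PropEC}\eqref{ItECPosSink}, which uses $\alpha_\sface+1<\frac12$ via Lemma~\ref{LemmaMISink}. The $H_j$ contribution is made small by taking $\digamma$ large.
\item $e^2$ collects the terms where the cutoffs $\chi_\rms'$ or $\chi_y'$ are differentiated. Their signs need not be checked, because all such terms are supported in the annulus where $E$ is elliptic.
\item $f$ comes from differentiating $\chi_0$ and is elliptically controlled.
\end{itemize}
The complex absorption $Q$ and the double commutator are handled as in~\eqref{EqEKErr0}--\eqref{EqEKErr3}. We bound the left side of~\eqref{EqEKPair} by $c_0^{-1}h^{-2}\|\rho_\sface^{-1}A\cL_h\omega\|^2+\frac14c_0\|\rho_\sface A\omega\|^2$. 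Then we fix $\digamma$ large and $h$ small and iterate the resulting $h^{1/2}$ gain as after~\eqref{EqEKHalf}. This gives~\eqref{EqES}, with $\delta=\digamma^{-1}$.

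\textbf{Part~\eqref{ItESBw}.} We use $\cL_h^*$ and the dual commutant $a=w'\rho_\sface^{2}\chi_\rms\chi_y\chi_0$, so that the weight is $\rho_\sface^{-\alpha_\sface-2}$ on $\omega$. The principal symbol becomes $aH_{-\ell+j}a+Sa^2$. The main term again has $b_0$ positive by Proposition~\ref{PropEC}\eqref{ItECPosSink}; note that it is still $-\ell(\frac{\dd w'}{w'})+S$ that appears, up to the sign bookkeeping of the dual weight. The cutoff derivative terms now carry the \emph{favorable} sign:
\begin{itemize}
\item $-\ell(-\dd\rho_\sface/\rho_\sface^2)$ is handled by Proposition~\ref{PropEC}\eqref{ItECPosEsc} for large $r$;
\item $\mp\ell(\dd\frac rt)$ on $\{\pm y\in[\frac54\delta_\sface,\frac32\delta_\sface]\}$ is handled by the last two statements of Proposition~\ref{PropEC}\eqref{ItECPosEsc}.
\end{itemize}
Both therefore join the left side, as in the treatment of $b_\cK$ in Proposition~\ref{PropEB}\eqref{ItEBBw}. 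No term of the form $-e^2$ remains, which yields~\eqref{EqESAdj}.

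The main obstacle is the sign bookkeeping at the sink. One must verify the following: the positivity in Proposition~\ref{PropEC}\eqref{ItECPosSink}, established for $-\ell(\frac{\dd w'}{w'})+S$, gives the correct sign for \emph{both} the forward commutant ($w'^{-1}$) and the dual one. Schematically, the adjoint pairing flips the sign of $S$ and of the weight simultaneously. Separately, the escape-function inequalities must hold uniformly on the cutoff annulus. The second requirement is where the choice $\delta\le\delta_r$ and the smallness of $\delta_\sface$ from Proposition~\ref{PropEC} enter.
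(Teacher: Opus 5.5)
Your argument follows the paper's proof of this proposition. It uses the same commutant $w'^{-1}\chi_\rms\chi_y\chi_0$ for $P_h$ and the same sink positivity from Proposition~\ref{PropEC}\eqref{ItECPosSink}, which is where $\alpha_\sface<-\frac12$ enters via Lemma~\ref{LemmaMISink}. For $P_h^*$ it uses the same observation that all cutoff-derivative terms have the sign of the main term by Proposition~\ref{PropEC}\eqref{ItECPosEsc}, so no a~priori control term appears. Your remark in part~\eqref{ItESFw} that the signs of the cutoff-derivative terms need not be checked is correct: anything microsupported in $\Elltbh(E)$ is bounded by $C\|E\omega\|^2$ modulo $h$-lower-order and residual errors. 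The paper checks these signs anyway and writes the terms as squares $e_1^2+e_2^2$, but this is not needed.

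There is, however, a weight error in part~\eqref{ItESBw}. The dual commutant must be $a=w'\chi_\rms\chi_y\chi_0$ with $w'=\rho_\sface^{\alpha_\sface+1}$, not $w'\rho_\sface^2\chi_\rms\chi_y\chi_0$. The main term of $\sC$ is $c_0\rho_\sface^2a^2+b_0^2$, and you bound the pairing by $c_0^{-1}h^{-2}\|\rho_\sface^{-1}A\cL_h^*\omega\|^2+\frac14 c_0\|\rho_\sface A\omega\|^2$. So with $a=w'\chi_\rms\chi_y\chi_0$ you control $\|\rho_\sface A\omega\|$, i.e.\ $\omega$ in $\rho_\sface^{-\alpha_\sface-2}L^2$, by $h^{-1}\|P_h^*\omega\|_{\rho_\sface^{-\alpha_\sface}L^2}$. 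In that case the threshold quantity is exactly $r^2\bigl(-\ell(\frac{\dd w'}{w'})+S\bigr)$, which is what Proposition~\ref{PropEC}\eqref{ItECPosSink} makes positive.

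With your extra factor $\rho_\sface^2$, the threshold quantity becomes $r^2\bigl(-\ell(\frac{\dd(\rho_\sface^2 w')}{\rho_\sface^2 w'})+S\bigr)$. This is Lemma~\ref{LemmaMISink} with $\beta_\sface=\alpha_\sface+3$, which for small $e$ at $\cR_v$ requires $\alpha_\sface<-\frac52$, so it fails in general. The resulting estimate would also bound $\omega$ in $\rho_\sface^{-\alpha_\sface-4}L^2$ by $P_h^*\omega$ in $\rho_\sface^{-\alpha_\sface-2}L^2$, not the spaces in~\eqref{EqESAdj}.

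Your own principle that the adjoint flips the weight $w'^{-1}\mapsto w'$ together with the sign of $S$ is correct; the formula you wrote just does not implement it. With this correction the proof goes through as in the paper.
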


See Figure~\ref{FigES}. Thus, for $P_h$ we can propagate control from a punctured neighborhood of the sink into the sink, whereas for $P_h^*$ we get a free estimate at the sink (which is a \emph{source} for backwards propagation).

\begin{figure}[!ht]
\centering
\includegraphics{FigES}
\caption{Setup of the forward propagation estimate~\eqref{EqES}. Control from the set labeled $E$ propagates into the region labeled $B_0$. The flow of the vector field $-r\cd^\sharp$ is indicated by green arrows. For the backward propagation estimate~\eqref{EqESAdj}, we get free control on the set labeled $B_0$, without the need for any a priori control region.}
\label{FigES}
\end{figure}

\begin{proof}[Proof of Proposition~\usref{PropES}]
  For part~\eqref{ItESFw}, we consider the commutant
  \[
    a = (w')^{-1}\chi_\sface(\digamma\rho_\sface)\chi_\cR\bigl(\tfrac{r}{t}/v-1\bigr)\chi_0(\digamma|\zeta|),\quad
    w'=\rho_\sface^{\alpha_\sface+1},
  \]
  where the cutoff functions are
  \begin{alignat*}{2}
    \chi_\sface&\in\CIc([0,\tfrac32)),&\quad \chi_\sface&=1\ \text{on}\ [0,\tfrac54], \\
    \chi_\cR&\in\CIc((-2\delta_\sface,2\delta_\sface)),&\quad \chi_\cR&=1\ \text{on}\ [-\tfrac54\delta_\sface,\tfrac54\delta_\sface], \\
    \chi_0&\in\CIc([0,\infty)),&\quad \chi_0&=1\ \text{near}\ 0;
  \end{alignat*}
  we further require $(-\chi_\sface\chi_\sface')^{\frac12}\in\CI$, and $(\mp\chi_\cR\chi_\cR')^{\frac12}\in\CI$ on $\pm[0,\infty)$. The parameter $\digamma>\delta_r^{-1}$ will be chosen large. Let $A=A^*\in\rho_\sface^{-\alpha_\sface-1}\Psitbh$ be a quantization of $a$ satisfying $\WFtbh'(A)=\supp a$. We again consider~\eqref{EqEKPair}--\eqref{EqEKC}, now writing
  \begin{equation}
  \label{EqESSymbol}
    a H_{-\ell+j}a - S a^2 = -c_0\rho_\sface^2 a^2 - b_0^2 + e_1^2 + e_2^2 + f,
  \end{equation}
  where for small $c_0>0$ we define the $\End(\cT^*)$-valued symbols
  \begin{align*}
    b_0 &:= \rho_\sface^{-\alpha_\sface}\chi_\sface\chi_\cR\chi_0\Bigl(\rho_\sface^{-2}\Bigl[-\ell\Bigl(\frac{\dd w'}{w'}\Bigr)+S+w'{}^{-1}H_j w'\Bigr]-c_0\Bigr)^{\frac12}, \\
    e_1 &:= \rho_\sface^{-\alpha_\sface}\chi_0\chi_\cR\Bigl(-\digamma\chi_\sface\chi_\sface'\Bigl[\ell\Bigl(\frac{\dd\rho_\sface}{\rho_\sface^2}\Bigr)+H_j(\rho_\sface^{-1})\Bigr]\Bigr)^{\frac12}, \\
    e_2 &:= \rho_\sface^{-\alpha_\sface}\chi_0\chi_\sface\Bigl(-v^{-1}\chi_\cR\chi_\cR'\rho_\sface^{-2}\Bigl[\ell\Bigl(\dd\frac{r}{t}\Bigr)-H_j\frac{r}{t}\Bigr]\Bigr)^{\frac12}, \\
    f &:= \rho_\sface^{-2\alpha_\sface}\chi_\sface^2\chi_\cR^2\chi_0\rho_\sface^{-2}H_{-\ell+j}\chi_0.
  \end{align*}
  By Proposition~\ref{PropEC}\eqref{ItECPosSink}, the square root in the expression for $b_0$ is smooth if we choose $\digamma$ large enough. The symbol $e_1$ is smooth by Proposition~\ref{PropEC}\eqref{ItECPosEsc} since $\frac{\dd\rho_\sface}{\rho_\sface^2}=-\dd r$ and $\rho_\sface^{-1}=r$; the symbol $e_2$ is smooth as well since on the support of $\dd\chi_\cR$, the sign of $-r^2\ell(\dd(\frac{r}{t}))$ matches that of $\chi_\cR'$. The terms $e_1,e_2$ have supports disjoint from $\{|\frac{r}{t}/v-1|\leq\delta_\sface,\ \rho_\sface\leq\digamma^{-1}\}$, and together give rise to the a priori control term $\|E \omega\|^2$ in~\eqref{EqES}. The term $f$ is supported in the elliptic set of $P_h$ as usual. The rest of the proof of part~\eqref{ItESFw} is completely analogous to that of Propositions~\ref{PropEK} and \ref{PropEB}.

  For part~\eqref{ItESBw}, we take $a=w'\chi_\sface(\digamma\rho_\sface)\chi_\cR(\frac{r}{t}/v-1)\chi_0(\digamma|\zeta|)$ with $w'=\rho_\sface^{\alpha_\sface+1}$ as above; then
  \[
    a H_{-\ell+j}a + S a^2 = c_0\rho_\sface^2 a^2 + b_0^2 + b_1^2 + b_2^2 + f,
  \]
  where we set
  \begin{align*}
    b_0 &:= \rho_\sface^{\alpha_\sface+2}\chi_\sface\chi_\cR\chi_0\Bigl(\rho_\sface^{-2}\Bigl[-\ell\Bigl(\frac{\dd w'}{w'}\Bigr)+S+w'{}^{-1}H_j w'\Bigr]-c_0\Bigr)^{\frac12}, \\
    b_1 &:= \rho_\sface^{\alpha_\sface+2}\chi_0\chi_\cR\Bigl(-\digamma\chi_\sface\chi_\sface'\Bigl[\ell\Bigl(\frac{\dd\rho_\sface}{\rho_\sface^2}\Bigr)+H_j(\rho_\sface^{-1})\Bigr]\Bigr)^{\frac12}, \\
    b_2 &:= \rho_\sface^{\alpha_\sface+2}\chi_0\chi_\sface\Bigl(-v^{-1}\chi_\cR\chi_\cR'\rho_\sface^{-2}\Bigl[\ell\Bigl(\dd\frac{r}{t}\Bigr)-H_j\frac{r}{t}\Bigr]\Bigr)^{\frac12}, \\
    f &:= \rho_\sface^{2(\alpha_\sface+2)}\chi_\sface^2\chi_\cR^2\chi_0\rho_\sface^{-2}H_{-\ell+j}\chi_0.
  \end{align*}
  All terms (except for $f$ which is however supported in the elliptic set of $P_h^*$) have the same sign. Therefore, we obtain the estimate~\eqref{EqESAdj} without any a priori control terms.
\end{proof}

\subsubsection{Real principal type zero section propagation in the remaining regions}
\label{SssER}

We next demonstrate how to propagate control from the saddle point near $r=r_0$ inside of $(\cK^+)^\circ$ towards the black hole. Since we shall be working away from $\sface$ and near the zero section of $\Ttb^*M$, we omit the weight at $\sface$ and the 3b-differential order from the notation (as already in~\S\ref{SssEK}). We write $\rho_\cK=t^{-1}$.

\begin{prop}[Propagation between the interior saddle point and the black hole]
\label{PropER}
  Let
  \[
    r_3:=r_0-\tfrac{52}{50}r_0\sqrt{e}/v,\quad r_4:=r_0-\tfrac{51}{50}r_0\sqrt{e}/v.
  \]
  Fix $0<t_1<t_2<t_3$ and $\bhm<r_1<r_2<r_3$. Let $B_0,B_1,E\in\Psitbh(M)$, with operator wave front sets that are compact subsets of $\Ttb^*M$.
  \begin{enumerate}
  \item\label{ItERFw}{\rm (Estimate for $P_h$: propagation towards the black hole.)} Suppose that
    \begin{alignat*}{3}
      \WFtbh'(B_0) &\subset \bigl\{ \rho_\cK\leq t_2^{-1},\ &&r_1<r<r_4,\ &&|\zeta|<1 \bigr\}, \\
      \Elltbh(E) &\supset \bigl\{ t_3^{-1} \leq \rho_\cK \leq t_1^{-1},\ &&r_1\leq r\leq r_4,\ &&|\zeta|<1 \bigr\} \cup \{ \rho_\cK \leq t_1^{-1},\ r_3\leq r\leq r_4,\ |\zeta|<1 \bigr\}, \\
      \Elltbh(B_1) &\supset \{ \rho_\cK\leq t_1^{-1},\ &&r_1\leq r\leq r_4,\ &&|\zeta|<2 \bigr\}.
    \end{alignat*}
    Suppose moreover that the Schwartz kernels of $B_0,B_1,E$ are supported in both factors in a compact subset $K$ of $\{ \bhm<r<r_0,\ \rho_\cK<\infty \}$, and let $\chi\in\CI(M)$ be identically $1$ on $K$. Then for any $N\in\R$ there exists a constant $C$ such that
    \begin{equation}
    \label{EqER}
      \|B_0 \omega\|_{\rho_\cK^{\alpha_\cK}L^2} \leq C\Bigl( h^{-1}\|B_1 P_h \omega\|_{\rho_\cK^{\alpha_\cK}L^2} + \|E \omega\|_{\rho_\cK^{\alpha_\cK}L^2} + h^N\|\chi \omega\|_{\rho_\cK^{\alpha_\cK}\Htbh^{-N}}\Bigr).
    \end{equation}
  \item\label{ItERBw}{\rm (Estimate for $P_h^*$: propagation towards the saddle point.)} Suppose that $B_0,B_1$ are as in part~\eqref{ItERFw}, while
    \[
      \Elltbh(E) \supset \bigl\{ \rho_\cK\leq t_1^{-1},\ r_1\leq r\leq r_2,\ |\zeta|<1 \bigr\}.
    \]
    Suppose that the Schwartz kernels of $B_0,B_1,E$ are supported in both factors in a compact subset $K$ of $\{ \bhm<r<r_0,\ \rho_\cK<\infty \}$, and let $\chi\in\CI(M)$ be identically $1$ on $K$. Then for any $N\in\R$ there exists a constant $C$ such that
    \begin{equation}
    \label{EqERAdj}
      \|B_0 \omega\|_{\rho_\cK^{-\alpha_\cK}L^2} \leq C\Bigl( h^{-1}\|B_1 P_h^* \omega\|_{\rho_\cK^{-\alpha_\cK}L^2} + \|E \omega\|_{\rho_\cK^{-\alpha_\cK}L^2} + h^N\|\chi \omega\|_{\rho_\cK^{-\alpha_\cK}\Htbh^{-N}}\Bigr).
    \end{equation}
  \end{enumerate}
\end{prop}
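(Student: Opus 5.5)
The proof is a positive commutator argument of the same type as in Propositions~\ref{PropEK}--\ref{PropES}, but simpler: no critical points of the transport are involved, and positivity comes from the principal symbol $\ell$ alone via an escape function in $r$. As before, the estimates away from $o$ follow from Proposition~\ref{PropEE}, and we replace $P_h$ by the microlocally elliptic multiple $\cL_h=-L_h+J-iQ$ from~\eqref{EqEKcL}. The key input is Proposition~\ref{PropEC}\eqref{ItECPosEsc}. On $[\bhm,r_0-r_0\sqrt e/v]\supset[r_1,r_4]$ the endomorphism $-\ell(-\dd r)$ is positive definite. So the transport is inward in $r$, towards the black hole, and the derivative of a function decreasing in $r$ (i.e.\ increasing along the flow) carries a definite sign.

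\emph{Part~\eqref{ItERFw}.} I would use the commutant
\[
  a=\rho_\cK^{-\alpha_\cK}\,e^{\digamma'(r-r_4)}\,\psi(r)\,\chi_\cK(t_2\rho_\cK)\,\chi_0(\digamma|\zeta|).
\]
Here $\psi$ equals $1$ on $[r_1,r_3]$, is supported in $(r_1-\eps,r_4)$, and is nonincreasing on $[r_3,r_4]$. The cutoff $\chi_\cK$ is supported where $\rho_\cK\le t_1^{-1}$ and equals $1$ on $\rho_\cK\le t_2^{-1}$.

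In $aH_{-\ell+j}a-Sa^2$, the exponential factor produces the term $-\digamma'(-\ell(-\dd r))a^2$. For $\digamma'$ large, this dominates both $S$ and the weight term $\alpha_\cK\rho_\cK\ell(\dd\rho_\cK/\rho_\cK^2)$; both are bounded because $\rho_\cK=t^{-1}$ and $\dd\rho_\cK/\rho_\cK^2=-\dd t$. This gives $-b_0^2$ with $b_0$ elliptic on $\{\psi\chi_\cK\chi_0\neq 0\}$. The remaining terms are handled as follows.
\begin{itemize}
\item The derivative of $\psi$ on $[r_3,r_4]$ gives a term of the form $+e^2$. It lies in the a priori region $\{r_3\le r\le r_4\}$ of $E$.
\item The derivative of $\psi$ near $r_1$ gives a term of good sign, $-b^2$, because propagation runs towards smaller $r$; it is simply dropped.
\item The derivative of $\chi_\cK$ lives where $t_1\le t\le t_2$. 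It is controlled by the slab part of $\Elltbh(E)$; its sign is irrelevant once that region is assumed controlled.
\item The derivative of $\chi_0$ is supported in the elliptic set of $P_h$.
\end{itemize}
The $Q$ and double-commutator errors are treated exactly as in \eqref{EqEKErr0}--\eqref{EqEKErr3}: fix $\digamma$ large, then take $h$ small, and iterate in powers of $h^{1/2}$.

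\emph{Part~\eqref{ItERBw}.} Here I would use the weight $\rho_\cK^{\alpha_\cK}$ and the reversed factor $e^{-\digamma'(r-r_1)}$, with $\psi$ chosen nondecreasing near $r_1$ and equal to $1$ up to $r_4$. The main term then has the sign opposite to part~\eqref{ItERFw}, as in the adjoint pairing~\eqref{EqEKAdjPair}. This positive main term absorbs the $S$-term and the weight term. Now the cutoff derivative near $r_1$, lying in $[r_1,r_2]$, is the one needing a priori control. The derivative at $r_4$ and the $\chi_\cK$ derivative have good sign, since backward propagation exits through small $t$; at the boundary of $\Omega$ one only needs $t\ge t_1$ bounded.

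\emph{Main obstacle.} The hardest step is making sure the positivity constant from $\digamma'\,(-\ell(-\dd r))$ is uniform up to $\rho_\cK=0$, and that it beats $\alpha_\cK\rho_\cK\ell(\dd t)$ and $S$ there; uniformity holds because $\cd$ is stationary, so all these quantities are continuous on the compact set $[r_1,r_4]\times\{\rho_\cK\le t_1^{-1}\}$. A second point is that $H_j r$ and $H_j\rho_\cK^{-1}$ are $\cO(|\zeta|^2)$, so they are negligible on $\supp\chi_0(\digamma|\zeta|)$.
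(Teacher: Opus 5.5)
Your argument is correct and is essentially the paper's proof. It is a positive commutator argument with an escape function in $r$: its derivative, through the positivity of $\ell(\dd r)$ on $[\bhm,r_0-r_0\sqrt e/v]$ from Proposition~\ref{PropEC}\eqref{ItECPosEsc}, dominates $S$ and the $\rho_\cK$-weight term. The a priori terms come from the cutoff on $[r_3,r_4]$ and the $t$-slab in part~\eqref{ItERFw}, and from $[r_1,r_2]$ in part~\eqref{ItERBw}.

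The only difference is cosmetic. The paper uses $e^{-\digamma/(r-r_1)}$ (resp.\ $e^{-\digamma/(r_4-r)}$), which serves both as escape function and as the cutoff at $r_1$ (resp.\ $r_4$). Your separate cutoff $\psi$ reaches slightly outside $[r_1,r_4]$, so you need $\eps$ small enough that $\supp a$ stays inside the open sets $\Elltbh(B_1)$ and $\Elltbh(E)$.
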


See Figure~\ref{FigER} for an illustration of part~\eqref{ItERFw}.

\begin{figure}[!ht]
\centering
\includegraphics{FigER}
\caption{\textit{On the left:} illustration of the forward propagation estimate~\eqref{EqER}. Control from the elliptic set of $E$ can be propagated towards $r=\bhm$, giving control on the elliptic set of $B_0$ (here taken to go up to $r=r_3$). The flow of $-\cd^\sharp$ is indicated by green arrows. \textit{On the right:} illustration of the backward propagation estimate~\eqref{EqERAdj}.}
\label{FigER}
\end{figure}

\begin{proof}[Proof of Proposition~\usref{PropER}]
  For part~\eqref{ItERFw}, fix
  \begin{alignat*}{3}
    \chi_1&\in\CI(\R),&\quad \chi_1&=0\ \text{on}\ (-\infty,0], &\quad \chi_1(x)&=e^{-\digamma/x}\ \text{for}\ x>0, \\
    \chi_2&\in\CI(\R),&\quad \chi_2&=1\ \text{near}\ (-\infty,r_3],&\quad \chi_2&=0\ \text{near}\ [r_4,\infty), \\
    \chi_\cK&\in\CIc([0,\infty)),&\quad \chi_\cK&=1\ \text{near}\ [0,t_2^{-1}],&\quad \chi_\cK&=0\ \text{near}\ [t_1^{-1},\infty), \\
    \chi_0&\in\CIc([0,\infty)),&\quad \chi_0&=1\ \text{near}\ 0;
  \end{alignat*}
  here $\digamma>1$ is chosen later, and we require $(-\chi_2\chi_2')^{\frac12}\in\CI$, $(-\chi_\cK\chi_\cK')^{\frac12}\in\CI$. We then set
  \[
    a = \rho_\cK^{-\alpha_\cK} \chi_1(r-r_1)\chi_2(r)\chi_\cK(\rho_\cK)\chi_0(\digamma|\zeta|).
  \]
  We have $\chi_1'(r-r_1)=\digamma (r-r_1)^{-2}\chi_1(r-r_1)$, and for any fixed $C$ we have $\digamma(r-r_1)^{-2}\geq C$ for all $r\in[r_1,r_4]$ provided $\digamma$ is sufficiently large. Thus, we can write
  \begin{equation}
  \label{EqERSymbol}
    a H_{-\ell+j}a - S a^2 = -a^2 - b_0^2 + e_R^2 + e_T^2 + f,
  \end{equation}
  where, in view of Proposition~\ref{PropEC}\eqref{ItECPosEsc} and the future timelike nature of $\frac{\dd\rho_\cK}{\rho_\cK^2}=-\dd t$, the symbols
  \begin{align*}
    b_0 &:= \rho_\cK^{-\alpha_\cK}\chi_1\chi_2\chi_\cK\chi_0 \Bigl( -\digamma(r-r_1)^{-2}(-\ell(\dd r)+H_j r) + S - \alpha_\cK\rho_\cK\Bigl[\ell\Bigl(\frac{\dd\rho_\cK}{\rho_\cK^2}\Bigr)+H_j(\rho_\cK^{-1})\Bigr] - 1 \Bigr)^{\frac12}, \\
    e_R &:= \rho_\cK^{-\alpha_\cK}\chi_1\chi_\cK\chi_0\bigl(-\chi_2\chi_2' (\ell(\dd r)-H_j r)\bigr)^{\frac12}, \\
    e_T &:= \rho_\cK^{-\alpha_\cK+1}\chi_1\chi_2\chi_0\Bigl(-\chi_\cK\chi_\cK'\Bigl[\ell\Bigl(\frac{\dd\rho_\cK}{\rho_\cK^2}\Bigr)+H_j(\rho_\cK^{-1})\Bigr]\Bigr)^{\frac12}, \\
    f &:= \rho_\cK^{-2\alpha_\cK}\chi_1^2\chi_2^2\chi_\cK^2 \chi_0 H_{-\ell+j}\chi_0
  \end{align*}
  are smooth for sufficiently large $\digamma>1$. The rest of the proof proceeds as usual: the terms $e_R$ (with support on $\supp(\chi_2'\chi_\cK)$) and $e_T$ (with support on $\supp(\chi_2\chi_\cK')$) give rise to the a priori control term $\|E \omega\|$ in~\eqref{EqER}, whereas $f$ is supported in the elliptic set of $P_h$.

  For part~\eqref{ItERBw}, we use the commutant
  \[
    a = \rho_\cK^{\alpha_\cK}\chi_1(r_4 - r)\tilde\chi_2(r)\chi_\cK(\rho_\cK)\chi_0(\digamma|\zeta|)
  \]
  where $\chi_1,\chi_\cK,\chi_0$ are as before, and we fix
  \[
    \tilde\chi_2\in\CI(\R),\quad \tilde\chi_2=0\ \text{near}\ (-\infty,r_1],\quad \tilde\chi_2=1\ \text{near}\ [r_2,\infty),
  \]
  with $(\tilde\chi_2\tilde\chi_2')^{\frac12}\in\CI$. Thus,
  \[
    a H_{-\ell+j}a + S a^2 = a^2 + b_0^2 + b_T^2 - e_R^2 + f
  \]
  where
  \begin{align*}
    b_0 &:= \rho_\cK^{\alpha_\cK}\chi_1\tilde\chi_2\chi_\cK\chi_0\Bigl( -\digamma(r_4-r)^{-2}(-\ell(\dd r)+H_j r) + S - \alpha_\cK\rho_\cK\Bigl[\ell\Bigl(\frac{\dd\rho_\cK}{\rho_\cK^2}\Bigr)+H_j(\rho_\cK^{-1})\Bigr] - 1\Bigr)^{\frac12}, \\
    b_T &:= \rho_\cK^{\alpha_\cK+1}\chi_1\tilde\chi_2\chi_0\Bigl(-\chi_\cK\chi_\cK'\Bigl[\ell\Bigl(\frac{\dd\rho_\cK}{\rho_\cK^2}\Bigr)+H_j(\rho_\cK^{-1})\Bigr]\Bigr)^{\frac12}, \\
    e_R &:= \rho_\cK^{\alpha_\cK}\chi_1\chi_\cK\chi_0\bigl(\tilde\chi_2\tilde\chi_2' (\ell(\dd r)-H_j r)\bigr)^{\frac12}, \\
    f &:= \rho_\cK^{2\alpha_\cK}\chi_1^2\tilde\chi_2^2\chi_\cK^2 \chi_0 H_{-\ell+j}\chi_0.
  \end{align*}
  The only a priori control term now arises from $e_R$, which has support in $\supp(\chi_\cK\chi_0\tilde\chi_2')$.
\end{proof}

Completely analogous results, with completely analogous proofs, provide forward propagation estimates near the zero section for propagation from $r=r_0+\frac{51}{50}r_0\sqrt{e}/v$ to any larger finite value of $r$, such as $r=\delta^{-1}$ where $\delta$ is as in Proposition~\ref{PropEB}; this propagation uses $r$ as an escape function. Likewise, one can propagate near $\sface$ from $\rho_\cK=\delta_r$ to $\frac{r}{t}/v-1=-\delta_\sface$, and from $\frac{r}{t}=10$ to $\frac{r}{t}/v-1=\delta_\sface$; for this, one uses $r/t$ as the escape function and the positivity condition arranged by Proposition~\ref{PropEC}\eqref{ItECPosEsc}. We leave the detailed statements of such results (and of the corresponding adjoint estimates) to the reader.

\begin{rmk}[Threshold regularity at the light cone at infinity]
\label{RmkERThres}
  Parallel to Remark~\ref{RmkEEThr}, the reader may have expected an upper bound on the sum of regularity and decay order to be required at the light cone at infinity given by $\{\frac{r}{t}=1,\ r^{-1}=0\}\subset M$ analogously to \cite[Proposition~4.4, \S{3.6}]{BaskinVasyWunschRadMink}. But yet again it is the strong damping provided by $-i L_h$ which now moves this threshold quantity above any fixed number when $h$ is small enough. Recalling the relationship between this threshold quantity and the pointwise decay rate of waves at null infinity, this is consistent with the observation that all eigenvalues of $1+\ubar S^{\cC_h}$ in~\eqref{EqTStruct}, which dictates the decay rates (in powers of $\rho=r^{-1}$) at null infinity, are $\gtrsim h^{-1}\gg 1$ when $h$ is small (as follows from an inspection of~\eqref{EqM00S}).
\end{rmk}

\subsubsection{Energy estimates}
\label{SssEI}

Near the initial and final Cauchy hypersurfaces $X$ and $\Sigma^\sharp$, we need to use energy estimates in order to have estimates which localize sharply to the domain $\Omega$. We follow \cite[\S8.4]{HintzVasyKdSStability} closely (up to various sign switches due to our switched signature convention), but sharpen and generalize the results slightly. Similar results are obtained in \cite[\S{4.5}]{HintzPetersenVasyKdS}. First, we recall an abstract version of \cite[Lemma~8.23]{HintzVasyKdSStability}:

\begin{lemma}[Generalized stress--energy--momentum tensor]
\label{LemmaEIT}
  Let $(T^*,\la\cdot,\cdot\ra_G)$ be a time-oriented Lorentzian vector space, and let $\nu\in T^*$ be past timelike. Denote by $|\cdot|_R$ a norm on $T^*$. Let $E$ be a vector space equipped with a Hermitian inner product $\la\cdot,\cdot\ra_E$ and squared norm $|e|_E^2=\la e,e\ra_E$. Let $\ell\colon T^*\to\End(E)$ be a linear map that takes values in the space of self-adjoint endomorphisms, and suppose that $\ell(\zeta)>0$ for all future causal $\zeta\in T^*\setminus\{0\}$. Define the generalized stress--energy--momentum tensor
  \begin{equation}
  \label{EqEITDef}
    T_{G,\nu,\ell}(\alpha,\beta) := \la\alpha,\nu\ra_G\ell(\beta) + \la\beta,\nu\ra_G\ell(\alpha) - \la\alpha,\beta\ra_G\ell(\nu) \in \End(E),\quad \alpha,\beta\in T^*.
  \end{equation}
  Then there exists $C>0$ such that for all $e,f\in E$ and $\nu'\in\nu^\perp$, we have
  \begin{equation}
  \label{EqEIT}
    \la T_{G,\nu,\ell}(\nu,\nu)e,e\ra_E + 2\Re\la T_{G,\nu,\ell}(\nu,\nu')e,f\ra_E + \la T_{G,\nu,\ell}(\nu',\nu')f,f\ra_E \geq C\bigl( |e|_E^2 + |\nu'|_R^2 |f|_E^2\bigr).
  \end{equation}
\end{lemma}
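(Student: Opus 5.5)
The plan is to compute the three components of $T_{G,\nu,\ell}$ explicitly and reduce \eqref{EqEIT} to a $2\times 2$ block positivity statement. That statement will follow from applying the hypothesis $\ell>0$ on future causal covectors to two well-chosen null covectors. Uniformity then comes from a homogeneity and compactness argument in $\nu'$.

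\emph{Computing the components.} Since $\la\nu',\nu\ra_G=0$, the definition~\eqref{EqEITDef} gives
\[
  T_{G,\nu,\ell}(\nu,\nu)=G(\nu)\ell(\nu),\qquad
  T_{G,\nu,\ell}(\nu,\nu')=G(\nu)\ell(\nu'),\qquad
  T_{G,\nu,\ell}(\nu',\nu')=-G(\nu')\ell(\nu).
\]
Rescaling $\nu$ by a positive constant only changes constants, so I normalize $G(\nu)=-1$. Put $N:=-\ell(\nu)=\ell(-\nu)$. Since $-\nu$ is future timelike, the hypothesis makes $N$ self-adjoint and positive definite. Since $\nu'\in\nu^\perp$ and $\nu$ is timelike, $\nu'$ is spacelike or zero, so $\gamma:=G(\nu')\geq 0$, with equality only if $\nu'=0$. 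The left-hand side of~\eqref{EqEIT} therefore equals
\[
  Q(e,f):=\la Ne,e\ra_E-2\Re\la\ell(\nu')e,f\ra_E+\gamma\la Nf,f\ra_E .
\]

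\emph{Key step: the off-diagonal term is strictly dominated.} For $\nu'\neq 0$, the covectors $-\nu\pm\gamma^{-1/2}\nu'$ are null, since $G=-1+1=0$. They are future directed because their $\nu$-component is that of the future timelike $-\nu$. By hypothesis $\ell(-\nu\pm\gamma^{-1/2}\nu')>0$, that is,
\[
  N\pm\gamma^{-1/2}\ell(\nu')>0 .
\]
Hence the self-adjoint operator $A:=\gamma^{-1/2}N^{-1/2}\ell(\nu')N^{-1/2}$ has spectrum in $(-1,1)$, so $\|A\|<1$. Writing $x=N^{1/2}e$ and $y=N^{1/2}f$, we get $Q=|x|^2-2\sqrt\gamma\,\Re\la Ax,y\ra+\gamma|y|^2$. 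Cauchy--Schwarz then gives
\[
  Q\geq(1-\|A\|)\bigl(|x|^2+\gamma|y|^2\bigr)\geq c\,(|e|_E^2+\gamma|f|_E^2),
\]
where $c$ depends on $\|A\|$ and the lowest eigenvalue of $N$. When $\nu'=0$ the claim is just $\la Ne,e\ra\geq c|e|^2$.

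\emph{Uniformity.} This is the only step needing care, because the constant must not depend on $\nu'$. $Q$ is quadratic in $(e,f)$, and replacing $\nu'$ by $s\nu'$ with $s>0$ is equivalent to replacing $f$ by $sf$. Both sides of~\eqref{EqEIT} scale the same way, so it suffices to take $\nu'$ in the compact set $\{\nu'\in\nu^\perp\colon|\nu'|_R=1\}$. On this set:
\begin{itemize}
  \item $\gamma=G(\nu')$ is continuous and bounded below by a positive constant, since $G$ is positive definite on $\nu^\perp$;
  \item $\|A\|<1$ holds pointwise and depends continuously on $\nu'$, so $\sup\|A\|<1$;
  \item the lowest eigenvalue of $N$ is fixed and positive.
\end{itemize}
Hence $c$ can be chosen uniformly. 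Using $\gamma\gtrsim|\nu'|_R^2$ then yields~\eqref{EqEIT}. I expect no real obstacle beyond choosing the null covectors $-\nu\pm\gamma^{-1/2}\nu'$, which is the mechanism behind the whole estimate.
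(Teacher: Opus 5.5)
Your proof is correct and follows essentially the same route as the paper. The paper also uses the future null covectors $-\hat\nu\pm\hat\nu'$ to obtain $I\pm A>0$ for the conjugated operator $A=(-\ell(\hat\nu))^{-1/2}\ell(\hat\nu')(-\ell(\hat\nu))^{-1/2}$, but it then completes the square in $e$ and shows that the Schur complement $I-A^2=(I+A)(I-A)$ is positive. You instead bound the cross term directly via $\|A\|<1$ and Cauchy--Schwarz, and your compactness argument for $\sup\|A\|<1$ makes the uniformity in $\nu'$ somewhat more explicit than the paper's homogeneity remark.
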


The connection with the standard stress--energy--momentum tensor is as follows: if one defines~\eqref{EqEITDef} tensorially on a manifold $M$, if $\alpha$ and $\beta$ are both equal to the differential $\dd u$ of a real-valued scalar function on $M$, and if $E=M\times\R$ is the trivial bundle and $\ell$ is a vector field $X$, then $\frac12 T_{G,\nu,X}(\dd u,\dd u)=(\nu^\sharp u)(X u)-\frac12 g(\nu^\sharp,X)\la\dd u,\dd u\ra_G$, $g:=G^{-1}$, is the stress--energy--momentum tensor of $u$ on $(M,g)$ evaluated on the vector fields $\nu^\sharp,X$.

\begin{proof}[Proof of Lemma~\usref{LemmaEIT}]
  We give an invariant version of the proof in the reference. If $\nu'=0$, the estimate follows directly from $T_{G,\nu,\ell}(\nu,\nu)=\la\nu,\nu\ra_G\ell(\nu)$ and the fact that $\ell(\nu)$ is negative definite and $\la\nu,\nu\ra_G<0$. Let us thus assume that $\nu'\neq 0$. Writing $G(v)=\la v,v\ra_G$, the left-hand side of~\eqref{EqEIT} is equal to
  \begin{align}
    &-G(\nu)\la-\ell(\nu)e,e\ra_E + 2 G(\nu)\Re\la\ell(\nu')e,f\ra + G(\nu')\la-\ell(\nu)f,f\ra_E \nonumber\\
  \label{EqEIT2}
  \begin{split}
    &\qquad = -G(\nu)\Bigl| \bigl(-\ell(\nu)\bigr)^{\frac12}e - \bigl(-\ell(\nu)\bigr)^{-\frac12} \ell(\nu')f\Bigr|_E^2 \\
    &\qquad\qquad + G(\nu') \Big\la \Bigl(-\ell(\nu) + \frac{-G(\nu)}{G(\nu')}\ell(\nu')\ell(\nu)^{-1}\ell(\nu')\Bigr)f, f\Big\ra_E.
  \end{split}
  \end{align}
  In the second summand, we have $G(\nu')\geq C |\nu'|_R^2$ for some $C>0$ since $\nu'\in\nu^\perp$ is spacelike. Introducing the normalized vectors $\hat\nu'=\nu'/G(\nu')^{\frac12}$ and $\hat\nu=\nu/(-G(\nu))^{\frac12}$, the second pairing is equal to
  \[
    G(\nu')\bigl(-G(\nu)\bigr)^{\frac12} \big\la S\bigl((-\ell(\hat\nu))^{\frac12}f\bigr),(-\ell(\hat\nu))^{\frac12}f\big\ra_E,\qquad
    S = I - \bigl( (-\ell(\hat\nu))^{-\frac12} \ell(\hat\nu') (-\ell(\hat\nu))^{-\frac12} \bigr)^2.
  \]
  Now, since $-\hat\nu\pm\hat\nu'$ is future lightlike, we have
  \[
    S_\pm = I \pm (-\ell(\hat\nu))^{-\frac12} \ell(\hat\nu') (-\ell(\hat\nu))^{-\frac12} > 0.
  \]
  Moreover, $S_+$ and $S_-$ commute, and therefore $S=S_+S_-=Q^2$ for $Q=(S_+S_-)^{\frac12}>0$. Altogether then,~\eqref{EqEIT2} is bounded from below by
  \begin{equation}
  \label{EqEIT3}
    C\Bigl(\Bigl| \bigl(-\ell(\nu)\bigr)^{\frac12}e - \bigl(-\ell(\nu)\bigr)^{-\frac12} \ell(\hat\nu')\bigl(G(\nu')^{\frac12}f\bigr)\Bigr|_E^2 + \bigl| Q\bigl(-\ell(\nu)\bigr)^{\frac12}\bigl(|\nu'|_R f\bigr) \bigr|_E^2 \Bigr)
  \end{equation}
  for some new constant $C>0$. If we fix the unit vector $\hat\nu'\perp\nu$, then this is a quadratic form in $(e,|\nu'|_R f)$. Now if~\eqref{EqEIT3}---which is a sum of two nonnegative terms---vanishes, then from the second summand we obtain $|\nu'|_R f=0$, and then the first summand gives $e=0$. By homogeneity, \eqref{EqEIT3} thus implies the estimate~\eqref{EqEIT} for some $C>0$.
\end{proof}

This implies the following local estimate (see also \cite[Lemma~8.25]{HintzVasyKdSStability}):

\begin{cor}[Coercivity estimate]
\label{CorEICoercive}
  Denote by $B_r$ the open ball of (Euclidean) radius $r$ around $0$ in $\R_z^{1+n}=\R_\ft\times\R^n_x$. Let $g\in\CI(\ol{B_1};S^2 T^*\R^{1+n})$ denote a Lorentzian metric for which $\dd\ft$ is past timelike, and write $G=g^{-1}$ for the inverse metric. Let $\pi\colon T^*\R^{n+1}\to\R^{n+1}$ denote the projection. Let $E\to\R^{n+1}$ denote a vector bundle with affine connection $\nabla^E$. Let $\la\cdot,\cdot\ra_E$ denote a Hermitian inner product on $E$, and write $|e|_E^2:=\la e,e\ra_E$. Denote by $R$ a Riemannian fiber metric on $T^*\R^{n+1}$, and let $\phi\in\CI(\R^{n+1})$ be a positive function. Suppose finally that $\ell\colon T^*\R^{n+1}\to\End(\pi^*E)$ is a bundle map with the property that $\ell(\zeta)$ is positive definite for all future causal $\zeta\in T^*\R^{n+1}\setminus o$. Then there exist constants $C,C'>0$ such that for all $\omega\in\CIc(B_1;E)$, we have
  \begin{equation}
  \label{EqEICoercive}
    \sum_{\mu,\nu=0}^n \int_{\ft^{-1}(0)} \big\la T_{G,\phi^2\,\dd\ft,\ell}^{\mu\nu} \nabla_\mu^E \omega,\nabla_\nu^E \omega\big\ra_E\,\dd x \geq C\int_{\ft^{-1}(0)} \phi^2|\nabla^E \omega|_{R\otimes E}^2\,\dd x - C'\int_{\ft^{-1}(0)} |\omega|_E^2\,\dd x,
  \end{equation}
  where $T_{G,\phi^2\,\dd\ft,\ell}^{\mu\nu}=T_{G,\phi^2\,\dd\ft,\ell}(\dd z^\mu,\dd z^\nu)$. If the data $g$, $(E,\la\cdot,\cdot\ra_E)$, $\nabla^E$, $\phi$, $\ell$, subject to the above conditions, lie in some fixed compact subset in the $\cC^1$ topology, the constants $C,C'$ can be chosen such that~\eqref{EqEICoercive} holds for all such data, and for all $\omega\in\CIc(B_1;E)$.
\end{cor}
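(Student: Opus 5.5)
The plan is to apply Lemma~\ref{LemmaEIT} pointwise on $\ft^{-1}(0)$ and then integrate. At each point $x$ of $\ft^{-1}(0)\cap B_1$, take $\nu:=\phi^2\,\dd\ft$, which is past timelike. Decompose the covariant derivative covector-wise as $\nabla^E\omega=\nu\otimes e+\nu'\otimes f$ with $\nu'\in\nu^\perp$. More concretely, fix an orthonormal-type frame $\nu,\nu'_1,\dots,\nu'_n$ of $T^*_x$ with $\nu'_j\perp\nu$ and write
\[
  \nabla^E\omega=\nu\otimes e+\sum_j\nu'_j\otimes f_j .
\]
The integrand $\sum_{\mu\nu}\la T^{\mu\nu}\nabla_\mu\omega,\nabla_\nu\omega\ra$ is then the bilinear expression $T(\cdot,\cdot)$ evaluated on this decomposition.

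The first step is to reduce the multi-component sum to Lemma~\ref{LemmaEIT}, which handles only a single spatial direction $\nu'$. I would do this by reproving its inequality in the slightly more general form with $\sum_j\nu'_j\otimes f_j$. The same completion-of-squares argument works: since $T$ is bilinear in its covector arguments, write $\sum_j\nu'_j\otimes f_j$ as an element of $\nu^\perp\otimes E$ and apply the lemma's computation with $\ell(\nu')$ replaced by the operator $F\mapsto\sum_j\ell(\nu'_j)f_j$. The positivity input is then the positive definiteness of the quadratic form
\[
  \sum_{j,k}\la\nu'_j,\nu'_k\ra_G\la-\ell(\nu)f_j,f_k\ra
  -\frac{-G(\nu)}{\cdots}\,\Bigl|(-\ell(\nu))^{-1/2}\textstyle\sum_j\ell(\nu'_j)f_j\Bigr|^2 .
\]
This follows from $\ell$ being definite on the future light cone $-\hat\nu+\hat\nu'$ for every unit $\hat\nu'\perp\nu$, combined with a compactness argument over the unit sphere in $\nu^\perp$.

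If this generalization proves awkward, the alternative is to note that $\ell(\zeta)>0$ on the closed future cone is an open condition. One can then obtain the positivity via the strict positivity of $T_{G,\nu,\ell}(\nu,\nu)$ together with homogeneity and compactness of the set of unit $(e,f_1,\dots,f_n)$. The only danger is a nontrivial null vector, which would violate the lightlike positivity. I expect this generalization to be the main point requiring care.

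Given the pointwise bound $\ge c\,\phi^{2}|\nabla^E\omega|^2_{R\otimes E}$, which comes from the homogeneity in $\nu=\phi^2\dd\ft$ (quadratic in $\nu$ times appropriate powers), I would integrate over $\ft^{-1}(0)$. The term $-C'\int|\omega|^2$ only arises if we insist on a weight mismatch or on replacing $\nabla^E$ by coordinate derivatives. In the plan, it absorbs the zeroth-order connection terms when converting $|\nabla^E\omega|$ to the $R$-norm of $\pa\omega$, via $|a+b|^2\ge\frac12|a|^2-|b|^2$.

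Finally, for uniformity: every constant produced above, namely the lower bound of $\ell$ on the unit future cone, the bounds on $G$, $G(\nu)$, $\phi$, and the connection coefficients, depends continuously on the data in $\cC^0$ (and thus certainly in $\cC^1$). By compactness of the parameter set, the constants can be chosen uniformly.
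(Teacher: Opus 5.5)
The step that fails is the pointwise extension of Lemma~\ref{LemmaEIT} from $\nu\otimes e+\nu'\otimes f$ to $\nu\otimes e+\sum_j\nu'_j\otimes f_j$. The integrand in \eqref{EqEICoercive} is in general \emph{not} pointwise nonnegative, so there is no pointwise bound $\geq c\,\phi^2|\nabla^E\omega|^2_{R\otimes E}$.

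To see this, normalize $G(\nu)=-1$, let $\nu'_1,\dots,\nu'_n$ be $G$-orthonormal in $\nu^\perp$, and put $K:=-\ell(\nu)>0$, $B_j:=\ell(\nu'_j)$. The integrand is then $\la Ke,e\ra-2\Re\sum_j\la B_je,f_j\ra+\sum_j\la Kf_j,f_j\ra$. Its minimum over $e$ is $\sum_j\la Kf_j,f_j\ra-|K^{-1/2}\sum_jB_jf_j|^2$. With $\tilde B_j:=K^{-1/2}B_jK^{-1/2}$, positivity for all $(f_j)$ is the condition $\|\sum_j\tilde B_j^2\|<1$. Definiteness of $\ell$ on the future causal cone only gives $\|\sum_jc_j\tilde B_j\|<1$ for $|c|=1$, i.e.\ positivity for rank-one data $f_j=c_jf$.

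These two conditions differ. Take $n=2$, $E=\C^2$, $K=I$, and $B_j=sP_j$ with $P_1=\bigl(\begin{smallmatrix}0&1\\1&0\end{smallmatrix}\bigr)$, $P_2=\bigl(\begin{smallmatrix}1&0\\0&-1\end{smallmatrix}\bigr)$, $s\in(2^{-1/2},1)$. For every future causal $\zeta=-a\nu+\sum_jc_j\nu'_j$ ($|c|\leq a$, $a>0$), $\ell(\zeta)$ has eigenvalues $a\pm s|c|>0$, so the hypothesis holds. Yet $e=2su$, $f_j=P_ju$ with $|u|=1$ give the value $2-4s^2<0$. So the compactness argument you sketch has nothing to work with: the degenerate directions are not rank one and contradict no lightlike positivity.

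What holds is only this rank-one (Legendre--Hadamard) positivity. Hence \eqref{EqEICoercive} must be proved as an integrated, G\aa{}rding-type inequality. It has to use that on $\ft^{-1}(0)$ the function $\nabla^E_0\omega$ is free, while all tangential derivatives come from the single section $\omega$.

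This is the paper's route. For constant $g,\ell,\phi$, trivial $E$ and $\nabla^E=\dd$, a linear change $x\mapsto x-v\ft$ makes $\dd\ft\perp\dd x^j$. Plancherel in $x$ then turns the spatial part into the rank-one tensor $(\xi\cdot\dd x)\otimes i\hat\omega$. Lemma~\ref{LemmaEIT} with $\nu=\dd\ft$, $\nu'=\xi\cdot\dd x$, $e=\nabla_0\hat\omega$, $f=i\hat\omega$ gives \eqref{EqEICoercive} with $C'=0$. In the example above, the cross term $2s^2\Re\la P_2P_1\pa_1\omega,\pa_2\omega\ra$ that produces the pointwise negativity integrates to zero because $P_2P_1$ is skew.

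Nonflat connections are handled by Cauchy--Schwarz, and variable coefficients by a partition of unity into cells where the data are nearly constant. These connection and commutator errors, not a weight mismatch, are the real source of the $-C'\int|\omega|_E^2$ term. The $\cC^1$ bounds on the data make the cell size and the constants uniform.
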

\begin{proof}
  If $E=\R^{n+1}\times\C^k$ is trivial, $\nabla^E=\dd$, and $g,\ell,\phi$ are constant, we in fact have~\eqref{EqEICoercive} with $C'=0$. To see this, change the $x$-coordinates by $x\mapsto x-v\ft$ for an appropriate $v\in\R^n$ such that $\dd\ft$ is orthogonal to $\dd x^1,\ldots,\dd x^n$. Then, denoting the Fourier transform in $x$ by $\hat{\ }$, Plancherel's theorem shows that the estimate~\eqref{EqEICoercive} is equivalent to
  \begin{align*}
    &\int_{\R^n_\xi} \big\la T_{G,\phi^2\,\dd\ft,\ell}(\dd\ft,\dd\ft)\nabla_0^E\hat\omega,\nabla_0^E\hat\omega\big\ra_E + 2 \Re\big\la T_{G,\phi^2\,\dd\ft,\ell}(\dd\ft,\xi)\nabla_0^E\hat\omega,i\hat\omega\big\ra_E + \big\la T_{G,\phi^2\,\dd\ft,\ell}(\xi,\xi)\hat\omega,\hat\omega\big\ra_E\,\dd\xi \\
    &\qquad \geq C\int_{\R^n_\xi} \phi^2\bigl(|\nabla_0^E\hat\omega|_E^2 + |\xi|_R^2|\hat\omega|_E^2\bigr)\,\dd\xi,
  \end{align*}
  where we identify $\xi\in\R^n$ with the 1-form $\xi\cdot\dd x$. This estimate however is an immediate consequence of Lemma~\ref{LemmaEIT} for $\nu=\dd\ft$, $\nu'=\xi\cdot\dd x$ and $e=\nabla_0^E\hat\omega$, $f=i\hat\omega$.

  If $\nabla^E=\dd+A$ for $A\in\CI(\R^{n+1};T^*\R^{n+1}\otimes\End(E))$, the estimate follows for slightly smaller $C$ but now with $C'>0$ by Cauchy--Schwarz. For general $g,\ell,\phi$, one uses a partition of unity on $\ol{B_1}$, localizing to small regions $\cU$ on which $T_{G,\phi^2\,\dd\ft,\ell}$, $\ell$, $g$, and $\phi$ vary by small amounts so that the desired positivity follows from the constant coefficient case, up to an error term---arising from the difference of $T_{G,\phi^2\,\dd\ft,\ell}$ etc.\ and their constant approximations---which can be estimated by a small constant times $\int_\cU \phi^2|\nabla^E \omega|_{R\otimes E}^2\,\dd x$. Note also that terms arising from commuting $\nabla^E$ through an element of the partition of unity localizing to $\cU$ give terms which can be estimated from below by a positive constant times
  \[
    -\int_\cU \phi^2|\omega|_E|\nabla^E \omega|_{R\otimes E}\,\dd x\geq -\eps\int_\cU \phi^2|\nabla^E \omega|_{R\otimes E}^2\,\dd x-C_\eps\int_\cU \phi^2|\omega|_E^2\,\dd x
  \]
  for any $\eps>0$. This completes the proof.
\end{proof}

This formulation allows for an immediate application of the estimate~\eqref{EqEICoercive} to the following setting: $\R^{n+1}$ is replaced by $\tilde\sfM:=I_\ft\times\sfM$ where $I\ni 0$ is an open interval and $\sfM$ is an $n$-dimensional manifold of bounded geometry; the latter means that $\sfM$ has a cover by coordinate charts $\phi_i\colon\cU_i\xra{\cong} B_3\subset\R^n$, with the smaller balls $\phi_i^{-1}(B_1)$ still covering $\sfM$, and such that the transition functions $\phi_i\circ\phi_j^{-1}$ are uniformly bounded in $\CI$. We further require $E\to\tilde\sfM$ to be a vector bundle, with Hermitian inner product and affine connection, of bounded geometry, i.e., it admits trivializations over the preferred charts $\tilde\phi_i\colon I\times\cU_i\to I\times B_3$ on $\tilde\sfM$ for which the transition functions as well as the fiber inner product and the affine connection have uniformly bounded coefficients. Finally, in these induced preferred charts, the data $g$, $(E,\la\cdot,\cdot\ra_E)$, $\nabla^E$, $\ell$, $\phi$, and $\dd\ft$ are required to lie in a compact subset of $\CI(I\times B_3)$ on which the hypotheses of Corollary~\ref{CorEICoercive} are satisfied. In this setup,~\eqref{EqEICoercive} applies to all $\omega\in\CIc(\tilde\sfM;E)$.

This situation arises naturally for metrics which have controlled asymptotic behavior on a suitable compactification of an underlying spacetime, in particular for the conformally rescaled spacetime
\[
  (M,g_\tbop),\quad g_\tbop:=\rho_\sface^2 g\in\CI(M;S^2\,\Ttb^*M),
\]
with $g_\tbop$ a nondegenerate Lorentzian 3b-metric. Namely, near $X$, we can take $\ft=\tau=\frac{t}{r}$, with the preferred charts on $\ft$-level sets being direct products of charts on $\Sph^2$ with dyadic intervals in $\R_r$; thus~\eqref{EqEICoercive} becomes an estimate for b-derivatives of $\omega$ (i.e., derivatives along spherical vector fields, $r\pa_r$, and $\pa_\ft=r\pa_t$), which is precisely what 3b-derivatives are away from $\cK^+$. Near $\Sigma^\sharp$, we can take $\ft=\bhm-r$, and the preferred charts on $r$-level sets are direct products of charts on $\Sph^2$ with unit length intervals in $t$; in this case~\eqref{EqEICoercive} becomes an estimate for derivatives along spherical vector fields, $\pa_t$, and $\pa_\ft=-\pa_r$, which also here means regularity along 3b-vector fields. (In practice, we shall work directly with $g$, in which case we get estimates on 3b-Sobolev spaces with appropriately shifted weights.)

The following result gives energy estimates for both settings:

\begin{prop}[Energy estimates near $X$ and $\Sigma^\sharp$]
\label{PropEI}
\fakephantomsection
  Let $\alpha_\sface,\alpha_\cK\in\R$.
  \begin{enumerate}
  \item\label{ItEISigma}{\rm (Estimate near $X$.)} Recall $\tau=\frac{t}{r}$, and let $T>0$ be such that $\dd\tau$ is past timelike for $g_\tbop$ on $\tau^{-1}([0,T])$. We drop the weight at $\cK^+$ from the notation. Then there exist $h_0>0$ and a constant $C$ such that for all $\omega\in\rho_\sface^{\alpha_\sface}\Htbh^2(M)$ which vanish for $t<0$, the following estimate holds for all $h\in(0,h_0)$:
    \begin{equation}
    \label{EqEISigmaFw}
      \|\omega\|_{\rho_\sface^{\alpha_\sface}\Htbh^1(\tau^{-1}([0,T]))} \leq C h^{-1}\|P_h \omega\|_{\rho_\sface^{\alpha_\sface+2}L^2(\tau^{-1}([0,T]))}.
    \end{equation}
    For $\omega\in\rho_\sface^{-\alpha_\sface-2}\Htbh^2(M)$ without support restrictions, we have
    \begin{equation}
    \label{EqEISigmaBw}
      \|\omega\|_{\rho_\sface^{-\alpha_\sface-2}\Htbh^1(\tau^{-1}([0,T/2]))} \leq C\Bigl( h^{-1}\|P_h^*\omega\|_{\rho_\sface^{-\alpha_\sface}L^2(\tau^{-1}([0,T]))} + \|\omega\|_{\rho_\sface^{-\alpha_\sface-2}\Htbh^1(\tau^{-1}([T/2,T]))} \Bigr).
    \end{equation}
  \item\label{ItEISigmaSharp}{\rm (Estimate near $\Sigma^\sharp$.)} Let $\bhm\leq r_1<r_2<r_3<r_+$. We drop the weight at $\sface$ from the notation. Then there exist $h_0>0$ and $C$ such that for all $\omega\in\rho_\cK^{\alpha_\cK}\Htbh^2(M)$ which vanish for $t<0$, the following estimate holds for all $h\in(0,h_0)$:
    \begin{equation}
    \label{EqEISigmaSharpFw}
      \|\omega\|_{\rho_\cK^{\alpha_\cK}\Htbh^1(r^{-1}([r_1,r_2]))} \leq C\Bigl( h^{-1}\|P_h \omega\|_{\rho_\cK^{\alpha_\cK}L^2(r^{-1}([r_1,r_3]))} + \|\omega\|_{\rho_\cK^{\alpha_\cK}\Htbh^1(r^{-1}([r_2,r_3]))} \Bigr).
    \end{equation}
    For all $\omega\in\rho_\cK^{-\alpha_\cK}\Htbh^2(M)$ which vanish for $r<r_1$ (but not necessarily for $t<0$), we have
    \begin{equation}
    \label{EqEISigmaSharpBw}
      \|\omega\|_{\rho_\cK^{-\alpha_\cK}\Htbh^1(r^{-1}([r_1,r_3])\cap\{\tau\geq 0\})} \leq C h^{-1}\|P_h^*\omega\|_{\rho_\cK^{-\alpha_\cK}L^2(r^{-1}([r_1,r_3])\cap\{\tau\geq 0\})}.
    \end{equation}
  \end{enumerate}
\end{prop}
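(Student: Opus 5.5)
We will prove both parts by a semiclassical energy (vector field) argument built on the generalized stress--energy--momentum tensor of Lemma~\ref{LemmaEIT} and the coercivity estimate of Corollary~\ref{CorEICoercive}. The damping term $-iL_h$ will supply the positive bulk term, playing the role that the timelike Killing or multiplier vector field plays in standard energy estimates. Concretely, for part~\eqref{ItEISigma} we take the time function $\ft=\tau$. We use the multiplier $V=\phi^2\chi(\tau)\,(\dd\tau)^\sharp$, with $\phi$ a weight carrying $\rho_\sface^{-\alpha_\sface-1}$ (or its dual) and with $\chi$ a decreasing cutoff, for instance $\chi(\tau)=e^{-\digamma\tau}$ on $[0,T]$ multiplied by a cutoff near $\tau=T$.

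We then pair $P_h\omega$ with the first order operator $\nabla_V\omega$, using the fiber inner product $B$ and the volume $|\dd g|$, and take $-\Re$ (or $\Im$, as appropriate) of $\la P_h\omega,h\nabla_V\omega\ra$. Integrating by parts in the $h^2\Box_g$ part produces the divergence of the current $T(\dd\omega,\dd\omega)(\cdot,V)$. Its boundary term at $\tau=0$ vanishes by the support assumption, and its boundary term at $\tau=T$ has a good sign, since $\dd\tau$ is past timelike for $g_\tbop$. The bulk term contains two pieces. The first is $-\chi'$ times the energy density, which Corollary~\ref{CorEICoercive} shows to be coercive over $h\nabla^E\omega$ in 3b-norms after conformally rescaling by $\rho_\sface^2$. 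The second is the contribution of $h\ell$, which is coercive on $|\omega|^2$ because $\ell(\zeta)$ is definite on future causal $\zeta$ (Lemma~\ref{LemmaCTimelike}). The most natural choice is to apply Corollary~\ref{CorEICoercive} directly with $\ell$ replaced by the symbol of $-L_h$, that is, with $T_{G,\nu,\ell}$ built from the principal part $G-i\ell$. The cross term $2\Re\la i h\ell\,\omega, h\nabla_V\omega\ra$ then gives positivity of order $h$ times the $\Htbh^1$ energy, while the error terms are $\cO(h^2)$ times the same energy plus $|\omega|^2$. Choosing $\digamma$ large, so that the $\chi'$ term dominates the lower order terms coming from $J_0=\Im\Box_g$, from the derivatives of the weights, and from $\nabla V$, and then choosing $h$ small, we get
\[
  h\,\|\omega\|_{\rho_\sface^{\alpha_\sface}\Htbh^1}^2\lesssim \|P_h\omega\|_{\rho_\sface^{\alpha_\sface+2}L^2}\,\|\omega\|_{\rho_\sface^{\alpha_\sface}\Htbh^1},
\]
which yields~\eqref{EqEISigmaFw}.

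For the adjoint estimate~\eqref{EqEISigmaBw} we use the same computation with $P_h^*=\Box_h^*+iL_h^*$, but reverse the direction of the cutoff: $\chi$ is increasing in $\tau$ and supported in $\tau\le T$. The damping now has the right sign for propagation in the past direction. The cutoff's derivative on $[T/2,T]$ now has the wrong sign, and it is controlled by the a priori term $\|\omega\|_{\Htbh^1(\tau^{-1}([T/2,T]))}$. The boundary term at $\tau=0$ is nonnegative.

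Part~\eqref{ItEISigmaSharp} runs the same argument with $\ft=\bhm-r$, which is a valid time function in $r<r_+$ because $\dd r$ is future timelike there. The weight is $\rho_\cK^{\mp\alpha_\cK}$. The preferred charts are products of unit $t$-intervals with charts on $\Sph^2$, and the $t$-translation invariance of all coefficients gives the uniform bounded-geometry constants that Corollary~\ref{CorEICoercive} requires. The weight $\rho_\cK^{\alpha_\cK}$ has bounded logarithmic derivative, so it only contributes lower order errors. In the forward case the cutoff in $r$ produces the control term on $[r_2,r_3]$. In the backward case there is a vanishing assumption for $r<r_1$, and the boundary term at $\tau=0$ has a good sign, since $\dd\tau$ is past timelike on the relevant region; with these two facts no control term is needed.

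The main obstacle is the first step: verifying that the combined current, which mixes the second order part $G$ with the $\cO(h)$ non-scalar damping $\ell$ (self-adjoint only with respect to $B$, not $g^{-1}$), gives bulk positivity uniformly in $h$. This is exactly what Lemma~\ref{LemmaEIT} is designed for, so we would apply it with $E=\cT^*$ and the inner product $B$. We would then check that the terms coming from $\Im\Box_h\in h\Difftbh^1$ and from $L_h-L_h^*\in h^2\Difftbh^0$ are absorbed by the $h$-sized damping term once $h$ is small. A further point to track is that the $\Htbh^1$ norm arising here pairs $h\nabla$ with $\omega$ at the same scale.
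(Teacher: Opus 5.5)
There is a genuine gap, and it sits exactly at the step you flag as the main obstacle. With a scalar multiplier $h\nabla_V\omega$, the damping--multiplier interaction is, to leading order,
\[
  \Re\la -iL_h\omega,h\nabla_V\omega\ra = -h^2\Re\la\ell(\nabla)\omega,\nabla_V\omega\ra + (\text{lower order}),
\]
where $\ell(\nabla)\omega=\ell(\dd z^\mu)\nabla_\mu\omega$. This is a \emph{second order} quadratic form in $\omega$ with principal symbol $\pm\zeta(V)\,\ell(\zeta)$, and it is indefinite. Indeed, by \eqref{EqCSplitMtx}, for spacelike $\zeta\perp\cd$ the endomorphism $\ell(\zeta)$ has eigenvalues $\pm2\sqrt{e|G(\cd)|G(\zeta)}$. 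By continuity, for every timelike $V$ there are $\zeta$ with $\zeta(V)\neq0$ and $\ell(\zeta)$ indefinite.

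This form is also of size $\|h\nabla\omega\|^2$. That is a factor $(\digamma h)^{-1}$ larger than the bulk term $\digamma h\|h\nabla\omega\|^2$ produced by the wave part and the weight $e^{-\digamma\tau}$. So it can be neither exploited nor absorbed, short of taking $\digamma\gtrsim h^{-1}$, which loses $e^{CT/h}$. In particular, neither the ``positivity of order $h$ times the $\Htbh^1$ energy'' nor the coercivity ``on $|\omega|^2$'' from $h\ell$ comes out of your pairing.

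The underlying misreading concerns Lemma~\ref{LemmaEIT}. In $T_{G,\nu,\ell}$ the endomorphism $\ell$ occupies the slot of the multiplier vector field $X$ in $T(\nu^\sharp,X)$. A scalar multiplier $\nabla_V$ only ever produces the scalar tensor $T_{G,\nu,V}$; $T_{G,\nu,\ell}$ arises only if $\ell$ itself is the multiplier.

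The missing idea is to use the damping operator as a non-scalar multiplier. The paper pairs with $a^2L_h\omega$ for a scalar weight $a$; in part~(1), $a=\rho_\sface^{-\alpha_\sface-2}e^{-\digamma\tau}H(T-\tau)H(r-\bhm)$. This gives
\[
  h^{-1}\Im\la P_h\omega,a^2L_h\omega\ra=\la\sC\omega,\omega\ra-h^{-1}\|aL_h\omega\|^2,\qquad \sC=\tfrac{i}{2h}\bigl(\Box_h^*a^2L_h-L_h^*a^2\Box_h\bigr).
\]
Now the damping--multiplier interaction is a perfect square with the same sign as the main term. The operator $\sC$ has principal symbol $a\,T_{G,\dd a,\ell}-a^2E$, with $E$ independent of $a$. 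Its coercivity is exactly Corollary~\ref{CorEICoercive}, applied with $\dd\tau$ and with $-\dd r$, since $\ell$ is positive on future causal covectors. This includes the boundary contributions from the $\delta$-factors in $a\,\dd a$ at $\tau=T$ and at $r=\bhm$; the latter boundary is missing from your part~(1).

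This pairing bounds $\digamma\|\rho_\sface^2ah\nabla\omega\|^2+h^{-1}\|aL_h\omega\|^2$ by $h^{-1}\|aP_h\omega\|^2$ plus $h^2$ times bulk and boundary $L^2$-norms of $\omega$. The zeroth order control comes from a second pairing, $h^{-1}\Im\la L_h\omega,\rho_\sface^2a^2\omega\ra=\la\sC'\omega,\omega\ra$. Here $\sC'=\rho_\sface a\,\ell(\dd(\rho_\sface a))+S\rho_\sface^2a^2$ is bounded below by $c\digamma\rho_\sface^4a^2$ plus positive boundary terms, by Lemma~\ref{LemmaCTimelike}, because $\dd(\rho_\sface a)$ is future timelike for large $\digamma$. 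This bounds those $L^2$-norms by $h^{-2}\|aL_h\omega\|^2$, and combining the two pairings closes the estimate for small $h$.

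The adjoint and $\Sigma^\sharp$ estimates follow the same scheme with the direction reversals you describe. A minor correction: $L_h-L_h^*\in h\Difftbh^0$, not $h^2\Difftbh^0$.
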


We have analogous semiclassical energy estimates in compact subsets of $\Omega^\circ$ as well; we shall not state such estimates explicitly here. Proposition~\ref{PropEI} sharpens (and is more natural than) \cite[Propositions~8.26 and 8.27]{HintzVasyKdSStability}, in that the domains in which the solution and the forcing are controlled coincide here; we accomplish this via a slightly different choice of commutants, and by exploiting the strong nature of the non-scalar damping term $-i L_h$ of $P_h$ more effectively.

\begin{proof}[Proof of Proposition~\usref{PropEI}]
  \pfstep{Preliminary computations.} The strategy is to use $a^2 L_h$ as a non-scalar vector field multiplier, where the scalar function $a$ on $M$, chosen later, encodes weights and has timelike differential; the choices relevant for our present purposes are described below. Still working with~\eqref{EqENotation}, we then consider the $L^2(M,|\dd g|;\cT^*)$-pairing
  \begin{equation}
  \label{EqEIComm}
    h^{-1}\Im\la P_h \omega,a^2 L_h \omega\ra = \la\sC \omega,\omega\ra - h^{-1}\|a L_h \omega\|^2,
  \end{equation}
  where we formally integrated by parts (i.e., we dropped boundary terms) and introduced
  \begin{align*}
    \sC &= \frac{i}{2 h}(\Box_h^* a^2 L_h-L_h^*a^2\Box_h) \\
      &= \frac{i}{2 h} \bigl( [\Box_h,a^2]L_h - [L_h,a^2]\Box_h + a^2[\Box_h,L_h] \bigr) + h^{-1}(\Im\Box_h)a^2 L_h-h^{-1}(\Im L_h)a^2\Box_h.
  \end{align*}
  Working in $M^\circ$, we have $\sC\in h^2\Diff^2$ (acting on sections of $T^*M^\circ$) since $\Box_h=:h^2\Box\in h^2\Diff^2$, $L_h=:h L\in h\Diff^1$, and the semiclassical principal symbol of $\sC$ is given by
  \begin{align*}
    T_z^*M^\circ \ni \zeta & \mapsto a(z)(H_G a)|_\zeta \ell(\zeta) - a(z)\ell|_z(\dd a) G(\zeta) - a(z)^2 E(\zeta,\zeta) &\\
      &= a(z) T_{G,\dd a,\ell}(\zeta,\zeta) - a(z)^2 E(\zeta,\zeta)
  \end{align*}
  in the notation of Lemma~\ref{LemmaEIT}, where $E(\cdot,\cdot)$ is the symmetric bilinear form (independent of $a$), valued in the space of self-adjoint endomorphisms of $T^*M^\circ$, which at $(\zeta,\zeta)$ evaluates to $(-1)$ times the semiclassical principal symbol of the second order semiclassical differential operator $\frac{i}{2 h}[\Box_h,L_h]+h^{-1}(\Im\Box_h)L_h-h^{-1}(\Im L_h)\Box_h$. In the passage to the second line, we used the identity $(H_G a)|_\zeta=2 g^{-1}(\zeta,\dd a)$.

  Write now $\nabla$ for any affine connection on the bundle $T^*M^\circ\to M^\circ$. Put $b:=a^2$ and $\sC_b:=\frac{i}{2}(\Box^*b L-L^*b\Box)$. Since this has principal symbol $\frac12 T_{G,\dd b,\ell}(\zeta,\zeta)-b E(\zeta,\zeta)$, we have
  \[
    \tilde\sC_b:=\sC_b - \nabla^*\bigl(\tfrac12 T_{G,\dd b,\ell}-b E\bigr)\nabla \in \Diff^1,
  \]
  which can thus be written as $\sC_{(1)}(b)\nabla+\sC_{(0)}(b)$, with the coefficients $\sC_{(0)},\sC_{(1)}$ depending only on the 2-jet of $b$. In fact, $\sC_{(1)}(b)$ does not contain any second-order derivatives of $b$: in local coordinates, the only first-order term of $\sC_b$ involving $\pa^2 b$ is $-\frac{i}{2}g^{\mu\nu}b_{;\mu\nu}\ell^\lambda D_\lambda=\frac12(\Box_g b)\ell^\lambda\pa_\lambda$, which is equal to that of $\nabla^*\frac12 T_{G,\dd b,\ell}\nabla$, i.e., to that of $-\frac12(T_{G,\dd b,\ell}^{\mu\lambda})_{;\mu}\pa_\lambda$. (The verification is straightforward using $T_{G,\dd b,\ell}^{\mu\lambda}=b^{;\mu}\ell^\lambda+b^{;\lambda}\ell^\mu-g^{\mu\lambda}b_{;\kappa}\ell^\kappa$.) Therefore, we have $\sC_{(1)}(b)=\sC_{(1)}^\sharp(\dd b)+\sC_{(1)}^\flat(b)$ for $b$-independent tensor fields $\sC_{(1)}^\sharp,\sC_{(1)}^\flat$. The only $\pa^2 b$-dependence of the coefficients of $\tilde\sC_b$ is thus in $\sC_{(0)}(b)$; but any $\pa^2 b$-term can be written as $\nabla^*\sC_{(0)}^\sharp(\dd b)+\sC_{(0)}^\flat(\dd b)$ for suitable $b$-independent tensor fields $\sC_{(0)}^\sharp$, $\sC_{(0)}^\flat$. Inserting $b=a^2$ shows that one can write
  \begin{equation}
  \label{EqEIComm2}
  \begin{split}
    h^{-2}\sC = \sC_{a^2} &= -\nabla^*\bigl( a T_{G,-\dd a,\ell}+E a^2\bigr)\nabla \\
      &\qquad + \nabla^*\bigl(a E^\sharp(\dd a)+E^\flat a^2\bigr) + \bigl(a \tilde E^\sharp(\dd a) + \tilde E^\flat a^2\bigr)\nabla + a\tilde E(\dd a) + a^2\tilde E',
  \end{split}
  \end{equation}
  where, upon writing $T^*=T^*M^\circ$, the tensors $E,E^\sharp,E^\flat$, and $\tilde E^\sharp,\tilde E^\flat,\tilde E,\tilde E'$ are independent of $a$, and smooth sections of the bundles $\End(T^*\otimes T^*)$, $\Hom(T^*,\Hom(T^*,T^*\otimes T^*))$, $\Hom(T^*,T^*\otimes T^*)$, and $\Hom(T^*,\Hom(T^*\otimes T^*,T^*))$, $\Hom(T^*\otimes T^*,T^*)$, $\Hom(T^*,\End(T^*))$, $\End(T^*)$, respectively. (This form is wasteful, but symmetric.) Roughly speaking then, the first, main, term in~\eqref{EqEIComm2} will control derivatives of $\omega$ by Corollary~\ref{CorEICoercive}, thus also $\omega$ itself upon integrating; the remaining terms will be much smaller by Cauchy--Schwarz and upon choosing $a$ such that $\dd a$ dominates $a$. Another error term is the $L^2$-norm of $\omega$ on the boundary of the domain of integration, which we shall be able to control from the initial data by making use of the strong (due to the presence of $h^{-1}$) second term in~\eqref{EqEIComm}.

  \pfstep{Estimates for $P_h$.} We implement this strategy to prove~\eqref{EqEISigmaFw}. Let
  \[
    a = \rho_\sface^{-\alpha_\sface-2}\chi_0(\tau)\chi_1(\tau)\chi_2(r),\quad
    \chi_0(\tau)=e^{-\digamma\tau},\ 
    \chi_1(\tau)=H(T-\tau),\ 
    \chi_2(r)=H(r-\bhm),
  \]
  with $\digamma>1$ chosen below; here $H$ is the Heaviside step function. We aim to evaluate~\eqref{EqEIComm} using the formula~\eqref{EqEIComm2} where as the connection on $\cT^*$ we take a 3b-connection
  \[
    \nabla\in\Difftb^1(M;\cT^*,\Ttb^*M\otimes\cT^*\bigr).
  \]
  Concretely, one can take this to be the Levi-Civita connection of $(M,g)$, for which this membership follows from~\eqref{EqK3bNabla} for $\cE=\cT^*$ and the fact that the space of smooth sections of $\Ttb^*M$ is equal to the space of products of $\rho_\sface$ with smooth sections of $\cT^*$ (as follows from Remark~\ref{RmkK3b3sc}). Now,
  \begin{equation}
  \label{EqEIada}
  \begin{split}
    a\,\dd a &= -\rho_\sface^{-2\alpha_\sface-4}\chi_0^2\chi_1^2\chi_2^2\Bigl(\digamma\,\dd\tau+(\alpha_\sface+2)\frac{\dd\rho_\sface}{\rho_\sface}\Bigr) \\
      &\qquad - \tfrac12\rho_\sface^{-2\alpha_\sface-4}\chi_0^2\chi_2^2\delta(T-\tau)\,\dd\tau + \tfrac12\rho_\sface^{-2\alpha_\sface-4}\chi_0^2\chi_1^2\delta(r-\bhm)\,\dd r.
  \end{split}
  \end{equation}
  (In this computation, one avoids products of Heaviside and $\delta$-distributions by computing $\frac12\dd(a^2)$.) As far as weights are concerned, recall that $\rho_\sface^{-2}\ell\in\CI(\Ttb^*M;\pi^*\End(\cT^*))$, $\rho_\sface^{-2}G\in\CI(\Ttb^*M)$, and
  \begin{equation}
  \label{EqEITweight}
    a T_{G,-\dd a,\ell} = T_{G,-a\,\dd a,\ell}=\rho_\sface^4 T_{\rho_\sface^{-2}G,-a\,\dd a,\rho_\sface^{-2}\ell} .
  \end{equation}
  We now integrate two versions of the estimate~\eqref{EqEICoercive} over $\{r\geq\bhm,\ 0\leq\tau\leq T\}$: once for $\dd\tau$ and $\phi^2=\rho_\sface^{-2\alpha_\sface-4}\chi_0^2\chi_2^2(\chi_1^2\digamma+\delta(T-\tau))=\digamma a^2+\rho_\sface^{-2\alpha_\sface-4}\chi_0^2\chi_2^2\delta(T-\tau)$, and once for $-\dd r$ (which, like $\dd\tau$, is past timelike) and $\phi^2=\rho_\sface^{-2\alpha_\sface-4}\chi_0^2\chi_1^2\delta(r-\bhm)$. We furthermore absorb the contribution from the $\frac{\dd\rho_\sface}{\rho_\sface}$-term in~\eqref{EqEIada} by choosing $\digamma>1$ large; this gives
  \begin{equation}
  \label{EqEICommMain}
  \begin{split}
    &{-}\la T_{G,-a\,\dd a,\ell}\nabla \omega,\nabla \omega\ra \\
    &\quad\leq -C\digamma\|\rho_\sface^2 a\nabla \omega\|^2 - C\|\chi_0\chi_2\nabla \omega\|_{\rho_\sface^{\alpha_\sface}L^2(\tau^{-1}(T))}^2 - C\|\chi_0\chi_1\nabla\omega\|_{L^2(r^{-1}(\bhm))}^2 \\
    &\quad\qquad + C'\digamma\|\rho_\sface^2 a \omega\|^2 + C'\|\chi_0\chi_2\omega\|_{\rho_\sface^{\alpha_\sface}L^2(\tau^{-1}(T))}^2 + C'\|\chi_0\chi_1\omega\|_{L^2(r^{-1}(\bhm))}^2.
  \end{split}
  \end{equation}
  Here, we use the volume density $r^3|\dd r\,\dd\slg|$ on level sets of $\tau$, which upon tensoring with $|\dd\tau|$ is a smooth positive multiple of the metric density $|\dd g|$.

  We proceed to estimate the remaining terms in~\eqref{EqEIComm2} when applied to $\omega$ and then paired with $\omega$. We first remark that $E\in\rho_\sface^4\CI(M;\End(\Ttb^*M\otimes\cT^*))$, and similarly for the other terms; that is, the total weight at $\sface$ of all terms as 3b-differential operators matches the weight $\rho_\sface^4$ of $\nabla^*T_{G,-a\,\dd a,\ell}\nabla$ (see~\eqref{EqEITweight}). Note then that the term with $E$ does not involve differentiation of $a$, and hence contributes $|\la E a^2\nabla \omega,\nabla \omega\ra|\leq C\|\rho_\sface^2 a\nabla \omega\|^2$, which can be absorbed into the first term on the right in~\eqref{EqEICommMain} for $\digamma>1$ large. For $E^\sharp$, we use the Peter--Paul inequality to obtain
  \begin{equation}
  \label{EqEIPeterPaul}
  \begin{split}
    &|\la a E^\sharp(\dd a)\omega,\nabla \omega\ra| \\
    &\qquad \leq \bigl(\eta\digamma\|\rho_\sface^2 a\nabla \omega\|^2 + C_\eta\digamma\|\rho_\sface^2 a \omega\|^2\bigr) + \bigl(\eta\|\chi_0\chi_2\nabla\omega\|_{\rho_\sface^{\alpha_\sface}L^2(\tau^{-1}(T))}^2 + C_\eta\|\chi_0\chi_2\omega\|_{\rho_\sface^{\alpha_\sface}L^2(\tau^{-1}(T))}^2\bigr) \\
    &\qquad \quad \quad + \bigl(\eta\|\chi_0\chi_1\nabla\omega\|_{L^2(r^{-1}(\bhm))}^2 + C_\eta\|\chi_0\chi_1\omega\|_{L^2(r^{-1}(\bhm))}^2\bigr)
  \end{split}
  \end{equation}
  for all $\eta>0$, likewise for the term involving $\tilde E^\sharp$, and similarly (but without the factor $\digamma$) for $E^\flat$, $\tilde E^\flat$. The terms from $\tilde E$ and $\tilde E'$ are bounded by $C\bigl(\digamma\|\rho_\sface^2 a \omega\|^2+\|\chi_0\chi_2\omega\|_{\rho_\sface^{\alpha_\sface}L^2(\tau^{-1}(T))}^2+\|\chi_0\chi_1\omega\|_{L^2(r^{-1}(\bhm))}^2\bigr)$ and $C\|\rho_\sface^2 a \omega\|^2$, respectively.
  
  We plug these estimates into the right-hand side of~\eqref{EqEIComm}. The left-hand side of~\eqref{EqEIComm}, on the other hand, we can bound from below by $-\frac12 h^{-1}\|a P_h \omega\|^2-\frac12 h^{-1}\|a L_h \omega\|^2$; the term $\frac12 h^{-1}\|a L_h\omega\|^2$ can be absorbed into the term $h^{-1}\|a L_h\omega\|^2$ in~\eqref{EqEIComm}. We thus obtain from $h^2$ times~\eqref{EqEICommMain} and the subsequent error estimates the $L^2(M,|\dd g|;\cT^*)$-estimate
  \begin{equation}
  \label{EqEIEstD}
  \begin{split}
    \digamma\|\rho_\sface^2 a h\nabla \omega\|^2 + h^{-1}\|a L_h \omega\|^2 &\leq C\Bigl( h^{-1}\|a P_h \omega\|^2 + h^2\digamma\|\rho_\sface^2 a \omega\|^2 \\
      &\quad \qquad + h^2\|\chi_0\chi_2\omega\|_{\rho_\sface^{\alpha_\sface}L^2(\tau^{-1}(T))}^2 + h^2\|\chi_0\chi_1\omega\|_{L^2(r^{-1}(\bhm))}^2\Bigr)
  \end{split}
  \end{equation}
  for all sufficiently large $\digamma$, with $C$ independent of $\digamma$. Here, we absorbed the boundary terms involving $\nabla\omega$ (such as $\eta\|\chi_0\chi_2\nabla\omega\|_{\rho_\sface^{\alpha_\sface}L^2(\tau^{-1}(T))}^2$ in~\eqref{EqEIPeterPaul}) into the boundary terms involving $\nabla\omega$ in~\eqref{EqEICommMain} upon choosing $\eta>0$ sufficiently small.

  Now,~\eqref{EqEIEstD} controls $h\nabla\omega$ by (a small constant times) $\omega$ itself (plus a norm of $P_h\omega$); to close the estimate, we need to control $\omega$ (including its boundary values at $\tau=T$ and $r=\bhm$) in turn by $h\nabla\omega$. We shall do this, roughly speaking, using the fundamental theorem of calculus for $L_h$. More precisely, consider the pairing
  \begin{equation}
  \label{EqEIFTC}
    h^{-1}\Im\la L_h \omega,\rho_\sface^2 a^2 \omega\ra = \big\la \sC' \omega, \omega\big\ra,\quad
    \sC'=\rho_\sface a\,\ell\bigl(\dd(\rho_\sface a)\bigr) + S\rho_\sface^2 a^2.
  \end{equation}
  For sufficiently large $\digamma$, the covector $\rho_\sface a\,\dd(\rho_\sface a)$ is future timelike by~\eqref{EqEIada}. Lemma~\ref{LemmaCTimelike} then implies that the endomorphism $\sC'$ is the sum of three terms: one that is bounded from below by $C\digamma\rho_\sface^4 a^2$ on $r\geq\bhm$ and $0\leq\tau\leq T$, and two $\delta$-distributions: one which is a positive multiple of $\rho_\sface^{-2\alpha_\sface}\chi_0^2\chi_2^2\delta(T-\tau)$, and one which is a positive multiple of $\chi_0^2\chi_1^2\delta(r-\bhm)$. Applying Cauchy--Schwarz to the left-hand side of~\eqref{EqEIFTC} (with one factor of $a$ moved to the first slot) thus gives
  \[
    \digamma\|\rho_\sface^2 a \omega\|^2 + \|\chi_0\chi_2 \omega\|_{\rho_\sface^{\alpha_\sface}L^2(\tau^{-1}(T))}^2 + \|\chi_0\chi_1\omega\|_{L^2(r^{-1}(\bhm))}^2 \leq C h^{-2}\|a L_h \omega\|^2
  \]
  for large $\digamma>1$. Using~\eqref{EqEIEstD}, we obtain
  \begin{align*}
    &\digamma\|\rho_\sface^2 a \omega\|^2 + \|\chi_0\chi_2\omega\|_{\rho_\sface^{\alpha_\sface}L^2(\tau^{-1}(T))}^2 + \|\chi_0\chi_1\omega\|_{L^2(r^{-1}(\bhm))}^2 \\
    &\qquad \leq C\Bigl( h^{-2}\|a P_h \omega\|^2 + h\digamma\|\rho_\sface^2 a \omega\|^2 + h\|\chi_0\chi_2\omega\|_{\rho_\sface^{\alpha_\sface}L^2(\tau^{-1}(T))}^2 + h\|\chi_0\chi_1\omega\|_{L^2(r^{-1}(\bhm))}^2 \Bigr).
  \end{align*}
  Since $\digamma$ is fixed now, we can, for all sufficiently small $h>0$, absorb the last three terms on the right into the left-hand side and obtain
  \begin{equation}
  \label{EqEIFTC2}
    \digamma\|\rho_\sface^2 a \omega\|^2 + \|\chi_0\chi_2\omega\|_{\rho_\sface^{\alpha_\sface}L^2(\tau^{-1}(T))}^2 + \|\chi_0\chi_1\omega\|_{L^2(r^{-1}(\bhm))}^2 \leq C h^{-2}\|a P_h \omega\|^2.
  \end{equation}
  Adding this to~\eqref{EqEIEstD} and absorbing the last three terms on the right in~\eqref{EqEIEstD} into the left-hand side of~\eqref{EqEIFTC2} for small $h>0$, we thus get
  \[
    \|\rho_\sface^2 a \omega\|_{\bar H_{\tbop,h}^1(\tau^{-1}((0,T)))} \leq C h^{-1}\|a P_h \omega\|_{L^2},\quad 0<h<h_0,
  \]
  provided $h_0>0$ is sufficiently small.

  The proof of~\eqref{EqEISigmaSharpFw} is very similar to the previous arguments after a preliminary step. Let $\chi\equiv 1$ for $r\leq r_2$, and $\chi\equiv 0$ for $r\geq r_3$. Put $\omega'=\chi \omega$. Set
  \[
    f' := P_h \omega' = \chi P_h \omega + f,\quad f:=[P_h,\chi]\omega,
  \]
  so $f$, resp.\ $f'$ is supported in $r^{-1}([r_2,r_3])$, resp.\ $\{r\leq r_3\}$. Note that
  \begin{align*}
    h^{-1}\|f'\|_{\rho_\cK^{\alpha_\cK}L^2(r^{-1}([r_1,r_3]))} &\leq h^{-1}\|\chi P_h \omega\|_{\rho_\cK^{\alpha_\cK}L^2(r^{-1}([r_1,r_3]))} + h^{-1}\|f\|_{\rho_\cK^{\alpha_\cK}L^2(r^{-1}([r_1,r_3]))} \\
      &\leq C\Bigl( h^{-1}\|P_h \omega\|_{\rho_\cK^{\alpha_\cK}L^2(r^{-1}([r_1,r_3]))} + \|\omega\|_{\rho_\cK^{\alpha_\cK}\Htbh^1(r^{-1}([r_2,r_3]))}\Bigr).
  \end{align*}
  It thus remains to prove the estimate
  \[
    \|\omega'\|_{\rho_\cK^{\alpha_\cK}\Htbh^1(r^{-1}([r_1,r_3]))} \leq C h^{-1}\|f'\|_{\rho_\cK^{\alpha_\cK}L^2(r^{-1}([r_1,r_3]))}
  \]
  for functions $\omega'\in\rho_\cK^{\alpha_\cK}\Htbh^2(M)$ which vanish when $r>r_3$ or $t<0$. But $-\dd r$ is past timelike in $r^{-1}([r_1,r_3])$, and hence the commutant
  \[
    a = \rho_\cK^{-\alpha_\cK}\chi_0(r)\chi_1(r),\quad
    \chi_0(r)=e^{\digamma r},\ 
    \chi_1(r)=H(r-r_1),\quad \digamma\gg 1,
  \]
  can be used in~\eqref{EqEIComm} (with $\omega'$ in place of $\omega$) to yield the desired estimate~\eqref{EqEISigmaSharpFw}.

  \pfstep{Estimates for $P_h^*$.} Since $P_h^*=\Box_h^*+i L_h^*$, the term $i L_h^*$ is a damping term only for propagation in the \emph{backward} direction. This explains the structure of the estimates~\eqref{EqEISigmaBw} and \eqref{EqEISigmaSharpBw}; their proofs are completely analogous to those of~\eqref{EqEISigmaSharpFw} and \eqref{EqEISigmaFw}.
\end{proof}

\subsubsection{Combination; proof of Theorem~\usref{ThmET}}
\label{SssEP}

We shall now collect the results from the previous sections to prove Theorem~\ref{ThmET}; \emph{we operate under its assumptions for the remainder of this section.} Since it is somewhat more intuitive, we first prove an a priori estimate for the forward problem for $P_h=\Box_{g,h}^{\cC_h}$.

\begin{prop}[Warm-up: a priori estimate]
\label{PropEPWarmup}
  Let $s\geq 1$. Then there exists $h_0>0$ such that for all $h\in(0,h_0)$ and for all $\omega\in\rho_\sface^{\alpha_\sface}\rho_\cK^{\alpha_\cK}\Htbh^{s+1}(\Omega)^{\bullet,-}$ with
  \[
    f:=P_h \omega=\Box_{g,h}^{\cC_h}\omega\in\rho_\sface^{\alpha_\sface+2}\rho_\cK^{\alpha_\cK}\Htbh^{s-1}(\Omega)^{\bullet,-},
  \]
  the estimate
  \begin{equation}
  \label{EqEPApriori}
    \|\omega\|_{\rho_\sface^{\alpha_\sface}\rho_\cK^{\alpha_\cK}\Htbh^s(\Omega)^{\bullet,-}} \leq C h^{-1}\|f\|_{\rho_\sface^{\alpha_\sface+2}\rho_\cK^{\alpha_\cK}\Htbh^{s-1}(\Omega)^{\bullet,-}}
  \end{equation}
  holds.
\end{prop}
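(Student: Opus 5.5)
The plan is to cover $\Omega$ by finitely many regions and chain the microlocal estimates of \S\S\ref{SssEE}--\ref{SssEI} along the (approximate) flow of $-\cd^\sharp$ near the zero section and along future null bicharacteristics at fiber infinity. Every a priori control term $\|E\omega\|$ then lies in a region that was already estimated, and the residual terms $h^N\|\chi\omega\|_{\Htbh^{-N}}$ are absorbed for small $h$. This gives an estimate of the form
\[
  \|\omega\|_{\rho_\sface^{\alpha_\sface}\rho_\cK^{\alpha_\cK}\Htbh^s} \leq C\bigl(h^{-1}\|f\|_{\rho_\sface^{\alpha_\sface+2}\rho_\cK^{\alpha_\cK}\Htbh^{s-1}} + h^{\frac12}\|\omega\|_{\rho_\sface^{\alpha_\sface}\rho_\cK^{\alpha_\cK}\Htbh^s} + h^N\|\omega\|_{\rho_\sface^{\alpha_\sface}\rho_\cK^{\alpha_\cK}\Htbh^{-N}}\bigr),
\]
whose last two terms are absorbed into the left-hand side once $h$ is small.

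The order of steps is as follows. First, Proposition~\ref{PropEI}\eqref{ItEISigma} (forward version~\eqref{EqEISigmaFw}) gives $\Htbh^1$ control in $\tau\in[0,T]$, with no a priori terms since $\omega$ vanishes for $t<0$. Second, near the zero section I propagate along $-\cd^\sharp$, starting in the compact region near $\tau\approx T$ using interior real principal type estimates. One branch runs outward through $r>r_0+\frac{51}{50}r_0\sqrt e/v$ up to large $r$ (the analogue of Proposition~\ref{PropER} with $r$ as escape function), then along $\sface$ towards the sink $\cR_v$ using $r/t$ as escape function, and finally into the sink via Proposition~\ref{PropES}\eqref{ItESFw}; the region $\frac{r}{t}\geq v(1+\delta_\sface)$ on $\sface$ is handled likewise from $\frac{r}{t}=10$, where the data come from step one. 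The other branch goes into the saddle at $r=r_0$ on $\cK^+$ using Proposition~\ref{PropEK}\eqref{ItEKFw}, which needs a priori control only at $\rho_\cK\in[\delta,3\delta]$, i.e.\ at finite times, and that is available from interior propagation. From a neighbourhood of width $\frac{54}{50}r_0\sqrt e/v$ of $r_0$, Proposition~\ref{PropER}\eqref{ItERFw} moves control towards the horizon, with the overlap $\frac{51}{50},\frac{52}{50}<\frac{54}{50}$ providing the required a priori region. Outward from $r_0$, the analogous estimate reaches $r=\delta^{-1}$, after which Proposition~\ref{PropEB}\eqref{ItEBFw} carries control into the corner $\pa\cK^+$. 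This step uses $\alpha_\cK>\alpha_\sface+\frac12$, which enters through Proposition~\ref{PropEC}\eqref{ItECPosCorner} with $w=\rho_\sface^{\alpha_\sface+1}\rho_\cK^{\alpha_\cK}$; the sink step similarly uses $\alpha_\sface<-\frac12$. Third, near $\Sigma^\sharp$ I apply~\eqref{EqEISigmaSharpFw}, whose a priori term on $r\in[r_2,r_3]$ is covered by the previous step. Fourth, at nonzero finite frequencies I use Proposition~\ref{PropEE}, and at fiber infinity Proposition~\ref{PropEEInfty}. The $h^{\frac12}$ a priori term in~\eqref{EqEEInfty} is supported in $\tau\leq T$, where it is controlled by step one, and it carries a gain of $h^{\frac12}$ in any case.

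Finally, I sum the pieces with a microlocal partition of unity. The $\Htbh^1$ control from the energy estimates and the zero-section estimates (which are of arbitrary differential order, being compactly microlocalized) are combined with the fiber infinity estimate at order $s$, which costs only $\|P_h\omega\|$ in $\Htbh^{s-1}$. The cutoffs $\chi_{\tau,j},\chi_{r,j}$ and the support conditions are arranged so that the localized estimates glue at the boundary hypersurfaces $X$ and $\Sigma^\sharp$. Because $\omega\in\Htbh^{s+1}$ is assumed, all pairings in the commutator arguments are justified, and no regularization is needed.

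I expect the main obstacle to be the bookkeeping of the chain, and in particular closing the loop around the two saddles. Each a priori set $\Elltbh(E)$ in Propositions~\ref{PropEK}, \ref{PropEB}, \ref{PropER} and~\ref{PropES} must be shown to lie where control has already been obtained, without circularity. The delicate points are the $\frac{55}{50}$-to-$\frac{60}{50}$ annulus around $r_0$, and the finite-$t$ strip feeding the saddle estimates. I would handle the latter by noting that every backward $-\cd^\sharp$-trajectory reaches $\tau\leq T$ in finite time on compact subsets of $M^\circ$, so the finite-time regions can be covered by finitely many interior propagation estimates.
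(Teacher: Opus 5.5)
\paragraph{Where the proposal matches the paper.} For $s=1$ your chain is essentially the paper's proof. It consists of:
\begin{itemize}
\item the energy estimate~\eqref{EqEISigmaFw} near $X$;
\item the elliptic and fiber-infinity estimates away from the zero section;
\item zero-section propagation through the interior saddle (Proposition~\ref{PropEK}), towards the horizon and outwards (Proposition~\ref{PropER} and its analogue), into $\pa\cK^+$ (Proposition~\ref{PropEB}), and along $\sface$ into the sink (Proposition~\ref{PropES});
\item the energy estimate~\eqref{EqEISigmaSharpFw} near $\Sigma^\sharp$, followed by absorption for small $h$.
\end{itemize}
There is one ordering slip in your first ``branch''. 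The uniform-in-$t$ outward propagation on $\cK^+$ can only be done after Proposition~\ref{PropEK}, since its a priori region is near $r=r_0$ down to $\cK^+$. Likewise, the propagation along $\sface\cap\{\delta_r\leq\frac{r}{t}\leq v(1-\delta_\sface)\}$ can only be done after Proposition~\ref{PropEB}, since its a priori region is near $\pa\cK^+$. You list both before these propositions.

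\paragraph{The gap: the case $s>1$.} Near $X$ and $\Sigma^\sharp$, the only estimates you have are those of Proposition~\ref{PropEI}, and they are first order: they bound $\omega$ in $\Htbh^1$, not $\Htbh^s$. Every other estimate you use is localized away from $X\cup\Sigma^\sharp$:
\begin{itemize}
\item Propositions~\ref{PropEE} and~\ref{PropEEInfty} only control $\chi_{\tau,2}\chi_{r,2}\omega$;
\item the zero-section estimates live near $\cK^+$, $\sface$, or in compact subsets of $\{r>\bhm,\ t>0\}$.
\end{itemize}
No arrangement of cutoffs can glue these into an $\Htbh^s$ bound near $X\cup\Sigma^\sharp$. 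For the same reason, absorbing the $h^{\frac12}$ a priori term of~\eqref{EqEEInfty} at order $s$ is circular: nothing controls $\omega$ in $\Htbh^s$ on $\tau^{-1}([0,T])$.

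Near $X$ (away from $r=\bhm$) this could be repaired using the supported character of $\omega$ and $f$. You would run the propagation estimates across $X$, starting in $t<0$ where $\omega=0$. Near $\Sigma^\sharp$, however, the spaces are extendible, so there is nothing to propagate from.

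\paragraph{How the paper closes this.} The paper uses an extension/restriction argument:
\begin{itemize}
\item Extend $f$ across $\Sigma^\sharp$ to $\tilde f$ on $\tilde\Omega=\mathrm{cl}\{t\geq 0,\ r\geq\bhm-\eps\}$ with comparable norm.
\item Solve the forward problem there, with an $\Htbh^0$-bound $Ch^{-1}\|\tilde f\|$; this follows from the dual estimate~\eqref{EqEPDual} at $s'=1$ and Hahn--Banach.
\item Conclude $\tilde\omega=\omega$ on $\Omega$ by finite speed of propagation, since $r$ is a time function near $\Sigma^\sharp$.
\item Apply elliptic regularity and the propagation estimates to $\tilde\omega$ on $r>\bhm-\frac12\eps$, where $\Sigma^\sharp$ is now an interior hypersurface. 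Near $\tau=0$, propagate from $\tau<0$, where $\tilde\omega=0$.
\end{itemize}
This gives the missing $\Htbh^s$ control on $r^{-1}([\bhm,r_+])$.

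Without this step, or a separate higher-order semiclassical energy estimate near $\Sigma^\sharp$ (which you do not supply), your argument proves~\eqref{EqEPApriori} only for $s=1$.
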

\begin{proof}
  We first consider the case $s=1$. We begin with the energy estimate~\eqref{EqEISigmaFw} for $T>0$ as used in~\eqref{EqEECutoffs}, which controls $\omega$ in a neighborhood of $X\subset M$. Recall now the cutoff functions~\eqref{EqEECutoffs}. We can use the microlocal estimates from Proposition~\ref{PropEE} (elliptic regularity) and Proposition~\ref{PropEEInfty} (control at the characteristic set at fiber infinity, $\pa\Sigma$) to control $\chi_{\tau,2}\chi_{r,2}\omega$ in $\rho_\sface^{\alpha_\sface}\rho_\cK^{\alpha_\cK}H_{\tbop,h}^1(M;\cT^*)$ outside the zero section; the second term on the right in~\eqref{EqEEInfty} is already controlled by the energy estimate~\eqref{EqEISigmaFw}.

  Next, near the zero section, we can propagate microlocal $\rho_\sface^{\alpha_\sface}\rho_\cK^{\alpha_\cK}H_{\tbop,h}^1(M;\cT^*)$-estimates along $-r\cd^\sharp$ inside of $\Omega^\circ$ and also in $\sface\cap\{\frac{r}{t}\geq v(1+\delta_\sface)\}$ (see the end of~\S\ref{SssER}). Next, we propagate control into a neighborhood of the saddle point at $r=r_0$ in $(\cK^+)^\circ$ by means of Proposition~\ref{PropEK}; this can then be propagated towards $r=\bhm<r_0$ by means of Proposition~\ref{PropER}, and likewise to $r=\delta^{-1}\geq r_0$. At this point, the term $E\omega$ in the estimate~\eqref{EqEB} of Proposition~\ref{PropEB} is now controlled, and hence we can propagate control to a neighborhood $\{\frac{r}{t}\leq\delta_r,\ \rho_\sface\leq\delta\}$ of the zero section over $\pa\cK^+$. This control is then further propagated along $\sface\cap\{\delta_r\leq \frac{r}{t}\leq v(1-\delta_\sface)\}$. Now we have control at the zero section outside of $\{|\frac{r}{t}/v-1|\leq\delta_\sface,\ \rho_\sface\leq\delta\}$ and can therefore apply Proposition~\ref{PropES} to obtain control near the sink of $-r\cd^\sharp$ in $\sface^\circ$. Altogether, we have now proved the estimate
  \begin{align*}
    &\|\omega\|_{\Htbh^{1,(\alpha_\sface,\alpha_\cK)}(\tau^{-1}([0,T]))} + \| \chi_{\tau,2}\chi_{r,2}\omega \|_{\Htbh^{1,(\alpha_\sface,\alpha_\cK)}} \\
    &\qquad \leq C\Bigl( h^{-1}\| P_h \omega \|_{\Htbh^{0,(\alpha_\sface+2,\alpha_\cK)}(\Omega)^{\bullet,-}} + h^N\|\chi_{\tau,0}\chi_{r,0}\omega \|_{\Htbh^{-N,(\alpha_\sface,\alpha_\cK)}} \Bigr).
  \end{align*}

  Note that $\chi_{r,2}=1$ for $r\geq r_2:=\bhm+\frac{2+\frac34}{10}(r_+-\bhm)$. To close the estimate, it thus remains to estimate $\omega$ on $r^{-1}([\bhm,r_2])$, which we accomplish using the energy estimate~\eqref{EqEISigmaSharpFw} near $\Sigma^\sharp$; there we take $r_1=\bhm$ and $r_3=\frac{r_2+r_+}{2}$. This yields
  \begin{equation}
  \label{EqEPAprioriAlmost}
    \|\omega\|_{\Htbh^{1,(\alpha_\sface,\alpha_\cK)}(\Omega)^{\bullet,-}} \leq C\Bigl( h^{-1}\| P_h \omega \|_{\Htbh^{0,(\alpha_\sface+2,\alpha_\cK)}(\Omega)^{\bullet,-}} + h^N\|\chi_{\tau,0}\chi_{r,0}\omega \|_{\Htbh^{-N,(\alpha_\sface,\alpha_\cK)}(\Omega)^{\bullet,-}} \Bigr).
  \end{equation}
  For sufficiently small $h>0$, we can absorb the second term on the right into the left-hand side, and thereby obtain~\eqref{EqEPApriori} for $s=1$.

  In order to prove~\eqref{EqEPApriori} for general $s\geq 1$, notice that in the previous arguments we only used $s=1$ to apply the energy estimates from Proposition~\ref{PropEI}. A standard extension/restriction argument (see, e.g., \cite[Lemma~2.7]{HintzVasySemilinear} and \cite[Chapter~9.4]{HintzMicro}) combined with the propagation of regularity removes this restriction. We only sketch the argument here. We extend $f$ to a function $\tilde f\in\rho_\sface^{\alpha_\sface+2}\rho_\cK^{\alpha_\cK}\Htbh^{s-1}(\tilde\Omega)^{\bullet,-}$ where $\tilde\Omega={\rm cl}(\{t\geq 0,\ r\geq\bhm-\eps\})\subset M_0$; we choose this extension to have the same norm as $f$. (Here, we use the quotient norm on $\rho_\sface^{\alpha_\sface+2}\rho_\cK^{\alpha_\cK}\Htbh^{s-1}(\Omega)^{\bullet,-}$ induced by the surjective restriction map from $\tilde\Omega$ to $\Omega$, and the existence of $\tilde f$ is then straightforward.) We then solve the forward problem for the wave equation
  \[
    \Box_{g,h}^{\cC_h}\tilde \omega=\tilde f
  \]
  (with $g$ extended as the Kerr metric to this region, and $\cd$ extended as a stationary and past timelike 1-form). This is a wave equation (with $r$ a time function for $\bhm-\eps\leq r<r_+$), and the existence of $\tilde\omega\in\rho_\sface^{\alpha_\sface}\rho_\cK^{\alpha_\cK}\Htbh^0(\tilde\Omega)$ which satisfies the norm bound $\|\tilde \omega\|_{\rho_\sface^{\alpha_\sface}\rho_\cK^{\alpha_\cK}\Htbh^0(\tilde\Omega)^{\bullet,-}}\leq C h^{-1}\|\tilde f\|\leq C' h^{-1}\|f\|$ (where the norms on $\tilde f$ and $f$ can be taken to be the $\rho_\sface^{\alpha_\sface+2}\rho_\cK^{\alpha_\cK}\Htbh^{-1}(\tilde\Omega)^{\bullet,-}$-norms) can be shown using duality and the Hahn--Banach theorem (cf.\ the estimate~\eqref{EqEPDual} below, for $s'=1$, and the subsequent discussion). Moreover, $\tilde\omega=\omega$ on $\Omega$ by finite speed of propagation. The elliptic regularity and propagation results can now be applied to $\tilde\omega$ (including near $\tau=0$ for $r>\bhm-\frac12\eps$, where we propagate from $\tau<0$ where $\tilde\omega=0$ to small $\tau>0$), giving $\rho_\cK^{\alpha_\cK}\Htbh^s$-control of $\tilde\omega$ in $r^{-1}([\bhm-\frac12\eps,r_+])$, and thus by restriction also of $\omega$ in $r^{-1}([\bhm,r_+])$.
\end{proof}

Now, we are actually interested in \emph{solvability} of $P_h \omega=f$ on the function spaces in~\eqref{EqEPApriori}. In order to establish this, we argue by duality using the adjoint versions of the elliptic, complex absorption type, and radial point type estimates used above. Thus, near the zero section of $\Ttb^*M$, Proposition~\ref{PropES}\eqref{ItESBw} gives a free estimate (no a priori control terms) near $\sface\cap\{\frac{r}{t}=v\}$---which is a source for the $r\cd^\sharp$-flow---from where we propagate control through $\pa\cK^+$ towards $r=r_0$ by means of Proposition~\ref{PropEB}\eqref{ItEBBw}, and also into all of $\frac{r}{t}\in(v,10)$. We get another free energy estimate at $\Sigma^\sharp$ on spaces of supported distributions (thus encoding vanishing in $r<\bhm$) from the energy estimate~\eqref{EqEISigmaSharpBw}, from where we can propagate control near the zero section towards $r=r_0$ using Proposition~\ref{PropER}\eqref{ItERBw}. Having thus obtained control near the zero section over $\cK^+\cap\{|r-r_0|\geq r_0\sqrt{e}/v\}$, we can propagate into the zero section over $\cK^+\cap\{|r-r_0|\leq r_0\sqrt{e}/v\}$ using Proposition~\ref{PropEK}\eqref{ItEKBw}. Elliptic estimates and the adjoint estimate~\eqref{EqEEInftyAdj} of Proposition~\ref{PropEEInfty}, together with propagation towards $\tau=0$ by means of the energy estimate~\eqref{EqEISigmaBw} ultimately prove the a priori estimate
\begin{equation}
\label{EqEPDual}
  \|\omega'\|_{\rho_\sface^{-\alpha_\sface-2}\rho_\cK^{-\alpha_\cK}\Htbh^{s'}(\Omega)^{-,\bullet}} \leq C h^{-1}\|P_h^*\omega'\|_{\rho_\sface^{-\alpha_\sface}\rho_\cK^{-\alpha_\cK}\Htbh^{s'-1}(\Omega)^{-,\bullet}}
\end{equation}
for $0<h<h_0$ when $h_0>0$ is sufficiently small, initially for $s'=1$, and then for all $s'\geq 1$ by an extension/restriction and elliptic regularity/propagation argument as in the proof of Proposition~\ref{PropEPWarmup}. Using duality and the Hahn--Banach theorem, this implies the solvability of $P_h \omega=f\in\rho_\sface^{\alpha_\sface+2}\rho_\cK^{\alpha_\cK}\Htbh^{s-1}(\Omega)^{\bullet,-}$ with $\omega\in\rho_\sface^{\alpha_\sface}\rho_\cK^{\alpha_\cK}\Htbh^s(\Omega)^{\bullet,-}$ satisfying the estimate~\eqref{EqEPApriori}, provided $s=-(s'-1)\leq 0$. Higher regularity then follows again by means of an extension/restriction argument and elliptic/propagation estimates. This completes the proof of the first part of Theorem~\ref{ThmET} (and proves the quantitative estimate~\eqref{EqETQuant}).

The solution of the initial value problem~\eqref{EqETIVP} can easily be reduced to that of the forcing problem~\eqref{EqET}. We leave the proof of semiclassical estimates for the solution $\omega$ to the reader and only present the details for \emph{fixed} $h>0$, which is all we need in our application. One argument proceeds by solving the initial value problem up to $\tau=\frac{t}{r}=T$ where $T>0$ is sufficiently small so that $\tau^{-1}(T)$ is spacelike; this is possible in the desired regularity class (note that 3b-regularity and b-regularity are the same) by standard short-time theory for the strictly hyperbolic operator $\Box_g^{\cC_h}$. Regarding the weights of the data~\eqref{EqETIVP} at $\sface$, note that $|\dd g|$ is a smooth positive multiple of $\dd\tau\cdot r^3\,\dd r|\dd\slg|$, and $\rho_\sface^{\alpha_\sface+\frac12}\Hbext^s(X,r^2\,|\dd r\,\dd\slg|)=\rho_\sface^{\alpha_\sface}\Hbext^s(X,r^3\,\dd r|\dd\slg|)$, with weight now matching that of the sought-after solution $\omega$. Given this short time solution $\omega'\in\rho_\sface^{\alpha_\sface}\Hb^1(\tau^{-1}([0,T]))$, fix a smooth cutoff $\chi=\chi(\tau)$ which equals $1$ for $\tau\geq T/2$ and $0$ for $\tau\leq T/4$; then the solution $\omega$ of~\eqref{EqETIVP} satisfies
\[
  \Box_g^{\cC_h}(\chi \omega)=\chi f+[\Box_g^{\cC_h},\chi]\omega' \in \Htb^{0,(\alpha_\sface+2,\alpha_\cK)}(\Omega)^{\bullet,-}.
\]
But from the first part of Theorem~\ref{ThmET} this implies that $\chi \omega\in\Htb^{1,(\alpha_\sface,\alpha_\cK)}(\Omega)^{\bullet,-}$, and hence $\omega=\chi \omega+(1-\chi)\omega'$ has the regularity and decay claimed in Theorem~\ref{ThmET}.

Another argument, which avoids the need to explicitly control initial data altogether, proceeds as follows. It suffices to assume that the initial data and the forcing have infinite b-regularity. (A density argument and energy estimates for the initial value problem then yield the finite-regularity statement.) A solution $\omega$ of~\eqref{EqETIVP}, sharply cut off to non-negative times via $\omega':=H(\tau)\omega$, satisfies
\begin{equation}
\label{EqETFwd}
  \Box_g^{\cC_h}\omega' = f' := H(\tau)f + [\Box_g^{\cC_h},H(\tau)]\omega,
\end{equation}
with the second summand expressible solely in terms of the initial data of $\omega$ and $\delta(\tau)$, $\delta'(\tau)$. Thus, $f'\in\Htb^{s'-1,(\alpha_\sface+2,\alpha_\cK)}(\Omega)^{\bullet,-}$ for $s'-1<-\frac32$ to accommodate the (differentiated) $\delta$-distribution at $\tau=0$. Since~\eqref{EqETFwd} is now a forward problem, we know that $\omega'\in\Htb^{s',(\alpha_\sface,\alpha_\cK)}(\Omega)^{\bullet,-}$. However, the (b-)wave front set of $f'$ near $\tau=0$ is contained in the conormal bundle of $\tau^{-1}(0)$, and hence in the elliptic set of $\Box_g^{\cC_h}$, and therefore the same holds for the wave front set of $\omega'$. Letting $\chi$ be as above, we thus have
\[
  \Box_g^{\cC_h}((1-\chi)\omega)=f'' := (1-\chi)f-[\Box_g^{\cC_h},\chi]\omega' \in \rho_\sface^{\alpha_\sface+2}\Hb^\infty(\tau^{-1}([0,T])),
\]
with $f''$ vanishing for $\tau\geq\frac12 T$. Letting then $\tilde f\in\rho_\sface^{\alpha_\sface+2}\Hb^\infty(\tau^{-1}([-T,T]))$ denote an extension of $f''$, we may solve the forcing problem $\Box_g^{\cC_h}\tilde \omega=\tilde f$ \emph{backwards}, with $\tilde \omega$ vanishing for $\tau\geq\frac12 T$, with the solution satisfying $\tilde \omega\in\rho_\sface^{\alpha_\sface}\Hb^\infty(\tau^{-1}([-T,T]))$. But $\tilde \omega=(1-\chi)\omega$ for $\tau>0$; and hence we conclude by restricting to $\tau\geq 0$ that $\omega\in\rho_\sface^{\alpha_\sface}\rho_\cK^{\alpha_\cK}\bar H_\tbop^1(\Omega)$, the new information being the uniform regularity of $\omega$ down to $\tau=0$ as an extendible distribution. (This argument is inspired by \cite[Exercise~9.10]{HintzMicro}.)

The proof of Theorem~\ref{ThmET} is complete.

\section{Mode stability and zero energy behavior}
\label{ST}

We now show how to deduce a precise version of Theorem~\ref{ThmIRough} from Theorem~\ref{ThmET}.

We continue denoting by $g=g_{\bhm,a}$ a subextremal Kerr metric (see~\eqref{EqKBL}--\eqref{EqKMetStar}) on the spacetime manifold $M^\circ$ where the compactification $M$ is defined in Definition~\ref{DefKMfd}, with compactified Cauchy hypersurface $X$. We use the coordinates $t,r,\theta,\phi$ (or $t,x=r(\sin\theta\cos\phi,\sin\theta\sin\phi,\cos\theta)$) from~\eqref{EqKMetCoord} and~\eqref{EqKMetStar}. Note that $X\subset M$ is diffeomorphic to $[\bhm,\infty]_r\times\Sph^2$, i.e., the closure of $[\bhm,\infty)\times\Sph^2$ inside the radial compactification of $\R^3$. Recall also the scattering bundles $\cT,\cT^*$ from Definition~\ref{DefKTsc}, and $\cT^*_X:=\cT^*|_X$.

\begin{definition}[Outgoing]
\label{DefTOutgoing}
  Define the \emph{tortoise coordinate} by
  \[
    r_*:=r+2\bhm\log(r-2\bhm).
  \]
  Given $0\neq\sigma\in\C$, we say that $\omega(t,x)\in\CI(\R\times X^\circ;T^*\R^4)$ is \emph{outgoing at frequency $\sigma$} if $\omega(t,x)=e^{-i\sigma t}\omega_0(x)$ where $\omega_0\in\CI(X^\circ;T^*_{X^\circ}\R^4)$ is a smooth stationary 1-form which in addition, for $r\geq 4\bhm$, is of the form $\omega_0(x)=e^{i\sigma r_*}\omega_1(x)$ where $\omega_1\in\cA^\gamma(X;\cT^*_X)$ is conormal at $\pa X$ for some $\gamma\in\R$ (see Definition~\ref{DefK3bConormal} for the definition of $\cA^\gamma$).
\end{definition}

The outgoing condition is often phrased only for the scalar wave equation for \emph{fully separated} modes in Boyer--Lindquist coordinates $\ft,r,\theta,\phi$ as used in~\eqref{EqKBL} (and in the region of validity of the latter). We recall from \cite[Definition~1.1]{ShlapentokhRothmanModeStability} that this means that $u(\ft,r,\theta,\phi)=e^{-i\sigma\ft}e^{i m\phi}S(\theta)R(r)$ where $S(\theta)$ is an oblate spheroidal harmonic, and $R$ is outgoing in the sense that $r e^{-i\sigma r_*}R(r)$ is smooth in $r^{-1}$, and near the event horizon $r=r_+$, the function $(r-r_+)^{-\frac{i(a m-2\bhm r_+\sigma)}{2\sqrt{\bhm^2-a^2}}}R(r)$ is smooth down to $r=r_+$. The latter is equivalent to smoothness of $u$ on $M\cap\{r\geq r_+\}$ down to the future event horizon, and thus consistent with Definition~\ref{DefTOutgoing}. The smoothness in $r^{-1}$ for large $r$ on the other hand can be recovered from the merely conormal regularity required in Definition~\ref{DefTOutgoing} using indicial root considerations (see below).

\begin{thm}[Mode stability of the constraint propagation wave operator: precise form]
\label{ThmT}
  Let $v\in(0,1)$ and $C_0>0$. Then for all sufficiently small $e>0$, there exist $h_0\in(0,1)$ and a smooth stationary 1-form $\cd$ on $M^\circ$, which equals $r^{-1}(\dd t-v\,\dd r)$ for large $r$, such that for all $h\in(0,h_0)$, the operator $\Box_g^{\cC_h}$ defined by~\eqref{EqIRoughC} has the following properties.
  \begin{enumerate}
  \item\label{ItTNonzero}{\rm (Mode stability in the punctured upper half plane.)} Let $\sigma\in\C$, $\Im\sigma\geq 0$, $\sigma\neq 0$. Suppose $\omega(t,x)=e^{-i\sigma t}\omega_0(x)$ is an outgoing solution of $\Box_g^{\cC_h}\omega=0$. Then $\omega=0$.
  \item\label{ItTZero}{\rm (Invertibility properties of the zero energy operator.)} Denote the zero energy operator of $\Box_g^{\cC_h}$ by $\wh{\Box_g^{\cC_h}}(0)\in\Diffb^2(X;\cT^*_X)$ (see Definition~\usref{DefK3bSpec}). Then for all $\ell\in[-C_0-\frac32,-\frac12)$ and $s\geq-C_0$, the operator
    \begin{equation}
    \label{EqTZero}
    \begin{split}
      \wh{\Box_g^{\cC_h}}(0) &\colon \bigl\{ \omega\in\Hbext^{s,\ell}(X;\cT^*_X) \colon \wh{\Box_g^{\cC_h}}(0)\omega\in\Hbext^{s-1,\ell+2}(X;\cT^*_X) \bigr\} \\
        &\quad\quad \to \Hbext^{s-1,\ell+2}(X;\cT^*_X)
    \end{split}
    \end{equation}
    is invertible; here we use $r^2\,|\dd r\,\dd\slg|$ as the volume density on $X$. Moreover, the indicial roots $\lambda\in\C$ of $\wh{\Box_g^{\cC_h}}(0)$ (see~\eqref{EqK3bIndRoots}) satisfy $\Re\lambda\geq 1$ or $\Re\lambda<-C_0$ and accumulate only at $\pm\infty$; and the only root with $\Re\lambda=1$ is $\lambda=1$, which is a simple scalar type $0$ and a simple scalar type $1$ root.
  \end{enumerate}
\end{thm}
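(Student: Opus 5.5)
The proof will deduce Theorem~\ref{ThmT} from Theorem~\ref{ThmET}, with $e$, $\cd$, $h_0$ taken from Proposition~\ref{PropEC} and Theorem~\ref{ThmET}. The weights will be $\alpha_\sface\ll-1$, chosen below $-C_0-1$ and well below any growth rate permitted by the hypotheses, and $\alpha_\cK>\max(0,\alpha_\sface+\frac12)$. With these weights, forward solutions of $\Box_g^{\cC_h}$ decay in $t$ over compact spatial sets.

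For part~\eqref{ItTNonzero}, let $\omega=e^{-i\sigma t}\omega_0$ be an outgoing mode with $\Im\sigma\geq0$ and $\sigma\ne0$. First, $\omega_0$ is polynomially bounded as $r\to\infty$. The outgoing form $e^{i\sigma r_*}\omega_1$ with $\omega_1\in\cA^\gamma$ gives $|\omega_0|\lesssim r^{-\gamma}e^{-\Im\sigma\, r_*}$, and this is bounded by $r^{-\gamma}$. Conormality also gives control of all b-derivatives, so $\omega$ restricted to $X$ lies in $\rho_\sface^{\alpha_\sface+\frac12}\Hbext^1$ once $\alpha_\sface<\gamma-2$. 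Its time derivative $r\cL_{\pa_t}\omega|_X=-i\sigma r\omega_0$ is handled the same way; it loses one power of $r$, which is absorbed by lowering $\alpha_\sface$. Smoothness at the future event horizon makes $\omega$ smooth on $\{r\geq\bhm\}$.

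Next, $\omega$ restricted to $\Omega$ solves the initial value problem~\eqref{EqETIVP} with $f=0$. By uniqueness of solutions, Theorem~\ref{ThmET} gives $\omega\in\rho_\sface^{\alpha_\sface}\rho_\cK^{\alpha_\cK}\bar H^1_\tbop$. On a compact set $r\in[r_1,r_2]$, this says $\int|e^{-i\sigma t}|^2 t^{-2\alpha_\cK}\,\dd t<\infty$ times a nonzero spatial norm of $\omega_0$. Since $\Im\sigma\ge0$, we have $|e^{-i\sigma t}|\geq1$, and $\alpha_\cK>0$ makes the time integral diverge. Hence $\omega_0=0$ on every such compact set, and so everywhere. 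The delicate point in this step is the regularity near $X\cap\sface$: the initial data must lie in the b-Sobolev space uniformly as $r\to\infty$. The conormality of $\omega_1$ and the oscillation $e^{i\sigma r_*}$ deal with this, because $r\pa_r$ applied to $e^{i\sigma r_*}$ produces only the factor $\sigma r$, a polynomial loss.

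For part~\eqref{ItTZero}, the indicial root statements follow from Theorem~\ref{ThmM0} together with Remark~\ref{RmkK3bInfty}, since by Lemma~\ref{LemmaKMink} the normal operator at $\pa X$ agrees with the Minkowski operator with $\cd=\cd_v$. The zero energy operator is elliptic at fiber infinity on $X^\circ$, though it degenerates at the horizon. One must check that Fredholm theory applies there; I would use the standard Vasy-type radial point estimates at the horizon, valid for $s\geq-C_0$ once $h$ is small, in the spirit of Remark~\ref{RmkEEThr}.

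By Theorem~\ref{ThmM0}, no indicial root has real part in $[-C_0,1)$. Under the density $r^2|\dd r\,\dd\slg|$, the weight $\ell\in[-C_0-\frac32,-\frac12)$ corresponds to a pointwise growth rate $r^{-\ell-\frac32}$, which lies in this root-free strip. So~\eqref{EqTZero} is Fredholm, and its index does not depend on $\ell$ in this range.

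Injectivity: take a kernel element $\omega_0$. Then $\omega(t,x)=\omega_0(x)$ is a stationary solution, and it lies in the space above with $|\omega_0|\lesssim r^{C_0+\epsilon}$. Treat it as in part~\eqref{ItTNonzero}, applying Theorem~\ref{ThmET} with $\alpha_\sface$ chosen more negative than the growth rate. This forces $\omega_0=0$, because a stationary nonzero function cannot lie in $\rho_\cK^{\alpha_\cK}L^2$ with $\alpha_\cK>0$. The data regularity follows from b-elliptic regularity.

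Surjectivity: show that the cokernel vanishes. The cokernel is the kernel of the adjoint $\wh{(\Box_g^{\cC_h})^*}(0)$ on the dual weight $-\ell-2$, which again lies in a root-free strip for the adjoint. Vanishing of this kernel follows by the same argument applied to $P_h^*$, using the backward estimate~\eqref{EqEPDual}.

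The main obstacle is the adjoint part of the surjectivity step. The adjoint kernel elements are supported distributions at $r=\bhm$ rather than smooth modes, so the argument must be run in the $(\cdot)^{-,\bullet}$ spaces, which \eqref{EqEPDual} controls. Alternatively, one could prove that the index is zero by deforming to a perturbative constraint damping operator; I would try the duality route first.
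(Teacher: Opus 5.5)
\textbf{Gap in part~(1): the weight cannot depend on the mode.} The delicate point here is the size of $\gamma$, not the b-regularity of the data near $X\cap\sface$, which you handle correctly. Definition~\ref{DefTOutgoing} only gives $\omega_1\in\cA^\gamma$ for \emph{some} $\gamma\in\R$, depending on the mode, so the growth of $\omega_0$ is not bounded a priori. You compensate by taking $\alpha_\sface<\gamma-2$, and lowering it further for $r\cL_{\pa_t}\omega$; that is, you choose the weight after seeing the mode.

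In Theorem~\ref{ThmET}, however, the weights are fixed first, and $e$, $\cd$ (via Proposition~\ref{PropEC}) and especially $h_0$ depend on them. The weight-commutator terms, of size $\sim h|\alpha_\sface|$, are absorbed only for $h$ small. In Proposition~\ref{PropEI}, $\digamma$ must dominate $|\alpha_\sface+2|$, and $h$ is then taken small depending on $\digamma$. Nothing in the argument makes $h_0$ uniform as $\alpha_\sface\to-\infty$ (cf.\ Remark~\ref{RmkTInv0}). So what you prove is only this: for each $\gamma$ there is $h_0(\gamma)$ such that outgoing modes with $\omega_1\in\cA^\gamma$ vanish. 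The theorem needs a single $h_0$ that works for all outgoing modes.

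\textbf{The missing step.} You need a mode-independent improvement of decay at infinity, and this is exactly where $\sigma\neq0$ is used. Write $\Box_g^{\cC_h}$ in the coordinates $(t_*,\rho)$ as in~\eqref{EqTStruct}, with leading part $-2\pa_{t_*}\rho(\rho\pa_\rho-1-\ubar S^{\cC_h}-\rho\tilde S)$. Insert $\omega=e^{-i\sigma t_*}\omega_1'$, where $\omega_1'$ is an $\cA^0$-multiple of $\omega_1$. This gives
\[
  2i\sigma\rho(\rho\pa_\rho-1-\ubar S^{\cC_h})\omega_1\in\cA^{\gamma+2}\quad\text{whenever }\omega_1\in\cA^\gamma .
\]
Since $\sigma\neq0$ and $\ubar S^{\cC_h}$ in~\eqref{EqM00S} has only positive eigenvalues, integrating this regular-singular ODE in $\rho$ upgrades $\omega_1$ to $\cA^{\gamma+1}$. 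Iterating gives $\omega_1\in\cA^1$ for every outgoing mode, whatever $\gamma$ was. Then $|\omega_0|\lesssim r^{-1}$ and the data lie in $\rho_\sface^{-\frac32-\eps}\Hbext^{1-j}$, so one fixed choice such as $\alpha_\sface=-C_0-2$, $\alpha_\cK=1$ serves for all modes. The rest of your part~(1) then goes through, and so does your injectivity argument in part~(2), where the growth is already bounded by the fixed range of $\ell$.

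\textbf{A milder issue in your surjectivity step.} An adjoint kernel element in $\Hbsupp^{s',-\ell-2}$ may decay only slightly. But \eqref{EqEPDual} gives injectivity of $(\Box_g^{\cC_h})^*$ only at the fixed dual weight $-\alpha_\sface-2$ at $\sface$, which for $\alpha_\sface\ll-1$ requires strong decay. So you must first upgrade its decay by a normal operator argument. The adjoint's indicial roots are $1-\bar\lambda$. If Theorem~\ref{ThmM0} is applied with a larger constant (the paper uses $C_0+4$), these have real part $\le0$ or $>C_0+3$, which gives $\omega_0\in\cA^{C_0+3}$ near $\pa X$.
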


Theorem~\ref{ThmIRough} is an immediate consequence of Theorem~\ref{ThmT} for $C_0+2$ in place of $C_0$. Indeed, for Theorem~\ref{ThmIRough}\eqref{ItIRough0}, note that if $\omega=\cO(r^{C_0})$, then $\omega\in r^{C_0+\frac32+\eps}L^2(X;\cT^*_X)$ for all $\eps>0$. Since among the indicial roots $\lambda$ of $\wh{\Box_g^{\cC_h}}(0)$, the one with $\Re\lambda\geq-C_0-\frac32-\eps$ and smallest $\Re\lambda$ is $\lambda=1$ (when $\eps$ is sufficiently small), a normal operator/indicial root argument shows that, in fact, $\omega\in r^{\frac12+\eps}L^2(X;\cT^*_X)$, and therefore the injectivity of the map~\eqref{EqTZero} (for $\ell=-\frac12-\eps$ and $s=0$) implies that $\omega=0$.

\begin{proof}[Proof of Theorem~\usref{ThmT}]
  Fix the weights $\alpha_\cK=1$ and $\alpha_\sface=-C_0-2$, which satisfy~\eqref{EqETThr}. Application of Theorem~\ref{ThmET} with $C_0$ increased by $1$ produces $e,h_0\in(0,1)$ and a stationary 1-form $\cd$ such that the forward and initial value problems~\eqref{EqET}--\eqref{EqETIVP} for the operator $\Box_g^{\cC_h}$ can be solved in the function spaces stated in Theorem~\ref{ThmET}, and such that the indicial roots of the zero energy operator $\wh{\Box_g^{\cC_h}}(0)$ are as in Theorem~\ref{ThmM0}; we take $C_0$ there to be equal to $C_0+4$ in present notation.

  \pfstep{Part~\eqref{ItTNonzero}.} Note that $\omega=e^{-i\sigma t}\omega_0$ is a solution of the initial value problem
  \[
    \left\{
    \begin{alignedat}{2}
    \Box_g^{\cC_h}\omega&=0, \\
    (\omega,r\cL_{\pa_t}\omega)|_{t=0}&=(\omega^{(0)},\omega^{(1)}) := (\omega_0,-i\sigma r\omega_0).
    \end{alignedat}
    \right.
  \]
  We claim that $\omega_1:=e^{-i\sigma r_*}\omega_0$, which we are only assuming to be conormal at $r^{-1}=0$ with \emph{some} weight, in fact has better decay. For present purposes, it will be enough to establish some fixed bound; we shall prove
  \begin{equation}
  \label{EqTDecay}
    \omega_1\in\cA^1.
  \end{equation}
  Once this is shown, we can conclude that the initial data of $\omega$ obey the pointwise bounds
  \[
    |\omega^{(0)}| \leq C r^{-1},\quad
    |\omega^{(1)}| \leq C,
  \]
  since $\omega_0=e^{i\sigma r_*}\omega_1$ and $\Im\sigma\geq 0$. Therefore, $\omega^{(j)}\in\rho_\sface^{-\frac32-\eps}\bar H_\bop^{1-j}(X,r^2 |\dd r\,\dd\slg|;\cT^*_X)$ for any $\eps>0$. Since $\alpha_\sface+\frac12<-\frac32$, Theorem~\ref{ThmET} (with $s=1$) implies that
  \begin{equation}
  \label{EqTMembership}
    \omega\in\rho_\sface^{\alpha_\sface}\rho_\cK^{\alpha_\cK}\bar H_\tbop^1(\Omega;\cT^*) \subset \rho_\sface^{-C_0-2}L^2(\Omega,|\dd g|;\cT^*).
  \end{equation}
  On the other hand, if $\omega_0$ were nonzero, then it would not be identically $0$ on some annulus $A=\{ r_1\leq r\leq r_2 \}$ with $\bhm<r_1<r_2<\infty$, which contradicts the square integrability of $|\omega|=e^{(\Im\sigma)t}|\omega_0|$ on $[0,\infty)_t\times A$. Therefore, $\omega_0=0$ and thus $\omega=0$.
  
  It remains to establish~\eqref{EqTDecay}. We first note that, in the coordinates $t_*$ from~\eqref{EqKtstar}, $\rho=r^{-1}$, and spherical coordinates, we have
  \begin{equation}
  \label{EqTStruct}
    \Box_g^{\cC_h} = -2\pa_{t_*}\rho(\rho\pa_\rho-1-\ubar S^{\cC_h}-\rho\tilde S) + \wh{\Box_g^{\cC_h}}(0) + Q\pa_{t_*} + R\pa_{t_*}^2,
  \end{equation}
  where the endomorphism $\ubar S^\cC$ of the bundle $\cT_X^*$ is given by~\eqref{EqM00S}, $\tilde S$ is a smooth section of $\End(\cT_X^*)$, and
  \[
    Q\in\rho^3\Diffb^1(X;\cT^*_X),\quad
    R\in\rho^2\CI(X),\quad
    \wh{\Box_g^{\cC_h}}(0)\in\rho^2\Diffb^2(X;\cT^*_X).
  \]
  Indeed, this follows by replacing $g_{\bhm,a}$ by the Minkowski metric $\ubar g$, with $\ubar g^{-1}=-2\pa_{t_*}\otimes_s\pa_r + \pa_r^2 + r^{-2}\slg^{-1}$, recalling~\eqref{EqM00Box}, and estimating the error terms by means of~\eqref{EqKtstarInner} and by comparison with~\eqref{EqKtstarMet} (which gives the decay order of the principal part of $Q$, while subprincipal $\rho^2\Diffb^0$-contributions to $Q$ can be absorbed into the term $\tilde S$); see \cite[\S3.3]{HaefnerHintzVasyKerr} and \cite[\S{7.1}]{HintzKerrStab} for similar calculations. (Moreover, $\wh{\Box_g^{\cC_h}}(0)$ is given by $\rho^2$ times~\eqref{EqM00} plus an error term of class $\rho^3\Diffb^2$.)

  Write now $\omega=e^{-i\sigma t}e^{i\sigma r_*}\omega_1=e^{-i\sigma t_*}\omega_1'$, where $\omega_1'=f\omega_1$, $f=e^{-i\sigma(t-r_*-t_*)}$; note that $f,f^{-1}\in\cA^0$ near $r^{-1}=0$. Thus, from~\eqref{EqTStruct} we obtain
  \[
    2 i\sigma\rho(\rho\pa_\rho-1-\ubar S^\cC)\omega_1 \in (\rho^2\Diffb^2 + \sigma\rho^3\Diffb^1 + \sigma^2\rho^2\Diffb^0)\omega_1
  \]
  where $\Diffb^k=\Diffb^k(X;\cT^*_X)$. Under the assumption $\omega_1\in\cA^\gamma$, the right-hand side lies in $\cA^{\gamma+2}$. We can then solve this regular-singular ODE (upon division by $\rho$) in $\rho$ and thus conclude $\omega_1\in\cA^{\gamma+1}$ as long as $\gamma+1$ is less than $1+\min\spec\ubar S^\cC$. But the eigenvalues of $\ubar S^\cC$ are all positive, and hence we (a fortiori) have $\omega_1\in\cA^1$, as claimed.

  \pfstep{Part~\eqref{ItTZero}.} When $s\geq 1$, any element $\omega_0$ in the kernel of~\eqref{EqTZero} gives rise to a stationary solution $\omega(t,x)=\omega_0(x)$ of the initial value problem for $\Box_g^{\cC_h}\omega=0$ with initial data $(\omega,r\cL_{\pa_t}\omega)|_{t=0}=(\omega_0,0)$. But $\omega_0\in\Hbext^{s,\ell}(X,r^2\,|\dd r\,\dd\slg|;\cT^*_X)$ with $\ell\geq-C_0-\frac32=\alpha_\sface+\frac12$. Therefore, Theorem~\ref{ThmET} applies, and we again conclude the membership~\eqref{EqTMembership} for $\omega$, which forces $\omega_0=0$. Thus, the map~\eqref{EqTZero} is injective. For general $s$, it is technically more convenient to pick $\chi=\chi(\tau)$ to be equal to $0$ for $\tau\leq T/4$ and $1$ for $\tau\geq T/2$; then $\Box_g^{\cC_h}(\chi\omega_0)=f:=[\Box_g^{\cC_h},\chi]\omega_0\in\rho_\sface^{\alpha_\sface+2}\rho_\cK^{\alpha_\cK}\Htb^{s-1}(\Omega;\cT^*)^{\bullet,-}$, so Theorem~\ref{ThmET} gives $\chi\omega_0\in\rho_\sface^{\alpha_\sface}\rho_\cK^{\alpha_\cK}\Htb^s(\Omega;\cT^*)^{\bullet,-}$, which again suffices to conclude.

  To prove surjectivity, we note that the map~\eqref{EqTZero} is Fredholm. (See \cite[Theorem~4.3]{HaefnerHintzVasyKerr}, \cite[\S6]{VasyLowEnergy}, \cite[Theorem~6.9]{VasyMinicourse} for such statements on spacetimes including Kerr, and also the earlier \cite{MelroseAPS}, \cite[Theorem~2.1]{GuillarmouHassellResI}.) Its index is equal to $0$; this follows from Theorem~\ref{ThmMiInv} and \cite[Corollary~9.43]{HintzNonstat2}. A direct proof proceeds by deforming $g$ to a Schwarzschild metric, and then noting that upon expanding into spherical harmonics, we obtain a collection of radial ordinary differential operators. Theorem~\ref{ThmM0}\eqref{ItM0Number} implies that, acting on suitable weighted spaces corresponding to the range considered in~\eqref{EqTZero}, they are of index $0$; therefore, $\wh{\Box_g^{\cC_h}}(0)$ is of index $0$ and thus invertible since it is injective.

  Alternatively, one can argue directly by proving that the $L^2(X,|\dd g|_X|;\cT^*)$-adjoint $\wh{\Box_g^{\cC_h}}(0)^*=\wh{(\Box_g^{\cC_h})^*}(0)$ (where $|\dd g|_X|$ is the spatial volume density of Kerr, so $|\dd t\,\dd g|_X|=|\dd g|$) has trivial nullspace on $\Hbsupp^{s',\ell'}(X;\cT^*_X)$ for $s'=-s+1$ and $\ell'=-\ell-2>-\frac32$. But we know from (the proof of) Theorem~\ref{ThmET} that $(\Box_g^{\cC_h})^*$ is injective on $\Htb^{-s+1,(-\alpha_\sface-2,-\alpha_\cK)}(\Omega)^{-,\bullet}=\Htb^{s',(C_0,-1)}(\Omega)^{-,\bullet}$. This implies that no nontrivial stationary solution $\omega(t,x)=\omega_0(x)$ of $(\Box_g^{\cC_h})^*\omega=0$ exists with $\omega_0\in\rho_\sface^{C_0+\frac12}\Hbsupp^{s'}(X;\cT^*_X)$. Suppose now that $\omega_0\in\Hbsupp^{s',\ell'}(X;\cT^*_X)$, $\ell'>-\frac32$, satisfies $\wh{(\Box_g^{\cC_h})^*}(0)\omega_0=0$. The indicial roots of $\wh{\Box_g^{\cC_h}}(0)$ all satisfy $\Re\lambda<-C_0-4$ or $\Re\lambda\geq 1$, and hence the indicial roots $\lambda^*=1-\bar\lambda$ of $\wh{\Box_g^{\cC_h}}(0)^*$ satisfy $\Re\lambda^*\leq 0$ or $\Re\lambda^*>C_0+3$. Taking into account the shift in the weight at $\pa X$ by $-\frac32$ when passing from $L^\infty$-based spaces to $L^2$-based spaces on $X$, we thus have $\omega_0\in\cA^{C_0+3}(X)$ near $\pa X$ by a standard normal operator argument. Therefore, $\omega_0\in\rho_\sface^{C_0+\frac12}\Hbsupp^{s'}(X;\cT^*_X)$ globally, so $\omega_0=0$ by what we already proved.
\end{proof}

\begin{rmk}[Invertibility of the zero energy operator]
\label{RmkTInv0}
  One could analyze $\wh{\Box_g^{\cC_h}}(0)$ directly using semiclassical analysis in $\ol{\Tb^*}X$ and prove its invertibility as a map~\eqref{EqTZero} for any fixed $\ell\in[-C_0-\frac32,-\frac12)$, or indeed for $\ell$ lying in any fixed compact subset of $[-C_0-\frac32,-\frac12)$. (This would rely on elliptic and complex absorption type estimates at nonzero semiclassical frequencies, and near the zero section on a source estimate near $r=r_0$, a sink estimate at $\pa X$, and propagation in between, with energy estimates near $r=\bhm$ to close the estimate.) \emph{However}, the required upper bound on the semiclassical parameter $h$ for invertibility to hold might, a priori, degenerate to $0$ as one increases the size of this compact subset. Only a calculation of the indicial roots, as performed in Theorem~\ref{ThmM0}, allows one to get invertibility (which is crucial for our application in \cite{HintzKerrStab}) for some fixed positive $h$ in the full stated range of weights.
\end{rmk}

\begin{rmk}[Direct analysis of the spectral family]
\label{RmkTInvCx}
  It appears difficult to prove the invertibility of $\wh{\Box_g^{\cC_h}}(\sigma)$ for all small $h>0$, uniformly for all $\sigma\in\C$ with $\Im\sigma\geq 0$, on appropriate Sobolev spaces (b-Sobolev spaces for $\sigma=0$, scattering Sobolev spaces for $\sigma\neq 0$), directly. For example, for large frequencies $\sigma=h^{-1}\varsigma$ where $\varsigma\in e^{i(0,\pi)}$, the semiclassical principal symbol of $h^2\wh{\Box_g^{\cC_h}}(h^{-1}\varsigma)$ is
  \[
    G(-\varsigma\,\dd t+\zeta) - i\ell_{g,\cd,e}(-\varsigma\,\dd t+\zeta),\quad \zeta\in T^*X^\circ.
  \]
  Since the covector $-\varsigma\,\dd t+\zeta$ is now \emph{complex}, the results in~\S\ref{SC} do not suffice anymore for the analysis of this symbol, and indeed its analysis appears to be a very challenging linear algebra problem. The operator $\wh{\Box_g^{\cC_h}}(h^{-1}(s+i\tau))$, $\tau>0$, is the spectral family at the \emph{real} frequency $h^{-1}s$ of the conjugation $e^{-\tau t/h}\Box_g^{\cC_h}e^{\tau t/h}$, and this operator describes $\Box_g^{\cC_h}$ acting on \emph{exponentially growing} ($\sim e^{\tau t/h}$) inputs. From this perspective, the fact that we can prove the solvability of $\Box_g^{\cC_h}$ on polynomially weighted spaces directly, via \emph{semiclassical} analysis on spacetime, offers an enormous simplification which is not available for the study of general wave operators (i.e., operators without suitable large parameters).
\end{rmk}

\appendix

\section{Proof of Theorem~\usref{ThmM0}}
\label{SM0}

We need the following identities for $\scal\in\scalspace_l$ and $\vect:=\slstar\sld\scal$:
\begin{equation}
\label{EqM0Spherical}
  \slDelta\scal = l(l+1)\scal,\quad
  \slDelta\vect = (l(l+1)-1)\vect,\quad
  \sldelta\vect = 0.
\end{equation}

\subsection{Vector type indicial roots}
\label{SsM0v}

Let $l\geq 1$. Acting on the 1-dimensional space spanned by $(0,0,\vect)$ where $\vect=\slstar\sld\scal$, $0\neq\scal\in\scalspace_l$, the operator $h N_{v,e,h}(\lambda)$ (using~\eqref{EqM0NormOp} and Corollary~\ref{CorM00}) is multiplication by the quadratic (in $\lambda$) polynomial
\begin{align*}
  p_\rmv(h,l,\lambda) &= h\bigl(-\lambda^2+\lambda+(l(l+1)-1) + 1 + h^{-1}q_{\slash\slash}(\lambda)\bigr) \\
    &= -h\lambda^2+(h-2 v)\lambda+ \bigl(l(l+1)h + 4 v\bigr),
\end{align*}
where $q_{\slash\slash}(\lambda)$ is obtained from $q_{\slash\slash}$ by replacing $\rho\pa_\rho$ by $\lambda$. This does not depend on the parameter $e$. We will show that for sufficiently small $h>0$, there is one root of this polynomial on either side of $\{-C_0\leq\Re\lambda\leq 1\}$. Now, the quadratic equation $p_\rmv(h,l,\lambda)=0$ can of course be solved explicitly for $\lambda$, and the desired conclusion can then be obtained by direct estimates. For example, for $l=1$, the two roots are $\lambda_-(h,1)=-\frac{2 v}{h}-1$ and $\lambda_+(h,1)=2$. As a point of comparison for later on, we remark that for fixed $l\geq 2$, the Taylor expansions of its roots around $h=0$ are
\begin{equation}
\label{EqM0vTaylor}
  \lambda_-(h,l) = -\frac{2 v}{h} + \cO_l(1),\quad
  \lambda_+(h,l) = 2 + \cO_l(h).
\end{equation}
Carefully note, however, that the $\cO(1)$ and $\cO(h)$ error terms depend on $l$; therefore, these expansions alone are not sufficient to conclude that one sufficiently small value of $h>0$ guarantees that $\lambda_-(h,l)<-C_0$ and $\lambda_+(h,l)>1$ for \emph{all} $l$.

We present an alternative, more conceptual (albeit for the present case of course unnecessarily elaborate) analysis, the main ideas of which generalize to the rather involved scalar type $l\geq 1$ considerations in~\S\ref{SsMls} below. Thus, we study the roots of $p_\rmv(h,l,\cdot)$ in four asymptotic regimes:
\begin{enumerate}
\item $l\lesssim 1$ is fixed (or bounded) and $h\to 0$;
\item $1\ll l\ll h^{-1}$ is large but much smaller than $h^{-1}$;
\item $l\sim h^{-1}$ is comparable to $h^{-1}$.
\item $l\gg h^{-1}$ is much larger than $h^{-1}$.
\end{enumerate}
Roughly speaking, in the first regime, the Taylor expansion~\eqref{EqM0vTaylor} suffices (and we will derive it by a simple computation); in the fourth regime, there are two roots roughly at $\pm l$. The transition between the two behaviors takes place in the second and third regimes.

The details are as follows.

\pfstep{{\rm (1)} Fixed, or bounded, $l\in\N$.} Fix $l\in\N$. The polynomial $p_\rmv(0,l,\lambda)=-2 v(\lambda-2)$ vanishes simply at $2$, and therefore, by the implicit function theorem, has a root $\lambda_+(h,l)$ also for $h>0$ that depends continuously (in fact, analytically) on $h$, with $\lambda_+(0,l)=2$. To find the other root (which disappeared in this naive limit of $p_\rmv(h,l,\lambda)$ as $h\to 0$), we shall instead work near $\lambda\sim h^{-1}$. To effect this, we ``blow up $h=0,\lambda=\infty$,'' i.e., we introduce
\begin{equation}
\label{EqM0tilde}
  \tilde h=h,\ 
  \tilde l=l,\ 
  \tilde\lambda=(h\lambda)^{-1}\quad\Longleftrightarrow\quad
  h=\tilde h,\ 
  l=\tilde l,\ 
  \lambda=(\tilde h\tilde\lambda)^{-1},
\end{equation}
and set
\[
  \tilde p_\rmv(\tilde h,\tilde l,\tilde\lambda) := \tilde h\tilde\lambda^2 p_\rmv(\tilde h,\tilde l,(\tilde h\tilde\lambda)^{-1}) = -1+(\tilde h-2 v)\tilde\lambda+\tilde l(\tilde l+1)\tilde h^2\tilde\lambda^2 + 4\tilde h v\tilde\lambda^2.
\]
The prefactor $\tilde h\tilde\lambda^2$ is chosen such that this has smooth coefficients down to $\tilde h=0$. The restriction of $\tilde p_\rmv$ to $\tilde h=0$,
\[
  \tilde p_\rmv(0,\tilde l,\tilde\lambda)=-1-2 v\tilde\lambda,
\]
vanishes simply for $\tilde\lambda_-(0)=-(2 v)^{-1}$, which is the value at $\tilde h=0$ of a smooth family of roots $\tilde\lambda_-(h,\tilde l)$ of $\tilde p_\rmv(\tilde h,\tilde l,\tilde\lambda)$. In summary, for fixed $l$, or indeed for $l\leq l_0$ for any fixed $l_0\in\N$, and for sufficiently small $h$ (depending on $l_0$), the $\rmv l$ indicial roots are
\[
  \lambda_+(h,l)=2+o(1),\quad
  \lambda_-(h,l)=(h\tilde\lambda_-(h,l))^{-1}=-(2 v+o(1))h^{-1},\quad l\leq l_0,\ h\to 0.
\]
(This recovers the leading order terms of~\eqref{EqM0vTaylor}.) See Figure~\ref{FigM0vSmall}. Therefore, for any $l_0$, there exists $h_0>0$ such that
\begin{equation}
\label{EqM0vSmall}
  \lambda_+(h,l)\geq\tfrac32,\quad
  \lambda_-(h,l)\leq -v h^{-1},\quad l\leq l_0,\ h\leq h_0.
\end{equation}

\begin{figure}[!ht]
\centering
\includegraphics{FigM0vSmall}
\caption{The vector type $l$ roots $\lambda_-(h,l)$ and $\lambda_+(h,l)$ for $v=\frac12$ and $l=1,2,10$. The root $\lambda_+(h,l)$ converges to $2$ as $h\searrow 0$. We already point out here that for fixed and fairly small $h$ the root corresponding to $l=10$ is already fairly far from $2$; this is indicative of the existence of the transitional regimes~(2)--(3). The root $\lambda_-(h,l)$ is smooth only on the resolution of $[0,1)_h\times\ol{\R_\lambda}$ at $\{0\}\times\{-\infty\}$, and limits to the point $\tilde\lambda=-(2 v)^{-1}$ (here $=-1$) as $h\to 0$; here $\tilde\lambda$ is an affine coordinate on the front face of this resolution (blow-up).}
\label{FigM0vSmall}
\end{figure}

\pfstep{{\rm (2)} Small intermediate $l$, i.e., $1\ll l\ll h^{-1}$.} We can no longer regard $l$ as fixed, and instead need to regard it as a coordinate on the space $(0,1)_h\times\N_l\times\R_\lambda$ on which the two roots (in $\lambda$) of $p_\rmv(h,l,\lambda)=0$ describe a subset that we are in the process of describing. With an eye towards the $l\to\infty$ behavior of the eigenvalues discussed later (they will behave like $\pm l$), we introduce $\hat\lambda:=\lambda l^{-1}$ and work in $(0,1)_h\times\N_l\times\R_{\hat\lambda}$. In order to study the regime $1\ll l\ll h^{-1}$, we introduce
\begin{equation}
\label{EqM0hat}
  \hat h=h l,\ 
  \hat l=l^{-1},\ 
  \hat\lambda=\lambda l^{-1} \quad\Longleftrightarrow\quad
  h=\hat h\hat l,\ 
  l=\hat l^{-1},\ 
  \lambda=\hat\lambda\hat l^{-1}.
\end{equation}
Thus, $\hat h,\hat l,\hat\lambda$ are local coordinates on the blow-up of $[0,1)_h\times[1,\infty]_l\times\R_{\hat\lambda}$ at $\{0\}\times\{\infty\}\times\R_{\hat\lambda}$; see Figure~\ref{FigM0vAll}.

\begin{figure}[!ht]
\centering
\includegraphics{FigM0vAll}
\caption{Illustration of the coordinates $\hat h$, $\hat l$, $\hat\lambda$ from~\eqref{EqM0hat}, the coordinates $\hat h',\hat l',\hat\lambda'$ from~\eqref{EqM0hatp} (which can be regarded as arising from the blow-up of $[0,1)_h\times[1,\infty]_l\times\ol{\R_{\hat\lambda}}$ at $h=0,l=\infty$ and then at $\hat h=0$, $\hat\lambda=\infty$), and $\check h,\check l,\check\lambda$ from~\eqref{EqM0check}. Shown in red is the positive root $\lambda_+(h,l)$ of $p_\rmv(h,l,\lambda)$ (computed in the course of our arguments in the various coordinate systems), and shown in green is the negative root $\lambda_-(h,l)$. (The color coding is different from that of Figure~\ref{FigM0vSmall}.) More precisely, we show the limits of the roots as $h$, $\hat h$, $\hat l$, or $\check l$ tend to $0$.}
\label{FigM0vAll}
\end{figure}

In the regime $1\ll l\ll h^{-1}$ of current interest, we have $0\leq\hat h,\hat l\ll 1$. Put
\[
  \hat p_\rmv(\hat h,\hat l,\hat\lambda):=\hat l p_\rmv(\hat h\hat l,\hat l^{-1},\hat\lambda\hat l^{-1}) = -\hat h\hat\lambda^2 - (2 v-\hat h\hat l)\hat\lambda + \bigl(4 v\hat l+\hat h(1+\hat l)\bigr),
\]
which is thus smooth and restricts to $\hat l=0$ as
\[
  \hat p_\rmv(\hat h,0,\hat\lambda)=-\hat h\hat\lambda^2-2 v\hat\lambda+\hat h.
\]
The roots of this are $\hat\lambda_\pm(\hat h,0)=\hat h^{-1}(-v\pm(v^2+\hat h^2)^{\frac12})$. The root $\hat\lambda_+(\hat h,0)=\hat h/(v+(v^2+\hat h^2)^{\frac12})$ is finite and simple for all $0\leq\hat h\lesssim 1$, and hence extends smoothly into $\hat l>0$ as a root $\hat\lambda_+(\hat h,\hat l)$ of $\hat p_\rmv(\hat h,\hat l,\hat\lambda)$. Furthermore, $\hat\lambda_+(0,\hat l)=2\hat l=2 l^{-1}=\lambda_+(0,l)l^{-1}$ is the rescaling of the root $\lambda_+(0,l)=2$ of $p_\rmv(0,l,\lambda)$ discussed previously. Moreover, $\pa_{\hat h}\hat\lambda_+(0,0)=(2 v)^{-1}$ is positive; therefore, there exists $\hat l_0>0$ such that $\pa_{\hat h}\hat\lambda_+(0,\hat l)>\frac12(2 v)^{-1}$ for $0\leq\hat l\leq\hat l_0$, and therefore there exists, moreover, a constant $\hat h_0>0$ such that
\[
  \hat\lambda_+(\hat h,\hat l) \geq 2\hat l + \tfrac14(2 v)^{-1}\hat h,\quad 0\leq\hat l\leq \hat l_0,\ \ 0\leq\hat h\leq\hat h_0.
\]
This gives the following behavior of one root of $p_\rmv(h,l,\lambda)$:
\begin{equation}
\label{EqM0lambdap1}
  \lambda_+(h,l) \geq 2 + \tfrac14(2 v)^{-1}h l^2,\quad h l\leq\hat h_0,\ \ l=\hat l^{-1}\geq\hat l_0^{-1}.
\end{equation}

In order to track the behavior of the root $\hat\lambda_-(\hat h)$ (which diverges to $-\infty$ as $\hat h\searrow 0$), we mimic the ``blow-up'' \eqref{EqM0tilde} done for regime~(1) (see also Figure~\ref{FigM0vSmall}) by introducing
\begin{equation}
\label{EqM0hatp}
  \hat h' = \hat h,\ 
  \hat l' = \hat l,\ 
  \hat\lambda' = (\hat h\hat\lambda)^{-1}\quad\Longleftrightarrow\quad
  \hat h=\hat h',\ 
  \hat l=\hat l',\ 
  \hat\lambda=(\hat h'\hat\lambda')^{-1}.
\end{equation}
We then consider the rescaling
\[
  \hat p'_\rmv(\hat h',\hat l',\hat\lambda') = \hat h'\hat\lambda'{}^2\hat p_\rmv(\hat h',\hat l',(\hat h'\hat\lambda')^{-1}) = -1 - (2 v-\hat h'\hat l')\hat\lambda' + \bigl(4 v\hat l'+\hat h'(1+\hat l')\bigr)\hat h'\hat\lambda'{}^2.
\]
At $\hat h'=0$, this vanishes at the unique and simple root $\hat\lambda'_-(0,\hat l')=-(2 v)^{-1}$, and hence there exists a root nearby for small $\hat h'$, which we denote by
\[
  \hat\lambda'_-(\hat h',\hat l')\in \bigl(-\tfrac32(2 v)^{-1},-\tfrac12(2 v)^{-1}\bigr),\quad \hat h'\leq\hat h'_0,
\]
for sufficiently small $\hat h'_0>0$. Since $\lambda=h^{-1}(\hat\lambda')^{-1}$, this translates into a root
\begin{equation}
\label{EqM0lambdahatp}
  \lambda_-(h,l) = h^{-1}\hat\lambda'_-(h l,l^{-1})^{-1} \leq -\tfrac23(2 v)h^{-1},\quad h l\leq\hat h'_0.
\end{equation}

In summary, for $l\gg 1$ and $h\ll l^{-1}$, we have found both roots of $p_\rmv(h,l,\lambda)$, and they are given by~\eqref{EqM0lambdap1} (which is larger than $1$) and~\eqref{EqM0lambdahatp} (which is very negative).

\pfstep{{\rm (3)} Intermediate $l$, i.e., $l\sim h^{-1}$.} We continue using the notation from the previous regime. For $\hat h_0\leq\hat h\leq C$ (for any fixed constants $\hat h_0>0$ and $C<\infty$), we have $\hat\lambda_+(\hat h,0)\geq c'>0$, and hence $\hat\lambda_+(\hat h,\hat l)\geq\frac12 c'$ for $\hat h_0\leq\hat h\leq C$ when $\hat l$ is sufficiently small; upon shrinking $\hat l_0$ if necessary, this means
\begin{equation}
\label{EqM0lambdap2}
  \lambda_+(h,l) \geq \tfrac12 c' l>1,\quad \hat h_0\leq\hat h=h l\leq C,\ \ l=\hat l^{-1}\geq\hat l_0^{-1}.
\end{equation}
Similarly, $\hat p_\rmv(\hat h,0,\hat\lambda)$ has the simple root $\hat\lambda_-(\hat h,0)\leq -c'<0$ for $\hat h_0\leq\hat h\leq C$, and thus (upon shrinking $\hat l_0$ further if necessary)
\begin{equation}
\label{EqM0lambdam2}
  \lambda_-(h,l) \leq -\tfrac12 c' l<-C_0,\quad \hat h_0\leq\hat h=h l\leq C,\ \ l=\hat l^{-1}\geq\hat l_0^{-1}.
\end{equation}

\pfstep{{\rm (4)} Large $l$, i.e., $l\gtrsim h^{-1}$.} Finally, we need to consider the region $l\gtrsim h^{-1}$. There, we use
\begin{equation}
\label{EqM0check}
  \check h=h,\ 
  \check l=(h l)^{-1},\ 
  \check\lambda=\lambda l^{-1}\quad\Longleftrightarrow\quad
  h=\check h,\ 
  l=(\check h\check l)^{-1},\ 
  \lambda=\check\lambda(\check h\check l)^{-1}.
\end{equation}
These can be thought of as coordinates near the lift of $l=\infty$ in the blow-up of $[0,1)_h\times[1,\infty]_l\times\R_{\hat\lambda}$, $\hat\lambda=\lambda l^{-1}$, at $h=0,l=\infty$; see again Figure~\ref{FigM0vAll}. The rescaling
\[
  \check p_\rmv(\check h,\check l,\check\lambda):=\check h\check l^2 p_\rmv\bigl(\check h,(\check h\check l)^{-1},\check\lambda(\check h\check l)^{-1}\bigr) = 1 - \check\lambda^2 - 2 v\check l\check\lambda + \check h\check l(1+\check\lambda+4 v\check l)
\]
is a smooth coefficient polynomial in $\check\lambda$. Its restriction to $\check l=0$, which is given by $\check p_\rmv(\check h,0,\check\lambda)=1-\check\lambda^2$, has simple roots $\check\lambda_\pm(\check h,0)=\pm 1$; these thus extend to two continuous families of roots $\check\lambda_\pm(\check h,\check l)$ for small $\check l$. For any fixed $\check h_0>0$, this gives as roots for $p_\rmv(h,l,\lambda)$:
\begin{equation}
\label{EqM0checkRoots}
  \pm l\check\lambda_\pm(h,(h l)^{-1}) \geq \tfrac12 l,\quad
  \check h=h\leq\check h_0,\ \ h l=\check l^{-1}\geq\check l_0^{-1},
\end{equation}
where $\check l_0>0$ is small (depending on $\check h_0$). Note that $l\geq\check l_0^{-1}h^{-1}$ is large when $h$ is small.

\pfstep{Conclusion.} This finishes our estimates of the vector type roots for sufficiently small $h$. More precisely, let $\hat h_0,\hat l_0$ be so small that~\eqref{EqM0lambdap1} and \eqref{EqM0lambdahatp} hold (giving control of the roots for $1\ll l\ll h^{-1}$), and let $\check l_0$ be such that~\eqref{EqM0checkRoots} holds for $\check h_0=\frac12$, say (giving control of the roots for $l\gg h^{-1}$). Upon shrinking $\hat l_0$ further if necessary, we then also have~\eqref{EqM0lambdap2}--\eqref{EqM0lambdam2} for $C:=\check l_0^{-1}$ (giving control of the roots for $l\sim h^{-1}$). Finally, we fix $h_0$ such that~\eqref{EqM0vSmall} holds (giving control for the roots for the remaining finitely many $l\ll 1$).

In summary, for all $h\leq h_0$, one of the two roots of $p_\rmv(h,l,\lambda)$ is $>1$ and the other is $<-C_0$ for all $l\in\N$; and the roots diverge to $\pm\infty$ as $l\to\infty$.

\subsection{Scalar type \texorpdfstring{$0$}{0} indicial roots}
\label{SsM0s}

We now let $N_{v,e,h}(\lambda)$ in~\eqref{EqM0NormOp} act on the 2-dimensional space of sections of the form $(a,b,0)$ where $a,b\in\C$; the action is given by the matrix
\[
  N_{\rms 0}(\lambda) = -\lambda^2+\lambda+\begin{pmatrix} 1 & -1 \\ -1 & 1 \end{pmatrix} + h^{-1} \begin{pmatrix} q_{0 0}(\lambda) & q_{0 1}(\lambda) \\ q_{1 0}(\lambda) & q_{1 1}(\lambda) \end{pmatrix},
\]
where $q_{i j}(\lambda)$ is obtained from $q_{i j}$ by replacing $\rho\pa_\rho$ by $\lambda$. One easily verifies that $N_{\rms 0}(1)$ is singular for any choice of $e,v,h$, so $\lambda=1$ is always a scalar type $0$ indicial root. We shall thus study the roots of
\begin{align*}
  p_{\rms 0}(e,h,\lambda) &= h^2(\lambda-1)^{-1}\det N_{\rms 0}(\lambda) \\
    &= h^2\lambda^3 + \bigl(2(3-e)h v-h^2\bigr)\lambda^2 \\
    &\qquad + (4(2-e)v^2-4 e-2(e+3)h v-2 h^2)\lambda - 4((2+e)v^2+e+h v).
\end{align*}
Now, $\frac14 p_{\rms 0}(e,0,\lambda)=((2-e)v^2-e)\lambda-((2+e)v^2+e)$ has a simple root
\[
  \lambda(e,0) = \frac{(2+e)v^2+e}{(2-e)v^2-e},
\]
which for fixed $v>0$ satisfies $\lambda(e,0)>1$ for all sufficiently small $e>0$. One can continue this root to a real-analytic function $h\mapsto\lambda(e,h)$ for small $h$ such that $p_{\rms 0}(e,h,\lambda(e,h))=0$. Thus, $\lambda(e,h)>1$ still for all sufficiently small $h>0$ (depending on $e$).

We show that the remaining two roots of $p_{\rms 0}(e,h,\lambda)$ are very negative by passing to the coordinates $\tilde h=h$, $\tilde\lambda=(h\lambda)^{-1}$ as in~\eqref{EqM0tilde} and considering the rescaling
\[
  \tilde p_{\rms 0}(e,\tilde h,\tilde\lambda) = \tilde h\tilde\lambda^3 p_{\rms 0}(e,\tilde h,(\tilde h\tilde\lambda)^{-1});
\]
this is a polynomial in $\tilde\lambda$ depending smoothly on $e$ and $\tilde h$, and we have
\[
  \tilde p_{\rms 0}(0,0,\tilde\lambda) = 8 v^2\tilde\lambda^2 + 6 v\tilde\lambda + 1.
\]
The roots of $\tilde p_{\rms 0}(0,0,\tilde\lambda)$ are $\tilde\lambda_{\ll,1}(0,0)=-(2 v)^{-1}$ and $\tilde\lambda_{\ll,2}(0,0)=-(4 v)^{-1}$. Hence, for small $e$, there are two negative roots $\tilde\lambda_{\ll,j}(e,0)$ which extend to negative roots $\tilde\lambda_{\ll,j}(e,\tilde h)$ for small $\tilde h$, corresponding to roots
\[
  \lambda_{\ll,j}(e,h) = h^{-1}\tilde\lambda_{\ll,j}(e,h)^{-1} \leq -v h^{-1},\quad j=1,2,
\]
of $p_{\rms 0}(e,h,\lambda)$.

Altogether, we have shown that there exists $e_0>0$ (depending on $v$) such that for all $e\in(0,e_0]$ there exists $h_0>0$ such that for all $0<h\leq h_0$ there are exactly two scalar type $0$ indicial roots $1$ and $\lambda(e,h)>1$ and two further roots $\lambda_{\ll,j}(e,h)\leq -v h^{-1}<-C_0$, $j=1,2$.

\subsection{Scalar type \texorpdfstring{$l\geq 1$ indicial roots}{indicial roots with l at least 1}}
\label{SsMls}

Fix $0\neq\scal\in\scalspace_l$. Then $N_{v,e,h}(\lambda)$ acts on the 3-dimensional space of sections of the form $(a\scal,b\scal,c\,\sld\scal)$, $a,b,c\in\C$, via the matrix
\begin{align*}
  N_{\rms l}(\lambda) &= -\lambda^2+\lambda+\begin{pmatrix} l(l+1) & 0 & 0 \\ 0 & l(l+1) & 0 \\ 0 & 0 & l(l+1)-1 \end{pmatrix} + \begin{pmatrix} 1 & -1 & -l(l+1) \\ -1 & 1 & l(l+1) \\ -2 & 2 & 1 \end{pmatrix} \\
    &\qquad + h^{-1}\begin{pmatrix} q_{0 0}(\lambda,l) & q_{0 1}(\lambda,l) & q_{0\slash}(\lambda,l) \\ q_{1 0}(\lambda,l) & q_{1 1}(\lambda,l) & q_{1\slash}(\lambda,l) \\ q_{\slash 0}(\lambda,l) & q_{\slash 1}(\lambda,l) & q_{\slash\slash}(\lambda,l) \end{pmatrix},
\end{align*}
where $q_{\bullet\bullet}(\lambda,l)$ is obtained from $q_{\bullet\bullet}$ in the notation of Corollary~\ref{CorM00} by replacing $\rho\pa_\rho$ by $\lambda$, further $\sld$ by $1$, and $\sldelta$ by $l(l+1)$, and we use~\eqref{EqM0Spherical} for the spherical Laplacian. We shall study the roots of the determinant
\begin{equation}
\label{EqMsPoly}
\begin{split}
  p(e,h,l,\lambda) &:= -h^3\det N_{\rms l}(\lambda) \\
    &= h^3\lambda^6 + (2(4-e)v-3 h)h^2\lambda^5 \\
    &\quad + \bigl(4(5-2 e)v^2-4 e - 2(13-e)h v-(3 l(l+1)-1)h^2\bigr)h\lambda^4 \\
    &\quad + \bigl( 8((2-e)v^2-e)v - 4((18-3 e)v^2-e)h \\
    &\quad\hspace{4em} + 2((10+e)v+2(e-4)v l(l+1))h^2 + 3(1+2 l(l+1))h^3 \bigr)\lambda^3 \\
    &\quad + \Bigl( -16 v((4-e)v^2-e) + 4\bigl((19+2 e)v^2-((5-e)v^2-2 e)l(l+1)+e\bigr)h \\
    &\quad\hspace{4em} + 2(5-e+2(8-e)l(l+1))v h^2 + (3(l^2+l-1)l(l+1)-2)h^3\Bigr)\lambda^2 \\
    &\quad + \Bigl( 8\bigl((10+e)v^2+e(1+(1-v^2)l(l+1))\bigr)v \\
    &\quad\hspace{4em} + 4\bigl((11 l(l+1)-2)v^2-e(1+3 v^2+(1+2 v^2)l(l+1))\bigr)h \\
    &\quad\hspace{4em} - 2\bigl(6+8 l(l+1)-(4-e)l^2(l+1)^2\bigr)v h^2 - 3 l^2(l+1)^2 h^3\Bigr)\lambda \\
    &\quad + \Bigl( -8 v\bigl(4 v^2+e(2(1+v^2)+(1-v^2)l(l+1))\bigr) \\
    &\quad\hspace{4em} - 4\bigl(2(2+3 l(l+1))v^2+e(1-v^2)l(l+1)(l^2+l-1)\bigr)h \\
    &\quad\hspace{4em} -2(2+(3-e)l(l+1))l(l+1)v h^2 - (l-1)l^2(l+1)^2(l+2)h^3\Bigr)
\end{split}
\end{equation}
using a strategy similar to the one in~\S\ref{SsM0v}, except the presence of an additional small parameter $e$ leads to a considerably more delicate separation of regimes. Recall that our aim is to control all roots of $p$ when $e>0$ is some small fixed number, and $h>0$ is chosen arbitrarily small (with the required smallness depending on the choice of $e$), for \emph{all} $l\in\N$. As in the vector type case, half (i.e., $3$) of the roots will be very negative, and the other half will be almost constant for small $e,h$ until $l$ gets rather large. For these two classes of roots, we consider two different classes of regimes.

For the identification and control of the three large negative roots, $3$ regimes suffice:
\begin{enumerate}
\item[(N1)] \textit{Fixed, or bounded, $l\in\N$.} The roots are $\sim -h^{-1}$ (with explicit $l$-independent prefactors in the limit $h\to 0$) when $e=0$, and remain of this size also for small $e>0$.
\item[(N2)] \textit{Intermediate $l$, i.e., $1\ll l\lesssim h^{-1}$.} The roots are still $\sim -h^{-1}$, but now the prefactors are functions of $h l$.
\item[(N3)] \textit{Large $l$, i.e., $l\gg h^{-1}$.} The roots are now $\sim -l$.
\end{enumerate}
Note that the parameter $e$ neither enters in the delineation of the asymptotic regimes nor in the scaling of the roots. We shall control these roots much as in the vector type case.

Identifying and controlling the three remaining roots, which we shall refer to (slightly imprecisely) as the \emph{positive roots}, is much more subtle. In particular, it is the precise relationship between $e,h,l$ that determines the behavior of the roots\footnote{For example, we shall see that there is a regime $\frac{1}{\sqrt{e}}\lesssim l\lesssim\frac{\sqrt{e}}{h}$ where two roots become complex. Also, for a certain range of $l\lesssim\frac{1}{\sqrt{e}}$, one of these roots is \emph{decreasing} as $l$ increases, cf.\ the panels~(P1) and~(P2) in Figure~\ref{FigMsNumeric}.} as well as their scaling. We need to consider \textit{six} regimes:
\begin{enumerate}
\item[(P1)] \textit{Fixed, or bounded, $l\in\N$.} Choosing $e$ small and then $h$ small, there are three roots $\approx 1,1,2$ (and for $l\geq 2$ strictly larger than $1$).
\item[(P2)] \textit{First intermediate regime: $1\ll l\lesssim\frac{1}{\sqrt{e}}$.} One root is close to $1$ and two further roots have real parts in $(1,2)$. The latter two roots become complex roughly when $l\sqrt{e}\geq\sqrt{\frac{v^2}{2(1-v^2)}}$, and as $l\sqrt{e}$ increases, their real parts are $\approx\frac32$ and their imaginary parts are $\sim\pm l\sqrt{e}$.
\item[(P3)] \textit{Second intermediate regime: $\frac{1}{\sqrt{e}}\lesssim l\ll\frac{\sqrt{e}}{h}$.} The roots behave as in the first intermediate regime. (We separate this regime, with $\frac{1}{l\sqrt{e}}\geq 0$, from the first intermediate regime, with $l\sqrt{e}\geq 0$, since the parameters that the roots depend continuously on in a \emph{uniform} fashion are different in the two.)
\item[(P4)] \textit{Third intermediate regime: $\frac{\sqrt{e}}{h}\lesssim l\ll h^{-1}$.} We identify three roots with positive real parts which now scale like $h l^2$. Two of these roots, which were complex in the previous regime, transition back to being real, and are real roughly for $\frac{h l}{\sqrt{e}}\geq 4\sqrt{2(1-v^2)}$. Throughout, their real parts are bounded below by positive constants times $h l^2\gtrsim\frac{e}{h}\gg 1$.
\item[(P5)] \textit{Fourth intermediate regime: $l\sim h^{-1}$.} Now there are three roots $\sim l$, with the quantitative prefactors hidden behind the ``$\sim$'' sign being functions of $h l$.
\item[(P6)] \textit{Large $l$, i.e., $l\gg h^{-1}$.} The three positive roots are now close to $l$.
\end{enumerate}

A concrete numerical example of the positive roots is plotted in Figure~\ref{FigMsNumeric}.

\begin{figure}[!ht]
\centering
  \subcaptionbox*{Regime~(P1): $1\leq l\leq 30$}{\includegraphics[width=0.45\textwidth]{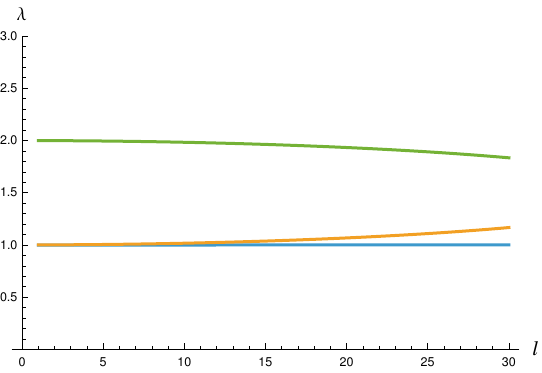}}\hspace{0.05\textwidth}
  \subcaptionbox*{Regime~(P2): $30\leq l\leq\frac{1.5}{\sqrt{e}}=150$}{\includegraphics[width=0.45\textwidth]{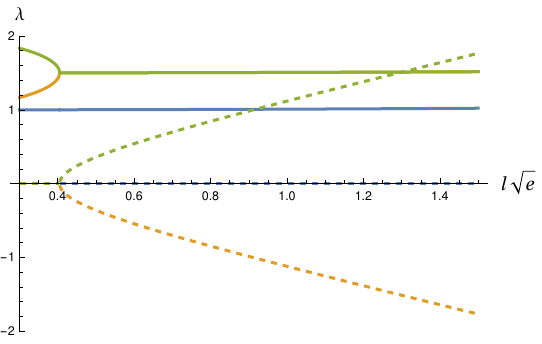}}

  \subcaptionbox*{Regime~(P3): $\frac{1.5}{\sqrt{e}}=150\leq l\leq\frac{\sqrt{e}}{2 h}=5000$}{\includegraphics[width=0.45\textwidth]{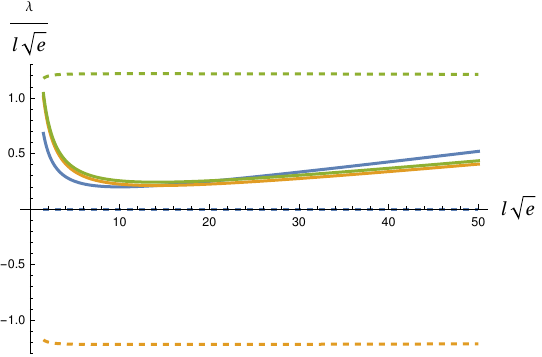}}\hspace{0.05\textwidth}
  \subcaptionbox*{Regime~(P4): $\frac{\sqrt{e}}{2 h}=5000\leq l\leq\frac{0.1}{h}=10^5$}{\includegraphics[width=0.45\textwidth]{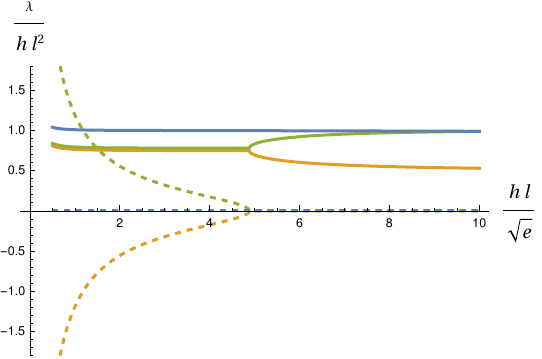}}

  \subcaptionbox*{Regime~(P5): $\frac{0.1}{h}=10^5\leq l\leq\frac{5}{h}=5\cdot 10^6$}{\includegraphics[width=0.45\textwidth]{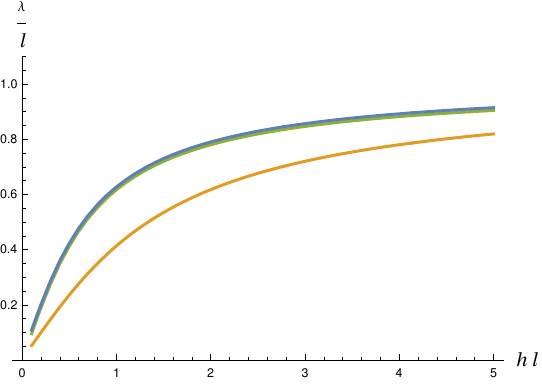}}\hspace{0.05\textwidth}
  \subcaptionbox*{Regime~(P6): $\frac{5}{h}=5\cdot 10^6\leq l<\infty$}{\includegraphics[width=0.45\textwidth]{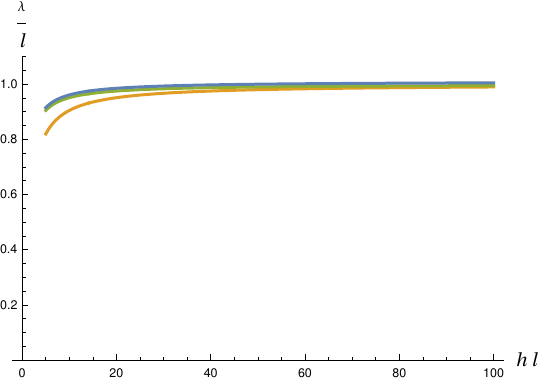}}
\caption{The three scalar type $l$ roots with positive real parts for $v=\frac12$, $e=10^{-4}$, $h=10^{-6}$ in the six regimes~(P1)--(P6). (While $l$ of course only takes on integer values, we plot the roots as functions of $l\in\R$ for aesthetic reasons.) The solid, resp.\ dashed lines are the real, resp.\ imaginary parts of the roots.}
\label{FigMsNumeric}
\end{figure}

\begin{rmk}[Identification of regimes and scaling of roots]
\label{RmkMsIdent}
  We originally arrived at the above regimes for the positive (real part) roots by numerical experimentation, namely, by computing the roots for various values of $e=10^{-N}$ and $h=10^{-M}$, $M\gg N$, and identifying the values of $l$ at which the behavior of roots changes qualitatively by a hands-on scaling analysis (i.e., determining $\alpha,\beta,\alpha',\beta'$ such that a transition happens for $l\sim e^{-\alpha}h^{-\beta}$ and such that the roots behave like $\lambda\sim e^{-\alpha'}h^{-\beta'}$). Of course, we will verify the claimed behavior of the roots, and thus prove Theorem~\ref{ThmM0}, rigorously.
\end{rmk}

We first determine the very negative roots.

\pfstep{{\rm (N1).} Negative roots for fixed, or bounded, $l\in\N$.} Let $l_0\in\N$. Write $\lambda=h^{-1}\breve\lambda$, then the rescaling
\[
  \breve p(e,h,l,\breve\lambda) := h^3 p(e,h,l,h^{-1}\breve\lambda)
\]
has smooth coefficients. Its restriction $\breve p(e,0,l,\breve\lambda)$ to $h=0$ has a triple root $\breve\lambda=0$ (which is the rescaling by $h=0$ of the $3$ positive roots---bounded for $l\leq l_0$ as $h\to 0$---that we will investigate later), and thus
\[
  \breve p'(e,l,\breve\lambda) := \breve\lambda^{-3}\breve p(e,0,l,\breve\lambda)
\]
is a cubic polynomial in $\breve\lambda$ with smooth coefficients. At $e=0$, one finds
\[
  \breve p'(0,l,\breve\lambda)=\breve\lambda^3 + 8 v\breve\lambda^2 + 20 v^2\breve\lambda + 16 v^3,
\]
the roots of which are $-4 v$ (single root) and $-2 v$ (double root). Therefore, $\breve p(e,h,l,\breve\lambda)$ has one root, resp.\ two roots close to $-4 v$, resp.\ $-2 v$ for all $1\leq l\leq l_0$ when $e$ and $h$ are small (depending on $l_0$); in particular, these roots are then still $\leq -v$. In summary,
\begin{equation}
\label{EqMsN1}
\begin{split}
  \forall\,l_0\in\N\ \exists\,e_0,h_0\in(0,1)\ \text{s.t.}\ p(e,h,l,\cdot)\ &\text{has three roots with}\ \Re\lambda_{\ll,j}(e,h,l)\leq -v h^{-1}, \\
    &\ j=1,2,3,\ 0<e\leq e_0,\ 0<h\leq h_0,\ 1\leq l\leq l_0.
\end{split}
\end{equation}

\pfstep{{\rm (N2).} Negative roots for intermediate $l$, i.e., $1\ll l\lesssim h^{-1}$.} We now write $\hat h=h l$ and $\hat l=l^{-1}$ and define the smooth coefficient polynomial
\[
  \hat p(e,\hat h,\hat l,\hat\lambda) := \hat h^3\hat l^3 p\Bigl(e,\hat h\hat l,\hat l^{-1},\frac{\hat\lambda}{\hat h\hat l}\Bigr).
\]
(Note that $\frac{\hat\lambda}{\hat h\hat l}=h^{-1}\hat\lambda$, i.e., we are seeking roots of size $\cO(h^{-1})$ here, as in regime~(N1).) At $\hat l=0$ and for $e=0$, this factors as
\[
  \hat p(0,\hat h,0,\hat\lambda) = (\hat\lambda^2+2 v\hat\lambda-\hat h^2)^2(\hat\lambda^2+4 v\hat\lambda-\hat h^2).
\]
The six roots of this polynomial are thus
\[
  -v \pm \sqrt{v^2+\hat h^2}\ \ \text{(double roots)},\quad
  -2 v\pm \sqrt{4 v^2+\hat h^2}\ \ \text{(single roots)}.
\]
Out of these, the three roots $-v-\sqrt{v^2+\hat h^2}$ (twice) and $-2 v-\sqrt{4 v^2+\hat h^2}$ (once) are $\leq -2 v<0$, and therefore also $\hat p(e,\hat h,\hat l,\hat\lambda)$ has three roots $\leq -v$ when $e,\hat l$ are small. In summary,
\begin{equation}
\label{EqMsN2}
\begin{split}
  \forall\,\hat h_0>0\ \exists\,\hat e_0,\hat l_0>0\ \text{s.t.}\ p(e,h,l,\cdot)\ &\text{has three roots with}\ \Re\lambda_{\ll,j}(e,h,l)\leq -v h^{-1}, \\
    &\ j=1,2,3,\ 0<e\leq\hat e_0,\ \hat l_0^{-1}\leq l\leq\hat h_0 h^{-1}.
\end{split}
\end{equation}

\pfstep{{\rm (N3).} Negative roots for large $l$, i.e., $l\gg h^{-1}$.} We put $\check h=h$, $\check l=(h l)^{-1}$ and seek roots of size $\sim -l$ by considering the smooth coefficient polynomial
\[
  \check p(e,\check h,\check l,\check\lambda) := \check h^3\check l^6 p\Bigl(e,\check h,\frac{1}{\check h\check l},\frac{\check\lambda}{\check h\check l}\Bigr).
\]
At $\check l=0$, this factors as
\[
  \check p(e,\check h,0,\check\lambda) = (\check\lambda-1)^3 (\check\lambda+1)^3
\]
and thus has a triple root at $\check\lambda=-1$. For small $\check l\geq 0$, there are thus three roots nearby. That is,
\begin{equation}
\label{EqMsN3}
\begin{split}
  \forall\,\check h_0,\check e_0>0\ \exists\,\check l_0>0\ \text{s.t.}\ p(e,h,l,\cdot)\ &\text{has three roots with}\ \Re\lambda_{\ll,j}(e,h,l)\leq -\tfrac12 l, \\
    &\ j=1,2,3,\ 0<e\leq \check e_0,\ 0<h\leq\check h_0,\ l\geq h^{-1}\check l_0^{-1}.
\end{split}
\end{equation}

\pfstep{Combination of {\rm (N1)--(N3)}: large negative roots.} Apply~\eqref{EqMsN3} with $\check h_0=\check e_0=1$ to obtain a value $\check l_0>0$. Then apply~\eqref{EqMsN2} with\footnote{Thus, $\hat h_0$ is \emph{large}.} $\hat h_0=\check l_0^{-1}$ (so that the upper bound $\hat h_0 h^{-1}=h^{-1}\check l_0^{-1}$ matches the lower bound on $l$ in~\eqref{EqMsN3}); this produces values $\hat e_0,\hat l_0>0$. Finally, apply~\eqref{EqMsN1} with $l_0=\hat l_0^{-1}$ (so that the upper bound $l_0$ for $l$ in~\eqref{EqMsN1} matches the lower bound $\hat l_0^{-1}$ in~\eqref{EqMsN2}). This produces values $e_0,h_0>0$ such that for all $0<e\leq e_0$ and $0<h\leq h_0$, the polynomial $p(e,h,l,\cdot)$ has three roots $\leq -v h^{-1}$ for $1\leq l\leq\hat h_0 h^{-1}$ and $\leq -\frac12 l$ for $l\geq \hat h_0 h^{-1}$. Reducing $h_0>0$ if necessary, these three roots are thus always $<-C_0$ for any fixed constant $C_0$. We have shown:
\begin{equation}
\label{EqMsN123}
\begin{split}
  \exists\,e_0,h_0>0\ \text{s.t.}\ p(e,h,l,\cdot)\ &\text{has three roots with}\ \Re\lambda_{\ll,j}(e,h,l)<-C_0, \\
    &\ j=1,2,3,\ 0<e\leq e_0,\ 0<h\leq h_0,\ l\in\N.
\end{split}
\end{equation}
Moreover, for any fixed $e,h$, we have $\Re\lambda_{\ll,j}(e,h,l)\to -\infty$ as $l\to\infty$.

\bigskip

We next turn to the positive roots (or, more precisely, the roots with positive real parts) by working our way through the six regimes (P1)--(P6). Since the parameter space of $e,h,l$ is $3$-dimensional, we can visualize these regimes on a resolved space as shown in Figure~\ref{FigMs}. (We are unable to include the variable $\lambda\in\C$, as this would require making a 5-dimensional figure.)

\begin{figure}[!ht]
\centering
\includegraphics{FigMs-r}
\caption{Resolution of the total parameter space $\halfopen_{\sqrt{e}}\times\halfopen_h\times[1,\infty]_l$ (obtained by first blowing up $(0,0,\infty)$, then the lift of the $\sqrt{e}$-axis, and finally the first front face intersected with the lift of $h=0$), together with local coordinate systems and an indication of what areas of parameter space the six regimes (P1)--(P6) correspond to. (The regimes do overlap to cover the entire thick curve, but we draw them disjoint here for better readability.) Essentially, we will give fairly precise bounds (sufficient for the proof of Theorem~\ref{ThmM0}) of the positive roots in a neighborhood of the thick piecewise smooth curve which is the double limit $\lim_{e\searrow 0}\lim_{h\searrow 0}\bigl(\{e\}\times\{h\}\times[1,\infty]_l\bigr)$. Thus, if one fixes a sufficiently small value of $e>0$, then for all sufficiently small $h$, all scalar type $l\geq 1$ roots are controlled. (Only near the axis labeled $l^{-1}$ do we not control roots in a full neighborhood of this thick curve, cf.\ \eqref{EqMsP1} and \eqref{EqMsP2Small}.)}
\label{FigMs}
\end{figure}

\pfstep{{\rm (P1)}. Positive roots for fixed, or bounded, $l\in\N$.} Let $l_0\in\N$; we are interested in the roots of $p(e,h,l,\lambda)$ for $1\leq l\leq l_0$. The restriction $p(e,0,l,\lambda)$ to $h=0$ is a cubic polynomial, and $\lambda=\lambda_0(e,0,l):=1$ is always a root. For $l=1$, we in fact have $p(e,h,1,1)=0$ for all $e,h$.

To study the remaining two roots, we introduce
\begin{equation}
\label{EqMsP1p1}
\begin{split}
  p_1(e,l,\lambda) &:= \bigl(8 v(\lambda-1)\bigr)^{-1}p(e,0,l,\lambda) \\
    &= ((2-e)v^2-e)\lambda^2 - ((6-e)v^2-e)\lambda + \bigl(4 v^2 + e\bigl((1-v^2)l(l+1) + 2(1+v^2)\bigr)\bigr).
\end{split}
\end{equation}
To avoid having to study the explicit roots of this quadratic polynomial directly, let us consider its restriction to $e=0$,
\[
  p_1(0,l,\lambda) = 2 v^2(\lambda-2)(\lambda-1),
\]
which thus has two simple roots $\lambda_{1,1}(0,l)=1$ and $\lambda_{1,2}(0,l)=2$; these can be continued to roots $\lambda_{1,j}(e,l)$, $j=1,2$, for $0<e\leq e_0$ where $e_0>0$ is small (depending on $l_0$). In the context of Theorem~\ref{ThmM0}, the root $\lambda=1$ warrants further study: we compute its $e$-derivative to be
\[
  \pa_e\lambda_{1,1}(e,l)|_{e=0} = -\frac{\pa_e p_1(e,l,\lambda)}{\pa_\lambda p_1(e,l,\lambda)}\Big|_{e=0,\lambda=1} = \frac{1}{2 v^2}\bigl( (1-v^2)l(l+1) + 2(1+v^2) \bigr) \geq \frac{1}{v^2} > 0.
\]
Therefore, for sufficiently small $e_0>0$ (depending on $l_0$), we have
\[
  \lambda_{1,1}(e,l) \geq 1 + \frac{e}{2 v^2},\quad 0<e\leq e_0,\ 1\leq l\leq l_0.
\]
The other root satisfies $\lambda_{1,2}(e,l)\geq 2-\cO(e)\geq\frac74$, $1\leq l\leq l_0$, for all sufficiently small $e\geq 0$.

In particular, for $0<e\leq e_0$ (with $e_0$ depending only on $l_0$), the three roots $\lambda_0(e,0,l)=1$, $\lambda_1(e,0,l):=\lambda_{1,1}(e,l)\geq 1+\frac{e}{2 v^2}$, and $\lambda_2(e,0,l):=\lambda_{1,2}(e,l)\geq\frac32$ of $p(e,0,l,\cdot)$ are \emph{simple} and can be continued to roots $\lambda_j(e,h,l)$, $j=0,1,2$, for small $h$. For $0<e\leq e_0$ then, the root $\lambda_0(e,0,l)=1$ warrants further attention. For small $h_0>0$ (depending on $e,l_0$), it continues to a root $\lambda_0(e,h,l)=1+\cO(h)$ of $p(e,h,l,\cdot)$, with its $h$-derivative
\[
  \pa_h\lambda_0(e,h,l)|_{h=0} = -\frac{\pa_h p(e,h,l,\lambda)}{\pa_\lambda p(e,h,l,\lambda)}\Big|_{h=0,\lambda=1} = \frac{(l-1)l(l+1)(l+2)(1-v^2)}{2 v\bigl( (1-v^2)l(l+1) + 2(1+v^2) \bigr)}
\]
being independent of $e$ and positive for $l\geq 2$, an explicit lower bound being $\frac{3(1-v^2)}{v(2-v^2)}$.\footnote{The numerator and denominator of $-\frac{\pa_h p}{\pa_\lambda p}$ at $h=0$, $\lambda=1$ are both proportional to $e$, with the factor of $e$ canceling. The $h$-derivative of the double root $1$ of $p(0,0,l,\lambda)$ cannot be computed by working only at $e=0$; instead we first move to small $e>0$ here to make $1$ a simple root and then compute its $h$-derivative, which \emph{happens} to be $e$-independent.}

In summary, we have shown that for a constant $c>0$ (where we can take $c=\frac{2(1-v^2)}{v(2-v^2)}$),
\begin{equation}
\label{EqMsP1}
\begin{split}
  &\forall\,l_0\in\N\ \exists\,e_0>0\ \forall\,0<e\leq e_0\,\exists\,h_0>0\ \text{s.t.}\ p(e,h,l,\cdot)\ \text{has three roots satisfying} \\
  &\quad \lambda_0(e,h,l) \begin{cases} =1,\!\! & l=1, \\ \geq 1+c h,\!\! & l\geq 2, \end{cases}\ \ \ \lambda_1(e,h,l)\geq 1+\frac{e}{3 v^2},\ \ \lambda_2(e,h,l)\geq\frac32\quad\forall\,1\leq l\leq l_0,\ 0<h\leq h_0.
\end{split}
\end{equation}

\pfstep{{\rm (P2)}. Positive roots in the first intermediate regime: $1\ll l\lesssim\frac{1}{\sqrt{e}}$.} In this regime,\footnote{The roots of~\eqref{EqMsP1p1} are of the form $C_1(e,v)\pm\sqrt{C_2(e,v)-C_3(e,v)e l(l+1)}$ for explicit $C_j(e,v)$, $j=1,2,3$; they are thus functions of $e l(l+1)\approx e l^2$. This motivates the introduction of the quantity $l\sqrt{e}$ here.} we introduce the quantities
\[
  e'=e l^2,\ h'=h l^2,\ l'=l^{-1} \iff e=e'l'{}^2,\ h=h'l'{}^2,\ l=l'{}^{-1}.
\]
The expression for $p$ in terms of these quantities is
\[
  p'(e',h',l',\lambda) := p(e' l'{}^2,h' l'{}^2, l'{}^{-1}, \lambda),
\]
and this is a smooth coefficient polynomial of degree $6$ in $\lambda$.

We first connect to the regime~(P1). (In terms of Figure~\ref{FigMs}, the following discussion concerns a neighborhood of the axis labeled $l^{-1}$.) The restriction of $p'(e',h',l',\lambda)$ to $h'=0$, given by $p(e'l'{}^2,0,l'^{-1},\lambda)$, is cubic in $\lambda$ (as already noted in~(P1)) and always vanishes for $\lambda=1$. The quotient $p'_1(e',l',\lambda):=(8v(\lambda-1))^{-1}p'(e',0,l',\lambda)$, restricted to $e'=0$, has the simple roots $1,2$; the $e'$-derivative of the root $1$ at $e'=0$ is
\[
  -\frac{\pa_{e'}p'_1(e',l',\lambda)}{\pa_\lambda p'_1(e',l',\lambda)}\Big|_{e'=0,\lambda=1} = \frac{(1+l')(1-v^2)+2(1+v^2)l'{}^2}{2 v^2}>0.
\]
For all small $e'>0$ (depending on an upper bound for $l'$), $p'(e',0,l',\cdot)$ thus has three simple roots including $1$, which can thus be extended to roots of $p'(e',h',l',\cdot)$ depending smoothly also on small $h'$ (depending on the choice of $e'$), with the $h'$-derivative of the root $1$ at $h'=0$ being
\begin{equation}
\label{EqMsP2Der}
  -\frac{\pa_{h'}p'(e',h',l',\lambda)}{\pa_\lambda p'(e',h',l',\lambda)}\Big|_{h'=0,\lambda=1} = \frac{(1-v^2)(1-l')(1+l')(1+2 l')}{2 v\bigl( (1-v^2)(1+l')+2(1+v^2)l'{}^2\bigr)} > 0.
\end{equation}

We can, in fact, get control on the size of $h'$ for which the roots near $1$ are controlled down to $e'=0$. To this end, we write $\lambda=1+e'\Lambda$, $h'e'{}^{-1}=H$, and study $P(e',H,l',\Lambda):=e'{}^{-2}p'(e',H e',l',1+e'\Lambda)$. This is a polynomial with
\[
  P(0,0,l',\Lambda)=-16 v^3\Lambda\Bigl(\Lambda-\frac{(1-v^2)(1+l')+2(1+v^2)l'{}^2}{2 v^2}\Bigr);
\]
this has simple roots $\Lambda_0(0,0,l')=0$ and $\Lambda_1(0,0,l')=\frac{(1-v^2)(1+l')+2(1+v^2)l'{}^2}{2 v^2}>0$, which extend to simple roots $\Lambda_j(e',H,l')$, $j=0,1$. Thus, $\Lambda_1>0$ for small $e',H,l'$, which corresponds to a root of $p$ given by $1+e'\Lambda_1\geq 1+c e l^2$ (for some $c>0$). For $\Lambda_0$, we note that $\Lambda_0(e',0,l')=0$ is a root of $P(e',0,l',\Lambda)$ also for nonzero $e'$, and its $H$-derivative at $H=0$,
\[
  \pa_H\Lambda_0(e',0,l') = -\frac{\pa_H P(e',H,l',\Lambda)}{\pa_\Lambda P(e',H,l',\Lambda)}\Big|_{H=0,\Lambda=0} > 0,
\]
is given by the expression~\eqref{EqMsP2Der}. Thus, for some $c>0$, we have $\Lambda_0(e',H,l')\geq c H>0$ for all small $e',H,l'$. This corresponds to a root $1+e'\Lambda_0\geq 1+c e'H=1+c h'=1+c h l^2$ of $p$.

Thus far, we have established that there exists a constant $c>0$ such that the following holds:
\begin{equation}
\label{EqMsP2Small}
\begin{split}
  &\forall\,l'_0\in(0,\tfrac{1}{2})\ \exists\,e'_\ll>0\ \forall\,0<e'\leq e'_\ll\ \exists\,h'_0>0\ \text{s.t.}\ p(e,h,l,\cdot)\ \text{has three roots satisfying} \\
  &\quad \lambda_0(e,h,l)\geq 1+c h l^2,\ \ \lambda_{1/2}(e,h,l)\geq 1+c e l^2\quad\forall\,l^{\prime-1}_0\leq l\leq\sqrt{\frac{e'}{e}},\ 0<h\leq\frac{h'_0 e}{e'}.
\end{split}
\end{equation}
(The upper bound on $h$ guarantees that $h\leq\frac{h'_0}{l^2}$.)

\bigskip

In order to control the roots when $l\sqrt{e}$ is not necessarily small, we must first study the roots of $p'$ restricted to $l'=0$. Now, the restriction to $l'=0$ is a cubic polynomial in $\lambda$, explicitly given by
\begin{equation}
\label{EqMsP2Poly}
  p'(e',h',0,\lambda) = (2 v\lambda-2 v-h')\bigl( 8 v^2\lambda^2 - 6 v(4 v+h')\lambda + (4(4-e')v^2 + 4 h' v+h'{}^2+4 e')\bigr).
\end{equation}
Its restriction to $h'=0$ is still cubic,
\[
  p'(e',0,0,\lambda) = 16 v^3(\lambda-1)\Bigl[ \lambda^2 - 3\lambda + \Bigl(2+\frac{e'(1-v^2)}{2 v^2}\Bigr)\Bigr],
\]
and its roots are given by
\begin{equation}
\label{EqMsP2Roots}
  \lambda_0(e',0,0)=1,\quad
  \lambda_{1/2}(e',0,0) = \frac32 \pm \frac12\sqrt{1 - 2 e'\frac{1-v^2}{v^2}}.
\end{equation}
See Figure~\ref{FigMsP2}. (For $e'=0$, these match the roots $1,1,2$ discussed previously.) Note that the roots $\lambda_{1/2}(e',0)$ become complex for $e'=e l^2>\frac{v^2}{2(1-v^2)}$, at which point their real parts are $\frac32$, while their imaginary parts grow like $\sqrt{e'}$ as $e'$ increases.

\begin{figure}[!ht]
\centering
\includegraphics{FigMsP2}
\caption{The real parts (in blue) and imaginary parts (in green) of the roots $\lambda_j(e',0,0)$ as functions of $e'=e l^2$; here $v=\frac12$ and thus $\frac{v^2}{2(1-v^2)}=\frac16$. Compare this with panel~(P2) in Figure~\ref{FigMsNumeric}.}
\label{FigMsP2}
\end{figure}

For $e'>0$, the root $1$ is simple, while $\lambda_{1/2}$ are not simple for $e'=\frac{v^2}{2(1-v^2)}$. We can thus continue these roots as continuous functions $\lambda_j(e',h',l')$, $j=0,1,2$, for $0<h'\leq h'_0$ and $0<l'\leq l'_0$ when $h'_0,l'_0>0$ are small (depending on $e'$), with $\lambda_0$ smooth; and the lower bound $\Re\lambda_{1/2}(e',h',l')>1$ (which follows for $h'=0$, $l'=0$ and $e'>0$ from the explicit expression~\eqref{EqMsP2Roots}) continues to hold. As for the root $\lambda_0(e',h',l')$, we again note that $\lambda_0(e',0,l')=1$ is always a root of $p'(e',0,l',\cdot)$. For $e'>0$ and small $l'\geq 0$, we have already seen that it is simple; and its $h'$-derivative at $h'=0$ was already computed above to be
\[
  \pa_{h'}\lambda_0(e',h',l')|_{h'=0} = -\frac{\pa_{h'}p'(e',h',l',\lambda)}{\pa_\lambda p'(e',h',l',\lambda)}\Big|_{h'=0,\lambda=1} = \frac{(1-v^2)(1-l')(1+l')(1+2 l')}{2 v\bigl( (1-v^2)(1+l') + 2(1+v^2)l'{}^2 \bigr)} > 0.
\]
Let now $e'_\ll$ be given by~\eqref{EqMsP2Small} for any fixed choice of $l'_0$, say $l'_0=\frac{1}{100}$. Then we have shown that for some $c>0$,
\begin{equation}
\label{EqMsP2}
\begin{split}
  &\forall\,e'_0<\infty\ \exists\,h'_0>0\ \text{s.t.\ for $0<e\leq e'_\ll(l'_0)^2$,}\ p(e,h,l,\cdot)\ \text{has three roots satisfying} \\
  &\quad \lambda_0(e,h,l) \geq 1+c h l^2,\ \ \Re\lambda_{1/2}(e,h,l)>1,\quad \sqrt{\frac{e'_\ll}{e}}\leq l\leq\sqrt{\frac{e'_0}{e}},\ 0<h\leq\frac{h'_0 e}{e'_0}.
\end{split}
\end{equation}
(The upper bound on $e$ ensures that $l\geq\sqrt{e'_\ll/e}$ implies $l^{-1}\leq l'_0$. The upper bound on $h$ guarantees that $h\leq\frac{h'_0}{l^2}$. It is satisfied for fixed $e>0$ for all sufficiently small $h$.) In combination,~\eqref{EqMsP2Small} and~\eqref{EqMsP2} control the roots of $p(e,h,l,\cdot)$ for $100\leq l\leq\sqrt{\frac{e'_0}{e}}$ (and small $h$).

\pfstep{{\rm (P3)}. Positive roots in the second intermediate regime: $\frac{1}{\sqrt{e}}\lesssim l\ll\frac{\sqrt{e}}{h}$.} We now use the coordinates
\begin{equation}
\label{EqMsP3Coord}
  \hat e=e,\ \hat h=\frac{h l}{\sqrt e},\ \hat l=\frac{1}{l\sqrt e} \iff e=\hat e,\ h=\hat e\hat h\hat l,\ l=\frac{1}{\hat l\sqrt{\hat e}},
\end{equation}
as indicated in Figure~\ref{FigMs}, and we shall seek the roots as multiples of $l\sqrt e=\hat l^{-1}$, i.e.,
\[
  \lambda = l\sqrt{e}\hat\lambda = \frac{\hat\lambda}{\hat l}.
\]
The function
\[
  \hat p(\hat e,\hat h,\hat l,\hat\lambda) := \hat l^3 p\Bigl(\hat e,\hat e\hat h\hat l,\frac{1}{\hat l\sqrt{\hat e}},\frac{\hat\lambda}{\hat l}\Bigr)
\]
is then a smooth coefficient (in $\sqrt{\hat e},\hat h,\hat l$) polynomial in $\hat\lambda$. To connect to the regime~(P2), we restrict $\hat p$ to $\hat e=0$ and obtain a cubic polynomial in $\hat\lambda$,
\[
  \hat p(0,\hat h,\hat l,\hat\lambda) = (2 v\hat\lambda - 2\hat l v - \hat h) \bigl( 8 v^2\hat\lambda^2 - 6 v(4 v\hat l+\hat h)\hat\lambda + (4(4\hat l^2-1)v^2 + 4\hat h\hat l v+\hat h^2+4 ) \bigr);
\]
this is equal to~\eqref{EqMsP2Poly} upon changing coordinates appropriately. Its restriction to $\hat h=0$ is, correspondingly, still cubic, with roots given by
\begin{equation}
\label{EqMsP3lambdahat}
  \hat\lambda_0(0,0,\hat l)=\hat l,\quad
  \hat\lambda_{1/2}(0,0,\hat l) = \frac{3}{2}\hat l \pm \frac12\sqrt{\hat l^2-2\frac{1-v^2}{v^2}};
\end{equation}
these are thus $\hat l$ times the roots~\eqref{EqMsP2Roots} (but now as functions of $\hat l=(l\sqrt{e})^{-1}=e'{}^{-\frac12}$). See Figure~\ref{FigMsP3}. They extend to small $\hat e,\hat h\geq 0$ (depending on a choice of upper bound for $\hat l\geq 0$) as roots $\hat\lambda_0(\hat e,\hat h,\hat l)$ and $\hat\lambda_{1/2}(\hat e,\hat h,\hat l)$. We compute the $\hat h$-derivative of the root $\hat\lambda_0(\hat e,0,\hat l)=\hat l$ to be
\begin{equation}
\label{EqMsP3pahath}
  \pa_{\hat h}\hat\lambda_0(\hat e,\hat h,\hat l)|_{\hat h=0} = -\frac{\pa_{\hat h}\hat p(\hat e,\hat h,\hat l,\hat\lambda)}{\pa_{\hat\lambda}\hat p(\hat e,\hat h,\hat l,\hat\lambda)}\Big|_{\hat h=0,\hat\lambda=\hat l} = \frac{(1-v^2)(1-\hat l\sqrt{\hat e})(1+\hat l\sqrt{\hat e})(1+2\hat l\sqrt{\hat e})}{2 v\bigl( (1-v^2)(1+\hat l\sqrt{\hat e}) + 2(1+v^2)\hat l^2\hat e\bigr)} > 0,
\end{equation}
matching earlier computations of the same quantity in different coordinates. (The important new point here is that the present computations are uniform as $\hat l\searrow 0$ in an open neighborhood of $\hat e,\hat h=0$.) For the roots $\hat\lambda_{1/2}(\hat e,\hat h,\hat l)$, we compute
\begin{equation}
\label{EqMsP3pahath2}
  \pa_{\hat h}\hat\lambda_{1/2}(\hat e,\hat h,\hat l)|_{\hat e=0,\hat h=0} = -\frac{\pa_{\hat h}\hat p(\hat e,\hat h,\hat l,\hat\lambda)}{\pa_{\hat\lambda}\hat p(\hat e,\hat h,\hat l,\hat\lambda)}\Big|_{\hat e=0,\hat h=0,\hat\lambda=\hat\lambda_{1/2}(0,0,\hat l)} = \frac{3}{8 v} \mp i\frac{5\hat l}{8\sqrt{2(1-v^2)-\hat l^2 v^2}}.
\end{equation}
If we restrict $\hat l$ to a compact subinterval of $\sqrt{2\frac{1-v^2}{v^2}}$, then $\hat\lambda_{1/2}(0,0,\hat l)$ are a complex conjugate pair, and thus so are $\hat\lambda_{1/2}(\hat e,0,\hat l)$ for all small $\hat e\geq 0$; we can compute their real parts using \eqref{EqMsP1p1} to be
\[
  \Re\hat\lambda_{1/2}(\hat e,0,\hat l) = \frac{(6-\hat e)v^2-\hat e}{2((2-\hat e)v^2-\hat e)}\hat l = \Bigl(\frac32 + \frac{\hat e(1+v^2)}{(2-\hat e)v^2-\hat e}\Bigr)\hat l\geq\frac32\hat l
\]
provided $0\leq\hat e<(2-\hat e)v^2$ (i.e., $\hat e$ is small enough). Therefore, $\Re\hat\lambda_{1/2}(\hat e,\hat h,\hat l)\geq\frac32\hat l+\frac{1}{4 v}\hat h$ for all small enough $\hat e,\hat h$ (depending on an arbitrary but fixed upper bound on $\hat l$).

\begin{figure}[!ht]
\centering
\includegraphics{FigMsP3}
\caption{The real parts (in blue) and imaginary parts (in green) of the roots $\hat\lambda_j(0,0,\hat l)$ as functions of $\hat l=\frac{1}{l\sqrt{e}}$; here $v=\frac12$. The roots corresponding to $j=1,2$ become complex for $\hat l<\sqrt{2\frac{1-v^2}{v^2}}=\sqrt{6}$. The limiting imaginary parts at $\hat l=0$ are $\pm\frac12\sqrt{2\frac{1-v^2}{v^2}}=\pm\frac12\sqrt{6}$, and the real parts at the transition point are $\frac32\sqrt{2\frac{1-v^2}{v^2}}=\frac32\sqrt{6}$.}
\label{FigMsP3}
\end{figure}

Altogether, noting that $\lambda_j(e,h,l)=\hat l^{-1}\hat\lambda_j(e,\hat h,\hat l)$ and $\hat l^{-1}\hat h=h l^2$, we have shown that for some $c>0$,
\begin{equation}
\label{EqMsP3}
\begin{split}
  &\forall\,\hat l_0<\sqrt{2\frac{1-v^2}{v^2}}\ \exists\,\hat e_0,\hat h_0>0\ \text{s.t.}\ p(e,h,l,\cdot)\ \text{has three roots satisfying} \\
  &\quad \lambda_0(e,h,l)\geq 1+c h l^2,\ \ \Re\lambda_{1/2}(e,h,l)\geq\frac32+c h l^2,\quad 0<e\leq\hat e_0,\ \frac{1}{\hat l_0\sqrt{e}} \leq l \leq \frac{\hat h_0\sqrt{e}}{h}.
\end{split}
\end{equation}

\pfstep{{\rm (P4)}. Positive roots in the third intermediate regime: $\frac{\sqrt{e}}{h}\lesssim l\ll h^{-1}$.} We now work with the coordinates
\begin{equation}
\label{EqMsP4Coord}
  \tilde e=\frac{e}{h^2 l^2},\ 
  \tilde h=h l,\ 
  \tilde l=\frac{1}{l\sqrt{e}} \iff
  e=\tilde e\tilde h^2,\ 
  h=\tilde h^2\tilde l\sqrt{\tilde e},\ 
  l=\frac{1}{\tilde h\tilde l\sqrt{\tilde e}},
\end{equation}
as indicated in Figure~\ref{FigMs}; and we seek roots that are multiples of $h l^2$ by setting
\[
  \lambda = h l^2\tilde\lambda = \frac{\tilde\lambda}{\tilde l\sqrt{\tilde e}}.
\]
Suitably rescaling the expression for $p$ in these coordinates yields a smooth coefficient polynomial (in $\sqrt{\tilde e},\tilde h,\tilde l,\tilde\lambda$),
\[
  \tilde p(\tilde e,\tilde h,\tilde l,\tilde\lambda) := \tilde e^{\frac32}\tilde l^3 p\Bigl(\tilde e\tilde h^2,\tilde h^2\tilde l\sqrt{\tilde e},\frac{1}{\tilde h\tilde l\sqrt{\tilde e}},\frac{\tilde\lambda}{\tilde l\sqrt{\tilde e}}\Bigr).
\]
Its restriction to $\tilde h=0$ is a cubic polynomial, and further restricting to $\tilde l=0$ produces the cubic polynomial
\[
  \tilde p(\tilde e,0,0,\tilde\lambda) = (2 v\tilde\lambda-1)\bigl( 8 v^2\tilde\lambda^2 - 6 v\tilde\lambda + (4(1-v^2)\tilde e + 1) \bigr)
\]
whose roots are
\begin{equation}
\label{EqMsP4lambda}
  \tilde\lambda_0(\tilde e,0,0) = \frac{1}{2 v}, \quad
  \tilde\lambda_{1/2}(\tilde e,0,0) = \frac{3}{8 v} \pm \frac{1}{8 v}\sqrt{1-32(1-v^2)\tilde e}.
\end{equation}
(Here, $\frac{1}{2 v}$ is equal to~\eqref{EqMsP3pahath} for $\hat l=0$. Similarly, $\frac{3}{8 v}$ matches the same number in~\eqref{EqMsP3pahath2}. Lastly, the imaginary part of~\eqref{EqMsP3lambdahat} is $\pm\frac{1}{2 v}\sqrt{2(1-v^2)}+\cO(\hat l^2)$ as $\hat l\to 0$, so upon passing to $\tilde\lambda=\frac{\sqrt e}{h l}\hat\lambda=\sqrt{\tilde e}\hat\lambda$, this gives $\Im\tilde\lambda_{1/2}\sim\pm\frac{1}{2 v}\sqrt{2(1-v^2)}\sqrt{\tilde e}=\pm\frac{1}{8 v}\sqrt{32(1-v^2)\tilde e}$, matching~\eqref{EqMsP4lambda} as $\tilde e\to+\infty$.) Note that $\Re\tilde\lambda_j(\tilde e,0,0)\geq\frac{1}{4 v}$ for $j=0,1,2$. The roots $\tilde\lambda_{1/2}$ describe the transition of the formerly complex roots (cf.\ $\hat\lambda_{1/2}$ in~\eqref{EqMsP3lambdahat} and $\lambda_{1/2}$ in~\eqref{EqMsP3}) back to real roots as $\tilde e$ crosses $\frac{1}{32(1-v^2)}$ in the decreasing direction; see Figure~\ref{FigMsP4}.

\begin{figure}[!ht]
\centering
\includegraphics{FigMsP4-r}
\caption{The real parts (in blue) and imaginary parts (in green) of the roots $\tilde\lambda_j(\tilde e,0,0)$ as functions of $\tilde e=\frac{e}{h^2 l^2}=(\frac{h l}{\sqrt{e}})^{-2}$; here $v=\frac12$. The roots corresponding to $j=1,2$ become real for $\tilde e\leq\frac{1}{32(1-v^2)}=\frac{1}{24}$, with the real parts at the transition point being $\frac{3}{8 v}=\frac{3}{4}$. As $\tilde e\searrow 0$, the roots tend to $\frac{1}{2 v}=1$ (for $j=0,1$) and $\frac{1}{4 v}=\frac12$ (for $j=2$).}
\label{FigMsP4}
\end{figure}

We can continuously extend the roots $\tilde\lambda_j(\tilde e,\tilde h,\tilde l)$ to small values of $\tilde l$ and $\tilde h$, and their real parts will remain $\geq\frac{1}{8 v}$. We have shown:
\begin{equation}
\label{EqMsP4}
\begin{split}
  &\forall\,\tilde e_0<\infty\ \exists\,\tilde h_0,\tilde l_0>0\ \text{s.t.\ for $0<e\leq\tilde e_0\tilde h_0^2$,}\ p(e,h,l,\cdot)\ \text{has three roots satisfying} \\
  &\quad \lambda_0(e,h,l)\geq\frac{1}{8 v}h l^2,\ \ \Re\lambda_{1/2}(e,h,l)\geq\frac{1}{8 v}h l^2,\quad \frac{1}{\sqrt{\tilde e_0}}\frac{\sqrt{e}}{h}\leq l\leq\frac{\tilde h_0}{h},\ 0<h\leq\frac{e\tilde l_0}{\sqrt{\tilde e_0}}.
\end{split}
\end{equation}
(The lower and upper bounds on $l$ are equivalent to $\tilde e\leq\tilde e_0$ and $\tilde h\leq\tilde h_0$, respectively, while the upper bound on $h$ ensures that this lower bound on $l$ implies also that $\tilde l=\frac{1}{l\sqrt{e}}\leq\tilde l_0$.)

\pfstep{{\rm (P5)}. Positive roots in the fourth intermediate regime: $l\sim h^{-1}$.} At this point, we could use the coordinates from regime~(N2). In order to stay with coordinates suggested by Figure~\ref{FigMs}, we instead use here again the coordinates~\eqref{EqMsP4Coord} and work in the regime $\tilde h\sim 1$. The roots scale like $l=\frac{1}{\tilde h\tilde l\sqrt{\tilde e}}$ now, so we consider
\[
  \tilde p'(\tilde e,\tilde h,\tilde l,\tilde\lambda') := \tilde e^{\frac32}\tilde h^3\tilde l^3 p\Bigl(\tilde e\tilde h^2,\tilde h^2\tilde l\sqrt{\tilde e},\frac{1}{\tilde h\tilde l\sqrt{\tilde e}},\frac{\tilde\lambda'}{\tilde h\tilde l\sqrt{\tilde e}}\Bigr),
\]
which has smooth coefficients (in $\sqrt{\tilde e},\tilde h,\tilde l,\tilde\lambda'$). Its restriction to $\tilde e=0$, $\tilde l=0$ is a sextic polynomial that factors into quadratics,
\[
  \tilde p'(0,\tilde h,0,\tilde\lambda') = (\tilde h\tilde\lambda'{}^2+2 v\tilde\lambda'-\tilde h)^2 ( \tilde h\tilde\lambda'{}^2 + 4 v\tilde\lambda'-\tilde h),
\]
and thus has positive roots
\begin{alignat*}{2}
  \tilde\lambda'_{1/2}(0,\tilde h,0) &= \tilde h^{-1}\biggl( -v+\sqrt{v^2+\tilde h^2} \,\biggr)&&\ \ \text{(double root)}, \\
  \tilde\lambda'_3(0,\tilde h,0) &= \tilde h^{-1}\biggl( -2 v+\sqrt{4 v^2+\tilde h^2} \,\biggr)&&\ \ \text{(single root)}.
\end{alignat*}
Extending these continuously to small $\tilde e,\tilde l\geq 0$, we obtain:
\begin{equation}
\label{EqMsP5}
\begin{split}
  &\forall\,0<\tilde h_0<\tilde h_1<\infty\ \exists\,c,\tilde e_0,\tilde l_0>0\ \text{s.t.}\ p(e,h,l,\cdot)\ \text{has three roots satisfying} \\
  &\quad \Re\lambda_j(e,h,l) \geq c l,\quad j=1,2,3,\ 0<e\leq\tilde e_0\tilde h_0^2,\ \frac{\tilde h_0}{h}\leq l\leq\frac{\tilde h_1}{h},\ 0<h\leq\tilde h_0\tilde l_0\sqrt{e}.
\end{split}
\end{equation}
(The bounds on $l$ are equivalent to $\tilde h_0\leq\tilde h\leq\tilde h_1$. The upper bound on $h$ ensures that the stated lower bound on $l$ implies also that $\tilde l=\frac{1}{l\sqrt{e}}\leq\tilde l_0$. The upper bound on $e$ finally ensures that $\tilde e=\frac{e}{h^2 l^2}\leq\frac{e}{\tilde h_0^2}$ is bounded from above by $\tilde e_0$.)

\pfstep{{\rm (P6)}. Positive roots for large $l$, i.e., $l\gg h^{-1}$.} We could argue as in regime~(N3). In keeping with Figure~\ref{FigMs}, we instead use
\[
  \check e=e,\ \check h=\frac{h}{\sqrt{e}},\ \check l=\frac{1}{h l} \iff e=\check e,\ h=\check h\sqrt{\check e},\ l=\frac{1}{\check h\check l\sqrt{\check e}},
\]
and implement the scaling $\lambda\sim l$ of the roots by considering
\[
  \check p(\check e,\check h,\check l,\check\lambda) := \check e^{\frac32}\check h^3\check l^6 p\Bigl(\check e,\check h\sqrt{\check e},\frac{1}{\check h\check l\sqrt{\check e}},\frac{\check\lambda}{\check h\check l\sqrt{\check e}}\Bigr).
\]
The restriction $\check p(\check e,\check h,0,\check\lambda)$ to $\check l=0$ factors as $(\check\lambda-1)^3(\check\lambda+1)^3$, and hence it has roots $\check\lambda_j(\check e,\check h,0)=1$ which extend to nearby roots $\check\lambda_j(\check e,\check h,\check l)$ for small $\check l\geq 0$. Thus,
\begin{equation}
\label{EqMsP6}
\begin{split}
  &\forall\,\check h_0,\check e_0>0\ \exists\,\check l_0>0\ \text{s.t.}\ p(e,h,l,\cdot)\ \text{has three roots satisfying} \\
  &\quad \lambda_j(e,h,l)\geq\tfrac12 l,\quad j=1,2,3,\ 0<e\leq\check e_0,\ 0<h\leq\check h_0\sqrt{e},\ l\geq\frac{1}{h\check l_0}.
\end{split}
\end{equation}

\pfstep{Combination of {\rm (P1)--(P6)}.} We say below that the (three) positive roots of $p(e,h,l,\cdot)$ are \emph{controlled} if all of them have real parts $>1$, except when $l=1$ and one root is equal to $1$.

We first build our way up from $l\gg 1$ to $l\ll h^{-1}$:
\begin{itemize}
\item (P2), first part. We use~\eqref{EqMsP2Small} for $l'_0=\frac{1}{100}$ to obtain $0<e'_\ll\ll 1$; and then we use $e'=e'_\ll$ there to obtain $h'_0>0$ such that the positive roots are controlled for $100\leq l\leq\sqrt{\frac{e'_\ll}{e}}$ and $0<h\leq\frac{h'_0 e}{e'_\ll}$.
\item (P2), second part. We combine this with~\eqref{EqMsP2} for $e'_0=4\frac{v^2}{2(1-v^2)}$ to deduce, for a possibly smaller value of $h'_0>0$, that the positive roots are in fact controlled for all $100\leq l\leq\sqrt{e'_0/e}$ and $0<h\leq h'_0 e$.
\item (P3). Next, we use~\eqref{EqMsP3} with $\hat l_0=(e'_0)^{-\frac12}=\frac12\sqrt{2\frac{1-v^2}{v^2}}$ to obtain $\hat e_0,\hat h_0>0$ such that we enlarge the interval of $l$ for which the positive roots are controlled to $100\leq l\leq\frac{\hat h_0\sqrt{e}}{h}$, under the assumptions $0<e\leq\hat e_0$ and $0<h\leq h'_0 e$.
\item (P4). We use~\eqref{EqMsP4} with $\tilde e_0=\hat h_0^{-2}$ to obtain $\tilde h_0>0$ such that control of the positive roots extends to $100\leq l\leq\frac{\tilde h_0}{h}$ provided $0<e\leq\hat e_0$ and $0<h\leq h'_0 e$ (where we shrink $h'_0>0$ further so that it is smaller than $\frac{\tilde l_0}{\sqrt{\tilde e_0}}$ in the notation of~\eqref{EqMsP4}).
\end{itemize}

We now build our way back from $l\gg h^{-1}$:
\begin{itemize}
\item (P6). We apply~\eqref{EqMsP6} with $\check h_0=\check e_0=1$ to obtain a value $\check l_0>0$ such that the positive roots are controlled for $l\geq\frac{1}{h\check l_0}$.
\item (P5). We use~\eqref{EqMsP5} with $\tilde h_0$ from before and $\tilde h_1=\check l_0^{-1}$. For $0<e\leq 1$, this produces a value $h_0$ (equal to $\tilde h_0\tilde l_0 e$ in the notation~\eqref{EqMsP5}) such that the positive roots are controlled for all $l\geq\frac{\tilde h_0}{h}$ provided $0<h\leq h_0$.
\end{itemize}

In combination, we get constants $e_0>0$ and $h'_0>0$ such that the positive roots are controlled for all $l\geq 100$ provided $0<e\leq e_0$ and $0<h\leq h'_0 e$. It remains to control the first $100$ values of $l$:

\begin{itemize}
\item (P1). We use~\eqref{EqMsP1} with $l_0=100$ to deduce that for all $0<e\leq e_0$ (for a possibly smaller value of $e_0$) there exists $h_0>0$ such that the positive roots are controlled for all $l\in\N$ provided $0<h\leq h_0$.
\end{itemize}

This completes the proof of Theorem~\ref{ThmM0}.

\section{A dilation-invariant model problem on Minkowski space}
\label{SMi}

We prove a technical result which is needed for the application of the present work to the analysis of the low-energy resolvent of a linearized gauge-fixed Einstein operator (see \cite[\S{8.3}]{HintzKerrStab}, and also \cite[\S\S{9.5}--{9.6}]{HintzNonstat2} for general context for the result proved here). Some context and functional analytic background are also presented in~\cite[Definition~3.13, \S{4.1.4}]{HintzNonstat}. Roughly speaking, the behavior of the spectral family of $\Box_{g_{\bhm,a}}^{\cC_h}$ at the transition from zero to nonzero frequencies $\sigma$ (with $\Im\sigma\geq 0$ and fixed $\arg\sigma\in[0,\pi]$) is captured by a model operator obtained by rescaling $(\sigma,r)\mapsto(\lambda\sigma,r/\lambda)$ and passing to the limit $\lambda\to 0$ (upon factoring out $\lambda^2$); this yields the spectral family of a dilation-invariant operator on Minkowski space (the model spacetime at $r=\infty$) at a frequency $\in e^{i[0,\pi]}$. (This operator is called the \emph{transition face normal operator} in \cite{HintzNonstat}.) Rather than present this procedure in detail, we shall simply present this operator here and establish some of its properties.

\subsection{Setup and statement of the main result}

We work on Minkowski space
\[
  \R_t\times\R^3_x,\quad g=-\dd t^2 + \dd r^2+r^2\,\slg,
\]
which is homogeneous of degree $2$ with respect to spacetime dilations $(t,r)\mapsto(\lambda t,\lambda r)$. We define the dilation-invariant 1-form
\begin{equation}
\label{EqMi1form}
  \cd=r^{-1}(\dd t-v\,\dd r),\quad v\in(0,1),
\end{equation}
and shall study the operator
\begin{equation}
\label{EqMiOp}
  \Box_g^{\cC_h} = 2 \delta_g\sfG_g\delta_{g,\cC_h}^*,\quad
  \delta_{g,\cC_h}^*=\delta_g^*+\cC_h,\quad
  \cC_h=h^{-1}\bigl(2\cd\otimes_s(\cdot)-(1-e)g\,\iota_{\cd^\sharp}\bigr).
\end{equation}
(Cf.\ \eqref{EqIRoughC}.) As in the bulk of the paper, we focus on the regime where $e>0$ is small but fixed, and $h\in(0,h_0)$ where $h_0>0$ is small. (In practice, we are only interested in values of $e$ allowed by Proposition~\ref{PropEC}, for some fixed choice of subextremal Kerr parameters, upon fixing any $C_0>0$.) Note that $\Box_g^{\cC_h}$ is homogeneous of degree $-2$ with respect to dilations, and it has singular coefficients at $r=0$. We shall thus consider it as an operator on the resolved compactification\footnote{The meaning of $M_0,M$ is different here than in the main part of the paper since we neither excise a neighborhood of $r=0$ nor restrict to the future of some spacelike hypersurface (yet).}
\begin{equation}
\label{EqMiMfd}
  M := [M_0; \ol{\{r=0\}}],\quad
  M_0 := \ol{\R^4},
\end{equation}
acting on sections of the bundle
\begin{equation}
\label{EqMiBundle}
  \cT^* := \upbeta^*\Tsc^*\ol{\R^4},
\end{equation}
where $\upbeta\colon M\to\ol{\R^4}$ denotes the blow-down map. (A smooth global frame of $\cT^*$ is given by the spacetime 1-forms $\dd t,\dd x^1,\dd x^2,\dd x^3$.)

Our main interest lies in the spectral family of $\Box_g^{\cC_h}$ at frequencies $\sigma\in\C\setminus\{0\}$, $\Im\sigma\geq 0$, with respect to the null coordinate
\[
  t_* = t-r.
\]
We denote this spectral family by
\begin{equation}
\label{EqMiSpecFam}
  \wh{\Box_g^{\cC_h}}(\sigma) = 2\wh{\delta_g}(\sigma)\sfG_g\wh{\delta_{g,\cC_h}^*}(\sigma) = 2\wh{\delta_g}(\sigma)\sfG_g\bigl(\wh{\delta_g^*}(\sigma)+\cC_h\bigr).
\end{equation}
This is the operator $e^{i\sigma t_*}\Box_g^{\cC_h}e^{-i\sigma t_*}$ acting on stationary 1-forms. We shall consider $\wh{\Box_g^{\cC_h}}(\sigma)$ as an operator on the manifold $X^\circ$ where
\[
  X := [\ol{\R^3};\{0\}] = [0,\infty]_\rho \times \Sph^2,\quad \rho=r^{-1},
\]
is regarded as the level set $\ol{\{t=0\}}\subset M$; thus, $\wh{\Box_g^{\cC_h}}(\sigma)$ acts on sections of the vector bundle
\[
  \cT^*_X := (\upbeta|_X)^*(\Tsc^*\ol{\R^4}),
\]
which has a smooth global frame $\dd t,\dd x^1,\dd x^2,\dd x^3$. In order to write down differential operators acting on sections of $\cT_X^*$, we shall use the bundle splitting~\eqref{EqM0Split} which identifies
\begin{equation}
\label{EqMiSplit}
  \cT^*_X\cong\ubar\R\oplus\ubar\R\oplus T^*\Sph^2.
\end{equation}
In the notation of Corollary~\ref{CorM00} then, we have
\begin{align}
  \wh{\Box_g^{\cC_h}}(\sigma) &= 2 i\sigma\rho(\rho\pa_\rho-1-h^{-1}\ubar S) + \rho^2\left(-(\rho\pa_\rho)^2+\rho\pa_\rho+\slDelta + T(\slnabla) + h^{-1}Q(\rho\pa_\rho,\slnabla)\right) \nonumber\\
\label{EqMiOpB}
\begin{split}
    &= -2 i \sigma r^{-1}(r\pa_r + 1 + h^{-1}\ubar S) \\
    &\qquad + r^{-2}\bigl(-(r\pa_r)^2 - r\pa_r + \slDelta + T(\slnabla) + h^{-1} Q(-r\pa_r,\slnabla)\bigr)
\end{split} \\
\label{EqMiOpSc}
\begin{split}
    &= 2 i \sigma(\rho^2\pa_\rho - \rho-\rho h^{-1}\ubar S) \\
    &\qquad + \bigl( -(\rho^2\pa_\rho)^2 + 2 \rho\cdot\rho^2\pa_\rho + \rho^2\slDelta + \rho^2 T(\slnabla) + h^{-1}\rho^2 Q(\rho\pa_\rho,\slnabla)\bigr)
\end{split}
\end{align}
where (recalling~\eqref{EqM00S}) we write
\begin{equation}
\label{EqMiOpS}
  \ubar S=\begin{pmatrix} 2(1-v) & 0 & 0 \\ (1-e)(1+v) & (1-e)(1-v) & 0 \\ 0 & 0 & 1-v \end{pmatrix},\quad
  T(\slnabla)=\begin{pmatrix} 1 & -1 & -\sldelta \\ -1 & 1 & \sldelta \\ -2\sld & 2\sld & 1 \end{pmatrix},
\end{equation}
and $Q(\rho\pa_\rho,\slnabla)$ is given by the final matrix in~\eqref{EqM00} (without the prefactor $h^{-1}$). Thus,
\begin{equation}
\label{EqMiOpPiecesb}
  T,Q\in\Diffb^1(X;\cT^*_X),\quad r^2\wh{\Box_g^{\cC_h}}(0)\in \Diffb^2(X;\cT^*_X)
\end{equation}
are dilation-invariant operators if we define dilations to act trivially on the three components of a section of~\eqref{EqMiSplit}. Formally setting $h^{-1}=0$, one obtains the spectral family of the 1-form wave operator on Minkowski space (albeit relative to the spacetime splitting $\R_{t_*}\times\R^3_x$); for general $h>0$, we will thus see the usual radial point structure over $r^{-1}=0$ from \cite[\S{4.1.2}]{HintzNonstat} and \cite[\S{9.2.2}]{HintzNonstat2} (which are inspired by \cite{VasyLowEnergyLag,MelroseEuclideanSpectralTheory}). Denote by
\begin{equation}
\label{EqMiBdfs}
  \rho_\scop := \frac{\rho}{\rho+1},\quad
  \rho_\bop := \frac{r}{r+1},
\end{equation}
global defining functions of the two boundaries
\[
  \pa_\scop X:=\rho^{-1}(0)\subset X,\quad
  \pa_\bop X:=r^{-1}(0)\subset X.
\]
The notation reflects that near $\rho=0$, $\wh{\Box_g^{\cC_h}}(\sigma)$ is an unweighted scattering operator, i.e., it is constructed from $\rho^2\pa_\rho,\rho\slnabla$, as follows from~\eqref{EqMiOpSc}; and near $r=0$, $\wh{\Box_g^{\cC_h}}(\sigma)$ is a weighted (with weight $r^{-2}$) b-operator, i.e., it is constructed from $r\pa_r$, $\slnabla$, as follows from~\eqref{EqMiOpB}. We combine both observations concisely as
\begin{equation}
\label{EqMiOpscb}
  \wh{\Box_g^{\cC_h}}(\sigma) \in \rho_\bop^{-2}\Diff_{\scop,\bop}^2(X;\cT^*_X).
\end{equation}
We shall study this operator as a map acting between weighted scattering-b Sobolev spaces
\[
  H_{\scop,\bop}^{s,(\ell,\beta)}(X;\cT^*_X):=\rho_\scop^\ell\rho_\bop^\beta H_{\scop,\bop}^s(X;\cT^*_X).
\]
The underlying $L^2$-space $L^2=H_{\scop,\bop}^{0,(0,0)}$ is defined with respect to the density $r^2\,|\dd r\,\dd\slg|=r^3\,|\frac{\dd r}{r}\,\dd\slg|=\rho^{-3}\frac{\dd\rho}{\rho}|\dd\slg|$. We will analyze~\eqref{EqMiOpscb} by microlocal means in the scattering-b-cotangent bundle ${}^{\scop,\bop}T^*X$, which over $r<2$ is equal to $\Tb^*([0,2)_r\times\Sph^2)$ and over $\rho<2$ (i.e., $r>\frac12$) equal to $\Tsc^*([0,2)_\rho\times\Sph^2)$. The main result of this appendix is:

\begin{thm}[Invertibility of $\wh{\Box_g^{\cC_h}}(\sigma)$]
\label{ThmMiInv}
  Fix $v\in(0,1)$ and define $\cd$ by~\eqref{EqMi1form}. Fix $C_0>0$. Then there exists $e_0>0$ such that for all $e\in(0,e_0)$ there exists $h_0>0$ such that for all $h\in(0,h_0)$ and $s,\ell,\beta\in\R$ with
  \begin{equation}
  \label{EqMiInvRange}
    -\tfrac12-h^{-1}(1-e)(1+v)<\ell<-\tfrac12+h^{-1}(1-e)(1-v),\quad
    \beta\in(\tfrac12,\tfrac32+C_0],
  \end{equation}
  the operator
  \begin{equation}
  \label{EqMiInv}
    \wh{\Box_g^{\cC_h}}(\sigma) \colon \bigl\{ \omega \in H_{\scop,\bop}^{s,(\ell,\beta)}(X;\cT^*_X) \colon \wh{\Box_g^{\cC_h}}(\sigma)\omega \in H_{\scop,\bop}^{s-2,(\ell+1,\beta-2)}(X;\cT^*_X) \bigr\} \to H_{\scop,\bop}^{s-2,(\ell+1,\beta-2)}(X;\cT^*_X)
  \end{equation}
  is invertible for all $\sigma\in e^{i[0,\pi]}$, with uniformly (in $\sigma$) bounded inverse. More generally, this holds for all variable order functions $s\in\CI({}^{\scop,\bop}S^*X)$, $\ell\in\CI(\ol{{}^{\scop,\bop}T_{\pa_\scop X}^*}X)$ satisfying the following conditions: $\ell<-\frac12+h^{-1}(1-e)(1-v)$ at $\cR_{\rm out}$ (defined in~\eqref{EqMiFredRout} below); and if $\sigma\in\{-1,+1\}$, then $\ell>-\frac12-h^{-1}(1-e)(1+v)$ at $\cR_{\rm in}$ (defined in~\eqref{EqMiFredRin} below), and $\ell$ is constant near $\cR_{\rm in},\cR_{\rm out}$ and non-increasing along $\sigma\sfH_{G_\sigma}$.
\end{thm}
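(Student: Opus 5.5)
The plan is the usual two-step argument: first a Fredholm estimate, then injectivity, handling the two ends of $X$ separately. At the b-end $r=0$, $r^2\wh{\Box_g^{\cC_h}}(\sigma)$ equals $r^2\wh{\Box_g^{\cC_h}}(0)$ modulo $r\,\Diffb^1$, by~\eqref{EqMiOpB}. The indicial roots of its b-normal operator are those of Theorem~\ref{ThmM0}, up to the sign $\lambda\mapsto-\lambda$ coming from the variable $r$ instead of $\rho$. They therefore lie in $\Re\lambda\le -1$ or $\Re\lambda>C_0$, in $r$-powers. After the $\frac32$ shift from the $L^2$ density $r^3\frac{\dd r}{r}$, the range $\beta\in(\frac12,\frac32+C_0]$ contains no indicial root, using also the gap $\eps_{\rm ind}$ of Theorem~\ref{ThmM0} at the endpoint. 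Hence the Mellin-transformed normal operator is invertible on the weighted line, and the standard b-parametrix gives an estimate near $r=0$ with a compact error.

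At the scattering end, for $\sigma\neq 0$, I would redo the radial point analysis of \cite[\S{4.1.2}]{HintzNonstat}. The characteristic set of the sc-principal symbol $G_\sigma$ over $\rho=0$ consists of the incoming and outgoing radial sets $\cR_{\rm in},\cR_{\rm out}$ (two of them if $\sigma$ is real, and sc-elliptic off the real axis). The threshold weights are computed from the subprincipal term $2i\sigma\rho(-1-h^{-1}\ubar S)$ in~\eqref{EqMiOpSc}. Since $\ubar S$ is lower triangular with eigenvalues $2(1-v)$, $(1-e)(1-v)$, $1-v$, the threshold at $\cR_{\rm out}$ is $-\frac12+h^{-1}\min\spec\ubar S=-\frac12+h^{-1}(1-e)(1-v)$. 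The analogous computation with the incoming phase gives $-\frac12-h^{-1}(1-e)(1+v)$ at $\cR_{\rm in}$. To use the eigenvalue bounds one needs a non-scalar positive commutator argument, with a fiber inner product making $\ubar S$ nearly diagonal. Concretely, I would rescale the off-diagonal entries by a small parameter so that the symmetric part is within $\eps$ of the diagonal, and this costs only an $\eps$ loss, which is absorbed into the open inequalities. Combining with sc-elliptic estimates, real principal type propagation from $\cR_{\rm in}$ to $\cR_{\rm out}$ along $\sigma\sfH_{G_\sigma}$ (where the variable order $\ell$ must be monotone), and the b-estimates, one gets
\[
\|\omega\|_{H^{s,(\ell,\beta)}_{\scop,\bop}}\le C\bigl(\|\wh{\Box_g^{\cC_h}}(\sigma)\omega\|_{H^{s-2,(\ell+1,\beta-2)}_{\scop,\bop}}+\|\omega\|_{H^{-N,(\ell-1,\beta-1)}_{\scop,\bop}}\bigr),
\]
together with the dual estimate for the adjoint on the dual spaces. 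The constants are uniform for $\sigma$ in the compact set $e^{i[0,\pi]}$, so the operator is Fredholm.

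It remains to show that the kernel and cokernel vanish, and I expect this to be the main obstacle, since the operator is not self-adjoint and no energy identity is available. Dilation invariance allows a reduction to spacetime. An element of the kernel gives a spacetime solution $e^{-i\sigma t_*}\omega(x)$ of $\Box_g^{\cC_h}$ on Minkowski space. For the spacetime analysis, $\cd=r^{-1}(\dd t-v\,\dd r)$ is already globally of the form used near $\sface$ in \S\ref{SE}, with $r=0$ playing the role of the interior. I would then try to run the semiclassical spacetime estimates of \S\ref{SE} (Propositions~\ref{PropEEInfty}, \ref{PropEB}, \ref{PropES}, with the b-end at $r=0$ handled through the indicial gap) to show that $\Box_g^{\cC_h}$ is solvable with decay in $t$ on polynomially weighted spaces, exactly as in the proof of Theorem~\ref{ThmT}\eqref{ItTNonzero}. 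A non-decaying mode would then contradict square integrability. One needs to check that the outgoing weight condition $\ell<-\frac12+h^{-1}(1-e)(1-v)$ forces the kernel element to be outgoing in the sense of Definition~\ref{DefTOutgoing}, with enough decay for the initial data to lie in the spaces of Theorem~\ref{ThmET}. Cokernel elements are handled symmetrically via the adjoint estimates, which give $P_h^*$-injectivity on the dual spaces. Uniformity of the inverse in $\sigma$ then follows from the uniform estimates together with compactness of $e^{i[0,\pi]}$ and continuity in $\sigma$.

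As a fallback for injectivity, one could instead compute the index, which is zero by continuity in $\sigma$ down to the zero-energy operator, and then prove injectivity by a boundary pairing argument at $\cR_{\rm out}$.
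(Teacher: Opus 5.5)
Your Fredholm step essentially matches the paper's. The thresholds come from $\ubar S$ at $\cR_{\rm out}$ and, via $t\mapsto -t$, at $\cR_{\rm in}$, and the $\beta$-range comes from the indicial roots of Theorem~\ref{ThmM0} at $r=0$. One correction: for $\Im\sigma>0$ the operator is not sc-elliptic at $\pa_\scop X$. The zero section $\cR_{\rm out}$ is always characteristic (it equals $\Char_\sigma$), so you still need a radial estimate there. In that estimate, after multiplying by $\sigma^{-1}$, the term $(\Im\sigma)\wh{\Box_g^{\cC_h}}(0)$ acts as complex absorption; this is why the threshold at $\cR_{\rm out}$ is required for nonreal $\sigma$.

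\textbf{The injectivity step has a genuine gap at $r=0$.} With $\cd=r^{-1}(\dd t-v\,\dd r)$ on Minkowski space, $h^2\Box_g^{\cC_h}$ has coefficients singular along the whole $t$-axis (schematically $h^2D^2+hr^{-1}D+hr^{-2}$). One must blow up $\ol{\{r=0\}}$ and work with edge-b operators built from $r\pa_t$, $r\pa_x$. This is not a b-end that can be ``handled through the indicial gap.''
\begin{itemize}
\item In any non-semiclassical treatment, the model operators at the axis are exactly $\wh{\Box_g^{\cC_h}}(\sigma)$ with $|\sigma|=1$, so that route is circular. The indicial roots say nothing about invertibility of these models.
\item The estimates of \S\ref{SE} do not transfer as you suggest. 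Proposition~\ref{PropEB} has no analogue here, because there is no saddle: $r(\pa_t+v\pa_r)$ has the entire face $\cface$ as a normal source.
\item The missing ingredient is a semiclassical radial \emph{source} estimate at the zero section over $\cface$. It uses a commutant built from $w=\rho_\sface^{\alpha_\sface+1}\rho_\cface^{\alpha_\cface-1}$ and needs positivity of $r^2\bigl(-\ell_{g,\cd,e}(\frac{\dd w}{w})+S_{g,\cd,e}\bigr)$ at $\cface$. By Lemma~\ref{LemmaMIbdy}, with $\beta_\cK-\beta_\sface$ replaced by $\alpha_\cface-1$, this holds iff $\alpha_\cface>\frac12$.
\item You also need energy estimates near $X$ localized in $t/r$, i.e.\ on the blow-up of $\{t=r=0\}$.
\item You need, and do not address, uniqueness for the initial value problem in a weak class containing $e^{-i\sigma t_*}\omega_0$. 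At the singular axis this is not standard: the conic-singularity uniqueness theory again presupposes invertibility of $\wh{\Box_g^{\cC_h}}(\sigma)$. The paper proves it by local semiclassical estimates on $\Omega_\smallrighttriangle$. Without it, the square-integrability contradiction does not start, unlike in the Kerr case of Theorem~\ref{ThmT}, where coefficients are smooth on $M^\circ$.
\end{itemize}

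\textbf{The cokernel step is not established.} It is not literally symmetric to the kernel, since time reversal maps $\cd_v$ to $-\cd_{-v}$, the inward-pointing case excluded in Remarks~\ref{RmkM0WrongV} and~\ref{RmkEBOut}. An adjoint-mode argument would have to be worked out separately. Your fallback fails for two reasons. First, the index cannot be transported from $\sigma=0$: the scattering end degenerates as $\sigma\to0$ and the spaces change type. In fact the paper deduces index $0$ at zero energy (Theorem~\ref{ThmT}) \emph{from} the present theorem. Second, a boundary pairing does not give injectivity for this non-symmetric operator.

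The paper instead proceeds as follows. It fixes $\sigma$ and takes a variable order $\ell$ with $\ell>-\frac12$ at $\cR_{\rm in}$ and $\ell<-\frac12$ at $\cR_{\rm out}$; this is admissible for all $h^{-1}\in[0,\infty)$. In each spherical-harmonic sector it shifts $\beta$ past the indicial roots, using Melrose's relative index theorem and the root count in Theorem~\ref{ThmM0}\eqref{ItM0Number}. It then deforms $h^{-1}\to0$ to the invertible 1-form wave operator, which gives index $0$. Since the kernel is independent of $(s,\ell,\beta)$, injectivity need only be checked for one choice, $\ell=-\frac12$, $\beta=\frac72$. That choice also settles your worry about whether kernel elements are outgoing with enough decay.
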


We shall prove Theorem~\ref{ThmMiInv} in two steps.
\begin{enumerate}
\item We first prove that~\eqref{EqMiInv} is a Fredholm operator (see~\S\ref{SsMiFred}), with nullspace and cokernel independent of $s,\ell,\beta$; this utilizes (non-semiclassical) microlocal analysis of $\wh{\Box_g^{\cC_h}}(\sigma)$.
\item We then show that~\eqref{EqMiInv} is invertible for a fixed choice of $s,\ell,\beta$ when $h$ is sufficiently small (depending on $e$). To this end, we shall prove an analogue of Theorem~\ref{ThmET} for the wave operator $\Box_g^{\cC_h}$ on spacetime (see~\S\ref{SsMiBox}) and then argue as in~\S\ref{ST} to conclude the proof of Theorem~\ref{ThmMiInv} (see~\S\ref{SsMiPf}). Any attempt to prove the invertibility of~\eqref{EqMiInv} directly would be met with considerable linear algebraic difficulties related to the properties of the semiclassical principal symbol of $h^2\wh{\Box_g^{\cC_h}}(h^{-1}\sigma)$ (which, for fixed $h$, is related to $\wh{\Box_g^{\cC_h}}(\sigma)$ via pullback along $r\mapsto h r$), cf.\ Remark~\ref{RmkTInvCx}.
\end{enumerate}

\subsection{Fredholm analysis of \texorpdfstring{$\wh{\Box_g^{\cC_h}}(\sigma)$}{the spectral family}}
\label{SsMiFred}

We record the b-principal symbol of~\eqref{EqMiOpscb} in $r<2$ to be
\[
  \sigmab\bigl(r^2\wh{\Box_g^{\cC_h}}(\sigma)\bigr)\Bigl(\xi_\bop\frac{\dd r}{r}+\eta_\bop\Bigr) = \xi_\bop^2+|\eta_\bop|_{\slg^{-1}}^2,\quad \eta_\bop\in T^*\Sph^2,
\]
which is thus elliptic; and its scattering principal symbol in $\rho<2$ is given by
\[
  G_\sigma(\rho,\omega;\xi,\eta) := \sigmasc\bigl(\wh{\Box_g^{\cC_h}}(\sigma)\bigr)\Bigl(\xi\frac{\dd\rho}{\rho^2}+\frac{\eta}{\rho}\Bigr) = -2\sigma\xi + \xi^2 + |\eta|_{\slg^{-1}}^2,\quad \eta\in T^*\Sph^2.
\]
This is elliptic for $\rho>0$, and over $\rho=0$ it is always characteristic at the zero section
\begin{equation}
\label{EqMiFredRout}
  \cR_{\rm out} := \{ \rho=0,\ \xi=\eta=0 \} = {}^{\scop,\bop}o_{\pa_\scop X}.
\end{equation}
For $\sigma\in\{\pm 1\}$, the characteristic set is a 2-sphere over each point of $\pa_\scop X$,
\[
  \Char_\sigma = \{ (\xi-\sigma)^2 + |\eta|_{\slg^{-1}}^2 = \sigma^2 \} \subset {}^{\scop,\bop}T^*_{\pa_\scop X},\quad \sigma\in\{\pm 1\},
\]
and the rescaled Hamiltonian vector field over $\rho=0$,
\begin{equation}
\label{EqMiFredHam}
  \sfH_{G_\sigma} := \rho^{-1}H_{G_\sigma} = 2(\xi-\sigma)(\rho\pa_\rho+\eta\pa_\eta) - 2|\eta|_{\slg^{-1}}^2\pa_\xi + (\pa_\eta G_\sigma)\pa_\omega - (\pa_\omega G_\sigma)\pa_\eta,
\end{equation}
scaled by $\sigma$, flows from the source
\begin{equation}
\label{EqMiFredRin}
  \cR_{\rm in} := \{ \rho=0,\ \xi=2\sigma,\ \eta=0 \}
\end{equation}
to the sink $\cR_{\rm out}$. On the other hand, for complex $\sigma$ we have
\begin{equation}
\label{EqMiFredChar}
  \Char_\sigma = \cR_{\rm out},\quad \Im\sigma>0.
\end{equation}
Indeed, $0=\Im G_\sigma=-2(\Im\sigma)\xi=0$ implies $\xi=0$, and then $\Re G_\sigma=0$ gives $\eta=0$.

\begin{prop}[Fredholm property]
\label{PropMiFred}
  Under the assumptions of Theorem~\usref{ThmMiInv}, the map~\eqref{EqMiInv} is Fredholm for all $\sigma\in e^{i[0,\pi]}$ and of index $0$. Moreover, for fixed $h>0$ and $\sigma\in e^{i[0,\pi]}$, its nullspace is independent of the choice of $s,\ell,\beta$ subject to~\eqref{EqMiInvRange}.
\end{prop}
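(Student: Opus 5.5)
We follow the standard scattering-b Fredholm strategy, as in \cite[\S{4.1}]{HintzNonstat} and \cite[\S{9.2}]{HintzNonstat2}, and prove estimates of the form
\[
  \|\omega\|_{H_{\scop,\bop}^{s,(\ell,\beta)}} \leq C\Bigl(\|\wh{\Box_g^{\cC_h}}(\sigma)\omega\|_{H_{\scop,\bop}^{s-2,(\ell+1,\beta-2)}} + \|\omega\|_{H_{\scop,\bop}^{-N,(\ell-1,\beta-1)}}\Bigr)
\]
together with the dual estimate for the adjoint on the dual spaces. The compact error term comes from two gains: one in the scattering decay order at $\pa_\scop X$, and one in the b-weight at $\pa_\bop X$.

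\textbf{Symbolic and radial point estimates.} Elliptic regularity holds on all of ${}^{\scop,\bop}T^*X$ away from $\Char_\sigma$, using the computed b- and scattering principal symbols. For $\Im\sigma>0$, \eqref{EqMiFredChar} shows that only $\cR_{\rm out}$ is characteristic. There $\Im G_\sigma=-2(\Im\sigma)\xi$ has a sign along the relevant direction and acts as complex absorption near the zero section, so a positive commutator or radial point argument at the sink $\cR_{\rm out}$ suffices. For $\sigma=\pm1$, we propagate along $\sigma\sfH_{G_\sigma}$ from the source $\cR_{\rm in}$ to the sink $\cR_{\rm out}$, using radial point estimates at both ends and real principal type propagation in between; this is where the variable order $\ell$, non-increasing along the flow, enters.

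The threshold conditions come from the subprincipal part at the radial sets. The term $2i\sigma\rho(\rho\pa_\rho-1-h^{-1}\ubar S)$ in \eqref{EqMiOpSc} is the relevant contribution. The eigenvalues of $\ubar S$ are $2(1-v)$, $(1-e)(1-v)$ and $1-v$, so the smallest is $(1-e)(1-v)$. At $\cR_{\rm out}$ the threshold becomes $\ell<-\frac12+h^{-1}(1-e)(1-v)$. At $\cR_{\rm in}$, where $\xi=2\sigma$ and the conjugation by the oscillatory factor changes the effective operator, the threshold becomes $\ell>-\frac12-h^{-1}(1-e)(1+v)$. The non-scalar nature of $\ubar S$ requires commutants built with a fiber inner product adapted to $\ubar S$. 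Since $\ubar S$ is lower triangular with distinct positive diagonal entries for small $e$, a suitable weighted inner product makes its symmetric part as close to diagonal as needed, and the thresholds are then as stated. Uniformity in $\sigma\in e^{i[0,\pi]}$ follows from smooth dependence of all symbols on $\sigma$. At $\sigma=\pm1$ we use variable orders and the uniform version of the estimates near the real axis.

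\textbf{b-end and compactness.} At $r=0$ the operator $r^2\wh{\Box_g^{\cC_h}}(\sigma)$ is b-elliptic with b-normal operator $r^2\wh{\Box_g^{\cC_h}}(0)$, that is, the Minkowski zero energy operator \eqref{EqM00} with $r$ in place of $\rho$. I therefore need $\beta-\frac32$, the $L^\infty$ weight shift relative to $r^3\,|\frac{\dd r}{r}\,\dd\slg|$, to avoid indicial roots at $r=0$. The indicial roots at $r=0$ are $-\lambda$, where $\lambda$ ranges over those of Theorem~\ref{ThmM0}. That theorem leaves no roots with real part in the corresponding open interval, which translates into the range $\beta\in(\frac12,\frac32+C_0]$, with the endpoint $\lambda=1$ simple and excluded. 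Invertibility of the Mellin-transformed normal operator on the line $\Re=\beta-\frac32$ then gives the b-estimate with a weight gain. Combining the scattering and b estimates, with compact inclusion of the error spaces, yields Fredholmness.

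\textbf{Index and independence of the nullspace.} The index is $0$: it is constant in $\sigma$ along the connected arc $e^{i[0,\pi]}$, and, as in the proof of Theorem~\ref{ThmT}, it equals the sum over spherical harmonic components, each of index $0$ by Theorem~\ref{ThmM0}\eqref{ItM0Number}. Alternatively, deform $h^{-1}\cC$ to zero while staying within the thresholds. The nullspace is independent of $s,\ell,\beta$ because elements of the kernel are smooth (by elliptic and propagation regularity), lie in the strongest weights allowed at the sink $\cR_{\rm out}$ (by the radial estimate), and have b-expansions at $r=0$ with no indicial root in the allowed strip.

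I expect the main obstacle to be the non-scalar threshold computation at $\cR_{\rm in}$, where the precise value $-\frac12-h^{-1}(1-e)(1+v)$ must be extracted. The first thing to try is conjugating by $e^{2i\sigma r}$, which moves $\cR_{\rm in}$ to the zero section, and reading the threshold off the transformed subprincipal endomorphism.
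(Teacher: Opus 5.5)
Your scattering and b-estimates follow the paper's proof: elliptic estimates off $\Char_\sigma$, radial source and sink estimates at $\cR_{\rm in}$ and $\cR_{\rm out}$ joined by real principal type propagation, thresholds from $\ubar S$ in an adapted fiber inner product, and the b-normal operator at $r=0$ with indicial roots $-\mu$, $\mu$ as in Theorem~\ref{ThmM0}. The thresholds you state are correct.

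The $e^{2i\sigma r}$ conjugation you propose for $\cR_{\rm in}$ does work. It replaces $\rho\pa_\rho$ by $\rho\pa_\rho+2i\sigma\rho^{-1}$, so the $\rho\pa_\rho$-coefficients of $Q(\rho\pa_\rho,\slnabla)$ now contribute. The result is $2i(-\sigma)(\rho^2\pa_\rho-\rho-\rho h^{-1}\ubar S')+\wh{\Box_g^{\cC_h}}(0)$, where $\ubar S'$ is upper triangular with diagonal entries $-(1-e)(1+v)$, $-2(1+v)$, $-(1+v)$; this gives exactly $\ell>-\frac12-h^{-1}(1-e)(1+v)$. The paper reaches the same conclusion via the time reversal $t\mapsto-t$, which maps $\cd_v$ to $-\cd_{-v}$. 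Two steps, however, need repair.

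\textbf{Complex $\sigma$.} For $\Im\sigma>0$ the mechanism you describe fails. $\Im G_\sigma=-2(\Im\sigma)\xi$ vanishes simply at $\cR_{\rm out}$ and changes sign across it. Because it is principal, it enters $\Im\langle\wh{\Box_g^{\cC_h}}(\sigma)\omega,A^2\omega\rangle$ one order of decay above the commutator term, since scattering commutators gain a factor $\rho$. Near the zero section it therefore dominates the radial term without having a sign.

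The missing step is to rotate first, as the paper does in \eqref{EqMiInvPQ}. For $\sigma^{-1}\wh{\Box_g^{\cC_h}}(\sigma)$, the linear term $-2\xi$ of the symbol is real and generates the sink at $\cR_{\rm out}$. The imaginary part $-(\Im\sigma)(\xi^2+|\eta|^2)$, coming from $-i(\Im\sigma)\wh{\Box_g^{\cC_h}}(0)$, is sign-definite: it is a sum of squares $T_j^*T_j$ modulo terms vanishing at the zero section, so it is genuine complex absorption. The $\ubar S$-term becomes $-2i\rho h^{-1}\ubar S$, so the threshold at $\cR_{\rm out}$ is the same as for $\sigma=1$.

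\textbf{Index.} Theorem~\ref{ThmM0}\eqref{ItM0Number} only says that in each spherical harmonic block of size $k\in\{1,2,3\}$, exactly $k$ of the $2k$ local solutions at $r=0$ lie in the space. It says nothing about $r=\infty$, which is a scattering end here, not a b-end.

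To get index $0$ per block you need two further facts. The first is the ODE index formula $n_0+n_\infty-2k$, obtained via the Wronskian pairing. The second is $n_\infty=k$: the $k$ outgoing solutions $\rho^{1+h^{-1}\mu}(1+o(1))$, $\mu\in\spec\ubar S$, are admissible, while the incoming ones $e^{-2i\sigma r}\rho^{1+h^{-1}\mu'}(1+o(1))$, $\mu'\in\spec\ubar S'$, are not (for $\Im\sigma>0$ they grow exponentially). This is exactly what the two threshold conditions encode. Completed this way, your route is a legitimate alternative that avoids external input.

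The paper argues differently. It moves $\beta$ below all $r=0$ roots, which changes the index by $3$ per scalar type $l$ block by Melrose's relative index theorem. It then deforms $h^{-1}\to0$ at that weight, moves $\beta$ back into the gap $(-l,l-1)$ of the 1-form wave operator, and invokes the known invertibility of $\wh{\Box_g}(\sigma)$.

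Your fallback of deforming $h^{-1}\cC$ to $0$ at fixed $\beta$ does not work as stated, because $r=0$ roots may cross $\Re=\beta-\frac32$ along the way. At $h^{-1}=0$ the scalar type $l$ roots are $-l-2,-l-1,-l,l-1,l,l+1$, so for $\beta=\frac32$ the endpoint operator is not even Fredholm. Finally, constancy of the index in $\sigma$ is not needed and is not automatic, since the domain in \eqref{EqMiInv} depends on $\sigma$.
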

\begin{proof}
  This is a variation on a well-known theme; see, e.g., \cite{MelroseEuclideanSpectralTheory,VasyLowEnergyLag}, and also \cite[Lemma~3.13]{HintzConicWave} for a version that is closely related to the present discussion. One can also obtain this result from \cite[Theorem~9.38]{HintzNonstat2}. We shall thus be rather brief.

  \pfstep{Real $\sigma$.} We only consider the case $\sigma=1$; the case $\sigma=-1$ is completely analogous (upon switching the direction of propagation). We shall prove the estimate
  \begin{equation}
  \label{EqMiFredDir}
    \|\omega\|_{H_{\scop,\bop}^{s,(\ell,\beta)}} \leq C\Bigl( \| \wh{\Box_g^{\cC_h}}(\sigma)\omega \|_{H_{\scop,\bop}^{s-2,(\ell+1,\beta-2)}} + \|\omega\|_{H_{\scop,\bop}^{s_0,(\ell_0,\beta_0)}} \Bigr)
  \end{equation}
  for any fixed $s_0<s$, $\ell_0<\ell$, $\beta_0<\beta$ subject to the stated conditions, and in the strong form that if the right-hand side is finite, then so is the left-hand side. As a first step, we prove
  \begin{equation}
  \label{EqMiFredNob}
    \|\omega\|_{H_{\scop,\bop}^{s,(\ell,\beta)}} \leq C\Bigl( \| \wh{\Box_g^{\cC_h}}(\sigma)\omega \|_{H_{\scop,\bop}^{s-2,(\ell+1,\beta-2)}} + \|\omega\|_{H_{\scop,\bop}^{s_0,(\ell_0,\beta)}} \Bigr),
  \end{equation}
  which differs from~\eqref{EqMiFredDir} only in that the b-decay order of the final, error, term on the right is $\beta$, not $\beta_0<\beta$. The proof of~\eqref{EqMiFredNob} combines microlocal elliptic estimates on ${}^{\scop,\bop}T^*X\setminus\Char_\sigma$ with a radial source estimate (in the scattering calculus) at $\cR_{\rm in}$ and a radial sink estimate at $\cR_{\rm out}$, with real principal type propagation on $\Char_\sigma$ in between these two; the non-elliptic part of the argument is essentially the same as the proof of \cite[Proposition~5.28]{VasyMinicourse}, except we need to determine the relevant threshold quantities.

  Consider the radial sink estimate near $\cR_{\rm out}$. We use a commutant
  \begin{equation}
  \label{EqMiFredComm}
    a=\rho^{-\ell-\frac12}\chi_0(\rho)\chi_1(|\eta|_{\slg^{-1}}^2)\chi_2(\xi)
  \end{equation}
  where $\chi_0,\chi_1,\chi_2\in\CIc((-1,1))$ are cutoffs, equal to $1$ near $0$ and satisfying $(-\chi_j\chi_j')^{\frac12}\in\CI([0,\infty))$. Quantizing $a$ to an operator $A=A^*\in\Psisc^{-\infty,\ell+\frac12}$ on $\rho^{-1}([0,2))$ with Schwartz kernel supported in $\rho^{-1}([0,1])\times\rho^{-1}([0,1])$, we consider
  \begin{equation}
  \label{EqMiFredCommCalc}
    \Im \big\la\wh{\Box_g^{\cC_h}}(\sigma)\omega,A^2\omega\big\ra = \la \sC\omega,\omega\ra,\quad
    \sC = \frac{i}{2}\bigl[\wh{\Box_g^{\cC_h}}(\sigma),A^2\bigr] + \frac{1}{2 i}\Bigl( \wh{\Box_g^{\cC_h}}(\sigma) - \wh{\Box_g^{\cC_h}}(\sigma)^*\Bigr)A^2.
  \end{equation}
  Using~\eqref{EqMiFredHam}, the principal symbol of the first summand of $\sC$ is
  \begin{equation}
  \label{EqMiFredComp}
  \begin{split}
    a H_{G_\sigma}a &= \rho^{-2\ell} \Bigl( (2\ell+1)(\sigma-\xi)\chi_0^2\chi_1^2\chi_2^2 - 2(\sigma-\xi)\rho\chi_0\chi_0'\chi_1^2\chi_2^2 \\
      &\quad\hspace{5em} - 4(\sigma-\xi)|\eta|^2\chi_1\chi_1' \chi_0^2\chi_2^2 - 2|\eta|^2\chi_0^2\chi_1^2\chi_2\chi_2'\Bigr).
  \end{split}
  \end{equation}
  The first term is a negative square when $\ell<-\frac12$. The second term is the square of a smooth function as long as $\xi<\sigma$ on $\supp a$ (and in any case is supported away from $\rho=0$), likewise for the third term. The support of the final term has two pieces; the one in $\xi>0$ is a square, the one in $\xi<0$ is supported in the elliptic set of $\wh{\Box_g^{\cC_h}}(\sigma)$. The second term of $\sC$ adds a shift to the first term; at the zero section, the shift of $(2\ell+1)\sigma$ is given by
  \begin{equation}
  \label{EqMiFredZero}
    \sigmasc^1\Bigl(\frac{1}{2 i\rho}\bigl( \wh{\Box_g^{\cC_h}}(\sigma) - \wh{\Box_g^{\cC_h}}(\sigma)^*\bigr)\Bigr)\Big|_{\rho=0,\,\xi=\eta=0},
  \end{equation}
  which we proceed to evaluate using~\eqref{EqMiOpSc} and~\eqref{EqMiOpPiecesb}. Note first that~\eqref{EqMiOpPiecesb} and the principally scalar nature of $\wh{\Box_g^{\cC_h}}(0)$ imply $\wh{\Box_g^{\cC_h}}(0)-\wh{\Box_g^{\cC_h}}(0)^*\in\rho^2\Diffb^1(X;\cT^*_X)$; dividing this on the left by $\rho$ yields an element of $\rho\Diffb^1\subset\Diffsc^1$ whose scattering principal symbol is linear in $(\xi,\eta)$ and thus vanishes at $(\xi,\eta)=0$. Next, if we regard $2 i\sigma(\rho^2\pa_\rho-\rho)$ as a scalar operator, it is symmetric on $L^2$ with volume density $\rho^{-3}\frac{\dd\rho}{\rho}$, and therefore its imaginary part, as an operator on sections of $\cT^*_X$ and for any smooth choice of fiber inner product, is of class $\rho^2\CI$; division by $\rho$ and restriction to $\rho=0$ yields $0$. The only nontrivial contribution to~\eqref{EqMiFredZero} thus comes from the term $-2 i\sigma\rho h^{-1}\ubar S$. Now, $\ubar S$ is diagonalizable; let us thus choose as the fiber inner product on $\cT^*_X$ a positive definite one for which a basis of eigenvectors of $\ubar S$ is orthonormal. Then~\eqref{EqMiFredZero} equals $-2 h^{-1}\ubar S$, and thus
  \begin{equation}
  \label{EqMiFredSymb}
  \begin{split}
    \sigmasc^{-\infty,2\ell}(\sC) &= \rho^{-2\ell} \Bigl( \bigl((2\ell+1)(\sigma-\xi) - 2 h^{-1}\ubar S + \cO(|\xi|+|\eta|)\bigr)\chi_0^2\chi_1^2\chi_2^2 - 2(\sigma-\xi)\rho\chi_0\chi_0'\chi_1^2\chi_2^2 \\
      &\quad\hspace{5em} - 4(\sigma-\xi)|\eta|^2\chi_1\chi_1' \chi_0^2\chi_2^2 - 2|\eta|^2\chi_0^2\chi_1^2\chi_2\chi_2'\Bigr).
  \end{split}
  \end{equation}
  For $\sigma=1$, the first term is negative (and thus has the opposite sign of the other terms) at $(\xi,\eta)=(0,0)$, and thus in a small neighborhood, if
  \begin{equation}
  \label{EqMiFredThres}
    \ell<-\tfrac12+h^{-1}\min\spec\ubar S = -\tfrac12 + h^{-1}(1-e)(1-v).
  \end{equation}

  Turning to the radial source estimate at $\cR_{\rm in}$ (where $\xi=2\sigma$), we still have~\eqref{EqMiFredComp}, with the second to fourth terms now being negative squares on $\Char_\sigma$ if we take $\chi_2\in\CIc((\sigma,3\sigma))$ to be equal to $1$ near $2\sigma$ and to satisfy $(\chi_2\chi_2')^{\frac12}\in\CI((\sigma,2\sigma])$. We compute the shift of the numerical coefficient $-(2\ell+1)\sigma$, $\sigma=1$, of the first (main) term in~\eqref{EqMiFredComp} arising from~\eqref{EqMiFredZero} at $\rho=0$, $\xi=2\sigma$, $\eta=0$, or equivalently,
  \begin{equation}
  \label{EqMiFredZero2}
    \sigmasc^1\Bigl(\frac{1}{2 i\rho}\Bigl[e^{2 i\sigma r}\wh{\Box_g^{\cC_h}}(\sigma) e^{-2 i\sigma r} - \bigl(e^{2 i\sigma r}\wh{\Box_g^{\cC_h}}(\sigma) e^{-2 i\sigma r}\bigr)^* \Bigr]\Bigr)\Big|_{\rho=0,\,\xi=\eta=0},
  \end{equation}
  using symmetry considerations. Let us write $\Box_{\cd_v,h^{-1}}:=\Box_g^{\cC_h}$ where $\cd_v=r^{-1}(\dd t-v\,\dd r)$ is the 1-form~\eqref{EqMi1form}. Under the isometric involution $\phi\colon t\mapsto -t$ of $(M,g)$, the 1-form $\cd_v$ pulls back to $-\cd_{-v}$. Thus,
  \[
    \phi^* \Box_{\cd_v,h^{-1}} = \Box_{-\cd_{-v},h^{-1}} = \Box_{\cd_{-v},-h^{-1}}.
  \]
  Therefore, acting on stationary sections of $\cT^*$, we have
  \[
    e^{2 i\sigma r}\wh{\Box_g^{\cC_h}}(\sigma)e^{-2 i\sigma r} = e^{i\sigma(t+r)}\Box_{\cd_v,h^{-1}}e^{-i\sigma(t+r)} = \phi^*\bigl( e^{i(-\sigma)t_*}\Box_{\cd_{-v},-h^{-1}}e^{-i(-\sigma)t_*}\bigr).
  \]
  For a fiber inner product of $\cT^*_X$ for which an eigenbasis of $\ubar S_{-v}$ (given by $\ubar S$ in~\eqref{EqMiOpS} but with $v$ replaced by $-v$) is orthonormal, we thus conclude that~\eqref{EqMiFredZero2} is equal to $(-1)\cdot(-2(-h^{-1})\ubar S_{-v})$ (the factor $-1$ arising from the minus sign in front of $\sigma$). The threshold condition for the radial source estimate is thus
  \[
    -(2\ell+1) - 2 h^{-1}(1-e)(1+v) < 0,
  \]
  i.e., $\ell>-\frac12-h^{-1}(1-e)(1+v)$.

  A standard regularization argument in the radial point and real principal type propagation estimates gives~\eqref{EqMiFredNob} in the strong form. Having established~\eqref{EqMiFredNob}, with $s_0=s-3$, say, one obtains~\eqref{EqMiFredDir} in the usual fashion of b-analysis by estimating the localization of the error term in~\eqref{EqMiFredNob} to $r=0$ by means of the (inverse) Mellin transform. We only discuss the range of $\beta$ here. Making the volume density explicit in the notation, we have
  \[
    \Hb^{s,\beta}([0,1)_r;r^2\,|\dd r|) = \Hb^{s,-\frac32+\beta}\Bigl([0,1)_r;\Bigl|\frac{\dd r}{r}\Bigr|\Bigr).
  \]
  Therefore, as long as no indicial root of $\wh{\Box_g^{\cC_h}}(0)$ at $r=0$ has real part $-\frac32+\beta$, one can conclude~\eqref{EqMiFredDir}; and the strong form holds whenever the interval $[-\frac32+\beta_0,-\frac32+\beta]$ does not contain the real part of any indicial root. Since $\rho=r^{-1}$, the indicial roots $\lambda\in\C$ of $\wh{\Box_g^{\cC_h}}(0)$ are given by $\lambda=-\mu$ where $\mu$ ranges over all indicial roots described in Theorem~\ref{ThmM0}. Therefore, either $\Re\lambda\leq -1$ or $\Re\lambda>C_0$, giving the range $\beta\in(-1-(-\frac32),C_0-(-\frac32)]$ as in~\eqref{EqMiInvRange}.

  The proof of the estimate dual to~\eqref{EqMiFredDir},
  \begin{equation}
  \label{EqMiFredAdj}
    \|\omega\|_{H_{\scop,\bop}^{-s+2,(-\ell-1,-\beta+2)}} \leq C\Bigl(\|\wh{\Box_g^{\cC_h}}(\sigma)^*\omega\|_{H_{\scop,\bop}^{-s,(-\ell,-\beta)}} + \|\omega\|_{H_{\scop,\bop}^{s_1,(\ell_1,\beta_1)}}\Bigr)
  \end{equation}
  where $s_1<-s+2$, $\ell_1<-\ell-1$, $\beta_1<-\beta+2$, is analogous (via propagation in the opposite direction) and thus omitted.

  \pfstep{Complex $\sigma$.} For $\sigma\in e^{i(0,\pi)}$, we only need to prove a radial point type estimate at the characteristic set $\Char_\sigma$, cf.\ \eqref{EqMiFredChar}. Everywhere else the operator~\eqref{EqMiOpscb} is microlocally elliptic, and the b-normal operator argument at $r=0$ is unchanged; thus we again have the estimate~\eqref{EqMiFredDir}. To prove the radial point type estimate, we follow the discussion prior to \cite[Lemma~4.7]{VasyLowEnergyLag} and consider the scalar multiple
  \[
    \sigma^{-1}\wh{\Box_g^{\cC_h}}(\sigma) = 2 i(\rho^2\pa_\rho - \rho - \rho h^{-1}\ubar S) + (\Re\sigma)\wh{\Box_g^{\cC_h}}(0) - i (\Im\sigma)\wh{\Box_g^{\cC_h}}(0).
  \]
  We use here that $\sigma^{-1}=\bar\sigma=\Re\sigma-i\Im\sigma$. We use the same fiber inner product as in~\eqref{EqMiFredSymb} and write this as
  \begin{equation}
  \label{EqMiInvPQ}
    \sigma^{-1}\wh{\Box_g^{\cC_h}}(\sigma) = P - i Q,\quad P=\sum_{j=1}^3 P_j,\ Q=\sum_{j=0}^3 Q_j,
  \end{equation}
  where
  \begin{alignat*}{4}
    &&&& Q_0&:=2\rho h^{-1}\ubar S, \\
    P_1&:=\Re\bigl( 2 i(\rho^2\pa_\rho-\rho)\bigr), &&&\quad
    Q_1&:= -\Im\bigl(2 i(\rho^2\pa_\rho-\rho)\bigr) &&\in \rho^2\CI, \\
    P_2&:=(\Re\sigma)\Re\wh{\Box_g^{\cC_h}}(0)&&\in\rho^2\Diffb^2, &\quad
    Q_2&:=-(\Re\sigma)\Im\wh{\Box_g^{\cC_h}}(0) &&\in \rho^2\Diffb^1, \\
    P_3&:= (\Im\sigma)\Im\wh{\Box_g^{\cC_h}}(0) &&\in \rho^2\Diffb^1, &\quad
    Q_3&:=(\Im\sigma)\Re\wh{\Box_g^{\cC_h}}(0) && \in \rho^2\Diffb^2.
  \end{alignat*}
  Note that $Q_3\in\rho^2\Diffb^2\subset\Diffsc^2$ is a principal term (with nonnegative principal symbol); it acts as complex absorption. For the commutant $A\in\Psisc^{-\infty,\ell+\frac12}$ given by quantizing the symbol $a=\rho^{-\ell-\frac12}\chi_0\chi_1\chi_2$ from~\eqref{EqMiFredComm}, we then consider, analogously to~\eqref{EqMiFredCommCalc}, the operator
  \begin{align*}
    \sC &= \frac{i}{2}\bigl[\sigma^{-1}\wh{\Box_g^{\cC_h}}(\sigma),A^2\bigr] + \frac{1}{2 i}\Bigl( \sigma^{-1}\wh{\Box_g^{\cC_h}}(\sigma) - \Bigl(\sigma^{-1}\wh{\Box_g^{\cC_h}}(\sigma)\Bigr)^*\Bigr)A^2 \\
      &= \frac{i}{2}[P,A^2] - A Q A - \frac12 [A,[A,Q]].
  \end{align*}
  The contributions to $\sC$ from all terms in~\eqref{EqMiInvPQ} except for $Q_3$ lie in $\Psisc^{-\infty,2\ell}$. Consider first the term $\frac{i}{2}[P,A^2]$. Since the scattering symbol of $P_2$ vanishes quadratically at the zero section, and since $P_3\in\rho\Diffsc^1$ is subprincipal (relative to $\Diffsc^2$), only the contribution from $P_1$ is elliptic at the zero section; in view of $\sigmasc^1(P_1)=-2\xi$, the principal symbol of $\sC$ there is $\rho^{-\ell-\frac12}\cdot(-2\rho^2\pa_\rho(\rho^{-\ell-\frac12}))=(2\ell+1)\rho^{-2\ell}$. Turning to $-A Q A$, the term $Q_0$ contributes $-2\rho^{-2\ell}h^{-1}\ubar S$ (for a total of $(2\ell+1-2 h^{-1}\ubar S)\rho^{-2\ell}$ at the zero section, cf.\ the first term in~\eqref{EqMiFredSymb} and the resulting threshold condition~\eqref{EqMiFredThres} under which this main term is negative), whereas the terms arising from $Q_1,Q_2$ are subprincipal. The double commutator $[A,[A,Q]]$ lies in $\Psisc^{-\infty,2\ell-1}$ and is thus also subprincipal. The only remaining term is $-A Q_3 A$. Since the b-principal symbol of the operator $\wh{\Box_g^{\cC_h}}(0)$ is $\xi_\bop^2+|\eta_\bop|_{\slg^{-1}}^2$ by inspection of~\eqref{EqMiOpB}, we can express $\wh{\Box_g^{\cC_h}}(0)$ as a finite sum
  \[
    \wh{\Box_g^{\cC_h}}(0) = \sum_j T_j^*T_j + R,\quad T_j\in\rho\Diffb^1,\ R\in\rho^2\Diffb^1.
  \]
  But then the $L^2$-pairing $\la -A T_j^*T_j A\omega,\omega\ra=-\|T_j A\omega\|^2$ is negative (and thus has the same sign as the main term), while the scattering principal symbol of $-A R A\in\Psisc^{-\infty,2\ell}$ vanishes at the zero section and thus does not affect the threshold condition. Lastly, when the Hamiltonian vector field of $P$ falls on the cutoffs $\chi_0,\chi_1,\chi_2$ in $a$, one obtains terms supported away from the zero section and thus in the elliptic set of $\wh{\Box_g^{\cC_h}}(\sigma)$. Equipped with this information about $\sC$, one can now prove microlocal $H_\scop^{s,\ell}$-estimates for $\omega$ in terms of $\wh{\Box_g^{\cC_h}}(\sigma)\omega\in H_\scop^{s-2,\ell+1}$.

  As mentioned before, one thus obtains~\eqref{EqMiFredDir}; and the adjoint estimate~\eqref{EqMiFredAdj} is obtained in a completely analogous fashion.

  \pfstep{Uniformity in $\sigma$.} An inspection of these arguments shows that for $s,\ell,\beta$ that are admissible for $\sigma=\pm 1$, the estimate~\eqref{EqMiFredDir} holds uniformly for $\sigma\in e^{i[0,\pi]}$ close to $\pm 1$ (or indeed for $\sigma\in e^{i[0,\pi-\delta]}$, resp.\ $\sigma\in e^{i[\delta,\pi]}$ for any fixed $\delta>0$). This is an instance of the limiting absorption principle. The proof, as discussed in \cite[\S{9.2.2}]{HintzNonstat2} (following \cite[\S{5}]{VasyMinicourse} and \cite[\S{4}]{VasyLAPLag}), requires computing the symbolic contribution arising from the full Hamiltonian vector field, including that of $P_2$, acting on the cutoff functions of the commutant~\eqref{EqMiFredComm}; but this has the same sign structure as discussed after~\eqref{EqMiFredComp} for $\sigma=1$.

  \pfstep{Index $0$.} We will argue using a relative index formula and a deformation argument. Upon separating into spherical harmonics, the operator $\wh{\Box_g^{\cC_h}}(\sigma)$ decomposes into a direct sum of ordinary differential operators (acting on sections of $\cT^*_X$, i.e., $\C^4$-valued functions) on $(0,\infty)_r$ of b- (i.e., regular singular) type at $r=0$. Consider the operator describing the action on, say, scalar type $l$ 1-forms where $l\geq 1$ is fixed, and write $I$ for its Fredholm index on $H_{\scop,\bop}^{s,(\ell,\beta)}$ where we take $\ell$ to be a variable order function which is $>-\frac12$ at $\cR_{\rm in}$ and $<-\frac12$ at $\cR_{\rm out}$. If we shift the weight $\beta$ from its original value $\beta_0\in(\frac12,\frac32+C_0]$ to a large negative number $\beta_1$ (at this point, any number smaller than $1-\Re\lambda$ works, where $\lambda$ ranges over all scalar type $l$ indicial roots), the index of this operator increases by $3$ as a consequence of Theorem~\ref{ThmM0}\eqref{ItM0Number}. (This is a special case of Melrose's relative index theorem, see~\cite[Theorem~6.5]{MelroseAPS}.) We now shift $h^{-1}$ from its initial value to $0$ while keeping $\beta_1$ below the scalar type $l$ indicial roots of $\wh{\Box_g^{\cC_h}}(\sigma)$ (which depend continuously on $h^{-1}\in[0,\infty)$, as follows immediately from the expression~\eqref{EqMiOpB}); then the index of $\wh{\Box_g^{\cC_\infty}}(\sigma)=\wh{\Box_g}(\sigma)$, restricted to scalar type $l$ inputs, is still equal to $I+3$; here $\Box_g$ is the tensor wave operator on 1-forms. But the scalar type $l$ indicial roots of this operator at $r=0$ are given by $\lambda=-l-2,-l-1,-l$ and $l-1,l,l+1$, as follows from a computation using the form of $\Box_{\ubar g}$ in Lemma~\ref{LemmaM0Ops}. Shifting $\beta_1$ up to a value $\beta_2$ with $\beta_2-\frac32\in(-l,l-1)$, the index of $\wh{\Box_g}(\sigma)$ on the scalar type $l$ subspace of $H_{\scop,\bop}^{s,(\ell,\beta)}$ goes down by $3$ and is thus given by $(I+3)-3=I$ when $\beta-\frac32\in(-l,l-1)$. But $\wh{\Box_g}(\sigma)$ is invertible as a map~\eqref{EqMiInv} (with $\Box_g$ in place of $\Box_g^{\cC_h}$); see, e.g., \cite[Proposition~9.40]{HintzNonstat2} and \cite[Lemma~4.1]{HintzConicWave}. Therefore, $I=0$.
\end{proof}

\subsection{Solvability of the constraint propagation wave operator on spacetime}
\label{SsMiBox}

Acting on the bundle $\cT^*$ (see~\eqref{EqMiBundle}), the operator $P_h=h^2\Box_g^{\cC_h}$ is schematically of the form $h^2 D^2+h r^{-1}D+h r^{-2}$ where $D=D_{t,x}$. Due to the singularity at $r=0$, even local existence and uniqueness are delicate; we will adapt arguments from \cite{HintzConicWave} to our setting to study $\Box_g^{\cC_h}$. We can desingularize the coefficients of $P_h$ at $r=0$ by passing to $r^2 P_h$, which schematically reads $(h r D)^2+h r D+h$. In $r\gtrsim|t|$, this is a semiclassical 3b-operator on the compactified Kerr spacetime manifold used in the bulk of the paper. Now we need to consider this as an operator on $M$ defined in~\eqref{EqMiMfd}.

\begin{definition}[edge-b-vector fields]
\label{DefMiBoxeb}
  We write $\Veb(M)$ for the space of smooth vector fields on $M$ which, over $\CI(M)$, are spanned by $r\pa_t$, $r\pa_x$.
\end{definition}

The terminology arises as follows. Denote the boundary hypersurfaces of $M$ by
\[
  \sface := \upbeta^*\pa\ol{\R^4},\quad
  \cface := \upbeta^*\ol{\{r=0\}}.
\]
See Figure~\ref{FigMiMfd}. We shall write $\rho_\sface,\rho_\cface\in\CI(M)$ for defining functions of $\sface,\cface$. Then $\Veb(M)$ consists of all smooth vector fields on $M$ which are tangent to $\sface$ (hence ``b'') and tangent to the fibers of $\Sph^2-\cface\to\ol{\{r=0\}}$ (hence ``edge,'' following Mazzeo \cite{MazzeoEdge}). Let us check this in local coordinates
\begin{equation}
\label{EqMiBoxRhoCoord}
  \rho_\sface=\frac{1}{t},\quad
  \rho_\cface=\frac{r}{t},
\end{equation}
and $\omega\in\Sph^2$ near the future corner $\sface\cap\cface\cap t^{-1}(\infty)$: there $r\pa_t=-\rho_\cface(\rho_\cface\pa_{\rho_\cface}+\rho_\sface\pa_{\rho_\sface})$ and $r\pa_r=\rho_\cface\pa_{\rho_\cface}$, whose $\CI([0,1)_{\rho_\sface}\times[0,1)_{\rho_\cface})$-span is equal to that of $\rho_\cface\pa_{\rho_\cface}$ and $\rho_\cface\rho_\sface\pa_{\rho_\sface}$. (The close relationship between the 3b- and edge-b-algebras is discussed around \cite[Equation~(1.13), \S{3.3}]{Hintz3b}.) Analogously to Definition~\ref{DefK3bConormal} and~\S\ref{SssK3bBdd}, we can realize the space $\cA_\ebop\Diffeb^m(M)$ (finite linear combinations of edge-b-differential operators with edge-b-regular coefficients) as bounded geometry operators on $M^\circ=\R^4\setminus\{r=0\}$, now for the family of unit cells
\begin{align*}
  U_{j,k,\beta} &:= (2^j(k-2),2^j(k+2))_t \times (2^{j-2},2^{j+2})_r \times \slU_\beta, \\
  \phi_{j,k,\beta}(t,r,\omega) &= \bigl( 2^{-j}t-k, \log_2(r)-j,\ \slvarphi_\beta(\omega)\bigr),
\end{align*}
where $\slU_\beta$ is a finite cover of $\Sph^2$ by charts $\slvarphi_\beta\colon\slU_\beta\to(-2,2)^2$. With these unit cells, the parameter space $(0,1)_h$, and the scaling $h$ on each chart, we obtain a parameterized scaled bounded geometry structure on $M^\circ$. The corresponding operators are semiclassical edge-b-(pseudo)differential operators with uniformly edge-b-regular coefficients.

\begin{figure}[!ht]
\centering
\includegraphics{FigMiMfd}
\caption{The manifold $M=[\ol{\R^4};\ol{\{r=0\}}]$, shown here after quotienting out by $\Sph^2$, and some local coordinates.}
\label{FigMiMfd}
\end{figure}

The operator
\[
  P_h = h^2\Box_g^{\cC_h} = \Box_h - i L_h \in r^{-2}\Diff_{\ebop,\semi}^2(M;\cT^*),\quad \Box_h=h^2\Box_g,\ L_h=h L_{g,\cd,e},
\]
where $L_{g,\cd,e}=L_{g,\cd,e,1}$ in the notation of~\eqref{EqCOp}, belongs to this class, and after factoring out $r^{-2}$ in fact has smooth coefficients on $[0,1)\times M$ as a semiclassical edge-b-operator, i.e., when expressed in terms of $r\pa_t,r\pa_x$; its principal symbol is equal to
\[
  \sigmaebh^2(P_h) = G - i\ell,\quad \ell:=\ell_{g,\cd,e},
\]
by Lemma~\ref{LemmaCSymb}.

The ability to work with edge-b-regular coefficients will be important for our analysis: following \cite[\S{3.3}]{HintzConicWave}, we will use functions $\chi(t/r)$, $\chi\in\CI(\R)$, which are edge-b-regular but not smooth on $M$, as cutoffs near the initial hypersurface
\[
  X := \ol{\{t=0\}} \subset M
\]
of the domain
\[
  \Omega := \ol{\{t\geq 0\}} \subset M
\]
on which we will solve forcing problems for $\Box_g^{\cC_h}$. In the analysis of $P_h$, we shall use weighted semiclassical edge-b-Sobolev spaces
\[
  H_{\ebop,h}^{s,(\alpha_\sface,\alpha_\cface)}(M) = \rho_\sface^{\alpha_\sface}\rho_\cface^{\alpha_\cface}H_{\ebop,h}^s(M),\quad H_{\ebop,h}^{s,(\alpha_\sface,\alpha_\cface)}(\Omega)^{\bullet,-},
\]
with the underlying $L^2$-space defined using the density $|\dd g|=r^3|\dd t\,\frac{\dd r}{r}\,\dd\slg|$. Non-semiclassical edge-b-Sobolev spaces are denoted $H_\ebop^{s,(\alpha_\sface,\alpha_\cface)}(M)$.

\begin{thm}[Solvability and uniqueness]
\label{ThmMiBox}
  Fix $v\in(0,1)$, $C_0>0$. Let $\alpha_\sface,\alpha_\cface\in\R$ be such that
  \[
    \alpha_\sface < -\frac12,\quad
    \alpha_\cface > \frac12.
  \]
  Let $\cd=r^{-1}(\dd t-v\,\dd r)$. Then there exists $e_0>0$ such that for all $e\in(0,e_0)$, there exists $h_0>0$ such that the following holds for all $h\in(0,h_0)$.
  \begin{enumerate}
  \item\label{ItMiBoxFwd}{\rm (Forward problem for $\Box_g^{\cC_h}$.)} For all $s\in[-C_0,C_0]$, the forcing problem\footnote{We use the standard convention from \cite[Appendix~B]{HormanderAnalysisPDE3} that the dot in $\dot H_\ebop$ indicates that elements of this space are distributions with support in $\Omega$.}
    \begin{equation}
    \label{EqMiBoxFwd}
      \Box_g^{\cC_h}\omega = f \in \dot H_\ebop^{s-1,(\alpha_\sface+2,\alpha_\cface-2)}(\Omega;\cT^*)
    \end{equation}
    admits a solution $\omega\in\dot H_\ebop^{s,(\alpha_\sface,\alpha_\cface)}(\Omega;\cT^*)$. This solution is unique in the following sense: if $\omega'$ solves $\Box_g^{\cC_h}\omega'=f$ and satisfies
      \begin{equation}
      \label{EqMiBoxFwdp}
        \omega'|_{\Omega_{\leq t_0}} \in H_\ebop^{-N,(-N,\alpha_\cface)}(\Omega_{\leq t_0})^{\bullet,-},\quad \Omega_{\leq t_0}:=\Omega\cap\ol{\{t\leq t_0\}},
      \end{equation}
      for all $t_0$ and some $N\leq C_0$, then $\omega'=\omega$.
  \item\label{ItMiBoxIVP}{\rm (Initial value problem.)} Making the volume densities explicit in the notation, the initial value problem
    \begin{equation}
    \label{EqMiBoxIVP}
      \left\{
      \begin{alignedat}{2}
        \Box_g^{\cC_h}\omega&=f\in\bar H_\ebop^{0,(\alpha_\sface+2,\alpha_\cface-2)}(\Omega,|\dd g|;\cT^*), \\
        (\omega,r\cL_{\pa_t}\omega)|_X &\in \rho_\sface^{\alpha_\sface+\frac12}\rho_\cface^{\alpha_\cface-\frac12}\Hbext^1(X,r^2\,|\dd r\,\dd\slg|;\cT^*_X) \oplus \rho_\sface^{\alpha_\sface+\frac12}\rho_\cface^{\alpha_\cface-\frac12}\Hbext^0(X,r^2\,|\dd r\,\dd\slg|;\cT^*_X)
      \end{alignedat}
      \right.
    \end{equation}
    admits a solution $\omega\in\bar H_\ebop^{1,(\alpha_\sface,\alpha_\cface)}(\Omega;\cT^*)$. This solution is unique among all $\omega'$ solving~\eqref{EqMiBoxIVP} whose restrictions to $\Omega_{\leq t_0}$ lie in $\bar H_\ebop^{1,(-N,\alpha_\cface)}(\Omega_{\leq t_0})$ for all $t_0$ and some $N$.
  \end{enumerate}
\end{thm}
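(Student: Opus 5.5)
The plan is to mirror the proof of Theorem~\ref{ThmET}, now on the edge-b compactification $M=[\ol{\R^4};\ol{\{r=0\}}]$ instead of the 3b-compactified Kerr spacetime. The strategy is to prove semiclassical a priori estimates for $P_h=h^2\Box_g^{\cC_h}$ and for its adjoint $P_h^*$ on weighted semiclassical edge-b-Sobolev spaces, and then obtain solvability from the adjoint estimate by duality and Hahn--Banach.

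\textbf{Step 1: reductions and sources of control.} We work with $r^2P_h$, which has smooth semiclassical edge-b coefficients and semiclassical principal symbol $r^2(G-i\ell)$.
\begin{itemize}
\item By Lemma~\ref{LemmaCInv}, this symbol is elliptic at finite nonzero frequencies.
\item At fiber infinity we use the damping argument of Proposition~\ref{PropEEInfty}. It relies only on Lemma~\ref{LemmaCTimelike} and not on the null-geodesic dynamics, so it carries over verbatim.
\item Near $X$ we use the energy estimates of Proposition~\ref{PropEI}, applied in the bounded geometry charts $U_{j,k,\beta}$. To localize near $X$ we use the edge-b-regular cutoffs $\chi(t/r)$, following \cite[\S3.3]{HintzConicWave}.
\end{itemize}
No analogue of $\Sigma^\sharp$ is present, since there is no black hole region.

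\textbf{Step 2: zero section propagation along $-r\cd^\sharp=\pa_t+v\pa_r$.}
\begin{itemize}
\item On $\sface$ the dynamics is the same as before: a saddle at the corner, now $\cface\cap\sface$, and a sink at $\cR_v$. We reuse Lemmas~\ref{LemmaMIbdy} and~\ref{LemmaMISink}, which for Minkowski are exact and need no perturbation. The sink gives the condition $\alpha_\sface<-\frac12$ (Proposition~\ref{PropES}).
\item The corner saddle plays the role of Proposition~\ref{PropEB}, with $\cface$ in place of $\cK^+$. Its threshold is the condition from Lemma~\ref{LemmaMIbdy} with $\rho_\cK$ replaced by the appropriate $\cface$ defining function. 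I expect this to be consistent with $\alpha_\cface>\frac12$ once one tracks the weight shift between $\rho_\cK=r/t$ and $\rho_\cface=r/t$ and the factor $r^{-2}$.
\item In the interior of $\cface$, i.e.\ at $r=0$ for finite $t/r$ (after rescaling), the vector field $r\pa_t+vr\pa_r$ is a b-type flow. It moves away from $\cface$ transversally, in the outgoing direction, toward $\sface$ and the sink.
\item Crucially, there is \emph{no} interior saddle at $r=r_0$. This removes the delicate $\frac65>1$ balance of Lemma~\ref{LemmaMSSubpr}. Away from the critical sets, positivity along the escape functions $r/t$ and $r$ comes from Lemma~\ref{LemmaCTimelikeSp} for small $e$, exactly as in the proof of Proposition~\ref{PropEC}.
\end{itemize}

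\textbf{Main obstacle: the face $\cface$ at $r=0$.} The weight $\alpha_\cface$ must enter as a radial/threshold condition at the critical set of $-r\cd^\sharp$ over $\cface$. I would compute the analogue of $T_{\cd_v,e}=r^2(-\ell(\dd w/w)+S)$ with $w=\rho_\sface^{\alpha_\sface+1}\rho_\cface^{\alpha_\cface}$. This is the same algebra as~\eqref{EqMIbdyEval}, but evaluated where $\rho_\cK$ is replaced by $t/r\to\infty$, i.e.\ at the past/future ends of $\cface$.

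The aim is to show that $\alpha_\cface>\frac12$ makes this positive definite for small $e$. If the thresholds are only obtainable up to an $h$-independent loss, I would instead take the $\cface$ behavior from the indicial roots at $r=0$, namely $-\mu$ for the $\mu$ in Theorem~\ref{ThmM0}. These lie outside the strip corresponding to $\alpha_\cface\in(\frac12,\frac32+C_0]$, so a Mellin-transform argument in $\log r$ on the b-normal operator at $\cface$ would handle this face. This is the step I expect to need the most care: reconciling the semiclassical estimate with the b-structure at $\cface$.

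\textbf{Step 3: combination, duality, and the remaining parts.}
\begin{itemize}
\item We chain the estimates as in Proposition~\ref{PropEPWarmup}, absorb the $h^N$ error for small $h$, and prove the dual estimate for $P_h^*$ by propagating in the opposite direction. Duality then gives the solvability in part~\eqref{ItMiBoxFwd}.
\item Uniqueness: take $\omega'-\omega$, which vanishes near $t<0$. Since $\omega'$ has arbitrary polynomial weight only at $\sface$ on bounded time slabs $\Omega_{\le t_0}$, finite speed of propagation together with the local estimates near $X$ and near $\cface$ (which only use the $\alpha_\cface$ weight) shows it vanishes on every $\Omega_{\le t_0}$.
\item Part~\eqref{ItMiBoxIVP} is reduced to part~\eqref{ItMiBoxFwd} by the short-time solution and cutoff argument at the end of \S\ref{SssEP}.
\end{itemize}
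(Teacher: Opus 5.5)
Your plan is essentially the paper's proof: energy estimates near $X$ localized by $\chi(t/r)$, damping at fiber infinity, a threshold estimate at $\cface$, Proposition~\ref{PropES} at $\cR_v$, duality, and uniqueness via finite speed of propagation plus a local semiclassical estimate near $r=0$ (the paper closes the latter on a compact domain with final boundary $\{r=1-t\}$, using localization in $\frac{t-1}{r}$). Three corrections:
\begin{itemize}
\item $-r^2\cd^\sharp=r\pa_t+vr\pa_r$ vanishes identically on $\cface$, so the corner $\cface\cap\sface$ is not a saddle but part of a normal source. A single radial source estimate covers all of $\cface$, including the corner, and its only a priori input is the $\chi(t/r)$-term near $X$, which the energy estimate controls.
\item Its commutant weight is $w^{-1}$ with $w=\rho_\sface^{\alpha_\sface+1}\rho_\cface^{\alpha_\cface-1}$, not $\rho_\cface^{\alpha_\cface}$; the shift accounts for the $r^{-2}$ weight of the operator. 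Since $\dd w/w\equiv(\alpha_\cface-1)\,\dd r/r$ modulo $\rho_\cface$ when $\rho_\sface=(t+r+1)^{-1}$, the eigenvalues~\eqref{EqMIbdyEval} yield the threshold $\alpha_\cface>\frac12$, decoupled from $\alpha_\sface$.
\item Drop the Mellin/indicial-root fallback at $\cface$. The model operator there is the dilation-invariant operator itself, and its invertibility (that of $\wh{\Box_g^{\cC_h}}(\sigma)$ for $|\sigma|=1$) is exactly what Theorem~\ref{ThmMiInv} derives from Theorem~\ref{ThmMiBox}, so that route would be circular.
\end{itemize}
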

\begin{proof}
  The strategy of the proof, and most of the relevant (symbolic) computations, are very similar to those used in the proof of Theorem~\ref{ThmET}. Thus, we shall be rather brief.

  \pfstep{Energy estimate near $X$.} Set
  \[
    \tau=\frac{t}{r};
  \]
  then $\dd\tau=r^{-1}(\dd t-\tau\,\dd r)$ is past timelike for $|\tau|<1$. Moreover, the metric
  \[
    g = r^2\Bigl( -\dd\tau^2 - 2\tau\,\dd\tau\,\frac{\dd r}{r} + (1-\tau^2)\frac{\dd r^2}{r^2} + \slg\Bigr)
  \]
  is a weighted (by $r^2$) b-metric on $(-1,1)_\tau\times[0,\infty]_r\times\Sph^2$. Geometrically, working with $\tau$ instead of $t$ amounts to working on the blow-up
  \[
    M_1 := [M; \{t=r=0\}],\quad \upbeta_1\colon M_1\to M,
  \]
  with $\tau$ being an affine coordinate along the front face $\ol{\R_\tau}\times\Sph^2$ (cf.\ \cite[Remark~3.8]{HintzConicWave}). See Figure~\ref{FigMiM1}.

  \begin{figure}[!ht]
  \centering
  \includegraphics{FigMiM1}
  \caption{The manifold $M_1$ (quotiented by $\Sph^2$), which is convenient for the purpose of localizing in $\tau=\frac{t}{r}$, and some local coordinates.}
  \label{FigMiM1}
  \end{figure}

  Moreover, b-Sobolev norms on $I_\tau\times[0,\infty]_r\times\Sph^2$ for intervals $I\subset\R$ are the same as edge-b-Sobolev norms on $\{(t,r,\omega)\colon\frac{t}{r}\in I\}$. A purely notational modification of the proof of Proposition~\ref{PropEI} thus yields, for any fixed $T\in(0,1)$, the (forward) energy estimate
  \begin{equation}
  \label{EqMiBoxEnFwd}
    \|\omega\|_{H_{\ebop,h}^{1,(\alpha_\sface,\alpha_\cface)}(\tau^{-1}([0,T]))} \leq C h^{-1}\|P_h\omega\|_{H_{\ebop,h}^{0,(\alpha_\sface+2,\alpha_\cface-2)}(\tau^{-1}([0,T]))}
  \end{equation}
  for all $\omega$ vanishing in $\tau<0$. (In this estimate, we can take the local defining functions $\rho_\sface=\frac{1}{1+r}$ and $\rho_\cface=\frac{r}{1+r}$ to define the weights here.) The backward energy estimate for the adjoint $P_h^*$ takes the form
  \begin{equation}
  \label{EqMiBoxEnBwd}
    \|\omega\|_{H_{\ebop,h}^{1,(\alpha_\sface,\alpha_\cface)}(\tau^{-1}([0,T/2]))} \leq C\Bigl( h^{-1}\|P_h^*\omega\|_{H_{\ebop,h}^{0,(\alpha_\sface+2,\alpha_\cface-2)}(\tau^{-1}([0,T]))} + \|\omega\|_{H_{\ebop,h}^{1,(\alpha_\sface,\alpha_\cface)}(\tau^{-1}([T/2,T]))}\Bigr)
  \end{equation}
  and is valid for all $\omega$. We shall henceforth fix
  \[
    T := \frac12.
  \]

  \pfstep{Estimates at nonzero and infinite semiclassical frequencies.} Denote by $\chi_j=\chi_j(\tau)$, $0\leq j\leq 4$, cutoff functions such that
  \begin{equation}
  \label{EqMiBoxCutoffs}
    \chi_j=0\ \text{for}\ \tau\leq\frac{j+\frac14}{10}T,\quad
    \chi_j=1\ \text{for}\ \tau\geq\frac{j+\frac34}{10}T,
  \end{equation}
  and such that $\chi_j'\geq 0$ and $(\chi_j\chi_j')^{\frac12}\in\CI$. Let $B_0,B_1\in\Psi_{\ebop,\semi}^0(M)$ (i.e., ps.d.o.s associated with the parameterized scaled bounded geometry structure giving rise to semiclassical edge-b-ps.d.o.\ with uniformly edge-b-regular coefficients) be elliptic at the intersection of $\upbeta_1^*(\ol{\Teb^*}M)|_{\supp\chi_0}$ with the characteristic set of $P_h$ at fiber infinity, and such that $\WF_{\ebop,\semi}'(B_0)\cap o=\emptyset$ and $\WF_{\ebop,\semi}'(B_0)\subset\Ell_{\ebop,\semi}(B_1)$. (We use here that the radial compactification $\upbeta_1^*(\ol{\Teb^*}M)$ of $\upbeta_1^*(\Teb^*M)\to M_1$ is an admissible compactification for the edge-b bounded geometry setting in the sense of \cite[Definition~3.62]{HintzScaledBddGeo}, and its product with $[0,1)_h$ is an admissible compactification for the semiclassical edge-b parameterized scaled bounded geometry setting.) We then have the following analogue of Proposition~\ref{PropEEInfty}: for all $s,N\in\R$ there exist $h_0>0$ and $C<\infty$ such that
  \begin{align}
  \label{EqMiBoxInfty}
  \begin{split}
    &\|B_0\chi_2\omega\|_{H_{\ebop,h}^{s,(\alpha_\sface,\alpha_\cface)}} \leq C\Bigl( \|B_1\chi_1 P_h\omega\|_{H_{\ebop,h}^{s-1,(\alpha_\sface+2,\alpha_\cface-2)}} + h^{\frac12}\|B_1\chi_0(1-\chi_2)\omega\|_{H_{\ebop,h}^{s,(\alpha_\sface,\alpha_\cface)}} \\
      &\quad\hspace{11em} + h^N\|\chi_0\omega\|_{H_{\ebop,h}^{-N,(\alpha_\sface,\alpha_\cface)}} \Bigr),
    \end{split} \\
    &\|B_0\chi_2\omega\|_{H_{\ebop,h}^{-s+1,(-\alpha_\sface-2,-\alpha_\cface+2)}} \leq C\Bigl( \|B_1\chi_1 P_h^*\omega\|_{H_{\ebop,h}^{-s,(-\alpha_\sface,-\alpha_\cface)}} + h^N\|\chi_0\omega\|_{H_{\ebop,h}^{-N,(-\alpha_\sface-2,-\alpha_\cface+2)}}\Bigr) \nonumber
  \end{align}
  for all $h\in(0,h_0)$. The proof is completely analogous to that of Proposition~\ref{PropEEInfty}. With $\hat\rho\in\CI(\ol{\Teb^*}M)$ being a defining function of fiber infinity, one uses the commutant
  \[
    a := \rho_\sface^{-\alpha_\sface-1}\rho_\cface^{-\alpha_\cface+1}\hat\rho^{-s+\frac12}\chi^\pm\chi(\digamma\hat\rho)\chi(\digamma\hat G)\chi_1(\tau),\quad \hat G := \hat\rho^2 r^2 G \in \CI(\ol{\Teb^*}M),
  \]
  where $\chi\in\CIc(\R)$ equals $1$ near $0$, $\chi^\pm\in\CI(\ol{\Teb^*}M)$ equals $1$, resp.\ $0$ in a neighborhood of $\pa\Sigma^\pm$ (the future and past component of the characteristic set of $P_h$ at fiber infinity), resp.\ $\pa\Sigma^\mp$, and $\digamma>1$ is large. Upon quantizing $a$ to an operator $A=A^*\in\rho_\sface^{-\alpha_\sface-1}\rho_\cface^{-\alpha_\cface+1}\Psi_{\ebop,\semi}^{s-\frac12}(M)$, one considers the $L^2$-pairing~\eqref{EqEEInftyPair}. The arguments following~\eqref{EqEEInftyPair} then apply \emph{mutatis mutandis}. The main change is that instead of~\eqref{EqEEInftyComm2}, we now write
  \[
    \mp\frac{i}{2}[P_h,A^2] = -h E_\tau^*E_\tau + h A\tilde B_2 A + h A Q P_h + h F'
  \]
  (i.e., there is no term corresponding to $E_r$ in~\eqref{EqEEInftyComm2}), where
  \begin{alignat*}{2}
    E_\tau &\in \rho_\sface^{-\alpha_\sface}\rho_\cface^{-\alpha_\cface}\Psi_{\ebop,\semi}^s, &\quad
    \tilde B_2 &\in \rho_\sface^2\rho_\cface^{-2}\Psi_{\ebop,\semi}^1, \\
    Q &\in \rho_\sface^{-\alpha_\sface-1}\rho_\cface^{-\alpha_\cface+1}\Psi_{\ebop,\semi}^{s-\frac32}, &\quad
    F' &\in \rho_\sface^{-2\alpha_\sface}\rho_\cface^{-2\alpha_\cface}\Psi_{\ebop,\semi}^{2 s-1}.
  \end{alignat*}
  This can be arranged by symbolic considerations much as in the penultimate paragraph of the proof of Proposition~\ref{PropEEInfty}, except now there is no cutoff in $r$. Also in the present setting, one uses that $\mp\chi_1 H_G\chi_1=\mp\chi_1\chi_1' H_G\tau$ is a negative square on $\supp a$ (which gives rise to the term $-h E_\tau^*E_\tau$) since $\pm H_G\tau>0$ at future (``$+$''), resp.\ past (``$-$'') lightlike covectors.

  We point out that the term $B_1\chi_0(1-\chi_2)\omega$ in~\eqref{EqMiBoxInfty} can be controlled by the energy estimate~\eqref{EqMiBoxEnFwd}.

  \pfstep{Zero section propagation near the source over $\cface$.} This is the only place which has no direct analogue in the Kerr setting. Note that $\cface$ is a critical set (indeed, a normal source) for $-r^2\cd^\sharp=r\pa_t+v r\pa_r$. We shall thus prove a radial source estimate there. Analogously to~\eqref{EqEKcL}, it is convenient to study the operator
  \[
    \cL_h := -i(1+i C L_h^*r^2)P_h = -L_h + J - i Q \in \rho_\cface^{-2}\rho_\sface^2\Diff_{\ebop,\semi}^3,
  \]
  where $J=J^*$ and $Q=Q^*$ are given by~\eqref{EqEKDecomp} with $r^{-1}$ in place of $\rho_\sface$, except memberships in $\rho_\sface^2\Difftb^k$ are replaced by memberships in $\rho_\sface^2\rho_\cface^{-2}\Diffeb^k$. Working in $t\geq-\frac{r}{2}$, we fix the local boundary defining functions
  \[
    \rho_\sface := \frac{1}{t+r+1},\quad
    \rho_\cface := \frac{r}{t+r+1}.
  \]
  (In particular, $r=\rho_\cface\rho_\sface^{-1}$.) Keeping in mind the need for sharp localization in $\frac{t}{r}$, we thus consider the commutant
  \[
    a = w^{-1}\chi_\cface(\digamma\rho_\cface)\chi_0(\digamma|\zeta|)\chi_1(\tfrac{t}{r}),\quad w:=\rho_\sface^{\alpha_\sface+1}\rho_\cface^{\alpha_\cface-1},
  \]
  where we write $\zeta\in\Teb^*M$ for the argument of $a$; here $\chi_\cface,\chi_0\in\CIc([0,3))$ are equal to $1$ near $[0,1]$ and satisfy $\chi_\cface',\chi_0'\leq 0$ and $(-\chi_\cface\chi_\cface')^{\frac12}\in\CI([0,3))$; and $\chi_1$ was fixed in~\eqref{EqMiBoxCutoffs}. Upon quantizing $a$ to an operator $A=A^*\in\rho_\sface^{-\alpha_\sface-1}\rho_\cface^{-\alpha_\cface+1}\Psi_{\ebop,\semi}$, one then computes the $L^2$-pairing~\eqref{EqEKPair} (with the fiber inner product on $\cT^*$ given by Lemma~\ref{LemmaCInner}). We only discuss the analogue of the symbolic computation~\eqref{EqEKSymbol} (or~\eqref{EqEBSymbol} which is more similar to the present setting), where now $\ell,j\in r^{-2}\CI(\Teb^*M)$: with $S\in r^{-2}\CI(\Teb^*M)$ denoting the skew-adjoint part of $h^{-1}L_h$ (see Lemma~\ref{LemmaCSubpr}), we can write
  \begin{equation}
  \label{EqMiBoxCommcf}
    a H_{-\ell+j}a - S a^2 = -c_0 r^{-2} a^2 - b_0^2 - b_\cface^2 + e_\tau^2 + f,
  \end{equation}
  where, for sufficiently small $c_0>0$, we define the $\End(\cT^*)$-valued symbols
  \begin{align*}
    b_0 &:= \rho_\sface^{-\alpha_\sface}\rho_\cface^{-\alpha_\cface}\chi_\cface\chi_0\chi_1\Bigl( r^2\Bigl[-\ell\Bigl(\frac{\dd w}{w}\Bigr) + S + w^{-1}H_j w\Bigr] - c_0\Bigr)^{\frac12}, \\
    b_\cface &:= \rho_\sface^{-\alpha_\sface}\rho_\cface^{-\alpha_\cface}\chi_0\chi_1\bigl(-\digamma\chi_\cface\chi_\cface' r^2\bigl[-\ell(\dd\rho_\cface)+H_j\rho_\cface\bigr]\bigr)^{\frac12}, \\
    e_\tau &:= \rho_\sface^{-\alpha_\sface}\rho_\cface^{-\alpha_\cface}\chi_\cface\chi_0\Bigl( \chi_1\chi_1'r^2\Bigl[ -\ell\Bigl(\dd\frac{t}{r}\Bigr) + H_j\frac{t}{r}\Bigr]\Bigr)^{\frac12}, \\
    f &:= \rho_\sface^{-2\alpha_\sface}\rho_\cface^{-2\alpha_\cface}\chi_\cface^2\chi_1^2\chi_0 r^2 H_{-\ell+j}\chi_0.
  \end{align*}
  In order to analyze these symbols, we compute
  \begin{align*}
    \frac{\dd\rho_\cface}{\rho_\cface} &= \frac{\dd r}{r} - \rho_\cface\Bigl(\frac{\dd t}{r}+\frac{\dd r}{r}\Bigr) \equiv \frac{\dd r}{r} \bmod \rho_\cface\CI(M;\Teb^*M), \\
    \frac{\dd w}{w} &= (\alpha_\cface-1)\frac{\dd r}{r} - \rho_\cface(\alpha_\sface+\alpha_\cface)\Bigl(\frac{\dd t}{r}+\frac{\dd r}{r}\Bigr) \equiv (\alpha_\cface-1)\frac{\dd r}{r}\bmod\rho_\cface\CI(M;\Teb^*M).
  \end{align*}
  Then:
  \begin{itemize}
  \item $r^2(-\ell(\frac{\dd w}{w})+S)$ is positive definite at $\rho_\cface=0$ when $e=0$ provided $\alpha_\cface-1>-\frac12$, i.e., $\alpha_\cface>\frac12$, as follows from~\eqref{EqMIbdyEll} (with $\beta_\cK-\beta_\sface$ replaced by $\alpha_\cface-1$),~\eqref{EqMIbdyS}, and~\eqref{EqMIbdyEval}. The same is then true for all sufficiently small $e$ and on $\supp a$ when $\digamma$ is sufficiently large (thus $\supp a$ is localized in a sufficiently small neighborhood of $\cface$). Choosing $\digamma$ even larger so that the contribution of $w^{-1}H_j w$ (which vanishes at the zero section) to $b_0$ is dominated by $r^2(-\ell(\frac{\dd w}{w})+S)$, the symbol $b_0$ is well-defined and smooth.
  \item Again using the computation~\eqref{EqMIbdyEll},
    \[
      -\frac12 r^2\ell\Bigl(\frac{\dd r}{r}\Bigr) = \begin{pmatrix} v & -1 & 0 \\ -e & (2-e)v & 0 \\ 0 & 0 & v \end{pmatrix},
    \]
    which has positive eigenvalues for $v>0$ and small $e\geq 0$ and is thus positive definite (with respect to the fiber inner product from Lemma~\ref{LemmaCInner}). (Cf.\ the  positivity of $-\ell_{g,\cd,e}(\dd r)$ stated in Proposition~\ref{PropEC}\eqref{ItECPosEsc}.) Therefore, $b_\cface$ is smooth.
  \item Since $\dd\frac{t}{r}$ is past timelike on $\supp\chi_1'$, Lemma~\ref{LemmaCTimelike} implies that $r^2\ell(\dd\frac{t}{r})$ is negative definite, and thus $e_\tau$ is smooth.
  \item The term $f$ is supported in the elliptic set of $P_h$.
  \end{itemize}
  Quantizing~\eqref{EqMiBoxCommcf} and proceeding as in the proof of Proposition~\ref{PropEK}, one obtains the estimate
  \begin{equation}
  \label{EqMiBoxcfFwd}
  \begin{split}
    &\| B_0\chi_2\omega \|_{\rho_\sface^{\alpha_\sface}\rho_\cface^{\alpha_\cface}L^2} \\
    &\qquad \leq C\Bigl( h^{-1}\|B_1\chi_1 P_h\omega\|_{\rho_\sface^{\alpha_\sface+2}\rho_\cface^{\alpha_\cface-2}L^2} + \|E\chi_0(1-\chi_2)\omega\|_{\rho_\sface^{\alpha_\sface}\rho_\cface^{\alpha_\cface}L^2} + h^N\|\chi_0\omega\|_{\rho_\sface^{\alpha_\sface}\rho_\cface^{\alpha_\cface}L^2}\Bigr);
  \end{split}
  \end{equation}
  here $B_0,B_1\in\Psi_{\ebop,\semi}(M)$ are elliptic at the zero section over $\cface$ and have operator wave front sets in small neighborhoods thereof, while $E$ is elliptic near the zero section and has $\WFebh'(E)$ near the zero section. See the left panel of Figure~\ref{FigMicf}. Thus, the a priori control term $E\chi_0(1-\chi_2)$ controls the term arising from $e_\tau$ in~\eqref{EqMiBoxCommcf}. It can itself be controlled by the energy estimate~\eqref{EqMiBoxEnFwd}.

  \begin{figure}[!ht]
  \centering
  \includegraphics{FigMicf-r}
  \caption{\textit{On the left:} illustration of the estimate~\eqref{EqMiBoxcfFwd}, which propagates control at the zero section from the operator wave front set of $E$ (red) to the elliptic set of $B_0$ (blue). The flow of the vector field $-r\cd^\sharp$ is indicated by green arrows. \textit{On the right:} illustration of the estimate~\eqref{EqMiBoxcfBwd}. The flow of the vector field $r\cd^\sharp$ is indicated by green arrows.}
  \label{FigMicf}
  \end{figure}

  The zero section propagation \emph{into} $\cface$ for the operator $P_h^*$ is proved similarly; the resulting estimate reads
  \begin{equation}
  \label{EqMiBoxcfBwd}
  \begin{split}
    &\| B_0\chi_4(\tau)\chi_{\cface,2}\omega \|_{\rho_\sface^{-\alpha_\sface-2}\rho_\cface^{-\alpha_\cface+2}L^2} \\
    &\qquad \leq C\Bigl( h^{-1}\|B_1\chi_3(\tau)\chi_{\cface,1}P_h^*\omega\|_{\rho_\sface^{-\alpha_\sface}\rho_\cface^{-\alpha_\cface}L^2} + \|E\chi_2(\tau)\chi_{\cface,0}(1-\chi_{\cface,2})\omega\|_{\rho_\sface^{-\alpha_\sface-2}L^2} \\
    &\qquad \hspace{8em} + h^N\|\chi_2(\tau)\chi_{\cface,0}\omega\|_{\rho_\sface^{-\alpha_\sface-2}\rho_\cface^{-\alpha_\cface+2}L^2}\Bigr).
  \end{split}
  \end{equation}
  Here, $\chi_{\cface,j}=\chi_{\cface,j}(\rho_\cface)\in\CIc([0,\digamma^{-1}))$ is equal to $1$ near $0$ and on $\supp\chi_{\cface,j+1}$, and $\digamma$ is sufficiently large. Thus,~\eqref{EqMiBoxcfBwd} propagates control from a punctured neighborhood of $\cface$ into $\cface$ itself, with additional localization in $\tau=\frac{t}{r}$. See the right panel of Figure~\ref{FigMicf}.

  \pfstep{Zero section propagation in $\sface^\circ$.} Away from $\cface$, semiclassical edge-b- and 3b-spaces are the same, and thus we can simply quote Proposition~\ref{PropES} for propagation into the radial sink at $\sface\cap\{\frac{r}{t}=v\}$ under the assumption $\alpha_\sface<-\frac12$, and the discussion in~\S\ref{SssER} for the real principal type propagation along $\sface^\circ$ (in the direction of increasing, resp.\ decreasing $\frac{r}{t}$ for $\frac{r}{t}<v$, resp.\ $\frac{r}{t}>v$).

  \pfstep{Combination: solvability.} Combining the energy estimate~\eqref{EqMiBoxEnFwd} with the estimate~\eqref{EqMiBoxInfty} at nonzero semiclassical edge-b-frequencies and the estimates~\eqref{EqMiBoxcfFwd} (radial source estimate at $\cface$), real principal type propagation, and Proposition~\ref{PropES} (radial sink estimate at $\sface\cap\{\frac{r}{t}=v\}$) gives
  \[
    \|\omega\|_{\dot H_{\ebop,h}^{s,(\alpha_\sface,\alpha_\cface)}(\Omega)} \leq C\Bigl(h^{-1}\|P_h\omega\|_{\dot H_{\ebop,h}^{s-1,(\alpha_\sface+2,\alpha_\cface-2)}(\Omega)} + h^N\|\omega\|_{\dot H_{\ebop,h}^{s,(\alpha_\sface,\alpha_\cface)}(\Omega)}\Bigr).
  \]
  For sufficiently small $h>0$, the error term on the right-hand side can be absorbed into the left-hand side, and we conclude that
  \begin{equation}
  \label{EqMiBoxFwdEst}
    \|\omega\|_{\dot H_{\ebop,h}^{s,(\alpha_\sface,\alpha_\cface)}(\Omega)} \leq C h^{-1}\|P_h\omega\|_{\dot H_{\ebop,h}^{s-1,(\alpha_\sface+2,\alpha_\cface-2)}(\Omega)},
  \end{equation}
  initially for $s=1$ and then for arbitrary but fixed $s\geq 1$ using an extension/restriction argument.

  For $P_h^*$, we propagate in the reverse direction (starting with the radial source estimate at $\sface\cap\{\frac{r}{t}=v\}$, followed by real principal type propagation, which yields control of the term involving $E$ in~\eqref{EqMiBoxcfBwd}, and ultimately applying the energy estimate~\eqref{EqMiBoxEnBwd}) and obtain
  \[
    \|\omega\|_{\bar H_{\ebop,h}^{-s+1,(-\alpha_\sface-2,-\alpha_\cface+2)}(\Omega)} \leq C h^{-1}\|P_h^*\omega\|_{\bar H_{\ebop,h}^{-s,(-\alpha_\sface,-\alpha_\cface)}(\Omega)}
  \]
  for $-s+1\geq 1$. By duality, this gives solvability of the equation~\eqref{EqMiBoxFwd}, i.e., $P_h\omega=h^2 f$, together with the quantitative estimate~\eqref{EqMiBoxFwdEst} for $s\leq 0$, and then for all $s\in\R$ by a propagation of regularity argument. The initial value problem~\eqref{EqMiBoxIVP} can be solved by a straightforward adaptation of the arguments at the end of~\S\ref{SssEP}.

  \pfstep{Strong uniqueness.} The uniqueness results in \cite{HintzConicWave} require, \emph{as an input}, the invertibility of certain model operators (see \cite[Definition~3.12]{HintzConicWave}) which in the present setting would be $\wh{\Box_g^{\cC_h}}(\sigma)$, $|\sigma|=1$, $\Im\sigma\geq 0$---the invertibility of which we are trying to establish. We must thus instead again rely on semiclassical tools. To prove the uniqueness claim of Theorem~\ref{ThmMiBox}\eqref{ItMiBoxFwd}, it suffices to show that if $\Box_g^{\cC_h}\omega'=0$ on $\Omega_{\leq 1}=\ol{\{0\leq t\leq 1\}}$, where $\omega'$ satisfies~\eqref{EqMiBoxFwdp} for $t_0=1$, then $\omega'=0$ on $\Omega_{\leq\frac12}$. (Iterating this $2\lceil t_0\rceil$ times for the pullback of $\omega'$ along translations in $t$ gives $\omega'=0$ on $\Omega_{\leq t_0}$, and taking $t_0\to\infty$ gives $\omega'=0$.) By finite speed of propagation for smooth coefficient wave-type equations, we have $\omega'=0$ for $t\in[0,1]$ and $r>t$. It thus suffices to show the following local uniqueness statement: let
  \[
    \Omega_\smallrighttriangle := \cl\{ 0\leq t\leq 1,\ r\leq 1-t\} \subset M,
  \]
  and suppose that $\omega'\in\rho_\cface^{\alpha_\cface}\Heb^{-N}(\Omega_\smallrighttriangle)^{\bullet,-}$ (supported character at $t=0$, extendible character at $r=1-t$) solves $\Box_g^{\cC_h}\omega'=0$. Then $\omega'=0$ on $\Omega_\smallrighttriangle$. (Note that the union of $\{0\leq t\leq 1,\ r\geq t\}$ and $\{0\leq t\leq 1,\ r\leq 1-t\}$ contains $\{0\leq t\leq\frac12\}$.) But this follows from the semiclassical a priori estimate
  \[
    \|\omega\|_{\rho_\cface^{\alpha_\cface}H_{\ebop,h}^s(\Omega_\smallrighttriangle)^{\bullet,-}} \leq C h^{-1}\|P_h\omega\|_{\rho_\cface^{\alpha_\cface-2}H_{\ebop,h}^{s-1}(\Omega_\smallrighttriangle)^{\bullet,-}},
  \]
  valid for any fixed $s\in\R$; this estimate in turn follows from a combination of energy estimates (now including one near the final hypersurface $r=1-t$ of $\Omega_\smallrighttriangle$, utilizing localization in $\frac{t-1}{r}$) and microlocal estimates as above.

  Uniqueness of solutions to the initial value problem~\eqref{EqMiBoxIVP} follows from an application of these considerations to the difference of two solutions $\omega$ and $\omega'$.
\end{proof}

\subsection{Conclusion of the proof of Theorem~\usref{ThmMiInv}}
\label{SsMiPf}

Without loss, we may assume $C_0\geq 2$. In the notation of Theorem~\ref{ThmMiInv}, we consider the fixed orders
\begin{equation}
\label{EqMiPfEllBeta}
  \ell = -\frac12,\quad
  \beta = \frac72.
\end{equation}
In view of Proposition~\ref{PropMiFred}, it suffices to show that
\begin{equation}
\label{EqMiPf0}
  \omega_0 \in H_{\scop,\bop}^{1,(\ell,\beta)}(X;\cT^*_X),\quad \wh{\Box_g^{\cC_h}}(\sigma)\omega_0 = 0\ \implies\ \omega_0=0.
\end{equation}

To this end, we use Theorem~\ref{ThmMiBox} for
\[
  \alpha_\sface = -\frac52,\quad
  \alpha_\cface = \frac52.
\]
(As the value of $e_0>0$ in Theorem~\ref{ThmMiInv}, we take the smaller one of the ones produced by Theorems~\ref{ThmMiBox} and \ref{ThmM0}.) Thus, $\omega(t,x) := e^{-i\sigma(t-r)}\omega_0$ solves $\Box_g^{\cC_h}\omega=0$ with initial data
\begin{equation}
\label{EqMiPfID}
  (\omega,\,r\cL_{\pa_t}\omega)|_{t=0} = (e^{i\sigma r}\omega_0,\,-i\sigma r e^{i\sigma r}\omega_0) =: (\omega_{(0)},\omega_{(1)}).
\end{equation}
Since $\Im\sigma\geq 0$, we have
\[
  \omega_{(0)} \in \rho_\scop^\ell\rho_\bop^\beta H_{\scop,\bop}^1 \subset \rho_\scop^{\ell-1}\rho_\bop^\beta H_\bop^1,\quad
  \omega_{(1)} \in \rho_\scop^{\ell-1}\rho_\bop^\beta L^2,
\]
and thus $\omega_{(j)}\in\rho_\sface^{\ell-1}\rho_\cface^{\beta}H_\bop^{1-j}(X,r^2|\dd r\,\dd\omega|;\cT^*_X)$; note here that on $X$, we can take $\rho_\sface$ and $\rho_\cface$ to be $\rho_\scop$ and $\rho_\bop$, respectively. For the choices~\eqref{EqMiPfEllBeta}, the space here is contained in $\rho_\sface^{\alpha_\sface+\frac12}\rho_\cface^{\alpha_\cface-\frac12}L^2$ (with room to spare). Theorem~\ref{ThmMiBox} now implies that the initial value problem for $\Box_g^{\cC_h}\omega'=0$ with initial data~\eqref{EqMiPfID} has a solution
\begin{equation}
\label{EqMiPfOmegap}
  \omega' \in \rho_\sface^{\alpha_\sface}\rho_\cface^{\alpha_\cface}\bar H^1_\ebop(\Omega;\cT^*).
\end{equation}
But since the restriction of $\omega$ to $\Omega_{\leq t_0}=\Omega\cap\{t\leq t_0\}$ is easily checked to lie in $\rho_\sface^{\alpha_\sface}\rho_\cface^{\alpha_\cface}\bar H_\ebop^1$ for all $t_0$, the uniqueness part of Theorem~\ref{ThmMiBox}\eqref{ItMiBoxIVP} implies $\omega'=\omega$.

Suppose now that $\omega_0\neq 0$, then $|\omega_0|\geq c>0$ on some non-empty open set $U\subset\R^3_x\setminus\{0\}$ with compact closure $\bar U\subset\R^3_x\setminus\{0\}$. But then, using the boundary defining functions $\rho_\cface=\frac{r}{t}$ and $\rho_\sface=\frac{1}{t}$ in $t\geq 1$, $r\leq t$, we have, for $T:=\max(1,\sup_U r)$,
\[
  \int_U \int_T^\infty \rho_\sface^{-2\alpha_\sface}\rho_\cface^{-2\alpha_\cface}|\omega|^2\,\dd t\,\dd x \geq c^2\int_U \biggl(\int_T^\infty t^{2(\alpha_\sface+\alpha_\cface)}\,\dd t\biggr) r^{-2\alpha_\cface}\,\dd x = \infty
\]
since $2(\alpha_\sface+\alpha_\cface)=0\geq-1$, contradicting the membership~\eqref{EqMiPfOmegap} of $\omega'=\omega$.

\bibliographystyle{alphaurl}


\begin{thebibliography}{GMGCH05}

\bibitem[AAG18]{AngelopoulosAretakisGajicLate}
Yannis Angelopoulos, Stefanos Aretakis, and Dejan Gajic.
\newblock Late-time asymptotics for the wave equation on spherically symmetric,
  stationary spacetimes.
\newblock {\em Adv. Math.}, 323:529--621, 2018.
\newblock \href {https://doi.org/10.1016/j.aim.2017.10.027}
  {\path{doi:10.1016/j.aim.2017.10.027}}.

\bibitem[AAG23]{AngelopoulosAretakisGajicKerr}
Yannis Angelopoulos, Stefanos Aretakis, and Dejan Gajic.
\newblock Late-time tails and mode coupling of linear waves on {K}err
  spacetimes.
\newblock {\em Advances in Mathematics}, 417:108939, 2023.
\newblock \href {https://doi.org/https://doi.org/10.1016/j.aim.2023.108939}
  {\path{doi:https://doi.org/10.1016/j.aim.2023.108939}}.

\bibitem[AHW26]{AnderssonHaefnerWhitingMode}
Lars Andersson, Dietrich H\"afner, and Bernard~F. Whiting.
\newblock Mode analysis for the linearized {E}instein equations on the {K}err
  metric: the large {${a}$} case.
\newblock {\em J. Eur. Math. Soc. (JEMS)}, 28(9):3919--3981, 2026.
\newblock \href {https://doi.org/10.4171/jems/1544}
  {\path{doi:10.4171/jems/1544}}.

\bibitem[AMPW17]{AnderssonMaPaganiniWhitingModeStab}
Lars Andersson, Siyuan Ma, Claudio Paganini, and Bernard~F. Whiting.
\newblock Mode stability on the real axis.
\newblock {\em J. Math. Phys.}, 58(7):072501, 19, 2017.
\newblock \href {https://doi.org/10.1063/1.4991656}
  {\path{doi:10.1063/1.4991656}}.

\bibitem[Bes20]{BessetRNdSDecay}
Nicolas Besset.
\newblock Decay of the local energy for the charged {K}lein--{G}ordon equation
  in the exterior {D}e {S}itter--{R}eissner--{N}ordstr\"om spacetime.
\newblock {\em Ann. Henri Poincar\'e}, 21(8):2433--2484, 2020.
\newblock \href {https://doi.org/10.1007/s00023-020-00919-z}
  {\path{doi:10.1007/s00023-020-00919-z}}.

\bibitem[BFHR99]{BrodbeckFrittelliHubnerReulaSCP}
Othmar Brodbeck, Simonetta Frittelli, Peter H\"ubner, and Oscar~A. Reula.
\newblock Einstein's equations with asymptotically stable constraint
  propagation.
\newblock {\em J. Math. Phys.}, 40(2):909--923, 1999.
\newblock \href {https://doi.org/10.1063/1.532694}
  {\path{doi:10.1063/1.532694}}.

\bibitem[BH08]{BonyHaefnerDecay}
Jean-Fran{\c{c}}ois Bony and Dietrich H{\"a}fner.
\newblock Decay and non-decay of the local energy for the wave equation on the
  de {S}itter--{S}chwarzschild metric.
\newblock {\em Communications in Mathematical Physics}, 282(3):697--719, 2008.
\newblock \href {https://doi.org/10.1007/s00220-008-0553-y}
  {\path{doi:10.1007/s00220-008-0553-y}}.

\bibitem[BL67]{BoyerLindquistKerr}
Robert~H. Boyer and Richard~W. Lindquist.
\newblock Maximal analytic extension of the {K}err metric.
\newblock {\em J. Mathematical Phys.}, 8:265--281, 1967.
\newblock \href {https://doi.org/10.1063/1.1705193}
  {\path{doi:10.1063/1.1705193}}.

\bibitem[BVW15]{BaskinVasyWunschRadMink}
Dean Baskin, Andr\'as Vasy, and Jared Wunsch.
\newblock Asymptotics of radiation fields in asymptotically {M}inkowski space.
\newblock {\em Amer. J. Math.}, 137(5):1293--1364, 2015.
\newblock \href {https://doi.org/10.1353/ajm.2015.0033}
  {\path{doi:10.1353/ajm.2015.0033}}.

\bibitem[CK93]{ChristodoulouKlainermanStability}
Demetrios Christodoulou and Sergiu Klainerman.
\newblock {\em The global nonlinear stability of the {M}inkowski space},
  volume~41 of {\em Princeton Mathematical Series}.
\newblock Princeton University Press, Princeton, NJ, 1993.

\bibitem[DH72]{DuistermaatHormanderFIO2}
Johannes~J. Duistermaat and Lars H{\"o}rmander.
\newblock Fourier integral operators. {II}.
\newblock {\em Acta Math.}, 128(3-4):183--269, 1972.
\newblock \href {https://doi.org/10.1007/BF02392165}
  {\path{doi:10.1007/BF02392165}}.

\bibitem[DHRT21]{DafermosHolzegelRodnianskiTaylorSchwarzschild}
Mihalis Dafermos, Gustav Holzegel, Igor Rodnianski, and Martin Taylor.
\newblock The nonlinear stability of the {S}chwarzschild solution to
  gravitational perturbations.
\newblock {\em Preprint, arXiv:2104.08222}, 2021.

\bibitem[Dya11a]{DyatlovQNMExtended}
Semyon Dyatlov.
\newblock Exponential energy decay for {K}err--de {S}itter black holes beyond
  event horizons.
\newblock {\em Math. Res. Lett.}, 18(5):1023--1035, 2011.
\newblock \href {https://doi.org/10.4310/MRL.2011.v18.n5.a19}
  {\path{doi:10.4310/MRL.2011.v18.n5.a19}}.

\bibitem[Dya11b]{DyatlovQNM}
Semyon Dyatlov.
\newblock Quasi-normal modes and exponential energy decay for the {K}err-de
  {S}itter black hole.
\newblock {\em Comm. Math. Phys.}, 306(1):119--163, 2011.
\newblock \href {https://doi.org/10.1007/s00220-011-1286-x}
  {\path{doi:10.1007/s00220-011-1286-x}}.

\bibitem[Dya12]{DyatlovAsymptoticDistribution}
Semyon Dyatlov.
\newblock Asymptotic distribution of quasi-normal modes for {K}err--de {S}itter
  black holes.
\newblock {\em Annales Henri Poincar{\'e}}, 13(5):1101--1166, 2012.
\newblock \href {https://doi.org/10.1007/s00023-012-0159-y}
  {\path{doi:10.1007/s00023-012-0159-y}}.

\bibitem[Fan26]{FangKdS}
Allen~Juntao Fang.
\newblock Nonlinear stability of the slowly-rotating {K}err--de {S}itter
  family.
\newblock {\em Ann. PDE}, 12(1):Paper No. 5, 79, 2026.
\newblock \href {https://doi.org/10.1007/s40818-025-00227-x}
  {\path{doi:10.1007/s40818-025-00227-x}}.

\bibitem[Fri86]{FriedrichStability}
Helmut Friedrich.
\newblock On the existence of {$n$}-geodesically complete or future complete
  solutions of {E}instein's field equations with smooth asymptotic structure.
\newblock {\em Comm. Math. Phys.}, 107(4):587--609, 1986.
\newblock \href {https://doi.org/10.1007/BF01205488}
  {\path{doi:10.1007/BF01205488}}.

\bibitem[Gan14]{GannotSAdS}
Oran Gannot.
\newblock Quasinormal modes for {S}chwarzschild--{A}d{S} black holes:
  exponential convergence to the real axis.
\newblock {\em Comm. Math. Phys.}, 330(2):771--799, 2014.
\newblock \href {https://doi.org/10.1007/s00220-014-2002-4}
  {\path{doi:10.1007/s00220-014-2002-4}}.

\bibitem[Gan17]{GannotKerrAdS}
Oran Gannot.
\newblock Existence of quasinormal modes for {K}err--{A}d{S} black holes.
\newblock {\em Ann. Henri Poincar\'e}, 18(8):2757--2788, 2017.
\newblock \href {https://doi.org/10.1007/s00023-017-0568-z}
  {\path{doi:10.1007/s00023-017-0568-z}}.

\bibitem[GH08]{GuillarmouHassellResI}
Colin Guillarmou and Andrew Hassell.
\newblock Resolvent at low energy and {R}iesz transform for {S}chr\"{o}dinger
  operators on asymptotically conic manifolds. {I}.
\newblock {\em Math. Ann.}, 341(4):859--896, 2008.
\newblock \href {https://doi.org/10.1007/s00208-008-0216-5}
  {\path{doi:10.1007/s00208-008-0216-5}}.

\bibitem[GKS24]{GiorgiKlainermanSzeftelStability}
Elena Giorgi, Sergiu Klainerman, and J\'er\'emie Szeftel.
\newblock Wave equations estimates and the nonlinear stability of slowly
  rotating {K}err black holes.
\newblock {\em Pure Appl. Math. Q.}, 20(7):2865--3849, 2024.
\newblock \href {https://doi.org/10.4310/pamq.241128023033}
  {\path{doi:10.4310/pamq.241128023033}}.

\bibitem[GMGCH05]{GundlachCalabreseHinderMartinConstraintDamping}
Carsten Gundlach, Jose~M. Martin-Garcia, Gioel Calabrese, and Ian Hinder.
\newblock {Constraint damping in the Z4 formulation and harmonic gauge}.
\newblock {\em Class. Quant. Grav.}, 22:3767--3774, 2005.
\newblock \href {http://arxiv.org/abs/gr-qc/0504114}
  {\path{arXiv:gr-qc/0504114}}, \href
  {https://doi.org/10.1088/0264-9381/22/17/025}
  {\path{doi:10.1088/0264-9381/22/17/025}}.

\bibitem[HHV21]{HaefnerHintzVasyKerr}
Dietrich H{\"a}fner, Peter Hintz, and Andr{\'a}s Vasy.
\newblock Linear stability of slowly rotating {K}err black holes.
\newblock {\em Inventiones mathematicae}, 223:1227--1406, 2021.
\newblock \href {https://doi.org/10.1007/s00222-020-01002-4}
  {\path{doi:10.1007/s00222-020-01002-4}}.

\bibitem[HHV25]{HaefnerHintzVasyKerrLarge}
Dietrich H\"afner, Peter Hintz, and Andr\'as Vasy.
\newblock Linear stability of {K}err black holes in the full subextremal range.
\newblock {\em Preprint, arXiv:2506.21183}, 2025.

\bibitem[Hin17]{HintzPsdoInner}
Peter Hintz.
\newblock Resonance expansions for tensor-valued waves on asymptotically
  {K}err--de {S}itter spaces.
\newblock {\em J. Spectr. Theory}, 7:519--557, 2017.
\newblock \href {https://doi.org/10.4171/JST/171} {\path{doi:10.4171/JST/171}}.

\bibitem[Hin18]{HintzKNdSStability}
Peter Hintz.
\newblock {N}on-linear {S}tability of the {K}err--{N}ewman--de {S}itter
  {F}amily of {C}harged {B}lack {H}oles.
\newblock {\em Annals of PDE}, 4(1):11, Apr 2018.
\newblock \href {https://doi.org/10.1007/s40818-018-0047-y}
  {\path{doi:10.1007/s40818-018-0047-y}}.

\bibitem[Hin22]{HintzPrice}
Peter Hintz.
\newblock {A} sharp version of {P}rice's law for wave decay on asymptotically
  flat spacetimes.
\newblock {\em Communications in Mathematical Physics}, 389:491--542, 2022.
\newblock \href {https://doi.org/10.1007/s00220-021-04276-8}
  {\path{doi:10.1007/s00220-021-04276-8}}.

\bibitem[Hin23a]{HintzMink4Gauge}
Peter Hintz.
\newblock Exterior stability of {M}inkowski space in generalized harmonic
  gauge.
\newblock {\em Archive for Rational Mechanics and Analysis}, 247(99), 2023.
\newblock \href {https://doi.org/10.1007/s00205-023-01931-3}
  {\path{doi:10.1007/s00205-023-01931-3}}.

\bibitem[Hin23b]{HintzGlueLocI}
Peter Hintz.
\newblock Gluing small black holes along timelike geodesics {I}: formal
  solution.
\newblock {\em Preprint, arXiv:2306.07409}, 2023.

\bibitem[Hin23c]{HintzNonstat}
Peter Hintz.
\newblock Linear waves on non-stationary asymptotically flat spacetimes. {I}.
\newblock {\em Preprint, arXiv:2302.14647}, 2023.

\bibitem[Hin23d]{Hintz3b}
Peter Hintz.
\newblock Microlocal analysis of operators with asymptotic translation- and
  dilation-invariances.
\newblock {\em Preprint, arXiv:2302.13803}, 2023.

\bibitem[Hin24]{HintzGlueLocIII}
Peter Hintz.
\newblock Gluing small black holes along timelike geodesics {III}: construction
  of true solutions and extreme mass ratio mergers.
\newblock {\em Preprint, arXiv:2408.06715}, 2024.

\bibitem[Hin25a]{HintzMicro}
Peter Hintz.
\newblock {\em {A}n {I}ntroduction to {M}icrolocal {A}nalysis}.
\newblock Graduate Texts in Mathematics, No. 304. Springer-Verlag, Cham, 2025.
\newblock \href {https://doi.org/10.1007/978-3-031-90706-7}
  {\path{doi:10.1007/978-3-031-90706-7}}.

\bibitem[Hin25b]{HintzKdSMS}
Peter Hintz.
\newblock Mode stability and shallow quasinormal modes of {K}err--de {S}itter
  black holes away from extremality.
\newblock {\em J. Eur. Math. Soc. (JEMS)}, 27(12):4891--4996, 2025.
\newblock \href {https://doi.org/10.4171/jems/1463}
  {\path{doi:10.4171/jems/1463}}.

\bibitem[Hin26a]{HintzConicWave}
Peter Hintz.
\newblock Local theory of wave equations with timelike curves of conic
  singularities.
\newblock {\em Advances in Mathematics}, 494:110925, 2026.
\newblock URL:
  \url{https://www.sciencedirect.com/science/article/pii/S0001870826001477},
  \href {https://doi.org/https://doi.org/10.1016/j.aim.2026.110925}
  {\path{doi:https://doi.org/10.1016/j.aim.2026.110925}}.

\bibitem[Hin26b]{HintzNonstat2}
Peter Hintz.
\newblock ({N}on-)linear waves on asymptotically flat spacetimes. {II}:
  trapping, bound states, nonlinear applications.
\newblock {\em Preprint, arXiv:2606.28008v1}, 2026.

\bibitem[Hin26c]{HintzKerrStab}
Peter Hintz.
\newblock Nonlinear stability of subextremal {K}err black holes.
\newblock {\em Preprint, arXiv:2606.28253}, 2026.

\bibitem[Hin26d]{HintzScaledBddGeo}
Peter Hintz.
\newblock Pseudodifferential operators on manifolds with scaled bounded
  geometry.
\newblock {\em Commun. Am. Math. Soc.}, 6:153--243, 2026.
\newblock \href {https://doi.org/10.1090/cams/58} {\path{doi:10.1090/cams/58}}.

\bibitem[H{\"o}r71]{HormanderEnseignement}
Lars H{\"o}rmander.
\newblock On the existence and the regularity of solutions of linear
  pseudodifferential equations.
\newblock {\em Enseignement Math.}, 2(17):99--163, 1971.

\bibitem[H{\"o}r07]{HormanderAnalysisPDE3}
Lars H{\"o}rmander.
\newblock {\em The analysis of linear partial differential operators. {III}}.
\newblock Classics in Mathematics. Springer, Berlin, 2007.
\newblock Pseudo-differential operators, Reprint of the 1994 edition.
\newblock \href {https://doi.org/10.1007/978-3-540-49938-1}
  {\path{doi:10.1007/978-3-540-49938-1}}.

\bibitem[HPV25]{HintzPetersenVasyKdS}
Peter Hintz, Oliver Petersen, and Andr\'as Vasy.
\newblock {C}onditional non-linear stability of {K}err--de~{S}itter spacetimes:
  the full subextremal range.
\newblock {\em Preprint, arXiv:2508.06620}, 2025.

\bibitem[HV15]{HintzVasySemilinear}
Peter Hintz and Andr{\'a}s Vasy.
\newblock Semilinear wave equations on asymptotically de {S}itter, {K}err--de
  {S}itter and {M}inkowski spacetimes.
\newblock {\em Anal. PDE}, 8(8):1807--1890, 2015.
\newblock \href {https://doi.org/10.2140/apde.2015.8.1807}
  {\path{doi:10.2140/apde.2015.8.1807}}.

\bibitem[HV18]{HintzVasyKdSStability}
Peter Hintz and Andr{\'a}s Vasy.
\newblock {T}he global non-linear stability of the {K}err--de {S}itter family
  of black holes.
\newblock {\em Acta mathematica}, 220:1--206, 2018.
\newblock \href {https://doi.org/10.4310/acta.2018.v220.n1.a1}
  {\path{doi:10.4310/acta.2018.v220.n1.a1}}.

\bibitem[HV20]{HintzVasyMink4}
Peter Hintz and Andr{\'a}s Vasy.
\newblock {S}tability of {M}inkowski space and polyhomogeneity of the metric.
\newblock {\em Annals of PDE}, 6(2), 2020.
\newblock \href {https://doi.org/10.1007/s40818-020-0077-0}
  {\path{doi:10.1007/s40818-020-0077-0}}.

\bibitem[HV24]{HintzVasyKdSCosm}
Peter Hintz and Andr{\'a}s Vasy.
\newblock Stability of the expanding region of {K}err--de~{S}itter spacetimes
  and smoothness at the conformal boundary.
\newblock {\em Preprint, arXiv:2409.15460}, 2024.

\bibitem[HV26]{HintzVasyScrieb}
Peter Hintz and Andr\'as Vasy.
\newblock Microlocal analysis near null infinity in asymptotically flat
  spacetimes.
\newblock {\em Anal. PDE}, 19(1):1--106, 2026.
\newblock \href {https://doi.org/10.2140/apde.2026.19.1}
  {\path{doi:10.2140/apde.2026.19.1}}.

\bibitem[HX22]{HintzXieSdS}
Peter Hintz and YuQing Xie.
\newblock Quasinormal modes of small {S}chwarzschild--de {S}itter black holes.
\newblock {\em Journal of Mathematical Physics}, 63(1):011509, 2022.
\newblock \href {https://doi.org/10.1063/5.0062985}
  {\path{doi:10.1063/5.0062985}}.

\bibitem[HZ25]{HitrikZworskiQNM}
Michael Hitrik and Maciej Zworski.
\newblock {O}verdamped {QNM} for {S}chwarzschild {B}lack {H}oles.
\newblock {\em Annals of PDE}, 11(2):28, Oct 2025.
\newblock \href {https://doi.org/10.1007/s40818-025-00222-2}
  {\path{doi:10.1007/s40818-025-00222-2}}.

\bibitem[Ian17]{IantchenkoRNdSDirac}
Alexei Iantchenko.
\newblock {Quasi-normal modes for de Sitter-Reissner-Nordstr\"om Black Holes}.
\newblock {\em Math. Res. Lett.}, 24:83--117, 2017.
\newblock \href {https://doi.org/10.4310/MRL.2017.v24.n1.a5}
  {\path{doi:10.4310/MRL.2017.v24.n1.a5}}.

\bibitem[Ian18]{IantchenkoKNdSDirac}
Alexei Iantchenko.
\newblock {Quasi-normal modes for Dirac fields in the Kerr-Newman-de Sitter
  black holes.}
\newblock {\em Anal. Appl. , Singap.}, 16(4):449--524, 2018.
\newblock \href {https://doi.org/10.1142/S0219530518500057}
  {\path{doi:10.1142/S0219530518500057}}.

\bibitem[Kei18]{KeirWeak}
Joseph Keir.
\newblock {T}he weak null condition and global existence using the p-weighted
  energy method.
\newblock {\em Preprint, arXiv:1808.09982}, 2018.

\bibitem[Ker63]{KerrKerr}
Roy~P. Kerr.
\newblock Gravitational field of a spinning mass as an example of algebraically
  special metrics.
\newblock {\em Phys. Rev. Lett.}, 11:237--238, 1963.
\newblock \href {https://doi.org/10.1103/PhysRevLett.11.237}
  {\path{doi:10.1103/PhysRevLett.11.237}}.

\bibitem[KI03]{KodamaIshibashiMaster}
Hideo Kodama and Akihiro Ishibashi.
\newblock A master equation for gravitational perturbations of maximally
  symmetric black holes in higher dimensions.
\newblock {\em Progr. Theoret. Phys.}, 110(4):701--722, 2003.
\newblock \href {https://doi.org/10.1143/PTP.110.701}
  {\path{doi:10.1143/PTP.110.701}}.

\bibitem[KS21]{KlainermanSzeftelPolarized}
Sergiu Klainerman and J{\'e}r{\'e}mie Szeftel.
\newblock {\em {G}lobal {N}onlinear {S}tability of {S}chwarzschild {S}pacetime
  under {P}olarized {P}erturbations}, volume 210 of {\em Annals of Mathematics
  Studies}.
\newblock Princeton University Press, Princeton, NJ, 2021.
\newblock \href {https://doi.org/10.1515/9780691218526}
  {\path{doi:10.1515/9780691218526}}.

\bibitem[KS23]{KlainermanSzeftelKerr}
Sergiu Klainerman and J\'er\'emie Szeftel.
\newblock Kerr stability for small angular momentum.
\newblock {\em Pure Appl. Math. Q.}, 19(3):791--1678, 2023.
\newblock \href {https://doi.org/10.4310/pamq.2023.v19.n3.a1}
  {\path{doi:10.4310/pamq.2023.v19.n3.a1}}.

\bibitem[LO24]{LukOhTwoTails}
Jonathan Luk and Sung-Jin Oh.
\newblock Late time tail of waves on dynamic asymptotically flat spacetimes of
  odd space dimensions.
\newblock {\em Preprint, arXiv:2404.02220}, 2024.

\bibitem[LR10]{LindbladRodnianskiGlobalStability}
Hans Lindblad and Igor Rodnianski.
\newblock The global stability of {M}inkowski space-time in harmonic gauge.
\newblock {\em Ann. of Math. (2)}, 171(3):1401--1477, 2010.
\newblock \href {https://doi.org/10.4007/annals.2010.171.1401}
  {\path{doi:10.4007/annals.2010.171.1401}}.

\bibitem[Maz91]{MazzeoEdge}
Rafe~R. Mazzeo.
\newblock {Elliptic theory of differential edge operators I}.
\newblock {\em Communications in Partial Differential Equations},
  16(10):1615--1664, 1991.
\newblock \href {https://doi.org/10.1080/03605309108820815}
  {\path{doi:10.1080/03605309108820815}}.

\bibitem[Mel93]{MelroseAPS}
Richard~B. Melrose.
\newblock {\em The {A}tiyah-{P}atodi-{S}inger index theorem}, volume~4 of {\em
  Research Notes in Mathematics}.
\newblock A K Peters, Ltd., Wellesley, MA, 1993.
\newblock \href {https://doi.org/10.1016/0377-0257(93)80040-i}
  {\path{doi:10.1016/0377-0257(93)80040-i}}.

\bibitem[Mel94]{MelroseEuclideanSpectralTheory}
Richard~B. Melrose.
\newblock Spectral and scattering theory for the {L}aplacian on asymptotically
  {E}uclidian spaces.
\newblock In {\em Spectral and scattering theory ({S}anda, 1992)}, volume 161
  of {\em Lecture Notes in Pure and Appl. Math.}, pages 85--130. Dekker, New
  York, 1994.

\bibitem[MM83]{MelroseMendozaB}
Richard~B. Melrose and Gerardo Mendoza.
\newblock {\em Elliptic operators of totally characteristic type}.
\newblock Mathematical Sciences Research Institute, 1983.

\bibitem[MSBV14a]{MelroseSaBarretoVasyResolvent}
Richard Melrose, Ant\^onio S\'a{}~Barreto, and Andr\'as Vasy.
\newblock Analytic continuation and semiclassical resolvent estimates on
  asymptotically hyperbolic spaces.
\newblock {\em Comm. Partial Differential Equations}, 39(3):452--511, 2014.
\newblock \href {https://doi.org/10.1080/03605302.2013.866957}
  {\path{doi:10.1080/03605302.2013.866957}}.

\bibitem[MSBV14b]{MelroseSaBarretoVasySdS}
Richard Melrose, Ant\^onio S\'a{}~Barreto, and Andr\'as Vasy.
\newblock Asymptotics of solutions of the wave equation on de
  {S}itter-{S}chwarzschild space.
\newblock {\em Comm. Partial Differential Equations}, 39(3):512--529, 2014.
\newblock \href {https://doi.org/10.1080/03605302.2013.866958}
  {\path{doi:10.1080/03605302.2013.866958}}.

\bibitem[Pre05]{PretoriusBinaryBlackHole}
Frans Pretorius.
\newblock Evolution of binary black-hole spacetimes.
\newblock {\em Phys. Rev. Lett.}, 95(12):121101, 2005.
\newblock \href {https://doi.org/10.1103/PhysRevLett.95.121101}
  {\path{doi:10.1103/PhysRevLett.95.121101}}.

\bibitem[PV25]{PetersenVasySubextremal}
Oliver Petersen and Andr\'as Vasy.
\newblock Wave equations in the {K}err--de {S}itter spacetime: the full
  subextremal range.
\newblock {\em J. Eur. Math. Soc. (JEMS)}, 27(8):3497--3526, 2025.
\newblock \href {https://doi.org/10.4171/jems/1448}
  {\path{doi:10.4171/jems/1448}}.

\bibitem[Rin08]{RingstromEinsteinScalarStability}
Hans Ringstr{\"o}m.
\newblock Future stability of the {E}instein--non-linear scalar field system.
\newblock {\em Inventiones mathematicae}, 173(1):123--208, 2008.
\newblock \href {https://doi.org/10.1007/s00222-008-0117-y}
  {\path{doi:10.1007/s00222-008-0117-y}}.

\bibitem[RW57]{ReggeWheelerSchwarzschild}
Tullio Regge and John~A. Wheeler.
\newblock Stability of a {S}chwarzschild singularity.
\newblock {\em Phys. Rev.}, 108(4):1063--1069, 1957.
\newblock \href {https://doi.org/10.1103/PhysRev.108.1063}
  {\path{doi:10.1103/PhysRev.108.1063}}.

\bibitem[SBZ97]{SaBarretoZworskiResonances}
Ant{\^o}nio S{\'a}~Barreto and Maciej Zworski.
\newblock Distribution of resonances for spherical black holes.
\newblock {\em Mathematical Research Letters}, 4:103--122, 1997.
\newblock URL: \url{https://dx.doi.org/10.4310/MRL.1997.v4.n1.a10}.

\bibitem[Shu92]{ShubinBounded}
M.~A. Shubin.
\newblock Spectral theory of elliptic operators on noncompact manifolds.
\newblock {\em Ast\'erisque}, (207):5, 35--108, 1992.
\newblock M\'ethodes semi-classiques, Vol.\ 1 (Nantes, 1991).

\bibitem[SR15]{ShlapentokhRothmanModeStability}
Yakov Shlapentokh-Rothman.
\newblock Quantitative mode stability for the wave equation on the {K}err
  spacetime.
\newblock {\em Ann. Henri Poincar\'e}, 16(1):289--345, 2015.
\newblock \href {https://doi.org/10.1007/s00023-014-0315-7}
  {\path{doi:10.1007/s00023-014-0315-7}}.

\bibitem[Sus24]{SussmanResolventPhg}
Ethan Sussman.
\newblock Complete asymptotic analysis of low energy scattering for
  {S}chr\"odinger operators with a short-range potential.
\newblock {\em Preprint, arXiv:2411.04220}, 2024.

\bibitem[Teu72]{TeukolskySeparation}
Saul~A. Teukolsky.
\newblock Rotating black holes: separable wave equations for gravitational and
  electromagnetic perturbations.
\newblock {\em Phys. Rev. Lett.}, 29(16):1114--1118, 1972.
\newblock \href {https://doi.org/10.1103/PhysRevLett.29.1114}
  {\path{doi:10.1103/PhysRevLett.29.1114}}.

\bibitem[Vas00]{VasyThreeBody}
Andr\'as Vasy.
\newblock Propagation of singularities in three-body scattering.
\newblock {\em Ast\'erisque}, (262):vi+151, 2000.

\bibitem[Vas13]{VasyMicroKerrdS}
Andr{\'a}s Vasy.
\newblock Microlocal analysis of asymptotically hyperbolic and {K}err--de
  {S}itter spaces (with an appendix by {S}emyon {D}yatlov).
\newblock {\em Invent. Math.}, 194(2):381--513, 2013.
\newblock \href {https://doi.org/10.1007/s00222-012-0446-8}
  {\path{doi:10.1007/s00222-012-0446-8}}.

\bibitem[Vas18]{VasyMinicourse}
Andr\'as Vasy.
\newblock A minicourse on microlocal analysis for wave propagation.
\newblock In {\em Asymptotic analysis in general relativity}, volume 443 of
  {\em London Math. Soc. Lecture Note Ser.}, pages 219--374. Cambridge Univ.
  Press, Cambridge, 2018.

\bibitem[Vas21a]{VasyLAPLag}
Andr\'as Vasy.
\newblock Limiting absorption principle on {R}iemannian scattering
  (asymptotically conic) spaces, a {L}agrangian approach.
\newblock {\em Communications in Partial Differential Equations},
  46(5):780--822, 2021.
\newblock \href {https://doi.org/10.1080/03605302.2020.1857400}
  {\path{doi:10.1080/03605302.2020.1857400}}.

\bibitem[Vas21b]{VasyLowEnergy}
Andr{\'a}s Vasy.
\newblock {R}esolvent near zero energy on {R}iemannian scattering
  (asymptotically conic) spaces.
\newblock {\em Pure and Applied Analysis}, 3(1):1--74, 2021.
\newblock \href {https://doi.org/10.2140/paa.2021.3.1}
  {\path{doi:10.2140/paa.2021.3.1}}.

\bibitem[Vas21c]{VasyLowEnergyLag}
Andr{\'a}s Vasy.
\newblock {R}esolvent near zero energy on {R}iemannian scattering
  (asymptotically conic) spaces, a {L}agrangian approach.
\newblock {\em Communications in Partial Differential Equations},
  46(5):823--863, 2021.
\newblock \href {https://doi.org/10.1080/03605302.2020.1857401}
  {\path{doi:10.1080/03605302.2020.1857401}}.

\bibitem[Vis70]{VishveshwaraSchwarzschild}
C.~V. Vishveshwara.
\newblock Stability of the {S}chwarzschild metric.
\newblock {\em Phys. Rev. D}, 1(10):2870--2879, 1970.
\newblock \href {https://doi.org/10.1103/PhysRevD.1.2870}
  {\path{doi:10.1103/PhysRevD.1.2870}}.

\bibitem[War15]{WarnickQNMs}
Claude~M. Warnick.
\newblock {O}n quasinormal modes of asymptotically anti-de {S}itter black
  holes.
\newblock {\em Communications in Mathematical Physics}, 333(2):959--1035, 2015.
\newblock \href {https://doi.org/10.1007/s00220-014-2171-1}
  {\path{doi:10.1007/s00220-014-2171-1}}.

\bibitem[Whi89]{WhitingKerrModeStability}
Bernard~F. Whiting.
\newblock Mode stability of the {K}err black hole.
\newblock {\em J. Math. Phys.}, 30(6):1301--1305, 1989.
\newblock \href {https://doi.org/10.1063/1.528308}
  {\path{doi:10.1063/1.528308}}.

\bibitem[Zer70]{ZerilliPotential}
Frank~J. Zerilli.
\newblock Effective potential for even-parity {R}egge--{W}heeler gravitational
  perturbation equations.
\newblock {\em Phys. Rev. Lett.}, 24(13):737--738, 1970.
\newblock \href {https://doi.org/10.1103/PhysRevLett.24.737}
  {\path{doi:10.1103/PhysRevLett.24.737}}.

\end{thebibliography}

\end{document}